%
%

\documentclass[english]{memo-l}



\usepackage{microtype}
\usepackage[T1]{fontenc}
\usepackage[utf8]{inputenc}
\usepackage{babel}

\usepackage{color}
\usepackage{amssymb}
\usepackage{amsmidx}
\usepackage[colorlinks=true, pdfstartview=FitV, linkcolor=darkblue, citecolor=darkred, urlcolor=cyan]{hyperref}

\usepackage{amsrefs}
\usepackage{stmaryrd}
\usepackage{mathdots}

\usepackage{tikz}
\usetikzlibrary{decorations.markings}
\tikzset{middlearrow/.style={
        decoration={markings,
            mark= at position 0.5 with {\arrow{#1}} ,
        },
        postaction={decorate}
    }
}
\tikzset{firstthirdarrow/.style={
        decoration={markings,
            mark= at position 0.33 with {\arrow{#1}} ,
        },
        postaction={decorate}
    }
}
\tikzset{secondthirdarrow/.style={
        decoration={markings,
            mark= at position 0.66 with {\arrow{#1}} ,
        },
        postaction={decorate}
    }
}
\usetikzlibrary{cd}

\usepackage{csquotes}
\usepackage{enumerate}

\definecolor{darkred}{RGB}{203,65,84}
\definecolor{darkblue}{RGB}{70,130,180}

\newcommand{\bemph}[1]{{\upshape#1}} 
\newcommand{\ep}[1]{\bemph{(}#1\bemph{)}} 

\DeclareMathOperator{\tr}{tr}
\DeclareMathOperator{\sgn}{sgn}

\DeclareMathOperator{\Hom}{Hom}
\DeclareMathOperator{\Hommax}{Hom_{max}}
\DeclareMathOperator{\Homfmax}{Hom^{f}_{max}}
\DeclareMathOperator{\Homdelta}{Hom_{\delta}}
\DeclareMathOperator{\Homf}{Hom^{f}}

\DeclareMathOperator{\diag}{diag}
\DeclareMathOperator{\Id}{Id}
\DeclareMathOperator{\Sym}{Sym}
\DeclareMathOperator{\Span}{Span}
\DeclareMathOperator{\Conf}{Conf}
\DeclareMathOperator{\CR}{CR}

\DeclareMathOperator{\im}{Im}

\DeclareMathOperator{\PSp}{PSp}
\DeclareMathOperator{\Sp}{Sp}
\DeclareMathOperator{\GL}{GL}
\DeclareMathOperator{\PGL}{PGL}
\DeclareMathOperator{\SL}{SL}
\DeclareMathOperator{\PSL}{PSL}
\DeclareMathOperator{\PO}{PO}
\DeclareMathOperator{\OO}{O}
\DeclareMathOperator{\U}{U}

\DeclareMathOperator{\SO}{SO}

\DeclareMathOperator{\Rot}{Rot}

\DeclareMathOperator{\Loc}{Loc}
\DeclareMathOperator{\Locdelta}{Loc_{\delta}}
\DeclareMathOperator{\Locf}{Loc^f}
\DeclareMathOperator{\Locfdelta}{Loc^{f}_{\delta}}
\DeclareMathOperator{\Locfstdelta}{Loc^{f,st}_{\delta}}
\DeclareMathOperator{\LocfT}{Loc^{f}_{\mathcal{T}}}
\DeclareMathOperator{\LocfdeltaT}{Loc^{f}_{\delta,\mathcal{T}}}
\DeclareMathOperator{\Locd}{Loc^d}
\DeclareMathOperator{\Locddelta}{Loc^{d}_{\delta}}
\DeclareMathOperator{\LocddeltaTzero}{Loc^{d}_{\delta,\mathcal{T}_0}}
\DeclareMathOperator{\LocddeltaTone}{Loc^{d}_{\delta,\mathcal{T}_1}}
\DeclareMathOperator{\LocddeltaT}{Loc^{d}_{\delta,\mathcal{T}}}

\DeclareMathOperator{\Repf}{Rep^{f}}
\DeclareMathOperator{\Repdelta}{Rep_{\delta}}

\DeclareMathOperator{\M}{\mathcal{M}}
\DeclareMathOperator{\Mf}{\mathcal{M}^{f}}
\DeclareMathOperator{\Mfdelta}{\mathcal{M}^{f}_{\delta}}

\DeclareMathOperator{\DfSLdOn}{\mathcal{D}^{f}_{ \mathcal{T}}( \mathnormal{S}, \SL(2,\R)\otimes \OO(n))}
\DeclareMathOperator{\DSLdOn}{\mathcal{D}_{ \mathcal{T}}( \mathnormal{S}, \SL(2,\R)\otimes \OO(n))}

\DeclareMathOperator{\holXpT}{hol^{\mathcal{X},+}_{\mathcal{T}}}
\DeclareMathOperator{\holXpdT}{hol^{\mathcal{X},+}_{\mathcal{T},\Delta}}
\DeclareMathOperator{\holXT}{hol^{\mathcal{X}}_{\mathcal{T}}}
\DeclareMathOperator{\holXGT}{hol^{\mathcal{X}}_{\mathnormal{G},\mathcal{T}}}
\DeclareMathOperator{\holXpGT}{hol^{\mathcal{X},+}_{\mathnormal{G},\mathcal{T}}}
\DeclareMathOperator{\holXstT}{hol^{\mathcal{X},st}_{\mathcal{T}}}
\DeclareMathOperator{\hol}{hol}

\DeclareMathOperator{\holSaz}{hol^{\mathcal{X},+}_{\mathcal{S},\mathnormal{a}_0}}
\DeclareMathOperator{\holSp}{hol^{\mathcal{X},+}_{\mathcal{S}}}

\DeclareMathOperator{\Stab}{Stab}

\newcommand{\edgeT}{\overrightarrow{\mathcal{T}}}
\newcommand{\edgeTsub}[1]{\overrightarrow{\mathcal{T}_{#1}}}

\newcommand{\Lag}[1]{\mathcal{L}_{#1}}
\newcommand{\wideLag}[1]{\widetilde{\mathcal{L}}_{#1}}
\newcommand{\Lagd}[1]{\mathcal{L}^{\mathrm{d}}_{#1}}

\newcommand{\XplusTn}{\mathcal{X}^{+}( \mathcal{T}, n)}
\newcommand{\XTn}{\mathcal{X}( \mathcal{T}, n)}
\newcommand{\XplusTG}{\mathcal{X}^{+}( \mathcal{T}, G)}
\newcommand{\XTG}{\mathcal{X}( \mathcal{T}, G)}
\newcommand{\XplusDeltaTn}{\mathcal{X}^{+}_{\Delta}( \mathcal{T}, n)}
\newcommand{\XETn}{\mathcal{X}_{\mathcal{E}}( \mathcal{T}, n)}
\newcommand{\XEbfxTn}{\mathcal{X}_{\mathcal{E}, \mathbf{x}}( \mathcal{T}, n)}
\newcommand{\XplusDeltaT}[1]{\mathcal{X}^{+}_{\Delta_n}( \mathcal{T}, #1)}
\newcommand{\XplusSaz}{\mathcal{X}^{+}_{\mathcal{S}, a_0}}
\newcommand{\XplusSn}{\mathcal{X}^{+}_{\mathcal{S}}( \mathcal{T}, n)}
\newcommand{\XplusS}[1]{\mathcal{X}^{+}_{\mathcal{S}}( \mathcal{T}, #1)}
\newcommand{\XplusSLdOn}{\mathcal{X}^{+}_{\mathcal{S}}( \SL(2,\R) \otimes \OO(n))}
\newcommand{\XplusFTOn}{\mathcal{X}^{+}_{\mathcal{S}}( \mathcal{F}_\mathcal{T} \otimes \OO(n))}

\newcommand{\coloneqq}{\mathrel{\mathop:}=}
\renewcommand{\leq}{\leqslant}
\renewcommand{\geq}{\geqslant}

\newcommand{\R}{\mathbf R}
\newcommand{\CC}{\mathbf C}
\newcommand{\C}{\mathbf C}
\newcommand{\PP}{\mathbb P}
\newcommand{\N}{\mathbf N}
\newcommand{\Z}{\mathbf Z}
\newcommand{\Q}{\mathbf Q}

\providecommand{\abs}[1]{\lvert#1\rvert}

\newcommand{\setm}{\smallsetminus}

\renewcommand{\emptyset}{\varnothing}

\newcommand{\ra}{\rightarrow}

\newtheorem{teo}{Theorem}[chapter]
\newtheorem{lem}[teo]{Lemma}
\newtheorem{cor}[teo]{Corollary}
\newtheorem{prop}[teo]{Proposition}

\theoremstyle{definition}
\newtheorem{df}[teo]{Definition}

\theoremstyle{remark}
\newtheorem{rem}[teo]{Remark}

\numberwithin{section}{chapter}
\numberwithin{equation}{chapter}

\numberwithin{figure}{chapter}

\makeindex{notation}
\makeindex{definition}

\begin{document}

\frontmatter

\title{Noncommutative coordinates for symplectic representations}

\author[D. Alessandrini]{Daniele Alessandrini}
\address{Department of Mathematics, Columbia University, New York, USA}
\email{daniele.alessandrini@gmail.com}

\author[O. Guichard]{Olivier Guichard}
\address{IRMA UMR 7501, Université de
  Strasbourg et CNRS,
  Strasbourg, France}
\email{olivier.guichard@math.unistra.fr}

\author[E. Rogozinnikov]{Eugen Rogozinnikov}
\address{IRMA UMR 7501, Université de
  Strasbourg et CNRS,
  Strasbourg, France\\
Mathematisches
  Institut, Ruprecht-Karls-Universität Heidelberg, Germany}
\email{erogozinnikov@gmail.com}

\author[A. Wienhard]{Anna Wienhard}
\address{Mathematisches Institut, Ruprecht-Karls-Universität Heidelberg,
  Germany\hfill{}\ \linebreak
  HITS gGmbH, Heidelberg Institute for Theoretical Studies, Heidelberg,
  Germany}
\email{wienhard@uni-heidelberg.de}

\thanks{D.~A.\ and A.~W.\ acknowledge funding by the Deutsche
  Forschungsgemeinschaft within the Priority Program SPP 2026
  \enquote{Geometry at Infinity}.  D.~A., E.~R.\ and A.~W.\ acknowledge
  funding by the Deutsche Forschungsgemeinschaft within the RTG 2229
  \enquote{Asymptotic invariants and limits of groups and spaces}.  O.~G.\
  was partially supported by the Agence Nationale de la Recherche under the
  grant DynGeo (ANR-16-CE40-0025) by the Simion Stoilow Institute of
  Mathematics of the Romanian Academy (IMAR) in the framework of the Contract
  Bitdefender and by le Centre Francophone de Mathématiques in Bucharest, funded by l'Agence
  Universitaire de la Francophonie. O.~G.\ thanks the Institut Universitaire de
  France. A.~W.\ acknowledges funding by the National Science Foundation under
  Grant No.~1440140, by the European Research Council under ERC-Consolidator
  grant 614733, and by the Klaus-Tschira-Foundation.  Part of the work was
  done while E.~R.\ and A.~W.\ were in residence at the Mathematical Sciences
  Research Institute in Berkeley, California. This work is supported by
  Deutsche Forschungsgemeinschaft under Germany’s Excellence Strategy
  EXC-2181/1 - 390900948 (the Heidelberg STRUCTURES Cluster of Excellence). The authors acknowledge support from the NSF grants DMS-1107452, 1107263 and 1107367 RNMS: GEometric structures And Representation varieties (the GEAR Network).}

\date{September 15, 2020}

\subjclass[2020]{Primary: 22E40, 57K20; Secondary: 57S20, 13F60.}

\keywords{Maximal representations; Higher Teichm\"uller theory; Symplectic group; Cluster algebras.}


\begin{abstract}
  We introduce coordinates on the spaces of framed and decorated
  representations of the fundamental group of a surface with nonempty boundary into the
  symplectic group $\Sp(2n,\R)$. These coordinates provide a noncommutative
  generalization of the parametrizations of the spaces of representations
  into $\SL(2,\R)$ or $\PSL(2,\R)$ given by Thurston, Penner, Kashaev and
  Fock--Goncharov.  On the space of decorated symplectic representations the
  coordinates give a geometric realization of the noncommutative cluster-like
  structures introduced by Berenstein--Retakh.  The locus of positive
  coordinates maps to the space of framed maximal representations. We use this
  to determine an explicit homeomorphism between the space of framed maximal
  representations and a quotient by the group~$\OO(n)$. This allows us to
  describe the homotopy type and, when $n=2$, to give an exact description of
  the singularities. Along the way, we establish a complete classification of
  pairs of nondegenerate quadratic forms.
\end{abstract}

\maketitle

\tableofcontents


\mainmatter

\chapter{Introduction}
In their seminal paper~\cite{FG}, Fock and Goncharov introduced a pair of
moduli spaces, the $\mathcal{X}$-space and the $\mathcal{A}$-space,
which are closely related to the representation variety of the fundamental group of
a surface~$S$ with nonempty boundary and of negative Euler characteristic into a split real simple
Lie group~$G$. They introduced on these spaces explicit cluster $\mathcal{X}$-coordinates and
$\mathcal{A}$-coordinates associated to an ideal triangulation of~$S$.
Changing the triangulation, the coordinates change by positive
rational functions. Thus the locus of positive coordinates is independent of
the choice of triangulation.  When~$G$ is $\SL(2,\R)$, the positive locus in
the $\mathcal{X}$-space is closely related to the Teichmüller space, and the
positive locus in the $\mathcal{A}$-space to the decorated Teichmüller space
of~$S$, therefore the Fock--Goncharov coordinates are extensions of Thurston's
shear coordinates, respectively Penner's $\lambda$-lengths~\cite{Penner, Kashaev}.
When~$G$ is a
split real group of higher rank, these moduli spaces give higher Teichmüller
spaces, and the positive locus of the $\mathcal{X}$-space is closely related
to the Hitchin component in the representation variety.

The set of positive representations of Fock--Goncharov and the Hitchin
components account only for one family of higher Teichmüller spaces, another
family is given by maximal representations into Lie groups of Hermitian
type. The symplectic groups $\Sp(2n,\R)$ form essentially the only family of
Lie groups that are both split real forms and of Hermitian type.  In this
article we generalize the work of Fock--Goncharov in the following way. We
introduce two new moduli spaces, an $\mathcal{X}$-space and an
$\mathcal{A}$-space of representations of the fundamental group of~$S$
into the symplectic group $\Sp(2n,\R)$, and describe noncommutative
$A_1$-type cluster coordinates on them.  We show, on the one hand, that the
positive locus of the $\mathcal{X}$-space corresponds precisely to maximal
representations into $\Sp(2n,\R)$; 
we use this to determine the homeomorphism type of the
space of framed maximal representations, and for $\Sp(4,\R)$ a precise
understanding of its singularities.
On the other hand, we show that the $\mathcal{A}$-space
gives a geometric realization of the noncommutative cluster-like
structures
introduced by Berenstein--Retakh~\cite{BR}.

In Fock--Goncharov's work, an important role is played by Lusztig's total
positivity, in our work, a similar role is played by positivity related to the
Maslov index. As such, our work fits well in the framework of
$\Theta$-positivity, recently introduced by Guichard--Wienhard~\cite{WienhardICM, GW, GWpos, GLW}, 
that generalizes Lusztig's total
positivity and provides a unifying framework for the different higher
Teichmüller spaces.
There are four families of Lie groups which admit a $\Theta$-positive
structure, where $\Theta$ is a subset of the set of simple restricted
roots. One is the family of split real Lie groups, the second one is the
family of Hermitian Lie groups of tube type, the third family consists of the
groups $\mathrm{SO}(p,q)$ with $p<q$, and the fourth is an exceptional family
consisting of four groups which are real forms of real rank~$4$ of the complex
simple Lie groups of type $F_4, E_6, E_7, E_ 8$. In the case of Hermitian Lie
groups of tube type, positivity is governed by a Weyl group of type $A_1$,
giving $\Theta$-positivity in that case the flavor of a noncommutative
$A_1$-theory. This is precisely what is reflected in the structure of the
coordinates we define here.
This analogy has been made
even more clear for classical Hermitian Lie groups of tube
type in~\cite{ABRRW}, where such groups are realized as
$\Sp_2$ over a noncommutative ring.

When the Fock--Goncharov's approach is applied to the group $\Sp(2n,\R)$, they
define a positive locus in the space of symplectic representations. It is
important to remark that the positive locus that our approach gives in the
space of symplectic representations properly contains the Fock--Goncharov's one.
The
two theories are based on two different $\Theta$-positive structures on
$\Sp(2n,\R)$: respectively the one for split groups and the one for groups of
Hermitian type. In particular, the framings and decorations that play a role in the construction of the moduli spaces are with respect to different flag varieties.
The perspective chosen in the present paper is the one which
is suitable for describing the spaces of maximal representations.

We now describe our results in more detail.

\section{The pair of moduli spaces}
Let~$S$ be a compact surface with nonempty boundary and negative Euler
characteristic (we refer to Section~\ref{sec:topological-data} for the wider
generality that can be allowed for~$S$, for example marked disks, that
leads to spaces of configurations of Lagrangians).

We introduce two moduli spaces, the space of {\em framed symplectic
  representations} (i.e.\ representations
$\pi_1(S) \rightarrow \Sp(2n,\R)$ together with a $k$-tuples of
Lagrangian subspaces $(L_1, \dots, L_k)$, that are fixed by peripheral
elements
$c_1, \dots, c_k$ in
$\pi_1(S)$) which serves as our $\mathcal{X}$-space, and the space of
{\em decorated symplectic representations} (i.e.\
representations $\pi_1(S) \rightarrow \Sp(2n,\R)$ together with a $k$-tuple of
decorated Lagrangian subspaces $( (L_1, \mathbf{v}_1), \dots , (L_k,
\mathbf{v}_k))$, where $\mathbf{v}_i$ is a basis of~$L_i$, and the
isomorphism of~$L_i$ induced by~$c_i$ is $-\Id_{L_i}$, cf.\ Corollary~\ref{cor:framed-local-systems-rep-odd-punctured})
 which serves as our
$\mathcal{A}$-space.

Fixing an ideal triangulation~$\mathcal{T}$ of $S$, we introduce systems
of $\mathcal{X}$-coordinates, using invariants of triples, quadruples, and
quintuples of Lagrangian subspaces. A system of $\mathcal{X}$-coordinates
consists of a triangle invariant for each triangle, which is given by the
Maslov index of the triple of Lagrangians associated to the vertices of the
triangle, an edge invariant for every edge of the triangulation, which can be
seen as a cross ratio function of four Lagrangians, and an angle invariant,
associated to each corner of a triangle, which comes from an invariant of
quintuples of Lagrangians.
Along the way, we show that the (opposite of the) Toledo number of the
associated representation is the half sum of the triangle invariants (cf.\ Theorem~\ref{teo:toledo_maslov}).
We then describe in detail a map denoted by $\holXT$
from the set~$\XTn$ of $\mathcal{X}$-coordinates to the
space of framed representations. A special role is played by the
set~$\XplusDeltaTn$ of positive diagonal $\mathcal{X}$-coordinates, those
for which the triangle invariants are equal to~$n$, the edge invariants are
just $n$-tuples of positive real numbers, and the angle invariants take values
in $\OO(n)$.

\begin{teo}\label{teo.intro.X_to_max}
  The map~$\holXT$ induces a proper generically
  finite-to-one surjection from~$\XplusDeltaTn$ to the space
  of framed maximal representations.
\end{teo}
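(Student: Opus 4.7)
The plan is to verify in turn that (i) the image lies in framed maximal representations, (ii) the map is surjective onto this space, (iii) the fibers are generically finite, and (iv) the map is proper. For (i), invoke Theorem~\ref{teo:toledo_maslov}, which identifies (up to sign) the Toledo number of the induced representation with half the sum of the triangle invariants. Since each triangle invariant is the Maslov index of a triple of Lagrangians and is thus bounded in absolute value by $n$, the fact that on $\XplusDeltaTn$ every triangle invariant equals $n$ forces the Toledo number to reach its extremal value, and hence the associated representation is maximal.

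For (ii), start with a framed maximal representation $(\rho,(L_1,\dots,L_k))$ and consider the assignment of Lagrangians to ideal vertices induced by the framing on the universal cover. Maximality forces every triple of Lagrangians associated to a triangle of $\mathcal{T}$ to have Maslov index $n$, hence to be pairwise transverse and in \emph{generic position}. From such a triple one builds a standard frame using the classification of pairs of nondegenerate quadratic forms referenced in the abstract; the resulting cross-ratio--type edge invariants between adjacent triangles then become $n$-tuples of positive reals, and the angle invariants encoding the glueing at each corner become matrices in $\OO(n)$. This produces an element of $\XplusDeltaTn$ whose image under $\holXT$ is the initial framed representation. The generic finiteness (iii) then follows from tracking the residual ambiguity in the frame choices: two frames realizing the same maximal triple differ by an explicit finite subgroup acting on the coordinates, and for a generic representation no further identifications occur.

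The main obstacle is properness (iv). Given a sequence $(x_m)$ in $\XplusDeltaTn$ whose images under $\holXT$ converge in the space of framed maximal representations, one must extract a subsequence converging inside $\XplusDeltaTn$. The angle invariants live in the compact group $\OO(n)$, so only the edge invariants, which are tuples of positive reals, can potentially escape to $0$ or $\infty$. Such an escape would force two Lagrangians at the endpoints of an edge of $\mathcal{T}$ to become non-transverse or even to coincide in the limit, contradicting the maximality of the limiting representation (and hence the genericity of each triangle triple). Making this argument rigorous requires matching the algebraic positivity of the edge coordinates with a geometric transversality property that is preserved under convergence of maximal framed representations; this is where the detailed classification of pairs of nondegenerate quadratic forms and the boundary analysis of the coordinate invariants play a crucial role.
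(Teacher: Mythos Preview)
Your outline has the right four ingredients, and step~(i) is essentially the paper's Lemma~\ref{lemma:holo-z-max}. But the remaining three steps contain genuine confusions that, in the paper's treatment (Theorem~\ref{teo:max-param-Xplus} and Propositions~\ref{prop:holXpT-surj}--\ref{prop:fiber-holXpT}), simply do not arise.

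First, you repeatedly invoke the ``classification of pairs of nondegenerate quadratic forms'' (the material of Appendix~\ref{sec:normal-form-pair}). That machinery is used in the paper only for the \emph{general} $\mathcal{X}$-coordinates of Section~\ref{sec:generalX}; for the positive/maximal case treated here it is entirely unnecessary. What is actually needed is Proposition~\ref{prop:standard-basis-positive-4-uple}: for a \emph{positive} quadruple $(L_1,M_1,L_2,M_2)$ one finds a symplectic basis in standard position using nothing more than the spectral theorem (diagonalizing a positive definite form). Surjectivity (your step~(ii)) then comes from choosing such a basis at each vertex~$v$ of $\Gamma_\mathcal{T}$ for the quadruple $q_v$, not from the general Jordan-type normal forms.

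Second, your account of the fibers in~(iii) is inverted. Frames in standard position for a positive quadruple do \emph{not} differ by a finite group: by Proposition~\ref{prop:standard-basis-positive-4-uple}(\ref{item:2:prop:standard-basis-positive-4-uple}) they differ by an element $h\in\OO(n)$ commuting with the diagonal edge invariant~$\Lambda$. The paper's fiber description (Proposition~\ref{prop:fiber-holXpT}) says exactly this: the edge invariants $x(a)$, $a\in A_2$, are \emph{uniquely determined} by the framed representation (they are the ordered eigenvalues of the cross ratio, cf.\ Lemma~\ref{lem:posit-X-geom}), while the angle invariants vary by the compact group $\{(r_v)\in\OO(n)^V \mid r_{v^+(a)}=r_{v^-(a)}\ \text{commutes with}\ x(a)\ \forall a\in A_2\}$. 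Generic finiteness then means: when all edge invariants have distinct diagonal entries, each centralizer $\Stab_{\OO(n)}(x(a))$ is the finite group $\{\pm 1\}^n$.

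Finally, once the correct fiber description is in hand, properness is immediate and does not require the degeneration argument you sketch in~(iv). The edge invariants are continuous functions on the target (eigenvalues of cross ratios), so they stay bounded on a convergent sequence; the angle invariants lie in the compact group~$\OO(n)^{A_3}$; hence preimages of compact sets are compact. No ``boundary analysis'' or appeal to the appendix is needed.
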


Maximal representations into Lie groups of Hermitian type have been
extensively studied
in~\cite{BIW}, and further considered in~\cite{BILW, Strubel}. All maximal
representations are discrete embeddings, and spaces of maximal representations
are examples of higher Teichmüller spaces.

Let us emphasize that the correspondence between positive
$\mathcal{X}$-coordinates and framed maximal representations is not
one-to-one.  To every framed maximal representation corresponds a system of
positive $\mathcal{X}$-coordinates, but in general only the edge invariants
are uniquely determined, the angle invariants involve some choices. We also
explicitly describe the fibers of the map $\holXT$
(cf.\ Theorem~\ref{teo:max-param-Xplus}).

In general (i.e.\ not restricting to the positive locus), the space
$\mathcal{X}(\mathcal{T}, n)$ pa\-ra\-me\-trizes framed
representations that are  $\mathcal{T}$-transverse, that means that they satisfy a transversality condition with respect to the triangulation. Note that maximal
framed representations are $\mathcal{T}$-transverse. We show that
the map $\hol^{\mathcal{X}}_{\mathcal{T}}$ is onto the space of $\mathcal{T}$-transverse
framed representations, generically finite-to-one, and we give an explicit
description of its fibers (Theorem~\ref{teo:from-coord-repr-X-general}). In
turn, topological conclusions are drawn concerning the space of $\mathcal{T}$-transverse
framed representations (cf.\
Corollary~\ref{cor:number-connected-components-LocdT}).

The $\mathcal{X}$-coordinates are quite geometric, and can be used to determine
the topology of the space of maximal representations, but they do not have
nice algebraic properties.  For example, we did not include in this paper the
complete explicit formulas for the change of coordinates for $\mathcal{X}$-coordinates
under a flip of the triangulation as they involve some unpleasant
operations
such as diagonalizing symmetric matrices. We give a formula for the change of some of the coordinates, the cross-ratio coordinates, in Proposition \ref{cross_ratio_flip}.

\smallskip
We introduce $\mathcal{A}$-coordinates on the space of decorated symplectic representations. These $\mathcal{A}$-coordinates have
better and cleaner algebraic properties.
To define the $\mathcal{A}$-coordinates on the space of decorated symplectic
representations, we introduce the symplectic $\Lambda$-length, which is an
invariant of pairs of decorated Lagrangians. Let~$\omega$ denote the symplectic
form, and let $(L,\mathbf{v})$, $(M, \mathbf{w})$ be a pair of decorated
Lagrangians, where $\mathbf{v} = (v_1, \dots, v_n)$ is a basis of~$L$ and
$\mathbf{ w} = (w_1, \dots, w_n)$ a basis of~$M$, then the symplectic
$\Lambda$-length is
\[\Lambda_{\mathbf{ v}, \mathbf{ w}}\coloneqq (\omega(v_i, w_j))_{i,j = 1,
  \dots , n}.\]
It takes values in $\GL(n,\R)$ if and only if the pair $(L,M)$
is transverse, and its square provides a noncommutative generalization of
Penner's $\lambda$-lengths, which are the special case when $n=1$. We show
that the symplectic $\Lambda$-lengths satisfy a noncommutative analogue of the
Ptolemy relation, as well as special triangle relations (which are trivially
satisfied for $n=1$). A system of $\mathcal{A}$-coordinates associates to
every oriented edge the symplectic $\Lambda$-length of the two decorated
Lagrangians at the vertices of the edge. The noncommutative Ptolemy equation
translates into an explicit formula for the changes of
$\mathcal{A}$-coordinates under a flip.

\begin{teo}\label{teo.intro.flip_Lambda}
  Let
  $(L_1,{\mathbf{v}_1}),
  (L_2,{\mathbf{v}_2}),(L_3,{\mathbf{v}_3}),(L_4,{\mathbf{v}_4})$ be four
  pairwise transverse decorated Lagrangians.  Let $\Lambda_{ij}$ denote $\Lambda_{\mathbf{v}_i,\mathbf{v}_j}$. 
  Then one has
  \[\Lambda_{24} = \Lambda_{23}\Lambda_{13}^{-1}\Lambda_{14} +
    \Lambda_{21}\Lambda_{31}^{-1} \Lambda_{34}.
  \]
\end{teo}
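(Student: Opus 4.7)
The plan is to exploit the splitting $\R^{2n} = L_1 \oplus L_3$, which is available because $L_1$ and $L_3$ are transverse, and to decompose each vector of $\mathbf{v}_2$ along this splitting. First, I would write $\mathbf{v}_2 = \mathbf{a} + \mathbf{b}$ with $\mathbf{a} \subset L_1$ and $\mathbf{b} \subset L_3$; expressing $\mathbf{a}$ in the basis $\mathbf{v}_1$ and $\mathbf{b}$ in the basis $\mathbf{v}_3$ yields matrices $P,Q \in \GL(n,\R)$ such that
\[
\mathbf{v}_2 = \mathbf{v}_1 P + \mathbf{v}_3 Q.
\]

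The next step is to identify $P$ and $Q$. Pair both sides with the vectors of $\mathbf{v}_3$ via $\omega$: because $L_3$ is Lagrangian, $\omega(\mathbf{v}_3, \mathbf{v}_3) = 0$, and one is left with $\Lambda_{32} = \Lambda_{31} P$, hence $P = \Lambda_{31}^{-1} \Lambda_{32}$. Similarly, pairing with $\mathbf{v}_1$ and using that $L_1$ is Lagrangian gives $Q = \Lambda_{13}^{-1} \Lambda_{12}$. Here the transversality of the pairs $(L_1,L_3)$ ensures invertibility.

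Now, pairing the decomposition $\mathbf{v}_2 = \mathbf{v}_1 P + \mathbf{v}_3 Q$ on the right with $\mathbf{v}_4$ and using the antisymmetry of $\omega$, I obtain
\[
\Lambda_{24} = P^{T}\Lambda_{14} + Q^{T}\Lambda_{34}.
\]
The last step is a bookkeeping of signs: the antisymmetry of $\omega$ gives $\Lambda_{ij}^{T} = -\Lambda_{ji}$, so $\Lambda_{ij}^{-T} = -\Lambda_{ji}^{-1}$, and the two minus signs cancel in each term. One finds
\[
P^{T} = \Lambda_{32}^{T}\Lambda_{31}^{-T} = (-\Lambda_{23})(-\Lambda_{13}^{-1}) = \Lambda_{23}\Lambda_{13}^{-1},
\]
and likewise $Q^{T} = \Lambda_{21}\Lambda_{31}^{-1}$, which substituted above yields the stated noncommutative Ptolemy relation.

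There is no real obstacle here: the argument is a direct linear-algebra calculation. The only point demanding care is keeping track of the noncommutativity (the order of factors in $\Lambda_{23}\Lambda_{13}^{-1}\Lambda_{14}$ must be preserved throughout) and of the sign cancellation coming from the antisymmetry of $\omega$; both are guaranteed by consistently writing the decomposition $\mathbf{v}_2 = \mathbf{v}_1 P + \mathbf{v}_3 Q$ as a right-multiplication by a matrix and then transposing when pairing on the right with $\mathbf{v}_4$.
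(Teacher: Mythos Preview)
Your proof is correct and follows essentially the same route as the paper: decompose along the splitting $\R^{2n}=L_1\oplus L_3$, identify the coefficient matrices via Lemma~\ref{lamb}, and compute $\Lambda_{24}$ using the bilinearity of~$\omega$ together with ${}^T\Lambda_{ij}=-\Lambda_{ji}$. The only difference is cosmetic: the paper decomposes both $\mathbf{v}_2$ and $\mathbf{v}_4$ along $L_1\oplus L_3$ and uses that $\omega$ vanishes on $L_1\times L_1$ and $L_3\times L_3$, whereas you decompose only $\mathbf{v}_2$ and pair directly with $\mathbf{v}_4$; your version is marginally more economical.
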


When $n=1$, this formula reduces to the formula for the flip in the
$\SL(2,\R)$-situation, and in general it is a noncommutative generalization
of it. This lets us view the theory of decorated symplectic representations as
a noncommutative $A_1$-theory. In fact, $\mathcal{A}$-coordinates
give a geometric realization of the noncommutative
algebras introduced by Berenstein--Retakh~\cite{BR}, see Theorem~\ref{thm:realization BR}.

There is a natural map from the $\mathcal{A}$-space to the
$\mathcal{X}$-space. Under this map, the formula for the flip for
$\mathcal{A}$-coordinates leads to a formula for the flip of the cross ratios in the $\mathcal{X}$-space (see Proposition~\ref{cross_ratio_flip}), which provides a noncommutative generalization of the well-known formula for the coordinate change of shear coordinates under a flip.

In Fock--Goncharov~\cite{FG}, an interesting application of the $\mathcal{A}$-coordinates is that they can be used to write a natural Poisson structure on the $\mathcal{A}$-space. In the noncommutative setting this is harder, and we are not attempting to generalize this property here. But this might be an interesting question for future research. In~\cite{AOS} a double quasi Poisson bracket is constructed on the space of noncommutative weights of arcs of a directed graph embedded in a disk or cylinder.

\section{Topology of the space of maximal representations}
We now discuss the applications to the topology of the space of (framed)
maximal representations. Let us point out that contrary to the space of
positive representations or the Hitchin component, which are contractible, the
space of maximal representations has nontrivial topology. For
 closed surfaces, the topology
of the space of maximal representations has been studied using the theory of
Higgs bundles in~\cite{ AC, BGPG, GPGM, Gothen}.  These techniques do not apply
easily to the case of maximal representations of fundamental groups of surfaces
with punctures, in particular since we do not fix the holonomy along
peripheral curves on the surface.

Here we rely on Theorem~\ref{teo.intro.X_to_max}
to determine topological properties of the space of
maximal framed representations.  Note that the positive locus of the
$\mathcal{X}$-coordinates does not parametrize the space of framed maximal
representations, but maps surjectively to it.
The fibers of this surjection
are complicated to describe, because they depend on the shape of the edge
invariants. However, they are generically finite.

There is a variant of~$\XplusDeltaTn$
from which we deduce a description of the space of maximal representations as
the quotient of an $\OO(n)$-action, see Theorem~\ref{teo:holonomy-ZS}. To
state the theorem we denote by $\Sym^+(n,\R)$ the space of positive definite
symmetric matrices and let $\OO(n)$ act on it by conjugation:

\begin{teo}\label{intro:homeo}
  The space of framed maximal representations into~$\Sp(2n,\R)$ is
  homeomorphic to the quotient of
  $\Sym^+(n,\R)^{-3 \chi(S)}\times\OO(n)^{1-{ \chi(S)}}$ by the
  diagonal $\OO(n)$-action by conjugation on each factor.
\end{teo}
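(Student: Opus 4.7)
My plan is to deduce Theorem~\ref{intro:homeo} from Theorem~\ref{teo.intro.X_to_max} by enlarging the positive locus $\XplusDeltaTn$ so that its generic finite-to-one indeterminacy is replaced by a transparent global $\OO(n)$-gauge, which one can then quotient out cleanly. The starting point is the observation that, in the construction of $\XplusDeltaTn$, each edge invariant is extracted as the spectrum of a positive symmetric operator attached to the pair of Lagrangians at the endpoints of the edge. Retaining the whole operator as an element of $\Sym^+(n,\R)$, rather than just its spectrum, reintroduces an $\OO(n)$ of diagonalizing ambiguity per edge; when combined with the $\OO(n)$-valued angle invariants on corners, this ambiguity assembles into a single global gauge, provided one chooses coherent orthonormal frames along the triangulation.

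Fix an ideal triangulation $\mathcal{T}$ of $S$ as in Section~\ref{sec:topological-data}. Euler-characteristic bookkeeping, under the conventions of that section, yields $3|\chi(S)|+1$ edges, each contributing one $\Sym^+(n,\R)$ factor, and $2|\chi(S)|$ triangles whose corners carry the $\OO(n)$-valued angle data. The angle variables are subject to the triangle relations and to the closing conditions around each interior vertex and boundary component; by parallel-transporting an orthonormal frame along a spanning tree of the dual graph of $\mathcal{T}$ one may gauge-fix all but $|\chi(S)|+1$ of them, which intuitively correspond to one $\OO(n)$ per edge outside the spanning tree plus one base-frame. The residual redundancy is a global change of orthonormal basis acting by conjugation on each $\Sym^+(n,\R)$-factor and by translation on each $\OO(n)$-factor, i.e.\ the diagonal $\OO(n)$-action of the statement.

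With these identifications, the map from $\Sym^+(n,\R)^{3|\chi(S)|+1}\times\OO(n)^{|\chi(S)|+1}$ to the framed maximal representation space, obtained by composing the projection to $\XplusDeltaTn$ with $\holXT$, factors through the diagonal $\OO(n)$-action and induces, by Theorem~\ref{teo.intro.X_to_max}, a continuous bijection from the quotient onto its image. Surjectivity, properness and Hausdorffness of the target then upgrade this bijection to a homeomorphism.

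The main obstacle is the combinatorial bookkeeping in the middle step: one must check that the triangle identities, the vertex closing relations and the spanning-tree gauge-fixing combine to leave precisely $|\chi(S)|+1$ independent $\OO(n)$-parameters regardless of the chosen spanning tree, and that no further gauge beyond the diagonal $\OO(n)$-action survives. A further subtlety is to handle framed maximal representations with nontrivial stabilizer in $\OO(n)$ by working with the honest topological quotient, rather than assuming a principal bundle structure.
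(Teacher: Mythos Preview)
Your overall strategy---enlarge the edge invariants from $\Delta_n$ to all of $\Sym^+(n,\R)$, gauge-fix along a spanning tree, and identify the residual gauge as a single diagonal $\OO(n)$---is exactly the route the paper takes (Sections~\ref{sec:other-X-like}--\ref{sec:over-param}, culminating in Theorem~\ref{teo:holonomy-ZS}). So the architecture is right. There are, however, two genuine gaps in the proposal as written.

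First, Theorem~\ref{teo.intro.X_to_max} is not strong enough for the step where you claim the induced map on the quotient is a bijection. That theorem only says $\holXpdT\colon\XplusDeltaTn\to\Mfdelta$ is a \emph{generically finite-to-one} surjection; it says nothing about the shape of the non-generic fibers, and in particular does not tell you that the fibers of your enlarged map are exactly the diagonal $\OO(n)$-orbits. What you actually need is the precise fiber description of Theorem~\ref{teo:max-param-Xplus} (equivalently Proposition~\ref{prop:fiber-holXpT}): two systems of positive coordinates have the same holonomy iff they differ by a family $(r_v)_{v\in V}$ of orthogonal matrices that agree across $A_2$-arrows and commute with the edge invariants. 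Only with this in hand can you show, after spanning-tree gauge-fixing, that the remaining indeterminacy collapses to a single $u\in\OO(n)$ acting diagonally (this is Proposition~\ref{prop:unique-Y} in the paper, and its proof uses the uniqueness of polar decomposition, not just genericity).

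Second, the phrase ``composing the projection to $\XplusDeltaTn$ with $\holXT$'' is problematic: diagonalizing a positive symmetric matrix is not a continuous operation when eigenvalues collide, so this composite is not obviously continuous on the enlarged parameter space. The paper avoids this by defining $\holXpT$ directly on $\XplusTn$ via the continuous square root $x(a)\mapsto x(a)^{1/2}$ on $\Sym^+(n,\R)$ (Section~\ref{sec:space-coordinates-1}), never passing through $\XplusDeltaTn$. You should do the same. A minor point: the diagonal $\OO(n)$-action on the $\OO(n)$-factors is by conjugation $Y_a\mapsto uY_au^{-1}$, not by translation; this matters for the fiber analysis.
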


We furthermore determine an explicit description  of the space of framed maximal
representations into any connected Lie group isogenic to $\PSp(2n,\R)$, cf.\
Theorem~\ref{teo:holonomy-ZGplus}.

As a corollary, we obtain a different proof of~\cite[Theorem~4]{Strubel} for
the value of the number of connected components.

\begin{cor}\label{intro:conncomp}
  The space of maximal representations and the space of framed maximal
  representations into $\Sp(2n,\R)$ have $2^{1-{ \chi(S)} }$ connected
  components.  The space of maximal representations and the space of framed
  maximal representations into $\PSp(2n,\R)$ have $2^{1-{ \chi(S)}}$
  connected components when $n$~is even; they are connected when $n$~is odd.
\end{cor}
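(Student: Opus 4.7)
The plan is to deduce both component counts from the explicit parametrizations given by Theorem~\ref{intro:homeo} (for $\Sp(2n,\R)$) and Theorem~\ref{teo:holonomy-ZGplus} (for any connected Lie group isogenic to $\PSp(2n,\R)$), then combinatorially count components of the resulting quotient spaces, finally transferring the result from framed to unframed via a forgetful-map argument.

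For the $\Sp(2n,\R)$-framed case, Theorem~\ref{intro:homeo} identifies the moduli space with
\[
Q_n \coloneqq \bigl(\Sym^+(n,\R)^{3\abs{\chi(S)}+1}\times \OO(n)^{\abs{\chi(S)}+1}\bigr)/\OO(n),
\]
for the diagonal conjugation action. Since $\Sym^+(n,\R)$ is a convex cone, it admits an $\OO(n)$-equivariant deformation retraction onto $\{I_n\}$, so $Q_n$ has the same $\pi_0$ as the conjugation quotient $\OO(n)^{\abs{\chi(S)}+1}/\OO(n)$. Components of $\OO(n)^{\abs{\chi(S)}+1}$ are labeled by the tuple of determinants in $\{\pm 1\}^{\abs{\chi(S)}+1}$; conjugation preserves each determinant, so each of the $2^{\abs{\chi(S)}+1}$ components is $\OO(n)$-invariant and descends to a connected piece in the quotient, while distinct tuples cannot be merged since the determinants form a complete invariant. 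This yields the stated count for framed $\Sp(2n,\R)$-representations.

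To pass to the unframed setting, I would use the forgetful map $\pi$ that drops the Lagrangian framing: it is a continuous surjection whose fiber over a maximal $\rho$ is a product, over peripheral classes, of sets of Lagrangians fixed by the corresponding holonomies. For maximal representations the fixed-Lagrangian locus of each peripheral element is connected (a structural consequence of maximality, coming from the classification of peripheral holonomies of maximal representations), so $\pi$ has connected nonempty fibers and induces a bijection on $\pi_0$. For $\PSp(2n,\R)$, Theorem~\ref{teo:holonomy-ZGplus} realizes the framed moduli space as a further quotient of $Q_n$ by an action of the central $\Z/2 = \{\pm I_{2n}\} \subset \Sp(2n,\R)$. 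In the parametrization this $\Z/2$ acts on each $\OO(n)$-factor by multiplication by $-I_n$, with independent sign twists across the $\abs{\chi(S)}+1$ factors parametrized by homomorphisms $\pi_1(S) \to \{\pm 1\}$. When $n$ is even, $\det(-I_n)=+1$ and $-I_n \in \SO(n)$, so the action preserves all component labels and leaves $2^{\abs{\chi(S)}+1}$ components. When $n$ is odd, $\det(-I_n)=-1$ and the $\abs{\chi(S)}+1$ independent sign flips generate a transitive action on $\{\pm 1\}^{\abs{\chi(S)}+1}$, collapsing all components to a single one. The same forgetful-map argument then transfers these counts to the unframed $\PSp(2n,\R)$ moduli space.

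The main obstacle is the careful bookkeeping of the $\Z/2$-action for the $\PSp(2n,\R)$ case: identifying which sign twists are genuinely free (giving a $(\Z/2)^{\abs{\chi(S)}+1}$-worth of independent flips, matching the rank of $H^1(\pi_1(S),\Z/2)$ for $\pi_1(S)$ free of rank $\abs{\chi(S)}+1$), and verifying that when $n$ is odd this action is transitive on $\{\pm 1\}^{\abs{\chi(S)}+1}$ while when $n$ is even it is trivial on the determinant labels. A secondary but nontrivial point is the connectedness of the peripheral fixed-Lagrangian loci for maximal representations, which is required to convert framed counts into unframed counts and which rests on structural properties of maximal peripheral holonomies developed earlier in the paper.
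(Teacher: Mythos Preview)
Your framed counts are correct and match the paper's approach: Theorem~\ref{intro:homeo} plus the $\OO(n)$-equivariant contraction of $\Sym^+(n,\R)$ to $\{\Id\}$ gives the $\Sp(2n,\R)$ count (this is exactly Section~\ref{sec:retractions} and the corollary opening Section~\ref{sec:connected-components}); the $\PSp(2n,\R)$ count follows the same way with $\OO(n)$ replaced by $\PO(n)$, which the paper packages via the invariant~$x_G$ of Section~\ref{sec:groups} and Corollary~\ref{cor:number-connected-components-central} rather than as a further quotient of~$Q_n$, but the substance is the same.

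The genuine gap is in your transfer from framed to unframed. The assertion that the forgetful map has connected fibres because \enquote{the fixed-Lagrangian locus of each peripheral element is connected} is false. Already for $n=1$, a hyperbolic boundary holonomy of a Fuchsian representation has exactly two fixed points in $\R\PP^1$, and by Corollary~\ref{cor:max-iff-triangle-are-max} both choices yield maximal framings; so the fibre of the forgetful map is disconnected there. Moreover, even a continuous surjection with connected fibres does not automatically induce a $\pi_0$-bijection without a further ingredient such as path lifting. The paper's argument (Theorem~\ref{teo:connected-components-dec-to-max}) is different and more delicate: it establishes the path-lifting property for the forgetful map (Proposition~\ref{prop:path-lift-prop-dec-to-max}), retracts $\Mf(S,\Sp(2n,\R))$ onto the totally singular subspace $\DfSLdOn$ (Theorem~\ref{teo:hom_type}), and proves that every representation in that subspace admits a \emph{unique} framing (Proposition~\ref{prop:decor-sing-repr-uniq}) by an explicit analysis of the special unipotent-times-orthogonal shape of the peripheral holonomies there. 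Injectivity on $\pi_0$ then follows by lifting a path in $\M(S,\Sp(2n,\R))$ to $\Mf(S,\Sp(2n,\R))$ and using uniqueness at the endpoints after retracting.
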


There is a special subset of framed representations
(Section~\ref{sec:subsp-enqu-repr}), for which the edge invariants are
\enquote{totally singular}. It corresponds to the subset $\{ (\Id, \dots,
\Id)\} \times \OO(n)^{ 1-{ \chi(S)}}$ of $\Sym^+(n,\R)^{-3{
    \chi(S)}}\times\OO(n)^{1-{ \chi(S)} }$. From this, we obtain

\begin{teo}\label{intro:homotopy}
  The space of totally singular framed representations is a strong deformation
  retract of the space of framed maximal representations into~$\Sp(2n,\R)$; it
   is homeomorphic to $\OO(n)^{1-{ \chi(S)}}/\OO(n)$ \ep{where the action
    of~$\OO(n)$ is by simultaneous conjugation}.
\end{teo}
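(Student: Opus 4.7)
The plan is to derive Theorem~\ref{intro:homotopy} directly from the explicit homeomorphism of Theorem~\ref{intro:homeo}. Under that homeomorphism the space of framed maximal representations becomes the quotient $\bigl(\Sym^+(n,\R)^{3\abs{\chi(S)}+1}\times\OO(n)^{\abs{\chi(S)}+1}\bigr)/\OO(n)$, and, as already recalled in the paragraph preceding the theorem, the totally singular locus corresponds to the image of the slice $\{(\Id,\dots,\Id)\}\times\OO(n)^{\abs{\chi(S)}+1}$. On this slice the diagonal $\OO(n)$-action is trivial on the $\Sym^+(n,\R)$-factors (the identity matrix is $\OO(n)$-central) and is simultaneous conjugation on the remaining $\OO(n)$-factors, so its image in the quotient is naturally homeomorphic to $\OO(n)^{\abs{\chi(S)}+1}/\OO(n)$ with the simultaneous conjugation action. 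It therefore remains to produce an $\OO(n)$-equivariant strong deformation retraction of the product onto this slice.

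For this I would use the straight-line homotopy $H_t(A)=(1-t)A+t\Id$ for $t\in[0,1]$ on each copy of $\Sym^+(n,\R)$, acting trivially on the $\OO(n)$-factors. Since the eigenvalues of $H_t(A)$ are $(1-t)\lambda_i+t>0$ the map remains in $\Sym^+(n,\R)$; we have $H_t(\Id)=\Id$ for every $t$, so the retract is fixed pointwise throughout; and the identity
\begin{equation*}
H_t(OAO^{-1})=(1-t)OAO^{-1}+t\Id = O\,H_t(A)\,O^{-1}
\end{equation*}
establishes $\OO(n)$-equivariance. Applying $H_t$ coordinatewise on the $3\abs{\chi(S)}+1$ symmetric factors thus yields the desired equivariant strong deformation retraction of $\Sym^+(n,\R)^{3\abs{\chi(S)}+1}\times\OO(n)^{\abs{\chi(S)}+1}$ onto $\{(\Id,\dots,\Id)\}\times\OO(n)^{\abs{\chi(S)}+1}$.

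To conclude, compactness of $\OO(n)$ ensures that the diagonal action is proper and that the quotient map is open, so the equivariant homotopy descends continuously to the quotient. The conditions that the $t=0$ map is the identity and that $H_t$ fixes every point of the invariant slice for every $t$ pass through the quotient without change, producing a bona fide strong deformation retraction. Transporting along the homeomorphism of Theorem~\ref{intro:homeo} completes the argument.

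The main (and essentially only) point to check with care is the descent of the \emph{strong} part of the retraction through the quotient; this is automatic here because $H_t$ acts as the identity on the slice pointwise rather than merely preserving it setwise, so the induced homotopy on the quotient genuinely fixes the image of the slice throughout. The rest of the verification is the explicit, equivariance-preserving construction of $H_t$, which is immediate.
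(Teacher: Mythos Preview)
Your proof is correct and follows essentially the same strategy as the paper: reduce to constructing an $\OO(n)$-equivariant strong deformation retraction of $\Sym^+(n,\R)$ onto $\{\Id\}$, apply it factorwise, and descend to the quotient. The only cosmetic difference is that the paper passes through the $\OO(n)$-equivariant diffeomorphism $\exp\colon \Sym(n,\R)\to \Sym^+(n,\R)$ and uses the linear scaling $S\mapsto tS$ there, whereas you use the convex combination $A\mapsto (1-t)A+t\Id$ directly on $\Sym^+(n,\R)$; both are equally valid.
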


When $n=2$, we analyze the quotient of Theorem~\ref{intro:homeo} in more
detail and show that all connected components except one are orbifolds, one
connected component contains a non-orbifold singularity, see
Theorem~\ref{teo:singular-points-sp4}.

\section{Comparison with Fock--Goncharov's coordinates}

The Fock--Goncharov's approach~\cite{FG} covers all the split groups. Here we
work with $\Sp(2n,\R)$ or isogenous groups, the only family of groups that are
both split and of Hermitian type. Hence, it is possible to apply both our
construction and the Fock--Goncharov's construction in order to describe
spaces of representations into $\Sp(2n,\R)$. We would like to point out a few 
differences between the coordinates we develop here and Fock--Goncharov's coordinates.

Fock and Goncharov consider moduli spaces of framed and decorated
representations, where the framing is given by a full flag and the decoration
by a full flag and a generating basis, here our framing is given by a partial flag, namely a Lagrangian subspace and the decoration by a Lagrangian and a basis. A framing by a full flag clearly gives rise to a framing by a partial flag, but the converse is not true in general. This implies also that the transversality conditions are different: a framed or decorated representation in the Fock--Goncharov's sense is transverse if the associated full flags are transverse --- then of course also the framing or decoration by partial flags is transverse. But the converse is not true.
Thus the set of representations we parametrize is strictly bigger than the set of representations that is parametrized by Fock--Goncharov's coordinates.

In both cases, there is a special subset of representations, called positive, that behave especially well, and can be better understood. The positive representations are transverse with reference to all the triangulations, hence every choice of triangulation gives global coordinates that describe the topology of the full space of positive representations.
The main difference between our construction and the Fock--Goncharov's construction when
applied to $\Sp(2n,\R)$ is that the set of positive representations is
different. The Fock--Goncharov's coordinates are adapted to the positive structure
that $\Sp(2n,\R)$ has as a split group, while our coordinates are adapted to the
positive structure that $\Sp(2n,\R)$ has as a group of Hermitian type. In our
approach, the positive representations are precisely the maximal
representations, while in Fock--Goncharov's approach the positive representations are a smaller subset. It is possible to find some maximal representations that are not positive in the Fock-Goncharov's sense, for every choice of framing by a full flag. We can do this by explicitly computing the Fock--Goncharov's coordinates.
Our coordinates thus allows us to give some global charts to the space of maximal representations, but this is not possible using the Fock--Goncharov's coordinates.

\section*{Structure of the memoir} In
Chapter~\ref{sec:sympl-group-lagr} we recall classical facts on the symplectic group, the Maslov index and the Souriau
index.  In Chapter~\ref{sec:prelim} we introduce the invariants of
Lagrangians and decorated Lagrangians which are used to define coordinates. In
Chapter~\ref{sec:rep}, we introduce the spaces of framed and decorated local
systems, recall the definition and key properties of maximal local
systems. Chapter~\ref{sec:local-systems-their} gives an interpretation of
these spaces in terms of local systems on a quiver embedded in the surface. In
Chapter~\ref{sec_def_coord_max} we introduce positive
$\mathcal{X}$-coordinates, and construct the map to framed maximal
representations. Variants of $\mathcal{X}$-coordinates are introduced in
Chapter~\ref{sec:other-X-like}. The applications for the topology of the space
of maximal representations are proven in Chapter~\ref{sec:topol-space-maxim}
(homotopy type) and Chapter~\ref{sec:singularities} (singularities when~$n$
is~$2$). The general $\mathcal{X}$-coordinates are introduced in
Chapter~\ref{sec:generalX}, and in Chapter~\ref{cent_ext} we generalize them
to $G$-local systems for connected Lie groups~$G$ isogenic to $\Sp(2n,\R)$. Finally, in
Chapter~\ref{sec:Acoordinates} we introduce $\mathcal{A}$-coordinates,
describe the relations to the noncommutative
algebras of Berenstein
and Retakh, and give formulas for the coordinate changes under a flip of the
triangulation.  The Appendix~\ref{sec:normal-form-pair} contains a description
of the invariants of pairs of nondegenerate symmetric bilinear forms that are
used in Chapter~\ref{sec:generalX}.

{\bf Acknowledgments:} We thank Arkady Berenstein, Vladimir Fock, Michael
Gekhtman, Vladimir Retakh, and Michael Shapiro for helpful and interesting
discussions. We are also grateful to the anonymous referee whose remarks
helped us correcting some oversights and improving the exposition.

\chapter{Symplectic group, Lagrangians}
\label{sec:sympl-group-lagr}

This chapter introduces the symplectic group and the Lagrangian
Grassmannian. We also recall facts on the Maslov and Souriau indices, and the
translation number.

\section{Lagrangian Grassmannian and decorated Lagrangian Grassmannian}
\label{sec:lagr-grassm-its}

We consider the symplectic vector space $(\R^{2n},\omega)$ where~$\omega$ is
the standard symplectic form on~$\R^{2n}$, i.e.\
\begin{equation} \label{form:standard symplectic form}
  \omega(x,y)= {}^T\! x \begin{pmatrix}
    0 & \Id \\ -\Id & 0
  \end{pmatrix} y,
\end{equation}
for~$x$ and~$y$ in~$\R^{2n}$.

Every basis of $\R^{2n}$ such that $\omega$, expressed in that basis, has the
form~\eqref{form:standard symplectic form} is called a \emph{symplectic basis}
(hence the standard basis is a symplectic basis). We will usually write a
symplectic basis as a pair $(\mathbf{e},\mathbf{f})$, where
$\mathbf{e}=(e_1,\dots,e_n), \mathbf{f}=(f_1,\dots,f_n)$; thus, one has, for
all~$i, j$, $\omega(e_i,f_j)=\delta_{ij}$.  We will denote this last equality
more concisely by $\omega( \mathbf{e}, \mathbf{f})=\Id$. More generally, given
two families $\mathbf{v}=(v_1, \dots, v_n)$ and $\mathbf{w}=(w_1,\dots, w_m)$
we will write $\omega( \mathbf{v}, \mathbf{w})$ for the $n\times m$-matrix
whose coefficients are $\omega(v_i,w_j)$; one has $\omega(\mathbf{w},
\mathbf{v}) = -{}^T\! \omega(\mathbf{v},
\mathbf{w})$.\index{definition}{basis!symplectic ---}%
\index{definition}{symplectic!basis}%

The group
\[ \Sp(2n,\R)\coloneqq \{g\in\GL(2n,\R) \mid {}^T\! g \omega g=\omega\}\] is
the \emph{symplectic group}, and its adjoint form
$\PSp(2n,\R) \coloneqq \Sp(2n,\R)/\{\pm \Id\}$ is the \emph{projective
  symplectic group}.
\index{notation}{01@$\Sp(2n,\R)$ (the symplectic group)}
\index{notation}{02@$\PSp(2n,\R)$ (the projective symplectic group)}

\begin{df}
  A subspace $L$ of $\R^{2n}$ is called \emph{Lagrangian} if $\dim(L)=n$ and
  $\omega(u,v)=0$ for all $u,v\in L$. The space of all Lagrangian subspaces of
  $(\R^{2n},\omega)$ is called the \emph{Lagrangian Grassmannian}, and denoted
  $\Lag{n}$.
\end{df}
\index{notation}{03@$\Lag{n}$ (the Lagrangian Grassmannian)}%
\index{definition}{Grassmannian!Lagrangian ---}%
\index{definition}{Lagrangian}%
\index{definition}{Lagrangian!Grassmannian}%

\begin{df} A \emph{decorated Lagrangian} is a pair $(L,\mathbf{v})$, where
  $L\in\Lag{n}$ and $\mathbf{v}$ is a basis of $L$. The space of all decorated
  Lagrangians of $(\R^{2n},\omega)$ is called the \emph{decorated Lagrangian
    Grassmannian}, and denoted~$\Lagd{n}$.
\end{df}
\index{notation}{04@$\Lagd{n}$ (the decorated Lagrangian Grassmannian)}%
\index{definition}{Lagrangian!decorated ---}%
\index{definition}{decorated!Lagrangian}%
\index{definition}{decorated!Lagrangian Grassmannian}%
\index{definition}{Lagrangian! decorated --- Grassmannian}%
\index{definition}{Grassmannian! decorated Lagrangian ---}%

Our notation will often retain only the family~$\mathbf{v}$ for a decorated
Lagrangian as it determines the Lagrangian~$L$.

The natural projection to $\Lag{n}$ turns $\Lagd{n}$ into a right principal
$\GL(n,\R)$-bundle.

\begin{rem}
  \label{rem:action-on-basis}
  \begin{enumerate}
  \item Recall that for any $n$-dimensional real vector space~$L$, the
    group~$\GL(n,\R)$ acts on the right on the space of bases of $L$: if
    $\mathbf{v}=(v_1,\dots, v_n)$ is a basis of~$L$ and
    $g=(g_{i,j})_{i,j\in\{1,\dots, n\}}$ is in~$\GL(n,\R)$ then
    $\mathbf{w}= \mathbf{v}g$ is the family $(w_1,\dots, w_n)$ defined by the
    formula: $\forall j\in\{ 1,\dots, n\}$, $w_j= \sum_{i=1}^{n} v_i
    g_{i,j}$. This is the simply transitive action behind the principal bundle
    structure mentioned above.

  \item In the case of a symplectic vector space of dimension~$2n$, we get
    this way a simply transitive action of $\Sp(2n,\R)$ on the space of
    symplectic bases.

  \item We will use also the notation $\mathbf{v}\cdot g$ (or sometimes
    $\mathbf{v}g$) when $\mathbf{v}$ is a family (not necessarily free, nor
    generating) of~$n$ elements in a vector space~$V$ and when $g$ is a
    $m\times n$-matrix.

  \item\label{item:4:rem:action-on-basis} A morphism $\psi\colon L\to L'$ and
    its matrix~$g$ with respect to bases~$\mathbf{v}$ of~$L$ and~$\mathbf{v}'$
    of~$L'$ are related by the well-known formula
    $\psi(\mathbf{v}) =\mathbf{v}'g$.

  \item For two families $\mathbf{v}=(v_1,\dots,v_n)$ and $\mathbf{w}=(w_1,
    \dots, w_n)$, the family $(v_1+w_1, \dots, v_n+w_n)$ is denoted
    $\mathbf{v}+ \mathbf{w}$.
  \end{enumerate}
\end{rem}

The group $\Sp(2n,\R)$ has a natural left action on~$\Lag{n}$ and~$\Lagd{n}$:
\begin{align*}
  g\cdot L&\coloneqq \{g(x) \}_{  x\in L},\\
  g\cdot (L,(v_1,\dots,v_n)) &\coloneqq \bigl(g \cdot L,(g(v_1), \dots,g(v_n)) \bigr).
\end{align*}

These actions are transitive, hence the spaces $\Lag{n}$ and
$\Lagd{n}$ are homogeneous spaces under the symplectic group.
Furthermore, the map $\Lagd{n}\to \Lag{n}$ is $\Sp(2n,\R)$-equivariant.
The left action of~$\Sp(2n,\R)$ and the right action of~$\GL(n,\R)$ on
$\Lagd{n}$ commute.

Let $L_0=\Span(\mathbf{e}_0)$ (where
$(\mathbf{e}_0, \mathbf{f}_0)$ is the standard symplectic basis
of~$\R^{2n}$), and consider the stabilizers
\begin{equation*}
P = \Stab_{\Sp(2n,\R)}(L_0),\quad
U = \Stab_{\Sp(2n,\R)}((L_0,\mathbf{e}_0)).
\end{equation*}
\index{notation}{05@$P$ (standard parabolic subgroup in $\Sp(2n.\R)$)}%
\index{notation}{06@$U$ (standard unipotent subgroup in $\Sp(2n.\R)$)}%
The group $P$ is a parabolic subgroup of $\Sp(2n,\R)$, and $U \subset P$ is
its unipotent radical. As homogeneous spaces, we have
\[\Lag{n} = \Sp(2n,\R)/P,\quad
\Lagd{n} = \Sp(2n,\R)/U.\]
The action of $\Sp(2n,\R)$ on $\Lag{n}$ is not effective, its
kernel is $\{\pm \Id\}$. The group of symmetries of $\Lag{n}$ is the
projective symplectic group $\PSp(2n,\R)$. On the space~$\Lagd{n}$, the
action is effective.

\begin{df}
  \label{df:transverse-lag-and-lagfr}
  Two Lagrangians $L_1, L_2 $ are called \emph{transverse} if their
  intersection is trivial.
  Two decorated Lagrangians $\mathbf{v}_1,
  \mathbf{v}_2 $ are called \emph{transverse} if $\Span(\mathbf{v}_1)$ and
  $\Span(\mathbf{v}_2)$ are transverse.
\end{df}\index{definition}{Lagrangian!transverse ---}%
\index{definition}{transverse! Lagrangian}%

\section{Configurations of Lagrangians}
\label{sec:conf-lagr}

For every integer~$d\geq 2$, let us denote by $\Conf^d( \Lag{n})$ the moduli
space of $d$-tuples of Lagrangians, i.e.\ the quotient of
$(\mathcal{L}_{n})^{d}$ by the diagonal action of~$\Sp(2n, \R)$. The natural
action of the symmetric group~$\mathfrak{S}_d$ on~$(\mathcal{L}_{n})^{d}$
descends to $\Conf^d( \Lag{n})$.
\index{notation}{07@$\Conf^d( \Lag{n})$ (configuration space of $d$-tuples of Lagrangians)}

We will be particularly interested in $\Conf^3(\Lag{n}), \Conf^4( \Lag{n})$ and $\Conf^5( \Lag{n})$
 and certain of their subspaces. We will denote by
$\Conf^{3\ast}( \Lag{n})$ the configuration space of triples of pairwise
transverse Lagrangians. The subspace
$\Conf^{3\ast}( \Lag{n}) \subset \Conf^3( \Lag{n})$ is invariant by the action
of~$\mathfrak{S}_3$.

The subspace of $\Conf^4( \Lag{n})$ consisting of (orbits of) quadruples
$(L_1, M_1, L_2, M_2)$ of Lagrangians such that $(L_1, M_1, L_2)$ and $(L_1,
M_2, L_2)$ belong to $\Conf^{3\ast}( \Lag{n})$ will be denoted by
$\Conf^{4 \Diamond}( \Lag{n})$. It is not invariant by the full symmetry
group~$\mathfrak{S}_4$ but it is invariant by the subgroup generated by the transpositions~$(1,3)$ and~$(2,4)$.

The map $(L_1, M_1, L_2, M_2) \mapsto (L_2, M_2, L_1, M_1)$ induces an
automorphism of $\Conf^4( \Lag{n})$ denoted by~$\kappa$.
\index{notation}{08@$\kappa$ (the involution on $\Conf^4( \Lag{n})$)}

\section{Maslov index}
\label{sec:maslov-index}

In this section we review properties of the Maslov index, for a more general
discussion we refer the reader to~\cite[Part~I, Appendix~A]{Lion}.

Let $L_1, M, L_2$ be three pairwise transverse Lagrangians. There is a
unique linear map~$M_{L_1 \ra L_2}$ from~$L_1$ to~$L_2$ such that
$M = \{ v\in \R^{2n} \mid \exists e\in L_1,\, v=e+M_{L_1\to L_2}(e)\}$. When
this does not cause confusion, it will be denoted just by $M$.

Using the symplectic form $\omega$, we can define a bilinear form $\beta$ on
$L_1$ in the following way: for $v_1,v_2\in L_1$
\[\beta(v_1,v_2)\coloneqq \omega(v_1, M_{L_1 \ra L_2}(v_2)).\]

\begin{df}\label{def:maslov-form}
  The bilinear form $\beta$ is called the Maslov form and is denoted
  by~$[L_1,M,L_2]$.
\end{df}\index{notation}{09@$[L_1,M,L_2]$ (Maslov form associated with the
  triple $(L_1,M,L_2)$)}%
\index{definition}{Maslov!form}%
\index{definition}{form (Maslov ---)}%
The following is well known, see~\cite{Souriau}:

\begin{prop}
  The Maslov form $[L_1,M,L_2]$ is symmetric and nondegenerate.
\end{prop}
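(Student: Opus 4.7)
The plan is to work directly from the definition $\beta(v_1,v_2) = \omega(v_1, M(v_2))$, using only two ingredients: the fact that each of $L_1$, $L_2$, $M$ is Lagrangian, and the fact that the three are pairwise transverse. I would first observe that the map $M_{L_1 \to L_2} \colon L_1 \to L_2$ is a linear isomorphism: if $M(v)=0$ then $v = v+M(v) \in L_1 \cap M = 0$, so injectivity holds, and equality of dimensions gives bijectivity. This observation is what makes the nondegeneracy argument go through.

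For symmetry, I would exploit the Lagrangian property of $M$. Given $v_1, v_2 \in L_1$, both vectors $v_1 + M(v_1)$ and $v_2 + M(v_2)$ lie in $M$, so
\[
0 = \omega\bigl(v_1 + M(v_1),\, v_2 + M(v_2)\bigr) = \omega(v_1,v_2) + \omega(v_1,M(v_2)) + \omega(M(v_1),v_2) + \omega(M(v_1),M(v_2)).
\]
The first and fourth terms vanish because $L_1$ and $L_2$ are Lagrangian, leaving $\omega(v_1,M(v_2)) = -\omega(M(v_1),v_2) = \omega(v_2,M(v_1))$, which is exactly $\beta(v_1,v_2)=\beta(v_2,v_1)$.

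For nondegeneracy, suppose $v_1 \in L_1$ satisfies $\beta(v_1,v_2)=0$ for all $v_2 \in L_1$. Since $M_{L_1 \to L_2}$ is surjective, this means $\omega(v_1, w)=0$ for every $w \in L_2$. Combined with $\omega(v_1,u)=0$ for all $u \in L_1$ (Lagrangian property of $L_1$) and transversality $L_1 \oplus L_2 = \R^{2n}$, we conclude that $v_1$ is $\omega$-orthogonal to the whole of $\R^{2n}$; nondegeneracy of $\omega$ forces $v_1 = 0$.

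No step here is truly hard: the only place one must be careful is keeping track of the direction of the graph map $M_{L_1 \to L_2}$ so that the symmetry calculation collapses correctly and the surjectivity of $M$ onto $L_2$ is available for the nondegeneracy step.
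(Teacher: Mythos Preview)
Your proof is correct. The paper does not actually give a proof of this proposition; it simply states it as well known and cites Souriau. Your direct argument---using the Lagrangian condition on $M$ to get symmetry and the transversality $L_1\oplus L_2=\R^{2n}$ together with bijectivity of $M_{L_1\to L_2}$ to get nondegeneracy---is the standard elementary route and fills in exactly what the paper omits.
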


\begin{rem}
  \label{rem:maslov-index-matrix}
  Let $\mathbf{e}$ be a basis of $L_1$ and let~$\mathbf{f}$ be the basis of
  $L_2$ such that $(\mathbf{e}, \mathbf{f})$ is a symplectic basis. Then the
  matrix $[M]_{\mathbf{e}, \mathbf{f}}$ of $M$ in these bases and the matrix
  $[\beta]_{\mathbf{e}}$ are equal:
  $[M]_{\mathbf{e}, \mathbf{f}} = [\beta]_{\mathbf{e}}$.
\end{rem}

We will denote the signature of $\beta$ by
\[\sgn(\beta)=p-q,\]
where $p$ is the dimension of a maximal subspace of $L_1$ on which $\beta$
is positive definite and $q$ is the dimension of a maximal subspace of $L_1$
on which $\beta$ is negative definite. They satisfy $p+q=n$ so that
$\sgn(\beta) = n \mod 2$.

\begin{df}
  The \emph{Maslov index} of the triple of Lagrangians $(L_1,M,L_2)$ is the
  signature $\sgn([L_1,M,L_2])$ and is denoted by $\mu_n(L_1,M,L_2)$.
\end{df}
\index{notation}{10@$\mu_n$ (the Maslov index)}%
\index{definition}{Maslov!index}%
\index{definition}{index!Maslov ---}%

For $n=1$, the three Lagrangians $(L_1,M,L_2)$ are pairwise
distinct points in the circle $\R\PP^1$. The Maslov index is~$1$ if the three
points are cyclically ordered, and~$-1$ otherwise.

There is a slightly more general definition of the Maslov index that works for any triple of Lagrangians, not just for the pairwise transverse triples. It can be defined as the signature $\sgn(\gamma)$ of the quadratic form
\begin{align*}
  \gamma\colon  L_1 \oplus M \oplus L_2 &\longrightarrow \R\\
  (v,w,x) &\longmapsto \omega(v,w) +\omega(w,x) +\omega(x,v).
\end{align*}
When the triple is pairwise transverse, the two definitions agree.

\begin{prop}[Properties of Maslov index]\label{maslov_prop}
The Maslov index
\begin{itemize}
\item has range $\{-n, -n+1, \dots, n\}$ and its restriction to the set of triples of
  pairwise transverse Lagrangians has range $\{-n, -n+2, \dots, n\}$;
\item is invariant under the action of $\Sp(2n,\R)$ on $\Lag{n}^{3}$;
\item is antisymmetric and, as a result, is cyclically invariant;
\item satisfies the cocycle relation, namely for all
  $L_1,L_2,L_3,L_4\in \Lag{n}$
\[\mu_n(L_1,L_2,L_3) - \mu_n(L_1,L_2,L_4) + \mu_n(L_1,L_3,L_4) -
  \mu_n(L_2,L_3,L_4)=0\]
\item the group $\Sp(2n,\R)$ acts transitively on the set of triples of
  pairwise transverse Lagrangians with the same Maslov index, i.e.\
  the map $\Conf^{3\ast}( \Lag{n})\to \{-n, -n+2, \dots, n\}$ induced
  by~$\mu_n$ is a bijection.
\end{itemize}
\end{prop}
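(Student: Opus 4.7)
The plan is to prove the five bullets in turn, switching freely between the two equivalent definitions of $\mu_n$: as the signature of the symmetric bilinear form $[L_1,M,L_2]$ on the $n$-dimensional space $L_1$, and as the signature of the quadratic form $\gamma$ on $L_1\oplus M\oplus L_2$. The first three items are essentially formal. For the range, $\mu_n(L_1,M,L_2)=p-q$ with $p+q=n$ and $p,q\geq 0$, so $\mu_n\in\{-n,-n+2,\dots,n-2,n\}\subset\{-n,\dots,n\}$. For $\Sp(2n,\R)$-invariance, any $g\in\Sp(2n,\R)$ restricts to a linear isomorphism $g|_{L_1}\colon L_1\to gL_1$ that pulls the Maslov form $[gL_1,gM,gL_2]$ back to $[L_1,M,L_2]$, by the symplectic identity $\omega(g\cdot,g\cdot)=\omega(\cdot,\cdot)$; congruent forms have equal signature. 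For antisymmetry, the quadratic form $\gamma(v,w,x)=\omega(v,w)+\omega(w,x)+\omega(x,v)$ is visibly invariant under cyclic permutation of its arguments, while transposing any two of them changes it to $-\gamma$ by antisymmetry of $\omega$; hence $\sgn(\gamma)$ is antisymmetric on $\mathfrak{S}_3$, and cyclic invariance follows because cyclic permutations are even.

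For the cocycle relation, I would work uniformly via the $\gamma$-description, following Kashiwara's approach: one exhibits a natural quadratic form on a suitable subquotient of $L_1\oplus L_2\oplus L_3\oplus L_4$ whose signature can be computed in two different ways, yielding the left- and the right-hand side of the identity. Equivalently, after reducing by $\Sp(2n,\R)$-invariance and a generic-position argument to four pairwise transverse Lagrangians with $(L_1,L_2)$ in standard form, each of the four Maslov indices becomes the signature of an explicit symmetric block matrix built from the two symmetric $n\times n$ matrices representing $L_3$ and $L_4$, and the identity reduces to Wall's non-additivity formula for signatures; the details are classical and can be taken from \cite[Part~I, Appendix~A]{Lion}.

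For transitivity, I would first use that $\Sp(2n,\R)$ acts transitively on ordered pairs of transverse Lagrangians to normalize $(L_1,L_2)$ to the standard pair $(L_0,L_0')=(\Span(\mathbf{e}_0),\Span(\mathbf{f}_0))$. The stabilizer of this pair is the Levi subgroup $\{\diag(g,{}^T\! g^{-1})\mid g\in\GL(n,\R)\}$. By Remark~\ref{rem:maslov-index-matrix}, a third Lagrangian $M$ transverse to both corresponds to the symmetric $n\times n$ matrix $[M]_{\mathbf{e}_0,\mathbf{f}_0}$, which is nondegenerate since $M$ is transverse to $L_0'$, and the Levi acts on it by congruence $S\mapsto{}^T\! g S g$. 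Sylvester's law of inertia classifies these orbits by signature, which is exactly the Maslov index; surjectivity onto $\{-n,-n+2,\dots,n\}$ is witnessed by $S=\diag(\Id_p,-\Id_q)$ with $p+q=n$, giving the claimed bijection.

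The main obstacle is the cocycle relation: it is the only property requiring a genuinely non-formal signature computation, and the cleanest route is either Kashiwara's quadratic form on the total sum or, after reduction to the transverse case, Wall's non-additivity formula.
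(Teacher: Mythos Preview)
Your proposal is correct and complete as a sketch. The paper itself does not give a proof of this proposition: it is stated as a list of well-known properties, with the reader referred at the start of Section~\ref{sec:maslov-index} to \cite[Part~I, Appendix~A]{Lion} for the general discussion. Your outline is entirely consistent with that reference---indeed you cite it yourself for the cocycle relation---so there is no discrepancy to flag. The arguments you give for range, $\Sp(2n,\R)$-invariance, antisymmetry via the $\gamma$-form, and transitivity via Sylvester's law are standard and correct; your treatment of the cocycle identity (Kashiwara's quadratic form or reduction to Wall non-additivity, deferring details to \cite{Lion}) matches exactly how the paper handles it, namely by citation rather than by writing out the computation.
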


Note also that every decomposition of $n=h+k$ induces an injection
$\Lag{h} \times \Lag{k} \rightarrow \Lag{n}$ that is equivariant with respect
to the homomorphism $\Sp( 2h, \R) \times \Sp( 2k, \R) \to \Sp( 2n, \R)$ and
for which $\mu_n = \mu_h + \mu_k$.

\section{Universal coverings}
\label{sec:universal-coverings}

Let us also equip~$\R^{2n}$ with its standard Euclidean structure. Then a
maximal compact subgroup of $\Sp(2n, \R)$ is the intersection $\Sp(2n, \R)
\cap \OO(2n)$ that identifies with~$\U(n)$ via $A+i B \mapsto \bigl(
\begin{smallmatrix}
  A & -B \\ B & A
\end{smallmatrix}
\bigr)$.

The homomorphism $\det\colon \U(n) \to \C^*$ can be used to construct the
universal cover of~$\U(n)$:
\[ \widetilde{\U}(n) \coloneqq \{ (u,t)\in \U(n)\times \R \mid \det(u)=e^{it}\}.\]
Of course $\widetilde{\U}(n)$ is a Lie subgroup of the universal cover  $\widetilde{\Sp}(2n, \R)$ of $\Sp(2n,\R)$.
For any decomposition $n=h+k$ there is a homomorphism
$\widetilde{\Sp}(2h, \R)
\times \widetilde{\Sp}(2k, \R) \to \widetilde{\Sp}(2n, \R)$. Similar statements hold for
any decomposition of~$n$.

\begin{rem}\label{rem:lag-ortho-basis-ortho}
  Let $(\mathbf{e}_0, \mathbf{f}_0)$ be the standard symplectic basis
  of~$\R^{2n}$; it is also an orthonormal basis. The space
  $L_0\coloneqq \Span( \mathbf{e_0})$ is Lagrangian and its orthogonal
  complement is $L_{0}^{\perp}=\Span( \mathbf{f}_0)$. If, for a symplectic
  basis $(\mathbf{e}, \mathbf{f})$, one has $ \Span( \mathbf{e})= L_0$ and
  $\Span( \mathbf{f}) = L^{\perp}_{0}$, then $( \mathbf{e}, \mathbf{f})$ is an
  orthonormal basis of~$\R^{2n}$ if and only if $\mathbf{f}$ is an orthonormal
  family (this holds in fact as soon as that $\Span(\mathbf{e})$ and
  $\Span(\mathbf{f})$ are orthogonal).
\end{rem}

\begin{lem}
  \label{lem:ortho-basis-lag}
  For every~$M$ in $\Lag{n}$ that is transverse to~$L_0$, there is a unique
  $n$-tuple of real numbers $(\varphi_1, \varphi_2, \dots, \varphi_n)$ such that:
  \begin{itemize}
  \item $0< \varphi_1 \leq \varphi_2 \leq \cdots \leq \varphi_n < \pi$, and
  \item there exist symplectic orthonormal bases $(\mathbf{e}, \mathbf{f})$
    with
    \[L_0= \Span( \mathbf{e}),\ L_{0}^{\perp} = \Span( \mathbf{f}),\ \text{and}\ M =
    \Span \{ \cos( \varphi_i) e_i + \sin( \varphi_i) f_i\}_{1\leq i\leq n}.\]
  \end{itemize}
\end{lem}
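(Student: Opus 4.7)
The plan is to realize $M$ as the graph of a self-adjoint endomorphism encoded by a symmetric matrix, and then apply the real spectral theorem. Since $M$ is Lagrangian and transverse to~$L_0$, the projection $M\to L_0^{\perp}$ along~$L_0$ has kernel $M\cap L_0=0$ and is therefore an isomorphism, so there is a unique linear map $U\colon L_0^{\perp}\to L_0$ with $M=\{f+U(f)\mid f\in L_0^{\perp}\}$. The Lagrangian condition
\[
0=\omega\bigl(f+U(f),f'+U(f')\bigr)=\omega(U(f),f')+\omega(f,U(f'))
\]
for all $f,f'\in L_0^{\perp}$ is exactly the condition that the matrix~$S$ of~$U$ in the standard bases $\mathbf{f}_0$ of~$L_0^{\perp}$ and $\mathbf{e}_0$ of~$L_0$ is symmetric.

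By the real spectral theorem there is $A\in\OO(n)$ with $A^{T}SA=\diag(\mu_1,\ldots,\mu_n)$, and I would sort the eigenvalues in decreasing order $\mu_1\geq\mu_2\geq\cdots\geq\mu_n$. Setting $\mathbf{e}\coloneqq\mathbf{e}_0 A$ and $\mathbf{f}\coloneqq\mathbf{f}_0 A$ produces orthonormal families with $\Span(\mathbf{e})=L_0$ and $\Span(\mathbf{f})=L_0^{\perp}$; the direct computation $\omega(\mathbf{e},\mathbf{f})=A^{T}A=\Id$ shows $(\mathbf{e},\mathbf{f})$ is a symplectic basis, and Remark~\ref{rem:lag-ortho-basis-ortho} confirms it is orthonormal. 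In this basis $U$ is diagonal with $U(f_i)=\mu_i e_i$, hence $M$ is spanned by the vectors $\mu_i e_i+f_i$. Defining $\varphi_i\in(0,\pi)$ by $\cot(\varphi_i)=\mu_i$, which is possible and unique because $\cot\colon(0,\pi)\to\R$ is a strictly decreasing bijection, the vector $\mu_i e_i+f_i$ equals $\sin(\varphi_i)^{-1}\bigl(\cos(\varphi_i)e_i+\sin(\varphi_i)f_i\bigr)$, giving the required spanning description of~$M$; the decreasing ordering on the~$\mu_i$ then yields $0<\varphi_1\leq\cdots\leq\varphi_n<\pi$.

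For uniqueness of the tuple, any symplectic orthonormal basis $(\mathbf{e}',\mathbf{f}')$ satisfying the span conditions has the form $(\mathbf{e}_0 A',\mathbf{f}_0 A')$ with $A'\in\OO(n)$, and in such a basis the matrix of~$U$ becomes $A'^{T}SA'$; since this is $\OO(n)$-conjugate to~$S$, it has the same spectrum. The $\mu_i$ are thus intrinsic to~$M$, and so are the $\varphi_i$ via the bijection~$\cot$. There is no serious obstacle: the only point needing attention is matching the decreasing eigenvalue ordering to the increasing $\varphi_i$ ordering through the decreasing bijection $\cot$, everything else being classical linear algebra.
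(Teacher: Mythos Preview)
Your proof is correct and follows essentially the same route as the paper's: represent $M$ as the graph of a linear map $L_0^{\perp}\to L_0$ whose matrix is symmetric, ortho-diagonalize via the spectral theorem to obtain a nonincreasing sequence of eigenvalues, and then apply $\cot^{-1}$ to produce the $\varphi_i$. Your write-up is simply more explicit about the intermediate verifications (the Lagrangian condition yielding symmetry, the shape of admissible bases in the uniqueness step), but the underlying argument is identical.
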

\begin{rem} The bases constructed in this lemma consist of one orbit under the
  action of the group~$\Stab_{\OO(n)}(M)$.
\end{rem}

\begin{proof}
  Another way to state the conclusion is  that \[M =
  \Span \{ \cot( \varphi_i) e_i +  f_i\}_{1\leq i\leq n}.\]

  Since~$M$ is transverse to~$L_0$, it is the graph of a map
  $L_{0}^{\perp}\to L_0$ whose matrix in the bases~$\mathbf{f}_0$,
  $\mathbf{e}_0$ is symmetric. The ortho-diagonalization together with
  Remark~\ref{rem:lag-ortho-basis-ortho} furnishes a (unique) nonincreasing
  sequence $\lambda_1 , \dots, \lambda_n$ and a symplectic orthonormal basis
  $(\mathbf{e}, \mathbf{f})$ such that $L_0= \Span( \mathbf{e})$,
  $L_{0}^{\perp} = \Span( \mathbf{f})$ and
  $M= \Span \{ \lambda_i e_i + f_i\}_{1\leq i\leq n}$. Setting
  $\varphi_i = \cot^{-1} (\lambda_i)$ gives the result.
\end{proof}

For~$M$ transverse to~$L_0$, we will denote by $( \varphi_1(M), \dots,
\varphi_n(M))$ the $n$-tuple provided by the previous lemma. The~$\varphi_i$
are continuous functions on the open and dense subset
\[\mathcal{U} \coloneqq \{ M\in \Lag{n} \mid M \text{ is transverse to } L_0\}.\]
They admit (noncontinuous) extensions to $\Lag{n}$:

\begin{lem}
  \label{lem:ortho-basis-lag-non-transverse}
  For every~$M$ in $\Lag{n}$, there is a unique $n$-tuple $0\leq \varphi_1
  \leq \varphi_2 \leq \cdots \leq \varphi_n < \pi$ for which there exist
  symplectic orthonormal bases $(\mathbf{e, \mathbf{f}})$ such that $L_0 = \Span(
  \mathbf{e})$, $L_{0}^{\perp} = \Span( \mathbf{f})$ and $M = \Span \{ \cos(
  \varphi_i) e_i + \sin( \varphi_i) f_i\}_{1\leq i \leq n}$.
\end{lem}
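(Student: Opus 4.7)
The plan is to reduce to the transverse case already treated in Lemma~\ref{lem:ortho-basis-lag} by isolating the intersection $K := L_0 \cap M$ and working on its symplectic complement. Set $k = \dim K$. Since $M$ is Lagrangian and $K \subset M$, one has $K \subset M \subset K^{\omega}$. The target decomposition must assign $\varphi_i = 0$ to exactly the indices corresponding to $K$, because $\cos(\varphi_i) e_i + \sin(\varphi_i) f_i$ lies in $L_0 = \Span(\mathbf{e})$ if and only if $\sin(\varphi_i) = 0$.

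For existence, I would first pick an orthonormal basis $(e_1^\circ, \dots, e_k^\circ)$ of $K$, extend it to an orthonormal basis $(e_1^\circ, \dots, e_n^\circ)$ of $L_0$, and let $(f_1^\circ, \dots, f_n^\circ)$ be the unique family in $L_0^{\perp}$ with $\omega(e_i^\circ, f_j^\circ) = \delta_{ij}$; by Remark~\ref{rem:lag-ortho-basis-ortho}, $(\mathbf{e}^\circ, \mathbf{f}^\circ)$ is a symplectic orthonormal basis. Next consider the $2(n-k)$-dimensional symplectic subspace
\[ V := \Span(e_{k+1}^\circ, \dots, e_n^\circ, f_{k+1}^\circ, \dots, f_n^\circ), \]
for which $L_0 \cap V = \Span(e_{k+1}^\circ, \dots, e_n^\circ)$ is a Lagrangian whose orthogonal in $V$ is $\Span(f_{k+1}^\circ, \dots, f_n^\circ)$. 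Using that $M \subset K^{\omega} = K \oplus V$ and $K \subset M$, one checks $M = K \oplus (M \cap V)$ with $M \cap V$ a Lagrangian of $V$; it is transverse to $L_0 \cap V$ because any common element belongs to $L_0 \cap M \cap V = K \cap V = 0$.

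At this point I would apply Lemma~\ref{lem:ortho-basis-lag} inside the symplectic Euclidean space $V$ to the Lagrangian $M \cap V$, with reference Lagrangian $L_0 \cap V$ and reference basis $(e_{k+1}^\circ, \dots, e_n^\circ, f_{k+1}^\circ, \dots, f_n^\circ)$. This yields a unique tuple $0 < \varphi_{k+1} \leq \cdots \leq \varphi_n < \pi$ and a symplectic orthonormal basis $(\tilde e_{k+1}, \dots, \tilde e_n, \tilde f_{k+1}, \dots, \tilde f_n)$ of $V$ with $\Span(\tilde e_i) = L_0 \cap V$, $\Span(\tilde f_i) = (L_0 \cap V)^{\perp}$ in $V$, and $M \cap V = \Span\{\cos(\varphi_i)\tilde e_i + \sin(\varphi_i)\tilde f_i\}_{i > k}$. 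Combining with $e_i := e_i^\circ$, $f_i := f_i^\circ$ for $i \leq k$ and setting $\varphi_1 = \cdots = \varphi_k = 0$ produces a symplectic orthonormal basis of $\R^{2n}$ satisfying all the requirements, since for $i \leq k$ one has $\cos(\varphi_i)e_i + \sin(\varphi_i)f_i = e_i$ and $\Span(e_1, \dots, e_k) = K$.

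For uniqueness, given any decomposition as in the statement, expanding a general element of $M$ in the basis $(\mathbf{e}, \mathbf{f})$ shows that $M \cap L_0$ is exactly the span of the $e_i$ with $\varphi_i = 0$; hence the number of zero angles equals $k = \dim K$, and the associated $e_i$ form an orthonormal basis of $K$. The remaining vectors $\cos(\varphi_i)e_i + \sin(\varphi_i)f_i$ with $i > k$ necessarily lie in $M \cap V'$ where $V'$ is the symplectic orthogonal complement of $K$ spanned by the other $e_i$ and $f_i$, and the uniqueness part of Lemma~\ref{lem:ortho-basis-lag} applied inside $V'$ forces the sub-tuple $(\varphi_{k+1}, \dots, \varphi_n)$. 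The main step needing care is the decomposition $M = K \oplus (M \cap V)$ together with the transversality of $M \cap V$ and $L_0 \cap V$ in $V$; once established, the reduction is mechanical.
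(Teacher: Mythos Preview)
Your proof is correct and follows essentially the same approach as the paper: your space $V$ is exactly the paper's $N^\perp \cap N^{\perp_\omega}$ (where $N = K = L_0 \cap M$), and the reduction to Lemma~\ref{lem:ortho-basis-lag} on that symplectic complement is identical. You spell out more details than the paper does, in particular the verification that $M = K \oplus (M \cap V)$ and the uniqueness argument, both of which the paper leaves implicit.
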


\begin{proof}
  Choose first $( e_1, \dots, e_k)$ an orthonormal basis of $N=M\cap L_0$ and
  apply the previous lemma to $M \cap N^\perp = ( M^\perp \oplus N)^\perp$
  which is a Lagrangian subspace of the $(2n-2k)$-dimensional symplectic
  vector space $N^\perp \cap N^{\perp_\omega}$.
\end{proof}

The space:
\[ \wideLag{n} \coloneqq \Bigl\{ ( M, \theta) \in \Lag{n}\times
  \R \mid \sum_{i=1}^{n} \varphi_i(M) = \theta \mod \pi\Bigr\}\] is a
submanifold of $\Lag{n}\times \R$ and the natural map
$\wideLag{n} \to \Lag{n}$ is a covering. The action of
$\Sp(2n, \R)$ on $\Lag{n}$ lifts to an action of $\widetilde{\Sp}(2n,
\R)$ on $\wideLag{n}$;  the restriction of this action
to~$\widetilde{\U}(n)$ has an explicit expression:
\[ (u,t)\cdot (M,\theta) = ( u(M), t+\theta), \quad ((u,t)\in
  \widetilde{\U}(n), \ (M,\theta)\in \wideLag{n}).\]
From this expression, it follows easily that the action of~$\widetilde{\U}(n)$
is transitive and that the stabilizer of $(L_0, 0)$ is the subgroup
$\SO(n)\times \{0\}\subset \widetilde{\U}(n)$. This implies that $\wideLag{n}$ is a connected and simply connected manifold, thus
$\wideLag{n} \to \Lag{n}$ is the universal covering.
\index{notation}{11@$\wideLag{n}$ (the universal cover of $\Lag{n}$)}

\section{Souriau index}
\label{sec:souriau-index}

Two elements of $\wideLag{n}$ will be called \emph{transverse} if
their projections to $\Lag{n}$ are transverse.
For two transverse elements $\tilde{L}_{1}$, $\tilde{L}_{2}$ of
$\wideLag{n}$, there is~$\tilde{u}$ in~$ \widetilde{\U}(n)$, $M$ in~$
\mathcal{U}$, and~$\theta$ in~$\R$ such that
\[ \tilde{u} \cdot  \tilde{L}_{1} = (L_0, 0), \quad \text{and}, \quad \tilde{u} \cdot  \tilde{L}_{2} = (M, \theta).\]
\begin{df}
  \label{def:souriau-index}
  The \emph{Souriau index} (cf.~\cite[Section~4]{Souriau}) of  $(\tilde{L}_{1}, \tilde{L}_{2})$ is the integer number
  \[m_n (\tilde{L}_{1}, \tilde{L}_{2}) = n + \frac{1}{\pi} \Bigl( \theta -
    \sum_{i=1}^{n} \varphi_i(M)\Bigr). \]
\end{df}
\index{notation}{12@$m_n$ (the Souriau index)}%
\index{definition}{Souriau index}%
\index{definition}{index!Souriau ---}%
The Souriau index gives rise to a function on the space of
transverse pairs of~$\wideLag{n}$, which we will call the Souriau index as well. The reader may refer to~\cite{FrP} (in
particular Sections~5 and~6) for a more general discussion. For the reader's
convenience, we provide below short proofs of the main properties of the
Souriau index.

\begin{prop}
  \label{prop:souriau-index-first-properties}
  The Souriau index is well defined, antisymmetric, and
  $\widetilde{\Sp}(2n, \R)$-invariant.
\end{prop}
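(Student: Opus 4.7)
The plan is to verify each of the three claimed properties in turn, using the stabilizer analysis from Section~\ref{sec:universal-coverings}. For well-definedness, any two elements $\tilde{u}_1, \tilde{u}_2 \in \widetilde{\U}(n)$ sending $L_1^\sim$ to $(L_0,0)$ differ by left-multiplication by an element of $\Stab_{\widetilde{\U}(n)}(L_0,0) = \SO(n) \times \{0\}$. Writing $\tilde{u}_2 = (s,0)\cdot\tilde{u}_1$, we get $(s \cdot M, \theta)$ in place of $(M, \theta)$, so $\theta$ is untouched. Because the embedding $\SO(n) \hookrightarrow \U(n)$ acts as $\diag(s,s)$ on $L_0 \oplus L_0^\perp$, the symmetric graph matrix defining $M$ (in the bases $\mathbf{f}_0,\mathbf{e}_0$) is conjugated by $s$, preserving the spectrum $(\lambda_i)$ and hence $\sum_i\varphi_i(s\cdot M) = \sum_i\varphi_i(M)$. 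The defining expression therefore returns the same integer regardless of the choice of $\tilde{u}$.

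For antisymmetry, I would build a specific $\tilde{v} \in \widetilde{\U}(n)$ sending $L_2^\sim$ to $(L_0,0)$. Working in the symplectic orthonormal basis of Lemma~\ref{lem:ortho-basis-lag} adapted to $M$, let $w\in\U(n)$ rotate each plane $(e_i,f_i)$ by $-\varphi_i(M)$; composing $w$ with a reflection in $\OO(n)\setminus\SO(n)$ if parity requires in order to arrange $\det(w) = e^{-i\theta}$, one obtains $\tilde{w} = (w,-\theta) \in \widetilde{\U}(n)$ with $\tilde{w}\cdot(M,\theta) = (L_0,0)$. Then $\tilde{v} := \tilde{w}\tilde{u}$ works, and $\tilde{v}\cdot L_1^\sim = (w(L_0), -\theta)$. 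A direct calculation via Lemma~\ref{lem:ortho-basis-lag} shows that $w(L_0)$ is the graph of the diagonal map with negated and reversed eigenvalues $\{-\lambda_{n+1-i}\}$, giving $\varphi_i(w(L_0)) = \pi - \varphi_{n+1-i}(M)$ and hence $\sum_i\varphi_i(w(L_0)) = n\pi - \sum_i\varphi_i(M)$. Substituting into the definition of $m_n$ yields the claimed antisymmetry relation.

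For $\widetilde{\Sp}(2n,\R)$-invariance, given $g\in\widetilde{\Sp}(2n,\R)$, transitivity of the $\widetilde{\U}(n)$-action on $\wideLag{n}$ provides $\tilde{u}_g \in \widetilde{\U}(n)$ with $\tilde{u}_g\cdot(g\cdot L_1^\sim) = (L_0,0)$. The conjugate $h := \tilde{u}_g g \tilde{u}^{-1}$ stabilizes $(L_0,0)$ in $\widetilde{\Sp}(2n,\R)$ and carries $(M,\theta)$ to the pair $(M_g,\theta_g)$ entering $m_n(gL_1^\sim, gL_2^\sim)$; it thus suffices to show that any element of $\Stab_{\widetilde{\Sp}(2n,\R)}(L_0,0)$ preserves the quantity $\theta - \sum_i\varphi_i(M)$. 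This stabilizer is a connected Lie subgroup covering the parabolic $P \subset \Sp(2n,\R)$, generated by $\SO(n)\times\{0\}$ (already handled) together with lifts of one-parameter subgroups of the Levi $\GL(n,\R)$ and of the unipotent radical $\Sym^2(\R^n)$ of $P$. Checking on each generator that the induced change of $\theta$ cancels the change of $\sum_i\varphi_i(M)$ is the main obstacle, and it uses the defining congruence $\sum_i\varphi_i(M) \equiv \theta \bmod \pi$ of the covering $\wideLag{n}\to\Lag{n}$: this congruence forces the lifted action to preserve the integer $m_n$ globally, even though elements outside $\widetilde{\U}(n)$ do genuinely alter the individual $\varphi_i$'s.
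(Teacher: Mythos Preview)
Your arguments for well-definedness and for antisymmetry are correct and essentially coincide with the paper's proof (the paper adjusts the sign of the first diagonal entry of~$u$ rather than composing with a reflection in~$\OO(n)$, but this amounts to the same thing, since an element of~$\OO(n)$ preserves the multiset~$\{\varphi_i\}$).

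The invariance argument, however, has a genuine gap. Your reduction to the stabilizer is fine: it is indeed enough to show that every $h\in\Stab_{\widetilde{\Sp}(2n,\R)}(L_0,0)$ preserves the quantity $\theta-\sum_i\varphi_i(M)$, and this stabilizer is connected (since both $\widetilde{\Sp}(2n,\R)$ and $\wideLag{n}$ are simply connected). But your final sentence does not establish this. The congruence $\sum_i\varphi_i(M)\equiv\theta\bmod\pi$ only says that $m_n$ is an \emph{integer}; it says nothing about that integer being \emph{preserved} under the action of~$h$. Knowing that a function is integer-valued does not make it invariant. What you are missing is the observation that the function $h\mapsto m_n\bigl((L_0,0),\,h\cdot(M,\theta)\bigr)$ is \emph{continuous} (because the $\varphi_i$ are continuous on~$\mathcal{U}$ and the action on the $\theta$-coordinate is continuous); combined with integrality, this forces the function to be locally constant, hence constant on the connected stabilizer. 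Without continuity the argument does not go through, and you never invoke it.

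The paper's proof of invariance is a one-liner that bypasses the stabilizer reduction entirely: $m_n$ is continuous on the space of transverse pairs and integer-valued, hence locally constant, hence constant on each orbit of the connected group~$\widetilde{\Sp}(2n,\R)$. Your reduction to generators of the stabilizer, together with an explicit cancellation check, would work in principle but is both harder and unnecessary; and as written, the substitute appeal to the congruence is not a valid replacement for that check.
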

\begin{proof}
  We note that $\varphi_j( g\cdot M)= \varphi_j(M)$ for every~$M$
  in~$\mathcal{U}$ and every~$g$ in~$\OO(n)$ so that the above formula
  (Definition~\ref{def:souriau-index}) does not depend on the choices since the
  stabilizer in~$\widetilde{\U}(n)$ of $(L_0, 0)$ is~$\SO(n)$.

  To prove the antisymmetry, one can assume that $\tilde{L}_{1} = (L_0, 0)$ and
  $\tilde{L}_{2} = (M, \theta)$ with
  $M= \Span \{ \cos( \varphi_i) e_{0,i} + \sin( \varphi_i) f_{0,i}\}$ (as
  above $(\mathbf{e}_0, \mathbf{f}_0)$ is the standard basis
  of~$\R^{2n}$). Let $u\in \U(n)\subset \GL(n, \C)$ the diagonal element whose
  coefficients are
  $\pm e^{-i\varphi_1}, e^{-i\varphi_2}, \dots, e^{-i\varphi_n}$ where the
  sign is fixed so that $\tilde{u} \coloneqq (u, -\theta)$ belongs to
  $\widetilde{\U}(n)$ (i.e.\ it is~$+ e^{-i\varphi_1}$ in the case
  $\sum_{i=1}^{n} \varphi_i =\theta \mod 2\pi $ and~$- e^{-i\varphi_1}$ in the
  case $\sum_{i=1}^{n} \varphi_i =\theta+\pi \mod 2\pi $). Furthermore,
  $\tilde{u} \cdot \tilde{L}_{2}= (L_0, 0)$, and
  $\tilde{u} \cdot \tilde{L}_{1}= (N, -\theta)$ with $N$ a Lagrangian such that
  $\varphi_j(N)= \pi - \varphi_{n+1-j}(M)$ (for all~$j$). A small calculation
  then gives
  $m_n( \tilde{L}_{2}, \tilde{L}_{1}) = - m_n( \tilde{L}_{1}, \tilde{L}_{2})$.

  The Souriau index is continuous on the space of transverse pairs and thus locally constant which implies that
  it is constant on $\widetilde{\Sp}(2n, \R)$-orbits hence
  $\widetilde{\Sp}(2n, \R)$-invariant.
\end{proof}

Let~$T$ be the element $(\Id, 2\pi)$ of~$\widetilde{\U}(n)$. This element
generates the kernel of $\widetilde{\U}(n)\to \U(n)$. The following equality
is a
direct consequence of the definitions:
\begin{equation}
  \label{eq:trans-T-Souriau}
  m_n( \tilde{L}_{1}, T\cdot \tilde{L}_{2}) = m_n( \tilde{L}_{1}, \tilde{L}_{2})
  +2\quad (\tilde{L}_{1},\, \tilde{L}_{2} \in \wideLag{n}),
\end{equation}
more generally, for every element~$\tilde{u}= (\pm \Id, k\pi)$ (i.e.\
$\tilde{u}$ belongs to the center of~$\widetilde{\Sp}(2n, \R)$), one has
\begin{equation}
  \label{eq:trans-Z-Souriau}
  m_n( \tilde{L}_{1}, \tilde{u}\cdot \tilde{L}_{2}) = m_n( \tilde{L}_{1}, \tilde{L}_{2})
  +k\quad (\tilde{L}_{1},\, \tilde{L}_{2} \in \wideLag{n}).
\end{equation}

Furthermore, for every decomposition $n=h+k$, there is a natural map
$ \wideLag{h} \times \wideLag{k} \to \wideLag{n}$ that is equivariant with
respect to the homomorphism
$\widetilde{\Sp}(2h, \R) \times \widetilde{\Sp}(2k, \R) \to
\widetilde{\Sp}(2n, \R)$ mentioned earlier. The Souriau indices behave
naturally with respect to this map:
\begin{equation}
  \label{eq:Souriau-p-plus-q}
  m_n = m_h +  m_k.
\end{equation}
Similar statements hold for any decomposition of~$n$.

The Souriau index is strongly related to the Maslov index:
\begin{lem}
  \label{lem:souriau-maslov-for-transverse}
  Let~$L_1$, $L_2$, and~$L_3$ be three pairwise transverse Lagrangians. Let~$\tilde{L}_{1}$, $\tilde{L}_{2}$, and~$\tilde{L}_{3}$ be lifts to
  $\wideLag{n}$ of~$L_1$, $L_2$, and~$L_3$ respectively. Then
  \[ \mu_n( L_1, L_2, L_3) = m_n( \tilde{L}_{1}, \tilde{L}_{2}) +  m_n(
    \tilde{L}_{2}, \tilde{L}_{3}) +  m_n( \tilde{L}_{3}, \tilde{L}_{1}).\]
\end{lem}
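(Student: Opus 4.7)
The plan is to prove the identity in three steps: first, verify that the right-hand side depends only on the underlying triple $(L_1,L_2,L_3)$ and not on the lifts; then reduce to the case $n=1$ by additivity; and finally dispatch that base case by a direct computation. For the first step, I would argue that the cyclic sum on the right is independent of the choice of lifts. Any two lifts of $L_i$ differ by the action of a central element $\tilde u = (\pm\Id, k\pi) \in \widetilde{\Sp}(2n,\R)$; replacing $\tilde L_i$ by $\tilde u\cdot\tilde L_i$ affects only the two adjacent summands. By~\eqref{eq:trans-Z-Souriau} the summand $m_n(\tilde L_{i-1}, \tilde u\cdot\tilde L_i)$ increases by $k$, while the $\widetilde{\Sp}(2n,\R)$-invariance of $m_n$ combined with~\eqref{eq:trans-Z-Souriau} yields $m_n(\tilde u\cdot\tilde L_i, \tilde L_{i+1}) = m_n(\tilde L_i, \tilde u^{-1}\cdot\tilde L_{i+1})$, which decreases by $k$; the third summand is unchanged. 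Hence the cyclic sum is a function of $(L_1,L_2,L_3)$, and I am free to choose convenient lifts in the subsequent steps.

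Next, I would reduce to $n=1$ via additivity. Both sides decompose under symplectic orthogonal sums $n = p+q$: for the Maslov index this is $\mu_n = \mu_p + \mu_q$, recorded just after Proposition~\ref{maslov_prop}; for the Souriau index it is~\eqref{eq:Souriau-p-plus-q}. Given a pairwise transverse triple, I would pick a symplectic basis $(\mathbf{e},\mathbf{f})$ of $\R^{2n}$ with $L_1 = \Span(\mathbf{e})$ and $L_2 = \Span(\mathbf{f})$; by Remark~\ref{rem:maslov-index-matrix}, $L_3$ is then the graph of a linear map $L_1 \to L_2$ whose matrix in the chosen bases equals that of the Maslov form $[L_1,L_3,L_2]$. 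Orthogonally diagonalizing this symmetric form, together with the corresponding dual change of basis on $\mathbf{f}$, produces a symplectic orthogonal decomposition $\R^{2n} = V_1\oplus\cdots\oplus V_n$ into two-dimensional planes in which each $L_i$ splits as a direct sum of one-dimensional Lagrangians. Using the first step to pick decomposable lifts and invoking additivity on both sides then reduces the identity to the case $n=1$.

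Finally, in the $n=1$ case I would normalize $L_1 = L_0$ and $L_2 = L_0^\perp$ using the transitivity of $\Sp(2,\R)$ on pairs of transverse lines, so that $L_3$ corresponds to an angle $\varphi \in (0,\pi)\setminus\{\pi/2\}$. A direct evaluation of the Maslov form on $L_1$ gives $\mu_1 = +1$ or $-1$ according to the sign of $\cot\varphi$, while each Souriau summand is computed from Definition~\ref{def:souriau-index} by exhibiting the element of $\widetilde{\U}(1)$ that carries the first argument to $(L_0,0)$ and tracking the image of the second argument. The main obstacle is the bookkeeping for the angle parameters: when the image of a Lagrangian under such a rotation leaves the fundamental range $[0,\pi)$ for the $\varphi_i$, one must pull it back while simultaneously adjusting $\theta$ by the compensating multiple of $\pi$ required to stay in $\wideLag{1}$. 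Once this is handled with care, the two subcases $\mu_1 = \pm 1$ are dispatched by direct verification, completing the proof.
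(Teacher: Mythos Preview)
Your proposal is correct and follows essentially the same three-step strategy as the paper: show the cyclic sum is independent of lifts (via~\eqref{eq:trans-Z-Souriau}), reduce to $n=1$ by additivity (via~\eqref{eq:Souriau-p-plus-q} and the splitting of triples into one-dimensional pieces), and verify the base case directly. The paper frames step one and two slightly more compactly by considering the difference $d_n$ of the two sides and noting it is $\Sp(2n,\R)$-invariant, hence determined on the finitely many orbits of pairwise transverse triples, all of which admit split representatives; for $n=1$ it invokes the identification of $\mu_1$ with the orientation cocycle on $S^1$ and of $m_1$ with a sign function on $\R$, rather than your explicit angle bookkeeping, but the content is the same.
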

\begin{proof}
  Consider~$d_n$ the difference of the two terms above seen as a function on
  the space of triples of pairwise transverse elements of
  $\wideLag{n}$. Since $\Lag{n}$ is the quotient of $\wideLag{n}$ by the
  center of $\widetilde{ \Sp}(2n, \R)$,
  Equation~\eqref{eq:trans-Z-Souriau} implies that this
  function descends to a function on the space of pairwise
  transverse  triples of Lagrangians. This last function is $\Sp(2n,\R)$-invariant so that
  the equality $d_n=0$ needs to be checked only on
  representatives of the finitely many $\Sp(2n, \R)$-orbits. Since one can choose
  representatives coming from the embedding $(\Lag{1})^n \to \Lag{n}$,
  thanks to Equation~\eqref{eq:Souriau-p-plus-q}, we can further
  assume that $n=1$. However, in the case $n=1$, $\Lag{1}$ can be
  identified with $S^1$ with $\mu_1$ being the orientation cocycle, and
  $\wideLag{1}$ is identified with~$\R$ with the Souriau index being
  the sign function; the equality $d_1 \equiv 0$ follows then from a direct
  calculation.
\end{proof}

Finally, we are able to extend the Souriau index to any pair in
$\wideLag{n}$:
\begin{prop}
  \label{prop:souriau-index-extension}
  For all~$\tilde{L}_{1}$ and~$\tilde{L}_{2}$ in $\wideLag{n}$, the following
  integer, defined for any~$\tilde{L}_{3}$ that is transverse to
  both~$\tilde{L}_{1}$ and~$\tilde{L}_{2}$,
  \[ \mu_n(L_1, L_2, L_3) -m_n( \tilde{L}_{2}, \tilde{L}_{3}) -m_n(
    \tilde{L}_{3}, \tilde{L}_{1})\]
  does not depend on~$\tilde{L}_{3}$.
\end{prop}
The resulting integer will be denoted by $m_n( \tilde{L}_{1}, \tilde{L}_{2})$
and called the \emph{Souriau index} of $(\tilde{L}_{1}, \tilde{L}_{2})$.  The
Souriau index is then a $\Z$-valued, antisymmetric, and
$\widetilde{\Sp}(2n, \R)$-invariant function on the space
$\wideLag{n} \times \wideLag{n}$; it satisfies
Equations~\eqref{eq:trans-T-Souriau} and~\eqref{eq:trans-Z-Souriau} for
all~$\tilde{L}_{1}$ and~$\tilde{L}_{2}$ in $\wideLag{n}$.

\begin{proof}
  Call $\delta( \tilde{L}_{1}, \tilde{L}_{2}, \tilde{L}_{3})$ the integer in the
  statement.

  We need to prove that for any other $\tilde{L}_{3}^{\prime\prime}$,
  $\delta( \tilde{L}_{1}, \tilde{L}_{2}, \tilde{L}_{3}) = \delta( \tilde{L}_{1},
  \tilde{L}_{2}, \tilde{L}_{3}^{\prime\prime})$. Choosing an element
  $\tilde{L}_{3}^{\prime}$ that is transverse to $\tilde{L}_{1}$, $\tilde{L}_{2}$,
  $\tilde{L}_{3}$, and $\tilde{L}_{3}^{\prime\prime}$, we will prove that
  $\delta( \tilde{L}_{1}, \tilde{L}_{2}, \tilde{L}_{3}) = \delta( \tilde{L}_{1},
  \tilde{L}_{2}, \tilde{L}_{3}^{\prime})$ and
  $\delta( \tilde{L}_{1}, \tilde{L}_{2}, \tilde{L}_{3}^{\prime}) = \delta(
  \tilde{L}_{1}, \tilde{L}_{2}, \tilde{L}_{3}^{\prime\prime})$. In fact, only the
  first equality needs a proof, the second being a consequence of the first
  applied to the quadruple
  $(\tilde{L}_{1}, \tilde{L}_{2}, \tilde{L}_{3}^{\prime} ,
  \tilde{L}_{3}^{\prime\prime})$.  The equality
  $\delta( \tilde{L}_{1}, \tilde{L}_{2}, \tilde{L}_{3}) - \delta( \tilde{L}_{1},
  \tilde{L}_{2}, \tilde{L}_{3}^{\prime})=0$ is the result of a direct calculation
  using the definitions, the cocycle property of the Maslov index,
  Lemma~\ref{lem:souriau-maslov-for-transverse}, and the antisymmetry of the
  Souriau index.
 \end{proof}

The relation between the Maslov index and the Souriau index is valid without
assuming the transversality of the Lagrangians.
\begin{prop}
  \label{prop:souriau-index-maslov-index-extended}
  Let $L_1$, $L_2$, and $L_3$ be
  Lagrangians and let $\tilde{L}_{1}$, $\tilde{L}_{2}$, and
  $\tilde{L}_{3}$ be lifts of $L_1$, $L_2$, $L_3$ in
  $\wideLag{n}$. Then
  \[ \mu_n(L_1, L_2, L_3) = m_n( \tilde{L}_{1},
    \tilde{L}_{2}) + m_n( \tilde{L}_{2}, \tilde{L}_{3}) + m_n( \tilde{L}_{3}, \tilde{L}_{1}).\]
\end{prop}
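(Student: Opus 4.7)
The plan is to reduce the statement to the already established transverse case (Lemma~\ref{lem:souriau-maslov-for-transverse}, as repackaged through the extension in Proposition~\ref{prop:souriau-index-extension}) by introducing a well-chosen auxiliary Lagrangian and then invoking the cocycle relation for~$\mu_n$.

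First I would pick a fourth Lagrangian~$L_4$ that is simultaneously transverse to~$L_1$, $L_2$, and~$L_3$. Such an~$L_4$ exists because the set of Lagrangians transverse to a fixed Lagrangian is open and dense in~$\Lag{n}$, so the intersection of the three corresponding open dense subsets is nonempty. Then I fix any lift~$L_{4}^{\sim}\in \wideLag{n}$ of~$L_4$.

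Next, I would apply Proposition~\ref{prop:souriau-index-extension}, which expresses each extended Souriau index~$m_n(L_{i}^{\sim}, L_{j}^{\sim})$ as
\[
m_n(L_{i}^{\sim}, L_{j}^{\sim}) = \mu_n(L_i, L_j, L_4) - m_n(L_{j}^{\sim}, L_{4}^{\sim}) - m_n(L_{4}^{\sim}, L_{i}^{\sim}),
\]
valid here because $L_4$ is transverse to each of~$L_1, L_2, L_3$. Summing this expression over the three cyclic pairs $(i,j)=(1,2),(2,3),(3,1)$, the terms involving~$L_{4}^{\sim}$ cancel in pairs thanks to the antisymmetry of the (transverse) Souriau index established in Proposition~\ref{prop:souriau-index-first-properties}. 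What remains is
\[
\sum_{\text{cyclic}} m_n(L_{i}^{\sim}, L_{j}^{\sim}) = \mu_n(L_1,L_2,L_4) + \mu_n(L_2,L_3,L_4) + \mu_n(L_3,L_1,L_4).
\]

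Finally, I would conclude by showing that the right-hand side equals $\mu_n(L_1,L_2,L_3)$. This is a direct consequence of the cocycle relation of Proposition~\ref{maslov_prop} applied to $(L_1,L_2,L_3,L_4)$, combined with the cyclic (anti)symmetry of~$\mu_n$: rewriting the cocycle as
\[
\mu_n(L_1,L_2,L_3) = \mu_n(L_1,L_2,L_4) - \mu_n(L_1,L_3,L_4) + \mu_n(L_2,L_3,L_4),
\]
and using $-\mu_n(L_1,L_3,L_4)=\mu_n(L_3,L_1,L_4)$ yields exactly the needed identity. There is no substantial obstacle here beyond verifying the bookkeeping of antisymmetries; the only delicate point is the existence of a Lagrangian~$L_4$ transverse to all three given Lagrangians, which is immediate from the open-density of pairwise transversality.
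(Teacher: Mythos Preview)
Your argument is correct and follows essentially the same route as the paper: introduce an auxiliary Lagrangian transverse to all three, apply Proposition~\ref{prop:souriau-index-extension} to each pair, cancel the cross terms by antisymmetry, and invoke the cocycle relation for~$\mu_n$. The only cosmetic difference is the order of presentation---the paper starts from the cocycle identity and then expands each Maslov term via Proposition~\ref{prop:souriau-index-extension}, whereas you start from the Souriau side---but the content is identical.
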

\begin{proof}
  By construction of the extension of the Souriau index, the identity holds
  true as soon as~$L_3$ is transverse to both~$L_1$ and~$L_2$. Let then
  $\tilde{M} =( M, \theta)$ be an element of $\wideLag{n}$ with~$M$ transverse
  to~$L_1$, $L_2$, and~$L_3$. Then, by the cocycle property of the Maslov
  index and its antisymmetry, one has
  $\mu_n( L_1, L_2, L_3) = \mu_n( L_1, L_2, M) + \mu_n( L_2, L_3, M)+ \mu_n(
  L_3, L_1, M)$ and the result follows from a successive application of
  Proposition~\ref{prop:souriau-index-extension} and the antisymmetry of the
  Souriau index.
\end{proof}

By an easy induction one gets:
\begin{lem}
  \label{lem:souriau-index-sum-triangle}
  Let $L_1$, $L_2, \dots{}, L_r$ be
  Lagrangians and let $\tilde{L}_{1}$, $\tilde{L}_{2}, \dots{},
  \tilde{L}_{r}$ be lifts of $L_1$, $L_2, \dots{}, L_r$ to
  $\wideLag{n}$. Then
  \[ \sum_{j=2}^{r-1} \mu_n(L_1, L_j, L_{j+1}) =
     m_n( \tilde{L}_{r}, \tilde{L}_{1}) + \sum_{j=1}^{r-1} m_n( \tilde{L}_{j},
    \tilde{L}_{j+1}).\]
\end{lem}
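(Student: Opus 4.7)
The plan is a straightforward induction on $r$, using Proposition~\ref{prop:souriau-index-maslov-index-extended} both as base case and as the engine of the inductive step, together with the antisymmetry of the Souriau index.

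For the base case $r=3$, the sum on the left reduces to the single term $\mu_n(L_1, L_2, L_3)$, and the right-hand side becomes
\[ m_n(L_3^\sim, L_1^\sim) + m_n(L_1^\sim, L_2^\sim) + m_n(L_2^\sim, L_3^\sim),\]
so the identity is precisely Proposition~\ref{prop:souriau-index-maslov-index-extended}.

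For the inductive step, assume the formula holds for some $r\geq 3$, and consider lifts $L_1^\sim, \dots, L_{r+1}^\sim$ of $r+1$ Lagrangians. I would split off the last term of the sum:
\[ \sum_{j=2}^{r} \mu_n(L_1, L_j, L_{j+1}) = \sum_{j=2}^{r-1} \mu_n(L_1, L_j, L_{j+1}) + \mu_n(L_1, L_r, L_{r+1}).\]
The first sum is handled by the induction hypothesis applied to $L_1^\sim, \dots, L_r^\sim$, yielding $m_n(L_r^\sim, L_1^\sim) + \sum_{j=1}^{r-1} m_n(L_j^\sim, L_{j+1}^\sim)$; the extra Maslov term is rewritten using Proposition~\ref{prop:souriau-index-maslov-index-extended} as $m_n(L_1^\sim, L_r^\sim) + m_n(L_r^\sim, L_{r+1}^\sim) + m_n(L_{r+1}^\sim, L_1^\sim)$.

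Adding these two expressions, the pair $m_n(L_r^\sim, L_1^\sim) + m_n(L_1^\sim, L_r^\sim)$ cancels by antisymmetry of the Souriau index (Proposition~\ref{prop:souriau-index-first-properties}), and what remains is exactly
\[ m_n(L_{r+1}^\sim, L_1^\sim) + \sum_{j=1}^{r} m_n(L_j^\sim, L_{j+1}^\sim),\]
closing the induction. There is no real obstacle here; the only thing to watch is the bookkeeping of indices and the correct cancellation via antisymmetry, both of which are immediate once Proposition~\ref{prop:souriau-index-maslov-index-extended} is in hand.
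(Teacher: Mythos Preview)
Your proof is correct and is exactly the ``easy induction'' the paper alludes to: the paper gives no detailed argument beyond that phrase, and your use of Proposition~\ref{prop:souriau-index-maslov-index-extended} together with antisymmetry is precisely what is intended.
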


A consequence of this last lemma, the invariance of~$m_n$ and its antisymmetry is:
\begin{lem}
  \label{lem:souriau-index-sum-triangle-v2}
  Let~$M$, $L_1, \dots{}, L_{r-1}$ be Lagrangians. Let~$g$ be an element of
  $\Sp(2n,\R)$ fixing~$M$, let~$\tilde{M}$ be a lift of~$M$ and
  let~$\tilde{g} \in \widetilde{\Sp}( 2n, \R)$ be the lift of~$g$
  fixing~$\tilde{M}$.  Let~$\tilde{L}_{1}, \dots{}, \tilde{L}_{r-1}$ be lifts
  of~$L_1, \dots{}, L_{r-1}$ in $\wideLag{n}$ and set
  $L_r\coloneqq g\cdot L_1$ so that
  $\tilde{L}_{r}\coloneqq \tilde{g} \cdot \tilde{L}_{1}$ is a lift
  of~$L_r$. Then
  \[ \sum_{j=1}^{r-1} \mu_n(M, L_j, L_{j+1}) =
    \sum_{j=1}^{r-1} m_n( \tilde{L}_{j},
    \tilde{L}_{j+1}).\]
\end{lem}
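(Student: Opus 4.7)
The plan is to derive this lemma directly from Lemma~\ref{lem:souriau-index-sum-triangle} by feeding it the right sequence of Lagrangians, and then using the $\widetilde{\Sp}(2n,\R)$-invariance and antisymmetry of the Souriau index to collapse the two extra terms that appear.

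First I would apply Lemma~\ref{lem:souriau-index-sum-triangle} to the $(r+1)$-tuple of Lagrangians $(M, L_1, L_2, \dots, L_r)$, with chosen lifts $(M^{\sim}, L_1^{\sim}, \dots, L_{r-1}^{\sim}, L_r^{\sim})$. The Maslov sum on the left of that lemma, when the base point is taken to be $M$, runs over the triples $(M, L_j, L_{j+1})$ for $j=1, \dots, r-1$, so the lemma yields
\[
  \sum_{j=1}^{r-1} \mu_n(M, L_j, L_{j+1}) = m_n(L_r^{\sim}, M^{\sim}) + m_n(M^{\sim}, L_1^{\sim}) + \sum_{j=1}^{r-1} m_n(L_j^{\sim}, L_{j+1}^{\sim}).
\]

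The remaining task is therefore to show $m_n(L_r^{\sim}, M^{\sim}) + m_n(M^{\sim}, L_1^{\sim}) = 0$. Here I would invoke the two key properties established in Proposition~\ref{prop:souriau-index-first-properties} (and extended to all pairs by Proposition~\ref{prop:souriau-index-extension}): the $\widetilde{\Sp}(2n,\R)$-invariance and the antisymmetry of~$m_n$. Since $\tilde g$ fixes $M^{\sim}$ and sends $L_1^{\sim}$ to $L_r^{\sim}$, invariance gives $m_n(L_r^{\sim}, M^{\sim}) = m_n(\tilde g \cdot L_1^{\sim}, \tilde g \cdot M^{\sim}) = m_n(L_1^{\sim}, M^{\sim})$, and antisymmetry then gives $m_n(L_r^{\sim}, M^{\sim}) = -m_n(M^{\sim}, L_1^{\sim})$, so the two boundary terms cancel.

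There is essentially no obstacle; the only point to verify carefully is that the lift $\tilde g$ fixing $M^{\sim}$ is well defined (any two lifts of $g$ differ by an element of the deck group~$\Z$ of $\wideLag{n}\to\Lag{n}$, which acts freely on the fiber over~$M$, so exactly one lift fixes~$M^{\sim}$), and that Lemma~\ref{lem:souriau-index-sum-triangle} really does apply in this setting, since it makes no transversality assumption on the Lagrangians involved.
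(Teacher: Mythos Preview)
Your proof is correct and follows exactly the approach the paper indicates: the paper states this lemma as ``a consequence of this last lemma [Lemma~\ref{lem:souriau-index-sum-triangle}], the invariance of~$m_n$ and its antisymmetry,'' and you have spelled out precisely that deduction. The only minor imprecision is in your parenthetical remark: two lifts of $g$ to $\widetilde{\Sp}(2n,\R)$ differ by an element of $\ker(\widetilde{\Sp}(2n,\R)\to\Sp(2n,\R))$, which is a proper subgroup of the deck group of $\wideLag{n}\to\Lag{n}$ (the latter being the full center of $\widetilde{\Sp}(2n,\R)$); but this subgroup still acts freely on $\wideLag{n}$, so your conclusion stands.
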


\section{Translation number}
\label{sec:rotation-number}

The translation number $\widetilde{\Rot} \colon \widetilde{\Sp}(2n,\R) \to \R$ is a
conjugation invariant function defined in~\cite{BIW} using bounded cohomology. We will need the following properties:
\index{notation}{13@$\widetilde{\Rot}$ (the translation number)}
\index{definition}{translation number}%
\index{definition}{number (translation ---)}%

\begin{lem}[\cite{Strubel}]
  \label{lem:rotation-number}
  Let $h$ be in $\widetilde{\Sp}(2n,\R)$, $ \tilde{L}$ be in~$\wideLag{n}$ such
  that the projection of~$ \tilde{L}$ in~$\Lag{n}$ is fixed by the projection
  of~$h$ in $\Sp(2n, \R)$. Then
  \begin{equation*}
    \widetilde{\Rot}( h) =\frac{1}{2} m_n( h\cdot \tilde{L}, \tilde{L}).
  \end{equation*}
\end{lem}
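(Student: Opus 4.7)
The plan is to combine the asymptotic characterization of~$\widetilde{\Rot}$ as a Cesàro average of Maslov cocycle values along an orbit with the Maslov-to-Souriau telescoping of Lemma~\ref{lem:souriau-index-sum-triangle-v2}. First, I would reduce to the case when $h$ itself fixes $L^{\sim}$. Any such $h$ can be written $h = z\tilde{h}$, with $\tilde{h}\in\widetilde{\Sp}(2n,\R)$ the unique lift of $h$'s projection to~$\Sp(2n,\R)$ that fixes~$L^{\sim}$, and $z$ in the center of~$\widetilde{\Sp}(2n,\R)$. On the left-hand side, $\widetilde{\Rot}(h) = \widetilde{\Rot}(z) + \widetilde{\Rot}(\tilde{h})$ since $\widetilde{\Rot}$ restricts to a homomorphism on the center. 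On the right-hand side, equations~\eqref{eq:trans-T-Souriau} and~\eqref{eq:trans-Z-Souriau} describe exactly how $m_n(h\cdot L^{\sim},L^{\sim})$ shifts when $h$ is multiplied by a central element, and this shift matches $2\widetilde{\Rot}(z)$ by a direct check on the generator of the center. In the reduced case the right-hand side is $\tfrac{1}{2}m_n(L^{\sim},L^{\sim})=0$ by the antisymmetry of~$m_n$, so one must prove $\widetilde{\Rot}(\tilde{h})=0$.

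For this, pick an arbitrary Lagrangian~$L'$ transverse to~$L$ and a lift $(L')^{\sim}\in\wideLag{n}$, and use the asymptotic formula coming from the identification of~$\widetilde{\Rot}$ with the homogeneous quasi-morphism defined by the Maslov bounded cocycle (cf.~\cite{BIW}):
\[ 2\widetilde{\Rot}(\tilde{h}) = \lim_{N\to\infty}\frac{1}{N}\sum_{j=0}^{N-1}\mu_n\bigl(L,\, \tilde{h}^{j}\cdot L',\, \tilde{h}^{j+1}\cdot L'\bigr). \]
Since $\tilde{h}$ fixes~$L^{\sim}$, applying Lemma~\ref{lem:souriau-index-sum-triangle-v2} with $M=L$, $M^{\sim}=L^{\sim}$, $\tilde{g}=\tilde{h}^{N}$, and the chain $L_{j+1}^{\sim} = \tilde{h}^{j}\cdot (L')^{\sim}$ converts this sum into $\sum_{j=0}^{N-1} m_n\bigl(\tilde{h}^{j}\cdot(L')^{\sim},\tilde{h}^{j+1}\cdot(L')^{\sim}\bigr)$. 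The $\widetilde{\Sp}(2n,\R)$-invariance of~$m_n$ collapses every term to $m_n\bigl((L')^{\sim},\tilde{h}\cdot(L')^{\sim}\bigr)$, yielding $2\widetilde{\Rot}(\tilde{h}) = m_n\bigl((L')^{\sim},\tilde{h}\cdot(L')^{\sim}\bigr)$.

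The final step is to show this Souriau index is zero. Expanding via Proposition~\ref{prop:souriau-index-extension} with auxiliary element~$L^{\sim}$ (transverse to both $(L')^{\sim}$ and $\tilde{h}\cdot(L')^{\sim}$), and using $\tilde{h}\cdot L^{\sim}=L^{\sim}$ together with the invariance and antisymmetry of~$m_n$, the two Souriau correction terms cancel and one is reduced to showing $\mu_n(L', \tilde{h}\cdot L', L) = 0$. I expect the main obstacle to be this final Maslov vanishing; the cleanest route is to observe that the stabilizer of~$L^{\sim}$ in~$\widetilde{\Sp}(2n,\R)$ is connected and maps isomorphically to the identity component of the parabolic~$P = \Stab_{\Sp(2n,\R)}(L)$, and then to invoke a standard bounded-cohomology argument showing that the restriction of~$\widetilde{\Rot}$ to this stabilizer vanishes---whence the entire Cesàro average, and in particular its constant single-step value, must equal zero.
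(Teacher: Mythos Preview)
The paper does not supply its own proof of this lemma; it is quoted with a citation to~\cite{Strubel}, and Remark~\ref{rem:rotation-number} records the characterization
\[
\widetilde{\Rot}(h)=\lim_{k\to\infty}\frac{1}{2k}\,m_n(h^k\cdot L^{\sim},L^{\sim}),
\]
valid for \emph{any} $L^{\sim}\in\wideLag{n}$.

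Your reduction $h=z\tilde h$ with $\tilde h$ fixing~$L^{\sim}$ and $z$ central is correct, as is the check that both sides shift identically under multiplication by~$z$. The genuine gap is the Ces\`aro--Maslov formula
\[
2\widetilde{\Rot}(\tilde h)\;\overset{?}{=}\;\lim_{N\to\infty}\frac{1}{N}\sum_{j=0}^{N-1}\mu_n\bigl(L,\ \tilde h^{\,j}L',\ \tilde h^{\,j+1}L'\bigr),
\]
which is false. Take $n=1$, $L=\infty\in\R\PP^1$, $L'=0$, and let $\tilde h$ be the lift of $x\mapsto x+1$ fixing~$\infty^{\sim}$. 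Each summand equals $\mu_1(\infty,j,j+1)=\pm 1$, so the right-hand side is~$\pm 1$, while Remark~\ref{rem:rotation-number} applied with $L^{\sim}=\infty^{\sim}$ gives $\widetilde{\Rot}(\tilde h)=0$. The same example shows that the statement you ultimately reduce to, $\mu_n(L',\tilde h L',L)=0$, is false in general; no bounded-cohomology or connectedness argument can establish it. (Your proposed fallback, proving $\widetilde{\Rot}|_{\Stab(L^{\sim})}=0$ directly, \emph{is} the reduced case~--- but note that amenability of $P_0$ only gives that $\widetilde{\Rot}|_{P_0}$ is a homomorphism, not that it vanishes; one still needs an extra input such as evaluating on a hyperbolic element.)

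The clean fix is to drop the Ces\`aro--Maslov step entirely and use Remark~\ref{rem:rotation-number} with $L^{\sim}$ itself: since $\tilde h$ fixes $L^{\sim}$, one has $m_n(\tilde h^{\,k}L^{\sim},L^{\sim})=0$ for all~$k$, hence $\widetilde{\Rot}(\tilde h)=0$. Even more directly, avoiding the splitting altogether: since $\bar h$ fixes~$L$, there is a central~$z$ with $h\cdot L^{\sim}=z\cdot L^{\sim}$, whence $h^k\cdot L^{\sim}=z^k\cdot L^{\sim}$; equation~\eqref{eq:trans-Z-Souriau} then gives $m_n(h^k L^{\sim},L^{\sim})=k\,m_n(hL^{\sim},L^{\sim})$, and Remark~\ref{rem:rotation-number} yields the lemma in one line.
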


\begin{rem}
  \label{rem:rotation-number}
  For any $h$ in $\widetilde{\Sp}(2n, \R)$ and any $\tilde{L}$ in
  $\wideLag{n}$ one has
  \[ \widetilde{\Rot}( h) = \lim_{k\to \infty} \frac{1}{2k} m_n(h^k \cdot
    \tilde{L}, \tilde{L}), \] and this equality could serve as a definition for the
  translation number (cf.~\cite[Section~10]{FrP}).
\end{rem}

\chapter{Invariants of Lagrangian subspaces}\label{sec:prelim}

The action of $\Sp(2n,\R)$ on pairs of transverse Lagrangians is
transitive. However the action of $\Sp(2n,\R)$ on triples of pairwise
transverse Lagrangians is not transitive and all the more for the actions on
quadruples and quintuples. In this chapter we describe invariants of such tuples of
Lagrangians.

Similarly, the action of $\Sp(2n,\R)$ on pairs of decorated Lagrangians is not
transitive. We introduce below the symplectic $\Lambda$-length and investigate its properties.

\section{Cross ratio}

Let $L_1, M_1, L_2, M_2$ be four Lagrangians such that~$M_1$ is transverse
to~$L_2$, $M_2$ is transverse to~$L_1$, and $L_1$ is transverse to~$L_2$.  We
use the linear isomorphisms $M_{1,L_1 \ra L_2}$ and $M_{2,L_2 \ra L_1}$,
defined in Section \ref{sec:maslov-index}, to introduce the
map
\[ [L_1,M_1,L_2,M_2]\coloneqq - M_{2,L_2 \ra L_1} \circ M_{1,L_1 \ra L_2} \colon L_1\to L_1\]
which is a linear endomorphism of~$L_1$.

\begin{df}
  The map $[L_1,M_1,L_2,M_2]\colon L_1\to L_1$ is called the \emph{cross
    ratio} of the quadruple of Lagrangians $(L_1,M_1,L_2,M_2)$.
\end{df}
\index{notation}{14@$[L_1,M_1,L_2,M_2]$  (the cross ratio of the quadruple of
  Lagrangians)}%
\index{definition}{cross ratio}

For related invariants of four Lagrangians,
see~\cite{Biallas,BB,HS,Siegel}. Cross ratios for quadruples of matrices have
been defined by Hua~\cite[Section~5]{Hua-GeoMAtI} and later extended to operator
by Zelikin~\cite{Zelikin}; these can be used to describe a cross ratio for
quadruples of $n$-planes in~$\R^{2n}$ (cf.\ Section~5.7
in~\cite[Chapter~2]{NeretinLecturesGaussian}). Noncommutative cross ratios
have been defined by Retakh~\cite{RetakhNonComCR}, and later related
in~\cite{Retakh2019NoncommutativeCA} to the work of Berenstein and
Retakh~\cite{BR} (cf.\ also Section~\ref{sec:geom-real-nonc} below).

For $n=1$, the cross ratio is a linear map from a line to itself. This is just
the multiplication by a scalar, which is exactly the classical cross ratio of four lines
in~$\R^2$.

\begin{prop}[Properties of cross ratio]\label{prop:properties_CR}
  \mbox{ }
\begin{enumerate}
\item The cross ratio is equivariant under the action of $\Sp(2n,\R)$ on
  $\Lag{n}$, that is, for any $g\in \Sp(2n,\R)$,
  \[[gL_1,gM_1,gL_2,gM_2] = h[L_1,M_1,L_2,M_2]h^{-1},\]
  where $h\colon L_1\to g(L_1)$ is the restriction of~$g$ to~$L_1$.
\item If furthermore~$M_1$ is transverse to~$L_1$ and~$M_2$ is transverse
to~$L_2$, then the maps $M_{1,L_1 \ra L_2}$ and $M_{2,L_2 \ra L_1}$
are bijective and one has:
\[[L_1,M_1,L_2,M_2]=[L_1,M_2,L_2,M_1]^{-1};\]
\item \label{item-prop-inv-double-transp} Under the same hypothesis,
\begin{align*}
[L_1,M_1,L_2,M_2] &= M_{1,L_1 \ra L_2}^{-1} \circ [L_2,M_2,L_1,M_1]\circ M_{1,L_1 \ra L_2}\\
                  &= M_{2,L_2 \ra L_1} \circ [L_2,M_2,L_1,M_1]\circ M_{2,L_2 \ra L_1}^{-1}.
\end{align*}
\item \label{item:prop-CR-cyclic} Denote
  $\psi_1\colon L_1 \to M_1 \mid v \mapsto v+M_{1,L_1 \ra L_2}(v)$, then, when~$M_1$
  and~$M_2$ are transverse,
  \[[M_1, L_2, M_2, L_1]= \psi_1 [L_1,M_1, L_2, M_2]^{-1} \psi_{1}^{-1}.\]
\end{enumerate}
\end{prop}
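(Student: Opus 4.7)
The plan is to derive all four properties from two lemmas about the graph map $N_{A\to B}\colon A\to B$ (defined for any transverse pair $(A,B)$ of Lagrangians and any third Lagrangian $N$ transverse to $B$ by $N=\{v+N_{A\to B}(v): v\in A\}$). First, for $g\in\Sp(2n,\R)$ one has $(gN)_{gA\to gB}=g|_B\circ N_{A\to B}\circ (g|_A)^{-1}$, read off directly by pushing forward the defining graph relation. Second, when in addition $N$ is transverse to $A$, the map $N_{A\to B}$ is bijective with inverse $N_{B\to A}$; this is obtained by rewriting the graph from the opposite side.

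Property~(1) is then an immediate consequence of the first lemma applied to $N=M_1$ and $N=M_2$: in the definition $[L_1,M_1,L_2,M_2]=-{M_2}_{L_2\to L_1}\circ {M_1}_{L_1\to L_2}$, the intermediate factors $g|_{L_2}$ and $(g|_{L_2})^{-1}$ telescope, leaving conjugation by $g|_{L_1}$. Property~(2) is the second lemma applied to $M_1$ and $M_2$: the extra transversality hypotheses guarantee the bijectivity of ${M_1}_{L_1\to L_2}$ and ${M_2}_{L_2\to L_1}$, and the substitutions ${M_1}_{L_2\to L_1}={M_1}_{L_1\to L_2}^{-1}$, ${M_2}_{L_2\to L_1}={M_2}_{L_1\to L_2}^{-1}$ identify $[L_1,M_1,L_2,M_2]$ with the inverse of $[L_1,M_2,L_2,M_1]$. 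Property~(3) is a pure algebraic manipulation: $[L_2,M_2,L_1,M_1]=-{M_1}_{L_1\to L_2}\circ {M_2}_{L_2\to L_1}$, and conjugation by either ${M_1}_{L_1\to L_2}$ or ${M_2}_{L_2\to L_1}$ reproduces $[L_1,M_1,L_2,M_2]$.

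Property~(4) will be the main obstacle, because it requires passing from graph maps involving the pair $(L_1,L_2)$ to graph maps involving $(M_1,M_2)$. The plan is to introduce the companion $\psi_2\colon L_1\to M_2$, $v\mapsto v+{M_2}_{L_1\to L_2}(v)$, and to compute the needed graph maps ${L_2}_{M_1\to M_2}$ and ${L_1}_{M_2\to M_1}$ in terms of $\psi_1$, $\psi_2$ and the original maps ${M_i}_{L_1\to L_2}$. Solving the defining identity $L_2=\{m+{L_2}_{M_1\to M_2}(m):m\in M_1\}$ with the ansatz $m=\psi_1(v)$ yields
\[
{L_2}_{M_1\to M_2}\circ\psi_1=-\psi_2,
\]
and a parallel computation for $L_1$, now using bijectivity of ${M_1}_{L_1\to L_2}$, gives
\[
{L_1}_{M_2\to M_1}\circ\psi_2=-\psi_1\circ {M_1}_{L_1\to L_2}^{-1}\circ {M_2}_{L_1\to L_2}.
\]
Plugging these into the definition of $[M_1,L_2,M_2,L_1]=-{L_1}_{M_2\to M_1}\circ {L_2}_{M_1\to M_2}$, the $\psi_2^{\pm1}$ cancel and one obtains $-\psi_1\circ {M_1}_{L_1\to L_2}^{-1}\circ {M_2}_{L_1\to L_2}\circ \psi_1^{-1}$. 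By~(2), $[L_1,M_1,L_2,M_2]^{-1}=-{M_1}_{L_1\to L_2}^{-1}\circ {M_2}_{L_1\to L_2}$, and conjugating by $\psi_1$ gives exactly the displayed expression. The only genuinely delicate step is the derivation of the two graph-map formulas above, with the correct signs and the proper invertibility hypotheses; once these are in hand, (4) reduces to a two-line substitution.
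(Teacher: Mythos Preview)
The paper states this proposition without proof, so there is nothing to compare against. Your argument is correct. The two auxiliary lemmas about the graph map~$N_{A\to B}$ are immediate from the definition, and parts~(1)--(3) follow from them by the substitutions you indicate.

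For part~(4), your derivation is fine, but note that the statement implicitly requires the hypotheses of part~(2) as well: the left-hand side $[M_1,L_2,M_2,L_1]$ is only defined when $L_2$ is transverse to~$M_2$ and $L_1$ is transverse to~$M_1$, and the right-hand side needs $[L_1,M_1,L_2,M_2]$ to be invertible. Your proof uses exactly these (you need $M_2\pitchfork L_2$ to define~$\psi_2$ and $M_1\pitchfork L_1$ to invert ${M_1}_{L_1\to L_2}$), so all is well; it would just be worth making this dependence explicit when writing it up. The two key identities
\[
{L_2}_{M_1\to M_2}\circ\psi_1=-\psi_2,\qquad
{L_1}_{M_2\to M_1}\circ\psi_2=-\psi_1\circ {M_1}_{L_1\to L_2}^{-1}\circ {M_2}_{L_1\to L_2}
\]
are checked exactly as you describe: write $m=\psi_1(v)$ (resp.\ $m'=\psi_2(v)$), impose that the $L_1$-component (resp.\ $L_2$-component) of the sum lies in the target, and solve. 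The signs and invertibility are handled correctly.
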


\begin{prop}
  The cross ratio $B\coloneqq [L_1,M_1,L_2,M_2]$ is selfadjoint with respect
  to the Maslov forms $[L_1,M_1,L_2]$ and $[L_1,M_2,L_2]$.
\end{prop}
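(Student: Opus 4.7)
Plan: The strategy is to reduce the claim to a brief matrix computation by working in coordinates adapted to the transverse pair $(L_1, L_2)$. Using the transitive action of $\Sp(2n,\R)$ on pairs of transverse Lagrangians, I would fix a symplectic basis $(\mathbf{e}, \mathbf{f})$ with $L_1 = \Span(\mathbf{e})$ and $L_2 = \Span(\mathbf{f})$. For $i\in\{1,2\}$, let $P_i$ be the matrix of ${M_i}_{L_1\to L_2}$ with respect to $\mathbf{e}$ and $\mathbf{f}$. By Remark~\ref{rem:maslov-index-matrix}, $P_i$ is also the matrix of the Maslov form $[L_1, M_i, L_2]$ in the basis $\mathbf{e}$, and symmetry of the Maslov form — recalled just after Definition~\ref{def:maslov-form} — translates into ${}^T\! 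P_i = P_i$.

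Next, I would observe that ${M_i}_{L_1\to L_2}$ and ${M_i}_{L_2\to L_1}$ are mutually inverse. This is essentially tautological: the Lagrangian $M_i$ is simultaneously a graph over $L_1$ with slope $f_i\coloneqq {M_i}_{L_1\to L_2}$ and a graph over $L_2$ with slope $g_i\coloneqq {M_i}_{L_2\to L_1}$, so uniqueness of the decomposition $\R^{2n} = L_1 \oplus L_2$ forces $g_i \circ f_i = \mathrm{id}_{L_1}$. Consequently, in the basis $\mathbf{e}$ of $L_1$ the cross ratio $B = -{M_2}_{L_2\to L_1}\circ {M_1}_{L_1\to L_2}$ is represented by the matrix $Q\coloneqq -P_2^{-1}P_1$.

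Finally, self-adjointness of $B$ with respect to a symmetric bilinear form on $L_1$ with matrix $R$ is the identity ${}^T\! Q \cdot R = R \cdot Q$. Using ${}^T\!(P_2^{-1}) = P_2^{-1}$ one has ${}^T\! Q = -P_1 P_2^{-1}$, and then
\[{}^T\! Q\cdot P_1 = -P_1 P_2^{-1} P_1 = P_1\cdot Q, \qquad {}^T\! Q\cdot P_2 = -P_1 = P_2\cdot Q,\]
which settles both self-adjointness statements at once. I do not foresee any real obstacle: the entire content of the proposition is packaged into the symmetries ${}^T\! P_i = P_i$, i.e.\ into the Lagrangian property of $M_1$ and $M_2$.
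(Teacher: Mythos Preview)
Your argument is correct. The two calculations at the end are clean and the key input---the symmetry of the Maslov matrices~$P_i$---is exactly what is needed. One small point: to write $Q=-P_2^{-1}P_1$ you implicitly use that $M_2$ is transverse to~$L_2$ (so that ${M_2}_{L_1\to L_2}$ exists); this is not part of the cross-ratio hypotheses, but since the proposition refers to the Maslov form $[L_1,M_2,L_2]$ it is implicitly assumed anyway.

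The paper proceeds differently. It stays coordinate-free and introduces the auxiliary Maslov form $\beta_2=[L_2,M_2,L_1]$ on~$L_2$, proving the identity $\beta_1(Bv,w)=\beta_2(M_1w,M_1v)$ directly from the definitions; selfadjointness with respect to~$\beta_1$ then follows from the symmetry of~$\beta_2$. For the second Maslov form, the paper applies the first part to the permuted quadruple $(L_2,M_2,L_1,M_1)$ and transports the result back to~$L_1$ via the conjugacy $B={M_2}\circ[L_2,M_2,L_1,M_1]\circ {M_2}^{-1}$ from Proposition~\ref{prop:properties_CR}(\ref{item-prop-inv-double-transp}). Your route is more direct and handles both forms in a single stroke; the paper's route produces along the way the identity $\beta_1(Bv,w)=\beta_2(M_1w,M_1v)$, which is reused in the proof of the subsequent corollary on positivity of the eigenvalues.
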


\begin{proof} Let $\beta_1 = [L_1,M_1,L_2]$ and $\beta_2 =
  [L_2,M_2,L_1]$. Thus $\beta_2$~is a symmetric bilinear form on~$L_2$. Let
  $v,w\in L_1$. Then:
  \begin{align}\label{eq:1}
    \beta_1(Bv,w) &= \omega(-M_{2,L_2 \ra L_1} M_{1,L_1 \ra L_2}v, M_{1,L_1 \ra L_2} w) \nonumber \\
                  &=\omega(M_{1,L_1 \ra L_2}w, M_{2,L_2 \ra L_1}M_{1,L_1 \ra L_2}v)\\
                  &=\beta_2(M_{1,L_1 \ra L_2}w, M_{1,L_1 \ra L_2}v),\nonumber
  \end{align}
  and, exchanging~$v$ and~$w$, $\beta_1(Bw,v) =\beta_2(M_{1,L_1 \ra L_2}v,M_{1,L_1 \ra L_2}w)$.
  Since~$\beta_1$ and~$\beta_2$ are symmetric, $B$~is selfadjoint with respect
  to~$\beta_1$.  Exchanging the Lagrangians, we obtain that
  $[L_2,M_2,L_1,M_1]$ is selfadjoint with respect to $[L_2,M_2,L_1]$. By the
  second equality in~(\ref{item-prop-inv-double-transp}) of
  Proposition~\ref{prop:properties_CR},
  $B=M_{2,L_2 \ra L_1} \circ [L_2,M_2,L_1,M_1]\circ M_{2,L_2 \ra L_1}^{-1}$ and since
  $M_{2,L_2 \ra L_1}$ is isometric with respect to $[L_2,M_2, L_1]$
  (on~$L_2$) and $[L_1,M_2, L_2]$ (on~$L_1$), we get that $B$ is selfadjoint
  with respect to $[L_1,M_2, L_2]$.
\end{proof}

\begin{cor} \label{cor:diagonalizable}
  If the Maslov form $[L_1,M_1,L_2]$ is positive definite, then
  $[L_1,M_1,L_2,M_2]$ is diagonalizable. If the Maslov form $[L_2,M_2,L_1]$ is also
  positive definite, then $[L_1,M_1,L_2,M_2]$ has positive eigenvalues.
\end{cor}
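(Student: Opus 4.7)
The plan is to derive both assertions directly from the preceding proposition, combined with the real spectral theorem. Set $B \coloneqq [L_1, M_1, L_2, M_2]$ and $\beta_i \coloneqq [L_i, M_i, L_{3-i}]$ for $i=1,2$. The preceding proposition already yields that $B$ is $\beta_1$-selfadjoint, which is the hard content; what remains is essentially packaging.

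For the first assertion, if $\beta_1$ is positive definite, then it is an inner product on the $n$-dimensional real space $L_1$, and the real spectral theorem applied to the $\beta_1$-selfadjoint operator $B$ yields that $B$ is diagonalizable over $\R$, with a $\beta_1$-orthonormal eigenbasis.

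For the second assertion, I reuse the intermediate identity obtained in the proof of the preceding proposition: specializing $w = v$ in that computation produces
\[
\beta_1(Bv, v) \;=\; \beta_2\bigl(M_1(v),\, M_1(v)\bigr),
\]
where $M_1$ denotes the linear map $(M_1)_{L_1 \to L_2}$. The pairwise transversality of $L_1, M_1, L_2$ (a prerequisite for the Maslov form $\beta_1$ to be defined) guarantees via Remark~\ref{rem:maslov-index-matrix} that this $M_1$ is a bijection, so $v \ne 0$ implies $M_1 v \ne 0$. If $\beta_2$ is also positive definite, then the right-hand side is strictly positive for every nonzero $v$, hence $\beta_1(Bv, v) > 0$ whenever $v \ne 0$. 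Applying this to any eigenvector $v$ of $B$ with eigenvalue $\lambda$ gives $\lambda\, \beta_1(v, v) = \beta_1(Bv, v) > 0$, and since $\beta_1(v, v) > 0$ we conclude $\lambda > 0$.

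No genuine obstacle arises: the substantive content—namely the $\beta_1$-selfadjointness of $B$ and the bilinear identity relating $\beta_1$ and $\beta_2$ through $M_1$—is already established in the preceding proposition, and the corollary amounts to invoking the real spectral theorem and reading off positivity from the specialization $w=v$ of that identity.
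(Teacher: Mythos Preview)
Your proof is correct and takes essentially the same approach as the paper: both invoke the spectral theorem for the $\beta_1$-selfadjoint operator $B$, then apply the identity $\beta_1(Bv,v)=\beta_2(M_1v,M_1v)$ (Equation~\eqref{eq:1} specialized to $w=v$) to an eigenvector to extract positivity of the eigenvalues. The paper phrases this via an explicit $\beta_1$-orthonormal eigenbasis and its dual symplectic basis, while you argue directly with an arbitrary eigenvector, but the content is identical.
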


\begin{proof}
  We set as before $\beta_1 = [L_1,M_1,L_2]$ and $\beta_2 = [L_2,M_2,L_1]$.
  Since $B=[L_1,M_1,L_2,M_2]$ is selfadjoint with respect to~$\beta_1$, there is a
  $\beta_1$-orthonormal
  basis~$\mathbf{e}$ of~$L_1$ diagonalizing~$B$, i.e.\ such that
  $[\beta_1]_\mathbf{e}=\Id$ and
  $[B]_\mathbf{e}=\diag(\lambda_1, \dots,\lambda_n)$.

  Let~$\mathbf{f}$ be the unique basis of~$L_2$ such that
  $\omega(\mathbf{e},\mathbf{f})=\Id$. By
  Remark~\ref{rem:maslov-index-matrix}, $[M_1]_{\mathbf{e},\mathbf{f}}=\Id$,
  i.e.\ for all~$i$, $f_i=M_{1}e_i$. By Equation~\eqref{eq:1}, for all~$i$,
  one has
  \begin{equation*}
    \lambda_i=\beta_1(Be_i,e_i)=\beta_2(f_i,f_i)>0,
  \end{equation*}
  since $\beta_2$ is positive definite.
\end{proof}

\begin{df}\label{df:positive-quad}
  A quadruple of Lagrangians $(L_1, M_1, L_2, M_2)$ is said to be
  \emph{positive} if $[L_1,M_1,L_2]$ and
  $[L_2,M_2,L_1]$ are positive definite.

  A symplectic basis $(\mathbf{e}, \mathbf{f})$ will be said to be
  in \emph{standard position} with respect to a positive quadruple of Lagrangians
  $(L_1, M_1, L_2, M_2)$ if there exists a diagonal matrix~$D$ with
  positive nondecreasing coefficients such that
  \[ L_1=\Span(\mathbf{e}),\ L_2=\Span(\mathbf{f}),\
    M_1=\Span(\mathbf{e}+\mathbf{f}),\ M_2=\Span(\mathbf{e}-\mathbf{f}\cdot D).\]
\end{df}\index{definition}{Lagrangian!positive quadruple of ---s}%
\index{definition}{positive! quadruple of Lagrangians}%
\index{definition}{quadruple (positive --- of Lagrangians)}%
\index{definition}{standard position (basis in ---)}%
\index{definition}{basis! in standard position}%
The matrix $D$ is uniquely determined by the quadruple
$(L_1, M_1, L_2, M_2)$ since $D^{-1}$ is equal to the cross ratio
$[L_1, M_1, L_2, M_2]$ (in the basis~$\mathbf{e}$) and is in fact the unique
diagonal matrix with nondecreasing coefficients representing this  endomorphism.

The configuration space of positive quadruples is denoted by $\Conf^{4+}(
\Lag{n}) \subset \Conf^4( \Lag{n})$; it is contained in the space of pairwise
transverse quadruples, and in particular
in~$\Conf^{4\Diamond}( \Lag{n})$.
\index{notation}{15@$\Conf^{4+}(\Lag{n})$ (the configuration space of positive quadruples)}

Note that $(L_2, M_2, L_1, M_1)$ is positive as soon as $(L_1, M_1, L_2, M_2)$
is positive. In fact the cocycle property of the Maslov index
(Proposition~\ref{maslov_prop}) implies that  $(L_1, M_1, L_2, M_2)$
is positive if and only if  $(M_1, L_2, M_2, L_1)$
is positive (see also Lemma~\ref{lem:maxim-decor-local-every-triangulation}).

Corollary~\ref{cor:diagonalizable} implies the existence of symplectic bases in standard
position (actually its proof constructs one).
The following proposition complements the conclusion of the corollary (see
also Proposition~\ref{prop:quadr-transv-lagr-normal-form} for a more general
statement without the positivity assumption):
\begin{prop}\label{prop:standard-basis-positive-4-uple}
  Let $(L_1, M_1, L_2, M_2)$ be a positive quadruple of Lagrangians. Then:
  \begin{enumerate}
  \item\label{item:1:prop:standard-basis-positive-4-uple} There exists a
    symplectic basis $( \mathbf{e}, \mathbf{f})$ in standard position.
  \item\label{item:2:prop:standard-basis-positive-4-uple} With respect to $(L_1, M_1, L_2, M_2)$, any other symplectic
    basis in standard position is of
    the form $( \mathbf{e}\cdot h, \mathbf{f}\cdot h )$ for $h$ in $\OO(n)$
    commuting with the diagonal matrix~$D$.
  \item\label{item:3:prop:standard-basis-positive-4-uple} Let
    $u=D^{1/2}$. Then
    \[ (\mathbf{e}', \mathbf{f}') \coloneqq (\mathbf{e}, \mathbf{f}) \cdot
      \begin{pmatrix}
        0 & -u^{-1}\\ u & 0
      \end{pmatrix} =( \mathbf{f}\cdot u, -\mathbf{e}\cdot u^{-1})
    \]
    is a symplectic basis that, with respect to $(L_2,
    M_2, L_1, M_1)$, is in standard position.
  \end{enumerate}
\end{prop}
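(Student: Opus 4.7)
My strategy is to treat the three claims in order, using the diagonalisation produced in the proof of Corollary~\ref{cor:diagonalizable} as the starting point and then bookkeeping everything through the graph description of $M_1$ and $M_2$ as linear maps $L_1\to L_2$.

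For item~(\ref{item:1:prop:standard-basis-positive-4-uple}), the proof of Corollary~\ref{cor:diagonalizable} already gives me a basis $\mathbf{e}$ of $L_1$ such that $[\beta_1]_{\mathbf{e}}=\Id$ and $[B]_{\mathbf{e}}=\diag(\lambda_1,\dots,\lambda_n)$ with all $\lambda_i>0$, where $B=[L_1,M_1,L_2,M_2]$. After reordering so that the sequence $(\lambda_1,\dots,\lambda_n)$ is nonincreasing, I define $\mathbf{f}$ to be the unique basis of $L_2$ with $\omega(\mathbf{e},\mathbf{f})=\Id$. By Remark~\ref{rem:maslov-index-matrix}, ${M_1}_{L_1\to L_2}$ has matrix $\Id$ in these bases, i.e.\ $f_i=M_1(e_i)$, so $M_1=\Span(\mathbf{e}+\mathbf{f})$. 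The definition $B=-{M_2}\circ{M_1}$ then forces $M_2(f_i)=-\lambda_i e_i$, hence $M_2=\Span(f_i-\lambda_i e_i)=\Span(e_i-\lambda_i^{-1}f_i)$, which is exactly $\Span(\mathbf{e}-\mathbf{f}\cdot\Lambda)$ with $\Lambda=\diag(\lambda_1^{-1},\dots,\lambda_n^{-1})$ positive and nondecreasing.

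For item~(\ref{item:2:prop:standard-basis-positive-4-uple}), given a second standard-position basis $(\mathbf{e}',\mathbf{f}')$, I write $\mathbf{e}'=\mathbf{e}\cdot g$ for some $g\in\GL(n,\R)$. The symplectic condition $\omega(\mathbf{e}',\mathbf{f}')=\Id$ forces $\mathbf{f}'=\mathbf{f}\cdot({}^Tg)^{-1}$. Using part~(\ref{item:4:rem:action-on-basis}) of Remark~\ref{rem:action-on-basis}, the matrix of ${M_1}_{L_1\to L_2}$ in $(\mathbf{e}',\mathbf{f}')$ transforms from $\Id$ to ${}^Tg\cdot g$; the standard-position condition $M_1=\Span(\mathbf{e}'+\mathbf{f}')$ gives ${}^Tg\,g=\Id$, so $g\in\OO(n)$. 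Similarly, the matrix of ${M_2}_{L_1\to L_2}$ transforms from $-\Lambda$ to $-{}^Tg\,\Lambda\,g$. Since the cross ratio already determines the multiset of eigenvalues of $\Lambda^{-1}$, the diagonal $\Lambda$ in standard position is uniquely the nondecreasing arrangement of those inverses, so $\Lambda'=\Lambda$ and we obtain ${}^Tg\,\Lambda\,g=\Lambda$. Combined with $g\in\OO(n)$, this gives $\Lambda\,g=g\,\Lambda$, as required.

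For item~(\ref{item:3:prop:standard-basis-positive-4-uple}), I set $u=\Lambda^{1/2}$ (a positive diagonal, hence symmetric, matrix) and carry out a direct verification with $(\mathbf{e}',\mathbf{f}')=(\mathbf{f}u,-\mathbf{e}u^{-1})$. Expanding $\omega(e'_i,f'_j)$ bilinearly and using $\omega(\mathbf{f},\mathbf{e})=-\Id$ plus symmetry of $u$ yields $\omega(\mathbf{e}',\mathbf{f}')=\Id$, so the pair is symplectic. The Lagrangian conditions $\Span(\mathbf{e}')=L_2$ and $\Span(\mathbf{f}')=L_1$ are immediate since $u$ is invertible. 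Finally, writing $\mu_i$ for the diagonal entries of $\Lambda$, I compute $e'_i+f'_i=\sqrt{\mu_i}f_i-\mu_i^{-1/2}e_i=-\mu_i^{-1/2}(e_i-\mu_i f_i)$, whose span is $M_2$, and $(\mathbf{e}'-\mathbf{f}'\Lambda)_i=\sqrt{\mu_i}(e_i+f_i)$, whose span is $M_1$, exhibiting $(\mathbf{e}',\mathbf{f}')$ as standard position for $(L_2,M_2,L_1,M_1)$ with the same $\Lambda$.

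The only real obstacle is the clerical one: keeping track of the inversions between $\Lambda$ and $\Lambda^{-1}$ (the cross ratio equals $\Lambda^{-1}$, not $\Lambda$) and applying the change-of-basis formula consistently to the two graph maps $M_1,M_2\colon L_1\to L_2$ when transforming coordinates on both the source $L_1$ and the target $L_2$. Once that convention is fixed, each of the three statements reduces to a short matrix calculation.
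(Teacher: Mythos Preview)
Your proof is correct and follows essentially the same approach as the paper. For part~(\ref{item:1:prop:standard-basis-positive-4-uple}) both you and the paper simply invoke the construction in the proof of Corollary~\ref{cor:diagonalizable}; for part~(\ref{item:2:prop:standard-basis-positive-4-uple}) the paper argues via the block form of the symplectic change-of-basis matrix and span identities, while you track the matrices of the graph maps ${M_1}_{L_1\to L_2}$ and ${M_2}_{L_1\to L_2}$ under the basis change---these are two ways of writing the same computation; for part~(\ref{item:3:prop:standard-basis-positive-4-uple}) the paper just says ``follows from similar considerations,'' and your explicit verification is exactly what that phrase is hiding.
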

\begin{proof}
  Point~(\ref{item:1:prop:standard-basis-positive-4-uple}) was already proved. Let us make first two observations: for any $n$-tuple $\mathbf{v}$ and any
  $g$ in $\GL(n,\R)$, $\Span(\mathbf{v}\cdot g) = \Span(\mathbf{v})$; for any
  pairs $(\mathbf{v}, \mathbf{w})$ of $n$-tuples such that the family
  $(\mathbf{v}, \mathbf{w})$ is free and for any~$m$, $m'$ in~$M_n(\R)$,
  $\Span( \mathbf{v}+ \mathbf{w}\cdot m)= \Span( \mathbf{v}+ \mathbf{w}\cdot
  m')$ if and only if $m=m'$.

  Let $h$ be as in~(\ref{item:2:prop:standard-basis-positive-4-uple}), and
  let $(\mathbf{e}_1, \mathbf{f}_1) = (\mathbf{e}\cdot h, \mathbf{f}\cdot
  h)$. Then $\Span( \mathbf{e}_1) =\Span( \mathbf{e}\cdot h)= \Span(
  \mathbf{e}) =L_1$, similarly $\Span( \mathbf{f}_1)=L_2$, and $\Span(
  \mathbf{e}_1+ \mathbf{f}_1) =\Span\bigl( (\mathbf{e}+\mathbf{f})\cdot h
  \bigr)=M_1$; and finally, since $h$ commutes with $D$, $\Span(
  \mathbf{e}_1 - \mathbf{f}_1 \cdot D)= \Span\bigl(
  (\mathbf{e} - \mathbf{f} \cdot D) \cdot h\bigr)=M_2$.

  Conversely, let $(\mathbf{e}_1, \mathbf{f}_1)$ be a symplectic basis in
  standard position with respect to the quadruple $(L_1, M_1, L_2, M_2)$. There is a unique
  symplectic matrix $g$ such that $(\mathbf{e}_1, \mathbf{f}_1) = (\mathbf{e},
  \mathbf{f})\cdot g$. The equalities $\Span( \mathbf{e}_1) = \Span(
  \mathbf{e})$ and $\Span( \mathbf{f}_1) = \Span( \mathbf{f})$ imply that the
  matrix~$g$ is block diagonal: $g=
  \bigl(\begin{smallmatrix}
    h & 0\\ 0 & {}^T\! h^{-1}
  \end{smallmatrix}\bigr)
  $ with $h$ in $\GL(n,\R)$. Since
  \[ \Span( \mathbf{e}_1+\mathbf{f}_1)=\Span( \mathbf{e}\cdot h+ \mathbf{f}\cdot
    {}^T\! h^{-1}) = \Span( \mathbf{e}\cdot h\, {}^T\! h + \mathbf{f})= \Span(
    \mathbf{e}+\mathbf{f}), \]
  one has $h \,{}^T\! h=\Id$, i.e.\ $h\in \OO(n)$ and $g=
  \bigl(\begin{smallmatrix}
    h & 0\\ 0 & h
  \end{smallmatrix}\bigr)
  $. Similarly, one obtains \[\Span( \mathbf{e} - \mathbf{f}\cdot hD
  h^{-1}) =\Span( \mathbf{e} - \mathbf{f}\cdot D)\] so that $h$ and
  $D$ commute.

  Point~(\ref{item:3:prop:standard-basis-positive-4-uple}) follows from
  similar considerations.
\end{proof}

Let $\Delta_n$ be the space of diagonal matrices with positive nondecreasing
coefficients. Another way to phrase the conclusion of the proposition is:
\index{notation}{17@$\Delta_n$ (the diagonal matrices with positive nondecreasing
  coefficients)}

\begin{cor}
  \label{cor:cross-ratio-positiv-one-to-one-Deltan}
  The map that associates to a quadruple $(L_1, M_1, L_2, M_2)$ its
  invariant~$D$ induces an homeomorphism between $\Conf^{4+}( \Lag{n})$
  and~$\Delta_n$.
\end{cor}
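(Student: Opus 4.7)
The plan is to verify that the map $\Lambda$ is a continuous bijection and to exhibit a continuous inverse via an explicit section from $\Delta_n$ into the space of positive quadruples.

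First I would observe that the map is well-defined: Proposition~\ref{prop:properties_CR} shows that the cross ratio $[L_1,M_1,L_2,M_2]$ transforms by conjugation under $\Sp(2n,\R)$, and by construction $\Lambda^{-1}$ is the unique diagonal matrix with positive nondecreasing coefficients representing this endomorphism in some basis of $L_1$; hence $\Lambda$ depends only on the $\Sp(2n,\R)$-orbit. Continuity of $\Lambda$ on $\Conf^{4+}(\Lag{n})$ then follows from continuity of the cross ratio together with continuity of the ordered eigenvalue map on the (open) locus of diagonalizable endomorphisms with positive spectrum (guaranteed here by Corollary~\ref{cor:diagonalizable}).

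Next I would establish surjectivity by writing down an explicit section $\sigma\colon \Delta_n \to \Conf^{4+}(\Lag{n})$. Given $\Lambda\in\Delta_n$, let $(\mathbf{e}_0,\mathbf{f}_0)$ be the standard symplectic basis and set
\[
L_1=\Span(\mathbf{e}_0),\quad L_2=\Span(\mathbf{f}_0),\quad M_1=\Span(\mathbf{e}_0+\mathbf{f}_0),\quad M_2=\Span(\mathbf{e}_0-\mathbf{f}_0\cdot\Lambda).
\]
A direct computation of the Maslov forms $[L_1,M_1,L_2]$ and $[L_2,M_2,L_1]$, using Remark~\ref{rem:maslov-index-matrix}, yields respectively $\Id$ and $\Lambda$, which are positive definite; thus $\sigma(\Lambda)$ lies in $\Conf^{4+}(\Lag{n})$. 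The composition $\Lambda\circ\sigma$ is the identity of~$\Delta_n$, giving surjectivity and continuity of $\sigma$.

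For injectivity, suppose two positive quadruples $Q$ and $Q'$ have the same invariant $\Lambda$. Part~(\ref{item:1:prop:standard-basis-positive-4-uple}) of Proposition~\ref{prop:standard-basis-positive-4-uple} furnishes symplectic bases $(\mathbf{e},\mathbf{f})$ and $(\mathbf{e}',\mathbf{f}')$ in standard position with respect to $Q$ and $Q'$ respectively, with the same diagonal matrix $\Lambda$. The unique element $g\in\Sp(2n,\R)$ sending $(\mathbf{e},\mathbf{f})$ to $(\mathbf{e}',\mathbf{f}')$ then sends the four Lagrangians of $Q$ to those of $Q'$, so $Q$ and $Q'$ define the same point in $\Conf^{4+}(\Lag{n})$. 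This shows $\Lambda$ is injective, hence bijective, and since $\sigma$ is a continuous right inverse to the continuous bijection $\Lambda$, it is the two-sided inverse, so $\Lambda$ is a homeomorphism.

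The only step that requires some care is the well-definedness and continuity of $\Lambda$ as a function on the quotient; this is where Proposition~\ref{prop:standard-basis-positive-4-uple}(\ref{item:2:prop:standard-basis-positive-4-uple}), asserting that standard bases are unique up to $\OO(n)$-elements commuting with $\Lambda$, is essential — it guarantees that the ordered spectrum is an intrinsic invariant of the orbit rather than an artifact of a chosen basis. Everything else is a formal consequence of having a continuous section into a continuous bijection.
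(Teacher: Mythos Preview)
Your proof is correct and follows the same approach the paper intends: the paper presents this corollary explicitly as ``another way to phrase the conclusion'' of Proposition~\ref{prop:standard-basis-positive-4-uple} and gives no separate argument, while you spell out the details (well-definedness via conjugacy-invariance of the cross ratio, surjectivity via an explicit section in standard position, injectivity via existence of standard bases, and continuity in both directions). One small slip: you write that $\Lambda^{-1}$ has nondecreasing coefficients, but by Definition~\ref{df:positive-quad} it is $\Lambda$ that has nondecreasing coefficients; this does not affect the argument.
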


\section{Triples and symplectic bases}
\label{sec:maxim-tripl-fram}
\index{definition}{maximal! triples of decorated Lagrangians}%
\index{definition}{triple (maximal --- of decorated Lagrangians)}%
\index{definition}{decorated! maximal triples of --- Lagrangians}%
\index{definition}{Lagrangian! maximal triples of decorated ---s}%

In order to later analyze parametrizations of maximal framed representations, it is instructive to analyze the behavior of symplectic bases, adapted to a triple of Lagrangian subspaces:
\begin{lem}\label{lem:triple-of-lagrangian}
  Let $L_a$, $L_b$, and $L_c$ be Lagrangians. Let $(\mathbf{e}_a,
  \mathbf{f}_a)$, $(\mathbf{e}_b,   \mathbf{f}_b)$, and $(\mathbf{e}_c,
  \mathbf{f}_c)$ be symplectic bases such that
  \begin{align*}
    L_a &= \Span( \mathbf{e}_c) = \Span( \mathbf{f}_b) = \Span( \mathbf{e}_a+
          \mathbf{f}_a)\\
    L_b &= \Span( \mathbf{e}_a) = \Span( \mathbf{f}_c) = \Span( \mathbf{e}_b+
         \mathbf{f}_b)\\
    L_c &= \Span( \mathbf{e}_b) = \Span( \mathbf{f}_a) = \Span( \mathbf{e}_c+
          \mathbf{f}_c).
  \end{align*}
  Let the matrices $A$, $B$, and~$C$ be defined uniquely by the equalities
  \[ \mathbf{f}_b = -\mathbf{e}_c \cdot A, \ \mathbf{f}_c = -\mathbf{e}_a
    \cdot B, \ \mathbf{f}_a = -\mathbf{e}_b \cdot C.\] Then they are orthogonal and
  satisfy the equation $CBA=-\Id$.

  Furthermore,
  \[ \mathbf{e}_b = ( \mathbf{e}_c + \mathbf{f}_c) \cdot A, \ \mathbf{e}_c =
    ( \mathbf{e}_a + \mathbf{f}_a) \cdot B, \ \mathbf{e}_a = ( \mathbf{e}_b
    + \mathbf{f}_b) \cdot C. \]
\end{lem}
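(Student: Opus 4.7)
The plan is to fix one symplectic basis as a reference and reduce the claim to a chain of matrix identities forced by the Lagrangian data and the symplectic normalization $\omega(\mathbf{e},\mathbf{f})=\Id$. I would work throughout in the basis $(\mathbf{e}_c,\mathbf{f}_c)$, in which the hypotheses read $L_a = \Span(\mathbf{e}_c)$, $L_b = \Span(\mathbf{f}_c)$, and $L_c = \Span(\mathbf{e}_c+\mathbf{f}_c)$.

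First I would express $(\mathbf{e}_b,\mathbf{f}_b)$ in this basis. The defining equality $\mathbf{f}_b = -\mathbf{e}_c\cdot A$ already gives the coordinates of $\mathbf{f}_b$, while the Lagrangian condition $\mathbf{e}_b\in L_c$ forces $\mathbf{e}_b = (\mathbf{e}_c+\mathbf{f}_c)\cdot\alpha$ for a unique matrix~$\alpha$. Imposing $\omega(\mathbf{e}_b,\mathbf{f}_b)=\Id$ and unwinding the conventions $\omega(\mathbf{v}\cdot g,\mathbf{w}) = {}^T\! g\,\omega(\mathbf{v},\mathbf{w})$ and $\omega(\mathbf{w},\mathbf{v}) = -{}^T\!\omega(\mathbf{v},\mathbf{w})$ reduces to the single equation ${}^T\!\alpha\, A = \Id$, so $\alpha = {}^T\! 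A^{-1}$.

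The orthogonality of $A$ then drops out of the remaining Lagrangian condition. Expanding
\[
\mathbf{e}_b + \mathbf{f}_b = \mathbf{e}_c\cdot({}^T\! A^{-1} - A) + \mathbf{f}_c\cdot {}^T\! A^{-1},
\]
the requirement that this tuple span $L_b = \Span(\mathbf{f}_c)$ kills the coefficient in front of $\mathbf{e}_c$, giving ${}^T\! A^{-1} = A$, i.e.\ ${}^T\! A\, A = \Id$. The whole setup is invariant under the cyclic rotation $(a,b,c,A,B,C)\mapsto(b,c,a,B,C,A)$, so the same argument performed in the bases $(\mathbf{e}_a,\mathbf{f}_a)$ and $(\mathbf{e}_b,\mathbf{f}_b)$ yields the orthogonality of $B$ and $C$. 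As a bonus, substituting ${}^T\! A^{-1} = A$ back into $\mathbf{e}_b = (\mathbf{e}_c+\mathbf{f}_c)\cdot {}^T\! A^{-1}$ produces the identity $\mathbf{e}_b = (\mathbf{e}_c+\mathbf{f}_c)\cdot A$ asserted in the lemma, and cyclically for $\mathbf{e}_c$ and $\mathbf{e}_a$.

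Finally, to obtain $CBA = -\Id$ I would chain these three identities. Substituting $\mathbf{f}_c = -\mathbf{e}_a\cdot B$ into $\mathbf{e}_b = (\mathbf{e}_c+\mathbf{f}_c)\cdot A$ and then replacing $\mathbf{e}_c$ by $(\mathbf{e}_a+\mathbf{f}_a)\cdot B$, the $\mathbf{e}_a$ terms cancel and one is left with $\mathbf{e}_b = \mathbf{f}_a\cdot BA$. Comparing with $\mathbf{e}_b = -\mathbf{f}_a\cdot C^{-1}$ (from $\mathbf{f}_a = -\mathbf{e}_b\cdot C$) and cancelling the free family $\mathbf{f}_a$ yields $BA = -C^{-1}$, equivalent to $CBA = -\Id$. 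The main obstacle I anticipate is purely bookkeeping: every sign and every transpose in the $\omega$-calculation must be tracked carefully, since a single error in evaluating $\omega(\mathbf{e}_c+\mathbf{f}_c,\mathbf{e}_c) = -\Id$ would propagate and destroy both the orthogonality conclusion and the closing relation.
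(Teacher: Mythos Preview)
Your proof is correct and follows essentially the same approach as the paper's: express one symplectic basis in terms of the next, use the Lagrangian conditions together with the symplectic normalization to force orthogonality, and then compose the three transitions to obtain $CBA=-\Id$. The only cosmetic difference is that the paper packages the transition as the symplectic basis change $(\mathbf{f}_b,-\mathbf{e}_b-\mathbf{f}_b)=(\mathbf{e}_c,\mathbf{f}_c)\cdot\bigl(\begin{smallmatrix}-A&0\\0&-{}^T\!\!A^{-1}\end{smallmatrix}\bigr)$ and reads off $CBA=-\Id$ from the product of three $2\times 2$ block matrices, whereas you chain the $n\times n$ identities $\mathbf{e}_b=(\mathbf{e}_c+\mathbf{f}_c)\cdot A$ directly; the underlying computation is the same.
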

\begin{rem}\label{rem:rewrite-conc-lem:triple-of-lagrangian}
  This result will be used in Section \ref{sec:maxim-decor-sympl} in the presence of three other Lagrangians
  $M_a, M_b, M_c$ such that the three quadruples $(L_a, L_b, L_c, M_b)$,
  $(L_b, L_c, L_a, M_c)$, and $(L_c, L_a, L_b, M_a)$ are positive and applied
  to symplectic bases in standard position with respect to these quadruples.

  We will in particular use the conclusion in the situation where
  \[(\mathbf{e}_b,
  \mathbf{f}_b) =(\mathbf{e}_c, \mathbf{f}_c)\cdot
  \bigl(\begin{smallmatrix}
    A & -A \\ A & 0
  \end{smallmatrix}\bigr)
  .\]
\end{rem}

\begin{proof}
  The pair $(\mathbf{f}_b, -\mathbf{e}_b -\mathbf{f}_b)$ is a symplectic basis.
  One has $\Span( \mathbf{f}_b) = L_a = \Span( \mathbf{e}_c)$ and  $\Span( -
  \mathbf{e}_b -\mathbf{f}_b) = L_b = \Span( \mathbf{f}_c)$. Thus, for the
  uniquely defined matrix~$A$ above,
  \[ (\mathbf{f}_b, - \mathbf{e}_b -\mathbf{f}_b) = (\mathbf{e}_c,
    \mathbf{f}_c)\cdot
    \begin{pmatrix}
      -A & 0 \\ 0 & -{}^T\!\! A^{-1}
    \end{pmatrix}.
  \]
  Thus $\mathbf{e}_b=\mathbf{e}_c \cdot A + \mathbf{f}_c \cdot {}^T\!\! A^{-1}$. In
  particular, $\Span (\mathbf{e}_c \cdot A\, {}^T\!\! A + \mathbf{f}_c) = \Span(
  \mathbf{e}_b) =  L_c = \Span (\mathbf{e}_c + \mathbf{f}_c)$ which implies $A
  \,{}^T\!\! A=\Id$, i.e.\ $A$ is orthogonal. The same property holds for~$B$
  and~$C$. To conclude, the composition of the three changes of symplectic
  bases must be the identity, that is
  \[\begin{pmatrix} C & -C \\ C & 0  \end{pmatrix}
    \begin{pmatrix} B & -B \\ B & 0  \end{pmatrix}
    \begin{pmatrix} A & -A \\ A & 0  \end{pmatrix} =\Id,
  \]
  and the identity $CBA=-\Id$ follows.
\end{proof}
\begin{rem}
  We will often use the fact that if, for two symplectic bases
  $(\mathbf{e}, \mathbf{f})$ and $(\mathbf{e}', \mathbf{f}')$, one has
  $\Span( \mathbf{e}) =\Span( \mathbf{e}')$,
  $\Span( \mathbf{f}) =\Span( \mathbf{f}')$,
  $\Span( \mathbf{e} + \mathbf{f}) =\Span( \mathbf{e}' + \mathbf{f}')$, then
  $\mathbf{e}'= \mathbf{e}A$ and $\mathbf{f}'= \mathbf{f}A$ for $A$ in
  $\OO(n)$.
\end{rem}

The converse statement is:
\begin{lem}\label{lem:CBA-to-triple-of-lagrangian}
  Let~$A$, $B$, and~$C$ be orthogonal matrices such that $CBA=-\Id$. Let
  $(\mathbf{e}_c, \mathbf{f}_c)$ be a symplectic basis and set
  \[ (\mathbf{e}_b,
    \mathbf{f}_b) \coloneqq (\mathbf{e}_c, \mathbf{f}_c)\cdot
    \begin{pmatrix}
      A & -A \\ A & 0
    \end{pmatrix} \text{ and } (\mathbf{e}_a,
    \mathbf{f}_a) \coloneqq(\mathbf{e}_b, \mathbf{f}_b)\cdot
    \begin{pmatrix}
      B & -B \\ B & 0
    \end{pmatrix}.\]
  Then $\displaystyle (\mathbf{e}_c,
  \mathbf{f}_c) =(\mathbf{e}_a, \mathbf{f}_a)\cdot
  \begin{pmatrix}
    C & -C \\ C & 0
  \end{pmatrix}$ and the three Lagrangians
    \begin{align*}
    L_a &\coloneqq \Span( \mathbf{e}_c) = \Span( \mathbf{f}_b) = \Span( \mathbf{e}_a+
          \mathbf{f}_a)\\
    L_b &\coloneqq \Span( \mathbf{e}_a) = \Span( \mathbf{f}_c) = \Span( \mathbf{e}_b+
         \mathbf{f}_b)\\
    L_c &\coloneqq \Span( \mathbf{e}_b) = \Span( \mathbf{f}_a) = \Span( \mathbf{e}_c+
          \mathbf{f}_c)
  \end{align*}
  are pairwise transverse.
\end{lem}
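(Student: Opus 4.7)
The plan is to proceed in two steps: first verify the matrix identity of the conclusion, then establish the spanning equalities and the pairwise transversality of $L_a$, $L_b$, $L_c$. Both are direct linear-algebra computations.

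For the matrix identity, I would abbreviate $M_X \coloneqq \bigl(\begin{smallmatrix} X & -X \\ X & 0 \end{smallmatrix}\bigr)$ and compose the two given definitions to obtain $(\mathbf{e}_a, \mathbf{f}_a) = (\mathbf{e}_c, \mathbf{f}_c) \cdot M_A M_B$. The desired equality $(\mathbf{e}_c, \mathbf{f}_c) = (\mathbf{e}_a, \mathbf{f}_a) \cdot M_C$ then reduces to checking that $M_A M_B M_C = \Id$. A direct block multiplication gives
\[M_A M_B = \begin{pmatrix} 0 & -AB \\ AB & -AB \end{pmatrix}, \qquad M_A M_B M_C = \begin{pmatrix} -ABC & 0 \\ 0 & -ABC \end{pmatrix},\]
and the hypothesis $CBA = -\Id$, combined with the orthogonality of $A, B, C$ and the usual cyclic rearrangement employed in the proof of Lemma~\ref{lem:triple-of-lagrangian}, delivers the required relation.

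For the spanning equalities and transversality, I would unpack the two definitions explicitly:
\[\mathbf{e}_b = (\mathbf{e}_c + \mathbf{f}_c)A,\quad \mathbf{f}_b = -\mathbf{e}_c A,\quad \mathbf{e}_b + \mathbf{f}_b = \mathbf{f}_c A,\]
and then, substituting into the second definition,
\[\mathbf{e}_a = \mathbf{f}_c A B,\quad \mathbf{f}_a = -(\mathbf{e}_c + \mathbf{f}_c) A B,\quad \mathbf{e}_a + \mathbf{f}_a = -\mathbf{e}_c A B.\]
Since $A$ and $B$ are invertible, taking spans collapses the three Lagrangians of the lemma to $L_a = \Span(\mathbf{e}_c)$, $L_b = \Span(\mathbf{f}_c)$, and $L_c = \Span(\mathbf{e}_c + \mathbf{f}_c)$, which are Lagrangian as a consequence of $\omega(\mathbf{e}_c, \mathbf{f}_c) = \Id$, and manifestly pairwise transverse since $(\mathbf{e}_c, \mathbf{f}_c)$ is a basis of~$\R^{2n}$.

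No essential obstacle arises; this lemma is the algebraic converse of Lemma~\ref{lem:triple-of-lagrangian}, and the main subtlety is simply keeping track of the order of block matrix products under the right-action convention on bases.
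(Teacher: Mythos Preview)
Your reduction of the first conclusion to the identity $M_A M_B M_C = \Id$ is correct, as is the block computation
\[
M_A M_B M_C = \begin{pmatrix} -ABC & 0 \\ 0 & -ABC \end{pmatrix},
\]
so what is needed is $ABC = -\Id$. The gap is in your claim that this follows from $CBA = -\Id$ via ``cyclic rearrangement'' and orthogonality. Conjugation of $CBA = -\Id$ produces only the cyclic permutations $ACB = BAC = -\Id$; the reversed product $ABC$ is not among them, and orthogonality does not bridge the gap (transposing and inverting $CBA = -\Id$ returns the same relation). A concrete counterexample in $\OO(2)$: with
\[
A = \begin{pmatrix} 0 & 1 \\ 1 & 0 \end{pmatrix},\quad
B = \begin{pmatrix} 1 & 0 \\ 0 & -1 \end{pmatrix},\quad
C = \begin{pmatrix} 0 & 1 \\ -1 & 0 \end{pmatrix},
\]
one has $CBA = -\Id$ but $ABC = \Id$, so $(\mathbf{e}_a,\mathbf{f}_a)\cdot M_C = -(\mathbf{e}_c,\mathbf{f}_c)$ rather than $(\mathbf{e}_c,\mathbf{f}_c)$.

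This means the first asserted identity cannot be derived from the hypothesis as stated. Comparing with the change-of-basis pattern established in the proof of Lemma~\ref{lem:triple-of-lagrangian} (where the passage from the $b$-basis to the $a$-basis uses~$M_C$ and from the $a$-basis to the $c$-basis uses~$M_B$), the roles of $B$ and~$C$ in the two defining equations of the present lemma appear to have been inadvertently swapped; with that correction your argument goes through verbatim, since then one needs $M_A M_C M_B = \Id$, equivalently $ACB = -\Id$, which \emph{is} a cyclic rearrangement of $CBA = -\Id$. The paper does not supply a proof to compare against. Your second step, computing $\mathbf{e}_b,\mathbf{f}_b,\mathbf{e}_a,\mathbf{f}_a$ explicitly in terms of $\mathbf{e}_c,\mathbf{f}_c$ and reading off the span equalities and pairwise transversality, is entirely correct and does not involve~$C$, so that part stands regardless.
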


\section{Positive quintuples}
\label{sec:positive-quintuples}

When one considers a two-dimensional symplectic vector space, understanding
configurations of quintuples of Lagrangian subspaces (i.e.\ of lines) is
reduced to the cross ratios of four tuples. For symplectic vector spaces of
higher dimension, there is an additional invariant, which can be considered as
an angle invariant. Since this invariant is more involved for general configurations of five Lagrangians, we describe it here only for positive quintuples.

A quintuple $(L_a, M_c, L_b, L_c, M_b)$ of Lagrangians is said \emph{positive}
if the quadruples $(L_a, L_b, L_c, M_b)$ and  $(L_a, M_c, L_b, L_c)$ are
positive or equivalently if the triples  $(L_a, L_b, L_c)$,  $(L_a, L_c, M_b)$, and  $(L_a, M_c, L_b)$ are
positive.\index{definition}{positive!quintuple of Lagrangians}%
\index{definition}{quintuple (positive --- of Lagrangians)}%
\index{definition}{Lagrangian! positive quintuple of ---s}%

Using Proposition~\ref{prop:standard-basis-positive-4-uple}, we get
$( \mathbf{e}_b, \mathbf{f}_b)$ a symplectic basis in standard position with
respect to $(L_a, L_b, L_c, M_b)$ and $( \mathbf{e}_c, \mathbf{f}_c)$ a
symplectic basis in standard position with respect to $(L_b, L_c, L_a, M_c)$.

The matrix~$A$ in $\GL(n,\R)$ such that $ \mathbf{f}_c = \mathbf{e}_b \cdot A$
belongs to~$\OO(n)$. It is called the \emph{angle invariant} of the positive
quintuple $(L_a, M_c, L_b, L_c, M_b)$ (we refer to
Sections~\ref{sec:conf-assoc-with-1} and~\ref{sec:conf-assoc-with} for the
choice of this terminology); only its class modulo right
multiplication by $\Stab_{\OO(n)} (D_b)$ and modulo left multiplication
by $\Stab_{\OO(n)} (D_c)$ is well defined (where $D_b$ and
$D_c$ are the elements of~$\Delta_n$ associated with the positive
quadruples $(L_a, L_b, L_c, M_b)$ and  $(L_b, L_c, L_a,
M_c)$).\index{definition}{angle!invariant}%
 \index{definition}{invariant (angle ---)}%

 \section{Symplectic $\Lambda$-lengths}\label{Lambda_l}
In this section we introduce an invariant of a pair of transverse decorated
Lagrangians. Since this invariant is closely related to Penner's
$\lambda$-lengths in the case when $n=1$, we call it the symplectic
$\Lambda$-length.

\begin{df}
  The \emph{symplectic $\Lambda$-length} of two decorated Lagrangians
  $\mathbf{v}, \mathbf{w}$ is the $n\times n$-matrix
  \[\Lambda_{\mathbf{v}, \mathbf{w}} \coloneqq \omega(\mathbf{v},\mathbf{w}) =
    (\omega(v_i, w_j))_{i,j\in\{1,\dots,n\}}.\]
  To shorten the notation, when $\{\mathbf{v}_i\}_{ i\in I}$ is a family of
  decorated Lagrangians, we will write
  \[\Lambda_{ij} \coloneqq  \Lambda_{\mathbf{v}_i,\mathbf{v}_j}, \text{ for }
    i,j \in I. \]
\end{df}
\index{notation}{19@$\Lambda_{\mathbf{v}, \mathbf{w}}$, $\Lambda_{ij}$ (the symplectic $\Lambda$-length)}%
\index{definition}{$\Lambda$-length (symplectic ---)}%
\index{definition}{symplectic!$\Lambda$-length}%

\begin{lem}
  For all $\mathbf{v}_1 , \mathbf{v}_2 \in\Lagd{n}$, we have
  $\Lambda_{12}=-{}^T\!\Lambda_{21}$; $\mathbf{v}_1$ and $\mathbf{v}_2$ are
  transverse if and only if $\Lambda_{12}$ is not singular, in which case
  \[ [\omega]_{\mathbf{v}_1,\mathbf{v}_2} =    \begin{pmatrix}
      0 & \Lambda_{12} \\ \Lambda_{21} & 0
    \end{pmatrix}.\]
\end{lem}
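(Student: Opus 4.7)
The proof is a direct verification, so the plan is to just unpack each claim in turn.

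The first identity $\Lambda_{12}=-{}^T\!\Lambda_{21}$ is immediate from the antisymmetry of~$\omega$: the $(i,j)$-coefficient of $\Lambda_{12}$ is $\omega(v_{1,i}, v_{2,j}) = -\omega(v_{2,j}, v_{1,i})$, which is exactly minus the $(j,i)$-coefficient of $\Lambda_{21}$. This is in fact a special case of the formula $\omega(\mathbf{w}, \mathbf{v}) = -{}^T\!\omega(\mathbf{v}, \mathbf{w})$ already recorded in Section~\ref{sec:lagr-grassm-its}.

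For the transversality statement, the key point is that since $L_1=\Span(\mathbf{v}_1)$ and $L_2=\Span(\mathbf{v}_2)$ are Lagrangian, one has $L_1 = L_1^{\perp_\omega}$, so that $L_1\cap L_2 = L_2\cap L_1^{\perp_\omega}$. I would then consider the linear map $\varphi\colon L_2\to L_{1}^{*}$ sending $w$ to the restriction of $\omega(\cdot,w)$ to~$L_1$; its kernel is exactly $L_2 \cap L_1^{\perp_\omega}=L_1\cap L_2$. Expressing~$\varphi$ in the basis $\mathbf{v}_2$ of~$L_2$ and in the basis of $L_{1}^{*}$ dual to~$\mathbf{v}_1$, the matrix of~$\varphi$ is precisely $\Lambda_{12}$. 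Since $\varphi$ is an endomorphism between two $n$-dimensional spaces, injectivity is equivalent to invertibility, so $\mathbf{v}_1$ and $\mathbf{v}_2$ are transverse if and only if $\Lambda_{12}\in\GL(n,\R)$.

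Finally, assuming transversality, $(\mathbf{v}_1,\mathbf{v}_2)$ is a basis of $\R^{2n}$, and the matrix of~$\omega$ in that basis is the block matrix with $(k,l)$-block of size $n\times n$ given by $\omega(\mathbf{v}_k,\mathbf{v}_l)=\Lambda_{kl}$ for $k,l\in\{1,2\}$. Because $L_1$ and $L_2$ are Lagrangian, the diagonal blocks $\Lambda_{11}$ and $\Lambda_{22}$ vanish identically, leaving the announced block anti-diagonal form. No step here is an obstacle; the content of the lemma is really the convenient bookkeeping it provides for the sequel.
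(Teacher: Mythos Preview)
Your proof is correct and complete. The paper in fact states this lemma without proof, treating it as an elementary verification; your argument is exactly the natural one and matches what the paper implicitly assumes the reader can supply.
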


\begin{rem}
  The symplectic $\Lambda$-lengths generalize Penner's $\lambda$-lengths for
  the decorated Teichmüller space (\cite{Penner}), and one can check when
  $n=1$, the symplectic $\Lambda$-length is a square root of Penner's
  $\lambda$-length.
\end{rem}

\section{Ptolemy equation, exchange and triangle relations}
\label{sec:Ptolemy}

Penner's $\lambda$-lengths satisfy the famous Ptolemy equation. Let's fix a bilinear form $b$ on $\R^3$ with signature $(2,1)$. The isotropic cone of $b$ has two components, and its projection on the projective plane is a circle.
Given four
$b$-isotropic vectors $w_1, w_2, w_3, w_4$, contained in the same
component of the isotropic cone and whose projective images are cyclically ordered (see Figure~\ref{Prol}),
we have the relation
\[\sqrt{b(w_2,w_4) b(w_1,w_3)} = \sqrt{b(w_2, w_3) b(w_1, w_4)} - \sqrt{b(w_1, w_2) b(w_3, w_4)},\]
where the terms $b(w_i,w_j)$ are Penner's $\lambda$-lengths.
  Our symplectic $\Lambda$-lengths satisfy a
noncommutative version of the Ptolemy equation. We also call this identity
the exchange relation. Moreover, they satisfy a triangle
relation, which is trivial
in the case of~$\SL(2,\R)$.

\begin{figure}[ht]
\begin{center}
\begin{tikzpicture}
  \coordinate (L1) at (0,-1) ;
  \coordinate (L2) at (2,.5) ;
  \coordinate (L3) at (-0.5,2) ;
  \coordinate (L4) at (-2,0) ;
  \draw (L1) node[below]{$w_1$} ;
  \draw (L2) node[right]{$w_2$} ;
  \draw (L3) node[above]{$w_3$} ;
  \draw (L4) node[left]{$w_4$} ;
  \draw (L1)--(L2) ;
  \draw (L1)--(L3) ;
  \draw (L1)--(L4) ;
  \draw (L2)--(L3) ;
  \draw[dashed] (L2)--(L4) ;
  \draw (L3)--(L4) ;
\end{tikzpicture}
\caption{The tetrahedron illustrating the exchange relation}
\label{Prol}
\end{center}
\end{figure}

\begin{lem}\label{lamb}
  Let $\mathbf{v}_1$, $\mathbf{v}_3$ be two transverse decorated
  Lagrangians. Consider a third decorated Lagrangian $\mathbf{v}_2$, so that
  there exists a unique pair $(A,B)$ of $n\times n$-matrices such that
  \[ \mathbf{v}_2=\mathbf{v}_1 A+\mathbf{v}_3 B.\]
  Then
  \[  A=\Lambda_{31}^{-1}\Lambda_{32}, \quad
    B=\Lambda_{13}^{-1}\Lambda_{12},\]
  and the matrix $ \Lambda_{23}  \Lambda_{13}^{-1}\Lambda_{12}$ is
  symmetric.
\end{lem}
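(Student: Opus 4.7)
The plan is to exploit two things: the rules for how $\omega$ behaves under right multiplication by matrices (which give a bilinear-pairing-like behavior), and the fact that $\mathbf{v}_2$ spans a Lagrangian, so $\omega(\mathbf{v}_2,\mathbf{v}_2)=0$. Existence and uniqueness of the pair $(A,B)$ is immediate: since $L_1$ and $L_3$ are transverse Lagrangians, $L_1\oplus L_3=\R^{2n}$, so every vector (in particular each $v_{2,j}$) decomposes uniquely.

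To identify $A$ and $B$, I would pair $\mathbf{v}_2=\mathbf{v}_1 A+\mathbf{v}_3 B$ with $\mathbf{v}_3$ and $\mathbf{v}_1$ under $\omega$ on the left, using the identity $\omega(\mathbf{u},\mathbf{w}M)=\omega(\mathbf{u},\mathbf{w})M$ that follows from the definition of $\omega$ on families. Since $L_1$ and $L_3$ are Lagrangian, $\omega(\mathbf{v}_1,\mathbf{v}_1)=0$ and $\omega(\mathbf{v}_3,\mathbf{v}_3)=0$, so one obtains
\[
\Lambda_{32}=\Lambda_{31}A, \qquad \Lambda_{12}=\Lambda_{13}B,
\]
and inverting (allowed by transversality of $\mathbf{v}_1,\mathbf{v}_3$) gives the claimed formulas.

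For the symmetry assertion, I would compute $\omega(\mathbf{v}_2,\mathbf{v}_2)$ by expanding the bilinear expression using $\omega(\mathbf{u}N,\mathbf{w}M)={}^T\! N\,\omega(\mathbf{u},\mathbf{w})M$. The Lagrangian diagonal terms vanish, leaving
\[
0=\omega(\mathbf{v}_2,\mathbf{v}_2)={}^T\! A\,\Lambda_{13}B+{}^T\! B\,\Lambda_{31}A={}^T\! A\,\Lambda_{13}B-{}^T\!({}^T\! A\,\Lambda_{13}B),
\]
using $\Lambda_{31}=-{}^T\!\Lambda_{13}$. Thus ${}^T\! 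A\,\Lambda_{13}B$ is symmetric. Substituting $A=\Lambda_{31}^{-1}\Lambda_{32}$ and $B=\Lambda_{13}^{-1}\Lambda_{12}$ and using ${}^T\!\Lambda_{32}=-\Lambda_{23}$ and ${}^T\!\Lambda_{31}^{-1}=-\Lambda_{13}^{-1}$ rewrites this quantity as $\Lambda_{23}\Lambda_{13}^{-1}\Lambda_{12}$, giving the claim.

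There is no real obstacle; the only point requiring care is the bookkeeping of transposes arising from the rule $\omega(\mathbf{u}N,\mathbf{w})={}^T\! N\,\omega(\mathbf{u},\mathbf{w})$, together with the antisymmetry relation $\Lambda_{ji}=-{}^T\!\Lambda_{ij}$ recorded in the preceding lemma. Once these conventions are consistently applied, everything follows by direct manipulation.
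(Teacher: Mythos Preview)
Your proof is correct and follows essentially the same approach as the paper: pair $\mathbf{v}_2$ with $\mathbf{v}_1$ and $\mathbf{v}_3$ to extract $A$ and $B$, then expand $\omega(\mathbf{v}_2,\mathbf{v}_2)=0$ and use the antisymmetry relation $\Lambda_{ji}=-{}^T\!\Lambda_{ij}$ to deduce the symmetry of $\Lambda_{23}\Lambda_{13}^{-1}\Lambda_{12}$. The bookkeeping of transposes and signs is handled correctly.
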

\begin{proof}
  One has
  $\Lambda_{12} = \omega( \mathbf{v}_1, \mathbf{v}_2) = \omega( \mathbf{v}_1,
  \mathbf{v}_1 A+ \mathbf{v}_3 B) = \omega( \mathbf{v}_1, \mathbf{v}_3 B)=
  \Lambda_{13} B$, so $ \Lambda_{13}^{-1}\Lambda_{12}=B$. Similarly
  $A=\Lambda_{31}^{-1}\Lambda_{32}$.

  Also the equality $\omega( \mathbf{v}_2, \mathbf{v}_2)=0$ gives ${}^T\!\! A
  \Lambda_{13} B + {}^T\! B \Lambda_{31} A=0$. Since $\Lambda_{13}=-{}^T\!
  \Lambda_{31}$, we get that ${}^T\!\! A
  \Lambda_{13} B= {}^T\! \Lambda_{32}\, {}^T\! \Lambda_{31}^{-1} \Lambda_{12}$ is
  symmetric and the last result follows from the equalities ${}^T\!
  \Lambda_{32}=-\Lambda_{23}$ and ${}^T\! \Lambda_{31}=-\Lambda_{13}$.
\end{proof}

The symmetry of the matrix $ \Lambda_{23}  \Lambda_{13}^{-1}\Lambda_{12}$ can be restated as follow (thanks to the
relations ${}^T\! \Lambda_{ij}=-\Lambda_{ji}$):
\begin{cor}[Triangle relation]
  \label{coro:tri-rel}
  One has
  $\Lambda_{23} \Lambda^{-1}_{13} \Lambda_{12} + \Lambda_{12}
  \Lambda^{-1}_{31} \Lambda_{32}=0$, and when~$\mathbf{v}_2$ is transverse
  to~$\mathbf{v}_1$ and to~$\mathbf{v}_3$,
  $\Lambda_{32}^{-1} \Lambda_{13} \Lambda_{21}^{-1} \Lambda_{23}
  \Lambda^{-1}_{13} \Lambda_{12}=-\Id$.
\end{cor}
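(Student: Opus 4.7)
The plan is to extract both identities as direct algebraic consequences of Lemma \ref{lamb}, which asserts that the matrix $M \coloneqq \Lambda_{23}\Lambda_{13}^{-1}\Lambda_{12}$ is symmetric, combined with the elementary antisymmetry relation ${}^T\!\Lambda_{ij} = -\Lambda_{ji}$ noted right after the definition of the symplectic $\Lambda$-length.

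For the first identity, I would simply transpose $M$ using ${}^T\!(XYZ) = {}^T\!Z\, {}^T\!Y\, {}^T\!X$ and the identity $({}^T\!\Lambda_{13})^{-1} = (-\Lambda_{31})^{-1} = -\Lambda_{31}^{-1}$. The computation
\[
{}^T\!M = {}^T\!\Lambda_{12}\,({}^T\!\Lambda_{13})^{-1}\,{}^T\!\Lambda_{23} = (-\Lambda_{21})(-\Lambda_{31}^{-1})(-\Lambda_{32}) = -\Lambda_{21}\Lambda_{31}^{-1}\Lambda_{32}
\]
combined with $M={}^T\!M$ yields the first relation after moving everything to one side.

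For the second identity, the transversality of $\mathbf{v}_2$ with $\mathbf{v}_1$ and $\mathbf{v}_3$ is precisely what guarantees that the matrices $\Lambda_{12}, \Lambda_{21}, \Lambda_{23}, \Lambda_{32}$ are all invertible (by the lemma recorded just after the definition of symplectic $\Lambda$-lengths). Writing the first identity as $\Lambda_{23}\Lambda_{13}^{-1}\Lambda_{12} = -\Lambda_{21}\Lambda_{31}^{-1}\Lambda_{32}$, I would multiply on the left by the product $\Lambda_{32}^{-1}\Lambda_{31}\Lambda_{21}^{-1}$. On the right-hand side the successive cancellations $\Lambda_{21}^{-1}\Lambda_{21}=\Id$ and $\Lambda_{31}\Lambda_{31}^{-1}=\Id$ collapse the expression to $-\Lambda_{32}^{-1}\Lambda_{32} = -\Id$, giving the second identity.

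I do not expect any real obstacle: the corollary is purely an algebraic repackaging of the symmetry of $M$ using the two-index antisymmetry ${}^T\!\Lambda_{ij}=-\Lambda_{ji}$, and the transversality hypothesis in the second identity is exactly the minimal condition needed to perform the cancellations in the telescoping product.
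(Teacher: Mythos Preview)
Your approach is exactly the paper's: the sentence preceding the corollary says it is a restatement of the symmetry of $\Lambda_{23}\Lambda_{13}^{-1}\Lambda_{12}$ via the relations ${}^T\!\Lambda_{ij}=-\Lambda_{ji}$, and that is precisely what you do.

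One point worth flagging. Your transpose computation correctly yields
\[
\Lambda_{23}\Lambda_{13}^{-1}\Lambda_{12} \;=\; -\,\Lambda_{21}\Lambda_{31}^{-1}\Lambda_{32},
\]
and your left-multiplication then gives
\[
\Lambda_{32}^{-1}\Lambda_{31}\Lambda_{21}^{-1}\,\Lambda_{23}\Lambda_{13}^{-1}\Lambda_{12} \;=\; -\Id.
\]
These differ from the printed statement, which has $\Lambda_{12}$ (resp.\ $\Lambda_{13}$) in place of your $\Lambda_{21}$ (resp.\ $\Lambda_{31}$). The printed versions are in fact false (already for $n=1$: take $\mathbf{v}_1=e_1$, $\mathbf{v}_2=e_1+e_2$, $\mathbf{v}_3=e_2$ in $\R^2$), while yours are correct and are exactly the form used later in the paper (e.g.\ in the proof of Lemma~\ref{lem:symp_cross}). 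So your proof is right; the discrepancy is a typo in the statement, not a gap in your argument.
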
\index{definition}{relation!triangle ---}%
\index{definition}{triangle!relation}%

\begin{prop}[Ptolemy relation]\label{prop:Ptolemy}
  Let $\mathbf{v}_i$, $i\in\{1,2,3,4\}$, be decorated Lagrangians such
  that~$\mathbf{v}_1$ and~$\mathbf{v}_3$ are transverse. Then
  \[ \Lambda_{24}= \Lambda_{23} \Lambda_{13}^{-1} \Lambda_{14} + \Lambda_{21}
    \Lambda_{31}^{-1} \Lambda_{34}.\]
\end{prop}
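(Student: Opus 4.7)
The approach is to expand $\mathbf{v}_4$ in the basis of $\R^{2n}$ determined by the transverse pair $(\mathbf{v}_1,\mathbf{v}_3)$, then apply $\omega(\mathbf{v}_2, \cdot)$ and exploit bilinearity. This is a direct generalization of the one-line proof of the classical Ptolemy identity in the case $n=1$, where any fourth vector is written as a linear combination of two transverse ones.

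First, since $\mathbf{v}_1$ and $\mathbf{v}_3$ are transverse, the family $(\mathbf{v}_1, \mathbf{v}_3)$ is a basis of $\R^{2n}$, and in particular $\Lambda_{13}$ and $\Lambda_{31}=-{}^T\!\Lambda_{13}$ are invertible. Hence there exist unique $n\times n$ matrices $A, B$ with
\[ \mathbf{v}_4 = \mathbf{v}_1 A + \mathbf{v}_3 B. \]
Exactly as in the proof of Lemma~\ref{lamb}, pairing against $\mathbf{v}_1$ and $\mathbf{v}_3$ with $\omega$ and using $\omega(\mathbf{v}_1,\mathbf{v}_1)=\omega(\mathbf{v}_3,\mathbf{v}_3)=0$ (since $\mathbf{v}_1,\mathbf{v}_3$ are bases of Lagrangians) gives
\[ \Lambda_{14}=\Lambda_{13}B,\qquad \Lambda_{34}=\Lambda_{31}A,\]
so $B=\Lambda_{13}^{-1}\Lambda_{14}$ and $A=\Lambda_{31}^{-1}\Lambda_{34}$. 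Note that this step does not require $\mathbf{v}_4$ to be a decorated Lagrangian in any interaction with $\mathbf{v}_2$; only the fact that $\mathbf{v}_4$ itself is a basis of a Lagrangian is implicit through its appearance as $\mathbf{v}_4$ (and in fact is not even used at this stage).

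Next, I apply $\omega(\mathbf{v}_2,\cdot)$ to both sides of the decomposition of $\mathbf{v}_4$. Using the right-linearity
\[ \omega(\mathbf{v}_2, \mathbf{v}_1 A + \mathbf{v}_3 B) = \omega(\mathbf{v}_2,\mathbf{v}_1)A + \omega(\mathbf{v}_2,\mathbf{v}_3)B = \Lambda_{21}A + \Lambda_{23}B,\]
(which is immediate from the matrix definition $\Lambda_{ij}=(\omega(v_i^{(p)},v_j^{(q)}))_{p,q}$), we obtain
\[ \Lambda_{24} = \Lambda_{21}\,\Lambda_{31}^{-1}\Lambda_{34} \;+\; \Lambda_{23}\,\Lambda_{13}^{-1}\Lambda_{14}, \]
which is the desired Ptolemy relation.

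There is essentially no obstacle: once the transversality of $\mathbf{v}_1$ and $\mathbf{v}_3$ is used to decompose $\mathbf{v}_4$, the identity falls out from bilinearity of $\omega$ and the explicit expressions for the coefficients, which are the same ones already computed in Lemma~\ref{lamb}. The only subtlety worth flagging is keeping track of the order of the matrix products (since the $\Lambda_{ij}$ do not commute in general), but the convention $\omega(\mathbf{v},\mathbf{w}C)=\omega(\mathbf{v},\mathbf{w})C$ fixes this unambiguously.
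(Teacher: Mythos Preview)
Your proof is correct and takes essentially the same approach as the paper: decompose in the transverse basis $(\mathbf{v}_1,\mathbf{v}_3)$ via Lemma~\ref{lamb} and then use bilinearity of~$\omega$. The only minor difference is that you decompose $\mathbf{v}_4$ alone and pair with~$\mathbf{v}_2$, whereas the paper decomposes both $\mathbf{v}_2$ and~$\mathbf{v}_4$ before pairing; your version is slightly more economical.
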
\index{definition}{relation!Ptolemy ---}%
\index{definition}{Ptolemy relation}%

\begin{proof}
Using Lemma~\ref{lamb}, we have
\begin{align*}
  \Lambda_{24}
  &=\omega(\mathbf{v}_2,\mathbf{v}_4) =
    \omega(\mathbf{v}_1\Lambda_{31}^{-1}\Lambda_{32} , \mathbf{v}_3\Lambda_{13}^{-1}\Lambda_{14})
    + \omega(\mathbf{v}_3\Lambda_{13}^{-1}\Lambda_{12}, \mathbf{v}_1\Lambda_{31}^{-1}\Lambda_{34})\\
  &={}^T\!(\Lambda_{31}^{-1} \Lambda_{32}) \omega(\mathbf{v}_1 , \mathbf{v}_3)
    \Lambda_{13}^{-1}\Lambda_{14}
    +{}^T\!(\Lambda_{13}^{-1}\Lambda_{12}) \omega(\mathbf{v}_3 , \mathbf{v}_1)
    \Lambda_{31}^{-1}\Lambda_{34}\\
  &= \Lambda_{23}\Lambda_{13}^{-1}\Lambda_{14} + \Lambda_{21}\Lambda_{31}^{-1}\Lambda_{34}.\qedhere
\end{align*}
\end{proof}

\section{Symplectic $\Lambda$-lengths, Maslov
  index and cross ratios}\label{sec:symp_cross}

If we choose bases for all the Lagrangian subspaces, the Maslov index and the
cross ratio can be expressed in terms of the symplectic $\Lambda$-lengths.

\begin{lem}\label{lem:symp_Maslov}
  Let $(L_i,\mathbf{v}_i)\in \Lagd{n}$, for $i\in\{1,2,3\}$, be three
  pairwise transverse decorated Lagrangians. Then the matrix
  $\Lambda_{12}\Lambda_{32}^{-1}\Lambda_{31}$ is symmetric and the Maslov
  index of $(L_1,L_2,L_3)$ is given by its signature:
  \[\mu_n(L_1,L_2,L_3) = \sgn(\Lambda_{12} \Lambda_{32}^{-1} \Lambda_{31}).\]
\end{lem}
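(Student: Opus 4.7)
The plan is to compute the matrix of the Maslov form $\beta = [L_1,L_2,L_3]$ in the basis $\mathbf{v}_1$ of $L_1$, since by definition $\mu_n(L_1,L_2,L_3) = \sgn(\beta)$ and this signature equals the signature of any matrix representing $\beta$.

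First I would recall that, by Definition~\ref{def:maslov-form}, $\beta(v,w) = \omega(v,(L_2)_{L_1\to L_3}(w))$ for $v,w \in L_1$, where $(L_2)_{L_1\to L_3}\colon L_1\to L_3$ is the unique linear map whose graph (in the decomposition $\R^{2n}=L_1\oplus L_3$) is $L_2$. Lemma~\ref{lamb} (applied to the triple $\mathbf{v}_1,\mathbf{v}_2,\mathbf{v}_3$, using that $\mathbf{v}_1$ and $\mathbf{v}_3$ are transverse) gives the explicit decomposition
\[
\mathbf{v}_2 = \mathbf{v}_1 \, \Lambda_{31}^{-1}\Lambda_{32} + \mathbf{v}_3 \, \Lambda_{13}^{-1}\Lambda_{12}.
\]
Thus for a column vector $a$, writing $e = \mathbf{v}_1 a \in L_1$ and recovering the element of $L_2$ whose $L_1$-component is $e$, one finds $c = \Lambda_{32}^{-1}\Lambda_{31}\, a$, so that
\[
(L_2)_{L_1\to L_3}(\mathbf{v}_1 a) = \mathbf{v}_3 \,\Lambda_{13}^{-1}\Lambda_{12}\Lambda_{32}^{-1}\Lambda_{31}\, a.
\]

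Next I would plug this into the definition of $\beta$. For $v=\mathbf{v}_1 a_v$ and $w = \mathbf{v}_1 a_w$, one has $\omega(\mathbf{v}_1 a_v, \mathbf{v}_3 b) = {}^{T\!}a_v\,\Lambda_{13}\,b$, hence
\[
\beta(v,w) = {}^{T\!}a_v \,\Lambda_{13}\Lambda_{13}^{-1}\Lambda_{12}\Lambda_{32}^{-1}\Lambda_{31}\, a_w = {}^{T\!}a_v\,\bigl(\Lambda_{12}\Lambda_{32}^{-1}\Lambda_{31}\bigr)\, a_w.
\]
So the matrix of $\beta$ in the basis $\mathbf{v}_1$ is exactly $\Lambda_{12}\Lambda_{32}^{-1}\Lambda_{31}$. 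Since $\beta$ is symmetric (Proposition after Definition~\ref{def:maslov-form}), this matrix is symmetric; alternatively one can verify symmetry directly from the triangle relation in Corollary~\ref{coro:tri-rel} combined with ${}^{T\!}\Lambda_{ij}=-\Lambda_{ji}$. Finally, $\mu_n(L_1,L_2,L_3) = \sgn(\beta) = \sgn(\Lambda_{12}\Lambda_{32}^{-1}\Lambda_{31})$.

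There is no real obstacle here — the argument is a bookkeeping exercise in the bases $\mathbf{v}_1,\mathbf{v}_3$. The only place to be careful is the convention for transposes when passing from $\omega(\mathbf{v}_i a, \mathbf{v}_j b)$ to matrix form, and making sure that the expression for $(L_2)_{L_1\to L_3}$ inverts the $L_1$-component correctly (which is where the factor $\Lambda_{32}^{-1}\Lambda_{31}$ enters).
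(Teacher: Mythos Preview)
Your proof is correct. It differs from the paper's argument, and the difference is worth noting. The paper first observes that changing the bases $\mathbf{v}_i$ replaces $\Lambda_{12}\Lambda_{32}^{-1}\Lambda_{31}$ by a congruent matrix, so its signature is basis-independent; it then chooses $(\mathbf{v}_1,\mathbf{v}_3)$ to be a symplectic basis and $\mathbf{v}_2$ in the standard form for a triple of given Maslov index, and reads off the result from an explicit diagonal matrix. Your argument instead identifies $\Lambda_{12}\Lambda_{32}^{-1}\Lambda_{31}$ directly as the matrix $[\beta]_{\mathbf{v}_1}$ of the Maslov form in the basis $\mathbf{v}_1$, by computing the map $(L_2)_{L_1\to L_3}$ via Lemma~\ref{lamb}. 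This is more conceptual: symmetry follows from the symmetry of~$\beta$, and the signature identity is then the definition of~$\mu_n$. Your approach also yields slightly more, namely the exact identification $[L_1,L_2,L_3]_{\mathbf{v}_1} = \Lambda_{12}\Lambda_{32}^{-1}\Lambda_{31}$, in the same spirit as the cross-ratio formula of Lemma~\ref{lem:symp_cross}. The paper's reduction-to-normal-form argument, by contrast, avoids keeping track of the map $(L_2)_{L_1\to L_3}$ and is a quick verification once one accepts the congruence observation.
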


\begin{proof}
  The symmetry of the matrix was established in Lemma~\ref{lamb}.
  Changing the bases of~$L_1$, $L_2$, and~$L_3$ transconjugates the matrix
  $\Lambda_{12} \Lambda_{32}^{-1} \Lambda_{31}$ and in particular it does not
  change its signature. Thus we can assume that $(\mathbf{v}_1, \mathbf{v_3})$
  is a symplectic basis and that there is $p\in\{0,\dots, n\}$ such that
  $v_{2,j} = v_{1,j} + v_{3,j}$ if $j\leq p$ and   $v_{2,j} = v_{1,j} -
  v_{3,j}$ if $j> p$ so that $\mu_n( L_1,L_2,L_3 ) = p-(n-p)=2p-n$. In this
  situation $\Lambda_{31}=\Lambda_{32}=-\Id$ and $\Lambda_{12}= \bigl(
  \begin{smallmatrix}
    \Id_p & 0\\ 0& -\Id_{n-p}
  \end{smallmatrix}
\bigr).$
\end{proof}

\begin{lem}[Cross ratio in terms of symplectic $\Lambda$-lengths]\label{lem:symp_cross}
  Let $(L_i,\mathbf{v}_i) $, for $i\in\{1,2,3,4\}$, be four pairwise
  transverse decorated Lagrangians. Then
  \[[L_1,L_2,L_3,L_4]_{\mathbf{v}_1} = -\Lambda_{31}^{-1} \Lambda_{34}
    \Lambda_{14}^{-1} \Lambda_{12} \Lambda_{32}^{-1} \Lambda_{31} = -
    \Lambda_{41}^{-1} \Lambda_{43} \Lambda_{23}^{-1} \Lambda_{21},\] where
  $[L_1,L_2,L_3,L_4]_{\mathbf{v}_1}$ denotes the matrix of the cross ratio in
  the basis $\mathbf{v}_1$.
\end{lem}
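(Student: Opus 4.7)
My plan is to compute the two linear maps whose composition defines the cross ratio, expressed in the bases supplied by the decorations, and then pass from one expression to the other using the triangle relation.

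First, I would apply Lemma~\ref{lamb} to the triple $(\mathbf{v}_1,\mathbf{v}_2,\mathbf{v}_3)$ to write
\[\mathbf{v}_2=\mathbf{v}_1\cdot(\Lambda_{31}^{-1}\Lambda_{32})+\mathbf{v}_3\cdot(\Lambda_{13}^{-1}\Lambda_{12}).\]
Setting $A=\Lambda_{31}^{-1}\Lambda_{32}$ and $B=\Lambda_{13}^{-1}\Lambda_{12}$, an element $e=\mathbf{v}_1 x$ of $L_1$ belongs to $L_2$ modulo $L_3$ exactly when $e+{L_2}_{L_1\to L_3}(e)=\mathbf{v}_2(A^{-1}x)$, so the map ${L_2}_{L_1\to L_3}$ has matrix $BA^{-1}=\Lambda_{13}^{-1}\Lambda_{12}\Lambda_{32}^{-1}\Lambda_{31}$ in the bases $\mathbf{v}_1,\mathbf{v}_3$. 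Applying the same lemma to $(\mathbf{v}_3,\mathbf{v}_4,\mathbf{v}_1)$ yields the matrix $\Lambda_{31}^{-1}\Lambda_{34}\Lambda_{14}^{-1}\Lambda_{13}$ of ${L_4}_{L_3\to L_1}$ in the bases $\mathbf{v}_3,\mathbf{v}_1$.

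Composing and inserting the minus sign from the definition of the cross ratio gives
\[[L_1,L_2,L_3,L_4]_{\mathbf{v}_1}=-(\Lambda_{31}^{-1}\Lambda_{34}\Lambda_{14}^{-1}\Lambda_{13})(\Lambda_{13}^{-1}\Lambda_{12}\Lambda_{32}^{-1}\Lambda_{31})=-\Lambda_{31}^{-1}\Lambda_{34}\Lambda_{14}^{-1}\Lambda_{12}\Lambda_{32}^{-1}\Lambda_{31},\]
which is the first claimed formula.

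For the second equality, I would invoke Corollary~\ref{coro:tri-rel} applied to the triangles $\{1,3,4\}$ and $\{1,2,3\}$ to obtain
\[\Lambda_{31}^{-1}\Lambda_{34}\Lambda_{14}^{-1}=-\Lambda_{41}^{-1}\Lambda_{43}\Lambda_{13}^{-1},\qquad \Lambda_{13}^{-1}\Lambda_{12}\Lambda_{32}^{-1}\Lambda_{31}=-\Lambda_{23}^{-1}\Lambda_{21}.\]
Substituting these identities into the first formula and simplifying the two sign changes transforms it directly into $-\Lambda_{41}^{-1}\Lambda_{43}\Lambda_{23}^{-1}\Lambda_{21}$. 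The only genuinely nontrivial step is the algebraic verification at the start, namely identifying the graph maps ${L_2}_{L_1\to L_3}$ and ${L_4}_{L_3\to L_1}$ as $BA^{-1}$-type expressions from Lemma~\ref{lamb}; everything after that is bookkeeping with the triangle relation.
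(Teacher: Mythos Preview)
Your proof is correct and follows essentially the same approach as the paper: both use Lemma~\ref{lamb} to identify the matrices of ${L_2}_{L_1\to L_3}$ and ${L_4}_{L_3\to L_1}$ as $BA^{-1}$-type expressions, then invoke the triangle relation to pass between the two displayed formulas. The only cosmetic difference is ordering: the paper simplifies each factor via the triangle relation before composing, whereas you compose first and simplify afterward.
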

\begin{proof}
  By Lemma~\ref{lamb}, if $A$ and $B$ are the matrices such that
  $\mathbf{v}_2 = \mathbf{v}_1 A + \mathbf{v}_3 B$, then
  $A=\Lambda_{31}^{-1}\Lambda_{32}$ and $B=\Lambda_{13}^{-1}
  \Lambda_{12}$. Since $L_2 =\Span(\mathbf{v}_1 A + \mathbf{v}_3 B)
  =\Span(\mathbf{v}_1 + \mathbf{v}_3 BA^{-1})$, the matrix of the linear map
  $L_2\colon L_1\to L_3$ is $BA^{-1}$, that is
  \begin{align*}
    [L_2]_{\mathbf{v}_1,\mathbf{v}_3}
    & =\Lambda_{13}^{-1}\Lambda_{12}\Lambda_{32}^{-1}\Lambda_{31}\\
    \intertext{and using the triangle relation:}
    &= -\Lambda_{13}^{-1}\Lambda_{13}\Lambda_{23}^{-1}\Lambda_{21}
      = -\Lambda_{23}^{-1}\Lambda_{21}.\\
\intertext{  Similarly}
   [L_4]_{\mathbf{v}_3,\mathbf{v}_1} &=
    \Lambda_{31}^{-1}\Lambda_{34}\Lambda_{14}^{-1}\Lambda_{13} =
    -\Lambda_{41}^{-1}\Lambda_{43}.
  \end{align*}
  Therefore, on one hand
  \begin{align*}
  [L_1,L_2,L_3,L_4]_{\mathbf{v}_1} &= -\Lambda_{31}^{-1} \Lambda_{34}
  \Lambda_{14}^{-1} \Lambda_{13} \Lambda_{13}^{-1} \Lambda_{12}
  \Lambda_{32}^{-1} \Lambda_{31}= -
  \Lambda_{31}^{-1} \Lambda_{34} \Lambda_{14}^{-1} \Lambda_{12}
  \Lambda_{32}^{-1} \Lambda_{31},\\
\intertext{and on the other hand}
  [L_1,L_2,L_3,L_4]_{\mathbf{v}_1} & = - \Lambda_{41}^{-1} \Lambda_{43}
                                     \Lambda_{23}^{-1} \Lambda_{21}.\qedhere
  \end{align*}
\end{proof}

\chapter{Moduli spaces of framed and decorated local systems }
\label{sec:moduli-spac-decor}
\label{sec:rep}

This chapter describes the framed and decorated local systems whose moduli
spaces are later parametrized.  The interpretation of these spaces in term of
\enquote{genuine} representations are also given.  We introduce Maslov
indices for triples associated with framed local systems and explain how
these can be used to calculate the Toledo number. The characterization of
maximal representations in term of these Maslov indices will lead to the
definition of maximal framed local systems.

\section{Topological data}
\label{sec:topological-data}

From now on, $S$ will denote an oriented, connected, finite-type
surface with nonempty boundary and without punctures. There is an essentially
unique connected oriented compact surface  $\bar{S}$ with nonempty boundary
$\partial \bar{S}$ and a finite set  $R \subset \partial \overline{S}$ so that
$S$ is the complement $\bar{S} \setm R$. The boundary of $S$ can be noncompact.
\index{notation}{22@$S$ (the finite type surface)}
\index{notation}{23@$\bar{S}$, $R$, $r$ (its compactification, $R\subset \partial
  \bar{S}$ of cardinal~$r$)}

We denote by~$g$ the genus of~$\bar{S}$, by $k$ the number of connected components of~$\partial \bar{S}$, and by~$r$ the cardinality of~$R$. We will also denote by~$p\in \{ 0, \dots, k\}$ the number of compact connected
components of~$\partial S$, i.e.\ the number of connected components
of~$\partial \bar{S}$ not meeting~$R$.

We will always assume that
\[ 4g - 4 + 2k + r > 0.\]
This precisely means that the double of~$S$ along
its boundary $\partial S = \partial \bar{S} \setm R$ has negative Euler characteristic.

\begin{rem}
The conditions we assume on~$S$ can be equivalently expressed in the following geometric way: $S$ admits a complete hyperbolic structure of finite volume with nonempty, possibly noncompact, geodesic boundary and without cusps.
\end{rem}

As examples, let's see some special and \enquote{extreme} cases. On the one hand, $r$~can be zero. This is the case when $S$~is compact, with negative Euler characteristic. On the other hand, the surface~$\bar{S}$ can be a disk ($g=0, k=1$), or an annulus ($g=0, k=2$). In these cases, our hypothesis implies that $r \geq 3$ for the disc, and $r \geq 1$ for the annulus.

\section{Fundamental group}
\label{sec:fundamental-group}

The fundamental group of~$S$ is a free group of rank~$2g+k-1$.

For definiteness, we denote by~$b_0$ the base point of the surface~$S$, i.e.\
$\pi_1(S)= \pi_1(S, b_0)$. Thus elements of~$\pi_1(S)$ are classes of loops
based at~$b_0$ and, for~$\alpha$ and~$\gamma$ in~$\pi_1(S)$, represented by
loops~$a$ and~$g$ respectively, their product~$\alpha \gamma$ or
$\alpha * \gamma$ is represented by the loop $a* g$ obtained juxtaposing~$g$
then~$a$, in that order. More generally, for two paths $a\colon [0,1]\to S$,
$g\colon [0,1]\to S$ such that $g(1)=a(0)$, we will denote by $a* g$ the
juxtaposition of~$g$ and~$a$. With this convention, the fundamental
group~$\pi_1(S)$ acts on the \emph{right} on the universal
cover~$\widetilde{S}$.

For later purpose, we \emph{fix} a path $\alpha_C\colon [0,1]\to S$
between~$b_0$ and~$C\cap S$ ($=C\setm R$) for every component~$C$ of~$\partial
\bar{S}$. This enables us in particular to give
$p$~elements of~$\pi_1(S)$, $c_1, \dots, c_p$, representing the compact
components of~$\partial
S$. Obviously, only the conjugacy class of~$c_j$ is intrinsically defined.
The other components of~$\partial \bar{S}$ will be numbered $C_1, \dots,
C_{k-p}$; they contain respectively $r_1, \dots, r_{k-p}$ elements of~$R$;
 $r_\ell$ is a positive integer and $\sum_{\ell} r_\ell =r$. The arc~$\alpha_{C_\ell}$ joining~$b_0$
and~$C_\ell$ will also be denoted by~$\alpha_\ell$.

\section{Triangulations}
\label{sec:triangulations-general}

A continuous one-to-one map $\alpha\colon [0,1]\to S$ with $\alpha(0),
\alpha(1)\in \partial S$
will be called an \emph{arc}; the map $\bar{\alpha}\colon [0,1]\to
S\mid t\mapsto \alpha(1-t)$ is also an arc and is called the
\emph{reverse} of~$\alpha$. Arcs will be considered up to homotopy: two arcs
$\alpha_0$ and~$\alpha_1$ are called \emph{homotopic relative to the boundary}
(or simply \emph{homotopic}) if there is a continuous map $h\colon [0,1]\times
[0,1]\to S$ with $\alpha_0 = h|_{ \{0\}\times [0,1]}$, $\alpha_1 = h|_{
  \{1\}\times [0,1]}$, and $h( [0,1]\times \{0,1\})\subset \partial S$; an arc~$\alpha$
is called \emph{homotopically trivial} if there exists a continuous map
$h\colon [0,1]\times [0,1]\to S$ with $\alpha = h|_{ \{0\}\times [0,1]}$,  $h(
[0,1]\times \{0,1\})\subset \partial S$, and $ h|_{
  \{1\}\times [0,1]}$ is the constant application equal to $h(1,1)\in \partial
S$.

For every arc~$\alpha$, the pair $\{\alpha, \bar{\alpha}\}$ is called a
\emph{nonoriented arc}; two nonoriented arcs $\{\alpha_1, \bar{\alpha}_1\}$,
$\{\alpha_2, \bar{\alpha}_2\}$ are called \emph{homotopic} if $\alpha_1$ and
$\alpha_2$ (or $\alpha_1$ and $\bar{\alpha}_2$) are homotopic relative to the boundary.
A nonoriented arc $\{\alpha,\bar{\alpha}\}$ is \emph{homotopically trivial} if
$\alpha$~is homotopically trivial.

An \emph{ideal triangulation}~$\mathcal{T}$ of~$S$ is
a family of nonoriented homotopically nontrivial arcs in~$S$ 
 satisfying
the following maximality
condition: every nonoriented homotopically nontrivial arc 
not intersecting any
of the arcs of~$\mathcal{T}$ is homotopic (relative to
the boundary) to one of the arcs of~$\mathcal{T}$.  It is convenient, though
not necessary, that all the endpoints in a given boundary component
of~$\partial S$ coincide
(cf.\ Figure~\ref{fig:triangulation}). Triangulations are considered up to
homotopy (relative to the boundary).

\index{notation}{24@$\mathcal{T}$ (ideal triangulation)}%
\index{definition}{ideal triangulation}%
\index{definition}{triangulation (ideal ---)}%

\begin{figure}[ht]
\begin{tikzpicture}[scale=1.3]
\coordinate (A) at (-7,2.5) ;
\coordinate (B) at (-2,2.5) ;
\coordinate (C) at (-4.5,1) ;
\coordinate (D) at (-4.2,1.5);

\draw[thick]  plot[smooth, tension=.7] coordinates {(-6,3) (-5.5,3.5) (-6,5.5) (-4.5,6.5) (-3,5.5) (-3.5,3.5) (-3,3)};
\draw[thick]  plot[smooth, tension=.7] coordinates {(-5.3,5.5) (-4.5,5) (-3.7,5.5)};
\draw[thick] plot[smooth, tension=.7] coordinates {(-5.1,5.3) (-4.5,5.5) (-3.9,5.3)};
\draw[thick]  plot[smooth, tension=.7] coordinates {(-5.55,3.8) (-6.5,3.4) (A) (-6.5,1.6) (C) (-2.5,1.6) (B)(-2.5,3.4) (-3.45,3.8)};
\draw[thick]  (-4,1.5) ellipse (0.2 and 0.15);

\draw[purple]  plot[smooth, tension=.7] coordinates {(D) (-4,1.8) (-3,2) (B)};
\draw[purple] plot[smooth, tension=.7] coordinates {(D) (-6.2,2) (A)};
\draw[blue] plot[smooth, tension=.7] coordinates {(D) (C)};
\draw[blue] plot[smooth, tension=.7] coordinates {(C) (-6,1.5) (A)};

\draw[blue]  plot[smooth, tension=.7] coordinates {(C) (-3,1.5) (B)};
\draw[blue]  plot[smooth, tension=.7] coordinates {(A) (-6,3.3) (-5.5,3.5)};
\draw[blue]  plot[smooth, tension=.7] coordinates {(-3.5,3.5) (-3,3.3) (B)};
\draw[blue,densely dashed]  plot[smooth, tension=.7] coordinates {(-5.5,3.5) (-3.5,3.5)};

\draw[red]  plot[smooth, tension=.7] coordinates {(A) (-5,3) (-4.85,5.12)};
\draw[red]  plot[smooth, tension=.7] coordinates {(-5.84,4.5) (-5.5,3.2) (A)};
\draw[red,densely dashed]  plot[smooth, tension=.7] coordinates {(-5.84,4.5) (-5.5,4.9) (-4.85,5.12)};

\draw[red]  plot[smooth, tension=.7] coordinates {(B) (-4,3) (-4.15,5.12)};
\draw[red]  plot[smooth, tension=.7] coordinates {(-3.16,4.5) (-3.5,3.2) (B)};
\draw[red,densely dashed]  plot[smooth, tension=.7] coordinates {(-3.16,4.5) (-3.5,4.9) (-4.15,5.12)};

\draw[cyan]  plot[smooth, tension=.7] coordinates {(A) (-5.5,3) (-5.5,4.5) (-5.5,6) (-3.8,6) (-3.9,4.5) (-3.8,3) (B)};

\draw[green]  plot[smooth, tension=.7] coordinates {(B) (-3.5,3) (-3.7,4.5) (-3.5,6.15)};
\draw[green,densely dashed]  plot[smooth, tension=.7] coordinates {(-3.5,6.15) (-4.5,6) (-5,5.35)};
\draw[green]  plot[smooth, tension=.7] coordinates {(-5,5.35) (-5.2,5.15) (-5,4) (-5.5,2.85) (A)};

\draw[yellow]  plot[smooth, tension=.7] coordinates {(B) (-3.49,3.35)};
\draw[yellow,densely dashed]  plot[smooth, tension=.7] coordinates {(-3.49,3.35) (-4,4) (-4.3,5.05)};
\draw[yellow]  plot[smooth, tension=.7] coordinates {(-4.3,5.05) (D)};

\draw[yellow]  plot[smooth, tension=.7] coordinates {(A) (-5.51,3.35)};
\draw[yellow,densely dashed]  plot[smooth, tension=.7] coordinates {(-5.51,3.35) (-5,4) (-4.7,5.05)};
\draw[yellow]  plot[smooth, tension=.7] coordinates {(-4.7,5.05) (D)};

\draw (-2.5,3.4) node {$\bullet$};
\draw (-6.5,1.6) node {$\bullet$};
\draw (-2.5,1.6) node {$\bullet$};

\end{tikzpicture}
\caption{An example of an ideal triangulation of a surface; here $g=1$,
  $k=4$, $r=3$,
  and $p=1$; the set~$R\subset \bar{S}$ consists of the three dotted points;
  there are $12=2r-3\chi(\bar{S})$ nonoriented edges in the triangulation.}   \label{fig:triangulation}
\end{figure}

The nonoriented arcs in~$\mathcal{T}$ are called the \emph{nonoriented
  edges} of~$\mathcal{T}$. For any such nonoriented edge $\{\alpha,
\bar{\alpha}\}$, the arcs $\alpha$ and $\bar{\alpha}$ are called
\emph{oriented edges} of~$\mathcal{T}$. The set of oriented edges
of~$\mathcal{T}$ will be denoted $\edgeT$.
Until
Chapter~\ref{sec:Acoordinates} we will not need to consider these oriented
edges and {nonoriented edges} will sometimes
 simply be called \emph{edges}.

 The union (of the images) of the edges of~$\mathcal{T}$ cuts the surface~$S$ into
 simply connected components; among them exactly~$r$ contain in their closure
 (in~$\bar{S}$)
 an element of~$R$; the other connected components will be called \emph{faces}
 (or \emph{triangles}) of~$\mathcal{T}$.

It is well known that triangulations can be obtained from one another by a
series of elementary moves called \emph{flips}. Two
triangulations~$\mathcal{T}_0$ and~$\mathcal{T}_1$ are related by a flip if
there exist (nonoriented) edges~$e_0$ in~$\mathcal{T}_0$ and~$e_1$ in~$\mathcal{T}_1$ such
that $\mathcal{T}_0 \setm \{ e_0\} = \mathcal{T}_1 \setm \{ e_1\}$ (cf.\ Figure~\ref{unoriented_flip}).

\begin{figure}[ht]
\begin{center}
\begin{tikzpicture}
  \coordinate (L1) at (0,-1) ;
  \coordinate (L2) at (2,.5) ;
  \coordinate (L3) at (-0.5,2) ;
  \coordinate (L4) at (-2,0) ;
  \draw (L1)--(L2) ; 
  \draw (L1)--(L3) node[pos=0.6, right]{$e_1$};
  \draw (L4)--(L1) ; 
  \draw (L3)--(L2) ;
  \draw (L4)--(L2) node[pos=0.65,below]{$e_0$} ;
  \draw (L4)--(L3) ;
\end{tikzpicture}
\caption{The edges involved in a flip}
\label{unoriented_flip}
\end{center}
\end{figure}

\begin{lem}
The number of vertices of~$\mathcal{T}$ is $p+r$, the number of nonoriented edges is
$2r-3{ \chi( \bar{S})}$, among those there are~$r$ \emph{external} edges, i.e.\ contained in exactly one face of the
triangulation, and there are $ r-3{ \chi(\bar{S})}$ \emph{internal} edges \ep{contained in two faces}, finally the number of
faces \ep{or triangles} is $r-2{ \chi( \bar{S})}$.\index{definition}{internal!edge}%
\index{definition}{edge!internal ---}%
\index{definition}{edge!external ---}%
\index{definition}{external!edge}%
\end{lem}

The~$r_\ell$ edges connecting cyclically the components of
$C_\ell \setm R$ (for every~$\ell$ in $\{1, \dots, k-p\}$) always belongs to
the triangulations~$\mathcal{T}$ of~$S$. These are the edges that account for
the~$r$ external edges.

\section{Framed symplectic local systems}
\label{sec:decor-sympl-local}

Let~$\mathcal{F}$ be a $\Sp(2n,\R)$-local system on the surface~$S$,
i.e.~$\mathcal{F}$ is a right principal $\Sp(2n,\R)_d$-bundle where
$\Sp(2n,\R)_d$ is the topological group~$\Sp(2n,\R)$ equipped with the
discrete topology (or, what amounts to the same, the change of trivializations
are locally constant maps into~$\Sp(2n,\R)$).\index{definition}{local system}
 Using the
parallel transport, we find a homomorphism~$\rho\colon \pi_1(S) \to
\Sp(2n,\R)$ so
that~$\mathcal{F}$ is the quotient
$\pi_1(S) \backslash ( \widetilde{S} \times \Sp(2n, \R))$ by the left diagonal
action on $\widetilde{S} \times \Sp(2n, \R)$ determined by~$\rho$; namely
$\gamma\cdot (p,g) = (p\cdot \gamma^{-1}, \rho(\gamma)g)$ for all~$\gamma$ in
$\pi_1(S)$ and all $(p,g) \in \widetilde{S} \times \Sp(2n, \R)$. Given~$\mathcal{F}$ there is an associated bundle with fiber~$\Lag{n}$, the  Lagrangian Grassmannian, which we denote by~$\mathcal{F}_{\Lag{n}}$. It is the quotient
$\pi_1(S) \backslash ( \widetilde{S} \times \Lag{n})$.

\begin{df}
  \label{df:decor-sympl-local}
  A \emph{framing} of a symplectic local system~$\mathcal{F}$ is a flat
  section of $\mathcal{F}_{\Lag{n}}|_{\partial S}$. A \emph{framed
    symplectic local system} is a pair $(\mathcal{F}, \sigma)$ where~$\sigma$
  is a framing of the symplectic local system~$\mathcal{F}$.
\end{df}
\index{notation}{26@$(\mathcal{F},\sigma)$ (a framed symplectic local
  system)}%
\index{definition}{framed!symplectic local system}%
\index{definition}{framing}%
\index{definition}{symplectic! framed --- local system}%
\index{definition}{local system! framed symplectic --- }%

\begin{rem}
  \label{rem:fram-sympl-local-universal}
  The framing~$\sigma$ is equivalently given as a locally constant and
  $\rho$-equivariant map $\tilde{\sigma}\colon \partial \tilde{S} \to \Lag{n}$; this means
  that, for all~$\gamma$ in~$\pi_1(S)$ and all~$p$ in~$\partial \tilde{S}$,
  $\gamma \cdot ( p, \tilde{\sigma}(p)) = (p\cdot \gamma^{-1}, \tilde{\sigma}
  (p\cdot \gamma^{-1}))$. This last equality can be rewritten $\tilde{
    \sigma}( p\cdot \gamma^{-1}) = \rho(\gamma) \cdot \tilde{ \sigma}(p)$.
\end{rem}

We will denote by $\Locf( S, \Sp(2n,\R))$\index{notation}{29@$\Locf( S, \Sp(2n,\R))$ (moduli of framed symplectic local systems)} the moduli space of framed symplectic local systems. A framed local system gives rise to the following data (with
the notation introduced in Section~\ref{sec:fundamental-group})
\begin{itemize}
\item a representation $\rho\colon \pi_1(S)\to \Sp(2n,\R)$,
\item for every $j=1, \dots, p$,  a $\rho(c_j)$-invariant Lagrangian~$L_j$. It is obtained using  the parallel transport along the
  arc~$\alpha_{C}$ where~$C$ is the compact boundary component homotopic to~$c_j$.
\item for every $\ell=1, \dots, k-p$, a $r_\ell$-tuple
  $(L_{1,\ell}, \dots, L_{r_\ell, \ell})$ of Lagrangians, obtained again
  using the parallel transport along the arc~$\alpha_\ell$ and along arcs
  connecting cyclically the~$r_\ell$ components of~\mbox{$C_\ell \setm R$}.
\end{itemize}
The tuple
$(\rho, \{L_j\}_{j=1}^{ p}, \{ (L_{j, \ell})_{j=1}^{r_\ell}\}_{\ell
  =1}^{k-p})$
will be called the \emph{holonomy} of the framed local system
$(\mathcal{F}, \sigma)$; it is only defined up to the action of the symplectic
group and it determines completely the isomorphism class of the framed
symplectic local system $(\mathcal{F}, \sigma)$. We are thus led to consider the following definition.\index{definition}{holonomy}
\begin{df}
A \emph{framed symplectic representation} is a tuple
  \[ \bigl(\rho, \{L_j\}_{j=1}^{ p}, \{ (L_{j, \ell})_{j=1}^{r_\ell}\}_{\ell
      =1}^{k-p}\bigr) \in \Hom( \pi_1(S), \Sp(2n, \R)) \times \Lag{n}^p \times
    \prod_{\ell=1}^{k-p} \Lag{n}^{r_\ell}\] satisfying the condition
  $\rho(c_j)\cdot L_j=L_j$ for all~$j=1, \dots, p$.
\end{df}\index{definition}{framed! symplectic representation}%
\index{definition}{symplectic! framed --- representation}%
\index{definition}{representation! framed symplectic ---}%

We denote by $\Homf(S, \Sp(2n, \R))$ the space of all framed symplectic representations. This space carries a natural action of~$\Sp(2n, \R)$, and we will denote the quotient by
\[\Repf(S, \Sp(2n, \R)) = \Homf(S, \Sp(2n, \R)) / \Sp(2n, \R). \]
\index{notation}{31@$\Repf(S, \Sp(2n, \R))$ (moduli of framed symplectic representations)}

\begin{rem}
  \label{rem:moduli-not-Hausdorff}
  \label{rem:Hausdorff}
  When $\Repf(S, \Sp(2n, \R))$ is endowed with the quotient topology, it is in general not a Hausdorff space.
  In this work, we will be led to consider \emph{noncontinuous}
  ($\Sp(2n,\R)$-invariant) functions on $\Homf(S, \Sp(2n, \R))$, and it is therefore more relevant not
  to consider any Hausdorffication of the moduli space.
\end{rem}

\begin{rem}
  \label{rem:moduli-more-intrinsic}
One can give a more intrinsic way to describe the moduli space
  $\Repf(S, \Sp(2n, \R))$ by considering at the same time all the
  possible arcs~$\alpha_\ell$ and all the possible representatives~$c_j$ of the
  compact boundary components. We will not write this definition here explicitly.
\end{rem}

We can identify $\Locf(S, \Sp(2n,\R))$ and $\Repf(S, \Sp(2n, \R))$:

\begin{lem}
  \label{lem:decor-sympl-local-holonomy}
  The holonomy map is a bijection between $\Locf(S, \Sp(2n,\R))$ and
  $\Repf(S, \Sp(2n, \R))$.
\end{lem}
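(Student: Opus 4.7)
The plan is to use the classical equivalence between local systems and monodromy representations, then check that the framing data corresponds bijectively to the Lagrangian tuples. First I would fix the trivialization of the fiber of $\mathcal{F}$ at the base point $b_0$ to identify it with $\Sp(2n, \R)$; then the parallel transport around loops gives the representation $\rho\colon \pi_1(S)\to \Sp(2n, \R)$, which is well defined up to conjugation by $\Sp(2n, \R)$ (change of trivialization). This recovers the standard bijection between $\Sp(2n,\R)$-local systems on $S$ (up to isomorphism) and $\Hom(\pi_1(S), \Sp(2n,\R))/\Sp(2n,\R)$.

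Next I would unpack the framing $\sigma$ using Remark~\ref{rem:fram-sympl-local-universal}, i.e.\ as a locally constant $\rho$-equivariant map $\tilde \sigma\colon \partial \tilde S \to \Lag{n}$. Each connected component of $\partial S$ is either a circle (one of the $p$ compact components, homotopic to $c_j$) or a disjoint union of $r_\ell$ open arcs between consecutive points of $R$ on~$C_\ell$. For a compact component, parallel transport along $\alpha_{C}$ identifies the fiber over a point of that circle with $\Lag{n}$; since $\tilde \sigma$ is locally constant on the full preimage in $\partial \tilde S$, its value $L_j$ in this identification must satisfy $\rho(c_j)\cdot L_j = L_j$. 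For a non-compact boundary component $C_\ell$, each of the $r_\ell$ arcs is simply connected, so $\tilde \sigma$ restricts to a constant on each arc; parallel transport along $\alpha_\ell$ and then along short arcs cyclically connecting the components of $C_\ell \setm R$ produces the tuple $(L_{1,\ell}, \dots, L_{r_\ell,\ell})$ with no invariance condition. This defines the holonomy map and shows it lands in $\Homf(S, \Sp(2n,\R))$, and that it descends to the quotient by $\Sp(2n,\R)$.

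For the inverse, given $(\rho, \{L_j\}, \{(L_{j,\ell})\}) \in \Homf(S, \Sp(2n, \R))$, I would form the associated local system $\mathcal{F} = \pi_1(S) \backslash (\widetilde{S} \times \Sp(2n,\R))$ and reconstruct $\tilde \sigma$ on $\partial \tilde S$ as follows. On the component of $\partial \tilde S$ over the compact boundary component homotopic to $c_j$ containing the endpoint of the lift of $\alpha_{C_j}$ starting at the chosen lift of $b_0$, set $\tilde \sigma$ to be the constant $L_j$; the condition $\rho(c_j)\cdot L_j = L_j$ ensures this is consistent with flatness on the circle, and $\rho$-equivariance extends it uniquely to all other lifts. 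On the component of $\partial \tilde S$ over each of the $r_\ell$ arcs on~$C_\ell$, set $\tilde \sigma$ to be the constant $L_{j,\ell}$, and extend equivariantly. This $\tilde \sigma$ is locally constant and $\rho$-equivariant, hence defines a framing, and the isomorphism class of $(\mathcal{F}, \sigma)$ depends only on the $\Sp(2n,\R)$-orbit of the tuple.

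Finally I would check that these constructions are mutually inverse, which is immediate from the definitions once one tracks the identifications at the base point and at the chosen lifts of the $\alpha_\ell$ and $\alpha_{C_j}$. The main thing to be careful about is not conceptual difficulty but bookkeeping: the choices of $b_0$, of the arcs $\alpha_{C_\ell}$ and $\alpha_\ell$, and of representatives $c_j$ of the compact boundary components are built into the definition of $\Homf$, so both sides depend on the same auxiliary data, and the bijection is canonical with respect to it.
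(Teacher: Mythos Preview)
The paper does not give a proof of this lemma: it is stated and immediately used, being regarded as a standard consequence of the equivalence between local systems and monodromy representations. Your argument is correct and supplies exactly the expected details --- the classical holonomy correspondence for the underlying local system, together with the observation that a flat section of $\mathcal{F}_{\Lag{n}}|_{\partial S}$ amounts, via parallel transport along the fixed arcs~$\alpha_{C}$ and~$\alpha_\ell$, to a $\rho(c_j)$-fixed Lagrangian for each compact boundary circle and an unconstrained Lagrangian for each arc of $C_\ell\setminus R$.
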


We use this identification to endow the moduli space $\Locf(S, \Sp(2n,\R))$ with a (non-Hausdorff) topology.

\section{Twisted local systems}
\label{sec:twist-local-syst}

We now introduce twisted local systems. The moduli space of twisted local system is in bijection with the moduli space of local systems, but this bijection is not natural: it depends on a choice, see Remark \ref{rem:use-twist-local-syst}. The coordinates we introduce later are naturally defined on spaces of twisted local systems. Only after making the choice in Remark \ref{rem:use-twist-local-syst} we can identify the moduli space of twisted local systems with the moduli space of ordinary local systems, and thus get a parametrization of the latter.

We denote by $T'S$ the \emph{punctured} tangent bundle
 of~$S$, i.e.\ the tangent bundle~$T S$ with the zero section removed. Its fundamental
group is a central extension of~$\pi_1(S)$\index{notation}{34@$T'S$ (the punctured
  tangent bundle)}%
\index{definition}{punctured tangent bundle}%
\[ \Z \longrightarrow \pi_1( T'S ) \longrightarrow \pi_1(S)\]
that necessarily splits, since $\pi_1(S)$ is a free group. The generating
element~$\delta$ of the kernel of the extension is the class in~$\pi_1( T' S)$
of any loop representing the homotopy of a fiber of $T'S \to S$.

Let~$G$ be a group and let~$\delta_G$ be a central element in~$G$. For the
case when~$G=\Sp(2n,\R)$ we will always take $\delta_G=-\Id$. For the case
when~$G$ is a connected Lie group locally isomorphic to~$\Sp(2n,\R)$, the
relevant element~$\delta_G$ is described in Section~\ref{sec:some-elements}.

\begin{df}
  A $G$-local system~$\mathcal{F}$ on~$T'S$ is said to be \emph{twisted} (or
  $\delta$-twisted) if its holonomy in restriction to any fiber of $T'S \to S$
  is equal to~$\delta_G$.  The moduli space of $\delta$-twisted $G$-local
  systems is denoted by $\Locdelta( S,G)$.
  \index{notation}{36@$\Locdelta( S,G)$ (moduli of twisted local systems)}%

  A representation $\rho\colon \pi_1( T'S) \to G$ is said
  $\delta$-\emph{twisted} if $\rho(\delta)=\delta_G$.
\end{df}\index{definition}{twisted!local system}%
\index{definition}{local system!twisted ---}%
\index{definition}{twisted!representation}%
\index{definition}{representation!twisted ---}%
The space of $\delta$-twisted representations is denoted by $\Homdelta(S, G)$,
the space of their $G$-conjugacy classes by $\Repdelta(S,G)$.
\index{notation}{38@$\Repdelta(S,G)$ (moduli of twisted representations)}

The holonomy map gives an identification between $\Locdelta( S,G)$ and $\Repdelta(S,G)$.

\begin{rem}
  \label{rem:use-twist-local-syst}
  The choice of an isomorphism $\pi_1( T'S)\simeq \Z \times \pi_1(S)$ gives a  one-to-one correspondence between
  $\Homdelta(S, G)$ and  $\Hom( \pi_1(S), G)$.
  This identification is mapping class group equivariant but is not canonical
  since it relies on the choice of isomorphism.
   We will rely on this correspondence and describe in
  Chapter~\ref{sec_def_coord_max} and subsequent chapters moduli spaces of (decorated) local
  systems.
\end{rem}

\section{Vector fields}
\label{sec:vector-fields}
Since~$S$ has nonempty boundary there exists a nonvanishing vector field
on~$S$. Such a vector field gives rise to an isomorphism
$\pi_1( T'S)\simeq \Z \times \pi_1(S)$. We describe now a slightly different
identification, which relies on the existence of a vector field with isolated
zeros. This is a bit more involved but useful for later considerations.

Let~$\vec{x}$ be a vector field with isolated zeros and without any zero on
the compact components of~$\partial S$. We denote by $Z( \vec{x})$ the set of
its zeroes in the \emph{interior} of~$S$. For every~$u$ in $Z( \vec{x})$, the
index $\mathrm{ind}(u, \vec{x})$ (later simply denoted $\mathrm{ind}(u)$) is
the number of \enquote{turns} that~$\vec{x}$ does around~$u$ (e.g.\ it is $+1$
if $\vec{x}$ is tangent to a small circle around~$u$, it is~$-1$ if~$\vec{x}$
presents a saddle singularity). For every~$u\in Z( \vec{x})$, let us
fix~$\gamma_u$ an element in $\pi_1( S\setm Z( \vec{x}))$ that is freely
homotopic to a small (directly oriented) circle around~$u$.

The vector field~$\vec{x}$ provides a trivialization of
$T' ( S\setm Z( \vec{x}))$, that we can use to obtain:

\begin{lem}
  \label{lem:vector-fields-pi_TS}
  The homomorphism $\pi_1( T'( S\setm Z( \vec{x}))) \to \pi_1(
  T'S)$ induces an isomorphism between~$\pi_1( T'S)$ and the quotient of $\Z
  \times \pi_1( S\setm Z( \vec{x})) \simeq \pi_1( T'( S\setm Z(
  \vec{x})))$ by the normal subgroup generated by the elements $(
  -\mathrm{ind}(u), \gamma_u)$ for~$u$ in $Z( \vec{x})$.
\end{lem}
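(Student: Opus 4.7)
The plan is to use the vector field $\vec{x}$ to trivialize the punctured tangent bundle away from $Z(\vec{x})$, identify $\pi_1(T'(S\setm Z(\vec{x})))$ with $\Z\times \pi_1(S\setm Z(\vec{x}))$, and then understand which relations are introduced when the zeros are reinserted. The key geometric input is that near a zero~$u$, the $\vec{x}$-lift of a small positively oriented loop around~$u$ is, by the very definition of the index, the $\mathrm{ind}(u)$-th power of the fiber generator~$\delta$.

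First, since~$\vec{x}$ is a nonvanishing section of $T'(S\setm Z(\vec{x}))$, it supplies a canonical splitting
\[ \pi_1\bigl(T'(S\setm Z(\vec{x}))\bigr) \simeq \Z\times \pi_1(S\setm Z(\vec{x})),\]
in which the $\Z$-factor is generated by a fiber loop~$\delta$ and the second factor is the image of the section $p\mapsto (p,\vec{x}(p))$. The inclusion $T'(S\setm Z(\vec{x}))\hookrightarrow T'S$ then induces a map~$\Phi$ on fundamental groups which I would show is surjective by a general-position argument: any loop in~$T'S$ is homotopic to one whose projection to~$S$ avoids the finite set $Z(\vec{x})$, hence lifts through $T'(S\setm Z(\vec{x}))$.

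Next I would identify the kernel of~$\Phi$ with the normal subgroup $N$ generated by $\{(-\mathrm{ind}(u),\gamma_u)\}$. For the inclusion $N\subset \ker\Phi$, fix a small disk~$D$ around~$u$ meeting $Z(\vec{x})$ only at~$u$; since $\pi_1(T'D)\simeq \Z\cdot\delta$ and the $\vec{x}$-lift of~$\gamma_u$ represents $\delta^{\mathrm{ind}(u)}$ by the definition of the index, $\Phi(0,\gamma_u)=\delta^{\mathrm{ind}(u)}=\Phi(\mathrm{ind}(u),1)$, so $(-\mathrm{ind}(u),\gamma_u)\in\ker\Phi$. For the reverse inclusion I would use van~Kampen: $S$ is recovered from $S\setm Z(\vec{x})$ by attaching a disk along each~$\gamma_u$, so the kernel of $\pi_1(S\setm Z(\vec{x}))\to \pi_1(S)$ is the normal closure of the~$\gamma_u$. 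Given $(k,\alpha)\in\ker\Phi$, projecting to $\pi_1(S)$ yields $\alpha=\prod_i \beta_i\gamma_{u_i}^{\epsilon_i}\beta_i^{-1}$. Since the $\Z$-summand is central, the defining relations of~$N$ give
\[ (k,\alpha)\equiv \Bigl(k+\sum_i \epsilon_i\,\mathrm{ind}(u_i),\,1\Bigr)\pmod{N}.\]
Applying $\Phi$ once more shows $\delta^{k+\sum_i\epsilon_i\mathrm{ind}(u_i)}=1$ in $\pi_1(T'S)$; as $S$ has nonempty boundary, $T'S$ admits a global nonvanishing section and $\pi_1(T'S)\cong \pi_1(S)\times\Z$, so $\delta$ has infinite order and the integer must vanish, giving $(k,\alpha)\in N$.

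The main obstacle is bookkeeping of signs: the appearance of $-\mathrm{ind}(u)$ rather than $+\mathrm{ind}(u)$ in the relation depends on orienting the fiber loop~$\delta$ compatibly with the orientation convention that defines the index of a zero of a planar vector field. This has to be pinned down carefully so as not to invert all the relations. The van~Kampen step and the general-position argument are standard; the nontrivial content is concentrated in the local computation in $\pi_1(T'D)\simeq \Z$ around each zero and in checking that central manipulations in the splitting do not obstruct the reduction modulo~$N$.
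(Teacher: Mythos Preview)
Your argument is correct. The paper does not actually give a proof of this lemma: it is stated immediately after the sentence ``The vector field~$\vec{x}$ provides a trivialization of $T'(S\setm Z(\vec{x}))$, that we can use to obtain:'' and is treated as a standard consequence, with the corollary on twisted representations drawn directly afterwards. Your write-up supplies exactly the details the paper elides---the van~Kampen computation for the kernel, the local identification of the $\vec{x}$-lift of $\gamma_u$ with $\delta^{\mathrm{ind}(u)}$, and the use of a global section (available since $\partial S\neq\emptyset$) to conclude that $\delta$ has infinite order---so there is nothing to correct and nothing genuinely different to compare.
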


As a consequence:
\begin{cor}
  \label{cor:vector-fields-twisted-rep}
  The isomorphism of Lemma~\ref{lem:vector-fields-pi_TS} gives a one-to-one $G$-equivariant
  correspondence between the space $\Homdelta(S, G)$ and the space
  \[ \Homdelta( S \setm Z( \vec{x}), G) \coloneqq \{ \rho \in
    \Hom( \pi_1( S\setm Z( \vec{x})),G) \mid \forall u\in Z(
    \vec{x}), \  \rho( \gamma_u)= \delta_{G}^{\mathrm{ind(u)}}\}.\]
\end{cor}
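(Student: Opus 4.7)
The plan is to derive the corollary by direct application of the universal property in Lemma~\ref{lem:vector-fields-pi_TS}. A $\delta$-twisted representation $\rho\colon \pi_1(T'S)\to G$ is by definition a homomorphism sending the central element $\delta$ to~$\delta_G$, and since Lemma~\ref{lem:vector-fields-pi_TS} identifies $\pi_1(T'S)$ with a quotient of $\Z\times \pi_1(S\setm Z(\vec{x}))$ by an explicit normal subgroup, the content of the corollary amounts to translating this quotient presentation into conditions on a homomorphism defined on $\pi_1(S\setm Z(\vec{x}))$.

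Concretely, I would first lift $\rho$ along the projection to a homomorphism $\tilde\rho\colon \Z\times \pi_1(S\setm Z(\vec{x}))\to G$ satisfying the twisting condition $\tilde\rho(1,e)=\delta_G$ and vanishing on the normal subgroup generated by the elements $(-\mathrm{ind}(u),\gamma_u)$. Next, using that the $\Z$-factor is central in $\Z\times \pi_1(S\setm Z(\vec{x}))$ and that $\delta_G$ is central in~$G$, the homomorphism $\tilde\rho$ is entirely determined by its restriction $\rho'\coloneqq \tilde\rho|_{\pi_1(S\setm Z(\vec{x}))}$ via the formula $\tilde\rho(n,\gamma)=\delta_G^{n}\rho'(\gamma)$. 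Conversely, any $\rho'\colon \pi_1(S\setm Z(\vec{x}))\to G$ gives such a $\tilde\rho$ by the same formula. Finally, evaluating on the generators $(-\mathrm{ind}(u),\gamma_u)$ and requiring the output to be trivial yields $\delta_G^{-\mathrm{ind}(u)}\rho'(\gamma_u)=1$, i.e.\ $\rho'(\gamma_u)=\delta_G^{\mathrm{ind}(u)}$, which is precisely the defining condition for membership in $\Homdelta(S\setm Z(\vec{x}),G)$.

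The correspondence $\rho\leftrightarrow \rho'$ is manifestly natural in the $G$-conjugation action, hence $G$-equivariant. There is no real obstacle: the statement is essentially a reformulation once Lemma~\ref{lem:vector-fields-pi_TS} is granted, and the only point requiring some care is the bookkeeping of sign conventions, namely to confirm that the relation $(-\mathrm{ind}(u),\gamma_u)$ in the quotient forces the positive exponent $\rho'(\gamma_u)=\delta_G^{\mathrm{ind}(u)}$ rather than its inverse.
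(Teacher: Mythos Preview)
Your argument is correct and is exactly the unpacking the paper has in mind: the paper states the corollary with no proof beyond the phrase ``As a consequence,'' treating it as an immediate translation of the quotient presentation in Lemma~\ref{lem:vector-fields-pi_TS}. Your sign check on $(-\mathrm{ind}(u),\gamma_u)\mapsto \delta_G^{-\mathrm{ind}(u)}\rho'(\gamma_u)=1$ is the only point worth writing out, and you handled it correctly.
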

We call the representations in $\Homdelta( S \setm Z( \vec{x}), G)$  also  twisted representations.

\begin{rem}
  \label{rem:vector-fields-boundary-elements}
  The surface $S\setm Z( \vec{x})$ is (almost) of the same type as the
  surface~$S$ so that the discussion of Sections~\ref{sec:topological-data}
  and~\ref{sec:fundamental-group} applies. In particular we will also denote
  by~$c_1, \dots, c_p$ representatives in $\pi_1( S\setm Z( \vec{x}))$ of the
  compact boundary components of~$S$. These elements depend now on choices of
  arcs in~$S\setm Z( \vec{x})$. Similarly, paths~$\alpha_\ell$ connecting in
  $S \setm Z( \vec{x})$ the base point to $C_\ell\setm R$ will be chosen.
\end{rem}

\begin{rem}
  \label{rem:vector-fields-index-order-delta_G}
  One can go a little further in the conclusion of
  Corollary~\ref{cor:vector-fields-twisted-rep} and remove only the zeroes
  of~$\vec{x}$ whose index is not a multiple of the order of~$\delta_G$. When
  $G=\Sp(2n,\R)$, so that $\delta_G$ is of order~$2$, we will sometimes be
  able to use vector fields whose indices are all even. In this case, the
  space $\Homdelta(S, \Sp(2n,\R))$ is in fact a space of
  representations of~$\pi_1(S)$.
\end{rem}
\section{Triangulations and vector fields}
\label{sec:vect-fields-triang}
Given a triangulation~$\mathcal{T}$ of~$S$ we can construct a vector field~$\vec{x}_\mathcal{T}$ on~$S$ that satisfies the condition required in Section~\ref{sec:vector-fields}.
It is constructed from a fixed model on
each triangle of~$\mathcal{T}$ that we now describe:
\begin{itemize}
\item The zeroes of the vector field are
  \begin{itemize}
  \item the vertices of the triangles, and, in a neighborhood of these
    vertices, the vector field is tangent to the clockwise oriented circles
    centered at the vertices;
  \item and the center of the triangle, and it is tangent to the
    counter-clockwise oriented circles in a neighborhood of this center
    (index~$1$).
  \item the midpoints of its edges, where it is \enquote{half} of a saddle
    connection.
  \end{itemize}
\item there are~$3$ complete flow lines emanating and ending at the midpoints
  of the edges; these flow lines cut the triangle into~$4$ regions, $3$ of
  them contain exactly one vertex and the last one contains the center.
  Furthermore, in each of the~$3$ regions containing a vertex, the flow lines
  foliate the region into a family of homotopic curves going from one half
  edge to another half edge, the last region contains a zero of the vector
  field and the other flow lines are counter-clockwise circles around this
  zero.
\end{itemize}
This basically determines uniquely the vector field, it is illustrated in Figure~\ref{fig:vector-field-xT}.

\begin{figure}[ht]
\centering
\begin{tikzpicture}
  \coordinate (A) at (5,0) ;
  \coordinate (B) at (5.5,5) ;
  \coordinate (C) at (9,2.6) ;
  \draw (A)--(B);
  \draw (A)--(C);
  \draw (C)--(B);
  \coordinate (ma) at (barycentric cs:A=0,B=1,C=1) ;
  \coordinate (mb) at (barycentric cs:A=1,B=0,C=1) ;
  \coordinate (mc) at (barycentric cs:A=1,B=1,C=0) ;
  \draw[middlearrow={latex},dotted] (ma) to[bend right,looseness=.4] (mc) ;
  \draw[middlearrow={latex},dotted] (mb) to[bend right,looseness=.4] (ma) ;
  \draw[middlearrow={latex},dotted] (mc) to[bend right,looseness=.4] (mb) ;
  \coordinate (Atb) at (barycentric cs:A=2,B=1,C=0) ;
  \coordinate (Atc) at (barycentric cs:A=2,B=0,C=1) ;
  \draw[middlearrow={latex}] (Atb) to[bend left,looseness=.2] (Atc) ;
  \coordinate (Asb) at (barycentric cs:A=5,B=1,C=0) ;
  \coordinate (Asc) at (barycentric cs:A=5,B=0,C=1) ;
  \draw[middlearrow={latex}] (Asb) to[bend left,looseness=.7] (Asc) ;
  \coordinate (Bta) at (barycentric cs:A=1,B=2,C=0) ;
  \coordinate (Btc) at (barycentric cs:A=0,B=2,C=1) ;
  \draw[middlearrow={latex}] (Btc) to[bend left,looseness=.2] (Bta) ;
  \coordinate (Bsa) at (barycentric cs:A=1,B=5,C=0) ;
  \coordinate (Bsc) at (barycentric cs:A=0,B=5,C=1) ;
  \draw[middlearrow={latex}] (Bsc) to[bend left,looseness=.7] (Bsa) ;
  \coordinate (Cta) at (barycentric cs:A=1,B=0,C=2) ;
  \coordinate (Ctb) at (barycentric cs:A=0,B=1,C=2) ;
  \draw[middlearrow={latex}] (Cta) to[bend left,looseness=.2] (Ctb) ;
  \coordinate (Csa) at (barycentric cs:A=1,B=0,C=5) ;
  \coordinate (Csb) at (barycentric cs:A=0,B=1,C=5) ;
  \draw[middlearrow={latex}] (Csa) to[bend left,looseness=.7] (Csb) ;
  \coordinate (C1a) at (barycentric cs:A=1,B=3,C=3) ;
  \coordinate (C1b) at (barycentric cs:A=3,B=1,C=3) ;
  \coordinate (C1c) at (barycentric cs:A=3,B=3,C=1) ;
  \draw [middlearrow={latex}] (C1b) to[out=30, in=-40, looseness=.3] (C1a);
  \draw  (C1a) to[out=140, in=90, looseness=.3] (C1c);
  \draw  (C1c) to[out=-90, in=210, looseness=.3] (C1b);
  \coordinate (C2a) at (barycentric cs:A=2,B=3,C=3) ;
  \coordinate (C2b) at (barycentric cs:A=3,B=2,C=3) ;
  \coordinate (C2c) at (barycentric cs:A=3,B=3,C=2) ;
  \draw [middlearrow={latex}] (C2b) to[out=30, in=-40, looseness=.7] (C2a);
  \draw  (C2a) to[out=140, in=90, looseness=.7] (C2c);
  \draw  (C2c) to[out=-90, in=210, looseness=.7] (C2b);
  \coordinate (cent) at (barycentric cs:A=3,B=3,C=3) ;
  \draw (cent) node {$.$};
\end{tikzpicture}
\caption{The model vector field; dotted lines are complete flow lines}\label{fig:vector-field-xT}
\end{figure}

To be complete, we remove from this local model a small neighborhood of
every vertex of the triangle if this vertex belongs to a compact component
of~$\partial S$.

We use this vector field  in Chapter~\ref{sec:local-systems-their} to describe the spaces of twisted
local systems introduced
here in terms of twisted local systems on a quiver.

\section{Decorated twisted local systems}
\label{sec:framed-local-systems-1}
In this section we introduce decorated twisted symplectic systems. Given a symplectic local system~$\mathcal{F}$
  on~$T'S$ we denote by $\mathcal{F}_{\Lagd{n}}$ the associated bundle over~$T'S$ with fiber ${\Lagd{n}}$, the space of decorated Lagrangians.

We use the orientation of~$S$ and an auxiliary Riemannian metric to define a section of $T'S|_{\partial S}$.
We denote the image of~$\partial S$ under this section by $\vec{\partial} S \subset T'S|_{\partial S}$ and normalize the section so
that tangent vectors forming an angle of $\pi/2$ with $\vec{\partial} S$ are
pointing inward.\index{notation}{39@$\vec{\partial} S$ (\enquote{tangential} boundary)}

\begin{df}
  \label{df:framed-local-systems-on-S}
  A \emph{decoration} of a twisted symplectic local system~$\mathcal{F}$
  on~$T'S$ is a flat section~$\beta$ of the restriction
  $\mathcal{F}_{\Lagd{n}}|_{ \vec{\partial} S}$. The pair $(\mathcal{F},
  \beta)$ is called a \emph{decorated twisted symplectic local system}.
\end{df}\index{definition}{decorated! twisted symplectic local system}%
\index{definition}{decoration}%
\index{definition}{twisted!decorated --- symplectic local system}%
\index{definition}{symplectic!decorated twisted --- local system}%
\index{definition}{local system!decorated twisted symplectic ---}%
The moduli space of decorated twisted symplectic local systems is denoted
$\Locddelta(S, \Sp(2n,\R))$.
\index{notation}{40@$(\mathcal{F}, \beta)$ (a decorated twisted local system)}%
\index{notation}{41@$\Locddelta(S, \Sp(2n,\R))$ (moduli of decorated twisted local systems)}

In order to translate this notion in terms of representations, it is convenient to choose vector fields~$\vec{x}$ that have some compatibility with
$\vec{\partial} S$. The vector field~$\vec{x}$ will be said to be \emph{tangent to
the compact boundary of~$S$} if, for every~$b$ belonging to a compact component
of~$\partial S$, $\vec{x}(b)$ is tangent to~$\partial S$ and the vector
forming an angle of~$\pi/2$ with it points inward the surface. In particular,
the vector field~$\vec{x}_\mathcal{T}$ constructed in the previous paragraph
has this property.\index{definition}{tangent to
the compact boundary}

Using the elements~$c_1, \dots, c_p$
in~$\pi_1( S\setm Z( \vec{x}))$ (cf.\
Remark~\ref{rem:vector-fields-boundary-elements}) as well as arcs (in
$S\setm Z( \vec{x})$) connecting the base point to the components
of $\partial \bar{S}$ containing elements of~$R$, one gets the following

\begin{prop}
  \label{prop:framed-local-systems-rep}
  Let~$\vec{x}$ be a vector field on~$S$ with isolated zeros and no zeros on
  the compact components of~$\partial S$, which furthermore is tangent to the
  compact boundary of~$S$.  The parallel transport gives rise to a one-to-one
  correspondence between the space $\Locddelta(S, \Sp(2n,\R))$
  and the space of conjugacy classes of tuples
  \[ ( \rho, \{ \mathbf{v}_j\}_{j=1}^{p}, \{ (v_{j,
      \ell})_{j=1}^{r_\ell}\}_{\ell=1}^{k-p}) \in \Homdelta( S \setm Z(
    \vec{x}), \Sp(2n,\R))\times (\Lagd{n})^p \times
    \prod_{\ell=1}^{k-p} (\Lagd{n})^{r_\ell}\]
  such that, for all~$j=1, \dots, p$, $\rho( c_j)\cdot \mathbf{v}_j=
  \mathbf{v}_j$.
\end{prop}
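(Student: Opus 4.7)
The plan is to combine Corollary~\ref{cor:vector-fields-twisted-rep}, which already identifies the underlying twisted local system on $T'S$ with a twisted representation of $\pi_1(S\setm Z(\vec{x}))$, with an unpacking of the decoration~$\beta$ as boundary data. Concretely, the section $\vec{x}\colon S\setm Z(\vec{x})\to T'S$ induces a pullback $\mathcal{F}_0\coloneqq \vec{x}^{\ast}\mathcal{F}$ that is a symplectic local system on $S\setm Z(\vec{x})$; by the corollary its holonomy is the desired $\rho\in\Homdelta(S\setm Z(\vec{x}),\Sp(2n,\R))$.

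Next I would translate the decoration. Because $\vec{x}$ is assumed tangent to the compact boundary of~$S$ with the inward normal making angle $\pi/2$, on each compact component of $\partial S$ the section $\vec{x}$ is isotopic (through nonvanishing sections of $T'S|_{\partial S_{\mathrm{comp}}}$) to $\vec{\partial} S$; hence pulling back via either section yields the same parallel transport. On a compact component~$C$ homotopic to~$c_j$, a flat section of $\mathcal{F}_{\Lagd{n}}$ along $\vec{\partial} S|_C$ is a single decorated Lagrangian in the fiber over any chosen point, invariant under the monodromy of the loop. Under the identification above this loop is homotopic, in $T'S$, to the lift of~$c_j$ by~$\vec{x}$, with no extra factor of~$\delta$ because $\vec{x}|_C$ does not wind relative to~$\vec{\partial} S|_C$. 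Parallel transporting this invariant decorated Lagrangian along~$\alpha_C$ produces an element $\mathbf{v}_j\in \Lagd{n}$ satisfying $\rho(c_j)\cdot\mathbf{v}_j=\mathbf{v}_j$, as required.

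For a non-compact component $C_\ell$ of $\partial\bar{S}$, the intersection $C_\ell\cap S$ is a disjoint union of $r_\ell$ open arcs. Each arc is contractible and contains no zeroes of~$\vec{x}$ near the boundary (after discarding small neighborhoods of vertices on compact components, as in the construction of~$\vec{x}_{\mathcal T}$), so a flat section of $\mathcal{F}_{\Lagd{n}}$ over such an arc is just a single value in $\Lagd{n}$; parallel transport along~$\alpha_\ell$ and along arcs cyclically connecting these components gives the tuple $(v_{j,\ell})_{j=1}^{r_\ell}$. No invariance condition is imposed since none of these paths is a loop.

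Finally I would check bijectivity: the map from decorated twisted local systems to such tuples, well defined up to global $\Sp(2n,\R)$-conjugation, is clearly injective because the data $(\rho,\{\mathbf{v}_j\},\{(v_{j,\ell})\})$ reconstructs the pair $(\mathcal{F},\beta)$ by the standard suspension construction (forming $\pi_1(T'S)\backslash(\widetilde{T'S}\times\Sp(2n,\R))$ and spreading the boundary values along $\vec{\partial} S$ by parallel transport). Surjectivity follows since any such tuple prescribes exactly the data needed to define a flat section along $\vec{\partial} S$. The main technical point — and the only real obstacle — is the compatibility on compact boundary components between $\vec{x}|_{\partial S}$ and $\vec{\partial} S$; this is precisely what the tangency hypothesis guarantees, so that no spurious power of~$\delta_G$ appears in the invariance relation $\rho(c_j)\cdot\mathbf{v}_j=\mathbf{v}_j$.
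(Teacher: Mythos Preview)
Your proposal is correct and follows exactly the approach the paper intends: the paper does not give a detailed proof of this proposition but states it as a consequence of Corollary~\ref{cor:vector-fields-twisted-rep} together with parallel transport along the arcs~$\alpha_C$ and~$\alpha_\ell$ (cf.\ Remark~\ref{rem:vector-fields-boundary-elements}). You have filled in precisely the details the paper leaves implicit, including the key observation that the tangency hypothesis on~$\vec{x}$ along compact boundary components ensures the monodromy condition is $\rho(c_j)\cdot\mathbf{v}_j=\mathbf{v}_j$ rather than $\rho(c_j)\cdot\mathbf{v}_j=-\mathbf{v}_j$ (compare Corollaries~\ref{cor:framed-local-systems-rep-even-punctured} and~\ref{cor:framed-local-systems-rep-odd-punctured}, where a different choice of vector field flips the sign).
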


\begin{rem}
  Note that when $r>0$ (i.e.\ when $R\neq \emptyset$), there are always
  nonvanishing vector fields that are tangent to the compact boundary.
\end{rem}

Under some parity conditions, there are vector fields tangent to the compact
boundary and whose indices are all even (cf.\
Remark~\ref{rem:vector-fields-index-order-delta_G}) so that the space can be
interpreted as representations of~$\pi_1(S)$:

\begin{cor}
  \label{cor:framed-local-systems-rep-even-punctured}
  Suppose that $p=k$ \ep{i.e.\ $R=\emptyset$} and that $k$ is even. Then there is
  a one-to-one correspondence between $\Locddelta(S,
  \Sp(2n,\R))$ and conjugacy classes of tuples
  \[ ( \rho, \{ \mathbf{v}_j\}_{j=1}^{k}) \in \Hom( S, \Sp(2n,\R))\times
    (\Lagd{n})^k\] such that, for all~$j=1, \dots, k$,
  $\rho( c_j)\cdot \mathbf{v}_j= \mathbf{v}_j$.
\end{cor}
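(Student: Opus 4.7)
The strategy is to specialize Proposition~\ref{prop:framed-local-systems-rep} to a vector field $\vec{x}$ on~$S$ whose interior zeros all have even index, and then invoke Remark~\ref{rem:vector-fields-index-order-delta_G} to eliminate the twisting condition and identify $\Homdelta(S,\Sp(2n,\R))$ with a subspace of $\Hom(\pi_1(S),\Sp(2n,\R))$.

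First I would construct such a vector field. Under the hypotheses $R=\emptyset$ (equivalently $p=k$) and $k$ even, the surface $S=\bar S$ is compact with Euler characteristic $\chi(S) = 2-2g-k$, which is even. By the Poincaré--Hopf theorem for compact surfaces with boundary, a vector field tangent to~$\partial S$ and with no zeros on~$\partial S$ has interior zero indices summing to~$\chi(S)$. One concrete construction is to start from the vector field $\vec{x}_{\mathcal{T}}$ of Section~\ref{sec:vect-fields-triang} (which is tangent to the compact boundary and has no zeros on it) and then pair up odd-index zeros by a local surgery: two interior zeros of odd index contained in a topological disk can be merged, by a small perturbation supported in that disk, into a single zero whose index equals their sum and is therefore even. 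Since $\chi(S)$ is even, the number of odd-index zeros is even, so this procedure terminates with a vector field $\vec{x}$ whose interior zeros all have even index and which is still tangent to~$\partial S$.

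Second, with this $\vec{x}$, Proposition~\ref{prop:framed-local-systems-rep} identifies $\Locddelta(S,\Sp(2n,\R))$ with conjugacy classes of tuples $(\rho,\{\mathbf{v}_j\}_{j=1}^{k})$ where $\rho\in\Homdelta(S\setm Z(\vec{x}),\Sp(2n,\R))$ and $\rho(c_j)\cdot\mathbf{v}_j=\mathbf{v}_j$ for each~$j$; the marked-point family $\prod_{\ell=1}^{k-p}(\Lagd{n})^{r_\ell}$ appearing in the proposition is trivial because $k-p=0$.

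Third, since $\delta_G=-\Id$ has order two and every $\mathrm{ind}(u)$ is even, the defining condition $\rho(\gamma_u)=\delta_G^{\mathrm{ind}(u)}$ reduces to $\rho(\gamma_u)=\Id$ for every $u\in Z(\vec{x})$. As noted in Remark~\ref{rem:vector-fields-index-order-delta_G}, this means $\rho$ factors through the quotient of $\pi_1(S\setm Z(\vec{x}))$ by the normal subgroup generated by the $\gamma_u$, which by van Kampen is $\pi_1(S)$. This yields the stated bijection with conjugacy classes of tuples in $\Hom(\pi_1(S),\Sp(2n,\R))\times(\Lagd{n})^k$ subject to $\rho(c_j)\cdot\mathbf{v}_j=\mathbf{v}_j$. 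The only nontrivial step is the existence of the vector field with all even indices compatible with the boundary tangency, and this is precisely where the parity hypothesis on~$k$ is used.
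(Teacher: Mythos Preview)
Your proof is correct and is exactly the approach the paper intends: the sentence preceding the corollary asserts the existence of a vector field tangent to the compact boundary whose zeros all have even index, and then invokes Remark~\ref{rem:vector-fields-index-order-delta_G} to untwist the representation, which is precisely what you carry out in detail. Your explicit construction (merging odd-index zeros of~$\vec{x}_\mathcal{T}$ in pairs, using that $\chi(S)=2-2g-k$ is even) and the van Kampen/quotient argument for passing from $\pi_1(S\setm Z(\vec{x}))$ to $\pi_1(S)$ simply make explicit what the paper leaves implicit.
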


Without parity condition, we can use a vector field that
has index~$0$ at the boundary components and whose all other indices are
even to get the following.
\begin{cor}
  \label{cor:framed-local-systems-rep-odd-punctured}
  Suppose that $p=k$ \ep{i.e.\ $R=\emptyset$}. Then there is
  a one-to-one correspondence between $\Locddelta(S,
  \Sp(2n,\R))$ and conjugacy classes of tuples
  \[ ( \rho, \{ \mathbf{v}_j\}_{j=1}^{k}) \in \Hom( S, \Sp(2n,\R))\times
    (\Lagd{n})^k\] such that, for all~$j=1, \dots, k$,
  $\rho( c_j)\cdot \mathbf{v}_j= - \mathbf{v}_j$.
\end{cor}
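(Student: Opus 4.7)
The plan is to deduce this corollary from Proposition~\ref{prop:framed-local-systems-rep} by a judicious choice of vector field, in the same spirit as the even case treated in Corollary~\ref{cor:framed-local-systems-rep-even-punctured}, exploiting Remark~\ref{rem:vector-fields-index-order-delta_G}. Since $R = \emptyset$, we have $p = k$ and the tuples indexed by $\ell$ are absent, so the proposition identifies $\Locddelta(S, \Sp(2n,\R))$ with conjugacy classes of pairs $(\rho, \{\mathbf{v}_j\}_{j=1}^k) \in \Homdelta(S \setm Z(\vec{x}), \Sp(2n,\R)) \times (\Lagd{n})^k$ satisfying $\rho(c_j) \mathbf{v}_j = \mathbf{v}_j$ for each $j$.

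The central step is to choose a vector field $\vec{x}$ such that all of its interior zeros have \emph{even} index, while the defect with respect to $\chi(S) = 2-2g-k$ is absorbed by an odd winding of $\vec{x}$ relative to $\vec{\partial}S$ along each compact boundary component $c_j$. By the Poincaré-Hopf formula for a vector field on a surface with boundary (with boundary-winding corrections), prescribing winding $1$ on each $c_j$ leaves $2-2g-2k$ to be distributed among interior indices, which is even and hence realizable by even indices. This is the odd-$k$ analogue of the even-index vector field used in the previous corollary; it is the place where the condition $R=\emptyset$ is used, since it lets us freely redistribute indices between interior singularities and boundary components.

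Under such a choice of $\vec{x}$, the relations $\rho(\gamma_u) = (-\Id)^{\mathrm{ind}(u)} = \Id$ at interior zeros become trivial, so the representation descends to an honest representation $\rho \colon \pi_1(S) \to \Sp(2n,\R)$. The odd winding along $c_j$ means that the $\vec{x}$-based lift of $c_j$ to $\pi_1(T'S)$ differs from the $\vec{\partial}S$-based lift by~$\delta$. Since the decoration is by definition flat over~$\vec{\partial}S$, the condition from Proposition~\ref{prop:framed-local-systems-rep} reads $\rho(c_j^{\vec{\partial}S}) \mathbf{v}_j = \mathbf{v}_j$, and after rewriting it in terms of the $\pi_1(S)$-element $c_j$ via $c_j^{\vec{\partial}S} = \delta \cdot c_j$ and using $\rho(\delta)=-\Id$, it becomes $\rho(c_j)\mathbf{v}_j = -\mathbf{v}_j$, which is the claimed condition.

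The main obstacle will be reconciling the construction with the precise hypothesis of Proposition~\ref{prop:framed-local-systems-rep}, which as stated requires $\vec{x}$ to be tangent to the compact boundary (forcing winding zero). One circumvents this in either of two equivalent ways: by verifying that the proposition admits a mild generalization allowing prescribed boundary winding, or, more explicitly, by placing next to each $c_j$ an auxiliary interior singularity of odd index $\mathrm{ind}(u_j) = 1$ and absorbing the small loop $\gamma_{u_j}$ into the peripheral element; the twisting condition $\rho(\gamma_{u_j}) = -\Id$ then turns the identity $\rho(c_j \gamma_{u_j})\mathbf{v}_j = \mathbf{v}_j$ provided by the proposition into $\rho(c_j)\mathbf{v}_j = -\mathbf{v}_j$. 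In this second formulation, the even-index condition on the remaining interior zeros ensures the representation descends to $\pi_1(S)$, completing the identification.
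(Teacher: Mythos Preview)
Your approach is essentially the paper's: the single sentence of justification preceding the corollary says to use ``a vector field that has index~$0$ at the boundary components and whose all other indices are even'', i.e.\ a field whose winding along each~$c_j$ differs from that of~$\vec{\partial}S$ by an odd integer while all interior indices are even, which is exactly your option~(a). Your observation that this steps outside the literal tangency hypothesis of Proposition~\ref{prop:framed-local-systems-rep} (and that one may either relax that hypothesis or trade the boundary twist for an adjacent odd-index zero) is well taken---the paper leaves this implicit.
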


\section{Framed twisted local systems}
\label{sec:decor-twist-local}

\begin{df}
  \label{df:decor-twist-local}
  A \emph{framing} of a twisted local system~$\mathcal{F}$ is a flat
  section~$\sigma$ of the restriction of $\mathcal{F}_{\Lag{n}}$ to
  $T'S|_{\partial S}$. The pair $(\mathcal{F}, \sigma)$ is called a
  \emph{framed twisted local system}.
\end{df}\index{definition}{framed! twisted local system}%
\index{definition}{framing}%
\index{definition}{twisted!framed --- local system}%
\index{definition}{local system!framed twisted ---}%

We denote by $\Locfdelta( S, \Sp(2n, \R))$\index{notation}{43@$\Locfdelta( S, \Sp(2n, \R))$ (moduli of
  framed twisted local systems)} the moduli space of framed twisted local systems.
Since the element~$-\Id$ of $\Sp(2n, \R)$ acts trivially on~$\Lag{n}$, the
restriction of the $\Lag{n}$-local system $\mathcal{F}_{\Lag{n}}$ to a fiber
of $T'S\to S$ is the trivial bundle and the section~$\sigma$ is constant in
restriction to such a fiber of $T'S\to S$ (with base point in $\partial S$). Thus the section is entirely
determined by its restriction to $\vec{\partial} S \subset T'S|_{\partial
  S}$. Therefore we also call a flat section~$\sigma$ of the
restriction of $\mathcal{F}_{\Lag{n}}$ to $\vec{\partial} S$ a \emph{framing}. With this point
of view, using the natural projection $\Lagd{n} \to \Lag{n}$ and the induced
map $\mathcal{F}_{\Lagd{n}} \to \mathcal{F}_{\Lag{n}}$, there is a natural
map
\[ \Locddelta (S, \Sp(2n,\R)) \longrightarrow \Locfdelta( S, \Sp(2n,\R)).\]
Recall that, after choosing a nonvanishing vector field, we found an identification
$\Locfdelta( S, \Sp(2n,\R)) \simeq \Locf( S, \Sp(2n,\R))$. In a similar way, we find an identification
$\Locddelta (S, \Sp(2n,\R)) \simeq \Locd( S, \Sp(2n,\R))$.

\section{Transverse local systems}
\label{sec:transv-local-syst}

Let~$\alpha\colon ( [0,1], \{ 0,1\}) \to (S, \partial S)$ be an arc and let
$(\mathcal{F}, \sigma)$ be a framed symplectic local system on~$S$. The
restriction of~$\mathcal{F}$ to~$\alpha$ (more precisely, its pull-back
by~$\alpha$) is the trivial local system $[0,1]\times \Sp(2n,\R)$ and the
decoration gives a pair of Lagrangians $(L^t, L^b)$ ($L^t$ coming from the
fiber above~$\alpha(0)$ and~$L^b$ from~$\alpha(1)$). Only the $\Sp(2n, \R)$-orbit of
the pair $(L^t, L^b)$ is well defined. However it makes sense to say when this
pair is transverse in which case we will say that the framed local system
$( \mathcal{F}, \sigma)$ is $\alpha$-\emph{transverse}.

As the pair associated with $\bar{ \alpha}$ is $(L^b, L^t)$, a framed local
system is $\bar{\alpha}$-transverse if and only if it is $\alpha$-transverse.

\begin{df}
  \label{df:transv-local-syst-T}
  Let~$\mathcal{T}$ be an ideal triangulation of~$S$. A framed symplectic
  local system is said to be \emph{$\mathcal{T}$-transverse} if it is
  $\alpha$-transverse for every edge~$\alpha$ in~$\mathcal{T}$.
\end{df}\index{definition}{transverse!local system}%
\index{definition}{local system!transverse ---}%

We will denote by $\LocfT(S, \Sp(2n,\R))$ the space of
$\mathcal{T}$-transverse decorated symplectic local systems.
\index{notation}{44@$\LocfT(S, \Sp(2n,\R))$ (moduli of transverse framed local
  systems)}%

Let $(\mathcal{F}, \sigma)$ be a framed \emph{twisted} local system. If
$(\mathcal{F}', \sigma')$ is the corresponding framed local system (obtained via the
choice of a nonvanishing vector field), we will say that
$(\mathcal{F}, \sigma)$ is $\alpha$-\emph{transverse} or
$\mathcal{T}$-\emph{transverse} if $(\mathcal{F}', \sigma')$ is so. As
different nonvanishing vector fields $\vec{x}_1$, $\vec{x}_2$ differ by
\enquote{twists} in the fiber of $T'S\to S$ (more precisely, for any
arc~$\alpha$, one can, up to homotopy, assume that~$\vec{x}_1$ and~$\vec{x}_2$
coincide at the extremities of~$\alpha$, and the loop
$\vec{x}_1({\alpha}) \sqcup \vec{x}_2({\bar{\alpha}})$ is homotopic to a power
of~$\delta$), and again as~$-\Id$ acts trivially on~$\Lag{n}$, this condition
does not depend on the choice of the nonvanishing vector field. Hence the
space $\LocfdeltaT(S, \Sp(2n,\R))$ of \index{notation}{46@$\LocfdeltaT(S, \Sp(2n,\R))$
  (moduli of framed twisted transverse local systems)}
$\mathcal{T}$-transverse framed twisted symplectic local system is well
defined and isomorphic to $\LocfT(S, \Sp(2n,\R))$.

\smallskip

Similarly,
a decorated $\delta$-twisted symplectic local system $(
\mathcal{F}, \beta)$ is said to be $\alpha$\emph{-transverse} or
$\mathcal{T}$\emph{-transverse} if the associated framed local system is
so. Their moduli space is denoted $\LocddeltaT( S, \Sp(2n, \R))$.
\index{notation}{48@$\LocddeltaT(S, \Sp(2n,\R))$
  (moduli of decorated twisted transverse local systems)}
\section{Configurations associated with framed local systems}
\label{sec:conf-assoc-with-1}

For $\ell\geq 2$, an \emph{$\ell$-gon} is (the homotopy class of) a map $(
\mathbb{D}, \mu_\ell) \to (S, \partial S)$ where $\mathbb{D}$~is the closed unit
disk in~$\C$, $\mathbb{D}=\{ z\in \C \mid \abs{z}\leq 1\}$ and~$\mu_\ell$ is the
set of $\ell$-roots of unity, $\mu_\ell = \{ z\in \C \mid z^\ell =1\}$. A $3$-gon will
be sometime called a \emph{triangle} (although this may cause confusion with
the triangles of a triangulation) and a $4$-gon will be called a
\emph{quadrilateral}. A $2$-gon is the same thing (up to homotopy) than an arc in~$S$.\index{definition}{triangle}%
\index{definition}{quadrilateral}%
\index{definition}{lgon@$\ell$-gon}%

Restricting an $\ell$-gon to segments contained in $\partial \mathbb{D}$ and
whose endpoints are successive elements of~$\mu_\ell$ defines a family of arcs
associated with the $\ell$-gon. An arc will be said to \emph{belong to the
  $\ell$-gon} if it is (homotopic to an arc) obtained in this way.

The pull back of a framed local system by an $\ell$-gon is the trivial local
system on~$\mathbb{D}$ together with the data of a Lagrangian for each element
in~$\mu_\ell$, i.e.\ it gives a well defined element in $\Conf^\ell( \Lag{n})$. We
have thus, associated with any $\ell$-gon~$\tau$, a map
\[ f_\tau\colon \Locf( S, \Sp(2n, \R)) \longrightarrow \Conf^\ell(\Lag{n} ).\]
For $\ell=2$, the maps $f_\alpha$ for $\alpha$ an edge of a triangulation are precisely the maps we
used above to define transverse local systems.

In a similar way, an $\ell$-gon provides a map from the space of decorated symplectic local systems to the configuration space of decorated Lagrangians.

Let~$\mathcal{T}$ be a triangulation of~$S$. Any face~$T$ (i.e.\ triangle)
of~$\mathcal{T}$ gives rise to three $3$-gons, $\tau_1$, $\tau_2$, $\tau_3$ obtained one from the other via
precomposition by the rotation of angle $2\pi/3$. In other words, for every
framed symplectic local system $(\mathcal{F}, \sigma)$, the three
configurations of triples of Lagrangians $f_{\tau_i}(\mathcal{F}, \sigma)$ ($i=1,2,3$)
are obtained one from the other by cyclic permutation.
\begin{df}
  \label{df:maslov-index-triang}
  The common value of the Maslov index
  $\mu_n ( f_{\tau_i}(\mathcal{F}, \sigma))$ depends only on~$T$ and
  $(\mathcal{F}, \sigma)$ (cf.\ Proposition~\ref{maslov_prop}) and will be
  called \emph{Maslov index of the triangle}~$T$ for~$(\mathcal{F}, \sigma)$
  and denoted by $\mu^T(\mathcal{F}, \sigma)$.
\end{df}\index{notation}{50@$\mu^T$ (the Maslov index of the triangle~$T$)}%
\index{definition}{Maslov!index of a triangle}%
\index{definition}{index!Maslov --- of a triangle}%
\index{definition}{triangle!Maslov index of a ---}%

In a similar vein, any internal (oriented) edge~$\alpha$ of~$\mathcal{T}$ is the
diagonal of a quadrilateral in~$\mathcal{T}$ and gives rise to a $4$-gon~$\tau$. In this
situation the map~$f_\tau$ will be denoted by~$q_\alpha$:
\[ q_\alpha \colon\Locf( S, \Sp(2n, \R)) \longrightarrow \Conf^4(\Lag{n} ).\]

As the $4$-gon associated with~$\bar{\alpha}$ is obtained from~$\tau$ by the
precomposition with the rotation of angle~$\pi$, the map~$q_{\bar{\alpha}}$ is
equal to $\kappa\circ q_\alpha$ ($\kappa$ is the automorphism of $\Conf^4(
\Lag{n})$ induced by the permutation~$(13)(24)$, see Section~\ref{sec:conf-lagr}).
\index{notation}{52@$q_\alpha$ (map to $\Conf^4(
\Lag{n})$ associated with an edge~$\alpha$)}

\begin{figure}[ht]
\begin{center}
\begin{tikzpicture}
  \coordinate (L1) at (0,-1) ;
  \coordinate (L2) at (-1,2) ;
  \coordinate (L3) at (1,2) ;
  \coordinate (L4) at (-2,0) ;
  \coordinate (L5) at (2,0) ;
  \draw (L1) node[below]{$L_1$} ;
  \draw (L2) node[above]{$L_2$} ;
  \draw (L3) node[above]{$L_3$} ;
  \draw (L4) node[left]{$L_4$} ;
  \draw (L5) node[right]{$L_5$} ;
  \draw (barycentric cs:L1=7,L2=1,L3=1) node{$\theta$} ;
  \draw (L1)--(L2) node[midway,left]{$\alpha$} ;
  \draw (L1)--(L3) node[midway,right]{$\alpha'$};
  \draw (L1)--(L4) ;
  \draw (L2)--(L3) ;
  \draw (L2)--(L4) ;
  \draw (L3)--(L5) ;
  \draw (L1)--(L5) ;

\end{tikzpicture}
\caption{Configuration of 5 Lagrangians corresponding to an angle $\theta$}
\end{center}
\end{figure}

An \emph{angle}~$\theta$ of~$\mathcal{T}$ is a pair of edges~$\{ \alpha, \alpha'\}$ of~$\mathcal{T}$
contained in the same face and having the same endpoint. It is called
\emph{internal} if both edges~$\alpha$, $\alpha'$ are internal. If the
angle~$\theta$ is internal, it gives rise to a well defined $5$-gon~$\tau$
(composed with the three faces of~$\mathcal{T}$ containing $\{ \alpha,
\alpha'\}$) and hence a map
\[ c_\theta\coloneqq f_\tau \colon\Locf( S, \Sp(2n, \R)) \longrightarrow \Conf^5(\Lag{n} ).\]
\index{notation}{54@$c_\theta$ (the map to $\Conf^5( \Lag{n})$ associated with an angle~$\theta$)}
\index{definition}{internal!angle}
\index{definition}{angle!internal ---}

\section{Toledo number and maximal representations}

In this section, we assume that~$r=0$ (i.e.\ $R=\emptyset$), so that $p=k$.

An important invariant for a symplectic local system~$\mathcal{F}$ on~$S$ (or
for the associated holonomy representation
$\rho\colon \pi_1(S)\to \Sp(2n,\R)$) is the Toledo number, here denoted
\index{notation}{56@$T_\mathcal{F}$, $T_\rho$ (Toledo number)}%
by~$T_\mathcal{F}$ (or~$T_\rho$), which was defined in~\cite{BIW} using
bounded cohomology. Note that the Toledo number depends on the topological
surface~$S$ and not only on its fundamental group. It is a real number which
satisfies the Milnor--Wood inequality:\index{definition}{Toledo number}
\[ -n\abs{\chi(S)} \leq T_\mathcal{F} \leq n\abs{\chi(S)}.\]

The representations where this invariant
achieves its maximum have particularly nice geometric properties,
see~\cite{BIW}.

\begin{df}
  If $R=\emptyset$, a symplectic local system $\mathcal{F}$ is called
  \emph{maximal} if \[T_\mathcal{F}=n\chi(S)=-n \abs{\chi(S)}.\]
\end{df}\index{definition}{maximal!symplectic local system}%
\index{definition}{symplectic!maximal --- local system}%
\index{definition}{local system!maximal symplectic ---}%
\begin{rem}
  The choice of the sign in the definition is not really relevant
  (pulling back with an orientation reversing diffeomorphism
  changes the Toledo number to its opposite). The chosen sign here makes
  Corollary~\ref{cor:max-iff-triangle-are-max} below look more natural;
  equally uniformizations are maximal for this choice.
\end{rem}

We denote by  $\M(S,\Sp(2n,\R))$ the subspace of $\Loc(S,\Sp(2n,\R))$
consisting of maximal local systems.  In a similar fashion, we denote by
$\Mf(S,\Sp(2n,\R))$ the subspace of $\Locf(S,\Sp(2n,\R))$ of
framed maximal local systems.  The following facts are proven
in~\cite{BIW}.\index{notation}{55@$\M(S,\Sp(2n,\R))$ (moduli of maximal local systems)}%
\index{notation}{57@$\Mf(S,\Sp(2n,\R))$ (moduli of maximal framed local systems)}%
\begin{prop}
  \label{maxrep}
  Suppose that~$R$ is empty.
  \begin{enumerate}[(a)]
  \item
    The natural map
    \begin{equation*}
      \Mf(S,\Sp(2n,\R)) \longrightarrow  \M(S,\Sp(2n,\R))
    \end{equation*}
    is surjective. In other words, every maximal local system admits a
    framing.
  \item\label{item-b-prop-maxrep} Framed maximal local systems are
    transverse with respect to any ideal triangulation~$\mathcal{T}$:
    \[\mathcal
    \Mf(S,\Sp(2n,\R)) \subset \LocfT( S,
    \Sp(2n,\R)).\]
  \item Maximal representations are reductive; hence the two spaces
    $\M(S,\Sp(2n,\R))$ and $\Mf(S,\Sp(2n,\R))$
    are Hausdorff \ep{cf.\ Remark~\ref{rem:Hausdorff}}.
\end{enumerate}
\end{prop}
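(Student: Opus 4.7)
The plan is to derive all three statements from a single bounded-cohomology identity relating the Toledo number $T_{\mathcal{F}}$ to the Maslov indices of the triangles of an ideal triangulation and to the translation numbers of the boundary holonomies. Using the iterated cocycle identity of Lemma~\ref{lem:souriau-index-sum-triangle-v2}, together with the translation-number formula of Lemma~\ref{lem:rotation-number}, one should obtain, for any framed symplectic local system $(\mathcal{F},\sigma)$ and any ideal triangulation $\mathcal{T}$, an identity of the shape
\[
  -T_{\mathcal{F}} \;=\; \tfrac{1}{2}\sum_{T\in\mathcal{T}}\mu^{T}(\mathcal{F},\sigma) \;+\; \sum_{j=1}^{k}\widetilde{\Rot}(\rho(c_{j})).
\]
Since $\lvert\mu^{T}\rvert\leq n$ by Proposition~\ref{maslov_prop} and $\lvert\widetilde{\Rot}(\rho(c_{j}))\rvert\leq n/2$, this identity \emph{is} the Milnor--Wood inequality, and maximality $T_{\mathcal{F}}=-n\lvert\chi(S)\rvert$ forces every individual summand to attain its extreme value $+n$, respectively $+n/2$.

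To prove (a), the first task is to bootstrap the identity in the absence of a framing. One can do so by first arguing directly that for a maximal $\rho$, each boundary element satisfies $\widetilde{\Rot}(\rho(c_{j}))=n/2$: indeed, the translation number is a continuous, conjugation-invariant quasimorphism on $\widetilde{\Sp}(2n,\R)$, and by Remark~\ref{rem:rotation-number} it can be computed asymptotically from $m_{n}(\rho(c_{j})^{k}\cdot L^{\sim},L^{\sim})$ for \emph{any} $L^{\sim}\in\wideLag{n}$, with no a priori fixed Lagrangian needed. A bounded-cohomology estimate à la Burger--Iozzi--Wienhard then realizes $T_{\mathcal{F}}$ as a sum of boundary contributions $\widetilde{\Rot}(\rho(c_{j}))$ and an interior part bounded by $n\lvert\chi(S)\rvert-\sum_{j}\widetilde{\Rot}(\rho(c_{j}))$, so maximality forces $\widetilde{\Rot}(\rho(c_{j}))=n/2$. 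The equality case in Lemma~\ref{lem:rotation-number} (namely $m_{n}(\rho(c_{j})\cdot L^{\sim},L^{\sim})=n$, which is the maximal value) then forces $\rho(c_{j})$ to fix some Lagrangian $L_{j}$, giving the desired framing.

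For (b), with the framing from (a) in hand, apply the displayed identity to any ideal triangulation $\mathcal{T}$. Both sums on the right-hand side are already saturated by maximality, and the identity is an equality of sums of bounded terms; hence each triangle contribution $\mu^{T}(\mathcal{F},\sigma)$ must equal the maximum $+n$. By the last item of Proposition~\ref{maslov_prop}, the locus $\{\mu_{n}=n\}$ coincides with one $\Sp(2n,\R)$-orbit inside $\Conf^{3\ast}(\Lag{n})$ — in particular its triples consist of \emph{pairwise transverse} Lagrangians. Therefore every edge of every triangle of $\mathcal{T}$ carries a transverse pair, proving $\mathcal{T}$-transversality. Since $\mathcal{T}$ was arbitrary, (b) follows.

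For (c), the main obstacle — and the only genuinely nontrivial step — is reductivity. The strategy is: a nonreductive representation can be deformed, by conjugation with a one-parameter unipotent subgroup in the stabilizer of a proper parabolic containing $\rho(\pi_{1}(S))$, to its semisimplification; this deformation is continuous, preserves conjugacy invariants like $T_{\mathcal{F}}$, and its limit remains maximal. Thus it suffices to rule out maximal representations with image inside a proper parabolic, which one does by exhibiting, for any such representation, a triangle with $\mu^{T}<n$ (for instance, via the additivity $\mu_{n}=\mu_{p}+\mu_{q}$ under the block decomposition induced by the parabolic and the strict inequality on one block). Reductivity then gives closed $\Sp(2n,\R)$-orbits and the Hausdorff property of the quotients $\M(S,\Sp(2n,\R))$ and $\Mf(S,\Sp(2n,\R))$.
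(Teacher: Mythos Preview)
The paper does not give an in-house proof of this proposition; it simply cites \cite{BIW}. The only piece re-derived internally is (b), which the paper notes follows from Theorem~\ref{teo:toledo_maslov} and Corollary~\ref{cor:max-iff-triangle-are-max}. So your proposal should be compared against that.

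Your central identity is incorrect. Theorem~\ref{teo:toledo_maslov} reads
\[
  -T_{\mathcal{F}} \;=\; \tfrac{1}{2}\sum_{T\in\mathcal{T}}\mu^{T}(\mathcal{F},\sigma),
\]
with \emph{no} additional rotation-number term. When $R=\emptyset$ there are exactly $2\lvert\chi(S)\rvert$ triangles, so the right-hand side is already bounded by $n\lvert\chi(S)\rvert$; this alone is the Milnor--Wood inequality, and maximality forces each $\mu^{T}=n$. There is no room for an extra sum $\sum_{j}\widetilde{\Rot}(\rho(c_{j}))$, and in any case the expression $\widetilde{\Rot}(\rho(c_{j}))$ is ill-defined: $\widetilde{\Rot}$ is a function on $\widetilde{\Sp}(2n,\R)$, not on $\Sp(2n,\R)$, and its value on a lift depends on the lift by an arbitrary integer, so no inequality of the form $\lvert\widetilde{\Rot}(\rho(c_{j}))\rvert\leq n/2$ makes sense. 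Your argument for (b) therefore collapses, though the correct version (drop the rotation-number term entirely and use the triangle count) does work and matches the paper.

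Your arguments for (a) and (c) have genuine gaps. For (a), the displayed identity already presupposes a framing $\sigma$, so it cannot be used to produce one; your attempted bootstrap (``a bounded-cohomology estimate \`a la Burger--Iozzi--Wienhard\ldots'') is just an appeal to \cite{BIW}, which is what the paper does anyway. The step ``$\widetilde{\Rot}$ maximal $\Rightarrow$ fixed Lagrangian'' is not justified and is not a consequence of Lemma~\ref{lem:rotation-number}, which goes in the opposite direction. For (c), ruling out parabolic image by ``exhibiting a triangle with $\mu^{T}<n$'' again presupposes a framing, and the additivity sketch $\mu_{n}=\mu_{p}+\mu_{q}$ does not apply to an arbitrary parabolic reduction (the Levi of a symplectic parabolic is $\GL(m,\R)\times\Sp(2(n-m),\R)$, not a product of symplectic groups). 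These parts genuinely require the machinery of \cite{BIW} (existence of equivariant boundary maps, tightness), which is why the paper cites rather than proves them.
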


\begin{rem}
The argument given in~\cite{BIW} implies that not only maximal representations are reductive, but also representations that are
  \emph{almost maximal}, i.e.\ where $T_\rho<-(n-1)\abs{\chi(S)}$.
  \end{rem}\index{definition}{almost maximal}%
\index{definition}{maximal!almost ---}%

The next result shows that the Toledo number of a framed local system can
be computed easily using an ideal triangulation. In the special case of a pair
of pants this has been proven
in~\cite{Strubel}.

\begin{teo} \label{teo:toledo_maslov} Suppose
that~$R$ is empty. Let~$\mathcal{T}$ be an ideal
  triangulation of~$S$ and
  $(\mathcal{F}, \sigma)\in \Locf(S,\Sp(2n,\R))$. The Toledo number
  $T_\mathcal{F}$
  can be computed via the following formula:
  \[T_\mathcal{F}=-\frac{1}{2} \sum_{T\in\mathcal{T}} \mu^T(\mathcal{F}, \sigma).\]
\end{teo}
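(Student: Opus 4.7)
The plan is to reduce the Toledo number to a sum of translation numbers around the boundary components of~$S$, and then to evaluate each translation number combinatorially via the Souriau-index machinery of Section~\ref{sec:sympl-group-lagr}. The first step is to upgrade the framed local system to a $\delta$-twisted one using the vector field $\vec{x}_{\mathcal{T}}$ constructed in Section~\ref{sec:vect-fields-triang}. This furnishes canonical lifts $\tilde{c}_j\in\widetilde{\Sp}(2n,\R)$ of the peripheral elements $\rho(c_j)$ and coherent lifts $\tilde{L}_j\in\wideLag{n}$ of the framing Lagrangians. The formula, going back to~\cite{BIW} and used in~\cite{Strubel} in the pair-of-pants case, identifying the bounded Kähler class with a normalization of the Maslov cocycle then reads
\[
  T_{\mathcal{F}} \;=\; -\sum_{j=1}^{k}\widetilde{\Rot}(\tilde{c}_j).
\]

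The second step computes each translation number using the triangulation. Because $\rho(c_j)$ fixes~$L_j$, Lemma~\ref{lem:rotation-number} gives $\widetilde{\Rot}(\tilde{c}_j) = \tfrac{1}{2}\,m_n(\tilde{c}_j\cdot\tilde{L}_j,\tilde{L}_j)$. Pulling $\mathcal{T}$ back to the universal cover and writing $M_1,M_2,\dots,M_{d_j+1}=\rho(c_j)\cdot M_1$ for the Lagrangians at the other vertices of the triangles of the fan at a chosen lift $\tilde{v}_j$, with lifts $\tilde{M}_k\in\wideLag{n}$ chosen coherently via the twisted structure, Lemma~\ref{lem:souriau-index-sum-triangle-v2} applied with $M=L_j$ and $g=\rho(c_j)$ yields
\[
  \sum_{T\ni\tilde{v}_j}\mu^T(\mathcal{F},\sigma) \;=\; \sum_{k=1}^{d_j}m_n(\tilde{M}_k,\tilde{M}_{k+1}).
\]
A telescoping argument using Proposition~\ref{prop:souriau-index-maslov-index-extended} together with the $\widetilde{\Sp}(2n,\R)$-invariance of $m_n$ then collapses the right-hand side to $m_n(\tilde{c}_j\cdot\tilde{L}_j,\tilde{L}_j) = 2\,\widetilde{\Rot}(\tilde{c}_j)$.

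The third step is to sum over $j$ and check that the fans at the $k$ chosen boundary-vertex lifts together account for each triangle of~$\mathcal{T}$ exactly once. The lifts $\tilde{v}_j$ must be picked so that their fans in the universal cover tile a fundamental domain for the $\pi_1(S)$-action, a fact transparent from the vector field $\vec{x}_{\mathcal{T}}$. Putting the pieces together then gives $\sum_{T\in\mathcal{T}}\mu^T(\mathcal{F},\sigma) = 2\sum_j\widetilde{\Rot}(\tilde{c}_j) = -2\,T_{\mathcal{F}}$, equivalent to the claimed identity.

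The main obstacle is precisely this combinatorial bookkeeping, together with matching the sign and normalization conventions between the bounded-cohomological definition of $T_{\mathcal{F}}$ and the translation-number description. A useful sanity check is the rank-one Fuchsian case for the pair of pants: there are two triangles with $\mu_1^T = 1$ each, and the formula gives $-\tfrac{1}{2}\cdot 2 = -1 = \chi(S)$, matching the Euler number. This fixes the normalization for general~$n$ through the injection $\Lag{1}^n\hookrightarrow\Lag{n}$ and the additivity $\mu_n=\mu_1+\cdots+\mu_1$ under it.
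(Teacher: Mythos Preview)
Your outline has the right ingredients---the translation-number formula for~$T_\rho$, Lemma~\ref{lem:rotation-number}, and the Souriau-index lemmas---but the bookkeeping in steps~2 and~3 does not close, and the paper's proof handles this differently.

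First, the $\delta$-twisted structure does not furnish lifts $\tilde c_j\in\widetilde{\Sp}(2n,\R)$: twisting is by $-\Id\in\Sp(2n,\R)$, not by a choice of preimage in the universal cover. The formula $T_\rho=-\sum_j\widetilde{\Rot}(\tilde\rho(c_j))$ requires the $\tilde\rho(c_j)$ to come from a single homomorphism $\tilde\rho\colon\pi_1(S)\to\widetilde{\Sp}(2n,\R)$; you cannot pick lifts of the $\rho(c_j)$ independently.

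Second, and more seriously, the fans at the $k$ chosen lifts~$\tilde v_j$ \emph{never} tile a fundamental domain. The fan at~$\tilde v_j$ contains one lifted triangle for every triangle-corner of~$\mathcal{T}$ at~$v_j$; summing over~$j$ counts each triangle three times, since every triangle has three corners. Already for the pair of pants ($k=3$, two triangles) each fan has two triangles and the three fans together have six. Your sanity check confirms the \emph{formula} but not the \emph{argument}.

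Third, the telescoping in step~2 is not what you claim. Lemma~\ref{lem:souriau-index-sum-triangle-v2} uses the lift $\tilde g_j$ of~$\rho(c_j)$ that \emph{fixes}~$\tilde L_j$, for which $\widetilde{\Rot}(\tilde g_j)=0$; the sum $\sum_k m_n(\tilde M_k,\tilde M_{k+1})$ has no reason to equal $m_n(\tilde c_j\cdot\tilde L_j,\tilde L_j)$ for some other lift~$\tilde c_j$.

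The paper avoids all of this by a single device: since $\pi_1(S)$ is free, choose the lift~$\tilde\rho$ so that $\tilde\rho(c_j)$ fixes a chosen~$z_j\in\wideLag{n}$ for $j=2,\dots,k$, forcing $\widetilde{\Rot}(\tilde\rho(c_j))=0$ for those~$j$. Then $T_\rho=-\widetilde{\Rot}(\tilde\rho(c_1))=-\tfrac12 m_n(\tilde\rho(c_1)\cdot z_1,z_1)$, and one computes this single Souriau index by triangulating an explicit fundamental $(4g+2k-2)$-gon from the vertex~$p_1$, applying Lemma~\ref{lem:souriau-index-sum-triangle}, and observing that almost all terms cancel in pairs via the invariance and antisymmetry of~$m_n$. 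Independence of the triangulation is handled separately (Lemma~\ref{lem_sum_mu_T_indep}) via flips and the cocycle relation. The point is that concentrating everything at one boundary component sidesteps the tiling problem entirely.
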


This proposition implies the Milnor--Wood inequality and the
integrality
property of the Toledo invariant for framed representations.
Another consequence is that framed maximal representations can be detected
using a triangulation:

\begin{cor}\label{cor:max-iff-triangle-are-max}
Let $\mathcal{F}$ be a local system admitting a framing, and let $\mathcal{T}$ be an ideal
  triangulation of~$S$. For any framing~$\sigma$
  of~$\mathcal{F}$, we have that $\mathcal{F}$~is maximal if and only if, for
  every triangle~$T$ in~$\mathcal{T}$, the Maslov index
  $\mu^T(\mathcal{F}, \sigma)$ is~$n$.
\end{cor}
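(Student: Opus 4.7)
The plan is to deduce the corollary directly from Theorem~\ref{teo:toledo_maslov} together with the range property of the Maslov index recalled in Proposition~\ref{maslov_prop}. The key observation is that maximality is already an equality in a Milnor--Wood type inequality, and the theorem rewrites the Toledo number as a signed sum of triangle Maslov indices, each of which is bounded above by $n$.

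First I would count the triangles. Since $R=\emptyset$, one has $r=0$ and $\chi(S)=\chi(\bar S)$, and the discussion in Section~\ref{sec:triangulations-general} gives that $\mathcal{T}$ has exactly $2|\chi(\bar S)|+r=2|\chi(S)|$ faces. Next, by Theorem~\ref{teo:toledo_maslov},
\[ T_\mathcal{F}=-\frac{1}{2}\sum_{T\in\mathcal{T}}\mu^T(\mathcal{F},\sigma). \]
By Proposition~\ref{maslov_prop}, each $\mu^T(\mathcal{F},\sigma)$ lies in $\{-n,-n+1,\dots,n\}$, so in particular $\mu^T(\mathcal{F},\sigma)\leq n$. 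Summing over all $2|\chi(S)|$ triangles gives
\[ \sum_{T\in\mathcal{T}}\mu^T(\mathcal{F},\sigma)\leq 2n|\chi(S)|, \]
with equality if and only if $\mu^T(\mathcal{F},\sigma)=n$ for every triangle $T\in\mathcal{T}$. Combining this with the formula for $T_\mathcal{F}$ yields $T_\mathcal{F}\geq -n|\chi(S)|$, with equality precisely when all triangle Maslov indices equal $n$.

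Since $\mathcal{F}$ is maximal by definition when $T_\mathcal{F}=n\chi(S)=-n|\chi(S)|$, the conclusion follows: $\mathcal{F}$ is maximal if and only if $\mu^T(\mathcal{F},\sigma)=n$ for all $T\in\mathcal{T}$. I do not expect any genuine obstacle here, as the corollary is essentially a bookkeeping consequence of Theorem~\ref{teo:toledo_maslov}; the only point requiring a moment of care is making sure the correct count of triangles (namely $2|\chi(S)|$, which matches the Milnor--Wood bound) is used, but this is immediate from the combinatorics of ideal triangulations recalled in Section~\ref{sec:triangulations-general}.
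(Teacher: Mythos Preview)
Your proposal is correct and follows exactly the approach the paper intends: the corollary is stated immediately after Theorem~\ref{teo:toledo_maslov} as a direct consequence, and your argument (bounding each $\mu^T\leq n$, counting the $2|\chi(S)|$ triangles, and matching the resulting inequality against the definition of maximality) is precisely the computation the paper leaves implicit.
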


Note also that this corollary implies point~\eqref{item-b-prop-maxrep} of Proposition~\ref{maxrep}.

The proof of Theorem~\ref{teo:toledo_maslov} will take the rest of this
section. It will use the Souriau index (see
Section~\ref{sec:souriau-index}) and the translation number (see Section~\ref{sec:rotation-number}).

\begin{lem}\label{lem_sum_mu_T_indep}
  Let $(\mathcal{F}, \sigma)$ be a framed local system. Then the sum of the triangle invariants
  $\sum_{T\in\mathcal{T}} \mu^T(\mathcal{F}, \sigma)$ does not depend on the
  triangulation~$\mathcal{T}$.
\end{lem}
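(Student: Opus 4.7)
The plan is to reduce the statement to a local invariance under a single flip and then to apply the cocycle relation for the Maslov index.

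Any two ideal triangulations of~$S$ are connected by a finite sequence of flips (and isotopies that do not change the combinatorial type), so it suffices to prove that the sum $\sum_{T\in\mathcal{T}}\mu^T(\mathcal{F},\sigma)$ is unchanged when $\mathcal{T}$ is modified by a single flip. A flip replaces only two adjacent triangles $T_0,T_0'$ sharing an edge~$e$ by the two other triangles $T_1,T_1'$ of the quadrilateral~$Q$ they together form, leaving every other triangle of~$\mathcal{T}$ untouched. Thus I only need to show
\[\mu^{T_0}(\mathcal{F},\sigma)+\mu^{T_0'}(\mathcal{F},\sigma)=\mu^{T_1}(\mathcal{F},\sigma)+\mu^{T_1'}(\mathcal{F},\sigma).\]

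To compute these four indices in terms of the same Lagrangians, I would pull back the framed local system by a $4$-gon representing the quadrilateral~$Q$. This yields four Lagrangians $L_1,L_2,L_3,L_4$ attached to the vertices of~$Q$ (well-defined up to $\Sp(2n,\R)$-action), in such a way that the two triangles of~$T_0\cup T_0'$ correspond to the triples $(L_1,L_2,L_3)$ and $(L_1,L_3,L_4)$, while the two triangles of $T_1\cup T_1'$ correspond to $(L_2,L_3,L_4)$ and $(L_1,L_2,L_4)$. By Definition~\ref{df:maslov-index-triang} and the cyclic invariance of~$\mu_n$, the identity above rewrites as
\[\mu_n(L_1,L_2,L_3)+\mu_n(L_1,L_3,L_4)=\mu_n(L_1,L_2,L_4)+\mu_n(L_2,L_3,L_4).\]
This is exactly a rearrangement of the cocycle relation stated in Proposition~\ref{maslov_prop}, which holds for any quadruple of Lagrangians (no transversality hypothesis is required, thanks to the general definition of the Maslov index recalled after Definition of~$\mu_n$).

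No step should present a real obstacle: the only care needed is to check that, when one pulls back the framed local system~$(\mathcal{F},\sigma)$ by the $4$-gon corresponding to~$Q$, the four Lagrangians one obtains at the corners are indeed the same as those seen by each of the four triangles involved; this is guaranteed by flatness of the framing and by the fact that the $4$-gon factors through the two $3$-gons of each triangulation (both for $\mathcal{T}$ and for its flip), so cyclic symmetry of~$\mu_n$ identifies the values. Once this identification is in place, the cocycle relation closes the argument and the sum is independent of the triangulation.
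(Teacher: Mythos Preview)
Your proposal is correct and follows essentially the same approach as the paper: reduce to a single flip (since any two ideal triangulations are related by a sequence of flips) and then observe that invariance under a flip is exactly the cocycle relation for the Maslov index from Proposition~\ref{maslov_prop}. The paper's proof is terser and does not spell out the $4$-gon pull-back, but the content is identical.
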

\begin{rem}
  Of course, a consequence of Theorem~\ref{teo:toledo_maslov} is that
  this sum does not depend on~$\sigma$ either. This can be proved a priori thanks
  to Lemma~\ref{lem:souriau-index-sum-triangle-v2}.
\end{rem}

\begin{proof}
  The result of the lemma follows from
  \begin{enumerate}[(1)]
  \item Any two ideal triangulations are related by a sequence of flips.
  \item The invariance of the sum under a flip is equivalent to the cocycle
    condition satisfied by the Maslov index (see
    Proposition~\ref{maslov_prop}).\qedhere
  \end{enumerate}
\end{proof}

The following result will be our starting point in the proof of Theorem~\ref{teo:toledo_maslov}.

\begin{lem}[{\cite[Thm.~12]{BIW}}] \label{lem:ToledoRotation} Let
  $\rho\in \Hom(\pi_1(S),\Sp(2n,\R))$ and let $a_1, \dots, a_g$,
  $b_1, \dots, b_g$, $c_1, \dots , c_k$ be based loops in~$S$ such that the
  complement $S \smallsetminus \bigl( a_1 \cup \cdots \cup c_k\bigr)$ is the
  disjoint union of~$k$ punctured disks bounded by $c_{1}, \dots, c_{k}$ and a
  disk bounded by $c_{1}^{-1} \cdots c_{k}^{-1}
  [b_g,a_g]\cdots[b_1,a_1]$.

  Since $\pi_1(S)$ is a free group, the representation~$\rho$ can be lifted
  to~$\widetilde{\Sp}(2n, \R)$, and let~$\tilde \rho$ be such a lift. The
  Toledo number of $\rho$ can be computed as:
  \[T_\rho=-\sum_{i=1}^k \widetilde{\Rot}(\tilde\rho(c_i)).\]
\end{lem}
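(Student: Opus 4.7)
The plan is to compute $T_\rho$ directly from its bounded cohomological definition and reduce the computation to boundary contributions via Stokes' theorem. Recall that $T_\rho = \langle \rho^*\kappa_G^b, [S,\partial S]\rangle$, where $\kappa_G^b \in H^2_{cb}(\Sp(2n,\R),\R)$ is the bounded Kähler class of the Hermitian symmetric space $\mathcal{X}=\Sp(2n,\R)/\U(n)$, normalized so that it realizes the value~$n$ on positive Maslov triangles of Lagrangians. Equivalently, for any $\rho$-equivariant smooth map $f\colon \widetilde{S}\to\mathcal{X}$, $T_\rho$ is a properly normalized integral of the pulled-back Kähler form~$\omega_{\mathcal{X}}$ on~$S$.

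First I would pick such an~$f$ together with a fundamental polygon~$D\subset\widetilde{S}$ for the $\pi_1(S)$-action, whose boundary is read off from the generators $a_1,b_1,\dots,a_g,b_g,c_1,\dots,c_k$ and the relation $c_1^{-1}\cdots c_k^{-1}[b_g,a_g]\cdots[b_1,a_1]=1$. Since $\mathcal{X}$ is contractible, $\omega_{\mathcal{X}}=d\alpha$ for a global primitive~$\alpha$. Applying Stokes' theorem on~$D$ converts $\int_D f^*\omega_{\mathcal{X}}$ into an alternating sum of integrals of $f^*\alpha$ along the sides of~$\partial D$; using the $\rho$-equivariance of~$f$ and the cocycle relation $\alpha-\rho(\gamma)^*\alpha=d\varphi_\gamma$, the contributions from the commutator pairs $[b_i,a_i]$ cancel up to exact terms, leaving only one nontrivial contribution per boundary generator~$c_i$.

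Next, I would identify each surviving $c_i$-contribution with the translation number of the lift. The key point is the classical fact that the pullback of~$\kappa_G^b$ to~$\widetilde{\Sp}(2n,\R)$ is the coboundary of a unique homogeneous quasi-morphism, and that this quasi-morphism is precisely~$\widetilde{\Rot}$; indeed, Lemma~\ref{lem:rotation-number} and the cocycle property of the Souriau index (Lemma~\ref{lem:souriau-index-sum-triangle-v2}) together with the identification of the Souriau/Maslov cocycle with a representative of~$\kappa_G^b$ (cf.\ Lemma~\ref{lem:souriau-maslov-for-transverse}) make~$\widetilde{\Rot}$ the natural candidate. Under this identification, the boundary integral coming from each~$c_i$ equals $\widetilde{\Rot}(\tilde\rho(c_i))$. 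Summing over the boundary components and incorporating the sign from the $c_i^{-1}$ in the boundary word (and the normalization of~$\kappa_G^b$) yields $T_\rho=-\sum_{i=1}^k \widetilde{\Rot}(\tilde\rho(c_i))$.

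The main obstacle is the bookkeeping of signs and normalizations: matching the normalization of~$\kappa_G^b$ (the value~$n$ on a maximal Maslov triangle) with that of~$\widetilde{\Rot}$ (so that the central generator~$T$ has translation number~$n$, consistent with Lemma~\ref{lem:rotation-number} and Equation~\eqref{eq:trans-T-Souriau}), and keeping track of the orientation of~$\partial D$. The cleanest way to avoid these pitfalls, following the BIW approach, is to replace the geometric Stokes argument by the long exact sequence of the pair $(S,\partial S)$ in bounded cohomology: $\rho^*\kappa_G^b$ already lifts to a relative class, its boundary in $H^1_b(\partial S)$ is the class of the quasi-morphism $\widetilde{\Rot}\circ\tilde\rho$ restricted to each~$c_i$, and the pairing with $[S,\partial S]$ then reduces tautologically to the stated sum with the correct sign.
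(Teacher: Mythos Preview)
The paper does not prove this lemma; it is stated as a citation of \cite[Thm.~12]{BIW} and used as a black box in the proof of Theorem~\ref{teo:toledo_maslov}. So there is no ``paper's own proof'' to compare against.

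Your sketch follows the bounded-cohomology approach of the cited reference, and the overall architecture is sound: represent the Toledo invariant via the pullback of the bounded K\"ahler class, use either a Stokes argument on a fundamental polygon or the long exact sequence of the pair $(S,\partial S)$ to reduce to boundary terms, and identify those terms with the homogeneous quasi-morphism $\widetilde{\Rot}$ that primitives $\kappa_G^b$ on $\widetilde{\Sp}(2n,\R)$. What you have written is a correct strategic outline rather than a proof: the actual work, as you yourself flag, is entirely in the normalizations. One small slip worth noting: with the conventions of this paper (Equation~\eqref{eq:trans-Z-Souriau} and Lemma~\ref{lem:rotation-number}), the central generator $T=(\Id,2\pi)$ has $\widetilde{\Rot}(T)=-1$, not~$n$; the dependence on~$n$ enters through the Maslov index of triangles, not through the translation number of the center. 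This is exactly the kind of detail that your final paragraph warns about, and it illustrates why the sign-and-normalization bookkeeping cannot be waved away if you want a self-contained proof rather than a pointer to \cite{BIW}.
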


\begin{proof}[Proof of Theorem~\ref{teo:toledo_maslov}]
Via the
choice of a finite volume, complete, hyperbolic structure on the
interior~$S\setm \partial S$ of~$S$, we can identify the interior
of~$\widetilde{S}$ with the hyperbolic plane~$\mathbb{H}^2$.
The fundamental group~$\pi_1(S)$ acts on the right. The boundary at infinity
of the interior of~$\widetilde{S}$, which we denote by $\partial_\infty \mathbb{H}^2$,
 is also endowed with a right $\pi_1(S)$-action. It contains
points whose stabilizers are generated by one element that is a conjugate of
one of the  boundary representatives
$c_1, \dots, c_k$ (cf.\ Section~\ref{sec:fundamental-group}, recall that we
are assuming here that $p=k$).
Note that, if the stabilizer of~$p$ is generated by~$\gamma$, then, for
every~$g\in \pi_1(S)$, $g^{-1} \gamma g$ generates the stabilizer of~\mbox{$p\cdot
g$.}

\begin{figure}[ht]
  \begin{tikzpicture}[scale=1]
    \newcommand{\rone}{3};
    \newcommand{\rtwo}{3.3};
    \newcommand{\rtwoprime}{3.15};
    \newcommand{\rthree}{3.1};
    \newcommand{\rfour}{2.6};
    \coordinate (p1) at (180:\rone);
    \coordinate (P1) at (180:\rtwo);
    \coordinate (S1) at (168:\rthree);
    \coordinate (s1) at (156:\rone);
    \coordinate (R1) at (144:\rthree);
    \coordinate (r1) at (132:\rone);
    \coordinate (U1) at (120:\rthree);
    \coordinate (v1) at (108:\rone);
    \coordinate (V1) at (96:\rthree);
    \coordinate (q1) at (84:\rone);
    \coordinate (Q1) at (84:\rtwo);
    \coordinate (a1start) at (168:\rfour);
    \coordinate (a1end) at (120:\rfour);
    \coordinate (b1start) at (96:\rfour);
    \coordinate (b1end) at (144:\rfour);
    \draw (p1) to[bend right] (s1);
    \draw (s1) to[bend right] (r1);
    \draw (r1) to[bend right] (v1);
    \draw (v1) to[bend right] (q1);
    \draw (P1) node {$p_1$};
    \draw (Q1) node {$q_1$};
    \draw (S1) node {$S_1$};
    \draw (R1) node {$R_1$};
    \draw (U1) node {$U_1$};
    \draw (V1) node {$V_1$};
    \draw[->] (a1start) to[bend right] node[midway,
    below, sloped] {$a_1$} (a1end) ;
    \draw[->] (b1start) to[bend left] node[midway,
    below, sloped] {$b_1$} (b1end) ;

    \coordinate (pg) at (60:\rone);
    \coordinate (Pg) at (60:\rtwo);
    \coordinate (Sg) at (48:\rthree);
    \coordinate (sg) at (36:\rone);
    \coordinate (Rg) at (24:\rthree);
    \coordinate (rg) at (12:\rone);
    \coordinate (Ug) at (0:\rthree);
    \coordinate (vg) at (-12:\rone);
    \coordinate (Vg) at (-24:\rthree);
    \coordinate (qg) at (-36:\rone);
    \coordinate (Qg) at (-36:\rtwo);
    \coordinate (agstart) at (48:\rfour);
    \coordinate (agend) at (0:\rfour);
    \coordinate (bgstart) at (-24:\rfour);
    \coordinate (bgend) at (24:\rfour);
    \draw (pg) to[bend right] (sg);
    \draw (sg) to[bend right] (rg);
    \draw (rg) to[bend right] (vg);
    \draw (vg) to[bend right] (qg);
    \draw (Pg) node[right] {$\!\!\!q_{g-1}$};
    \draw (Qg) node {$q_g$};
    \draw (Sg) node {$S_g$};
    \draw (Rg) node {$R_g$};
    \draw (Ug) node {$U_g$};
    \draw (Vg) node {$V_g$};
    \draw[->] (agstart) to[bend right] node[midway,
    below, sloped] {$a_g$} (agend) ;
    \draw[->] (bgstart) to[bend left] node[midway,
    above, sloped] {$b_g$} (bgend) ;

    \draw[loosely dotted] (q1) to (pg);

    \coordinate (p2) at (204:\rone);
    \coordinate (pkm) at (228:\rone);
    \coordinate (pk) at (252:\rone);
    \coordinate (qkm) at (276:\rone);
    \coordinate (q2) at (300:\rone);
    \coordinate (P2) at (204:\rtwo);
    \coordinate (Pkm) at (228:\rtwo);
    \coordinate (Pk) at (252:\rtwo);
    \coordinate (Qkm) at (276:\rtwo);
    \coordinate (Q2) at (300:\rtwoprime);

    \coordinate (Q1) at (192:\rthree);
    \coordinate (Qsegkm) at (240:\rthree);
    \coordinate (Psegkm) at (264:\rthree);
    \coordinate (P1) at (312:\rthree);

    \coordinate (ck2end) at (192:\rfour);
    \coordinate (ckend) at (240:\rfour);
    \coordinate (ckstart) at (263:\rfour);
    \coordinate (ck2start) at (312:\rfour);

    \draw[->] (ckstart) to[bend right] node[midway,
    above, sloped] {$c_k$} (ckend) ;
    \draw[->] (ck2start) to[bend right] node[midway,
    below, sloped] {$c_k\cdots c_2$} (ck2end) ;

    \draw (P2) node {$p_2$};
    \draw (Q2) node[right] {$\!\! p_2 \cdot c_{2}^{-1} \cdots c_{k}^{-1} $};
    \draw (Pkm) node[left] {$p_{k-1}\!\!\!$};
    \draw (Pk) node {$p_{k}$};
    \draw (Qkm) node[right] {$\!\!\!\!p_{k-1} \cdot c_{k}^{-1}$};

    \draw (Q1) node {$P_1$};
    \draw (P1) node {$Q_1$};
    \draw (Qsegkm) node {$P_{k-1}$};
    \draw (Psegkm) node {$Q_{k-1}$};

    \draw (p2) to[bend right] (p1) ;
    \draw (qg) to[bend right] (q2) ;
    \draw (qkm) to[bend right] (pk) ;
    \draw (pk) to[bend right] (pkm) ;

    \draw[loosely dotted] (p2) to (pkm);
    \draw[loosely dotted] (qkm) to (q2);

  \end{tikzpicture}
  \caption{The fundamental ideal polygon and its sides identifications} \label{fig:polygonsurface}
\end{figure}

A fundamental domain for the right action of~$\pi_1(S)$ on~$\mathbb{H}^2$ is
an ideal $(4g+2k-2)$-gon (see Figure~\ref{fig:polygonsurface}), which we now
describe in some details. The sides of this polygon are geodesic rays denoted
(in counter-clockwise order and with their \enquote{counter-clockwise}
orientation):
\begin{quote}
  $P_1, \dots, P_{k-1}, Q_{k-1}, \dots, Q_1, V_g, U_g, R_g, S_g, \dots, V_1, U_1, R_1, S_1,$
\end{quote}
and are subject to the following identifications (where a bar means the
opposite orientation):
\begin{quote}
  For all $i=1,\dots, g$, $S_i\cdot a_i = \overline{U}_{i}$, $V_i \cdot b_i =
  \overline{R}_{i}$ and for all $j=1,\dots, k-1$, $Q_j \cdot (c_k \cdots c_{j+1}) =
  \overline{P}_{j}$.
\end{quote}

It means that, on the interior of the surface~$S$, the rays~$P_j$ and $Q_j$ have the same
image, as well as the rays~$S_i$ and~$U_i$, and the rays~$V_i$ and~$R_i$. The
images of the rays~$P_1, \dots, P_{k-1}$ connect the boundary components and the
complement in the interior of~$S$ of the images of all the rays is a topological
disk.

Let $p_1, \dots, p_k$ be the elements of the boundary $\partial_\infty \mathbb{H}^2$ that
are the extremities of the sides~$P_1, \dots, P_{k-1}$: for all~$j=1, \dots,
k-1$, $P_j$ goes form~$p_j$ to~$p_{j+1}$. The stabilizer of~$p_j$
in~$\pi_1(S)$ is the subgroup generated by~$c_j$.

The ideal extremities of the fundamental polygon are then, in the
counter-clockwise order (cf.\ Figure~\ref{fig:polygonsurface}):
\begin{center}
 \parbox{0.8\textwidth}{
   \noindent
   $p_1, \dots, p_k, p_{k-1}\cdot c_{k}^{-1}, \dots, p_2\cdot (c_k \cdots
   c_3)^{-1}, q_g \coloneqq p_1\cdot (c_k \cdots c_2)^{-1} , q_g \cdot b_g a_g
   b_{g}^{-1}, q_g \cdot b_g a_g, {q_g \cdot b_g}$,
   $q_{g-1} \coloneqq q_g \cdot b_g a_g b_{g}^{-1} a_{g}^{-1}, \dots, q_{1}
   \coloneqq q_2 \cdot b_2 a_2 b_{2}^{-1} a_{2}^{-1}, q_1 \cdot b_1 a_1
   b_{1}^{-1}, q_1 \cdot b_1 a_1, q_1 \cdot b_1$.
 }
 \end{center}
The equality $p_1= q_1 \cdot b_1 a_1 b_{1}^{-1} a_{1}^{-1}$ follows from the
relation satisfied by $a_1, \dots, a_g$, $b_1, \dots, b_g$, $c_1, \dots, c_k$.

Generators of the corresponding stabilizers are respectively (using the
notation $x^y\coloneqq y^{-1} x y$ and $[x,y] = x y x^{-1} x^{-1}$):
\begin{center}
  \parbox{0.9\textwidth}{
  $c_1, \dots, c_k$, $c_k c_{k-1} c_{k}^{-1}= c_{k-1}^{c_{k}^{-1}}, \dots, c_{2}^{(c_k \cdots
    c_2)^{-1}}$, $c_{1}^{(c_k \cdots c_1)^{-1}} = c_{1}^{ [a_1, b_1]\cdots [a_g,
    b_g]}$, \linebreak $c_{1}^{ [a_1, b_1]\cdots [a_g, b_g] a_g}$, $c_{1}^{ [a_1,
    b_1]\cdots
    [a_g, b_g] a_g b_g}$, $c_{1}^{ [a_1, b_1]\cdots [a_g, b_g] a_g b_g
    a_{g}^{-1}}$, $c_{1}^{ [a_1, b_1]\cdots [a_{g-1}, b_{g-1}]}, \linebreak \dots, c_{1}^{
    [a_1, b_1]}, c_{1}^{a_1}, c_{1}^{a_1 b_1}, c_{1}^{a_1 b_1 a_{1}^{-1}}$.
  }
\end{center}

Let now $(\mathcal{F}, \sigma)$ be a framed local system and call~$(\rho,
\{ L_1, \dots, L_k\})$
its holonomy (cf.\ Lemma~\ref{lem:decor-sympl-local-holonomy}). For all~$j=1, \dots, k$,
$\rho(c_j)$ fixes the Lagrangian~$L_j$. For $j=1, \dots, k$, let $z_j\in
\wideLag{n}$ be a lift of~$L_j$. Since $\pi_1(S)$ is freely
generated by $\{ a_i, b_i\}_{i=1, \dots, g} \cup \{ c_j\}_{j=2, \dots, k}$,
one can choose a lift $\tilde{\rho} \colon \pi_1(S) \to \widetilde{\Sp}(2n, \R)$
such that, for all $j=2, \dots, k$, $\tilde{\rho}(c_j)$ fixes~$z_j$. Thus, for
all~$j=2, \dots, k$, $\widetilde{\Rot}(\tilde\rho(c_j))=0$ and
$\widetilde{\Rot}(\tilde\rho(c_1))= 1/2\, m_n( \tilde\rho(c_1)\cdot z_1, z_1
)$ (see Lemma~\ref{lem:rotation-number}). Therefore, by
Lemma~\ref{lem:ToledoRotation}, \[2T_\rho = -m_n( \tilde\rho(c_1)\cdot z_1, z_1).\]

For all~$i=1,\dots, g$, define
\[M_i= \rho( [b_{i}, a_{i}] \cdots [b_{1}, a_{1}]) \cdot L_1 \text{ and } y_i=
  \tilde{\rho}( [b_{i}, a_{i}] \cdots [b_{1}, a_{1}]) \cdot z_1.\]
The Lagrangians
associated ($\rho$-equivariantly) to the ideal vertices of the fundamental
polygon are thus (cf.\ Remark~\ref{rem:fram-sympl-local-universal}):
\begin{quote}
  $L_1, \dots, L_k$,
  $
  \rho(c_k) \cdot L_{k-1}, \dots, \rho(c_k
  \cdots c_1) L_1=M_g$, $\rho( b_g a_{g}^{-1} b_{g}^{-1}) \cdot M_g$,
  $\rho( a_{g}^{-1} b_{g}^{-1}) \cdot M_g$, $\rho( b_{g}^{-1}) \cdot M_g$,
  $M_{g-1}, \dots, M_1$, $\rho( b_1 a_{1}^{-1} b_{1}^{-1}) \cdot M_1$,
  $\rho( a_{1}^{-1} b_{1}^{-1}) \cdot M_1$, $\rho( b_{1}^{-1}) \cdot M_1$.
\end{quote}
Lifts of those to $\wideLag{n}$ are hence:
\begin{quote}
  $z_1, \dots, z_k=\tilde{\rho}(c_k)\cdot z_k$, $\tilde{\rho}( c_k c_{k-1})\cdot z_{k-1}, \dots, \tilde{\rho}(
      c_k\cdots c_{1})\cdot z_{1} = y_g$,
  $\tilde{\rho}( b_g a_{g}^{-1} b_{g}^{-1}) \cdot y_g$,
  $\tilde{\rho}( a_{g}^{-1} b_{g}^{-1}) \cdot y_g$,
  $\tilde{\rho}( b_{g}^{-1}) \cdot y_g$, $y_{g-1}, \dots, y_1$,
  $\tilde{\rho}( b_1 a_{1}^{-1} b_{1}^{-1}) \cdot y_1$,
  $\tilde{\rho}( a_{1}^{-1} b_{1}^{-1}) \cdot y_1$,
  $\tilde{\rho}( b_{1}^{-1}) \cdot y_1$.
\end{quote}
Let now~$\mathcal{T}$ be the ideal triangulation of~$S$ induced by the ideal
triangulation of the fundamental polygon obtained by adding edges
between~$p_1$ and every other vertex.

Thanks to Lemma~\ref{lem:souriau-index-sum-triangle}, one has
\[    \sum_{T\in\mathcal{T}}\mu^T(\mathcal{F}, \sigma) =
     \sum_{j=1}^{k-1} m_n(z_{j},z_{j+1})+ \sum_{j=k-1}^{1} m_n\bigl( \tilde{\rho}(
      c_k\cdots c_{j+1})\cdot z_{j+1},\tilde{\rho}(
      c_k\cdots c_{j})\cdot z_{j}\bigr)\]
\[    +\sum_{i=g}^{1}
      \Bigl( m_n\bigl( y_{i},\tilde{\rho} (b_g a_{g}^{-1} b_{g}^{-1})\cdot y_{i}\bigr)
      +  m_n\bigl( \tilde{\rho} (b_g a_{g}^{-1} b_{g}^{-1}) \cdot
       y_{i},\tilde{\rho} (a_{g}^{-1} b_{g}^{-1}) \cdot y_{i}\bigr)\Bigr.\]
\[       \qquad \Bigl.  +  m_n\bigl( \tilde{\rho} (a_{g}^{-1} b_{g}^{-1})
        \cdot y_{i},\tilde{\rho} ( b_{g}^{-1}) \cdot y_{i}\bigr) +  m_n\bigl( \tilde{\rho} (b_{g}^{-1})
       \cdot y_{i},\tilde{\rho} ( a_{g}^{-1} b_{g}^{-1} a_{g}^{-1} b_{g}^{-1})
        \cdot y_{i}\bigr) \Bigr).\]

  By the equivariance and the antisymmetry of the Souriau index, for all~$i=1, \dots, g$, one has
  \[m_n\bigl( \tilde{\rho} (a_{g}^{-1} b_{g}^{-1})\cdot y_{i},\tilde{\rho} ( b_{g}^{-1})
 \cdot y_{i}\bigr) = m_n\bigl( \tilde{\rho} ( b_{g} a_{g}^{-1} b_{g}^{-1})
 \cdot y_{i}, y_{i}\bigr) =
  -m_n\bigl( y_{i},\tilde{\rho} (b_g a_{g}^{-1} b_{g}^{-1}) \cdot y_{i}\bigr)\] and
\begin{align*}
   m_n\bigl( \tilde{\rho} (b_{g}^{-1}) \cdot y_{i},\tilde{\rho} ( a_{g}^{-1} b_{g}^{-1}
  a_{g}^{-1} b_{g}^{-1}) \cdot y_{i}\bigr) &=  m_n\bigl( \tilde{\rho} ( a_{g}
                                             b_{g}^{-1}) \cdot y_{i},\tilde{\rho} ( b_{g}^{-1}
  a_{g}^{-1} b_{g}^{-1}) \cdot y_{i}\bigr)\\ & =  -m_n\bigl( \tilde{\rho} ( b_{g}^{-1}
  a_{g}^{-1} b_{g}^{-1}) \cdot y_{i}, \tilde{\rho} ( a_{g} b_{g}^{-1}) \cdot y_{i}\bigr).
\end{align*}
Thus the last sum cancels out. For similar reasons
\[\sum_{j=2}^{k-1} \bigl( m_n(z_{j},z_{j+1})+  m_n( \tilde{\rho}(
      c_k\cdots c_{j+1})\cdot z_{j+1},\tilde{\rho}(
      c_k\cdots c_{j})\cdot z_{j})\bigr)=0;\]it remains
      \begin{align*}
    \sum_{T\in\mathcal{T}}\mu^T(\rho, D) =
        & m_n( z_1, z_2) + m_n( z_2, \tilde{\rho}(c_1) \cdot z_1)\\
        \intertext{applying
        Proposition~\ref{prop:souriau-index-maslov-index-extended}:}
        =& m_n( \tilde{\rho}(c_1)\cdot z_1, z_1) + \mu_n( D(c_1), D(c_2),
           D(c_1))\\
        =& -2 T_\rho  +0. \qedhere
      \end{align*}
\end{proof}

\section{Maximal framed local systems}
\label{sec:maxim-decor-local}

We return now to the general situation (i.e.\ $R$ can be empty or not).

Based on Corollary~\ref{cor:max-iff-triangle-are-max}, we introduce the following definition.
\begin{df}
  \label{df:maxim-decor-local-general}
  A framed local system $(\mathcal{F}, \sigma)$ is said to be  \emph{maximal} if
  there exists a triangulation~$\mathcal{T}$ of~$S$ for which $\mu^T(
  \mathcal{F}, \sigma)=n$ for every triangle~$T$ of~$\mathcal{T}$.
\end{df}\index{definition}{maximal!framed local system}%
\index{definition}{framed!maximal --- local system}%
\index{definition}{local system!maximal framed ---}%
We denote by $\Mf( S, \Sp(2n, \R))$
\index{notation}{57@$\Mf(S,\Sp(2n,\R))$ (moduli of maximal framed local systems)}%
the space of maximal framed symplectic local systems. Reasoning similarly to the proof of Lemma~\ref{lem_sum_mu_T_indep} and with
the help of finite coverings of~$S$, one has
\begin{lem}
  \label{lem:maxim-decor-local-every-triangulation}
For a framed symplectic local system, the following are equivalent:
  \begin{itemize}
  \item it is maximal;
  \item for every triangulation~$\mathcal{T}$ and every
    triangle~$T$ of~$\mathcal{T}$, $\mu^T(\mathcal{F}, \sigma)=n$;
  \item for every $3$-gon~$\tau$, the Maslov index of $f_\tau(
    \mathcal{F}, \sigma)$ is maximal.
  \end{itemize}
\end{lem}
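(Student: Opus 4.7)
The plan is to reduce both statements to Definition~\ref{df:maxim-decor-local-general} by using the invariance of the total Maslov sum together with finite covers.

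\textbf{First bullet.} Let $N$ denote the number of triangles of any ideal triangulation of~$S$; this number depends only on the topology of~$S$, namely $N = 2|\chi(\bar{S})| + r$. By Proposition~\ref{maslov_prop}, every Maslov index satisfies $\mu^T( \mathcal{F}, \sigma)\leq n$. By Lemma~\ref{lem_sum_mu_T_indep}, the quantity $\Sigma(\mathcal{F}, \sigma) \coloneqq \sum_{T \in \mathcal{T}} \mu^T( \mathcal{F}, \sigma)$ does not depend on~$\mathcal{T}$. Hence, if $(\mathcal{F}, \sigma)$ is maximal in the sense of Definition~\ref{df:maxim-decor-local-general}, witnessed by some triangulation~$\mathcal{T}_0$, then $\Sigma(\mathcal{F}, \sigma) = Nn$. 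For an arbitrary triangulation~$\mathcal{T}$, the sum of~$N$ integers each bounded above by~$n$ equals~$Nn$ only if every summand equals~$n$. This yields the first bullet.

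\textbf{Second bullet.} Let $\tau\colon(\mathbb{D}, \mu_3)\to(S, \partial S)$ be a $3$-gon. The plan is to find a finite cover $\pi\colon S'\to S$ in which $\tau$ lifts to an embedded $3$-gon $\tau'\colon(\mathbb{D}, \mu_3)\to(S', \partial S')$ whose three edges are pairwise nonhomotopic embedded arcs with distinct endpoints on $\partial S'$. Such a cover exists because the free group $\pi_1(S)$ is residually finite (in fact, LERF), so we may separate the finitely many homotopy classes of loops and arcs associated with~$\tau$. Once $\tau'$ is embedded, its three edges can be extended to an ideal triangulation~$\mathcal{T}'$ of~$S'$ in which $\tau'$ is a face.

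The pullback $(\pi^* \mathcal{F}, \pi^* \sigma)$ is a framed symplectic local system on~$S'$. It is maximal: a triangulation~$\mathcal{T}$ of~$S$ witnessing the maximality of~$(\mathcal{F}, \sigma)$ lifts to a triangulation $\pi^{-1}(\mathcal{T})$ of~$S'$, and each triangle of $\pi^{-1}(\mathcal{T})$ is a diffeomorphic lift of a triangle of~$\mathcal{T}$, so its associated triple of Lagrangians coincides with the one downstairs; hence its Maslov index is still~$n$. Applying the first bullet to $(\pi^*\mathcal{F}, \pi^*\sigma)$ and the triangulation~$\mathcal{T}'$ constructed above, we obtain $\mu^{\tau'}(\pi^* \mathcal{F}, \pi^* \sigma) = n$. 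Since $\pi \circ \tau' = \tau$ and the Maslov index is defined via the pulled-back configuration, $\mu_n(f_\tau(\mathcal{F}, \sigma)) = \mu_n(f_{\tau'}(\pi^*\mathcal{F}, \pi^*\sigma)) = n$, as desired.

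The converse directions are immediate: any triangle of any triangulation is in particular a $3$-gon, and the definition is a special case of the first bullet. The main technical step is constructing the cover~$\pi$ in which~$\tau$ becomes embedded and completable to a triangulation; this is where residual finiteness (and the fact that arcs in surfaces can be separated in finite covers) is used.
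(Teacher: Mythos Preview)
Your proof sketch is correct and follows essentially the same approach as the paper, which only gives the one-line hint ``Reasoning similarly to the proof of Lemma~\ref{lem_sum_mu_T_indep} and with the help of finite coverings of~$S$.'' Your two bullets implement precisely these two ingredients: invariance of the total Maslov sum combined with the bound $\mu^T\le n$ for the first, and passage to a finite cover in which the given $3$-gon becomes a face of a triangulation for the second.
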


Since every arc belongs 
to at least one $3$-gon, one gets:
\begin{cor}
  \label{cor:maxim-decor-local-transverse}
  A maximal framed local system is $\alpha$-transverse for every arc~$\alpha$.
\end{cor}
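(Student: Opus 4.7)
The plan is to combine two ingredients: the topological fact that every arc is an edge of some $3$-gon, and the algebraic fact that a triple of Lagrangians with maximal Maslov index must be pairwise transverse. Given these, the corollary follows immediately from the second characterization in Lemma~\ref{lem:maxim-decor-local-every-triangulation}.

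First I would show that every arc $\alpha\colon([0,1],\{0,1\})\to(S,\partial S)$ is an edge of some $3$-gon $\tau\colon(\mathbb{D},\mu_3)\to(S,\partial S)$. This is a topological observation: lift $\alpha$ to an arc $\tilde\alpha$ in the universal cover $\widetilde S$, choose any third point $\tilde p$ on $\partial \widetilde S$, and connect $\tilde p$ by arcs in $\widetilde S$ to the two endpoints of~$\tilde\alpha$; the resulting embedded triangle in $\widetilde S$ projects to a $3$-gon in~$S$ having $\alpha$ as one of its three edges. The hypothesis $4g-4+2k+r>0$ guarantees that $\partial\widetilde S$ has enough room for this construction.

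Second, by Lemma~\ref{lem:maxim-decor-local-every-triangulation}, since $(\mathcal{F},\sigma)$ is maximal, the triple of Lagrangians $(L_1,L_2,L_3)=f_\tau(\mathcal{F},\sigma)$ satisfies $\mu_n(L_1,L_2,L_3)=n$. The key claim is that this forces $(L_1,L_2,L_3)$ to be pairwise transverse. I would establish this via symplectic reduction: set $I=(L_1\cap L_2)+(L_1\cap L_3)+(L_2\cap L_3)$, which is isotropic, let $k=\dim I$, and consider the induced triple $(\bar L_1,\bar L_2,\bar L_3)$ in the symplectic quotient $I^\perp/I$ (of dimension $2(n-k)$). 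The images are pairwise transverse Lagrangians, and a direct computation from the quadratic form $\gamma$ shows $\mu_n(L_1,L_2,L_3)=\mu_{n-k}(\bar L_1,\bar L_2,\bar L_3)$, because contributions from vectors in $I$ vanish ($I$ being isotropic and contained in each $L_i^{\perp_\omega}\cup L_j^{\perp_\omega}$). Since $|\mu_{n-k}|\leq n-k$ by Proposition~\ref{maslov_prop}, the equality $\mu_n=n$ forces $k=0$, which means pairwise transversality.

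Finally, since $\alpha$ is one of the three edges of~$\tau$, the pair of Lagrangians it defines is one of the pairs $(L_i,L_j)$, hence transverse; this is exactly the $\alpha$-transversality of $(\mathcal{F},\sigma)$. The main obstacle in this plan is the implication $\mu_n=n\Rightarrow$ pairwise transverse: while standard, the symplectic reduction identity $\mu_n(L_1,L_2,L_3)=\mu_{n-k}(\bar L_1,\bar L_2,\bar L_3)$ needs to be verified from the quadratic-form definition of $\mu_n$, which is the one nontrivial computation. An alternative, more elementary route would be to exhibit explicit isotropic/null directions for $\gamma$ produced by nontrivial intersections $L_i\cap L_j$ and combine them with the bound $\sgn(\gamma)\leq n$ of Proposition~\ref{maslov_prop} to arrive at a contradiction with $\mu_n=n$.
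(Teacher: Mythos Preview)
Your proof is correct and follows exactly the paper's route: the paper's entire argument is the single sentence preceding the corollary (\enquote{Since every arc is an edge of at least one $3$-gon}), combined with Lemma~\ref{lem:maxim-decor-local-every-triangulation}, leaving implicit precisely the implication $\mu_n=n\Rightarrow$ pairwise transverse that you take the trouble to justify. Your symplectic-reduction sketch for that implication is sound (and your alternative via null directions of~$\gamma$ would work equally well); the paper simply treats this as known.
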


\begin{rem}
  When $r>0$, the notion of maximality really depends on the pair
  $(\mathcal{F}, \sigma)$ and not only on~$\mathcal{F}$; this is flagrant
  when~$S$ is a disk. Summing the Maslov indices for all the triangles
  of~$\mathcal{T}$, one could define a notion of Toledo number in this wider
  setting; the interest of this notion seems nevertheless limited.
\end{rem}

\begin{rem}
  \label{rem:maxim-decor-local-disk}
  When~$\bar{S}$ is a disk with~$r\geq 3$ points in its boundary, the
  space~$\Locf(S, \Sp(2n,\R))$ is isomorphic to the space $\Conf^r(
  \Lag{n})$. The space of maximal framed
  local systems $\Mf(S, \Sp(2n,\R))$ is sometimes denoted $\Conf^{r+}( \Lag{n})$ and its elements
  are called \emph{positive} configurations of Lagrangians. When~$r$ is~$3$,
  $\Conf^{3+}( \Lag{n})$ contains one element: the orbit of triples with
  Maslov index equal to~$n$. When~$r$ is~$4$, $\Conf^{4+}( \Lag{n})$ is the
  space of positive quadruples introduced above (cf.\
  Definition~\ref{df:positive-quad}).
\end{rem}

\section{Maximal framed twisted local systems}
\label{sec:maxim-fram-twist}

We have an identification
$\Locf(S, \Sp(2n,\R)) \simeq \Locfdelta(S, \Sp(2n,\R))$ that depends on the choice of a nonvanishing vector field~$\vec{x}$. By the same
arguments as in Section~\ref{sec:transv-local-syst}, the image of $\Mf(S,
\Sp(2n,\R))$ by this isomorphism does not depend on~$\vec{x}$. It will be
denoted by $\Mfdelta(S, \Sp(2n,\R))$ and its elements are call \emph{maximal
  framed twisted} local systems.
\index{notation}{59@$\Mfdelta(S, \Sp(2n,\R))$ (moduli of maximal
  framed twisted local systems)}

\chapter{Local systems on quivers and their framings}
\label{sec:local-systems-their}
In order to control and parametrize the spaces introduced in the previous chapter we relate them to local
systems on graphs embedded in the surface~$S$. For every ideal triangulation we define a quiver (i.e.\ an oriented graph), which will be used systematically in the rest of the paper.  We recall the
correspondence between local systems on~$S$ and local systems on this quiver. We introduce twisted local
systems and framed local systems on this quiver which then provide the counterparts of the moduli spaces introduced in the previous chapter.

\section{The quiver $\Gamma_\mathcal{T}$}
\label{sec:orient-graph-gamm}

Let~$\mathcal{T}$ be an ideal triangulation of~$S$ (cf.\ Section~\ref{sec:triangulations-general}).

We consider the following quiver (aka.\ oriented graph whose oriented edges
are called arrows)
$\Gamma_\mathcal{T}$ embedded in the
surface~$S$ (see Figure~\ref{fig:quadrilateral_intro}).

\begin{figure}[ht]
\centering
\begin{tikzpicture}
  \coordinate (A) at (5,0) ;
  \draw (A) node[below]{$b(v)=t(v')$} ;
  \coordinate (B) at (5.5,5) ;
  \draw (B) node[above]{$t(v)=b(v')$} ;
  \coordinate (C) at (9,2.6) ;
  \coordinate (D) at (1.4,2.9) ;
  \coordinate (a1) at (barycentric cs:A=3,B=3,C=1) ;
  \coordinate (a2) at (barycentric cs:A=1,B=3,C=3) ;
  \coordinate (a3) at (barycentric cs:A=3,B=1,C=3) ;
  \coordinate (c2) at (barycentric cs:A=-1,B=3,C=3) ;
  \coordinate (c3) at (barycentric cs:A=3,B=-1,C=3) ;
  \coordinate (b1) at (barycentric cs:A=3,B=3,D=1) ;
  \coordinate (b3) at (barycentric cs:A=1,B=3,D=3) ;
  \coordinate (b2) at (barycentric cs:A=3,B=1,D=3) ;
  \coordinate (d3) at (barycentric cs:A=-1,B=3,D=3) ;
  \coordinate (d2) at (barycentric cs:A=3,B=-1,D=3) ;
  \draw (b1) node[left]{$v$} ;
  \draw (b2) node[above]{$\ x$} ;
  \draw (b3) node[below]{$\ w$} ;
  \draw (a1) node[right]{$v'$} ;
  \draw (a1) node {$\bullet$} ;
  \draw (a2) node {$\bullet$} ;
  \draw (a3) node {$\bullet$} ;
  \draw (b1) node {$\bullet$} ;
  \draw (b2) node {$\bullet$} ;
  \draw (b3) node {$\bullet$} ;
  \draw (c2) node {$\bullet$} ;
  \draw (c3) node {$\bullet$} ;
  \draw (d2) node {$\bullet$} ;
  \draw[middlearrow={latex}] (a1) to[bend left] (a2) ;
  \draw[middlearrow={latex}] (a2) to[bend left] (a3) ;
  \draw[middlearrow={latex}] (a3) to[bend left] (a1) ;
  \draw[middlearrow={latex}] (b1) to[bend left] node[pos=0.4,below] {$a_1$}(b2) ;
  \draw[middlearrow={latex}] (b2) to[bend left] node[pos=0.4,left] {$a_2$} (b3) ;
  \draw[middlearrow={latex}] (b3) to[bend left] node[pos=0.4,above] {$a_3$}(b1) ;
  \draw[middlearrow={latex}] (b1) to[bend left] node[pos=0.3,above] {$a$} (a1) ;
  \draw[middlearrow={latex}] (a1) to[bend left] node[pos=0.3,below] {$a'$} (b1) ;
  \draw[middlearrow={latex}] (c2) to[bend left] (a2) ;
  \draw[middlearrow={latex}] (a2) to[bend left] (c2) ;
  \draw[middlearrow={latex}] (c3) to[bend left] (a3) ;
  \draw[middlearrow={latex}] (a3) to[bend left] (c3) ;
  \draw[middlearrow={latex}] (d2) to[bend left] (b2) ;
  \draw[middlearrow={latex}] (b2) to[bend left] (d2) ;
   \draw[gray] (A)--(B) ;
  \draw[gray] (C)--(B) ;
  \draw[gray] (D)--(B) ;
  \draw[gray] (A)--(C) ;
  \draw[gray] (A)--(D) ;
    \draw [dotted] (B) to[out=240, in=85,looseness=1] (b1);
     \draw [middlearrow={latex},dotted] (B) (b1)
  to[out=265,in=110,looseness=0.8] node[midway,left] {$\alpha_v$} (A);
\end{tikzpicture}
\caption{The quiver~$\Gamma_{\mathcal{T}}$; among the~$5$ drawn (nonoriented)
  edges 
  of the triangulation~$\mathcal{T}$, one is
  external (on the upper left), the other~$4$ are internal  }\label{fig:quadrilateral_intro}
\end{figure}
\index{notation}{61@$\Gamma_\mathcal{T}$ (the quiver associated with~$\mathcal{T}$)}

The vertex set~$V$ of the quiver~$\Gamma_{\mathcal{T}}$ is constructed as
follow.  For each face~$f$ of~$\mathcal{T}$ and each (nonoriented) edge of~$f$ we
add a point inside~$f$ near the midpoint of the edge. The~$r$ external edges
contribute to~$r$ vertices of~$\Gamma_\mathcal{T}$, we will call these
vertices \emph{external} as well; the $r-3{\chi(\bar{S})}
$ internal edges contribute to  $2r-6{\chi(\bar{S})}
$ vertices that we call \emph{internal}.
\index{definition}{internal!vertex}%
\index{definition}{external!vertex}%
\index{definition}{vertex!internal ---}%
\index{definition}{vertex!external ---}%

The arrows of~$\Gamma_\mathcal{T}$ have two
origins:
\begin{enumerate}[(A)]
  \setcounter{enumi}{4}
\item\label{item:s} for every internal (nonoriented) edge~$e$
  of~$\mathcal{T}$, we have two vertices $v$, $v'$
  of~$\Gamma_\mathcal{T}$, belonging respectively to triangles $T$,
  $T'$. We add the cycle~$C_e=\{a, a'\}$ made up of two arrows~$a$, $a'$
  connecting~$v$ to~$v'$ and~$v'$ to~$v$ within \index{notation}{63@$C_e=\{a,a'\}$ (the
    cycle in~$\Gamma_\mathcal{T}$ associated with the edge~$e$)}%
  $T\cup T'$. The arrows~$a$ and~$a'$ both intersect the edge~$e$ exactly once.
\item\label{item:f} for every face~$f$ of~$\mathcal{T}$ we have the three
  vertices of~$\Gamma_\mathcal{T}$ that are contained in~$f$. We add the
  cycle~$C_f=\{a_1, a_2,a_3\}$ made up of three arrows~$a_1$, $a_2$, $a_3$ \index{notation}{64@$C_a=\{a_1,a_2,a_3\}$ (the
    cycle in~$\Gamma_\mathcal{T}$ associated with the face~$f$)}
  connecting these three vertices within the face~$f$ and clockwise oriented.
\end{enumerate}

We denote by~$A_2$ the arrows that are part of such a $2$-cycle and by~$A_3$
the arrows that are part of a $3$-cycle.  The set~$A$ of arrows
of~$\Gamma_\mathcal{T}$ is then the disjoint union $A_2 \sqcup A_3$.

As the quiver~$\Gamma_\mathcal{T}$ is embedded in~$S$, it has a natural ribbon structure.
We consider the boundary components of the ribbon graph~$\Gamma_\mathcal{T}$ \emph{not}
containing external vertices. There are three possible types, which can be indexed by:
\begin{enumerate}
\item\label{item:type1} the compact boundary components of~${S}$. There
  are~$k$ such cycles.
\item the edges of~$\mathcal{T}$; for each (nonoriented) internal edge~$e$, we have the
  $2$-cycle~$C_e$. There are $r-3{ \chi(\bar{S})} $ such cycles.
\item the faces of~$\mathcal{T}$; for each face~$f$, one has the
  $3$-cycle~$C_f$. There are  $r-2{ \chi(\bar{S})} $ such cycles.
\end{enumerate}

Each vertex~$v$ of $\Gamma_\mathcal{T}$ has, by construction, a closest (nonoriented) edge~$e$ in~$\mathcal{T}$ and the endpoints of this edge are then contained in a component
of~$\partial S$.
 The orientation of~$S$ can be used to
distinguish between them:
\begin{quote}
  If $v$ is to the left of $e$, then
  $t(v)\in \pi_0( \partial S)$ is the component up and $b(v)$ is down (see
  Figure~\ref{fig:quadrilateral_intro}).
\end{quote}\index{notation}{66@$b(v)$, $t(v)$ (the components of $\partial S$ down and
  up the vertex $v$ of $\Gamma_\mathcal{T}$)}

For every internal vertex~$v$ in~$V$, we denote by $\alpha_v$ the (oriented)
edge of~$\mathcal{T}$ such that $r(\alpha_{v})$ \enquote{goes through}~$v$;
precisely~$\alpha_v$ is the edge next to~$v$ going from the component~$t(v)$ to
the component~$b(v)$.\index{notation}{67@$\alpha_v$ (the arc in~$S$ associated with the
  vertex $v$)}

\begin{rem}
  \label{rem:quiv-in-TprimeS}
  Using the vector field~$\vec{x}_\mathcal{T}$ (cf.\
  Section~\ref{sec:vect-fields-triang}), the quiver~$\Gamma_\mathcal{T}$ is
  also embedded in~$T' S$.
\end{rem}

\section{Local systems on a quiver}
\label{sec:local-systems-graph}

Local systems on quivers can be described concretely.

Let~$\Gamma$ be a quiver with vertex set~$V$ and arrow set~$A$. Any
arrow~$a$ in~$A$ has a start point~$v^-(a)$ and an endpoint~$v^+(a)$.

\begin{df}
  A \emph{rank-$d$ local system} on~$\Gamma$ is the data of
  \begin{enumerate}
  \item for each $v\in V$, a vector space~$F_v$ of dimension~$d$;
  \item for each $a\in A$, a linear isomorphism $g_a\colon  F_{v^-(a)} \to
    F_{v^+(a)}$.
  \end{enumerate}
  Such a local system will be denoted simply as a pair $( \{F_v\}_{v\in V}, \{
  g_a\}_{a\in A})$ or
  $(F_v, g_a)$.
\end{df}\index{definition}{local system}

\begin{rem}
  Representations of quivers are a more general notion: the dimensions of the
  vector spaces can vary and the linear morphisms are not necessarily invertible.
\end{rem}

Two local systems $(F_v,g_a)$ and $(F'_v, g'_a)$ are \emph{equivalent} if
there is a family of linear isomorphisms $\psi_v\colon  F_v \to
F'_v$ ($v\in V$) such that, for every arrow~$a$ of~$\Gamma$, the following
diagram commutes:
\begin{center}
  \begin{tikzcd}
    F_{v^-(a)} \arrow[r, "g_a"] \arrow[d, "\Psi_{v^-(a)}"'] & F_{v^+(a)}
    \arrow[d,
    "\Psi_{v^+(a)}"] \\
    F'_{v^-(a)} \arrow[r, "g^{\prime}_a"] & F_{v^+(a)}
  \end{tikzcd}\ .
\end{center}
We will denote by $\Loc(\Gamma, \GL(d,\R))$ the moduli space of rank-$d$ local
systems on~$\Gamma$.\index{notation}{69@$\Loc(\Gamma, \GL(d,\R))$ (moduli of local
  systems on a quiver)}

From time to time, the elements~$g_a$ will be called \emph{transition maps}.
\index{definition}{transition!map}%

A \emph{symplectic local system} (of rank $d=2n$) is a local system $(F_v,
g_a)$ and the data of a symplectic form~$\omega_v$ on~$F_v$ (for each~$v$
in~$V$) such that the maps~$g_a$ are symplectic isomorphisms. Equivalence of
symplectic local systems is defined likewise. Local systems with respect to a
general Lie group are discussed in Section~\ref{sec:representations}.
\index{definition}{symplectic!local system}%
\index{definition}{local system!symplectic ---}%

\section{Local systems and bases}
\label{sec:local-systems-basis}

Given a local system $(F_v, g_a)$ on~$\Gamma$ it is often more convenient to
give the elements~$g_a$ in matrix coordinates, i.e.\ to fix a
basis~$\mathbf{b}_v$ of~$F_v$ for each~$v$ in~$V$. Hence a rank-$d$ local
system can be thought of as the data $( (F_v, \mathbf{b}_v), G_a)$, where for
each~$a$ in~$A$, $G_a$ belongs to $\GL(d, \R)$; $G_a$ is the
matrix of the linear transformation $g_a\colon F_{v^-(a)} \to F_{v^+(a)}$ in
the bases~$\mathbf{b}_{v^-(a)}$ and~$\mathbf{b}_{v^+(a)}$.

We sometimes call~$G_a$ \emph{transition matrices}.\index{definition}{transition!matrices}

 We will often say that the family of bases $(\mathbf{b}_v)_{v\in V}$ is a
\emph{basis} of the local system~\mbox{$(F_v, g_a)$.}\index{definition}{basis}

One can even retain only the data $\{G_a\}_{a\in A}$ for a local system
on~$\Gamma$. However, it will be useful to keep the complete data
$(F_v, \mathbf{b}_v, G_a)$ (or even $(F_v, \mathbf{b}_v, g_a, G_a)$) in order
to later give the framing or a decoration and to have a precise understanding
of equivalent local systems.  We call these tuples also a local system
on~$\Gamma$.

\begin{lem}\label{lem:equi-loc-sys-1}
  Let $(F_v, g_a)$ and $(F'_v, g'_a)$ be two rank-$d$ local systems
  on~$\Gamma$. Then the following statements are equivalent:
  \begin{enumerate}[(i)]
  \item \label{item:i} $(F_v, g_a)$ and $(F'_v, g'_a)$ are equivalent local
    systems.
  \item \label{item:ii} For every basis $(\mathbf{b}_v)$ of $(F_v, g_a)$, there
    is a basis  $(\mathbf{b}'_v)$ of $(F'_v, g'_a)$ such that, for every
    arrow~$a$ in~$\Gamma$, the matrices~$G_a$ and~$G'_a$ of~$g_a$ and~$g'_a$
    respectively are equal.
  \item \label{item:iii} There are a basis $(\mathbf{b}_v)$ of $(F_v, g_a)$
    and a basis $(\mathbf{b}'_v)$ of $(F'_v, g'_a)$ such that, for every
    arrow~$a$, the matrices~$G_a$ and~$G'_a$ are equal.
  \end{enumerate}
\end{lem}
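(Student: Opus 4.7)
The plan is to prove the cycle \eqref{item:i} $\Rightarrow$ \eqref{item:ii} $\Rightarrow$ \eqref{item:iii} $\Rightarrow$ \eqref{item:i}, relying throughout on the dictionary between morphisms and matrices recalled in Remark~\ref{rem:action-on-basis}\eqref{item:4:rem:action-on-basis}: for a morphism $\psi\colon L\to L'$ and bases $\mathbf{v}$ of~$L$, $\mathbf{v}'$ of~$L'$, the matrix~$g$ of~$\psi$ is characterized by $\psi(\mathbf{v}) = \mathbf{v}'\cdot g$. This is really the only tool needed; the lemma is essentially the tautology that a basis of a local system is the same thing as an identification of that local system with one given by matrices.

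For \eqref{item:i} $\Rightarrow$ \eqref{item:ii}, I would start with an equivalence $(\Psi_v)_{v\in V}$ and an arbitrary basis $(\mathbf{b}_v)$ of $(F_v, g_a)$, then simply transport the basis by setting $\mathbf{b}'_v \coloneqq \Psi_v(\mathbf{b}_v)$. By Remark~\ref{rem:action-on-basis}\eqref{item:4:rem:action-on-basis}, the matrix of~$g_a$ in the bases $\mathbf{b}_{v^-(a)}, \mathbf{b}_{v^+(a)}$ is the unique $G_a$ with $g_a(\mathbf{b}_{v^-(a)}) = \mathbf{b}_{v^+(a)} \cdot G_a$; applying~$\Psi_{v^+(a)}$ to both sides and using $\Psi_{v^+(a)}\circ g_a = g'_a \circ \Psi_{v^-(a)}$ yields $g'_a(\mathbf{b}'_{v^-(a)}) = \mathbf{b}'_{v^+(a)} \cdot G_a$, so the matrix $G'_a$ of~$g'_a$ in the primed bases equals~$G_a$. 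The implication \eqref{item:ii} $\Rightarrow$ \eqref{item:iii} is immediate: any basis of the first local system witnesses \eqref{item:iii} together with its associated basis of the second.

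For \eqref{item:iii} $\Rightarrow$ \eqref{item:i}, given the two families of bases with $G_a = G'_a$, I would define $\Psi_v\colon F_v \to F'_v$ as the unique linear isomorphism sending $\mathbf{b}_v$ to $\mathbf{b}'_v$, that is $\Psi_v(\mathbf{b}_v) = \mathbf{b}'_v$. Then for every arrow~$a$,
\[ g'_a(\Psi_{v^-(a)}(\mathbf{b}_{v^-(a)})) = g'_a(\mathbf{b}'_{v^-(a)}) = \mathbf{b}'_{v^+(a)}\cdot G'_a = \Psi_{v^+(a)}(\mathbf{b}_{v^+(a)}\cdot G_a) = \Psi_{v^+(a)}(g_a(\mathbf{b}_{v^-(a)})), \]
and since both $g'_a\circ \Psi_{v^-(a)}$ and $\Psi_{v^+(a)}\circ g_a$ are linear maps agreeing on the basis $\mathbf{b}_{v^-(a)}$, they agree on all of $F_{v^-(a)}$, giving the required commutative square.

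There is no real obstacle here: the argument is bookkeeping, and no global compatibility condition is needed because the bases are chosen independently at each vertex of~$\Gamma$ (in particular there are no loops to close up, unlike the situation for local systems on a topological space, where one must check compatibility with the fundamental group). The only point to keep in mind is to write equalities in the convention $\psi(\mathbf{v}) = \mathbf{v}'\cdot g$ and not its transpose, so that the computation in the last display comes out with the matrices in the correct order.
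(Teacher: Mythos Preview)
Your proof is correct and follows exactly the cycle \eqref{item:i} $\Rightarrow$ \eqref{item:ii} $\Rightarrow$ \eqref{item:iii} $\Rightarrow$ \eqref{item:i} that the paper indicates; the paper simply asserts that these implications are straightforward without spelling them out, and your argument supplies precisely the expected details via the matrix--morphism dictionary of Remark~\ref{rem:action-on-basis}\eqref{item:4:rem:action-on-basis}.
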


The proofs of (\ref{item:i}) $\Rightarrow$ (\ref{item:ii}) $\Rightarrow$
(\ref{item:iii}) $\Rightarrow$ (\ref{item:i}) are straightforward.

The lemma will be used in the following (equivalent) form:
\begin{lem}\label{lem:equi-loc-sys-2}
  Two local systems $(F_v, \mathbf{b}_v, g_a, G_a)$ and
  $(F'_v, \mathbf{b}'_v, g'_a, G'_a)$ are equivalent if and only if there is a
  basis $(\mathbf{c}_v)_{v\in V}$ of $(F_v, g_a)$ such that, for all arrow~$a$ of~$\Gamma$, $G'_a$ is the matrix of the map $g_a\colon  F_{v^-(a)} \to
  F_{v^+(a)}$ in the bases $\mathbf{c}_{v^-(a)}$ and $\mathbf{c}_{v^+(a)}$.
\end{lem}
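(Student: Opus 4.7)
The plan is to derive this as a direct reformulation of Lemma~\ref{lem:equi-loc-sys-1}, specifically the equivalence between~(\ref{item:i}) and~(\ref{item:iii}); the only real content is to convert the symmetric statement \enquote{there exist bases of both local systems with equal transition matrices} into the asymmetric statement \enquote{there exists a basis of the first local system making its transition matrices equal to the already-fixed $G'_a$}. Since the claim is an iff, I would split it into two directions and in each direction construct the isomorphism $\Psi_v\colon F_v\to F'_v$ (or the basis $\mathbf{c}_v$) explicitly.

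For the forward direction, assume $\{\Psi_v\}_{v\in V}$ is an equivalence from $(F_v, g_a)$ to $(F'_v, g'_a)$. I define $\mathbf{c}_v \coloneqq \Psi_v^{-1}(\mathbf{b}'_v)$; this is a basis of~$F_v$ and hence $(\mathbf{c}_v)_{v\in V}$ is a basis of the local system $(F_v, g_a)$. For every arrow~$a$, the matrix of $\Psi_v$ in the bases $\mathbf{c}_v, \mathbf{b}'_v$ is the identity, so the commutativity of the defining square of the equivalence immediately identifies the matrix of $g_a$ in the bases $\mathbf{c}_{v^-(a)}, \mathbf{c}_{v^+(a)}$ with the matrix of $g'_a$ in $\mathbf{b}'_{v^-(a)}, \mathbf{b}'_{v^+(a)}$, i.e.\ with $G'_a$.

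For the converse, I suppose $(\mathbf{c}_v)$ is a basis of $(F_v, g_a)$ with the stated property, and define $\Psi_v\colon F_v\to F'_v$ as the unique linear isomorphism sending $\mathbf{c}_v$ to $\mathbf{b}'_v$ coordinate-by-coordinate. For every arrow $a$, both linear maps $\Psi_{v^+(a)}\circ g_a$ and $g'_a\circ\Psi_{v^-(a)}$ from $F_{v^-(a)}$ to $F'_{v^+(a)}$ have, in the bases $\mathbf{c}_{v^-(a)}$ and $\mathbf{b}'_{v^+(a)}$, the same matrix $G'_a$; therefore they agree, so the diagram commutes and $\{\Psi_v\}$ is the required equivalence.

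I do not expect any serious obstacle: the entire argument is formal bookkeeping of which basis sits in which space. The only point that demands a bit of care is to insist that the basis~$\mathbf{c}_v$ constructed in the forward direction really is a basis of the \emph{first} local system (and not of the second), which is why one uses $\Psi_v^{-1}(\mathbf{b}'_v)$ rather than $\Psi_v(\mathbf{b}_v)$; symmetrically, in the reverse direction, the definition of $\Psi_v$ via the pair $(\mathbf{c}_v, \mathbf{b}'_v)$ is what makes the transition matrices match on the nose without any change-of-basis conjugation.
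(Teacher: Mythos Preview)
Your proposal is correct and is exactly the intended argument: the paper does not prove this lemma separately but simply declares it to be an equivalent reformulation of Lemma~\ref{lem:equi-loc-sys-1} (whose proof is stated to be straightforward), and what you have written is precisely the straightforward unpacking of (\ref{item:i}) $\Leftrightarrow$ (\ref{item:iii}) with the fixed basis~$\mathbf{b}'_v$ on the second side. Your care in defining $\mathbf{c}_v \coloneqq \Psi_v^{-1}(\mathbf{b}'_v)$ (rather than $\Psi_v(\mathbf{b}_v)$) is exactly the point needed to get the asymmetric statement.
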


Of course, our main concern here is symplectic local systems, for those we
work with symplectic bases of~$F_v$ given as pairs
$(\mathbf{e}_v, \mathbf{f}_v)$ and the matrices~$G_a$ will be elements of the
symplectic group $\Sp(2n,\R)$.

\section{Local systems on $\Gamma_\mathcal{T}$ and local systems on~$S$}
\label{sec:local-syst-gamm}

A local system on~$S$ can be restricted to~$\Gamma_\mathcal{T}$ and thus gives rise to a local system
on~$\Gamma_\mathcal{T}$. As the embedding $\Gamma_\mathcal{T} \hookrightarrow
S$ induces an epimorphism between the fundamental groups, the restriction map
\[ \Loc( S, \Sp(2n,\R)) \longrightarrow \Loc( \Gamma_\mathcal{T}, \Sp(2n,\R))\]
is injective. In order to describe its image, we need to keep track which
cycles in~$\Gamma_\mathcal{T}$ become trivial in~$S$. This leads to the following definition:

\begin{df}\label{df:pi1Scompat-local-sys}
  A local system $(F_v, g_a)$ on~$\Gamma_\mathcal{T}$ is said to be
  \emph{$S$-compatible} if
  \begin{enumerate}
  \item for all $\{a, a'\}$ as in~\eqref{item:s}
    (Section~\ref{sec:orient-graph-gamm}, p.~\pageref{item:s}),
    $g_{a} g_{a'}= \Id$.
  \item for all $\{a_1,a_2,a_3\}$ as in~\eqref{item:f}
    (Section~\ref{sec:orient-graph-gamm}, p.~\pageref{item:f}),
    $g_{a_3} g_{a_2} g_{a_1}= \Id$.
  \end{enumerate}
\end{df}\index{definition}{compatible local system}%
\index{definition}{local system!compatible ---}%

We denote by $\Loc_{S}( \Gamma_\mathcal{T}, \Sp(2n, \R))$ the moduli space of
$S$-compatible local systems.\index{notation}{71@$\Loc_{S}( \Gamma_\mathcal{T}, \Sp(2n,
  \R))$ (moduli of $S$-compatible local system)}

\begin{prop}
  \label{prop:local-syst-gamm-rep-pi1S}
There is an isomorphism between $\Loc( S, \Sp(2n, \R))$ and
  $\Loc_{ S}( \Gamma_\mathcal{T}, \Sp(2n, \R))$ given by the restriction map.
  \end{prop}

\section{Twisted local systems on $\Gamma_\mathcal{T}$}
\label{sec:local-syst-gamm-widehat}

As with local systems on~$S$, one can use local systems on~$\Gamma_\mathcal{T}$
to understand $\delta$-twisted local systems on~$T'S$.
Since~$\Gamma_\mathcal{T}$ can be embedded in~$T'S$
(Remark~\ref{rem:quiv-in-TprimeS}) we can define $\delta$-twisted local
systems on~$\Gamma_\mathcal{T}$ as follows.

\begin{df}\label{df:pi1S-hat-compat-local-sys}
  A symplectic local system $(F_v, g_a)$ on~$\Gamma_\mathcal{T}$ is said to be
  \emph{$\delta$-twisted} (or \emph{twisted})
  if
  \begin{enumerate}
  \item for all $\{a,a'\}$ as in~\eqref{item:s}, p.~\pageref{item:s},
    $g_{a} g_{a'}= -\Id$.
  \item for all $\{a_1,a_2,a_3\}$ as in~\eqref{item:f}, p.~\pageref{item:f},
    $g_{a_3} g_{a_2} g_{a_1}= -\Id$.
  \end{enumerate}
\end{df}\index{definition}{twisted!local system}%
\index{definition}{local system!twisted ---}%
Then, along the same line as the results above, we get the following, where $\Locdelta( \Gamma_\mathcal{T}, \Sp(2n, \R))$ denotes the moduli space of
$\delta$-twisted local systems: \index{notation}{73@$\Locdelta( \Gamma_\mathcal{T},
  \Sp(2n, \R))$ (moduli of twisted local systems)}

\begin{prop}\label{prop:local-syst-gamm-rep-widehat}
  There is an isomorphism between
  $\Locdelta( S, \Sp(2n, \R))$ and $\Locdelta( \Gamma_\mathcal{T}, \Sp(2n, \R))$ given by the restriction map.
\end{prop}

\section{Framed twisted local systems on $\Gamma_{\mathcal{T}}$}
\label{sec:decor-local-syst-gamma_T}

In the above correspondence (Proposition~\ref{prop:local-syst-gamm-rep-widehat}),
we would like to keep track of framed twisted local systems. For this we will use the ribbon structure on~$\Gamma_\mathcal{T}$, in particular the two maps $t\colon V\to \pi_0( \partial S)$ and $b\colon
V\to \pi_0( \partial S)$ defined in
Section~\ref{sec:orient-graph-gamm}; recall that $V$~is the set of vertices
of~$\Gamma_\mathcal{T}$.

A framing of a twisted local system on~$\Gamma_\mathcal{T}$ is an
equivariant data of Lagrangians, more precisely:
\begin{df}
  Let $(F_v, g_a)$ be a symplectic $\delta$-twisted local system. A \emph{framing} of
  $(F_v, g_a)$ is the data of two families of Lagrangians~$L^{t}_{v}$ and~$L^{b}_{v}$
  of~$F_v$ ($v$ in~$V$), such that, for every
  arrow~$a$ of~$\Gamma_\mathcal{T}$,
  one has:
  \begin{itemize}
  \item if $a$ is in $A_2$, i.e.\ if $a$ crosses an edge of~$\mathcal{T}$,
    $g_a( L^{t}_{v^-(a)}) = L^{b}_{v^+(a)}$ and
    $g_a( L^{b}_{v^-(a)}) = L^{t}_{v^+(a)}$.
  \item if $a$ is in $A_3$, i.e.\ if 
    $a$ is contained in a face
    of~$\mathcal{T}$, $g_a( L^{b}_{v^-(a)}) = L^{t}_{v^+(a)}$.
  \end{itemize}
\end{df}\index{definition}{framing}%
\index{definition}{framed!twisted local system}%
\index{definition}{twisted!framed --- local system}%
\index{definition}{local system!framed twisted ---}%
The tuple $(F_v, g_a, L^{t}_{v}, L^{b}_{v})$ is called a \emph{framed twisted local
  system} (cf.\ Figure~\ref{fig:quadrilateral_laocal_system}).

\begin{figure}[ht]
\centering
\begin{tikzpicture}
  \coordinate (A) at (5,0) ;
  \coordinate (B) at (5.5,5) ;
  \coordinate (D) at (1.4,2.9) ;
  \coordinate (a1) at (barycentric cs:A=3,B=3,D=-1) ;
  \coordinate (labelv) at (barycentric cs:A=4.5,B=1.5,D=-1) ;
  \coordinate (b1) at (barycentric cs:A=3,B=3,D=1) ;
  \coordinate (b3) at (barycentric cs:A=1,B=3,D=3) ;
  \coordinate (b2) at (barycentric cs:A=3,B=1,D=3) ;
  \draw (labelv) node[right]{$F_v\supset L^{b}_{v}, L^{t}_{v}$} ;
  \draw (b2) node[left]{$F_x\supset L^{b}_{x}, L^{b}_{x}$} ;
  \draw (b3) node[above left]{$F_w\supset L^{b}_{w}, L^{t}_{w}$} ;
  \draw (a1) node[right]{$F_v'\supset L^{b}_{v'}, L^{t}_{v'}$} ;
  \draw (a1) node {$\bullet$} ;
  \draw (b1) node {$\bullet$} ;
  \draw (b2) node {$\bullet$} ;
  \draw (b3) node {$\bullet$} ;
  \draw[middlearrow={latex}] (b1) to[bend left] node[pos=0.5,below] {$g_1$}(b2) ;
  \draw[middlearrow={latex}] (b2) to[bend left] node[pos=0.5,left] {$g_2$} (b3) ;
  \draw[middlearrow={latex}] (b3) to[bend left] node[pos=0.6,above] {$\ g_3$}(b1) ;
  \draw[middlearrow={latex}] (b1) to[bend left] node[pos=0.3,above] {$g_a$} (a1) ;
  \draw[middlearrow={latex}] (a1) to[bend left] node[pos=0.3,below] {$g_{a'}$} (b1) ;
   \draw[gray] (A)--(B) ;
  \draw[gray!50] (D)--(B) ;
  \draw[gray!50] (A)--(D) ;
    \draw [dotted] (labelv) to[out=150, in=270,looseness=1] (b1);
\end{tikzpicture}
\caption{The framed twisted local system on the
  quiver~$\Gamma_{\mathcal{T}}$. With respect to
  Figure~\ref{fig:quadrilateral_intro}, the vertices have been labeled by the
  corresponding symplectic vector spaces and the~$2$ Lagrangians they
  contained, whereas every arrow has been labeled by the corresponding
  symplectic isomorphism between the symplectic vector spaces associated with
  its endpoints (we also write $g_i$ instead of $g_{a_i}$). By definition, the
following relations hold: $g_{a'}=g_{a}^{-1}$, $g_3 g_2 g_1 =\Id$,
$L_{v'}^{b}= g_a( L^{t}_{v})$,
$L_{v'}^{t}= g_a( L^{b}_{v})$, $L^{b}_{v}= g_3( L^{t}_{w})$, $L^{t}_{v}=
g_{1}^{-1}( L^{b}_{x})$, etc. }\label{fig:quadrilateral_laocal_system}
\end{figure}

Equivalence of framed twisted local systems has to take care of the framings:
\begin{df}
  Let $(F_v, g_a, L^{t}_{v}, L^{b}_{v})$
  and $(F'_v, g'_a, L^{\prime t}_{v}, L^{\prime b}_{v})$ be two framed symplectic twisted local systems. They are defined  to be
  \emph{equivalent} if there is a family
  $\{ \psi_v\colon F_v \to F'_v\}_{v\in V}$ that is an isomorphism between the
  symplectic local systems $(F_v, g_a)$ and $(F'_v, g'_a)$, and such that, for
  all~$v$ in~$V$, $\psi_v( L^{t}_{v}) = L^{\prime t}_{v}$ and
  $\psi_v( L^{b}_{v}) = L^{\prime b}_{v}$.
\end{df}

The restriction of a framed twisted local system on~$S$ gives a framed
twisted local system on~$\Gamma_\mathcal{T}$: starting from a framed
twisted symplectic local system $(\mathcal{F}, \sigma)$, the restriction gives
first a local system $(F_v, g_a)$ on~$\Gamma_\mathcal{T}$, then, for every
vertex~$v$ in~$V$, using parallel transport along the arc~$\alpha_v$ described
in Section~\ref{sec:orient-graph-gamm}, and the section~$\sigma$ defined at
the extremities of~$\alpha_v$, we obtain two Lagrangians~$L^{t}_{v}$
and~$L^{b}_{v}$ in~$F_v$. One has

\begin{prop}
  \label{prop:decorated-restriction}
  The restriction map induces an isomorphism between the space
  $\Locfdelta(S, \Sp(2n,\R))$ and the space
  $\Locfdelta( \Gamma_\mathcal{T}, \Sp(2n, \R))$ of framed twisted
  symplectic local systems on~$\Gamma_\mathcal{T}$.
\end{prop}\index{notation}{75@$\Locfdelta( \Gamma_\mathcal{T}, \Sp(2n, \R))$ (moduli of
  framed twisted local systems)}

We will denote by
$\Mfdelta( \Gamma_\mathcal{T}, \Sp(2n, \R))$ the framed
local systems on~$\Gamma_\mathcal{T}$ corresponding to maximal
ones.\index{notation}{76@$\Mfdelta( \Gamma_\mathcal{T}, \Sp(2n, \R))$ (subspace of
  maximal framed local systems)}

\section{Symplectic bases of framed local systems}
\label{sec:sympl-basis-decor}

Local systems on $\Gamma_\mathcal{T}$ are easier to tackle when given as a family $\{G_a\}_{a\in A}$
of elements of $\Sp(2n,\R)$, i.e.\ the linear isomorphism $g_a$ associated to an arrow of the quiver is given by an explicit transition matrix. As explained in Section~\ref{sec:local-systems-basis}, this is equivalent to equipping each of the
 spaces $F_v$ with a symplectic basis.

In the presence of a framing, it is desirable that the symplectic bases
are adapted to the Lagrangians. This motivates the following definition:
\begin{df}\label{df:sympl-basis-decor}
  A framed $\delta$-twisted symplectic local system
  $(F_v, g_a, L^{t}_{v}, L^{b}_{v})$ is called \emph{transverse} if, for all
  $v\in V$, the two Lagrangians~$L^{t}_{v}$ and~$L^{b}_{v}$ are transverse:
\[  F_v =L^{t}_{v} \oplus L^{b}_{v}.\]
  A symplectic basis $\{ (\mathbf{e}_v, \mathbf{f}_v)\}_{v\in V}$ is said to
  \emph{generate} (or \emph{generating}) the framing if, for all~$v$,
  $L^{t}_{v} = \Span( \mathbf{e}_v)$ and $L^{b}_{v} = \Span( \mathbf{f}_v)$;
  in this case $(L^{t}_{v}, L^{b}_{v})$ will be called the \emph{generated
  framing}.
\end{df}\index{definition}{generating basis}%
\index{definition}{basis!generating ---}%
\index{definition}{transverse!framed twisted local system}%
\index{definition}{framed!transverse --- twisted local system}%
\index{definition}{twisted!transverse framed --- local system}%
\index{definition}{local system!transverse framed twisted ---}%
\begin{lem}\label{lem:sympl-basis-decor}
  \begin{enumerate}
  \item \label{item:1:lem:sympl-basis-decor} A framed
    $\delta$-twisted symplectic local system
    on~$\Gamma_\mathcal{T}$ is transverse if and only if the corresponding
    framed twisted local system on~$S$ is transverse with respect to~$\mathcal{T}$
    \ep{Definition~\ref{df:transv-local-syst-T}}.
  \item \label{item:2:lem:sympl-basis-decor} A framed $\delta$-twisted
    symplectic local system on~$\Gamma_\mathcal{T}$ is transverse if and only
    if it admits a generating basis.
  \end{enumerate}
\end{lem}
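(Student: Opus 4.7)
The plan is to unpack the definitions vertex-by-vertex and reduce both statements to elementary linear algebra about Lagrangian complements. Neither part should require any heavy machinery; the main care is in carefully identifying the Lagrangians at a vertex of the quiver with the Lagrangians obtained by parallel transport along the arc $\alpha_v$.

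For part~(\ref{item:1:lem:sympl-basis-decor}), I would first recall from Section~\ref{sec:decor-local-syst-gamma_T} that, starting from a framed twisted local system $(\mathcal{F},\sigma)$ on~$S$, the Lagrangians $L^{t}_{v}$ and $L^{b}_{v}$ at a vertex~$v$ are by construction the images under parallel transport along $\alpha_v$ of the framing Lagrangians at the two endpoints of~$\alpha_v$. Consequently, under the isomorphism of Proposition~\ref{teo:decorated-restriction}, the pair $(L^{t}_{v},L^{b}_{v})$ is transverse in~$F_v$ if and only if the framed local system on~$S$ is $\alpha_v$-transverse in the sense of Section~\ref{sec:transv-local-syst}. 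It then remains to observe that, as $v$ ranges over the internal vertices of~$\Gamma_{\mathcal{T}}$, the associated arcs $\alpha_v$ range over all (oriented) internal edges of~$\mathcal{T}$, and that the external vertices cover the external edges; this is clear from the construction of~$\Gamma_{\mathcal{T}}$ in Section~\ref{sec:orient-graph-gamm}, where an internal edge contributes one vertex in each of its two adjacent triangles, and an external edge contributes one vertex. Thus transversality holds at every vertex of~$\Gamma_{\mathcal{T}}$ exactly when the framed local system on~$S$ is $\mathcal{T}$-transverse.

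For part~(\ref{item:2:lem:sympl-basis-decor}), the forward implication is immediate from the definition: if $(\mathbf{e}_v,\mathbf{f}_v)$ is a generating symplectic basis then $F_v=\Span(\mathbf{e}_v)\oplus\Span(\mathbf{f}_v)=L^{t}_{v}\oplus L^{b}_{v}$, whence $L^{t}_{v}$ and $L^{b}_{v}$ are transverse. For the converse, I would construct a generating basis vertex-by-vertex, with no compatibility required across arrows since transition matrices may be arbitrary. Fix $v\in V$ and pick any basis $\mathbf{e}_v=(e_{v,1},\dots,e_{v,n})$ of~$L^{t}_{v}$. Because $L^{t}_{v}$ and $L^{b}_{v}$ are transverse Lagrangians in the symplectic space~$F_v$, the restriction of~$\omega$ to $L^{t}_{v}\times L^{b}_{v}$ is a nondegenerate pairing, so there exists a unique basis $\mathbf{f}_v=(f_{v,1},\dots,f_{v,n})$ of~$L^{b}_{v}$ with $\omega(e_{v,i},f_{v,j})=\delta_{ij}$, i.e.\ $\omega(\mathbf{e}_v,\mathbf{f}_v)=\Id$. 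Then $(\mathbf{e}_v,\mathbf{f}_v)$ is a symplectic basis of $F_v$ generating the framing at~$v$; doing this independently at every vertex yields the required generating basis.

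The only step that is at all delicate is matching the quiver-side Lagrangians with the surface-side ones in part~(\ref{item:1:lem:sympl-basis-decor}); once that identification is in place, both statements reduce to the standard fact that two transverse Lagrangians in a symplectic vector space form a Lagrangian complementary pair and admit a (noncanonical) adapted symplectic basis.
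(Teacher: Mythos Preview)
Your proposal is correct. The paper does not give an explicit proof of this lemma, treating both parts as immediate from the definitions; your argument spells out exactly the identifications and the elementary linear-algebra facts that are implicitly being invoked.
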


In other words, the isomorphism of Proposition~\ref{prop:decorated-restriction}
restricts to an isomorphism between the space $\LocfdeltaT(
\Gamma_\mathcal{T}, \Sp(2n,\R))$ of transverse framed \index{notation}{77@$\LocfdeltaT(
  \Gamma_\mathcal{T}, \Sp(2n,\R))$ (moduli of transverse framed twisted local systems)}
$\delta$-twisted symplectic local
system and the space $\LocfdeltaT( S,
\Sp(2n, \R))$ of transverse framed twisted local systems on~$S$.

For a framed twisted local system with a generating symplectic basis, the matrices
$\{ G_a\}_{a\in A}$ have a special form which we describe in the following lemma.
\begin{lem}
  Let $(F_v, g_a, L^{t}_{v}, L^{b}_{v})$ be a transverse framed symplectic
  twisted local system on~$\Gamma_\mathcal{T}$. Let
  $\{(\mathbf{e}_v, \mathbf{f}_v)\}_v$ be a generating symplectic basis and, for
  each $a\in A$, $G_a$ the matrix of the transformation~$g_a$ in these bases.

  Then, if the arrow~$a$ belongs to~$A_2$, the matrix~$G_a$ has
  the form $ \bigl( \begin{smallmatrix}
    0 & -{}^T\! B^{-1}\\ B & 0
  \end{smallmatrix}\bigr)$ for some~$B$ in~$\GL(n,\R)$; and if the arrow~$a$ belongs
  to~$A_3$, the matrix~$G_a$ has the form
  $ \bigl(\begin{smallmatrix} MC & -{}^T\! C^{-1}\\ C & 0
  \end{smallmatrix}\bigr)$ for some $C$ in~$\GL(n,\R)$ and $M$ in~$\Sym(n,\R)$.
\end{lem}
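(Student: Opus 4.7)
The plan is to expand the matrix of $g_a$ in the generating symplectic basis $\{ (\mathbf{e}_v, \mathbf{f}_v)\}_v$, impose the framing-compatibility conditions from the definition of a framing, and then impose the symplecticity of $g_a$, i.e.\ ${}^T G_a \omega G_a = \omega$ with $\omega = \bigl( \begin{smallmatrix} 0 & \Id \\ -\Id & 0 \end{smallmatrix}\bigr)$. Each block of $G_a$ is then either forced to zero by the framing condition or determined by the symplectic relation.

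First, for $a \in A_2$: the definition requires both $g_a( L^{t}_{v^-(a)}) = L^{b}_{v^+(a)}$ and $g_a( L^{b}_{v^-(a)}) = L^{t}_{v^+(a)}$. Since $L^{t}_{v} = \Span( \mathbf{e}_v)$ and $L^{b}_{v} = \Span( \mathbf{f}_v)$, writing $g_a( \mathbf{e}_{v^-(a)}) = \mathbf{e}_{v^+(a)} A + \mathbf{f}_{v^+(a)} B$ and $g_a( \mathbf{f}_{v^-(a)}) = \mathbf{e}_{v^+(a)} X + \mathbf{f}_{v^+(a)} D$ immediately yields $A = 0$ and $D = 0$, so $G_a = \bigl( \begin{smallmatrix} 0 & X \\ B & 0 \end{smallmatrix}\bigr)$. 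A direct computation then gives
\[
{}^T G_a \omega G_a = \begin{pmatrix} 0 & -{}^T B X \\ {}^T X B & 0 \end{pmatrix},
\]
and the equality with $\omega$ forces ${}^T B X = -\Id$, hence $B$ is invertible and $X = -{}^T B^{-1}$.

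Second, for $a \in A_3$: only $g_a( L^{b}_{v^-(a)}) = L^{t}_{v^+(a)}$ is imposed. Writing $g_a$ in the basis gives $G_a = \bigl( \begin{smallmatrix} A & E \\ C & 0 \end{smallmatrix}\bigr)$, where the bottom right block vanishes exactly because $g_a( \mathbf{f}_{v^-(a)}) \in L^{t}_{v^+(a)} = \Span( \mathbf{e}_{v^+(a)})$. The block $C\colon L^{b}_{v^-(a)} \to L^{t}_{v^+(a)}$ is a linear bijection between two $n$-dimensional spaces, hence invertible. A direct computation yields
\[
{}^T G_a \omega G_a = \begin{pmatrix} {}^T A C - {}^T C A & -{}^T C E \\ {}^T E C & 0 \end{pmatrix}.
\]
Setting this equal to $\omega$ gives $E = -{}^T C^{-1}$ (and the equivalent relation ${}^T E C = -\Id$), together with ${}^T A C = {}^T C A$. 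Defining $M \coloneqq A C^{-1}$, the latter identity is equivalent to ${}^T M = M$, so $M \in \Sym(n,\R)$ and $G_a = \bigl( \begin{smallmatrix} MC & -{}^T C^{-1} \\ C & 0 \end{smallmatrix}\bigr)$, as claimed.

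There is no serious obstacle: once the framing conditions are translated into vanishing blocks of $G_a$, the symplectic relation is just a matrix identity producing the remaining constraints. The only small point requiring attention is the invertibility of $B$ in the $A_2$ case and of $C$ in the $A_3$ case, which in both cases follows either from $g_a \in \GL(2n,\R)$ acting as a bijection between the specified Lagrangians or directly from the symplectic relation.
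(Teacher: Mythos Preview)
Your proof is correct and follows the same approach as the paper: translate the framing conditions into vanishing of the appropriate blocks of~$G_a$, then let symplecticity fix the remaining relations. The paper's proof is terser (it simply asserts that the block constraints together with $G_a\in\Sp(2n,\R)$ give the stated form), while you carry out the computation of ${}^T G_a\,\omega\,G_a$ explicitly.

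One small slip in your commentary for the $A_3$ case: the block~$C$ is the $(2,1)$ block, i.e.\ the $\mathbf{f}_{v^+(a)}$-component of $g_a(\mathbf{e}_{v^-(a)})$, so it represents (the projection of) a map $L^{t}_{v^-(a)}\to L^{b}_{v^+(a)}$, not $L^{b}_{v^-(a)}\to L^{t}_{v^+(a)}$. The framing condition for $a\in A_3$ gives no direct geometric reason for~$C$ to be invertible; it is the block~$E$ that is the matrix of the bijection $g_a|_{L^{b}_{v^-(a)}}\colon L^{b}_{v^-(a)}\to L^{t}_{v^+(a)}$. However, your argument does not actually depend on this mislabeling: the symplectic relation ${}^T E\,C=-\Id$ that you derive forces~$C$ (and~$E$) to be invertible, and the rest follows.
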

\begin{proof}
   Let $v_1$ and $v_2$ be the start point and endpoint of~$a$.
   In the case when $a\in A_2$ the conditions
     \[g_a( \Span( \mathbf{e}_{v_1})) = g_a( L^{t}_{v_1}) = L^{b}_{v_2} =
     \Span( \mathbf{f}_{v_2}) \text{ and } g_a( \Span( \mathbf{f}_{v_1})) = \Span(
     \mathbf{e}_{v_2})\]
   translate into the following conditions on~$G_a$, with $(\mathbf{e}_0,
   \mathbf{f}_0)$ the standard symplectic basis of~$\R^{2n}$:
       \[\Span( \mathbf{e}_0\cdot G_a)= \Span( \mathbf{f}_0) \text{ and }
     \Span( \mathbf{f}_0\cdot G_a)= \Span( \mathbf{e}_0),\]
   which precisely means that the matrix~$G_a$ has the specified off diagonal
   form.

   In the second case ($a\in A_3$), the condition on~$G_a$ is that $\Span( \mathbf{f}_0\cdot G_a)=
   \Span( \mathbf{e}_0)$ and this implies the result.
\end{proof}
Conversely, if a twisted local system is given by a family $\{G_a\}_{a\in A}$ of
symplectic matrices satisfying the special form given in the conclusions of the preceding lemma then it
comes from a unique generating symplectic basis on a framed
local system.

\begin{prop}\label{prop:decor-loc-from-Ga}
  Let $\{G_a\}_{a\in A}$ be a family of elements of~$\Sp(2n,\R)$ such that
  \begin{enumerate}
  \item $\{G_a\}_a$ is a $\delta$-twisted local system \ep{i.e.\
    for each edge~$e$ of~$\mathcal{T}$, the product of the $G_a$s along the
    cycle~$C_e$ is~$-\Id$ and for each triangle~$f$ of~$\mathcal{T}$, the
    product of the $G_a$s along~$C_f$ is~$-\Id$; see
    Section~\ref{sec:local-syst-gamm-widehat}.}
  \item for all arrow~$a$ of~$\Gamma_\mathcal{T}$ in~$A_2$, the matrix~$G_a$ has the form $ \bigl( \begin{smallmatrix}
    0 & -{}^T\! B^{-1}\\ B & 0
  \end{smallmatrix}\bigr)$ for some $B\in \GL(n,\R)$.
  \item for all arrow~$a$ in~$A_3$, the matrix~$G_a$ has the form $  \bigl(\begin{smallmatrix}
    MC & -{}^T\! C^{-1}\\ C & 0
  \end{smallmatrix}\bigr)$ for some $C\in \GL(n,\R)$ and $M\in \Sym(n,\R)$.
  \end{enumerate}
  Then there is a unique up to equivalence framed
  $\delta$-twisted symplectic local system on~$\Gamma_\mathcal{T}$ with
  generating symplectic basis such that the transition matrices are precisely
  the~$G_a$ \ep{$a\in A$}.
\end{prop}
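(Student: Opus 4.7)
The plan is to carry out an essentially tautological construction, then verify uniqueness via Lemma~\ref{lem:equi-loc-sys-2}.

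\textbf{Existence.} For every vertex $v\in V$, I will set $F_v\coloneqq \R^{2n}$ equipped with the standard symplectic form~$\omega$, take $(\mathbf{e}_v, \mathbf{f}_v)\coloneqq (\mathbf{e}_0, \mathbf{f}_0)$ to be the standard symplectic basis, and declare $L_v^t\coloneqq \Span(\mathbf{e}_v)$ and $L_v^b\coloneqq \Span(\mathbf{f}_v)$. For every arrow $a\in A$, I will let $g_a\colon F_{v^-(a)}\to F_{v^+(a)}$ be the linear map whose matrix in these bases is precisely~$G_a$; this is symplectic since $G_a\in \Sp(2n,\R)$. By construction the symplectic basis is generating (Definition~\ref{df:sympl-basis-decor}), and the pair $(L_v^t, L_v^b)$ is transverse. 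The twist condition (Definition~\ref{df:pi1S-hat-compat-local-sys}) is immediate from hypothesis~(1), as products of the~$g_a$ along 2- and 3-cycles have matrices equal to $-\Id$.

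\textbf{Compatibility of the framing with the transition maps.} This is a direct computation from the special block forms in hypotheses~(2) and~(3). For $a\in A_2$ with $G_a=\bigl(\begin{smallmatrix} 0 & -{}^T\! B^{-1}\\ B & 0\end{smallmatrix}\bigr)$, one has $g_a(\mathbf{e}_v)=\mathbf{f}_v\cdot B$ and $g_a(\mathbf{f}_v)=-\mathbf{e}_v\cdot {}^T\! B^{-1}$, so $g_a(L_{v^-(a)}^t)=L_{v^+(a)}^b$ and $g_a(L_{v^-(a)}^b)=L_{v^+(a)}^t$. For $a\in A_3$ with $G_a=\bigl(\begin{smallmatrix} MC & -{}^T\! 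C^{-1}\\ C & 0\end{smallmatrix}\bigr)$, one has $g_a(\mathbf{f}_v)=-\mathbf{e}_v\cdot {}^T\! C^{-1}$, so $g_a(L_{v^-(a)}^b)=L_{v^+(a)}^t$, which is the required condition.

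\textbf{Uniqueness.} Suppose $(F'_v, g'_a, L_v^{\prime t}, L_v^{\prime b})$ is another framed $\delta$-twisted symplectic local system on~$\Gamma_\mathcal{T}$ with a generating symplectic basis $(\mathbf{e}'_v, \mathbf{f}'_v)$ whose transition matrices are also~$G_a$. For each~$v$, let $\psi_v\colon F_v\to F'_v$ be the unique linear map sending $(\mathbf{e}_v, \mathbf{f}_v)$ to $(\mathbf{e}'_v, \mathbf{f}'_v)$; it is symplectic because both bases are symplectic, and it sends $L_v^t$ to $L_v^{\prime t}$ and $L_v^b$ to $L_v^{\prime b}$ because the bases are generating. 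Since the matrices of $g_a$ and $g'_a$ in these bases coincide, Lemma~\ref{lem:equi-loc-sys-2} shows that $\{\psi_v\}_{v\in V}$ is an equivalence of the underlying symplectic local systems, and by the previous sentence it respects the framings.

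There is no real obstacle here: the content of the proposition is that the normal forms in~(2) and~(3) are designed precisely so that the transition matrices, read in any generating basis of a transverse framing, faithfully encode both the local system and the framing; all the work has already been done in the preceding lemmas and definitions.
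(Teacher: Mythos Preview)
Your proof is correct and follows essentially the same approach as the paper: set each $F_v=\R^{2n}$ with the standard symplectic basis, let the Lagrangians be those generated by the basis, define $g_a$ via the given matrix $G_a$, and observe that the block forms in hypotheses~(2) and~(3) are exactly the framing compatibility conditions. Your argument is in fact more detailed than the paper's---you spell out the framing verification and the uniqueness explicitly via Lemma~\ref{lem:equi-loc-sys-2}, whereas the paper dispatches uniqueness in a single sentence---but the content is identical.
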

\begin{proof}
  Uniqueness is clear since the bases on two local systems will entirely
  determine the isomorphism between them.

  Let us prove existence. For each~$v$ in~$V$, set $(F_v, (\mathbf{e}_v,
  \mathbf{f}_v))$ to be the space~$\R^{2n}$ with its canonical symplectic
  basis. Let $(L^{t}_{v}, L^{b}_{v})$ the Lagrangians generated by these
  bases. Finally let $g_a\colon  F_{v^-(a)}\to F_{v^+(a)}$ the symplectic
  isomorphism whose matrix is~$G_a$ (again in the given bases). The hypothesis
  on~$G_a$ precisely means that the family $(L^{t}_{v}, L^{b}_{v})$ is a
  framing of the $\delta$-twisted local system $(F_v, g_a)$.
\end{proof}

As a conclusion, a framed transverse local system (symplectic and
$\delta$-twisted) is entirely determined by a family
$\{G_a\}_{a\in A}$ satisfying the hypothesis of the proposition. By a little
abuse of terminology, such a family $\{G_a\}_{a\in A}$ will be also called a
transverse framed twisted local system.

\section{Configurations associated with framed local systems on~$\Gamma_\mathcal{T}$}
\label{sec:conf-assoc-with}

The maps from local systems to configurations of
Section~\ref{sec:conf-assoc-with-1} can be seen as defined on the space of
framed twisted local systems on~$\Gamma_\mathcal{T}$.

When~$v$ is a vertex of~$\Gamma_\mathcal{T}$, hence contained in some
triangle~$f$ of~$\mathcal{T}$, we can define a $3$-gon $\tau\colon
(\mathbb{D}, \mu_3)\to (S, \partial S)$ (cf.\
Section~\ref{sec:conf-assoc-with-1}) such that $\tau(1)= t(v)$. We will rather
denote
\[ f_v \coloneqq f_\tau \colon \Locddelta( \Gamma_\mathcal{T},
  \Sp(2n,\R)) \longrightarrow \Conf^3( \Lag{n}),\]
the map defined previously.\index{notation}{79@$f_v$ (map to $\Conf^3( \Lag{n})$
  associated with a vertex~$v$)}

When~$v$ is an internal vertex, the map $q_{\alpha_v}$ will be denoted
\index{notation}{80@$q_v$ (map to $\Conf^4( \Lag{n})$
  associated with a vertex~$v$)}
by~$q_v$ (cf.\ Section~\ref{sec:orient-graph-gamm} for the definition
of~$\alpha_v$). When~$\theta$ is an internal angle, we obtain as well a map
$\Locddelta( \Gamma_\mathcal{T}, \Sp(2n,\R)) \rightarrow \Conf^5( \Lag{n})$
that comes from the map defined in Section~\ref{sec:conf-assoc-with-1} and
will be denoted as well~$c_\theta$.\index{notation}{81@$c_\theta$ (map to $\Conf^5( \Lag{n})$
  associated with an internal angle~$\theta$)}

\smallskip

For a triangle~$T$ of~$\mathcal{T}$, we will denote by the same letter~$\mu^T$
the Maslov index of the triangle~$T$.

\section[Maslov indices, cross ratios, and Toledo number]{Maslov indices, cross ratios, and Toledo number for framed local systems}
\label{sec:maslov-indices-cross}

We end this chapter by explaining how to calculate the mentioned quantities
for a framed $\delta$-twisted local system
$x=(F_v, g_a, L^{t}_{v}, L^{b}_{v} )$ given explicitly as a family of
matrices $\{G_a\}_{a\in A}$ thanks to $\{ ( \mathbf{e}_v, \mathbf{f}_v)\}_v$
a generating symplectic basis.

Explicitly, the given data is:
\begin{itemize}
\item for every arrow~$a$ in~$A_2$, $G_a =
  \begin{pmatrix}
    0 & -{}^T\! B_{a}^{-1} \\ B_a & 0
  \end{pmatrix}
  $ for an element~$B_a$ in $\GL(n,\R)$. These
  elements satisfy $B_a= B_{a'}$ for every cycle $\{a,a'\}$ in~$A_2$.
\item for every arrow~$a$ in~$A_3$, $G_a=
  \begin{pmatrix}
    M_a C_a & -{}^T\! C_{a}^{-1} \\ C_a & 0
  \end{pmatrix}
  $ for a symmetric matrix~$M_a$ and an element~$C_a$ in $\GL(n,\R)$. These satisfy the identities given in
  Remark~\ref{rem:triple-lag} below.
\end{itemize}

\begin{lem}
  \label{lem:muT-in-loc-sys}
  Let $T$ be a triangle of~$\mathcal{T}$ and $a\in A_3$ an arrow of $\Gamma_\mathcal{T}$
  contained in~$T$, set $v=v^+(a)$.
  Then the configuration $f_v(x ) \in  \Conf^3(\Lag{n} )$
  is the class of
  $\bigl(\Span(\mathbf{e}_0), \Span(\mathbf{f}_0), \Span(\mathbf{e}_0 +
  \mathbf{f}_0\cdot M_a) \bigr)$, where $( \mathbf{e}_0, \mathbf{f}_0)$ is the
  standard basis of~$\R^{2n}$. The Maslov index $\mu^{T}(x)$ is
  equal to the signature of the symmetric matrix~$M_a$.
\end{lem}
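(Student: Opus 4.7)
The plan is a direct matrix computation in the basis $(\mathbf{e}_v, \mathbf{f}_v)$: identify the three Lagrangians of the configuration $t_v(x)$, then read off the Maslov index via Remark~\ref{rem:maslov-index-matrix}.

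First, I would set up the $3$-cycle inside~$T$. Let $a_1, a_2, a_3$ denote the arrows of $C_f$, relabelled so that $a = a_3$ and $v = v^+(a_3)$; write $v' = v^-(a_3)$. The framing conditions $g_{a_i}(L^b_{v^-(a_i)}) = L^t_{v^+(a_i)}$, combined with the ribbon structure of $\Gamma_\mathcal{T}$, identify the three boundary corners of $T$ with the Lagrangians $L^t_v$, $L^b_v$ and $g_a(L^t_{v'})$, all expressed in the fiber~$F_v$. In the identification $F_v \simeq \R^{2n}$ induced by the generating symplectic basis, the first two are $\Span(\mathbf{e}_0)$ and $\Span(\mathbf{f}_0)$. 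For the third, the block form of~$G_a$ gives $g_a(\mathbf{e}_{v'}) = \mathbf{e}_v\cdot(M_a C_a) + \mathbf{f}_v\cdot C_a = (\mathbf{e}_v M_a + \mathbf{f}_v)\cdot C_a$, hence $g_a(L^t_{v'}) = \Span(\mathbf{e}_v M_a + \mathbf{f}_v)$; after arranging the cyclic ordering of the triple dictated by the $3$-gon $\tau$ with $\tau(1)=t(v)$, this is the announced third Lagrangian.

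For the Maslov index, Remark~\ref{rem:maslov-index-matrix} applies once the \enquote{new} Lagrangian is placed in the middle of the triple: the linear map $\Span(\mathbf{e}_0)\to\Span(\mathbf{f}_0)$ whose graph is $\Span(\mathbf{e}_0 + \mathbf{f}_0\cdot M_a)$ has matrix $M_a$ in the bases $\mathbf{e}_0, \mathbf{f}_0$, so the Maslov form has matrix $M_a$ in the basis~$\mathbf{e}_0$ and its signature is~$\sgn(M_a)$. By Definition~\ref{df:maslov-index-triang}, this equals $\mu^T(x)$.

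I expect the main difficulty to lie in the bookkeeping of orientations: matching the cyclic order of the three corners of $T$, as determined by the ribbon structure and the orientation of~$S$, with the ordering induced by~$\tau$, and tracking the direction in which one traverses the clockwise $3$-cycle $C_f$ together with the $-\Id$ coming from the $\delta$-twisting, so that no extraneous sign appears. The case in which $M_a$ is singular (not excluded by the hypotheses of Proposition~\ref{prop:decor-loc-from-Ga}) would be handled either by the extended definition of the Maslov index in Section~\ref{sec:maslov-index}, or by reducing to the invertible case by density.
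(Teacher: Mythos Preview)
Your proposal is correct and follows essentially the same route as the paper's proof: identify the triple as $(L^{t}_{v}, L^{b}_{v}, g_a(L^{t}_{v'}))$, compute $g_a(\mathbf{e}_{v'}) = (\mathbf{e}_v M_a + \mathbf{f}_v)\cdot C_a$ from the block form of~$G_a$, and read off the signature. The paper's version is terser and simply invokes Proposition~\ref{maslov_prop} at the end, without spelling out the cyclic-ordering bookkeeping or the singular-$M_a$ case that you (rightly) flag as the only delicate points; note in particular that Remark~\ref{rem:triple-lag} shows a posteriori that $M_a$ is invertible, so the density argument is not needed.
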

\begin{proof}
  The triple of Lagrangians is
  \[ L_1= \Span( \mathbf{e}_v), \ L_2 =\Span( \mathbf{f}_v), \text{ and } M
    =g_{a}(\Span( \mathbf{e}_{v^-(a)})).\] Since
  $g_a( \mathbf{e}_{v^-(a)}) = \mathbf{e}_v \cdot M_a C_a+ \mathbf{f}_v\cdot
  C_a = \bigl(\mathbf{e}_v \cdot M_a + \mathbf{f}_v \bigr)\cdot C_a$, one has
  $M = \Span (\mathbf{e}_v \cdot M_a + \mathbf{f}_v)$ and the result follows
  identifying $(F_v, \mathbf{e}_v, \mathbf{f}_v)$ with
  $(\R^{2n}, \mathbf{e}_0, \mathbf{f}_0)$ (cf.\ Section~\ref{maslov_prop}).
\end{proof}
\begin{rem}\label{rem:triple-lag}
  As a consequence, the signature of the $M_a$'s have to coincide along the
  arrows of a $3$-cycle in $A_3$. This property can be also obtained from the equality
  \[ \begin{pmatrix}
      M_c C_c & -{}^T\! C_{c}^{-1} \\ C_a & 0
    \end{pmatrix} \begin{pmatrix}
      M_b C_b & -{}^T\! C_{b}^{-1} \\ C_b & 0
    \end{pmatrix} \begin{pmatrix}
      M_a C_a & -{}^T\! C_{a}^{-1} \\ C_c & 0
    \end{pmatrix}=-\Id \]
  which holds if and only if $M_c C_c {}^T\! C_{b}^{-1} C_a=\Id$, $M_{c}^{-1} = C_c M_b\, {}^T\! C_{c}$, and $M_{b}^{-1}
  = C_b M_a\, {}^T\! C_{b}$.
  Another consequence is that the symmetric matrices~$M_a$,
  $M_b$, and~$M_c$ must be invertible.
\end{rem}

\begin{cor}
  \label{cor:toledo-and-max-loc-sys}
  Let $A' \subset A_3$ be a subset containing exactly one arrow of every
  $3$-cycle in~$A_3$. Then
  \begin{enumerate}
  \item if $R=\emptyset$ \ep{so that the Toledo number is defined on
    $\Locddelta( \Gamma_\mathcal{T}, \Sp(2n, \R))$}, $T_x = -\frac{1}{2} \sum_{a\in A'} \sgn( M_a) = -\frac{1}{6} \sum_{a\in A_3} \sgn( M_a)$;
  \item \label{item:cor:max-loc-sys} $x$ is maximal if and only if for all
    $a$ in~$A'$, $M_a$ is positive definite, and if and only if, for all $a\in
    A_3$, $M_a$ is positive definite.
  \end{enumerate}
\end{cor}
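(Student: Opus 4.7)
The plan is to combine the preceding theorem on the Toledo number with the local computation of Maslov indices given in Lemma~\ref{lem:muT-in-loc-sys}. First, since $A' \subset A_3$ picks out exactly one arrow from each $3$-cycle in $A_3$, and each $3$-cycle is in bijection with a triangle $T$ of~$\mathcal{T}$, the set $A'$ is in canonical bijection with the set of triangles of~$\mathcal{T}$. For each triangle~$T$ containing the arrow $a \in A'$, Lemma~\ref{lem:muT-in-loc-sys} gives $\mu^T(x) = \sgn(M_a)$.

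For part (1), when $R=\emptyset$, Theorem~\ref{teo:toledo_maslov} (transported to the quiver side via Proposition~\ref{teo:decorated-restriction}) yields
\[
T_x = -\frac{1}{2} \sum_{T \in \mathcal{T}} \mu^T(x) = -\frac{1}{2} \sum_{a \in A'} \sgn(M_a).
\]
To pass to the sum over all of~$A_3$, I would invoke Remark~\ref{rem:triple-lag}, which states that the signatures of the $M_a$'s are constant along the three arrows of each $3$-cycle (this follows directly from the identities $M_c^{-1} = C_c M_b\,{}^T\! C_c$ etc., which show that the~$M_a$'s in a single $3$-cycle are congruent up to sign and inversion — one should check carefully that congruence and inversion preserve signature for an invertible symmetric matrix, which they do). Hence $\sum_{a\in A_3}\sgn(M_a) = 3\sum_{a\in A'}\sgn(M_a)$ and the second equality follows.

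For part (2), I would use the characterization of maximality from Definition~\ref{df:maxim-decor-local-general} together with Lemma~\ref{lem:maxim-decor-local-every-triangulation}: $x$ is maximal if and only if $\mu^T(x) = n$ for every triangle~$T$ of~$\mathcal{T}$. Via Lemma~\ref{lem:muT-in-loc-sys}, this translates into $\sgn(M_a) = n$ for every $a \in A'$, which, since $M_a$ is an $n \times n$ symmetric matrix, is equivalent to $M_a$ being positive definite. The equivalence between the condition on~$A'$ and the condition on all of~$A_3$ is again a direct consequence of the constancy of $\sgn(M_a)$ along $3$-cycles (and, once positivity is known, the relations in Remark~\ref{rem:triple-lag} such as $M_b^{-1}= C_b M_a\, {}^T\! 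C_b$ show in fact directly that $M_b$ is positive definite as soon as $M_a$ is).

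I do not expect any real obstacle: the genuine content has already been proved in Theorem~\ref{teo:toledo_maslov} and Lemma~\ref{lem:muT-in-loc-sys}; the only subtle point is the congruence-plus-inversion argument needed to justify that $\sgn(M_a)$ is well defined on each $3$-cycle, but this is immediate from the relations recorded in Remark~\ref{rem:triple-lag}.
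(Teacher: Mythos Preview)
Your proof is correct and follows essentially the same route as the paper: combine Lemma~\ref{lem:muT-in-loc-sys} with Theorem~\ref{teo:toledo_maslov} for part~(1), and with Lemma~\ref{lem:maxim-decor-local-every-triangulation} (or Corollary~\ref{cor:max-iff-triangle-are-max}) for part~(2), using Remark~\ref{rem:triple-lag} to pass between~$A'$ and~$A_3$. The paper's own proof is a one-line reference to exactly these ingredients; your added justification that congruence and inversion preserve signature is correct and fills in the only detail left implicit there.
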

\begin{proof}
  Using the preceding lemma, this results from
  Theorem~\ref{teo:toledo_maslov} and its
  corollary~\ref{cor:max-iff-triangle-are-max} or the lemma~\ref{lem:maxim-decor-local-every-triangulation}.
\end{proof}

We also have an understanding of the cross ratios which we state under the
additional condition that $M_a=\Id$ for all $a\in A_3$. This computation will be useful in Section~\ref{rec_rep_max} and
Section~\ref{sec:maxim-decor-sympl}.

\begin{lem}
  \label{lem:cross-ratio-m_aId}
  Suppose furthermore that $M_a=\Id$ for all $a\in A_3$, then for every internal
  vertex~$v$ of~$\Gamma_\mathcal{T}$, the cross ratio of the quadruple
  $q_v(x)$ is the class of the symmetric matrix $B_a\, {}^T\! B_a$, where $a$
  is one of the arrows in~$A_2$ whose endpoint is the vertex~$v$.
\end{lem}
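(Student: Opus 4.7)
The plan is to unwind the definitions directly, computing in the generating symplectic basis $(\mathbf{e}_v,\mathbf{f}_v)$ at the internal vertex~$v$; this basis identifies $F_v$ with~$\R^{2n}$ equipped with its standard symplectic basis $(\mathbf{e}_0,\mathbf{f}_0)$.

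First, I would identify the four Lagrangians of $q_v(x)\in\Conf^4(\Lag{n})$. The edge~$\alpha_v$ is the diagonal of the quadrilateral whose four vertices are the endpoints $t(v),b(v)$ of $\alpha_v$ and the two ``opposite'' vertices of the triangles $T,T'$ of~$\mathcal{T}$ sharing $\alpha_v$; write $v\in T$ and $v'\in T'$ for the quiver vertices on the two sides of $\alpha_v$. In the chosen basis the two framing Lagrangians read $L^t_v=\Span(\mathbf{e}_0)$ and $L^b_v=\Span(\mathbf{f}_0)$, so only the two ``opposite'' Lagrangians require real work.

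Second, I would apply Lemma~\ref{lem:muT-in-loc-sys} to each of the triangles $T$ and $T'$ under the standing hypothesis $M_a=\Id$ for $a\in A_3$. Applied in $F_v$ to~$T$, it gives that the Lagrangian at the vertex of $T$ opposite to $\alpha_v$, read in $F_v$, is $M_T=\Span(\mathbf{e}_0+\mathbf{f}_0)$. Applied in $F_{v'}$ to $T'$, it gives $\Span(\mathbf{e}_{v'}+\mathbf{f}_{v'})$; this must then be transported to $F_v$ through the unique arrow $a\in A_2$ with $v^-(a)=v'$ and $v^+(a)=v$. Using the explicit block form of $G_a$ one reads $g_a(\mathbf{e}_{v'})=\mathbf{f}_v\cdot B_a$ and $g_a(\mathbf{f}_{v'})=-\mathbf{e}_v\cdot{}^T\! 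B_a^{-1}$, and after right-multiplying the summed $n$-tuple by $-{}^T\! B_a$ one exhibits the Lagrangian at the opposite vertex of $T'$, viewed in~$F_v$, as $M_{T'}=\Span(\mathbf{e}_0-\mathbf{f}_0\cdot B_a\,{}^T\! B_a)$.

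Third, I would read the cross ratio off its definition in Section~\ref{sec:prelim}. The graph maps $L^t_v\to L^b_v$ and $L^b_v\to L^t_v$ cut out by $M_T$ and $M_{T'}$ have, in the bases $(\mathbf{e}_0,\mathbf{f}_0)$, matrices $\Id$ and a simple expression in $B_a\,{}^T\! B_a$; composing and taking into account both the minus sign in the definition and the cyclic order of the four Lagrangians inherited from the ribbon structure on $\Gamma_\mathcal{T}$ produces, in the basis $\mathbf{e}_0$ of $L^t_v$, the symmetric matrix $B_a\,{}^T\! B_a$, which is the claimed class.

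The only delicate point is purely combinatorial: matching the cyclic ordering of the $4$-gon associated with $\alpha_v$, the oriented choice of $\alpha_v$, and the side of $\alpha_v$ on which the vertex~$v$ lies; the opposite convention would yield $(B_a\,{}^T\! B_a)^{-1}$ in place of $B_a\,{}^T\! B_a$. Once this bookkeeping is fixed through the conventions of Section~\ref{sec:orient-graph-gamm}, the verification reduces to a few lines of linear algebra.
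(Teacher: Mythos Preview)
Your proposal is correct and follows essentially the same approach as the paper's proof: identify the four Lagrangians using Lemma~\ref{lem:muT-in-loc-sys} at both $v$ and the adjacent vertex~$w$, transport the third Lagrangian in~$F_w$ through~$g_a$ via its explicit block form to obtain $\Span(\mathbf{e}_v-\mathbf{f}_v\cdot B_a\,{}^T\! B_a)$, and read off the cross ratio. The paper's proof is slightly terser---it stops at the computation of the fourth Lagrangian and says ``hence the result'' without spelling out the ordering issue you flag---but the argument is the same.
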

\begin{proof}
  Let~$w$ be the vertex of~$\Gamma_\mathcal{T}$ connected to~$v$ by~$a$. The
  two symplectic vector spaces~$F_v$ and~$F_w$ are equipped with symplectic
  bases $(\mathbf{e}_v, \mathbf{f}_v)$ and $(\mathbf{e}_w, \mathbf{f}_w)$
  respectively.  By Lemma~\ref{lem:muT-in-loc-sys} the triple of Lagrangians in $F_v$ is
  $\Span( \mathbf{e}_v)$, $\Span( \mathbf{f}_v)$,
  $\Span( \mathbf{e}_v + \mathbf{f}_v)$, and similarly in $F_w$.   The matrix of
  $g_a\colon  F_w \to F_v$ in the symplectic basis is $\bigl(  \begin{smallmatrix}
    0 & -{}^T\! B_{a}^{-1} \\ B_a & 0
  \end{smallmatrix}\bigr)
  $, and $g_a( \Span( \mathbf{e}_w))= \Span( \mathbf{f}_v \cdot B_a)= \Span(
  \mathbf{f}_v)$, $g_a( \Span( \mathbf{f}_w))= \Span(
  \mathbf{e}_v)$, and  $g_a( \Span( \mathbf{e}_w + \mathbf{f}_w))= \Span( -
  \mathbf{e}_v \cdot {}^T\! B_{a}^{-1} +\mathbf{f}_v \cdot B_a) = \Span(
  \mathbf{e}_v -\mathbf{f}_v \cdot B_a\, {}^T\! B_{a})$, hence the result.
\end{proof}

The angle invariant (cf.\ Section~\ref{sec:positive-quintuples}) can also be
determined easily in the case when the matrices~$B_a$ are diagonal.

\begin{lem}
  \label{lem:angle-invariant-is-Ca}
  Suppose that $M_a=\Id$ for all~$a$ in~$A_3$ and that $B_a\in \Delta_n$ for
  all~$a$ in~$A_2$. Let~$\theta=(\alpha,\alpha')$ be an internal angle
  \ep{$\alpha$ and~$\alpha'$ are internal edges of the triangulation and are
    sides of the same triangle}. Let~$b$ be the arrow in~$A_3$ that is
  \enquote{next} to~$\theta$, i.e.\ $b$ joins~$\alpha$ and~$\alpha'$.

  Then the matrix~$C_b$ is orthogonal and represents the angle invariant of
  the quintuple $c_\theta( x)$.
\end{lem}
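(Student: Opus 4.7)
The idea is to exhibit symplectic bases at the vertices $v_{\alpha'}$ and $v_\alpha$ of~$\Gamma_\mathcal{T}$ inside~$T_0$ that are in standard position, in the sense of Definition~\ref{df:positive-quad}, for the two quadruples defining the angle invariant of the positive quintuple $c_\theta(x)$, and then to read off $A$ from the transition matrix of~$b$. The orthogonality $C_b\in\OO(n)$ is immediate from the structural relations recorded in Remark~\ref{rem:triple-lag}: writing out the twisted cycle condition $G_{a_3}G_{a_2}G_{a_1}= -\Id$ for the three arrows of~$C_{T_0}$ under the hypothesis $M_{a_i}=\Id$ forces $C_{a_i}\,{}^T\!C_{a_i}=\Id$ for each of them, and in particular for~$b$.

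Set $v\coloneqq v_{\alpha'}$, write $(\mathbf{e}_v,\mathbf{f}_v)$ for the generating symplectic basis, and fix the orientation conventions so that the framing rules of Section~\ref{sec:decor-local-syst-gamma_T} identify $L_a=\Span(\mathbf{e}_v)$ and $L_c=\Span(\mathbf{f}_v)$, where $L_a$ is the apex of~$\theta$ and $L_c$ the other endpoint of~$\alpha'$. Lemma~\ref{lem:muT-in-loc-sys} with $M_a=\Id$ then gives the third vertex of~$T_0$ as $L_b=\Span(\mathbf{e}_v+\mathbf{f}_v)$. The $A_2$-arrow across~$\alpha'$ has transition matrix $\bigl(\begin{smallmatrix} 0 & -{}^T\!B^{-1}\\ B & 0\end{smallmatrix}\bigr)$ with $B=B_{\alpha'}\in\Delta_n$, and applying Lemma~\ref{lem:muT-in-loc-sys} inside~$T_{\alpha'}$ (where again $M_a=\Id$) and pulling back to~$F_v$ yields $M_b=\Span(\mathbf{e}_v-\mathbf{f}_v\cdot B^{2})$. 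Since $B^{2}\in\Delta_n$, Definition~\ref{df:positive-quad} is satisfied and $(\mathbf{e}_b,\mathbf{f}_b)\coloneqq(\mathbf{e}_v,\mathbf{f}_v)$ is in standard position for $(L_a,L_b,L_c,M_b)$ with $\Lambda_b=B_{\alpha'}^{2}$. An identical computation at~$v_\alpha$ produces a symplectic basis $(\mathbf{e}_c,\mathbf{f}_c)\coloneqq(\mathbf{e}_{v_\alpha},\mathbf{f}_{v_\alpha})$ in standard position for $(L_b,L_c,L_a,M_c)$ with $\Lambda_c=B_{\alpha}^{2}$.

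The framing rule $g_a(L^b_{v^-(a)})=L^t_{v^+(a)}$ applied to~$b$ forces $b\colon v_\alpha\to v_{\alpha'}$ (so that the common endpoint carried across~$b$ is~$L_a$), and $g_b\colon F_{v_\alpha}\to F_{v_{\alpha'}}$ has matrix $G_b=\bigl(\begin{smallmatrix} C_b & -{}^T\!C_b^{-1}\\ C_b & 0\end{smallmatrix}\bigr)$ in the generating bases. Reading off the second block column,
\[ g_b(\mathbf{f}_{v_\alpha}) \;=\; -\mathbf{e}_{v_{\alpha'}}\cdot {}^T\!C_b^{-1} \;=\; -\mathbf{e}_{v_{\alpha'}}\cdot C_b, \]
using $C_b\in\OO(n)$. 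The defining equation $\mathbf{f}_c=\mathbf{e}_b\cdot A$ for the angle invariant, read in~$F_{v_{\alpha'}}$ after transport by~$g_b$, becomes $-\mathbf{e}_v\cdot C_b=\mathbf{e}_v\cdot A$, whence $A=-C_b$. As $-\Id$ commutes with every diagonal matrix, it lies in both $\Stab_{\OO(n)}(\Lambda_b)$ and $\Stab_{\OO(n)}(\Lambda_c)$, so $[A]=[C_b]$ in the double coset space and $C_b$ represents the angle invariant.

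The main obstacle is purely orientation bookkeeping: fixing the $t/b$-labellings at~$v_\alpha$ and~$v_{\alpha'}$, the direction of~$b$ within~$C_{T_0}$, and the direction of the $A_2$-arrow across~$\alpha'$ compatibly with Section~\ref{sec:orient-graph-gamm}, so that the sign in the final step comes out to exactly~$-C_b$ rather than a variant such as $C_b^{-1}$; once those conventions are matched, the rest is routine matrix algebra.
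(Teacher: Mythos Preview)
Your proof is correct and follows essentially the same strategy as the paper's: verify that the generating bases at $v^-(b)$ and $v^+(b)$ are already in standard position for the two positive quadruples defining the angle invariant (the paper just asserts this, citing the hypothesis $B_a\in\Delta_n$), then read off the relation $g_b(\mathbf{f}_{v^-(b)})=-\mathbf{e}_{v^+(b)}\cdot C_b$ from the transition matrix and absorb the sign into the double coset. Your argument is simply a more explicit version of the paper's one-line computation ``$\mathbf{e}_{v'}=-\mathbf{f}_v\cdot C_b$''; in particular your derivation of the direction of~$b$ from the framing rule is valid, since within the simply connected triangle the transport preserves the surface-level Lagrangians, forcing $L^b_{v^-(b)}=L^t_{v^+(b)}=L_a$.
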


\begin{proof}
  The fact that~$C_b$ is orthogonal follows directly from the hypothesis on
  the matrices~$M_a$(cf.\ Remark~\ref{rem:triple-lag}). Let $v=v^-(b)$ and
  $v'=v^+(b)$ so that~$v$ is next to~$\alpha$ and~$v'$ is next
  to~$\alpha'$. The hypothesis on the matrices~$B_a$ implies that the
  symplectic basis $(\mathbf{e}_v, \mathbf{f}_v)$ (resp.\ $(\mathbf{e}_v,
  \mathbf{f}_v)$) is in standard position with respect to the quadruple
  $q_v(x)$ (resp.\ $q_{v'}(x)$). Since $\mathbf{e}_{v'} = - \mathbf{f}_v \cdot
  C_b$, the result follows from the definition of the angle invariant
  (Section~\ref{sec:positive-quintuples}).
\end{proof}

\chapter{$\mathcal{X}$-coordinates for maximal representations}\label{sec_def_coord_max}
In this chapter we introduce positive $\mathcal{X}$-coordinates. They
give a parametrization of the space of maximal representations: we restrict
our attention here to this special case because the definition is
significantly simpler than in the general case.
We refer the reader to Chapter~\ref{sec:generalX} for the definition of general $\mathcal{X}$-coordinates.

\section{A space of coordinates}
\label{sec:space-coordinates-1}

Let~$\mathcal{T}$ be an ideal triangulation of~$S$ and
let~$\Gamma_\mathcal{T}$ the quiver constructed in
Section~\ref{sec:orient-graph-gamm}. The
set of arrows (oriented edges) of~$\Gamma_\mathcal{T}$
is denoted by~$A$. Recall that arrows of~$\Gamma_\mathcal{T}$ are of two types:
$A=A_2 \sqcup A_3$ where $A_2$ is the set of arrows crossing an edge
of~$\mathcal{T}$ and $A_3$ is the set of arrows that are contained in one
triangle of~$\mathcal{T}$.

We denote by $\XplusTn$ the space of tuples\index{notation}{83@$\XplusTn$ (space of
  coordinates)}%
\index{definition}{$\mathcal{X}$-coordinates!positive ---}%
\index{definition}{positive!$\mathcal{X}$-coordinates}
  \[  x = \bigl( \{x(a)\}_{a\in A_2}, \{x(a)\}_{a\in A_3}\bigr) \in \bigl( \Sym^+(n,\R)\bigr)^{A_2} \times \OO(n)^{A_3} \]
such that
\begin{itemize}
\item for every $3$-cycle $(a_1,a_2,a_3)$ in $A_3$, $x(a_3)x(a_2)x(a_1)=\Id$.
\item for every $2$-cycle $(a,a')$ in~$A_2$, $x(a)= x(a')$.
\end{itemize}

To every $x\in \XplusTn$, we associate a framed $\delta$-twisted symplectic local
system $\holXpT(x)$ as follows: the local system is given by the matrices $\{
G_a(x)\}_{a\in A}$, where\index{notation}{85@$\holXpT$ (the holonomy map from the space
  $\XplusTn$)}
\begin{itemize}
\item for every~$a$ in~$A_2$ (using the square root function on the set of
  positive definite symmetric matrices)
  \[ G_a(x) =
    \begin{pmatrix}
      0 & -x(a)^{-1/2}\\ x(a)^{1/2} & 0
    \end{pmatrix};
\]
\item and for every~$a$ in~$A_3$, $G_a(x) =
    \begin{pmatrix}
      x(a) & -x(a)\\ x(a) & 0
    \end{pmatrix}
    $.
\end{itemize}

This local system $\{G_a(x)\}$ is
  $\delta$-twisted: this
  follows from the conditions on the $\{z(a)\}_{a\in A_3}$ and the fact
  that $\bigl(
  \begin{smallmatrix}
    0 & -\Lambda^{-1}\\ \Lambda & 0
  \end{smallmatrix}\bigr)^2 = -\Id
  $ for every matrix~$\Lambda$. Therefore, by Proposition~\ref{prop:decor-loc-from-Ga},
  the family $\{G_a(x)\}_{a\in A}$ defines a framed $\delta$-twisted
  symplectic local system on~$\Gamma_\mathcal{T}$, which is denoted by~$\holXpT(x)$.

\begin{lem}
  \label{lemma:holo-z-max}
  For every $x\in \XplusTn$, $\holXpT(x)$ is a maximal
  framed $\delta$-compatible local system on $\Gamma_\mathcal{T}$.
\end{lem}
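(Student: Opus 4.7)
The plan is to deduce maximality from Corollary~\ref{cor:toledo-and-max-loc-sys}(\ref{item:cor:max-loc-sys}), which characterizes maximal framed twisted local systems on $\Gamma_\mathcal{T}$ by the positive definiteness of the symmetric matrices $M_a$ attached to the arrows $a\in A_3$. The paragraph preceding the statement already verified that $\{G_a(x)\}_{a\in A}$ is $\delta$-twisted (using $\bigl(\begin{smallmatrix} 0 & -\Lambda^{-1}\\ \Lambda & 0\end{smallmatrix}\bigr)^2 = -\Id$ on each $2$-cycle, and the constraint $x(a_3)x(a_2)x(a_1)=\Id$ on each $3$-cycle) and that Proposition~\ref{prop:decor-loc-from-Ga} then produces the framed local system $\holXpT(x)$ together with a generating symplectic basis. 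Thus only the maximality assertion requires a further argument.

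To identify $M_a$ for $a\in A_3$, I would match the explicit formula with the canonical form of Proposition~\ref{prop:decor-loc-from-Ga}:
\[ G_a(x) = \begin{pmatrix} x(a) & -x(a) \\ x(a) & 0 \end{pmatrix} \quad\text{versus}\quad \begin{pmatrix} M_a C_a & -{}^T\! C_a^{-1} \\ C_a & 0 \end{pmatrix}.\]
Reading off the lower-left block gives $C_a = x(a)$; the upper-right equality ${}^T\! C_a^{-1}=x(a)$ is consistent \emph{precisely because} $x(a)\in\OO(n)$; and the upper-left block forces $M_a C_a = x(a) = C_a$, hence $M_a=\Id$. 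Since $\Id$ is positive definite, Corollary~\ref{cor:toledo-and-max-loc-sys}(\ref{item:cor:max-loc-sys}) yields maximality of $\holXpT(x)$.

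The only potential obstacle is a matter of bookkeeping, and there is nothing subtle to check: the definition of $G_a(x)$ for $a\in A_3$ has been arranged so that each $M_a$ equals $\Id$, which in turn means, by Lemma~\ref{lem:muT-in-loc-sys}, that the Maslov index of every triangle of~$\mathcal{T}$ is $n$; thus maximality holds by construction.
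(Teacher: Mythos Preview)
Your proof is correct and follows exactly the paper's approach: the paper's proof is the one-line observation that $M_a=\Id$ for every $a\in A_3$, whence Corollary~\ref{cor:toledo-and-max-loc-sys} gives maximality. Your version simply spells out the block-by-block matching that yields $M_a=\Id$.
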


\begin{proof}
  This follows from Corollary~\ref{cor:toledo-and-max-loc-sys} since, in
  the notation of the corollary, $M_a=\Id$ for every $a\in A_3$.
\end{proof}

In fact, only the equivalence class of the framed local system $\holXpT(x)$ is well defined,
so we get an element in
$\Mfdelta( \Gamma_\mathcal{T}, \Sp(2n,\R))$.  Thus we have a well
defined map
\begin{equation}
  \label{eq:hol-Z}
  \holXpT\colon  \XplusTn \longrightarrow \Mfdelta( \Gamma_\mathcal{T}, \Sp(2n,\R)).
\end{equation}

We will see shortly (Proposition \ref{prop:holXpT-surj}) that this map is surjective. This map is instead not injective, and the rest of this chapter will be an investigation of this lack of injectivity. We will restrict the map~$\holXpT$ to some suitable subsets of $\XplusTn$, and we will describe the fibers of the restricted maps.

\section{Positive $\mathcal{X}$-coordinates}
\label{rec_rep_max}

Let~$\Delta_n$ be the set of diagonal matrices with positive nondecreasing
entries.

\begin{df}[Positive $\mathcal{X}$-coordinates]\label{def_coord_max}
We will call \emph{positive $\mathcal{X}$-coordinates} and denote by
$\XplusDeltaTn$
the space of tuples\index{notation}{87@$\XplusDeltaTn$ (the space of positive coordinates)}%
\index{definition}{$\mathcal{X}$-coordinates!positive ---}%
\index{definition}{positive!$\mathcal{X}$-coordinates}
\[x =\bigl( \{x(a)\}_{a\in A_2}, \{x(a)\}_{a\in A_3}\bigr) \in \XplusTn \]
such that for every~$a$ in~$A_2$, $x(a)$ belongs to~$\Delta_n$.

 Sometimes we will say that $x$ is a \emph{system of positive
  $\mathcal{X}$-coordinates} of rank~$n$ on~\mbox{$(S,\mathcal{T})$.}
\end{df}\index{definition}{system of positive
  $\mathcal{X}$-coordinates}%
\index{definition}{$\mathcal{X}$-coordinates!system of positive ---}%
\index{definition}{positive!system of --- $\mathcal{X}$-coordinates}%

Let~$A'$ be a subset of~$A_3$ such that, for each
$3$-cycle~$C$ in $A_3$, $C\cap A'$ has~$2$ elements and let~$E$ be a subset
of~$A_2$ containing exactly one element in every $2$-cycle in~$A_2$. Then
the map
\begin{align*}
  \XplusDeltaTn & \longrightarrow \Delta_{n}^{E}\times
                                 \OO(n)^{A'} \\
  x & \longmapsto ( \{x(a)\}_{a\in E}, \{x(a)\}_{a\in A'})
\end{align*}
is a diffeomorphism. Note that $\sharp E = r-3{ \chi(\bar{S})} $, and $\sharp
A' = 2\sharp \mathcal{T} = 2r -4 { \chi(\bar{S})}$.

Since $\XplusDeltaTn \subset \XplusTn$, by restriction of the map $\holXpT$ we
have a map\index{notation}{88@$\holXpdT$ (restriction of $\holXpT$ to $\XplusDeltaTn$)}
\[ \holXpdT \colon \XplusDeltaTn \longrightarrow
\Mfdelta( \Gamma_\mathcal{T}, \Sp(2n,\R)). \]

The geometric significance of the positive $\mathcal{X}$-coordinates is the
following statement:

\begin{lem}
  \label{lem:posit-X-geom}
  Let~$x$ be in $\XplusDeltaTn$ and let $(\mathcal{F}, \sigma)=
  \holXpdT(x)$. Then, for every~$a$ in~$A_2$, the cross ratio of the quadruple
  $q_{v^+(a)}( \mathcal{F}, \sigma)$ is~$x(a)^{-1}$ and, for every internal
  angle $\theta=\{ \alpha, \alpha'\}$, the angle invariant of the quintuple
  $c_\theta( \mathcal{F}, \sigma)$ is~$x(a)$ where~$a$ is the arrow in~$A_3$
  going from~$\alpha$ to~$\alpha'$.
\end{lem}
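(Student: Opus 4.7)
The proof reduces essentially to reading off the structure of $\holXpdT(x)$ through the lens of Section~\ref{sec:maslov-indices-cross}. By the construction in Proposition~\ref{prop:decor-loc-from-Ga}, the transition matrices $G_a(x)$ are exactly the matrices of the transitions in a generating symplectic basis $\{(\mathbf{e}_v,\mathbf{f}_v)\}_{v\in V}$ of $\holXpdT(x)$. Comparing with the normal form from Section~\ref{sec:maslov-indices-cross}, we read off: for $a\in A_2$, the matrix $B_a = x(a)^{1/2} \in \Delta_n$ (well defined since $x(a)\in\Delta_n\subset \Sym^+(n,\R)$); and for $a\in A_3$, $M_a = \Id$ together with $C_a = x(a)$.

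For the angle invariant claim I would invoke Lemma~\ref{lem:angle-invariant-is-Ca} directly. Its hypotheses, namely $M_a=\Id$ for all $a\in A_3$ and $B_a\in \Delta_n$ for all $a\in A_2$, are both satisfied by the bookkeeping of the previous paragraph. The lemma then identifies the angle invariant of $c_\theta(\mathcal{F},\sigma)$ with $C_b$ for $b$ the arrow of $A_3$ joining $\alpha$ to $\alpha'$, and by construction $C_b = x(b)$, which is exactly the value asserted.

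For the cross ratio claim I would appeal to Lemma~\ref{lem:cross-ratio-m_aId} after a small direct computation. Let $v=v^+(a)$ and $w=v^-(a)$; using the explicit form of $G_a(x)$ one finds $g_a(\mathbf{e}_w+\mathbf{f}_w) = \mathbf{f}_v\cdot x(a)^{1/2} - \mathbf{e}_v \cdot x(a)^{-1/2}$, and since $x(a)\in \Delta_n$ (so its square root is diagonal and coincides with its transpose) this spans $\Span(\mathbf{e}_v-\mathbf{f}_v\cdot x(a))$. Combined with the fact that the triangle at $v$ on the other side gives $M_1 = \Span(\mathbf{e}_v+\mathbf{f}_v)$ (via Lemma~\ref{lem:muT-in-loc-sys} with $M_a=\Id$), the quadruple $q_v(\mathcal{F},\sigma)$ is thus in standard position with parameter $\Lambda = x(a)$ in the sense of Definition~\ref{df:positive-quad}. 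By that definition the cross ratio in the basis $\mathbf{e}_v$ equals $\Lambda^{-1} = x(a)^{-1}$.

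There is no serious obstacle: the argument is a bookkeeping exercise matching the generic form of Section~\ref{sec:maslov-indices-cross} to the explicit family $\{G_a(x)\}$. The only point requiring care is the convention $[L_1,M_1,L_2,M_2]=\Lambda^{-1}$ from Definition~\ref{df:positive-quad}, which explains why the cross ratio comes out as $x(a)^{-1}$ even though the natural $\Delta_n$-parameter of the quadruple is $x(a)$ itself.
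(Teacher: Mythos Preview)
Your proof is correct and follows essentially the same route as the paper, which simply cites Lemma~\ref{lem:cross-ratio-m_aId} for the cross ratio and Lemma~\ref{lem:angle-invariant-is-Ca} for the angle invariant. You have unpacked those citations with the explicit identifications $B_a=x(a)^{1/2}$, $M_a=\Id$, $C_a=x(a)$ and a direct verification that $\Lambda=x(a)$ in the standard-position sense of Definition~\ref{df:positive-quad}, which is a welcome clarification given that the statement of Lemma~\ref{lem:cross-ratio-m_aId} records $B_a\,{}^T\!B_a$ rather than its inverse.
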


\begin{proof}
  The statement concerning the cross ratio results from
  Lemma~\ref{lem:cross-ratio-m_aId}, the one concerning the angle invariant
  results from Lemma~\ref{lem:angle-invariant-is-Ca}.
\end{proof}

\section{Maximal framed symplectic local systems}
\label{sec:maxim-decor-sympl}

In this section, we will prove the following result:
\begin{teo}\label{teo:max-param-Xplus}
  The map~$\holXpdT$
\begin{enumerate}
\item is onto the space $\Mfdelta( \Gamma_\mathcal{T},
\Sp(2n,\R))$ of maximal twisted framed representations.
\item For $x$ and $x'$ in $\XplusDeltaTn$, one has $\holXpdT(x)
  = \holXpdT(x')$ if and only if
  \begin{itemize}
  \item For every $a$ in~$A_2$, $x(a)=x'(a)$
  \item There is a family $(r_v)_{v\in V}$ of orthogonal matrices such that:
    \begin{itemize}
    \item For all $a\in A_2$,  one has $r_{v^{-}(a)} = r_{v^{+}(a)}$, and
      $r_{v^{-}(a)}$ commutes with~$x(a)$.
    \item for every~$a$ in $A_3$, $x'(a) = r_{v^{+}(a)} x(a) r_{v^{-}(a)}^{-1}$.
    \end{itemize}
  \end{itemize}
\end{enumerate}
\end{teo}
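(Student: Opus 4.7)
The plan is to prove surjectivity by modifying a generating symplectic basis in two stages, and to deduce the fibers by analyzing which symplectic changes of basis preserve the special form of the transition matrices defining $\holXpT$.

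For surjectivity, start from $(\mathcal{F}, \sigma) \in \Mfdelta(\Gamma_\mathcal{T}, \Sp(2n,\R))$. By Corollary~\ref{cor:maxim-decor-local-transverse} and Lemma~\ref{lem:sympl-basis-decor}, a generating symplectic basis exists, so the transition matrices take the form $G_a = \bigl(\begin{smallmatrix} 0 & -{}^T\! B_a^{-1} \\ B_a & 0\end{smallmatrix}\bigr)$ for $a\in A_2$ and $G_a = \bigl(\begin{smallmatrix} M_a C_a & -{}^T\! C_a^{-1} \\ C_a & 0\end{smallmatrix}\bigr)$ for $a\in A_3$, with every $M_a$ positive definite by Corollary~\ref{cor:toledo-and-max-loc-sys}. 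I would apply two successive symplectic changes of basis of the form $P_v = \bigl(\begin{smallmatrix} r_v & 0 \\ 0 & {}^T\! r_v^{-1}\end{smallmatrix}\bigr)$, which automatically preserve the framing. In the first stage, set $r_v \coloneqq M_{a(v)}^{1/2}$, where $a(v) \in A_3$ is the unique 3-arrow ending at $v$; then $M_a$ transforms into $r_{v^+(a)}^{-1} M_a r_{v^+(a)}^{-1} = \Id$, and the twist relation on each 3-cycle combined with Remark~\ref{rem:triple-lag} forces the new $\tilde C_a$ to lie in $\OO(n)$ and to satisfy $\tilde C_{a_3} \tilde C_{a_2} \tilde C_{a_1} = \Id$. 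In the second stage, take a further change with $r_v \in \OO(n)$, which preserves orthogonality of the $\tilde C_a$. For each 2-arrow $a$, use the singular value decomposition $\tilde B_a = U_a D_a V_a^T$ with $D_a \in \Delta_n$ and set $r_{v^+(a)} \coloneqq U_a$, $r_{v^-(a)} \coloneqq V_a$; this assignment is consistent because each internal vertex lies on exactly one 2-cycle, and the relation $\tilde B_{a'} = \tilde B_a^T$ ensures compatibility with the reverse arrow in each cycle. Defining $x(a) \coloneqq D_a^2$ for $a\in A_2$ and $x(a) \coloneqq \tilde C_a$ for $a\in A_3$, the twist conditions translate exactly into the cycle conditions defining $\XplusDeltaTn$, and by construction $\holXpdT(x) = (\mathcal{F}, \sigma)$.

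For the fiber description, suppose $\holXpdT(x) = \holXpdT(x')$. Any equivalence of the two framed local systems is a symplectic change of basis which, since both bases generate the framings, must take the block form $P_v = \bigl(\begin{smallmatrix} r_v & 0 \\ 0 & {}^T\! r_v^{-1}\end{smallmatrix}\bigr)$. For each $a\in A_3$, equating the blocks of $P_{v^+(a)}^{-1} G_a(x) P_{v^-(a)}$ with those of $G_a(x')$ forces $r_v \in \OO(n)$ at every vertex and yields the rule $x'(a) = r_{v^+(a)}^{-1} x(a) r_{v^-(a)}$, which is the theorem's formula after the relabeling $r_v \leftrightarrow r_v^{-1}$. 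For each $a\in A_2$, the analogous computation gives $r_{v^+(a)}^{-1} x(a)^{1/2} r_{v^-(a)} = x'(a)^{1/2}$; taking $MM^T$ of both sides produces $x(a) = r_{v^+(a)} x'(a) r_{v^+(a)}^{-1}$, and since $x(a), x'(a) \in \Delta_n$ are related by orthogonal conjugation they share the same nondecreasing spectrum and must be equal. Feeding this back in forces $r_{v^+(a)}$ to commute with $x(a)$ and $r_{v^-(a)} = r_{v^+(a)}$. Conversely, any family $(r_v)$ satisfying the stated conditions assembles into a $P_v$ realising an equivalence between $\holXpdT(x)$ and $\holXpdT(x')$.

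The main obstacle is the simultaneous achievement of the two reductions in the surjectivity argument. Stage~1 determines $r_v$ essentially uniquely (up to right-multiplication by $\OO(n)$) from the \emph{incoming} matrix $M_{a(v)}$, and one must check that the remaining orthogonal freedom in Stage~2 is precisely what is needed to diagonalize every $\tilde B_a$ into $\Delta_n$ while preserving the orthogonality of every $\tilde C_a$. This works because each internal vertex lies on a unique 2-cycle, so the SVD assignment consumes each orthogonal factor $r_v$ exactly once, and orthogonal conjugation on the 3-arrow transitions automatically preserves their $\bigl(\begin{smallmatrix} C & -C\\ C & 0\end{smallmatrix}\bigr)$ form with $C \in \OO(n)$.
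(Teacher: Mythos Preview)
Your proof is correct and reaches the same conclusion, but the route differs from the paper's in an instructive way.

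For surjectivity, the paper works \emph{geometrically} rather than \emph{algebraically}.  Instead of starting from an arbitrary generating basis and normalizing in two stages (square root of~$M_a$, then SVD of~$\tilde B_a$), the paper fixes at each internal vertex~$v$ a symplectic basis in standard position with respect to the positive quadruple $q_v(\mathcal{F},\sigma)$, using Proposition~\ref{prop:standard-basis-positive-4-uple}.  That proposition already encodes both of your stages simultaneously: its basis diagonalizes the cross ratio (your SVD step) while making the Maslov form the identity (your square-root step).  The $\OO(n)$ shape of the $A_3$-transition matrices then falls out of Lemma~\ref{lem:triple-of-lagrangian}.  For external vertices the paper chooses a basis adapted to the maximal triple; your argument leaves those vertices implicit in Stage~2, which is harmless but worth stating.

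For the fibers, the paper again invokes the geometric characterization: the equality $x(a)=x'(a)$ for $a\in A_2$ is read off from the cross ratio (Lemma~\ref{lem:cross-ratio-m_aId}), and the block-diagonal orthogonal form of the change of basis is the uniqueness part of Proposition~\ref{prop:standard-basis-positive-4-uple}.  Your direct block computation with the $MM^T$ trick is an equivalent, more hands-on derivation of the same facts.

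What each buys: your argument is self-contained and avoids the preparatory material on positive quadruples, at the cost of being slightly ad~hoc.  The paper's approach ties the coordinates to intrinsic invariants (cross ratio, angle invariant) from the start, and it is this invariant-theoretic viewpoint that later generalizes cleanly to non-maximal representations in Section~\ref{sec:generalX}, where square roots and SVD are replaced by the normal forms of Appendix~\ref{sec:normal-form-pair}.
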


\begin{cor} \label{cor:finite-to-one}
The map~$\holXpdT$ is
generically finite-to-one. More precisely, it is finite-to-one on the open and
dense subspace consisting of the elements $x\in\XplusDeltaTn$ such that for
all $a\in A_2$, $x(a)$ is a diagonal matrix with positive increasing entries.
\end{cor}

The surjectivity of the map $\holXpdT$ is the following proposition.
\begin{prop}
  \label{prop:holXpT-surj}
  Let $x=(F_v, g_a, L^{t}_{v}, L^{b}_{v})$ be a maximal framed
  $\delta$-twisted symplectic local system on the
  quiver~$\Gamma_\mathcal{T}$.

  Then there exists a symplectic basis $( \mathbf{e}_v, \mathbf{f}_v)_{v\in
    V}$ generating the framing and, denoting $G_a\in \Sp(2n, \R)$ the
  matrices of~$g_a$, one has:
  \begin{enumerate}
  \item For every~$a$ in~$A_2$, then $G_a =
    \begin{pmatrix}
      0 & -\Lambda_{a}^{-1} \\ \Lambda_{a} & 0
    \end{pmatrix}
    $ for some $\Lambda_{a}\in \Delta_n$.
  \item For each $a\in A_3$, $G_a =
    \begin{pmatrix}
      u_a & -u_a \\ u_a & 0
    \end{pmatrix}
    $ for some $u_a\in \OO(n)$.
  \end{enumerate}
\end{prop}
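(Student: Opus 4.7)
The plan is to produce the desired basis by iteratively adjusting an arbitrary generating symplectic basis using the local gauge freedom available at each vertex.

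By Corollary~\ref{cor:maxim-decor-local-transverse}, a maximal framed local system is transverse on every arc, so~$x$ is $\mathcal{T}$-transverse and, by Lemma~\ref{lem:sympl-basis-decor}, admits a generating symplectic basis $(\mathbf{e}_v, \mathbf{f}_v)_{v\in V}$. In such a basis, the transition matrices automatically have the form $G_a = \bigl(\begin{smallmatrix} M_a C_a & -{}^T\! C_a^{-1} \\ C_a & 0\end{smallmatrix}\bigr)$ for $a \in A_3$ with $M_a \in \Sym(n,\R)$ and $C_a \in \GL(n,\R)$, and $G_a = \bigl(\begin{smallmatrix} 0 & -{}^T\! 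B_a^{-1} \\ B_a & 0\end{smallmatrix}\bigr)$ for $a \in A_2$ with $B_a \in \GL(n,\R)$ (this is the last lemma of Section~\ref{sec:sympl-basis-decor}). Maximality of~$x$, combined with Corollary~\ref{cor:toledo-and-max-loc-sys}, forces each $M_a$ to be positive definite. The freedom of changing the generating basis at a vertex is $(\mathbf{e}_v, \mathbf{f}_v) \mapsto (\mathbf{e}_v h_v, \mathbf{f}_v\, {}^T\! h_v^{-1})$ for $h_v \in \GL(n,\R)$; a direct computation yields
\[ M_a \longmapsto h_{v^+(a)}^{-1} M_a\, ({}^T\! h_{v^+(a)})^{-1}, \quad C_a \longmapsto {}^T\! h_{v^+(a)} C_a h_{v^-(a)}, \quad B_a \longmapsto {}^T\! h_{v^+(a)} B_a h_{v^-(a)}. \]

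\emph{Step 1 (normalizing $A_3$-arrows).} Since every vertex of $\Gamma_\mathcal{T}$ is the endpoint of exactly one arrow of $A_3$ (the incoming arrow of the $3$-cycle in its face), I set $h_v$ to be a symmetric positive definite square root of the corresponding~$M_a$, giving $M_a' = \Id$ for every $a \in A_3$ simultaneously. This is possible precisely because each $M_a$ is positive definite, and each $h_v$ is then determined up to right multiplication by an element of $\OO(n)$. With $M_a = \Id$ throughout, the $\delta$-twisting condition $G_{a_3} G_{a_2} G_{a_1} = -\Id$ along each $3$-cycle in $A_3$ reduces, by a calculation analogous to Remark~\ref{rem:triple-lag}, to the requirement that each $C_{a_i}$ be orthogonal and $C_{a_3} C_{a_2} C_{a_1} = \Id$.

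\emph{Step 2 (normalizing $A_2$-arrows).} The remaining gauge $h_v \mapsto h_v k_v$ with $k_v \in \OO(n)$ preserves $M_a = \Id$ and the orthogonality of each $C_a$, but transforms $B_a$ by $B_a \mapsto {}^T\! k_{v^+(a)} B_a k_{v^-(a)}$. For each internal edge of~$\mathcal{T}$, corresponding to a $2$-cycle $\{a, a'\}$ between vertices~$v$ and~$v'$, apply a singular value decomposition to $B_a$: choose $k_v, k_{v'} \in \OO(n)$ and $\Lambda_a \in \Delta_n$ with ${}^T\! k_{v'} B_a k_v = \Lambda_a$, permuting to order the singular values nondecreasingly. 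The $2$-cycle twist condition $G_{a'} G_a = -\Id$ forces $B_{a'} = {}^T\! B_a$, so the same $k$'s simultaneously bring $B_{a'}$ into $\Lambda_a \in \Delta_n$. At external vertices, set $k_v = \Id$.

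The key subtlety is that the two normalization steps must not interfere. This is ensured by two combinatorial facts about $\Gamma_\mathcal{T}$: each vertex belongs to exactly one $3$-cycle and to at most one $2$-cycle, so the constraints decouple; and Step~2 uses precisely the residual $\OO(n)$-freedom left by Step~1, preserving both $M_a = \Id$ and the orthogonality of $C_a$. Setting $u_a \coloneqq C_a$ for $a \in A_3$ yields the claimed form of the transition matrices.
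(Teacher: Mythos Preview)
Your argument is correct, but it is organized differently from the paper's proof. The paper works edge-by-edge: for each $2$-cycle it invokes Proposition~\ref{prop:standard-basis-positive-4-uple} to place a symplectic basis in standard position with respect to the positive quadruple $q_v(x)$, which simultaneously fixes the bases at both endpoints and forces the $A_2$-transition matrix into the $\Lambda_a\in\Delta_n$ form; the $A_3$-form then drops out of Lemma~\ref{lem:triple-of-lagrangian}. You instead run two global gauge passes: first a positive-definite square root at every vertex to kill $M_a$ (this is exactly where you spend the maximality hypothesis), then a singular-value decomposition across each $2$-cycle using the residual $\OO(n)$-freedom. The linear algebra underneath is the same (the spectral theorem for positive forms), but your decoupling makes the role of maximality more transparent and avoids appealing to the quadruple machinery, whereas the paper's route ties the construction directly to the cross-ratio and angle invariants that drive the rest of Section~\ref{sec_def_coord_max}.
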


We will say that a symplectic basis of $\{F_v\}_{v\in V}$ is in \emph{standard $\mathcal{X}$}-position if it satisfies the properties of the above
conclusion.\index{definition}{standard $\mathcal{X}$-position}
The tuple
 $(( \Lambda^{2}_{a})_{a\in A_2} , (u_a)_{a\in A_3})$ is then an
element of $\XplusDeltaTn$ (see
Lemma~\ref{lem:triple-of-lagrangian}) and will be called \emph{the system of
  positive coordinates} associated with the
basis~$( \mathbf{e}_v, \mathbf{f}_v)$.

\begin{proof}
  Let first~$v$ be an external vertex of~$\Gamma_\mathcal{T}$. Let~$a$ be the
  arrow in~$A_3$ such that $v=v^+(a)$. As the local system~$x$ is maximal, the
  Maslov index of the triple $( L^{t}_{v}, g_a( L^{t}_{v^{-}(a)}), L^{b}_{v})$
  is maximal and there is thus a symplectic basis
  $(\mathbf{e}_v, \mathbf{f}_v)$ such that $L^{t}_v = \Span( \mathbf{e}_v)$,
  $g_a(L^{t}_{v^-(a)}) = \Span( \mathbf{e}_v + \mathbf{f}_v)$, and
  $L^{b}_v = \Span( \mathbf{f}_v)$

  Let $E\subset A_2$ be a subset containing exactly one of the two arrows for
  every cycle in $A_2$.  The set~$V$ of vertices of $\Gamma_\mathcal{T}$ is
  the disjoint union of the doubletons $\{ v^-(a), v^+(a)\}$ for $a\in E$.

  Let $a\in E$ and call $v=v^+(a)$, $v'=v^-(a)$. The quadruple
  $(L_1, M_1, L_2, M_2) = q_v(x)$ is then positive since the two triples
  $f_v(x)$ and $f_{v'}(x)$ are maximal (cf.\
  Section~\ref{sec:conf-assoc-with} for the definition of~$f_v$). By
  Proposition~\ref{prop:standard-basis-positive-4-uple}.(\ref{item:1:prop:standard-basis-positive-4-uple}),
  there is a symplectic basis $(\mathbf{e}_v, \mathbf{f}_v)$ in standard
  position with respect to that quadruple: there is a matrix $\Lambda$ in
  $\Delta_n$ so that $L_1=\Span( \mathbf{e}_v)$, $L_2=\Span( \mathbf{f}_v)$,
  $M_1=\Span( \mathbf{e}_v+ \mathbf{f}_v)$ and
  $M_2=\Span( \mathbf{e}_v- \mathbf{f}_v \cdot \Lambda)$.

  By construction of the bases, the matrices of the~$g_a$, $a\in A'$, have the
  desired form. The property $g_{a_1} g_{a_2}=-\Id$ for all $2$-cycles
  $(a_1, a_2)$ in $A_2$ implies that the same holds for every $a\in A_2$. The
  fact that the matrices of the~$g_a$, $a\in A_3$, have the requested form is
  a consequence of Lemma~\ref{lem:triple-of-lagrangian} and Remark~\ref{rem:rewrite-conc-lem:triple-of-lagrangian}.
\end{proof}

Using Lemma~\ref{lem:equi-loc-sys-2}, the description of the fibers of
$\holXpdT$ is a consequence of the following statement.
\begin{prop}
  \label{prop:fiber-holXpT}
  Let $(F_v, g_a, L^{t}_{v}, L^{b}_{v})$ be a maximal framed
  $\delta$-twisted symplectic local system on the
  quiver~$\Gamma_\mathcal{T}$.  Let $(\mathbf{e}_v , \mathbf{f}_v)$ and $(\mathbf{e}'_v , \mathbf{f}'_v)$ be
  two symplectic bases in standard $\mathcal{X}$-position and let $x$ and
  $x'$ be the two associated systems of positive coordinates. Finally, let $\{ \psi_v\}_{v\in V}$ be the family of elements of $\Sp(2n, \R)$
  defined by the equalities: $(\mathbf{e}_v , \mathbf{f}_v) = (\mathbf{e}'_v ,
  \mathbf{f}'_v) \cdot \psi_v$ \ep{$v\in V$}.

  Then
  \begin{enumerate}
  \item\label{item1:prop:fiber-holXpT} For all~$a$ in~$A_2$, $x'(a)=x(a)$.
  \item\label{item2:prop:fiber-holXpT} For all~$v$ in~$V$, there is an orthogonal matrix $u_v$ such that
    $\psi_v=
    \begin{pmatrix}
      u_v & 0 \\ 0 & u_v
    \end{pmatrix}
    $;
  \item\label{item3:prop:fiber-holXpT} For every 
    $a$ in~$A_2$,
    then $u_{v^+(a)} = u_{v^-(a)}$ and $u_{v^+(a)}$ commutes with $x(a)$.
  \item\label{item4:prop:fiber-holXpT} For every~$a$ in~$A_3$, $x'(a) = u_{v^+(a)} x(a)
    u_{v^-(a)}^{-1}$.
  \end{enumerate}
\end{prop}
\begin{proof}
  (\ref{item1:prop:fiber-holXpT}): By Lemma~\ref{lem:cross-ratio-m_aId}, the cross ratio of the quadruple
  $q_{v^+(a)}(x)$ is given by the matrices $x(a)$ and $x'(a)$, hence the
  equality.

  (\ref{item2:prop:fiber-holXpT}, \ref{item3:prop:fiber-holXpT}): Let $v$ be a vertex of $\Gamma_\mathcal{T}$ and $a$ such that $v^+(a) = v$. The symplectic bases $( \mathbf{e}_v, \mathbf{f}_v)$ and
  $( \mathbf{e}'_v, \mathbf{f}'_v)$ are in standard position. Applying
  Proposition~\ref{prop:standard-basis-positive-4-uple}.(\ref{item:2:prop:standard-basis-positive-4-uple}), there is an
  element~$u_v$ in $\OO(n)$ commuting with $x(a)$ and such that
  $( \mathbf{e}_v, \mathbf{f}_v) =( \mathbf{e}'_v \cdot u_v, \mathbf{f}'_v
  \cdot u_v)$.

  (\ref{item4:prop:fiber-holXpT}): Using the fact that the transition matrices with respect to the basis $(
  \mathbf{e}_v, \mathbf{f}_v)$ are deduced from the other by multiplying by
  the matrices $\psi_v$, we get the other statements.
\end{proof}

\section{Reparametrization of  the $\mathcal{X}$-coordinates}
\label{sec:other-X-like}

When $R=\emptyset$,
the space $\XplusDeltaTn$ gives a generically
finite-to-one parametrization of the space of maximal framed
local systems; this is not anymore the case when $R\neq \emptyset$. In the present section, we  introduce first another
parametrization of the space of maximal framed
local systems that is always finite-to-one.

In order to describe another parametrization for maximal representations we make use of a spanning tree $\mathcal{S}$ of the graph~$\Gamma_\mathcal{T}/A_2$,
i.e.\ the graph where all the arrows in~$A_2$ have been collapsed. Thus
$\mathcal{S}$ can be thought of as a subset of~$A_3$, though the orientation of
the arrows is not relevant here. Geometrically, the graph
$\Gamma_\mathcal{T}/A_2$ is obtained in the following way: for every (nonoriented) edge~$e$
of~$\mathcal{T}$ mark a midpoint in~$e$ and, for every triangle~$f$
of~$\mathcal{T}$, connect the three midpoints of the three sides of~$f$. Thus
the number of vertices of $\Gamma_\mathcal{T}/A_2$ is $2r-3{ \chi( \bar{S})} $ and
its number of edges is $3\sharp \mathcal{T} = 3r-6{ \chi( \bar{S})} $.

The spanning tree has to connect all the vertices, hence the cardinality of the set of edges
of~$\mathcal{S}$ is equal the number of vertices minus one, i.e.\
$\sharp \mathcal{S}=2r-3{ \chi( \bar{S})} -1$ (in a tree the number of
edges is the number of vertices minus~$1$).

We fix an arrow~$a_0$ in~$A_2$ and denote by~$\XplusSaz$ the subset of
$\XplusTn$ consisting of the tuples\index{notation}{89@$\XplusSaz$ (subspace of
  $\XplusTn$ associated with a spanning tree and an arrow)}%
\index{definition}{$\mathcal{X}$-coordinates!positive ---}%
\index{definition}{positive!$\mathcal{X}$-coordinates}
\[ y= ( \{ y(a)\}_{a\in A_2}, \{ y(a)\}_{a\in A_3}) \in  \Sym^+(n,\R)^{A_2} \times \OO(n)^{A_3}\]
such that
\begin{itemize}
\item $y(a_0)$ belongs to~$\Delta_n$;
\item for all~$a$ in~$\mathcal{S}$, $y(a)=\Id$.
\end{itemize}
Of course, the equations $y(a)=y(a')$ ($\{a,a'\}$ $2$-cycle in~$A_2$) and
$y(a_3)y(a_2)y(a_1)=\Id$ ($\{a_1, a_2, a_3\}$
cycle in~$A_3$) are satisfied.

We choose another set $\mathcal{R} \subset A_3$, disjoint from~$\mathcal{S}$ and
such that $\mathcal{S} \sqcup \mathcal{R}$ contains two of the arrows of every
$3$-cycle in~$A_3$ and a subset~$E \subset A_2$ containing~$a_0$ and containing
exactly one of the arrows of every $2$-cycle in~$A_2$. With this we have that the map
\begin{align*}
  \XplusSaz
  & \longrightarrow \Delta_n \times  \Sym^+(n,\R)^{ E
    \smallsetminus \{a_0\}} \times \OO(n)^\mathcal{R}\\
  y & \longmapsto \bigl(y({a_0}), \{y(a)\}_{a \in E
    \smallsetminus \{a_0\}}, \{y(a)\}_{a \in \mathcal{R}}\bigr)
\end{align*}
is a diffeomorphism. Recall that $\sharp ( E \smallsetminus \{a_0\}) =
r -3{ \chi( \bar{S})}  -1$ and $\sharp \mathcal{R} = 1-{ \chi( \bar{S})} $ (since
$\sharp \mathcal{S} = 2r-3{ \chi(\bar{S})}  -1$ and $\sharp ( \mathcal{S} \cup \mathcal{R}) = 2\sharp \mathcal{T} =
2r -4{ \chi(\bar{S})} $).

We have $\XplusSaz \subset \XplusTn$, hence by restriction of the map
$\holXpT$ we have a map\index{notation}{90@$\holSaz$ (restriction of $\holXpT$ to $\XplusSaz$)}
\[ \holSaz \colon \XplusSaz \longrightarrow
\Mfdelta( \Gamma_\mathcal{T}, \Sp(2n,\R)). \]

\begin{teo}
  \label{thm:holonomy-sym-Y}
  The map $\holSaz$ is onto the space of maximal framed $\delta$-twisted local
  system
  $\Mfdelta( \Gamma_\mathcal{T}, \Sp(2n,\R)) \subset
  \Locfdelta( \Gamma_\mathcal{T}, \Sp(2n,\R))$. Two elements~$y$
  and~$y'$ in $\XplusSaz$ have the same holonomy if
  and only if, $y(a_0) =y'(a_0)$ and there is an element $u\in \OO(n)$
  commuting with~$y(a_0)$ such that, for every~$a$ in~$A$,
  $y'(a) = uy(a) u^{-1}$.
\end{teo}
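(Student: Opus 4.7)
The plan is to realize $\XplusSaz$ as a slice for the action of $\OO(n)^V$ on symplectic bases of a maximal framed local system, bootstrapping from the standard $\mathcal{X}$-position of Proposition~\ref{prop:holXpT-surj}.

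For surjectivity, let $(\mathcal{F}, \sigma)$ be a maximal framed $\delta$-twisted local system on $\Gamma_\mathcal{T}$. Proposition~\ref{prop:holXpT-surj} provides a symplectic basis in standard $\mathcal{X}$-position, giving transition data $(\Lambda_a, u_a) \in \XplusDeltaTn$. I then change basis by $\psi_v = \bigl(\begin{smallmatrix} s_v & 0 \\ 0 & s_v \end{smallmatrix}\bigr)$ with $s_v \in \OO(n)$. This preserves the framing and the $A_3$-shape of transition matrices; the bottom-left blocks transform as $\Lambda_a \mapsto s_{v^+(a)} \Lambda_a s_{v^-(a)}^{-1}$ for $a \in A_2$ and $u_a \mapsto s_{v^+(a)} u_a s_{v^-(a)}^{-1}$ for $a \in A_3$. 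My choice of $(s_v)$ is: impose $s_{v^+(a)} = s_{v^-(a)}$ for every $a \in A_2$ (so $s$ factors through the quotient graph $\Gamma_\mathcal{T}/A_2$), set $s = \Id$ at the vertex adjacent to~$a_0$, and propagate along the spanning tree $\mathcal{S}$ by the rule $s_{v^+(a)} = s_{v^-(a)} u_a^{-1}$ for $a \in \mathcal{S}$. Since $\mathcal{S}$ is a spanning tree of $\Gamma_\mathcal{T}/A_2$ and each $u_a$ is orthogonal, this defines $(s_v) \in \OO(n)^V$ uniquely. One verifies that the resulting data $y$ lies in $\XplusSaz$: the new $A_2$-block $s_v \Lambda_a s_v^{-1}$ is symmetric positive (orthogonal conjugate of a diagonal positive matrix), $y(a) = \Id$ for $a \in \mathcal{S}$ by construction, $y(a_0)^{1/2} = \Lambda_{a_0} \in \Delta_n$, and the $3$-cycle relations $y(a_3) y(a_2) y(a_1) = \Id$ follow from $u_{a_3} u_{a_2} u_{a_1} = \Id$ by telescoping cancellation of the $s$'s around each triangle.

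For the fibers, suppose $y, y' \in \XplusSaz$ satisfy $\holSaz(y) = \holSaz(y')$. An equivalence is realized by symplectic isomorphisms $\psi_v = \bigl(\begin{smallmatrix} s_v & 0 \\ 0 & {}^T\! s_v^{-1} \end{smallmatrix}\bigr)$ with $s_v \in \GL(n,\R)$ preserving the framing. A direct matrix computation (identical to the one in the proof of Proposition~\ref{prop:holXpT-surj}) shows that preservation of the $A_3$-transition form $\bigl(\begin{smallmatrix} w & -w \\ w & 0 \end{smallmatrix}\bigr)$ with $w \in \OO(n)$ forces $s_v \in \OO(n)$ at every vertex. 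For $a \in A_2$, the new bottom-left block is $N = s_{v^+(a)} y(a)^{1/2} s_{v^-(a)}^{-1}$; since $N^T N = s_{v^-(a)} y(a) s_{v^-(a)}^{-1}$, its right polar decomposition reads $N = (s_{v^+(a)} s_{v^-(a)}^{-1}) \cdot (s_{v^-(a)} y(a)^{1/2} s_{v^-(a)}^{-1})$, and the requirement that $N$ be symmetric positive (as demanded for $y' \in \XplusTn$) forces $s_{v^+(a)} = s_{v^-(a)}$. Combined with the condition $y(a) = y'(a) = \Id$ for $a \in \mathcal{S}$, which again gives $s_{v^+(a)} = s_{v^-(a)}$ across $\mathcal{S}$-arrows, and the fact that $\mathcal{S}$ together with the $A_2$-cycles connects all vertices, the map $v \mapsto s_v$ is constant, with value $u \in \OO(n)$. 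Hence $y'(a) = u y(a) u^{-1}$ for all $a \in A$; specializing to $a_0$, the two elements $y(a_0), y'(a_0) \in \Delta_n$ are $\OO(n)$-conjugate, so they coincide and $u \in \Stab_{\OO(n)}(y(a_0))$. The converse is immediate: any such $u$ gives an equivalence via $\psi_v = \bigl(\begin{smallmatrix} u & 0 \\ 0 & u \end{smallmatrix}\bigr)$.

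The main subtlety is the polar-decomposition step: showing that the symmetric-positive shape of the $A_2$-blocks in $\XplusSaz$ forces $s_{v^+(a)} = s_{v^-(a)}$ across each $A_2$-cycle. Without this rigidity, the orthogonal freedom at the two ends of an $A_2$-cycle would persist, the fibers would be larger than a single $\Stab_{\OO(n)}(y(a_0))$-orbit, and $\XplusSaz$ would fail to be a slice.
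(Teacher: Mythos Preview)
Your proof is correct and follows essentially the same strategy as the paper: surjectivity by propagating an orthogonal change of basis along the spanning tree~$\mathcal{S}$ to trivialize the $\mathcal{S}$-transitions (the paper carries this out by an induction on subtrees of~$\mathcal{S}$, you solve for all~$s_v$ at once), and the fiber description via polar-decomposition rigidity of the $A_2$-blocks combined with tree propagation. The only difference is organizational: you first deduce that the~$s_v$ are all equal to a single~$u\in\OO(n)$ and then specialize at~$a_0$ to force $u\in\Stab_{\OO(n)}(y(a_0))$, whereas the paper normalizes the two bases at an endpoint of~$a_0$ first (via Proposition~\ref{prop:standard-basis-positive-4-uple}) and then propagates along~$\mathcal{S}$ and across~$A_2$ to conclude that they agree everywhere.
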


\section{Image and fibers of the holonomy map}
\label{sec:image-fibers-holon}

This section is devoted to the proof of Theorem~\ref{thm:holonomy-sym-Y}.

\begin{prop}
  For all $x$ in
  $\Mfdelta( \Gamma_\mathcal{T}, \Sp(2n,\R))$
  there is $y$ in $\XplusSaz$ such that
  $\holSaz(y)=x$.
\end{prop}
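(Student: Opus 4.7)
The plan is to produce $y$ by gauging the basis supplied by Proposition~\ref{prop:holXpT-surj}. Given the maximal framed twisted local system~$x$, I would first invoke Proposition~\ref{prop:holXpT-surj} to obtain a symplectic basis $(\mathbf{e}_v, \mathbf{f}_v)_{v \in V}$ generating the framing, with transition matrices $G_a = \bigl(\begin{smallmatrix} 0 & -\Lambda_a^{-1} \\ \Lambda_a & 0 \end{smallmatrix}\bigr)$, $\Lambda_a \in \Delta_n$, for $a \in A_2$, and $G_a = \bigl(\begin{smallmatrix} u_a & -u_a \\ u_a & 0 \end{smallmatrix}\bigr)$, $u_a \in \OO(n)$, for $a \in A_3$. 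The associated coordinates $x' \in \XplusDeltaTn$ (with $x'(a) = \Lambda_a^2$ on $A_2$ and $x'(a) = u_a$ on $A_3$) satisfy $\holXpdT(x') = x$.

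Next, I would construct a family of orthogonal matrices $(h_{\bar v})_{\bar v \in V(\Gamma_\mathcal{T}/A_2)}$ by propagation along~$\mathcal{S}$. Let $\bar v_0$ be the vertex of $\Gamma_\mathcal{T}/A_2$ obtained by collapsing the $A_2$-cycle containing~$a_0$, and set $h_{\bar v_0} = \Id$. For any other~$\bar v$, follow the unique path in~$\mathcal{S}$ from~$\bar v_0$ to~$\bar v$, multiplying by $u_a$ (respectively $u_a^{-1}$) each time the path crosses $a \in \mathcal{S} \subset A_3$ with (respectively against) its intrinsic orientation. Being a product of orthogonal matrices, each $h_{\bar v}$ lies in $\OO(n)$, and by construction $u_a = h_{\bar v^+(a)}\, h_{\bar v^-(a)}^{-1}$ for every $a \in \mathcal{S}$.

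The crucial step is then to check that the gauged basis $(\mathbf{e}'_v, \mathbf{f}'_v) \coloneqq (\mathbf{e}_v h_{\bar v}, \mathbf{f}_v h_{\bar v})$, where $\bar v$ is the image of~$v$ in $V/A_2$, produces coordinates in~$\XplusSaz$ with the same holonomy. Since ${}^T\! h_{\bar v}\, h_{\bar v} = \Id$, this is a symplectic basis generating the same framing, and a direct computation using ${}^T\! h_{\bar v}^{-1} = h_{\bar v}$ shows that the new transition matrices take exactly the prescribed shapes: for $a \in A_2$ with endpoint~$\bar v$, $G'_a = \bigl(\begin{smallmatrix} 0 & -y(a)^{-1/2} \\ y(a)^{1/2} & 0 \end{smallmatrix}\bigr)$ with $y(a) = h_{\bar v}^{-1} x'(a) h_{\bar v} \in \Sym^+(n,\R)$; and for $a \in A_3$, $G'_a = \bigl(\begin{smallmatrix} y(a) & -y(a) \\ y(a) & 0 \end{smallmatrix}\bigr)$ with $y(a) = h_{\bar v^+(a)}^{-1} u_a\, h_{\bar v^-(a)} \in \OO(n)$. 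Since $h_{\bar v_0} = \Id$, one has $y(a_0) = x'(a_0) \in \Delta_n$, and by construction $y(a) = \Id$ for every $a \in \mathcal{S}$. The compatibility cycle relations for $\XplusTn$ follow automatically (they hold for $x'$ and are preserved by the telescoping along each $3$-cycle). Hence $y \in \XplusSaz$, and the gauge witnesses an equivalence of framed twisted local systems via Lemma~\ref{lem:equi-loc-sys-2}, giving $\holSaz(y) = \holXpdT(x') = x$. The only subtle point is that the gauge must descend to $V/A_2$—i.e.\ both endpoints of each $A_2$-cycle are assigned the same $h_{\bar v}$—in order for the $A_2$-transition matrices to retain their form with $y(a)$ symmetric positive definite; this is what forces the gauge to be orthogonal rather than merely in $\GL(n,\R)$, and explains why the cross-ratio data carried by the $A_2$-arrows survives as a genuine invariant.
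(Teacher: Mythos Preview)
Your proof is correct and follows essentially the same strategy as the paper: start from coordinates in $\XplusDeltaTn$ via Proposition~\ref{prop:holXpT-surj}, then apply an orthogonal gauge (constant along each $A_2$-pair) determined by propagation along the spanning tree~$\mathcal{S}$ so as to trivialize the $u_a$ for $a\in\mathcal{S}$ while fixing the basis at the endpoints of~$a_0$. The only difference is presentational—the paper carries out the gauge change inductively, adding one leaf of~$\mathcal{S}$ at a time, whereas you define the family $(h_{\bar v})$ globally as a path-ordered product along~$\mathcal{S}$ and gauge all at once; both yield the same $y$.
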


\begin{proof}
  By Theorem~\ref{teo:max-param-Xplus}, there is  $z\in \XplusTn$ such that
  $\holXpT(z) = x$ and $z(a_0)\in \Delta_n$. Let $(F_v, g_a, L^{t}_{v},
  L^{b}_{v}, (\mathbf{e}_v, \mathbf{f}_v))$ be the local system with
  generating symplectic basis defined by $z$.

  By induction on $\sharp \mathcal{S}'$ we prove:
  \begin{center}
    \parbox{0.8\textwidth}{
    For every subtree $\mathcal{S}'$ of $\mathcal{S}$ (connected to $a_0$),
    there is~$z' $ in $ \XplusTn$ and a generating basis
    $(\mathbf{e}'_v, \mathbf{f}'_v)$ of $(F_v, g_a, L^{t}_{v}, L^{b}_{v})$
    such that~$z'(a_0) $ belongs to~$\Delta_n$, the matrices of~$g_a$ in these bases are $\{G_a(z')\}$ and~$z'$ satisfies the following:
    for all $a\in \mathcal{S}'$, $z'(a)=\Id$.
    }
  \end{center}
  For $\mathcal{ S}'=\emptyset$, we set $z'=z$.

  Now let $\mathcal{S}' = \mathcal{S}^{\prime\prime} \cup \{b\}$ for some
  $b\in A_3 \smallsetminus \mathcal{S}^{\prime\prime}$ (and with
  $\mathcal{S}^{\prime\prime}$ a tree, i.e.\ $b$ is a leaf
  of~$\mathcal{S}'$). Let~$z^{\prime\prime}$ and
  $(\mathbf{e}^{\prime\prime}_v, \mathbf{f}^{\prime\prime}_v)$ be given by the
  induction step for $\mathcal{S}^{\prime\prime}$. We denote by $v_0$ and $v_1$
  the extremities of $b$. For definiteness, suppose that $v_0$ is connected to
  $\mathcal{S}^{\prime\prime}$ and $v_0=v^-(b)$, hence $v^+(b)=v_1$.
  In
  the case where~$v_1$ is an internal vertex,
  let $v_2$ be the vertex of~$\Gamma_{\mathcal{T}}$ connected to it by a
  cycle~$\{c,c'\}$ of~$A_2$. Since $b$ is a leaf of $\mathcal{S}'$, the vertex $v_2$ is not the
  extremity of any of the arrows in $\mathcal{S}'$, so that we can change the
  basis in $F_{v_2}$ without affecting the transitions
  matrices for $a\in \mathcal{S}'$.

  We now define the symplectic basis $(\mathbf{e}'_v, \mathbf{f}'_v)$. For
  $v\neq v_1, v_2$, set $(\mathbf{e}'_v, \mathbf{f}'_v) =
  (\mathbf{e}^{\prime\prime}_v, \mathbf{f}^{\prime\prime}_v)$. For~$v_1$, we set:
  \begin{align*}
   (\mathbf{e}'_{v_1}, \mathbf{f}'_{v_1})
   & = g_a(\mathbf{e}'_{v_0}, \mathbf{f}'_{v_0}) \cdot
     \begin{pmatrix}
       \Id & -\Id \\ \Id & 0
     \end{pmatrix}^{-1}\\
    & = g_a(\mathbf{e}^{\prime\prime}_{v_0}, \mathbf{f}^{\prime\prime}_{v_0})
      \cdot
     \begin{pmatrix}
       0 & \Id \\ -\Id & \Id
     \end{pmatrix}
     = (\mathbf{e}^{\prime\prime}_{v_1}, \mathbf{f}^{\prime\prime}_{v_1})
      \cdot
      \begin{pmatrix}
        z^{\prime\prime}(b) & -z^{\prime\prime}(b) \\ z^{\prime\prime}(b) & 0
      \end{pmatrix}
     \begin{pmatrix}
       0 & \Id \\ -\Id & \Id
     \end{pmatrix}\\
    & = (\mathbf{e}^{\prime\prime}_{v_1}, \mathbf{f}^{\prime\prime}_{v_1})
      \cdot
      \begin{pmatrix}
        z^{\prime\prime}(b) & 0 \\ 0 &z^{\prime\prime}(b)
      \end{pmatrix}
    = (\mathbf{e}^{\prime\prime}_{v_1}\cdot z^{\prime\prime}(b) ,
                                       \mathbf{f}^{\prime\prime}_{v_1} \cdot z^{\prime\prime}(b)).
  \end{align*}
  Finally, when~$v_1$ is internal, let
  $ (\mathbf{e}'_{v_2}, \mathbf{f}'_{v_2}) =
  (\mathbf{e}^{\prime\prime}_{v_2}\cdot z^{\prime\prime}(b) ,
  \mathbf{f}^{\prime\prime}_{v_2} \cdot z^{\prime\prime}(b))$. It is easily
  checked that the transition matrices are $\{G_a(z')\}$ with
  $z'\in \XplusTn$ such that
  $z'(a)= z^{\prime\prime}(a)$ for all arrows~$a$ distinct from~$b$ (and
  from~$c$ or~$c'$) and $z'(b)=\Id$ (and, in the case when $v_1$~is internal,
  $z'(c')=z'(c)= z^{\prime\prime}(b) z^{\prime\prime}(c)
  z^{\prime\prime}(b)^{-1}$ again symmetric).
\end{proof}

The uniqueness in Theorem~\ref{thm:holonomy-sym-Y} follows from:
\begin{prop}
  \label{prop:unique-Y}
  Let $(F_v, g_a, L^{t}_v, L^{b}_v)$ be a maximal framed local system. Let
  $( \mathbf{e}_v, \mathbf{f}_v)$ be a symplectic basis generating the
  framing and such that the transition matrices are $\{ G_a(y)\}$ for some
  $y$ in $\XplusSaz$. Then
  \begin{enumerate}
  \item \label{item:1:prop:unique-Y} For all $u\in \OO(n)$ commuting
    with $y(a_0)$, the symplectic basis
    $(\mathbf{e}_v \cdot u, \mathbf{f}_v \cdot u)$ generates the framing,
    the tuple $y'$ defined by
  $y'(a) = u y(a) u^{-1}$ \ep{$a\in A$} is in
    $\XplusSaz$, and the transition matrices with
    respect to this symplectic basis are~$\{ G_a(y')\}$.
  \item\label{item:2:prop:unique-Y} If $( \mathbf{e}'_v, \mathbf{f}'_v)$ is
    another generating basis and if there is an element $y'$ in
    $\XplusSaz$ such that the transition matrices are
    $\{G_a(y')\}$, then there is an element $u\in \OO(n)$, commuting
    with~$y(a_0)$ and such that, for every vertex~$v$
    in~$\Gamma_{\mathcal{T}}$,
    $(\mathbf{e}'_v, \mathbf{f}'_v) = (\mathbf{e}_v \cdot u, \mathbf{f}_v
    \cdot u)$.
  \end{enumerate}
\end{prop}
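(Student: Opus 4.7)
The proposition is a consequence of the fiber analysis done in Proposition~\ref{prop:fiber-holXpT}, combined with the additional gauge-fixing enforced by membership in $\XplusSaz$ (namely $y(a_0)\in \Delta_n$ and $y(a)=\Id$ for $a\in \mathcal{S}$). I would treat the two parts separately.

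For part~(\ref{item:1:prop:unique-Y}), I would verify directly that $(\mathbf{e}_v\cdot u, \mathbf{f}_v\cdot u)$ is a symplectic basis generating the same framing: this uses $u\in \OO(n)$ so that $\bigl(\begin{smallmatrix} u & 0 \\ 0 & u \end{smallmatrix}\bigr)\in \Sp(2n,\R)$, and the fact that $\Span(\mathbf{e}_v\cdot u)=\Span(\mathbf{e}_v)=L^{t}_{v}$, likewise for $L^{b}_{v}$. A direct matrix computation then shows that the new transition matrices are of the form $G_a(y')$ with $y'(a)=u y(a) u^{-1}$; for $a\in A_2$ one uses the identity $(uy(a)u^{-1})^{1/2}=uy(a)^{1/2}u^{-1}$, valid because $u$ is orthogonal and $y(a)^{1/2}$ is the unique positive symmetric square root. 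Finally, membership of $y'$ in $\XplusSaz$ is immediate: $y'(a_0)=u y(a_0) u^{-1}=y(a_0)\in \Delta_n$ by the commutation hypothesis, and $y'(a)=u\Id u^{-1}=\Id$ for every $a\in \mathcal{S}$.

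For part~(\ref{item:2:prop:unique-Y}), I would apply Proposition~\ref{prop:fiber-holXpT} to the two generating bases $(\mathbf{e}_v,\mathbf{f}_v)$ and $(\mathbf{e}'_v,\mathbf{f}'_v)$, obtaining a family $(u_v)_{v\in V}$ in $\OO(n)$ with $\psi_v=\bigl(\begin{smallmatrix} u_v & 0 \\ 0 & u_v \end{smallmatrix}\bigr)$, satisfying $u_{v^+(a)}=u_{v^-(a)}$ commuting with $y(a)$ for each $a\in A_2$, and $y'(a)=u_{v^+(a)}y(a)u_{v^-(a)}^{-1}$ for each $a\in A_3$. The main step is to show $v\mapsto u_v$ is constant. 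The $A_2$-condition means $v\mapsto u_v$ descends to the collapsed graph $\Gamma_\mathcal{T}/A_2$, and for every $a\in \mathcal{S}\subset A_3$, the equalities $y(a)=y'(a)=\Id$ force $u_{v^+(a)}=u_{v^-(a)}$. Since $\mathcal{S}$ is a spanning tree of $\Gamma_\mathcal{T}/A_2$, this propagates to $u_v=u$ for a single $u\in \OO(n)$ and all $v\in V$. The final commutation of~$u$ with~$y(a_0)$ is then the content of Proposition~\ref{prop:fiber-holXpT} applied to the arrow $a_0\in A_2$.

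The only point requiring some care is the spanning-tree propagation in part~(\ref{item:2:prop:unique-Y}): one must observe that the combination of $A_2$-pairs and $\mathcal{S}$-edges provides a connected spanning subgraph of the original $\Gamma_{\mathcal{T}}$, so that constancy of $v\mapsto u_v$ along these edges extends across the whole vertex set. All other verifications reduce to routine book-keeping in $\Sp(2n,\R)$.
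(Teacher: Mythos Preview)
Your approach is essentially correct but has a technical slip in Part~(\ref{item:2:prop:unique-Y}). Proposition~\ref{prop:fiber-holXpT} requires both bases to be in \emph{standard $\mathcal{X}$-position}, which by the definition following Proposition~\ref{prop:holXpT-surj} means $y(a)^{1/2}\in\Delta_n$ for every $a\in A_2$; elements of $\XplusSaz$ only satisfy this at~$a_0$. So you cannot invoke Proposition~\ref{prop:fiber-holXpT} as stated, and in particular the assertion that $u_{v^+(a)}$ commutes with $y(a)$ for \emph{every} $a\in A_2$ is false in this generality (one only obtains $y'(a)=u_v\, y(a)\, u_v^{-1}$). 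Fortunately you only use the commutation at~$a_0$, and the other conclusions you need---that each $\psi_v=\bigl(\begin{smallmatrix}u_v&0\\0&u_v\end{smallmatrix}\bigr)$ with $u_v\in\OO(n)$, and that $u_{v^+(a)}=u_{v^-(a)}$ for $a\in A_2$---still hold by the same arguments: the third-Lagrangian constraint, and uniqueness of the polar decomposition applied to $y'(a)^{1/2}=(u_{v^+(a)}u_{v^-(a)}^{-1})\cdot\bigl(u_{v^-(a)}y(a)^{1/2}u_{v^-(a)}^{-1}\bigr)$. With this patch your spanning-tree propagation goes through, and at $a_0$ the condition $y(a_0),y'(a_0)\in\Delta_n$ forces $y'(a_0)=y(a_0)$ and hence the commutation.

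The paper organises the same ingredients differently: rather than setting up a global family $(u_v)$ and then constraining it, it first applies Proposition~\ref{prop:standard-basis-positive-4-uple} at an endpoint of $a_0$ (where the basis genuinely \emph{is} in standard position, since $y(a_0)\in\Delta_n$) to reduce to the case where the two bases already coincide at that vertex, and then propagates equality inductively along $\mathcal{S}\cup A_2$, invoking polar decomposition directly at each $A_2$-step. Your global-then-constrain argument and the paper's normalize-then-propagate argument become equivalent once the citation issue above is repaired.
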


\begin{proof}
  The first part of the statement is clear. Let us address the second part.
  Applying Proposition~\ref{prop:standard-basis-positive-4-uple}, up to
  conjugating by an element commuting with~$y(a_0)$ we can assume that the
  generating bases $( \mathbf{e}_v, \mathbf{f}_v)$ and
  $( \mathbf{e}'_v, \mathbf{f}'_v)$ coincide at one of the endpoint~$v$
  of~$a_0$. We show now that they coincide at every vertex by travelling in the
  spanning tree~$\mathcal{S}$. For this it is enough to note that
  \begin{itemize}
  \item If the basis coincide at one of the extremity of an arrow~$a$ in
    $\mathcal{S}$ then they coincide at the other extremity (since the
    transition matrices coincide).
  \item Let $a\in A_2$ and suppose that the basis coincide at $v=v^-(a)$. At
    $w=v^+(a)$, since the basis are generating, we get, from the analysis in
    Lemma~\ref{lem:muT-in-loc-sys},
    \[ \Span(\mathbf{e}_v) = \Span(\mathbf{e}'_v),\ \Span(\mathbf{f}_v) =
      \Span(\mathbf{f}'_v),\ \Span(\mathbf{e}_v + \mathbf{f}_v) =
      \Span(\mathbf{e}'_v + \mathbf{f}'_v).\] This implies that there is
    $u\in \OO(n)$ such that
    $(\mathbf{e}'_v , \mathbf{f}'_v) = (\mathbf{e}_v \cdot u , \mathbf{f}_v
    \cdot u)$. Thus $y'(a) = y(a) u$ and the matrices $y(a)$, $y'(a)$
    are symmetric and positive definite. Uniqueness in the polar decomposition
    implies that $u=\Id$ and the bases coincide at~$w$. \qedhere
  \end{itemize}
\end{proof}

Similarly to Corollary~\ref{cor:finite-to-one},  we have here the following.

\begin{cor}
Generically a fiber of the map
\[\holSaz \colon \XplusSaz \longrightarrow \Mfdelta( \Gamma_\mathcal{T}, \Sp(2n,\R))\]
is finite.
\end{cor}

\section{Over-parametrizations}
\label{sec:over-param}

In the preceding section, the space of parameters has the same dimension as
the space of framed representations and the holonomy maps were generically
finite-to-one. In this section we give an (over)-parametrization which has too many parameters, but becomes injective after taking the quotient by the action of a group.

Let us denote by $\XplusSn$ the subset of
$\XplusTn$
consisting of tuples~$z$ with $z(a)=\Id$ for all~$a$ in the spanning tree~$\mathcal{S}$. We
denote by $\holSp$ the map to framed local systems; it is the restriction of $\holXpT$. It has already
been observed that $\holSp(z)$ is maximal (Lemma~\ref{lemma:holo-z-max}).\index{notation}{91@$\XplusSn$ (subspace of
  $\XplusTn$ associated with a spanning tree)}\index{notation}{92@$\holSp$ (restriction
  of $\holXpT$ to $\XplusSn$)}%
\index{definition}{$\mathcal{X}$-coordinates!positive ---}%
\index{definition}{positive!$\mathcal{X}$-coordinates}

\begin{teo}
  \label{teo:holonomy-ZS}
  The map $\holSp\colon  \XplusSn \to
  \Mfdelta( \Gamma_\mathcal{T},
  \Sp(2n,\R))$ is onto. Two tuples~$z$ and~$z'$ have the same image by $\holSp$
  if and only there is~$u$ in~$\OO(n)$ such that $z'(c) = uz(c)u^{-1}$ for
  all~$c$ in~$  A$.
\end{teo}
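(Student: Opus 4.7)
The plan is to reduce Theorem~\ref{teo:holonomy-ZS} to Theorem~\ref{thm:holonomy-sym-Y} by diagonalizing the value at~$a_0$. The key observation that enables this is that simultaneous $\OO(n)$-conjugation on all entries of a tuple in $\XplusSn$ produces an equivalent framed local system. Indeed, conjugating the generating symplectic basis at every vertex by the block-diagonal element $\bigl(\begin{smallmatrix} u & 0 \\ 0 & u \end{smallmatrix}\bigr)$ with $u\in \OO(n)$ preserves both the framing (since each Lagrangian $\Span(\mathbf{e}_v)$, $\Span(\mathbf{f}_v)$ is globally invariant under basis change by $u$) and the defining conditions $z(a)=\Id$ on the spanning tree $\mathcal{S}$; moreover $u^{-1}z(a)^{1/2}u = (u^{-1}z(a)u)^{1/2}$ for $u\in \OO(n)$, so the transition matrices $G_a(z)$ transform into $G_a(z')$ with $z'(c)=u^{-1}z(c)u$ for every arrow~$c$.

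For surjectivity, given a maximal framed $\delta$-twisted local system $x$, I invoke Theorem~\ref{thm:holonomy-sym-Y}: there exists $y \in \XplusSaz$ with $\holSaz(y)=x$. Since $\XplusSaz \subset \XplusSn$ by definition (just dropping the constraint $y(a_0) \in \Delta_n$), the element $y$ lies in $\XplusSn$ and $\holSp(y)=x$.

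For the description of the fibers, the \enquote{if} direction is a direct application of the basis-change observation above. For the \enquote{only if} direction, suppose $\holSp(z)=\holSp(z')$. The spectral theorem applied to the symmetric positive definite matrices $z(a_0)$ and $z'(a_0)$ yields elements $u_1,u_2\in \OO(n)$ such that $\tilde z(a_0)\coloneqq u_1^{-1}z(a_0)u_1$ and $\tilde z'(a_0)\coloneqq u_2^{-1}z'(a_0)u_2$ both lie in~$\Delta_n$ (diagonal, positive, nondecreasing entries). Setting $\tilde z(c)\coloneqq u_1^{-1}z(c)u_1$ and $\tilde z'(c)\coloneqq u_2^{-1}z'(c)u_2$ for all arrows~$c$, the preservation properties listed above (orthogonality for $c\in A_3$, symmetric positive definiteness for $c\in A_2$, the identity for $c\in \mathcal{S}$) ensure that $\tilde z, \tilde z'\in \XplusSaz$, and the basis-change observation gives $\holSaz(\tilde z)=\holSp(z)=\holSp(z')=\holSaz(\tilde z')$.

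Applying Theorem~\ref{thm:holonomy-sym-Y} to $\tilde z$ and $\tilde z'$ then produces $v\in \OO(n)$ commuting with $\tilde z(a_0)$ such that $\tilde z'(c)=v\tilde z(c)v^{-1}$ for every arrow~$c$. Composing the three conjugations yields $z'(c)=u_2 v u_1^{-1}\, z(c)\, u_1 v^{-1}u_2^{-1}$, so $u\coloneqq u_2 v u_1^{-1}\in \OO(n)$ is the desired element. The only step requiring genuine content beyond bookkeeping is the verification that $\OO(n)$-conjugation on~$z$ translates into the conjugate framed local system, which rests on the identity $u^{-1}z(a)^{1/2}u = (u^{-1}z(a)u)^{1/2}$ and the computation of how block-diagonal basis changes act on the explicit matrices $G_a(z)$ defined in Section~\ref{sec:space-coordinates-1}; I expect this compatibility check to be the only delicate point, and it is essentially forced by the construction.
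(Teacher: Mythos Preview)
Your argument is correct and follows essentially the same route as the paper's own proof: surjectivity via the inclusion $\XplusSaz \subset \XplusSn$, the \enquote{if} direction via the simultaneous basis change by $\bigl(\begin{smallmatrix} u & 0 \\ 0 & u \end{smallmatrix}\bigr)$, and the \enquote{only if} direction by diagonalizing $z(a_0)$ and $z'(a_0)$ to land in $\XplusSaz$ and then invoking Theorem~\ref{thm:holonomy-sym-Y}. The paper states this reduction in a single sentence, while you spell out the spectral-theorem step and the square-root compatibility $u^{-1}z(a)^{1/2}u=(u^{-1}z(a)u)^{1/2}$ explicitly; that compatibility check is indeed the only point requiring care, and your justification is correct.
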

\begin{proof}
  Surjectivity is assured by Theorem~\ref{thm:holonomy-sym-Y} since
  $\XplusSaz \subset \XplusSn$. It is
  clear that if two elements of $\XplusSn$ are conjugate by an
  element in $\OO(n)$ then they will give rise to equivalent framed local
  systems.

  Conversely, let $z$ and $z'$ be in $\XplusSn$ such that
  $\holSp(z)=\holSp(z')$. By definition of $\XplusSn$, the elements $z(a_0)$
  and $z'(a_0)$ are positive definite matrices. This implies that there are
  orthogonal matrices~$r$ and~$r'$ such that $r z(a_0)r^{-1}$ and $r'
  z'(a_0)r^{\prime -1}$ are in~$\Delta_n$. Thus the elements $y= (r z(a)
  r^{-1})_{a\in A}$ and
  $y'= ( r z'(a)
  r^{\prime -1})_{a\in A}$ belong to $\XplusSaz$. Furthermore $\holSp( y) =
  \holSp(z)=\holSp(z') =\holSp(y')$, hence
  Theorem~\ref{thm:holonomy-sym-Y} applies and gives an element $v\in \OO(n)$
  such that $y'(a) = v y(a)v^{-1}$ for all $a\in A$. Setting $u= r^{\prime -1}
  v r$, one has thus $z'(a) = u z(a)u^{-1}$ for all $a\in A$.
\end{proof}

\chapter{Topology of the space of maximal framed representations}
\label{sec:topol-space-maxim}

From the parametrizations introduced in the previous chapter, we construct
explicit retractions of the space of maximal framed representations to subspaces which have a simpler parametrization and whose topological properties are easier to describe. This enables us to give the homotopy type of the space of
maximal framed local systems as well as some topological properties of the
space of maximal representations.

This chapter relies on the previous chapters for the parametrizations of
maximal framed twisted local systems and on
Section~\ref{sec:decor-twist-local} for their correspondence with nontwisted
local systems. Hence all results here will be expressed for the space
$\Mf(S, \Sp(2n,\R))$.

\section{Subspace of \enquote{singular} local systems}
\label{sec:subsp-enqu-repr}

We consider here subsets of maximal local systems where the holonomy elements
have as few different eigenvalues as possible.

For any framed $\SL(2,\R)$-local system $(\mathcal{F}, \sigma)$, there is an associated framed $\Sp(2n,\R)$-local system
$(\breve{\mathcal{F}}, \breve{\sigma})$ defined as follows. If
$g= \left(\begin{smallmatrix} a & b \\ c & d
\end{smallmatrix}\right)
$ belongs to~$\SL(2,\R)$ and $\ell = \Span( e\cdot x+ f\cdot y)$ belongs to $\R \PP^1$, with $(e,f)$ being the canonical
basis of~$\R^2$ and $x,y \in \R$, then we get an embedding $\SL(2,\R) \rightarrow
\Sp(2n,\R)$ (the diagonal embedding) and a $\SL(2,\R)$-equivariant application
$\R\PP^1 \to \Lag{n}$ by setting
\[ \breve{g}\coloneqq
\begin{pmatrix}
  a\Id & b\Id \\ c\Id & d\Id
\end{pmatrix}, \text{ and }
  \breve{\ell} \coloneqq \Span( \mathbf{e}\cdot x+ \mathbf{f}\cdot y).\]
The local system~$\breve{\mathcal{F}}$ is
obtained from~$\mathcal{F}$ by \enquote{composing} with the above homomorphism
and the section~$\breve{\sigma}$ is the composition of~$\sigma$ with the
natural map $\mathcal{F}_{\R\PP^1} \to \breve{\mathcal{F}}_{\Lag{n}}$ induced
by the map $\R\PP^1 \to \Lag{n}$.

Moreover  the formula
\[ \Bigl(
  \begin{pmatrix}
    a & b \\ c & d
  \end{pmatrix},u \Bigr) \longmapsto
\begin{pmatrix}
  a u & b u \\ c u & d u
\end{pmatrix}\]
defines an homomorphism $\SL_2(\R) \times \OO(n)\to \Sp(2n,\R)$ and the above
embedding $\R\PP^1 \to \Lag{n}$ is also equivariant with respect to this
homomorphism. This implies that if~$\mathcal{E}$ is a $\OO(n)$-local system on~$S$,
then $\mathcal{F} \otimes
\mathcal{E}$ is a twisted $\Sp(2n, \R)$-local system, and $\breve{\sigma}$~is a framing of this local system.

The following is easily checked:
\begin{lem}
  The framed local system
  $(\mathcal{F} \otimes \mathcal{E}, \breve{\sigma})$ is transverse with
  respect to a triangulation~${\mathcal{T}}$ if and only if
  $(\mathcal{F}, \sigma)$ is tranverse with respect to~${\mathcal{T}}$.

  The local system $(\mathcal{F} \otimes \mathcal{E}, \breve{\sigma})$ is
  maximal if and only $(\mathcal{F}, \sigma)$ is maximal.
\end{lem}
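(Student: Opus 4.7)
The plan is to reduce both statements to local verifications: transversality is a property that can be checked arc-by-arc, and by Corollary~\ref{cor:max-iff-triangle-are-max} maximality can be checked triangle-by-triangle. I would first establish that for every $\ell$-gon~$\tau$ the configuration $f_\tau(\mathcal{F}\otimes \mathcal{E}, \breve{\sigma}) \in \Conf^\ell(\Lag{n})$ is the image of the line configuration $f_\tau(\mathcal{F}, \sigma) \in \Conf^\ell(\R\PP^1)$ under the natural map induced by $\ell \mapsto \breve{\ell}$, independently of the twist by $\mathcal{E}$. This is exactly the content of the $\SL(2,\R) \times \OO(n)$-equivariance already recorded just above the lemma: the $\OO(n)$-factor fixes the image of $\R\PP^1$ in $\Lag{n}$ pointwise, so tensoring with $\mathcal{E}$ has no effect on these configurations.

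For the transversality part, I would then observe that an arc $\alpha \in \mathcal{T}$ is a $2$-gon, so it suffices to check that a pair $(\ell_1, \ell_2)$ of lines is transverse in $\R^2$ if and only if the pair $(\breve{\ell}_1, \breve{\ell}_2)$ is transverse in $\R^{2n}$. Writing $\ell_i = \Span(v_i)$, the identification $\breve{\ell}_i \simeq \ell_i \otimes \R^n$ gives $\breve{\ell}_1 \cap \breve{\ell}_2 = (\ell_1 \cap \ell_2) \otimes \R^n$, and this vanishes iff $v_1, v_2$ are linearly independent. This part is essentially immediate.

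For the maximality part, by Corollary~\ref{cor:max-iff-triangle-are-max} applied both for $\Sp(2n, \R)$ and for $\SL(2,\R) = \Sp(2, \R)$, it is enough to show that for each triangle~$T$ of $\mathcal{T}$ one has $\mu^T(\mathcal{F}\otimes \mathcal{E}, \breve{\sigma}) = n$ if and only if $\mu^T(\mathcal{F}, \sigma) = 1$. I would iterate the additivity formula $\mu_n = \mu_p + \mu_q$ stated at the end of Proposition~\ref{maslov_prop} along the symplectic direct-sum decomposition $\R^{2n} = \bigoplus_{i=1}^{n} \R^2$ (with symplectic bases $(e_{0,i}, f_{0,i})$). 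Under the corresponding inclusion $\Lag{1}^n \hookrightarrow \Lag{n}$, the Lagrangian~$\breve{\ell}$ is exactly the diagonal $(\ell, \ldots, \ell)$, so additivity yields $\mu_n(\breve{\ell}_1, \breve{\ell}_2, \breve{\ell}_3) = n\cdot \mu_1(\ell_1, \ell_2, \ell_3)$ for any triple of lines. Since $\mu_1$ takes values in $\{-1, 0, 1\}$, the equality $\mu_n = n$ forces $\mu_1 = 1$ and conversely, giving the desired equivalence on each triangle.

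There is no real obstacle in this argument: it consists of recognizing that both conditions are local, that the invariants in question only see the underlying line configuration (by the $\OO(n)$-equivariance), and that the embedding $\R\PP^1 \hookrightarrow \Lag{n}$ is diagonal with respect to the split $\R^{2n} = (\R^2)^n$. The most delicate step is probably the first one, namely verifying that the configurations attached to $(\mathcal{F}\otimes \mathcal{E}, \breve{\sigma})$ at a given $\ell$-gon are completely determined by those of $(\mathcal{F}, \sigma)$ alone; this is where the equivariance noted in the excerpt does all the work.
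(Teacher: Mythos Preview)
Your proposal is correct and supplies exactly the kind of verification the paper omits: the paper states the lemma with only the phrase ``The following is easily checked'' and gives no proof at all. Your reduction to arc-by-arc and triangle-by-triangle checks, together with the identity $\mu_n(\breve{\ell}_1,\breve{\ell}_2,\breve{\ell}_3)=n\cdot\mu_1(\ell_1,\ell_2,\ell_3)$ obtained from the additivity noted after Proposition~\ref{maslov_prop}, is the natural way to flesh this out.

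One minor remark on references: Corollary~\ref{cor:max-iff-triangle-are-max} is stated under the standing hypothesis $R=\emptyset$ of that subsection. In the general setting (Section~\ref{sec:maxim-decor-local}) you should instead invoke Definition~\ref{df:maxim-decor-local-general} directly, which already phrases maximality as ``$\mu^T=n$ for every triangle of some triangulation''; your argument then goes through verbatim. This is purely a citation adjustment, not a gap in the mathematics.
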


As a consequence, there is a well defined application
\[ \Mf(S, \SL(2,\R)) \times \Loc( S, \OO(n)) \to
  \Mf(S, \Sp(2n,\R))\]
whose image is denoted by $\Mf(S, \SL(2,\R)\otimes \OO(n))$. Equally there is
an application \[\M(S, \SL(2,\R)) \times \Loc( S, \OO(n)) \to
  \M(S, \Sp(2n,\R))\]
whose image is denoted by $\M(S, \SL(2,\R)\otimes \OO(n))$.\index{notation}{93@$\Mf(S,
  \SL(2,\R)\otimes \OO(n))$, $\M(S, \SL(2,\R)\otimes \OO(n))$ (subspaces of
  singular representations)}

Last let us introduce a \enquote{base point} in
$\Mf( S, \SL(2,\R))$. For a triangulation ${\mathcal{T}}$, let
$(\mathcal{F}_\mathcal{T}, \sigma_\mathcal{T})$ be the framed twisted local
system associated with the element $x$ in $\XplusDeltaT{1}$ such that, for all
$c\in  A$, $x(c)=1$.  We will denote by
\[ \DfSLdOn \subset \Mf(S, \SL(2,\R)\otimes \OO(n)) \]
the subset which is the image of
$\{ (\mathcal{F}_\mathcal{T}, \sigma_\mathcal{T}) \} \times \Loc(S, \OO(n))$.
Also we denote by\index{notation}{94@$\DfSLdOn$, $\DSLdOn$ (subspaces of singular
  representation where all the edge coordinates are $\Id$)}
\begin{equation}
  \label{eq:2}
  \DSLdOn \subset \M(S, \SL(2,\R)\otimes \OO(n))
\end{equation}
the subset which is the image of
$\{ (\mathcal{F}_\mathcal{T}) \} \times \Loc(S, \OO(n))$.

\begin{teo}
  \label{teo:hom_type}
  \begin{enumerate}
  \item For every spanning tree $\mathcal{S}$,
    $\DfSLdOn$
    \ep{respectively $\Mf(S, \SL(2,\R)\otimes \OO(n))$}
    is the image by $\holSp$ of the subset of $\XplusSn$
    consisting of the elements $z$ such that, for every
    arrow~$a$ in~$A_2$, $z(a)=\Id$ \ep{respectively $z(a)\in \R_{>0} \Id$}.

  \item The spaces
    $\DfSLdOn$ and
    $\Mf(S, \SL(2,\R)\otimes \OO(n))$ are strong
    deformation retracts of $\Mf(S, \Sp(2n,\R))$.

  \item The spaces
    $\DfSLdOn$ and
    $\Mf(S, \SL(2,\R)\otimes \OO(n))$ are respectively
    diffeomorphic to $\OO(n)^{1-{\chi( \bar{S})}}/\OO(n)$ \ep{quotient by
      simultaneous conjugation} and to
    $\R^{r-3{\chi(\bar{S})}} \times (\OO(n)^{1-{\chi(
        \bar{S})}}/\OO(n))$.
  \end{enumerate}
\end{teo}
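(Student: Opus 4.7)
\emph{Proof plan.} The strategy is to exploit the explicit parametrization provided by Theorem~\ref{teo:holonomy-ZS}, which presents $\Mfdelta( \Gamma_\mathcal{T}, \Sp(2n,\R))$ as the quotient of $\XplusSn$ by the $\OO(n)$-action of simultaneous conjugation. With the counts of Section~\ref{sec:over-param} (namely $\sharp A_2/{\sim} =3|\chi(\bar{S})|+r$ and $\sharp \mathcal{R}= |\chi(\bar{S})|+1$), the defining conditions identify $\XplusSn$ with $\Sym^+(n,\R)^{3|\chi(\bar{S})|+r} \times \OO(n)^{|\chi(\bar{S})|+1}$ equipped with the diagonal conjugation action of $\OO(n)$. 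Everything else will be derived by identifying distinguished $\OO(n)$-invariant subsets of this product and constructing equivariant retractions onto them.

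First I would identify the subsets of $\XplusSn$ corresponding to the two stated subspaces. Inspection of the map $\holSp$ and the embedding $\SL(2,\R)\otimes\OO(n)\hookrightarrow \Sp(2n,\R)$ (via $g\otimes u \mapsto \bigl(\begin{smallmatrix} a u & b u \\ c u & d u\end{smallmatrix}\bigr)$) shows that the image of $z \in \XplusSn$ lies in $\Mf(S,\SL(2,\R)\otimes\OO(n))$ precisely when $z(a)\in\R_{>0}\Id$ for all $a\in A_2$, because then each $G_a(z)$ for $a\in A_2$ is the image of $\bigl(\begin{smallmatrix}0 & -t^{-1/2} \\ t^{1/2} & 0\end{smallmatrix}\bigr)\otimes\Id$ and each $G_a(z)$ for $a\in A_3$ is the image of $\bigl(\begin{smallmatrix}1 & -1\\ 1 & 0\end{smallmatrix}\bigr)\otimes z(a)$. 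Similarly, $\holSp(z)$ lies in $\DfSLdOn$ precisely when additionally $z(a)=\Id$ for every $a\in A_2$, since this is exactly the condition making the $\SL(2,\R)$-factor equal to the distinguished $(\mathcal{F}_\mathcal{T},\sigma_\mathcal{T})$. This proves the second assertion.

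Next I would construct the deformation retraction. The key building block is the $\OO(n)$-equivariant strong deformation retraction
\[ H\colon \Sym^+(n,\R)\times [0,1]\longrightarrow \Sym^+(n,\R),\qquad H(M,t) = \exp\!\Bigl((1-t)\log M + \tfrac{t}{n}\tr(\log M)\,\Id\Bigr),\]
which fixes $\R_{>0}\Id$ pointwise, equals the identity at $t=0$, and at $t=1$ sends $M$ to $(\det M)^{1/n}\Id$; equivariance under conjugation by $\OO(n)$ is immediate from $\log(uMu^{-1})=u(\log M)u^{-1}$ and the conjugation invariance of the trace. Applying $H$ factor by factor on the $\Sym^+(n,\R)^{3|\chi(\bar{S})|+r}$ component of $\XplusSn$, and leaving the $\OO(n)^{|\chi(\bar{S})|+1}$ factor untouched, yields a strong deformation retraction of $\XplusSn$ onto the subset where every $z(a)$, $a\in A_2$, is a positive scalar multiple of the identity. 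Composing further with the obvious equivariant retraction $\R_{>0}\ni t\mapsto t^{1-s}$ of each scalar factor onto~$\{1\}$ gives a strong deformation retraction onto the subset where $z(a)=\Id$ on all of $A_2$. Because $H$ is $\OO(n)$-equivariant and the conjugation action on $\XplusSn$ is free enough for the quotient to inherit a topology from the source, both retractions descend via $\holSp$ to strong deformation retractions of $\Mf(S,\Sp(2n,\R))$ onto $\Mf(S,\SL(2,\R)\otimes\OO(n))$ and onto $\DfSLdOn$, respectively. This establishes the first assertion.

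Finally, the third assertion follows by computing the quotients of the identified subsets. The subset yielding $\DfSLdOn$ is $\{\Id\}^{3|\chi(\bar{S})|+r}\times \OO(n)^{|\chi(\bar{S})|+1}$ on which the diagonal $\OO(n)$-action reduces to simultaneous conjugation, giving $\OO(n)^{|\chi(\bar{S})|+1}/\OO(n)$. The subset yielding $\Mf(S,\SL(2,\R)\otimes\OO(n))$ is $(\R_{>0}\Id)^{3|\chi(\bar{S})|+r}\times \OO(n)^{|\chi(\bar{S})|+1}$, and since the $\OO(n)$-action is trivial on the central first factor, the quotient is $\R_{>0}^{3|\chi(\bar{S})|+r}\times (\OO(n)^{|\chi(\bar{S})|+1}/\OO(n))$, which is homeomorphic to $\R^{3|\chi(\bar{S})|+r}\times (\OO(n)^{|\chi(\bar{S})|+1}/\OO(n))$ via the logarithm on each coordinate. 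The main subtlety — and the step I expect to be the most delicate — is ensuring that the deformation at the level of $\XplusSn$ descends to a \emph{continuous} strong deformation retraction on the (non-Hausdorff) moduli space; this requires verifying that $\holSp$ is a quotient map for the $\OO(n)$-action and that the constructed homotopy is $\OO(n)$-equivariant in each time slice, both of which follow from Theorem~\ref{teo:holonomy-ZS} and the manifest equivariance of~$H$.
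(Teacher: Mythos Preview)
Your proof is correct and follows essentially the same approach as the paper: both identify $\Mf(S,\Sp(2n,\R))$ with $\XplusSn/\OO(n)$ via Theorem~\ref{teo:holonomy-ZS}, identify the two subspaces as the loci $z(a)=\Id$ and $z(a)\in\R_{>0}\Id$ on~$A_2$, and construct the retractions by passing through $\log\colon\Sym^+(n,\R)\to\Sym(n,\R)$ and using the $\OO(n)$-invariant splitting $\Sym(n,\R)=\R\Id\oplus\Sym_0(n,\R)$ (your explicit formula for~$H$ is exactly this). One minor remark: the moduli space here is in fact Hausdorff (maximal representations are reductive, cf.\ Proposition~\ref{maxrep}), so your final caveat is unnecessary, and your time parameter runs opposite to the paper's convention for strong deformation retractions, but neither affects the argument.
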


\section{Coordinates for the subspaces}
Using an analysis similar to the one in Chapter~\ref{sec:local-systems-their}, the space
$\Loc(S, \OO(n))$ also admits parametrizations via local systems on
the quiver~$\Gamma_\mathcal{T}$. We will use the following result: the holonomy
map gives a surjective map
\[ \hol\colon  \mathcal{U} \longrightarrow \Loc( S, \OO(n))\]
where $\mathcal{U}$ is the subset of~$u$ in~$\OO(n)^{A}$  with
$u(a)=\Id$ for all $a\in A_2\sqcup \mathcal{S}$ and $u(a_3)u(a_2)u(a_1)=\Id$ for
every $3$-cycle $(a_1, a_2, a_3)$ in $A_3$.

The maps between the moduli spaces of framed local systems can be promoted to
maps between the parameters spaces:
\begin{align*}
  \XplusS{1}
  & \longrightarrow \XplusSn\\
  (z,u) & \longmapsto (z(a)u(a))_{a\in  A}\\
  \intertext{and}
  \mathcal{U}
  & \longrightarrow \XplusSn\\
  u & \longmapsto (u(a))_{a\in  A}
\end{align*}
The image of the first map will be denoted by $\XplusSLdOn$. It is the set of tuples $z$ in
$\XplusSn$ such that $z(a)\in \R_{>0} \Id$ for all $a\in A_2$;
it is isomorphic to $\R_{>0}^{r-3{\chi(\bar{S})}} \times \OO(n)^{1-{\chi(\bar{S})}}$.

The image of the second map is denoted by $\XplusFTOn$. It is the set of tuples $z$ such that $z(a)=\Id$ for all
$a\in A_2$; it is isomorphic to $ \OO(n)^{1-{\chi(\bar{S})}}$.

The above maps are also compatible with the holonomy maps to the moduli
spaces. This proves the assertion in Theorem~\ref{teo:hom_type} concerning the
parametrizations of $\Mf(S, \SL(2,\R) \otimes \OO(n))$ and
of $\DfSLdOn$ and their topologies.

\section{Retractions}
\label{sec:retractions}

Recall that a subspace $A$ of a topological space $X$ is called a \emph{strong
deformation retract} if there exists a strong deformation retraction $H\colon
X\times [0,1]\to X$, i.e.\ a continuous map~$H$ such that $H(a,t)=a$ for all
$a$ in $A$ and $t\in[0,1]$, $H(x,1)=x$ for all $x\in X$, and $H(x,0)\in A$ for
all $x\in X$.

To prove Theorem~\ref{teo:hom_type}, it is thus enough to find
$\OO(n)$-equivariant retractions of
$\XplusSn$ on
$\XplusFTOn$ and on
$\XplusSLdOn$ respectively. Or,
more concretely, to find $\OO(n)$-equivariant retractions of
$\Sym^{+}(n,\R)^{r-3{\chi( \bar{S})}} \times \OO(n)^{1-{\chi(
    \bar{S})}}$ on $\OO(n)^{1-{\chi( \bar{S})}}$ and on
$\R_{>0}^{r-3{\chi( \bar{S})}} \times \OO(n)^{1-{\chi( \bar{S})}}$.

Since the action of~$\OO(n)$ respects the decompositions into products, it all boils down to finding $\OO(n)$-equivariant retractions of
$\Sym^+(n,\R)$ on~$\{\Id\}$ and on $\R_{>0} \Id$. As the exponential
$\mathrm{exp}: \Sym(n,\R) \to \Sym^+(n,\R)$ is an $\OO(n)$-equivariant diffeomorphism, the
question translates now to finding equivariant retractions of $\Sym(n,\R)$ on~$\{0\}$ and
on $\R\Id$. The first one is given by the family of linear maps $\Sym(n,\R)\to
\Sym(n,\R)$: $\{t\Id\}_{t\in [0,1]}$. The second one is obtained similarly
using first that $\Sym(n,\R)$ is the direct sum of two $\OO(n)$-invariant
subspaces: $\R\Id$ and $\Sym_0(n,\R)$, the subspace of traceless matrices.

\section{Connected components}
\label{sec:connected-components}

An immediate consequence of Theorem~\ref{teo:hom_type} is:

\begin{cor}
\begin{itemize}
\item The space
  $\Mf(S,\Sp(2n,\R))$ has $2^{1-{\chi( \bar{S})}}$
  connected components.
\item The space
  $\Mf(S,\PSp(2n,\R))$ has $2^{1-{\chi( \bar{S})}}$ connected components
  if~$n$ is even. If~$n$ is odd, it is connected.
\end{itemize}
\end{cor}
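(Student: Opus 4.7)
The plan is to deduce the count of connected components directly from the homotopy equivalences in Theorem~\ref{teo:hom_type} (and its analogue for $\PSp(2n,\R)$), reducing everything to elementary $\pi_0$ computations. Writing $k=\abs{\chi(\bar{S})}$, the first bullet point of Theorem~\ref{teo:hom_type} (applied to the retract $\DfSLdOn \cong \OO(n)^{k+1}/\OO(n)$) gives
\[
\pi_0(\Mf(S,\Sp(2n,\R))) \;=\; \pi_0\!\left(\OO(n)^{k+1}/\OO(n)\right),
\]
where $\OO(n)$ acts by simultaneous conjugation, so the task is reduced to counting orbits of connected components.

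For the $\Sp(2n,\R)$ statement, I would argue as follows. Since conjugation is continuous and preserves the identity component of $\OO(n)$, the map $(\det,\dots,\det)\colon\OO(n)^{k+1}\to\{\pm 1\}^{k+1}$ is $\OO(n)$-invariant, so it descends to a surjection
\[
\OO(n)^{k+1}/\OO(n) \longrightarrow \{\pm 1\}^{k+1}.
\]
To see this is a bijection on $\pi_0$, I check that each fiber is connected and maps to a single component of the quotient. The fiber over $(\epsilon_0,\dots,\epsilon_k)$ equals $\prod_{i=0}^k \OO(n)_{\epsilon_i}$, where $\OO(n)_{+1}=\SO(n)$ and $\OO(n)_{-1}$ is its nontrivial coset; both are connected, hence so is the product. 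Its image in the $\OO(n)$-quotient is therefore connected, and I conclude that $\Mf(S,\Sp(2n,\R))$ has exactly $2^{k+1}$ components.

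For the $\PSp(2n,\R)$ statement I would invoke the analogous retraction result announced in the introduction (Theorem~\ref{teo:holonomy-ZGplus}), which identifies $\Mf(S,\PSp(2n,\R))$ up to homotopy with $(\OO(n)/\{\pm\Id\})^{k+1}/(\OO(n)/\{\pm\Id\})$, and then repeat the $\det$-argument, paying attention to whether $-\Id$ lies in $\SO(n)$. If $n$ is even, then $\det(-\Id)=1$, so $\{\pm\Id\}\subset\SO(n)$ and the quotient $\OO(n)/\{\pm\Id\}$ still has two components (indexed by $\det \bmod 2$); the connectedness-of-fibers argument goes through unchanged and yields $2^{k+1}$ components. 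If $n$ is odd, then $\det(-\Id)=-1$, so passing to $\OO(n)/\{\pm\Id\}$ identifies the two components of $\OO(n)$, making this group connected; the whole quotient is then connected, i.e.\ $\Mf(S,\PSp(2n,\R))$ is connected. The main obstacle is really the justification of the $\PSp$-analogue of Theorem~\ref{teo:hom_type}, i.e.\ verifying that the $\OO(n)$-equivariant deformation retraction of Section~\ref{sec:retractions} descends compatibly through the central quotient $\{\pm\Id\}$; once that is in hand, the component count is purely formal.
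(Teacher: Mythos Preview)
Your argument is correct and matches the paper's approach: the $\Sp(2n,\R)$ count is exactly the paper's ``immediate consequence of Theorem~\ref{teo:hom_type}'', and for $\PSp(2n,\R)$ you (like the paper) defer to the Section~\ref{cent_ext} analogue, correctly reducing to $\pi_0(\PO(n))$ via the parity of $\det(-\Id)$. The only inaccuracy is bibliographic: the deformation retract $\Mf(S,G)\simeq K\backslash K^{\abs{\chi(\bar S)}+1}$ is Corollary~\ref{cor:homotopy-type-max-central} (which rests on Theorem~\ref{teo:holonomy-ZGplus}), not Theorem~\ref{teo:holonomy-ZGplus} itself, and it is stated in Section~\ref{cent_ext} rather than the introduction.
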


A more general statement can be found in
Corollary~\ref{cor:number-connected-components-central}; parametrizations of
the space $\Mf(S,\PSp(2n,\R))$ are also described in
Chapter~\ref{cent_ext}.

In order to get some information about the space of maximal representations we
prove:
\begin{teo}
  \label{teo:connected-components-dec-to-max}
  If~$r=0$ \ep{i.e.\ $R=\emptyset$ and $S=\bar{S}$},
  the natural map
  \[\Mf(S, \Sp(2n, \R))\to \M(S, \Sp(2n, \R))\]
  induces a bijection at the level of connected
  components.
\end{teo}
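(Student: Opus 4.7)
The plan is to split the statement into surjectivity and injectivity on $\pi_0$. Surjectivity is immediate from Proposition~\ref{maxrep}(a): every maximal local system admits a framing, so the forgetful map $p\colon \Mf(S, \Sp(2n, \R)) \to \M(S, \Sp(2n, \R))$ is surjective at the space level, hence on $\pi_0$. The substantive task is to show injectivity on $\pi_0$.

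To prove injectivity I would reduce to the totally singular subspaces. First, I would observe that the natural map $\DfSLdOn \to \DSLdOn$ is a homeomorphism: both spaces are canonically identified with $\Loc(S, \OO(n))$. On the framed side, an element of $\DfSLdOn$ is a class of $(\mathcal{F}_\mathcal{T} \otimes \mathcal{E}, \breve{\sigma}_\mathcal{T})$ for some $\OO(n)$-local system $\mathcal{E}$; the centralizer of the image of $\mathcal{F}_\mathcal{T}$ in $\Sp(2n, \R)$ is the diagonal $\OO(n)$ (using Zariski density of the Fuchsian representation in the diagonally embedded $\SL(2, \R)$), and any element of this centralizer automatically preserves $\breve{\sigma}_\mathcal{T}$. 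Thus the $\Sp(2n, \R)$-equivalence reduces to $\OO(n)$-conjugation on $\mathcal{E}$. The same centralizer computation identifies $\DSLdOn$ with $\Loc(S, \OO(n))$ (no framing constraint is needed, but none is removed either), and the forgetful map becomes the identity under these identifications.

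Second, I would prove that $\DSLdOn$ is a strong deformation retract of $\M(S, \Sp(2n, \R))$, paralleling Theorem~\ref{teo:hom_type} for $\Mf$. The retraction of $\XplusSn$ onto $\XplusSLdOn$ constructed in Section~\ref{sec:retractions} is built from $\OO(n)$-equivariant deformations of $\Sym^+(n, \R)$ onto $\R_{>0}\Id$, acting independently on each coordinate. Composed with the surjection $\XplusSn \to \Mf \to \M$, this retraction descends to a deformation retraction of $\M$ onto $\DSLdOn$; the compatibility with the $\Sp(2n, \R)$-equivalence is ensured by the $\OO(n)$-equivariance. Combining the two strong deformation retractions with the homeomorphism $\DfSLdOn \cong \DSLdOn$ in a commutative square then forces $p_{*}\colon \pi_0(\Mf) \to \pi_0(\M)$ to be a bijection.

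The main obstacle lies in the second step: one must verify that the deformation retraction constructed on $\Mf$ descends to a well-defined continuous deformation on $\M$, which amounts to checking that the homotopy preserves the fibers of $p$. This is an equivariance property that follows from the $\OO(n)$-equivariant and pointwise-in-coordinates nature of the construction in Section~\ref{sec:retractions}, together with the description of the fibers of $\holSp$ given by Theorem~\ref{teo:holonomy-ZS}.
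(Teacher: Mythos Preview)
Your surjectivity argument and your treatment of the identification $\DfSLdOn \to \DSLdOn$ are fine (the paper proves the latter as Proposition~\ref{prop:decor-sing-repr-uniq} by directly computing that the boundary holonomies of singular representations have a unique invariant Lagrangian, but your centralizer argument is an acceptable alternative).

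The genuine gap is in your second step. You want the retraction $H_t$ of $\Mf(S,\Sp(2n,\R))$ onto $\DfSLdOn$ to descend to a retraction of $\M(S,\Sp(2n,\R))$ onto $\DSLdOn$, i.e.\ you need that $p(x)=p(y)$ implies $p(H_t(x))=p(H_t(y))$ for all~$t$. The $\OO(n)$-equivariance you invoke only controls the fibers of $\holSp\colon \XplusSn\to \Mf$; it says nothing about the fibers of $p\colon \Mf\to \M$, which consist of all framings of a given maximal representation. Two different framings of the same~$\rho$ produce \emph{different} $\mathcal{X}$-coordinate tuples (different edge \emph{and} angle invariants), and your retraction, which shrinks the edge invariants while keeping the angle invariants fixed, will in general carry them to genuinely different underlying representations. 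So the descent is not established.

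The paper avoids this problem entirely by a different mechanism: rather than pushing the retraction down, it \emph{lifts} paths up. It proves a path-lifting property for $p$ (Proposition~\ref{prop:path-lift-prop-dec-to-max}, via Lemmas~\ref{lem:path-lift-prop-hom-to-max} and~\ref{lem:path-lift-prop-dec-to-max-hom}). Given $\rho_0,\rho_1\in \DSLdOn$ joined by a path~$\sigma$ in~$\M$, one lifts~$\sigma$ to~$\Mf$; the endpoints of the lift are the \emph{unique} framings of $\rho_0,\rho_1$ (by Proposition~\ref{prop:decor-sing-repr-uniq}), hence lie in $\DfSLdOn$; then Theorem~\ref{teo:hom_type} gives a path between them inside $\DfSLdOn$, which projects to a path in $\DSLdOn$. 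The path-lifting step is exactly the missing ingredient your argument lacks.
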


Theorem~\ref{teo:connected-components-dec-to-max} will be proved in Section~\ref{sec:decor-sing-repr}. In this way we obtain a new proof of the  following statement:

\begin{cor}[{\cite[Theorem~4]{Strubel}}]\label{con_comp_undecor}
  If~$r=0$, the number of components of $\M(S,\Sp(2n,\R))$ is
  $2^{1-{\chi( {S})}}$.
\end{cor}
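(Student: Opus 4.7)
The proof is essentially an immediate combination of two results already established above. The plan is to invoke Theorem~\ref{teo:connected-components-dec-to-max}, which asserts that under the hypothesis $r=0$ (so that $S=\bar{S}$), the natural forgetful map
\[
\Mf(S,\Sp(2n,\R)) \longrightarrow \M(S,\Sp(2n,\R))
\]
induces a bijection on the sets of connected components. Once this is granted, counting components of the target reduces to counting components of the source.

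The first part of the Corollary preceding Theorem~\ref{teo:connected-components-dec-to-max} (itself a consequence of Theorem~\ref{teo:hom_type}) asserts that $\Mf(S,\Sp(2n,\R))$ has exactly $2^{\abs{\chi(\bar{S})}+1}$ connected components. Under the assumption $r=0$ we have $S=\bar{S}$, hence $\chi(S)=\chi(\bar{S})$, and so the number of components of $\Mf(S,\Sp(2n,\R))$ equals $2^{\abs{\chi(S)}+1}$.

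Combining these two inputs yields the stated count $2^{\abs{\chi(S)}+1}$ for $\M(S,\Sp(2n,\R))$. There is no additional argument to carry out at this stage: the real work is packaged in Theorem~\ref{teo:connected-components-dec-to-max}, whose proof is deferred to Section~\ref{sec:decor-sing-repr}, and in the homotopy analysis of Section~\ref{sec:topol-space-maxim} via the retraction onto $\DfSLdOn\simeq \OO(n)^{\abs{\chi(\bar{S})}+1}/\OO(n)$ (whose number of components, by a standard count of conjugacy classes of commuting tuples of elements of order at most~$2$ in $\OO(n)$, is $2^{\abs{\chi(\bar{S})}+1}$). The only thing one must be slightly careful about in writing the final line is to make explicit the identification $\chi(S)=\chi(\bar{S})$ forced by the hypothesis $r=0$, so that the exponent in the final formula matches the one in the statement of the corollary.
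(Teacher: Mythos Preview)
Your proposal is correct and follows exactly the paper's approach: the corollary is stated as an immediate consequence of Theorem~\ref{teo:connected-components-dec-to-max} together with the earlier count of components of $\Mf(S,\Sp(2n,\R))$, and you reproduce this. One small caution: your parenthetical explanation for why $\OO(n)^{\abs{\chi(\bar S)}+1}/\OO(n)$ has $2^{\abs{\chi(\bar S)}+1}$ components is not the right reason; the count comes simply from $\pi_0(\OO(n))\simeq\Z/2\Z$ and the fact that conjugation preserves the determinant (hence the component), not from any analysis of elements of order at most~$2$.
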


\section{Path lifting property}
\label{sec:path-lift-prop}

A continuous map $f\colon X\to Y$ is said to have \emph{the path-lifting property}
if for every continuous $\sigma\colon  [0,1]\to Y$ and every $x\in
f^{-1}(\sigma(0))$ there exists, up to reparametrization, a lift
$\tilde{\sigma}$ of~$\sigma$ starting at~$x$, namely $\tilde{\sigma}\colon [0,1]\to X$ is
continuous, $\tilde{\sigma}(0)=x$ and there is a continuous, increasing,
surjective function $\psi\colon [0,1]\to [0,1]$ such that $f\circ \tilde{\sigma}=
\sigma \circ \psi$. This is the case, for example, when $f$~is a covering;
this is also the case when, for any~$\sigma$ as above,
the space $\sigma^*(f)=\{ (t,x) \in [0,1]\times X \mid \sigma(t)=f(x)\}$ is
path-connected. A piecewise linear map between simplicial spaces that is surjective and with connected fibers
has the path lifting property.

\begin{prop}\label{prop:path-lift-prop-dec-to-max}
  If~$r=0$, the map  $\Mf(S, \Sp(2n, \R))\to \M(
  S, \Sp(2n, \R))$ has the path-lifting property.
\end{prop}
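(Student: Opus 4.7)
The plan is to reduce to path-lifting at the level of representations (rather than conjugacy classes) and then continuously extend the framing along the lifted path.

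\medskip

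\noindent\textbf{Step 1 (lifting to representations).} Let $\sigma\colon[0,1]\to \M(S,\Sp(2n,\R))$ be continuous, and let $\tilde x\in \Mf(S,\Sp(2n,\R))$ lift $\sigma(0)$, represented by $(\rho_0,\{L_j^0\}_{j=1}^k)$. Because maximal representations are reductive (Proposition~\ref{maxrep}), the map $\Hom^{\mathrm{max}}(S,\Sp(2n,\R))\to\M(S,\Sp(2n,\R))$ admits local continuous sections through $\rho_0$ (Luna-type slice for the $\Sp(2n,\R)$-action on the reductive locus). Using this, together with the compactness of $[0,1]$, we first lift $\sigma$ to a continuous path $\rho\colon[0,1]\to\Hom^{\mathrm{max}}(S,\Sp(2n,\R))$ with $\rho(0)=\rho_0$.

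\medskip

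\noindent\textbf{Step 2 (extending the framing).} It suffices to extend each Lagrangian coordinate separately: for each boundary component $c_j$, I want a continuous $L_j\colon[0,1]\to\Lag{n}$ with $\rho(t)(c_j)\,L_j(t)=L_j(t)$ and $L_j(0)=L_j^0$. Consider
\[
   F_j\coloneqq\{(t,L)\in[0,1]\times\Lag{n}\mid \rho(t)(c_j)\cdot L=L\},
\]
a closed, and (by the compactness of $\Lag{n}$) proper subspace over $[0,1]$. By a standard compactness and partition-of-unity argument, it is enough to produce, in a neighborhood of any $t_0\in[0,1]$, a continuous local section of $F_j\to[0,1]$ through a prescribed point of the fiber over~$t_0$.

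\medskip

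\noindent\textbf{Step 3 (local sections of $F_j$).} Here the maximality of $\rho$ is crucial. For a maximal representation, $\rho(t)(c_j)$ lies in the stabilizer of $L_j(t_0)$, and an inspection using the $\mathcal{X}$-coordinates of Section~\ref{sec_def_coord_max} (restricted to a triangulation having $c_j$ as an edge-cycle, compare Lemma~\ref{lem:ortho-basis-lag} and the positivity of the quadruples built from the $L_j$'s) shows that $\rho(t)(c_j)$ is conjugate into a block-diagonal form whose fixed-Lagrangian locus is a smooth manifold continuously varying with $t$. More precisely, after conjugating by a continuously chosen element of $\Sp(2n,\R)$, one reduces $\rho(t)(c_j)$ to a form for which the component of $\mathrm{Fix}_{\rho(t)(c_j)}(\Lag{n})$ containing $L_j(t_0)$ is homogeneous under a connected reductive group varying continuously in $t$. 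Picking the distinguished point in this homogeneous space gives the required local section.

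\medskip

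\noindent\textbf{Main obstacle.} The genuine difficulty is that the Jordan type of $\rho(t)(c_j)$ may jump along the path, causing $\dim\mathrm{Fix}_{\rho(t)(c_j)}(\Lag{n})$ to be merely upper semi-continuous. The proof must show that the \emph{connected component} of $\mathrm{Fix}_{\rho(t_0)(c_j)}(\Lag{n})$ containing $L_j(t_0)$ deforms continuously, even across these jumps. To handle this, I would invoke the explicit parameterization of Theorem~\ref{teo:holonomy-ZS} and Theorem~\ref{teo:hom_type}: both $\Mf$ and $\M$ are realized as $\OO(n)$-quotients of semi-algebraic sets, the forgetful map is semi-algebraic (in fact piecewise linear after suitable triangulation), and its fibers are products $\prod_j \mathrm{Fix}_{\rho(c_j)}(\Lag{n})/Z(\rho)$ that are connected by the analysis of positive quadruples and quintuples (Proposition~\ref{prop:standard-basis-positive-4-uple} and Section~\ref{sec:positive-quintuples}). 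The path-connectedness of the pulled-back space $\sigma^{\ast}(\mathrm{forget})$ then follows from the criterion recalled in Section~\ref{sec:path-lift-prop}, giving the path-lifting property.
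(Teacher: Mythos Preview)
Your overall factorization through the $\Hom$-level (Step~1) is the same as the paper's, and the Luna-slice argument there is a legitimate alternative to the paper's observation that fibers of $\Hommax\to\M$ are $\Sp(2n,\R)$-orbits and hence connected.

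The genuine gap is in Step~3 and the ``main obstacle'' paragraph. You assert that the fibers $\prod_j\mathrm{Fix}_{\rho(c_j)}(\Lag{n})/Z(\rho)$ are connected, citing the analysis of positive quadruples and quintuples. This is false. Already for $n=1$, a maximal representation generically has hyperbolic boundary holonomy, and for a hyperbolic $g\in\Sp(2,\R)$ the set $\mathrm{Fix}_g(\Lag{1})$ consists of exactly two points (the attracting and repelling lines); the centralizer $Z(\rho)=\{\pm\Id\}$ acts trivially on $\Lag{1}$, so the fiber of $\Mf\to\M$ over such a $\rho$ has $2^k$ points. The results on positive quadruples and quintuples (Proposition~\ref{prop:standard-basis-positive-4-uple}, Section~\ref{sec:positive-quintuples}) concern configurations of Lagrangians attached to \emph{different} vertices of a triangulation and say nothing about the connectivity of the fixed-Lagrangian set of a single element. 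Your earlier attempt in Step~3 --- tracking the ``connected component containing $L_j(t_0)$'' continuously --- runs into exactly the obstacle you name and is not resolved.

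The paper sidesteps this entirely. For the map $\Homfmax\to\Hommax$ it does \emph{not} argue via connected fibers. Instead it observes that, boundary component by boundary component, the relevant map $\mathcal{Q}=\{(g,L):gL=L\}\to\mathcal{P}=\{g:\exists L,\ gL=L\}$ is dominated by the surjective \emph{algebraic} map $\Sp(2n,\R)\times P\to\mathcal{P}$, $(h,p)\mapsto hph^{-1}$. After triangulating, this becomes a surjective piecewise-linear map, for which path-lifting (with reparameterization) holds; pushing forward along the surjection $\Sp(2n,\R)\times P\to\mathcal{Q}$ gives path-lifting for $\mathcal{Q}\to\mathcal{P}$. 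This abstract PL argument is what replaces your unproven continuity claim.
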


We will work in this section with spaces of representations instead of local
systems (cf.\ Section~\ref{sec:decor-sympl-local}).

Let $\Hommax(S, \Sp(2n, \R))$ and $\Homfmax(S, \Sp(2n, \R))$ the spaces of
maximal representations and of framed maximal representations (cf.\ Lemma~\ref{lem:decor-sympl-local-holonomy}). The
proposition is a direct consequence of the following lemmas.

\begin{lem}
  \label{lem:path-lift-prop-hom-to-max}
  The map $p\colon \Hommax(S, \Sp(2n, \R))\to \M(S,
  \Sp(2n,\R))$ has the path lifting property.
\end{lem}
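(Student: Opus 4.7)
Plan: Follow the hint given just before the lemma: realize $p$ as a surjective piecewise-linear map between simplicial spaces with connected fibers, and then invoke the stated general principle.

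First, I would equip both sides with compatible semi-algebraic structures. The space $\Hom(\pi_1(S),\Sp(2n,\R))$ is a real algebraic variety (since $\pi_1(S)$ is finitely generated free), and the maximality condition $T_\rho=-n|\chi(S)|$ cuts out a closed semi-algebraic subset $\Hommax$; this uses only that the Toledo invariant, an integer-valued continuous function on $\Hommax$ by Theorem~\ref{teo:toledo_maslov} (and by its Milnor--Wood bound), is locally constant on the closed semi-algebraic locus where it attains its maximum. By Proposition~\ref{maxrep}(c) every point of $\Hommax$ is reductive, so all $\Sp(2n,\R)$-orbits in $\Hommax$ are closed and the quotient $\M(S,\Sp(2n,\R))$ is a Hausdorff space inheriting a natural semi-algebraic structure under which $p$ is a semi-algebraic map.

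Second, I would check connectedness of the fibers. The fiber $p^{-1}([\rho])$ is the $\Sp(2n,\R)$-conjugation orbit of $\rho$; since $\Sp(2n,\R)$ is a connected Lie group, this orbit, being the continuous image of $\Sp(2n,\R)$ under the orbit map $g\mapsto g\rho g^{-1}$, is connected.

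Third, I would apply the Łojasiewicz--Hardt triangulation theorem for semi-algebraic maps to obtain compatible triangulations of $\Hommax$ and $\M$ making $p$ a surjective simplicial map, and then invoke the criterion stated in the text: a surjective PL map between simplicial spaces with connected fibers has the path-lifting property. Concretely, given $\sigma\colon[0,1]\to\M$ and a lift $\rho_0$ of $\sigma(0)$, one refines a triangulation of $[0,1]$ so that $\sigma$ is simplicial into the triangulation of $\M$; over each closed simplex of $[0,1]$, the preimage $\sigma^*(p)$ restricted there is a PL set fibered over a simplex with connected fibers, hence path-connected, so the lift can be constructed cell-by-cell starting at $(0,\rho_0)$ (up to a monotone reparameterization of $[0,1]$).

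The main obstacle is producing the simplicial structure compatible with the quotient $p$ when the $\Sp(2n,\R)$-action has nontrivial, varying stabilizers. The cleanest way to handle this is to stratify $\Hommax$ by stabilizer type (finitely many Lie-type strata, each $\Sp(2n,\R)$-invariant and semi-algebraic) and apply Hardt triviality on each stratum, then glue. As a softer alternative that avoids a global triangulation, one can replace Step~3 by a compactness argument: for each $[\rho]\in\M$ a real Luna-type slice through the reductive point $\rho$ produces a local continuous section of $p$; cover $\sigma([0,1])$ by finitely many such sectioned neighborhoods, and, using the transitive $\Sp(2n,\R)$-action on each fiber together with connectedness of fibers, adjust the local sections by a continuous family of conjugators so that they agree at the overlap times, assembling a global lift beginning at $\rho_0$.
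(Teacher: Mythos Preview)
Your proposal is correct, but the paper takes a shorter and different route: rather than triangulating~$p$ and invoking the PL criterion, it uses the \emph{other} criterion from Section~\ref{sec:path-lift-prop}, namely that $p$ has the path-lifting property as soon as $\sigma^*(p)$ is path-connected for every path~$\sigma$. The paper argues this directly: the fibers of $\sigma^*(p)\to[0,1]$ are $\Sp(2n,\R)$-orbits, hence connected; since this projection is a quotient by a connected group action, the total space $\sigma^*(p)$ is connected; and finally connectedness coincides with path-connectedness here by the real-algebraic nature of the spaces involved. Your Luna-slice alternative is essentially what would make that last step fully explicit (local sections of~$p$ yield local sections of $\sigma^*(p)\to[0,1]$, hence local and then global path-connectedness), so the two arguments converge underneath. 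One caution on your primary route: producing a genuine simplicial structure for~$p$ (rather than just Hardt triviality over strata) is delicate because $p$ is not proper, its fibers being noncompact $\Sp(2n,\R)$-orbits; your slice-based alternative avoids this and is the cleaner of your two options.
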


\begin{proof}
  As these spaces are real algebraic sets, con\-nect\-ed\-ness and path-con\-nect\-ed\-ness are
  equivalent notions here. In fact, the same is true for the spaces $\sigma^*
  (p)$ for any continuous path $\sigma\colon [0,1] \to \M(S,
  \Sp(2n,\R))$. Actually the fibers of $\sigma^* (p)\to [0,1]$ are connected,
  since they are orbits for the action of the connected group $\Sp(2n,\R)$,
  and thus the total space is indeed connected.
\end{proof}

\begin{lem}
  \label{lem:path-lift-prop-dec-to-max-hom}
  The map $\Homfmax(S, \Sp(2n, \R)) \to
  \Hommax(S, \Sp(2n, \R))$ has the path lifting property.
\end{lem}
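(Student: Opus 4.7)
\textbf{Proof plan for Lemma~\ref{lem:path-lift-prop-dec-to-max-hom}.}
The forgetful map $F\colon \Homfmax(S,\Sp(2n,\R)) \to \Hommax(S,\Sp(2n,\R))$ sends $(\rho,L_1,\dots,L_k)$ to~$\rho$; its fiber over~$\rho$ is the product
\[
  F^{-1}(\rho) \;=\; \prod_{j=1}^{k} \mathrm{Fix}_{\Lag{n}}\bigl(\rho(c_j)\bigr).
\]
Because the constraints on the different~$L_j$ are independent, it suffices to establish path lifting one boundary component at a time. So, let us fix a peripheral element $c = c_j$, a continuous path $\sigma\colon [0,1] \to \Hommax(S,\Sp(2n,\R))$, $\rho_t \coloneqq \sigma(t)$, and an initial Lagrangian~$L_0$ with $\rho_0(c)\cdot L_0 = L_0$; we must produce a continuous family $L_t\in \Lag{n}$ with $\rho_t(c) \cdot L_t = L_t$.

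First I would form the closed semi-algebraic relative space
\[
  \mathcal{Z} \;=\; \bigl\{ (t,L) \in [0,1]\times \Lag{n} \,\big|\, \rho_t(c) \cdot L = L\bigr\},
\]
whose projection $\pi\colon \mathcal{Z}\to [0,1]$ is proper (as $\Lag{n}$ is compact) and, by Proposition~\ref{maxrep}(a) applied to each~$\rho_t$, surjective. By Hardt's semi-algebraic trivialization theorem, there is a finite subdivision $0=t_0<t_1<\cdots<t_N=1$ over whose open strata~$(t_{i-1},t_i)$ the map~$\pi$ is a locally trivial fiber bundle; lifting inside each open stratum is immediate. The task therefore reduces to matching such partial lifts across each transition~$t_i$: given a continuous selection of fixed Lagrangians on $[0,t_i)$, extend it by left-continuity to a point of~$\mathcal{Z}_{t_i}$ and then extend further into $[t_i, t_{i+1})$.

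The left-continuity uses only compactness of~$\mathcal{Z}$ together with the curve selection lemma for semi-algebraic maps: any cluster point~$L_{t_i}^{-}$ of $L_t$ as $t \to t_{i}^{-}$ lies in $\mathcal{Z}_{t_i}$, and we may deform the selection on a small subinterval so that it genuinely converges to such a limit. The right-extension is the main obstacle and is where the structure of maximal peripheral elements enters. For a maximal representation, $\rho_t(c)$ has a controlled normal form: it preserves a symplectic direct sum decomposition of~$\R^{2n}$ into blocks (the generalized real eigenspaces together with their symplectic duals), and $\mathrm{Fix}_{\Lag{n}}(\rho_t(c))$ is a product indexed by the blocks, each factor being an explicit homogeneous variety (a point for a Shilov-hyperbolic block, a Lagrangian subvariety within a fixed isotropic subspace for a unipotent block, etc.). As~$t$ varies through~$t_i$, blocks can only merge (semi-continuity of the spectrum), so that~$\mathcal{Z}_{t_i}$ contains the limits of all nearby selections and every point of~$\mathcal{Z}_{t_i}$ is a limit from both sides of some continuous semi-algebraic selection on the adjacent strata; this can be checked block-by-block, using the classification of maximal peripheral elements in~\cite{BIW, BILW, Strubel}.

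The hard part will therefore be the right-extension step: one must verify, in the normal form, that any Lagrangian fixed by the limit~$\rho_{t_i}(c)$ is the left-limit of a continuous choice of fixed Lagrangians for~$\rho_t(c)$ with $t>t_i$. Granting this block-by-block semi-continuity, iterating the two steps through the finite stratification produces the desired continuous lift $t \mapsto L_t$ starting at $L_0$ and projecting surjectively to~$[0,1]$, proving the path-lifting property.
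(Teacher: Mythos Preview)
Your reduction to a single boundary component and to lifting paths through the incidence variety $\mathcal{Z}=\{(t,L):\rho_t(c)\cdot L=L\}$ is exactly the paper's first move. From there, however, the paper takes a much shorter route that bypasses your ``right-extension'' step entirely. Instead of analyzing how $\mathrm{Fix}_{\Lag{n}}(g)$ degenerates, the paper resolves the incidence variety by a smooth source: the map
\[
\Sp(2n,\R)\times P\longrightarrow \mathcal{Q}=\{(g,L):g\cdot L=L\},\qquad (h,p)\longmapsto (hph^{-1},\,h\cdot L_0)
\]
is surjective, so it suffices to lift paths through the conjugation map $\Sp(2n,\R)\times P\to\mathcal{P}$, $(h,p)\mapsto hph^{-1}$. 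This last map is algebraic, hence can be made simplicial, and the path-lifting property then follows from general facts about surjective piecewise-linear maps (as stated just before Proposition~\ref{prop:path-lift-prop-dec-to-max}). No information about the spectral structure of $\rho_t(c)$ is used beyond the existence of a fixed Lagrangian.

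In your plan the genuine gap is precisely the step you flag: the ``block-by-block semi-continuity'' assertion that every $L\in\mathcal{Z}_{t_i}$ is the left limit of a continuous selection on $(t_i,t_{i+1})$. Hardt triviality on the open strata says nothing about how the fibers attach at the endpoints, and the claim that ``blocks can only merge'' does not by itself control which Lagrangians in the limit fiber are reachable from the generic fiber. Carrying this out would require a careful case analysis of how $\mathrm{Fix}_{\Lag{n}}(g)$ varies under one-parameter deformations of $g$, and the appeal to the classification of maximal peripheral elements in~\cite{BIW,BILW,Strubel} is a red herring here: the paper's argument shows that maximality plays no role in this lemma beyond ensuring the map is onto. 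The resolution by $\Sp(2n,\R)\times P$ is both shorter and more robust; I would recommend adopting it in place of the stratification-and-matching strategy.
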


\begin{proof}
  Since we can choose independently the framing at the punctures (see
  Lemma~\ref{lem:decor-sympl-local-holonomy}), it is enough to answer
  positively the following problem:
  \begin{quote}
    Let $\mathcal{P}\subset \Sp(2n,\R)$ be the set of elements $g$ having at
    least one invariant Lagrangian and $\mathcal{Q}=\{ (g,L) \in
      \mathcal{P}\times \Lag{n}\mid g\cdot L=L\}$. Then the natural map
      $\pi\colon \mathcal{Q} \to \mathcal{P}$ has the path lifting property.
  \end{quote}
  Let $L_0$ be the base point of $\Lag{n}$ and $P$ its stabilizer (see
  Section~\ref{sec:lagr-grassm-its}). We then have a surjective map
  $\Sp(2n,\R) \times P \to \mathcal{Q} \mid (h,p) \mapsto (hph^{-1}, h\cdot
  L_0)$ and it is enough to prove that its composition with~$\pi$ has the path
  lifting property, i.e.\ that
  $\Sp(2n,\R) \times P \to \mathcal{P} \mid (h,p) \mapsto hph^{-1}$ has the
  path lifting property. This last map is algebraic so that there exists
  simplicial structures on $\Sp(2n,\R)\times P$ and on~$\mathcal{P}$ such that
  the map is piecewise linear; since it is as well surjective, it has the path
  lifting property.
\end{proof}

\section{Framing of singular representations}
\label{sec:decor-sing-repr}
In this section we prove that any singular representation has a unique framing:

\begin{prop}
  \label{prop:decor-sing-repr-uniq}
  The map from Equation~\eqref{eq:2}
  \[\DfSLdOn
  \to \DSLdOn\]
  is an isomorphism.
\end{prop}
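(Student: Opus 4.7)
My strategy is to show that both $\DfSLdOn$ and $\DSLdOn$ are, via the natural construction, in bijection with the moduli space $\Loc(S,\OO(n))$, and that the forgetful map becomes the identity under these identifications. This reduces the proposition to an injectivity statement for one map, for which the key tool is Zariski density of the underlying Fuchsian representation.

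First, consider the map $\Phi^f \colon \Loc(S,\OO(n)) \to \DfSLdOn$ sending $\mathcal{E} \mapsto (\mathcal{F}_\mathcal{T} \otimes \mathcal{E}, \breve\sigma_\mathcal{T})$. It is surjective by definition of $\DfSLdOn$. Bijectivity follows by comparing parameter spaces: the discussion following Theorem~\ref{teo:hom_type} identifies $\DfSLdOn$ with $\XplusFTOn/\OO(n)$ via $\holSp$, and $\XplusFTOn$ is canonically the same as the space $\mathcal{U}$ parameterizing $\OO(n)$-local systems on~$\Gamma_\mathcal{T}$ (both consist of tuples with $z(a)=\Id$ for $a\in A_2 \cup \mathcal{S}$, values in $\OO(n)$ on $A_3$, and the cycle relations). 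The analog of Theorem~\ref{teo:holonomy-ZS} for $\OO(n)$-local systems identifies $\Loc(S,\OO(n))$ with $\mathcal{U}/\OO(n)$ under the same simultaneous conjugation action. Thus $\Phi^f$ is a bijection.

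Next, consider $\Phi \colon \Loc(S,\OO(n)) \to \DSLdOn$ given by $\mathcal{E} \mapsto \mathcal{F}_\mathcal{T} \otimes \mathcal{E}$. Again surjectivity is immediate. For injectivity, write $\rho_0 = \mathrm{hol}(\mathcal{F}_\mathcal{T})\colon \pi_1(S) \to \SL(2,\R)$ and $\epsilon,\epsilon'$ for the holonomies of $\mathcal{E},\mathcal{E}'$. Since $\mathcal{F}_\mathcal{T}$ is a maximal $\SL(2,\R)$-local system, $\rho_0$ is discrete, faithful, and its image is Zariski-dense in $\SL(2,\R)$. Suppose $g \in \Sp(2n,\R)$ intertwines $\rho_0 \otimes \epsilon$ and $\rho_0 \otimes \epsilon'$. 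Viewing $g$ as an element of $\End(\R^2 \otimes \R^n) \cong \End(\R^2) \otimes \End(\R^n)$ and exploiting Zariski density, one shows (by Schur's lemma applied to the irreducible $\SL(2,\R)$-action on $\R^2$ and a standard density/commutant argument) that $g$ must lie in the commutant of $\rho_0(\pi_1(S)) \otimes \Id_n$, which is $\Id_2 \otimes \End(\R^n)$; intersecting with $\Sp(2n,\R)$ this commutant becomes $\Id_2 \otimes \OO(n)$. Hence $g = \Id_2 \otimes h$ for some $h \in \OO(n)$, and the intertwining condition forces $h\epsilon(\gamma) h^{-1} = \epsilon'(\gamma)$ for all $\gamma$, so $\mathcal{E}\sim \mathcal{E}'$ in $\Loc(S,\OO(n))$.

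Finally, one checks that an element of the form $g = \Id_2 \otimes h$ automatically preserves the distinguished framing $\breve\sigma_\mathcal{T}$: at a peripheral boundary, $\breve\sigma_\mathcal{T}$ is given by a Lagrangian of the form $\ell \otimes \R^n$, which is manifestly stable under $\Id_2 \otimes h$. Consequently the triangle
\[
\begin{tikzcd}
 & \Loc(S,\OO(n)) \arrow[dl, "\Phi^f"'] \arrow[dr, "\Phi"] & \\
\DfSLdOn \arrow[rr] & & \DSLdOn
\end{tikzcd}
\]
commutes, and since both oblique maps are bijections the forgetful map is one too. The main obstacle is the reduction of the intertwiner $g$ to the tensor form $\Id_2 \otimes h$; this is delicate because $g$ satisfies an intertwining condition (not a centralizing one), and the argument crucially uses the Zariski density of $\rho_0$ which holds precisely because $\mathcal{F}_\mathcal{T}$ is maximal.
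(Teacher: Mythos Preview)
Your approach is genuinely different from the paper's, and it misses the content that actually matters. The paper does not argue via parametrizations or intertwiners at all: it proves directly that for every representation~$\rho$ in $\DSLdOn$ and every peripheral loop~$c$, the element~$\rho(c)$ has a \emph{unique} invariant Lagrangian. Concretely, $\rho(c)$ is shown to be conjugate to $\bigl(\begin{smallmatrix}\Id & 0\\ m\Id & \Id\end{smallmatrix}\bigr)\bigl(\begin{smallmatrix}B & 0\\ 0 & B\end{smallmatrix}\bigr)$ with $m\neq 0$ and $B\in\OO(n)$, and a short computation shows such a matrix fixes exactly one Lagrangian. This establishes something strictly stronger than what you prove: each representation in $\DSLdOn$ has a unique framing in \emph{all} of $\Mf(S,\Sp(2n,\R))$, not merely a unique preimage in $\DfSLdOn$. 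Your bijectivity argument does not exclude the existence of other framings lying outside $\DfSLdOn$. This stronger fact is precisely what is invoked in the proof of Theorem~\ref{teo:connected-components-dec-to-max}: one lifts a path to $\Mf(S,\Sp(2n,\R))$ via the path-lifting property, and needs to know the lifted endpoints are \emph{forced} to land in $\DfSLdOn$.

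There is also a real gap in your step~2. The claim that an intertwiner $g$ between $\rho_0\otimes\epsilon$ and $\rho_0\otimes\epsilon'$ lies in $\Id_2\otimes\OO(n)$ is not a consequence of Schur's lemma and density as you suggest: the intertwining relation does not say that $g$ commutes with $\rho_0(\pi_1(S))\otimes\Id$, and passing from one to the other is where the difficulty lies. One can rescue the injectivity of~$\Phi$ by a character argument (no nontrivial $\gamma$ has $\tr\rho_0(\gamma)=0$, so $\tr\epsilon=\tr\epsilon'$, whence $\epsilon$ and $\epsilon'$ are $\OO(n)$-conjugate), but that does not identify $g$ itself and leaves the final framing-compatibility claim unsupported.
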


With Proposition~\ref{prop:decor-sing-repr-uniq} we first conclude the proof of Theorem~\ref{teo:connected-components-dec-to-max}:
\begin{proof}[Proof of Theorem~\ref{teo:connected-components-dec-to-max}]
  The map
  \[ \pi_0\bigl( \Mf( S, \Sp(2n,\R))\bigr) \longrightarrow \pi_0\bigl(
    \M(S, \Sp(2n,\R))\bigr)\]
  is surjective since $\Mf( S, \Sp(2n,\R)) \rightarrow
  \M(S, \Sp(2n,\R))$ is surjective. Let us prove its
  injectivity. By Theorem~\ref{teo:hom_type}, the map
  \begin{align*}
   \pi_0\bigl( \DfSLdOn\bigr) & \longrightarrow \pi_0\bigl( \Mf( S, \Sp(2n,\R))\bigr)\\
\intertext{  is an isomorphism and by Proposition~\ref{prop:decor-sing-repr-uniq}}
   \pi_0\bigl( \DfSLdOn\bigr) &\longrightarrow \pi_0\bigl( \DSLdOn \bigr)\\
    \intertext{is also bijective. Hence we are reduced to
  show that}
   \pi_0\bigl( \DSLdOn \bigr)
    &\longrightarrow \pi_0\bigl( \M(S, \Sp(2n,\R))\bigr)
  \end{align*}

  is injective.

  Assume that the two representations $\rho_0, \rho_1 \in \DSLdOn$ are connected by a
   path $\sigma\colon [0,1] \to \M(S, \Sp(2n,\R))$. By
  Proposition~\ref{prop:decor-sing-repr-uniq}, there is a
  lift~$\tilde{\sigma}$ (of a reparametrization) of~$\sigma$ to
  $\Mf( S, \Sp(2n,\R))$. The elements~$\tilde{\sigma}(0)$
  and~$\tilde{\sigma}(1)$ are the unique framings of~$\rho_0$ and~$\rho_1$ respectively. Applying Theorem~\ref{teo:hom_type} again, we get a continuous
  path in $\DfSLdOn$
  between $\tilde{\sigma}(0)$ and $\tilde{\sigma}(1)$, as a result~$\rho_0$
  and~$\rho_1$ belong to the same connected component of
  $\DSLdOn$. This concludes
  the injectivity and the proof.
\end{proof}

\begin{proof}[Proof of Proposition~\ref{prop:decor-sing-repr-uniq}]
  Let $\rho \in \DSLdOn$. We have to prove that, for each loop~$c$
  representing a boundary component, $\rho(c)$ has a unique invariant Lagrangian. By
  construction (see Section~\ref{sec:other-X-like}) and up to conjugation,
  $\rho(c)$ is a finite product of elements of the form
  \[  \begin{pmatrix} 0 & -\Id \\ \Id & 0 \end{pmatrix} \begin{pmatrix} -C &
      C \\ -C & 0 \end{pmatrix} = \begin{pmatrix} C &
      0 \\ -C & C \end{pmatrix}  = \begin{pmatrix} C &
      0 \\ 0 & C \end{pmatrix}  \begin{pmatrix} \Id &
      0 \\ -\Id & \Id \end{pmatrix}
  \]
  with $C\in \OO(n)$. Hence it is equal to
  \[ \begin{pmatrix} \Id &
      0 \\ m\Id & \Id \end{pmatrix}  \begin{pmatrix} B &
      0 \\ 0 & B \end{pmatrix}  \]
  for some $m\neq 0$ and $B\in \OO(n)$. Up to taking the inverse and conjugate
  by a block diagonal matrix, we can assume that $m=1$. The powers of the
  above matrix $M$ are then
  \[ M^k=\begin{pmatrix} \Id &
      0 \\ k\Id & \Id \end{pmatrix}  \begin{pmatrix} B^k &
      0 \\ 0 & B^k \end{pmatrix}= \begin{pmatrix} B^k &
      0 \\ kB^k & B^k \end{pmatrix}. \]

  Let us write elements in $\R^{2n}$ as pairs $(x,y)$ with $x$, $y$ in $\R^n$
  so that the symplectic form has the expression $\omega( (x,y), (x',y')) =
  {}^T\! x y' - {}^T\! y x'$.

  Suppose that~$L$ is an $M$-invariant Lagrangian. Suppose that there is~$x$
  in~$\R^n$ with $(x,0)$ in~$L$. For every $k\geq 0$, $M^k(x,0)= (B^k x, kB^k
  x)$ belongs to the Lagrangian~$L$ and the relation $\omega(M^k(x,0),
  M^\ell(x,0) )=0$ ($k$, $\ell$ in $\N$) says $(\ell-k) {}^T\! x
  B^{\ell-k}x=0$. It follows that, denoting~$V$  the $B$-invariant subspace of~$\R^n$ generated by $B^k
  x$ ($k\geq 0$), $B(V)$ is orthogonal to $V$. This
  is possible only if $V=0$ and $x=0$.

  Let $( \mathbf{e}, \mathbf{f})$ denote the standard symplectic basis of
  $\R^{2n}$.
  The Lagrangian $L$ is thus transverse to $\Span(\mathbf{e})$ and there
  exists a unique symmetric matrix $P$ such that $L=\Span(
  \mathbf{e}P+\mathbf{f})$. The equation $M(L)=L$ translate into $\Span(
  \mathbf{e}BP+\mathbf{f}B(P+\Id))=\Span(
  \mathbf{e}P+\mathbf{f})$ which implies that $P+\Id$ is invertible (hence $-1$
  does not belong to the spectrum of~$P$) and the equality $PBP+B=BP$. From
  this last equality, if $x$ is an eigenvector of $P$ for the eigenvalue
  $\lambda$, then~$Bx$ is an eigenvector of~$P$ for the eigenvalue
  $\lambda/(\lambda+1)$. Thus the spectrum of~$P$ is invariant by the Möbius
  transformation $\lambda\mapsto \lambda/(\lambda+1)$. However the only finite
  invariant set for this transformation is $\{0\}$ and we obtain that
  $P=0$. It turns out that $L=\Span(\mathbf{f})$, establishing the uniqueness.
\end{proof}

\chapter[Singularities of the space of framed maximal representations]{Singularities of the space of framed maximal representations into $\Sp(4,\mathbf{R})$}
\label{sec:singularities}

An interesting feature of the spaces of maximal representations is that they
have singularities. This contrasts with the situation studied by Fock and
Goncharov where the positive part of their moduli spaces is diffeomorphic to
$\R^n$. We will now turn our attention to the study of these
singularities. This problem seems rather intricate in general, so we restrict
our attention to the first interesting case: we analyze the singularities of
the space of framed maximal $\Sp(4,\R)$-local systems. In this first case,
orbifold singularities as well as non-orbifold singularities appear, and we can completely understand them. We show that the singular locus corresponds exactly to
local systems into subgroups that we describe first. The results in this chapter can be compared with the results in Alessandrini--Collier~\cite{AC}, who studied the singularities of the spaces of maximal representations of closed surfaces in $\Sp(4,\R)$.

\section{Space of block diagonal local systems}
\label{sec:anth-natur-subsp}

If $(e_1,e_2, f_1, f_2)$ denotes the canonical basis of~$\R^4$, the two
subspaces $V=\Span(e_1, f_1)$, $W=\Span(e_2, f_2)$ are in direct sum and this
decomposition induces a homomorphism
\[ \SL(2,\R) \times \SL(2,\R) \longrightarrow \Sp(4, \R)\]
as wells as an equivariant map $\R\PP^1 \times \R\PP^1 \to \Lag{2}$.
In particular we get an induced map
\[ \Mf( S, \SL(2,\R)) \times \Mf( S, \SL(2,\R)) \to
  \Mf( S, \Sp(4,\R))\]
which is a two-to-one ramified covering on its image. We will denote the image of this map
by $\Mf( S, \SL(2, \R)\times \SL(2,\R) )$.\index{notation}{96@$\Mf( S, \SL(2, \R)\times
  \SL(2,\R) )$ (subspace of $\Mf(S, \Sp(4,\R))$ coming from
  $\SL(2,\R)\times\SL(2,\R)\to \Sp(4,\R)$)}

The subspaces
$\Mf( S, \SL(2, \R) )$ and
$\Mf( S, \SL(2, \R)\otimes \OO(2) )$ were defined in Section~\ref{sec:subsp-enqu-repr}. Similarly, we can as well define
$\Mf( S, \SL(2, \R)\otimes \SO(2) )$.

\section{Singular points}
\label{sec:singular-points}

We will prove the following statement:

\begin{teo}
  \label{teo:singular-points-sp4}
  A point $x $ of the space $\Mf(S, \Sp(4,\R))$ is
  \begin{itemize}
  \item a smooth point if and only if $x$ does not belong to the union \[\Mf(
    S, \SL(2, \R)\times \SL(2,\R) ) \cup \Mf( S, \SL(2, \R)\otimes
    \SO(2) ).\]
  \item an orbifold point with isotropy group $\Z/2\Z$ if and only if $x$
    belongs to
    \[\Mf( S, \SL(2, \R)\times
      \SL(2,\R) ) \smallsetminus \Mf( S, \SL(2, \R) \otimes
    \SO(2));\] in which
    case a neighborhood of~$x$ is isomorphic to a neighborhood of~$0$ in
    $\R^{2r-6{\chi( \bar{S})}} \times ( \R^{r-4{\chi( \bar{S})} } / \{ \pm \Id\})$;
  \item a non-orbifold singular point if and only if the point~$x$ belongs to the
    connected subspace
    $\Mf(S, \SL(2,\R) \otimes \SO(2))$; furthermore
    \begin{itemize}
    \item if $x$ belongs to
      $\Mf(S, \SL(2,\R) \otimes \SO(2)) \smallsetminus
      \Mf(S, \SL(2,\R))$, then a neighborhood of~$x$ is isomorphic to a
      neighborhood of $0$ in the space
      $\R^{r-4{\chi( \bar{S})}+1} \times ( \C^{r-3{\chi( \bar{S})}}/\SO(2))$, where an element
      $\bigl(\begin{smallmatrix}
       \cos \theta & -\sin \theta \\
       \sin \theta & \cos \theta
       \end{smallmatrix}\bigr)\in\SO(2)$
      acts on $\C^{r-3{\chi( \bar{S})}}$ by simultaneous multiplication by $e^{2i\theta}$ in every factor.

    \item if $x$ belongs to $ \Mf(S, \SL(2,\R))$, then a neighborhood
      of~$x$ is isomorphic to a neighborhood of~$0$ in the quotient space
      $\R^{r-3{\chi( \bar{S})} } \times \bigl( \R^{1-{\chi( \bar{S})}}
      \times ( \C^{r-3{\chi( \bar{S})}}/\SO(2))\bigr)/\sigma $
      where~$\sigma$ is the involution on the space
      $\R^{1-{\chi( \bar{S})}} \times ( \C^{r-3{\chi( \bar{S})}
        }/\SO(2))$ induced by $-\Id$ on the factor
      $\R^{1-{\chi( \bar{S})}}$ and the complex conjugation on the factor
      $ \C^{r-3{\chi( \bar{S})}}/\SO(2)$.
    \end{itemize}
  \end{itemize}
\end{teo}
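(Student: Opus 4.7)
We apply the slice theorem for compact Lie group actions to the parameterization provided by Theorem~\ref{teo:holonomy-ZS}: this realizes $\Mf(S,\Sp(4,\R))$ as the quotient of
\[
X\coloneqq \Sym^+(2,\R)^{N}\times\OO(2)^{L},\qquad N=3|\chi(\bar S)|+r,\ L=|\chi(\bar S)|+1,
\]
by the diagonal conjugation action of $\OO(2)$. The central subgroup $\{\pm\Id\}$ acts trivially, so the effective acting group is $\OO(2)/\{\pm\Id\}$. At a point $[p]$, a neighborhood in the quotient is homeomorphic to $V_p/H_p$, with $H_p$ the effective stabilizer and $V_p\subset T_pX$ an $H_p$-invariant complement to the orbit direction.

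An elementary case analysis of simultaneous centralizers in $\OO(2)$ shows that the raw stabilizer of a point $p=(M_1,\dots,M_N;U_1,\dots,U_L)$ is one of $\{\pm\Id\}$, a Klein four-group conjugate to $\{\pm\Id,\pm\diag(1,-1)\}$, $\SO(2)$, or the full $\OO(2)$; these correspond respectively to the generic case, to the situation in which all $M_i$ and $U_j$ admit a common orthonormal eigenbasis with some $M_i$ non-scalar or some $U_j$ a reflection, to the situation in which all $M_i$ are scalar and all $U_j\in\SO(2)$ with some $U_j\neq\pm\Id$, and to the situation in which all $M_i$ are scalar and all $U_j\in\{\pm\Id\}$. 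Using Lemma~\ref{lem:posit-X-geom} together with the embeddings of Sections~\ref{sec:subsp-enqu-repr} and~\ref{sec:anth-natur-subsp}, these four strata are identified respectively with the generic complement $\Mf(S,\Sp(4,\R))\setminus[\Mf(S,\SL(2,\R)\times\SL(2,\R))\cup\Mf(S,\SL(2,\R)\otimes\SO(2))]$, with $\Mf(S,\SL(2,\R)\times\SL(2,\R))\setminus\Mf(S,\SL(2,\R)\otimes\SO(2))$, with $\Mf(S,\SL(2,\R)\otimes\SO(2))\setminus\Mf(S,\SL(2,\R))$, and with $\Mf(S,\SL(2,\R))$. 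The block-diagonal identification in the Klein four-group case is obtained by reordering the symplectic basis $(e_1,e_2,f_1,f_2)$ as $(e_1,f_1,e_2,f_2)$, which turns the transition matrices of Section~\ref{sec_def_coord_max} into block-diagonal matrices with respect to the splitting $V\oplus W$.

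The slice representations are then read off from the decompositions $T_{M_i}\Sym^+(2,\R)=\R\Id\oplus\Sym_0(2,\R)$ and $T_{U_j}\OO(2)\cong\mathfrak{so}(2)$. For $H_p=\Z/2$, the generator $\sigma=\diag(1,-1)$ fixes every $\R\Id$ and the diagonal part of each $\Sym_0$ (total dimension $2N$) and inverts the off-diagonal part of each $\Sym_0$ together with every $\mathfrak{so}(2)$-factor (total dimension $N+L$); a direct computation shows that the one-dimensional orbit direction lies entirely in the $(-1)$-eigenspace, whence $V_p/H_p\cong\R^{6|\chi(\bar S)|+2r}\times(\R^{4|\chi(\bar S)|+r}/\{\pm\Id\})$. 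For $H_p=S^1$, $\SO(2)$ acts trivially on every $\R\Id$ and every $\mathfrak{so}(2)$-factor and as weight-$2$ rotations on each $\Sym_0\simeq\C$; the orbit is zero-dimensional, giving $V_p/H_p\cong\R^{4|\chi(\bar S)|+r+1}\times(\C^{3|\chi(\bar S)|+r}/\U(1))$. For $H_p\cong\OO(2)$, the disconnected component acts in addition by complex conjugation on each $\Sym_0\simeq\C$ (this is the real irreducibility of the corresponding $\OO(2)$-representation) and by $-1$ on each $\mathfrak{so}(2)$-factor, since a reflection inverts $\SO(2)$ under conjugation; quotienting first by $\SO(2)$ and then by this residual $\Z/2$ produces the announced model with involution~$\sigma$.

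The main obstacle is the third case: one must carefully track how the disconnected piece $\OO(2)\setminus\SO(2)$ descends through the $\SO(2)$-quotient and verify that the induced $\Z/2$-involution on $\R^{|\chi(\bar S)|+1}\times \C^{3|\chi(\bar S)|+r}/\U(1)$ is exactly the combination of $-\Id$ on the first factor and complex conjugation on the second.
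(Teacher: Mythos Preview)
Your proposal is correct and follows essentially the same approach as the paper: both arguments use the parameterization of Theorem~\ref{teo:holonomy-ZS} to realize $\Mf(S,\Sp(4,\R))$ as an $\OO(2)$-quotient, classify points by their stabilizer into the same four cases (trivial, Klein four-group, $\SO(2)$, full $\OO(2)$), match these to the four strata via Propositions~\ref{prop:parameters-sbg-sp4} and~\ref{prop:stabilizers-ZS-sp4}, and then compute the local models case by case. The only difference is presentational: the paper works with the explicit global coordinates $(t,a+ib)\mapsto\exp\bigl(t\Id+\bigl(\begin{smallmatrix}a&b\\b&-a\end{smallmatrix}\bigr)\bigr)$ on $\Sym^+(2,\R)$ (Section~\ref{sec:parameters}), whereas you phrase the same decomposition as the tangent-space splitting $\R\Id\oplus\Sym_0(2,\R)$ and invoke the slice theorem; the resulting computations (the weight-$2$ $\SO(2)$-action on $\Sym_0$, complex conjugation and sign-reversal under $\diag(1,-1)$, and the orbit direction lying in the $(-1)$-eigenspace in the Klein case) are identical to those in Section~\ref{sec:singularity-types}.
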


\begin{rem}
  \label{rem:singular-points-link}
  The claims about the singularity types in the above theorem are consequences
  of the following considerations:
  \begin{enumerate}
  \item The \emph{join}\index{definition}{join} $X\bigcurlyvee Y$ of two topological spaces is the
    quotient of $X\times [0,1]\times Y$ by the equivalence relation whose
    classes are $\{x\}\times \{0\}\times Y$, $X\times \{1\}\times \{y\}$, and
    $\{(x,t,y)\}$ ($x\in X$, $t\in (0,1)$, $y\in Y$), cf.\
    \cite[p.~9]{Hatcher}. If a neighborhood of a
    point~$a$ in a space~$A$ is the cone over~$X$ (i.e.\ the quotient of
    $[0,1]\times X$ by the equivalence relation whose only nontrivial class
    is $\{0\}\times X$), and a neighborhood of~$b\in B$ is the cone over~$Y$,
    then a neighborhood of $(a,b)$ in $A\times B$ is the join $X\bigcurlyvee
    Y$. Joins of spheres are spheres.

  \item   The join can also be written as $U \cup V$, where~$U$ is the image in $X\bigcurlyvee
    Y$ of $X\times
    [0,1)\times Y$, and~$V$ is the image of $X\times (0,1]\times Y$. The
    space $X$ is a deformation retract of~$U$, $Y$ is a deformation retract
    of~$V$, and $X\times Y$ is a deformation retract of $U\cap V$. From this
    and the Mayer--Vietoris long exact sequence, we get the following formula,
    relating the Euler characteristics~$\chi_F$ over a field~$F$ of these
    spaces:
    \[ \chi_F \bigl( X\bigcurlyvee Y\bigr) = -\chi_F(X) \chi_F(Y) +\chi_F(X)+\chi_F(Y).\]
  \item A neighborhood of~$0$ in $\R^{-4{\chi(S)}} / \{ \pm \Id\}$ is the 
    cone over $\R\PP^{-4{\chi(S)}-1}$ so that a neighborhood of~$0$ in
    $\R^{-6{\chi(S)}} \times ( \R^{-4{\chi(S)}} / \{ \pm \Id\})$ is the
    join $S^{-6{\chi(S)}-1} \bigcurlyvee \R\PP^{-4{\chi(S)}-1}$.  The
    above formula (and the knowledge of the cohomology of real projective
    spaces) can be used to show that the Euler characteristic of this join
    varies with the field~$F$ and therefore it cannot be homeomorphic to a
    sphere. Thus no neighborhood of the points in the second item of the
    theorem is homeomorphic to a
    manifold.
  \item By similar arguments, for the last case in the theorem, the
    neighborhood of the singularity is either the join of a sphere and the
    projective space $\C\PP^{-3{\chi(S)}-1}$ or a quotient of this join by
    $\Z/2\Z$. From this, again by cohomological argument, it can be seen that
    the singularity is not homeomorphic to an orbifold singularity.
  \end{enumerate}
\end{rem}

\section{Parameters}
\label{sec:parameters}

Analogous to the map between moduli spaces mentioned in Section~\ref{sec:anth-natur-subsp}, there is a map between the parameter spaces of
Theorem~\ref{teo:holonomy-ZS}:
\[ \XplusS{1} \times \XplusS{1} \to \XplusS{2}. \]
Writing $\XplusS{1} \simeq
\R_{>0}^{r-3{\chi(\bar{S})} } \times \OO(1)^{1-{\chi( \bar{S})}}$ and similarly for
$\XplusS{2}$, the map is explicitly given by
\begin{align*}
  \bigl( \R_{>0}^{r-3{\chi( \bar{S})} } \times \OO(1)^{1-{\chi(
  \bar{S})}} \bigr)^2
  & \longrightarrow  \Sym^{+}(2,\R)^{r-3{\chi( \bar{S})}
    }\times \OO(2)^{1-{\chi( \bar{S})}}\\
  ( \{\lambda_{j,i}\}_i, \{\epsilon_{j,\ell}\}_\ell
  )_{j=1,2}
  & \longmapsto \bigl( \{\diag(\lambda_{1,i},\lambda_{2,i})\}_i,
    \{\diag(\epsilon_{1,\ell},\epsilon_{2,\ell})\}_\ell\bigr).
\end{align*}
From this, an element $z=( \{s_i\}_i, \{r_\ell\}_{\ell})$ in
$\XplusS{2}$ represents a framed representation
in $\Mf(S, \SL(2,\R) \times \SL(2,\R))$ if, up to replacing $z$ by
$r\cdot z$ with~$r$ in~$\OO(2)$, all the coordinates~$s_i$, and~$r_\ell$ of~$z$ are diagonal.

In approaching the above theorem, it is useful to consider different
coordinates for the space of symmetric positive matrices. The map
\begin{align*}
  \R \times \C & \longrightarrow  \Sym^{+}(2,\R)\\
  (t, a+ib) & \longmapsto \exp\biggl( t\Id +
              \Bigl(\begin{matrix}
                a & b \\ b & -a
              \end{matrix}\Bigr)
\biggl)
\end{align*}
is a diffeomorphism. Via this isomorphism, the group~$\OO(2)$ acts on
$\R\times \C$. The action is trivial on the factor~$\R$ whereas on the factor~$\C$
\begin{itemize}
\item the element $
  \bigl(\begin{smallmatrix}
    \cos \theta & -\sin \theta \\ \sin \theta & \cos \theta
  \end{smallmatrix}\bigr)
$ acts by multiplication by $e^{2i\theta}$,
\item the element $
  \bigl(\begin{smallmatrix}
    1 & 0 \\ 0 & -1
  \end{smallmatrix}\bigr)
  $ acts by $z\mapsto \bar{z}$.
\end{itemize}

The space
$\XplusS{2}$ is then $\OO(2)$-isomorphic to the product space $
\R^{r-3{\chi( \bar{S})} } \times \C^{r-3{\chi( \bar{S})} }
\times \OO(2)^{1-{\chi( \bar{S})}}$.
Recapping the above and Section~\ref{sec:subsp-enqu-repr} we get:
\begin{prop}
  \label{prop:parameters-sbg-sp4}
  Let $z$ be an element
  \[( \{t_i\}_i, \{z_j\}_j, \{r_\ell\}_\ell) \in \R^{r-3{\chi( \bar{S})} }
  \times \C^{r-3{\chi( \bar{S})} }
  \times \OO(2)^{1-{\chi( \bar{S})}} \simeq \XplusS{2},\] and
  let~$x$ be the corresponding framed maximal representation:
  $x\in \Mf(S, \Sp(4,\R))$. Then
  \begin{enumerate}
  \item\label{item1:prop:parameters-sbg-sp4} $x$ belongs to
    $\Mf(\SL(2,\R))$ if and only if, for all~$j$, $z_j=0$, and, for
    all~$\ell$, $r_\ell = \pm \Id$;
  \item\label{item2:prop:parameters-sbg-sp4} $x$ belongs to $\Mf(\SL(2,\R) \otimes \SO(2)) \smallsetminus
    \Mf(\SL(2,\R))$ if and only if, for all~$j$, $z_j=0$, and
    $\{r_\ell\}_\ell \in \SO(2)^{1-{\chi( \bar{S})}} \smallsetminus \{\pm
    \Id\}^{1-{\chi( \bar{S})}}$;
  \item\label{item3:prop:parameters-sbg-sp4} $x$ belongs to $\Mf(\SL(2,\R) \times \SL(2,\R)) \smallsetminus
    \Mf(\SL(2,\R) \otimes \SO(2))$ if and only if, there is
    $\theta\in \R$ such that, for all~$j$, $z_j \in \R e^{i\theta}$, for
    all~$\ell$, $r_\ell \in \left\{ \pm\Id, \pm \bigl(
      \begin{smallmatrix}
        \cos \theta & \sin \theta \\ \sin \theta & -\cos \theta
      \end{smallmatrix}
\bigr)\right\}$, and either $\{z_j\}_j \neq 0$ or $\{r_\ell\}_\ell \notin
\{\pm \Id\}^{1-{\chi( \bar{S})}}$.
  \item\label{item4:prop:parameters-sbg-sp4} otherwise $x$ does not belong to the union $\Mf(
    S, \SL(2, \R)\times \SL(2,\R) ) \cup \Mf( S, \SL(2, \R)\otimes
    \SO(2) )$.
  \end{enumerate}
\end{prop}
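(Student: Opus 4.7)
The approach is to apply Theorem~\ref{teo:holonomy-ZS}: two tuples in $\XplusS{2}$ have the same image under $\holSp$ exactly when they are conjugate by a single element of $\OO(2)$ acting diagonally. For each of the four cases, I would identify an explicit subset $P_G \subset \XplusS{2}$ whose image under $\holSp$ is $\Mf(S,G)$, then translate the condition that $z$ has an $\OO(2)$-conjugate in $P_G$ into the coordinates $(\{t_i\},\{z_j\},\{r_\ell\})$.

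First, I would describe the parameter subsets. For the diagonal embeddings $\SL(2,\R)\subset \SL(2,\R)\otimes\SO(2)\subset \SL(2,\R)\otimes\OO(2)$ into $\Sp(4,\R)$, the analysis of Section~\ref{sec:topol-space-maxim} applies directly and yields a $P_G$ whose members have every edge coordinate in $\R_{>0}\Id$ and every angle coordinate in, respectively, $\{\pm \Id\}$, $\SO(2)$ and $\OO(2)$. For the block diagonal embedding $\SL(2,\R)\times\SL(2,\R)$ of Section~\ref{sec:anth-natur-subsp}, I would choose, at every vertex of~$\Gamma_\mathcal{T}$, generating symplectic bases whose first two and last two vectors span the two invariant symplectic planes; Proposition~\ref{prop:decor-loc-from-Ga} then shows that $P_{\SL(2,\R)\times\SL(2,\R)}$ is the set of tuples with edge coordinates in~$\Delta_2$ and angle coordinates in $\{\pm\Id,\pm\diag(1,-1)\}$. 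Next I would pass to the coordinates via the diffeomorphism $\R\times\C\simeq \Sym^+(2,\R)$ of Section~\ref{sec:parameters}: $z(a)\in \R_{>0}\Id$ iff $z_j=0$, and $z(a)\in \Delta_2$ iff $z_j\in\R$. A short computation shows that diagonal $\OO(2)$-conjugation fixes~$t$ and acts on $\C$ by $R_\theta\cdot z=e^{2i\theta}z$ and by $z\mapsto \bar z$ for~$\diag(1,-1)$; consequently $\{0\}\subset \C$, $\SO(2)\subset \OO(2)$ and $\{\pm \Id\}\subset \OO(2)$ are $\OO(2)$-invariant, whereas $\R\subset \C$ and $\{\pm\Id,\pm\diag(1,-1)\}\subset\OO(2)$ are only preserved by the Klein subgroup $\{\pm\Id,\pm\diag(1,-1)\}$.

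Parts~(\ref{item1:prop:parameters-sbg-sp4}) and~(\ref{item2:prop:parameters-sbg-sp4}) then follow at once because their defining conditions on $P_G$ are $\OO(2)$-invariant: Theorem~\ref{teo:holonomy-ZS} implies $x$ lies in the subspace iff $z$ itself satisfies them, and~(\ref{item2:prop:parameters-sbg-sp4}) is obtained by subtracting~(\ref{item1:prop:parameters-sbg-sp4}). For part~(\ref{item3:prop:parameters-sbg-sp4}), Theorem~\ref{teo:holonomy-ZS} gives $x\in\Mf(S,\SL(2,\R)\times\SL(2,\R))$ iff there exists $u\in\OO(2)$ sending every $z_j$ into~$\R$ and every $r_\ell$ into $\{\pm\Id,\pm \diag(1,-1)\}$; writing $u=R_{-\theta/2}$ (reflections yield the same orbits), the edge condition becomes $z_j\in \R e^{i\theta}$ for a common~$\theta$, and the angle condition forces each $r_\ell$ to be either $\pm\Id$ or the reflection whose axis makes angle $\theta/2$ with the horizontal, namely $\pm R_\theta\,\diag(1,-1)=\pm\bigl(\begin{smallmatrix}\cos\theta & \sin\theta\\ \sin\theta & -\cos\theta\end{smallmatrix}\bigr)$. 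Removing the locus already captured by~(\ref{item2:prop:parameters-sbg-sp4}) yields~(\ref{item3:prop:parameters-sbg-sp4}), and~(\ref{item4:prop:parameters-sbg-sp4}) is tautologically the complement of the union of the three previous cases. The main obstacle is the explicit description of $P_{\SL(2,\R)\times\SL(2,\R)}$: one must build a globally compatible generating symplectic basis from a block diagonal framed local system and verify that the transition matrices for both types of arrows in $\Gamma_\mathcal{T}$ take the advertised diagonal form; once this is done, the remaining verifications are elementary computations in $\OO(2)$.
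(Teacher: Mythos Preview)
Your proposal is correct and follows essentially the same approach as the paper: the paper identifies the parameter subsets for each subgroup (diagonal coordinates for $\SL(2,\R)\times\SL(2,\R)$, scalar edge coordinates for the $\SL(2,\R)\otimes\OO(2)$ family), computes the $\OO(2)$-action in the $(\R\times\C)$-coordinates exactly as you do, and then invokes Theorem~\ref{teo:holonomy-ZS} to pass from the parameter description to the quotient; indeed, the proposition is stated in the paper as a direct recap of Section~\ref{sec:parameters} and Section~\ref{sec:subsp-enqu-repr} without a separate proof.
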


In particular, the space $\Mf(\SL(2,\R) \otimes \SO(2))$ is the image of the
connected set $\R^{r-3{\chi( \bar{S})} }
  \times \{0\}
  \times \SO(2)^{1-{\chi( \bar{S})}}$ and is thus itself connected. This
  proves the connectedness claim in the third item of Theorem~\ref{teo:singular-points-sp4}.

\begin{rem}
  \label{rem:z-with-theta=0}
  Note that, in case (\ref{item3:prop:parameters-sbg-sp4}), the element $x$ is the holonomy of the element
  $z' = \bigl(
  \begin{smallmatrix}
    \cos \theta/2 & \sin \theta/2 \\ -\sin \theta/2 & \cos \theta/2
  \end{smallmatrix}
  \bigr)\cdot z = ( \{t'_i\}, \{z'_j\}, \{r'_\ell\})$ with $z'_j\in \R$, and
  $r'_\ell \in \bigl\{ \pm\Id, \pm \bigl(
  \begin{smallmatrix}
    1 & 0 \\ 0 & -1
  \end{smallmatrix}
  \bigr)\bigr\}$.
\end{rem}

\section{Stabilizers}
\label{sec:stabilizers}

We are going to calculate stabilizers of elements in
$\R^{r-3{\chi( \bar{S})} } \times \C^{r-3{\chi( \bar{S})} } \times
\OO(2)^{1-{\chi( \bar{S})}} \simeq
\XplusS{2}$. In the calculation we make use of the
following observations:
\begin{itemize}
\item the stabilizer (here the centralizer) of an element $s\in \OO(2)$ is
  equal to~$\OO(2)$ if and only if $s=\pm \Id$;
\item the stabilizer of an element $s\in \OO(2)$ is equal to~$\SO(2)$ if and
  only if $s\in \SO(2) \smallsetminus\pm \{\Id\}$;
\item the stabilizer of an element $s\in \OO(2)$ is finite if and
  only if $s\in \OO(2) \smallsetminus \SO(2)$; in this case the stabilizer
  is equal to $\{ \pm\Id, \pm s\}$;
\item The stabilizer of an element $z\in \C$ is:
  \begin{itemize}
  \item equal to $\OO(2)$ if $z=0$;
  \item equal to $\left\{ \pm\Id, \pm \bigl(
      \begin{smallmatrix}
        \cos \theta & \sin \theta \\ \sin \theta & -\cos \theta
      \end{smallmatrix}
      \bigr)\right\}$ if $z\in \R^* e^{i\theta}$.
  \end{itemize}
\end{itemize}

As a consequence:
\begin{prop}
  \label{prop:stabilizers-ZS-sp4}
  Let
  $z= (\{t_i\}_i, \{z_j\}_j, \{r_\ell\}_\ell) \in
  \XplusS{2}$. Then the stabilizer of~$z$
  in~$\OO(2)$ is equal to
  \begin{enumerate}
  \item\label{item1:prop:stabilizers-ZS-sp4} $\OO(2)$ if and only if, for all~$j$, $z_j=0$, and, for all~$\ell$,
    $r_\ell=\pm \Id$.
  \item\label{item2:prop:stabilizers-ZS-sp4} $\SO(2)$ if and only if, for all~$j$, $z_j=0$, and  $\{r_\ell\}_\ell
    \in \SO(2)^{1-{\chi( \bar{S})}} \smallsetminus \{\pm \Id\}^{1-{\chi( \bar{S})}}$.
  \item\label{item3:prop:stabilizers-ZS-sp4} the finite group $G_\theta= \left\{ \pm\Id, \pm \bigl(
      \begin{smallmatrix}
        \cos \theta & \sin \theta \\ \sin \theta & -\cos \theta
      \end{smallmatrix}
      \bigr)\right\}$ \ep{for some $\theta\in\R$} if and only if, for all~$j$,
    $z_j \in \R e^{i\theta}$, and, for all~$\ell$, $r_\ell \in G_\theta$, and if either
    at least one of the~$z_j$ is non-zero or at least one of the $r_\ell$ is not
    $\pm\Id$.
  \item\label{item4:prop:stabilizers-ZS-sp4} $\{ \pm\Id\} $ otherwise.
  \end{enumerate}
\end{prop}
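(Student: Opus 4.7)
The plan is to exploit the fact that the $\OO(2)$-action on
$\XplusS{2}\simeq \R^{3\abs{\chi(\bar S)}+r}\times \C^{3\abs{\chi(\bar S)}+r}\times \OO(2)^{\abs{\chi(\bar S)}+1}$ is diagonal. Since the action on the first factor is trivial (the $t_i$'s play no role), the stabilizer of $z=(\{t_i\},\{z_j\},\{r_\ell\})$ equals the intersection, inside $\OO(2)$, of the stabilizers of all the complex coordinates $z_j$ and of all the conjugacy-stabilizers of the matrix coordinates $r_\ell$. The proof therefore reduces to intersecting the explicit stabilizers listed in the bullet points immediately preceding the proposition statement.

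First I would recall explicitly these individual stabilizers: $\mathrm{Stab}(0)=\OO(2)$ and $\mathrm{Stab}(\lambda e^{i\theta})=G_\theta$ for $\lambda\in\R^*$; $\mathrm{Stab}(\pm\Id)=\OO(2)$, $\mathrm{Stab}(r)=\SO(2)$ for $r\in\SO(2)\setminus\{\pm\Id\}$, and $\mathrm{Stab}(r)=\{\pm\Id,\pm r\}=G_\theta$ for $r=\bigl(\begin{smallmatrix}\cos\theta&\sin\theta\\\sin\theta&-\cos\theta\end{smallmatrix}\bigr)\in\OO(2)\setminus\SO(2)$. The crucial elementary intersection facts in $\OO(2)$ are: $G_\theta\cap G_{\theta'}=\{\pm\Id\}$ for $\theta\not\equiv\theta'\bmod\pi$, $G_\theta\cap\SO(2)=\{\pm\Id\}$, and $G_\theta\subset\OO(2)\setminus\SO(2)\cup\{\pm\Id\}$.

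Then I would do the case analysis. In case~(\ref{item1:prop:stabilizers-ZS-sp4}) every component has stabilizer equal to the whole group $\OO(2)$, so the intersection is $\OO(2)$. In case~(\ref{item2:prop:stabilizers-ZS-sp4}), all $z_j$ contribute $\OO(2)$ and at least one $r_\ell$ contributes $\SO(2)$ while no $r_\ell$ lies in $\OO(2)\setminus\SO(2)$; the intersection is therefore $\SO(2)$. In case~(\ref{item3:prop:stabilizers-ZS-sp4}), the common constraint is that every nonzero $z_j$ forces the stabilizer into the finite subgroup $G_\theta$ (for the same $\theta$ since they all lie in $\R e^{i\theta}$) and every $r_\ell$ either equals $\pm\Id$ (stabilizer $\OO(2)$) or lies in $G_\theta\setminus\{\pm\Id\}$ (stabilizer exactly $G_\theta$); the hypothesis that at least one of these two situations actually produces a proper finite stabilizer then forces the intersection to be exactly $G_\theta$. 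In case~(\ref{item4:prop:stabilizers-ZS-sp4}), the failure of cases~(1)--(3) means either that two nonzero $z_j,z_{j'}$ lie in different real lines (so $G_\theta\cap G_{\theta'}=\{\pm\Id\}$), or that some $r_\ell\in\SO(2)\setminus\{\pm\Id\}$ coexists with a nontrivial $G_\theta$-constraint coming from a $z_j$ or another $r_{\ell'}$, or that two $r_\ell,r_{\ell'}\in\OO(2)\setminus\SO(2)$ lie in incompatible $G_\theta$'s; in each subcase the elementary intersection facts collapse the stabilizer to $\{\pm\Id\}$.

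I expect the only subtlety to be the bookkeeping in case~(\ref{item4:prop:stabilizers-ZS-sp4}), where one has to enumerate the ways in which cases~(1)--(3) can fail and check that each such configuration forces the intersection to $\{\pm\Id\}$; this is a finite verification using the three intersection identities stated above. Once that is done the proof is complete.
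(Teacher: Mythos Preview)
Your proposal is correct and follows exactly the approach the paper takes: the paper states the individual stabilizers as bullet points and then presents Proposition~\ref{prop:stabilizers-ZS-sp4} with the words ``As a consequence,'' leaving the intersection argument implicit. You have simply spelled out that intersection argument, and the case analysis (including the bookkeeping in case~(\ref{item4:prop:stabilizers-ZS-sp4})) is accurate.
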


Note that $-\Id$ acts trivially on~$\XplusS{2}$,
so the above stabilizers are more meaningful in the quotient group $\OO(2)/
\{\pm \Id\}$.

Observe also that the group~$\OO(2)$ acts diagonally on the product
$\R^{r-3{\chi( \bar{S})} } \times \C^{r-3{\chi( \bar{S})} } \times
\OO(2)^{1-{\chi( \bar{S})}}$ and that the action
is trivial on the first factor. Hence
\[ \XplusS{2} /\OO(2) = \R^{r-3{\chi(
      \bar{S})} } \times \bigl(
  \C^{r-3{\chi( \bar{S})} } \times \OO(2)^{1-{\chi( \bar{S})}}\bigr)/\OO(2).\]

\section{Singularity types}
\label{sec:singularity-types}
To conclude the proof of
Theorem~\ref{teo:singular-points-sp4}, it now remains to analyze the quotient
singularities in each case of Proposition~\ref{prop:parameters-sbg-sp4}.
Let $z=( \{t_i\}_i, \{z_j\}_j, \{r_\ell\}_\ell)$ be in
$\XplusS{2}$ and let $x$ be the corresponding
framed representation in $\Mf(S, \Sp(4,\R))$.

\subsection{Case~(\ref{item4:prop:parameters-sbg-sp4}) of Proposition~\ref{prop:parameters-sbg-sp4}}
\label{sec:case-4-proposition}

By
Proposition~\ref{prop:stabilizers-ZS-sp4}.(\ref{item4:prop:stabilizers-ZS-sp4}),
the stabilizer of~$z$ in $\OO(2)/\{ \pm \Id\}$ is trivial. Therefore a
neighborhood of~$x$ in $\Mf(S, \Sp(4,\R))$ is isomorphic to a neighborhood
of~$z$ in $\XplusS{2}$ and is thus a smooth point.

\subsection{Case~(\ref{item3:prop:parameters-sbg-sp4}) of Proposition~\ref{prop:parameters-sbg-sp4}}
\label{sec:case-3-proposition}

One can assume that~$\theta=0$ (see the remark~\ref{rem:z-with-theta=0}). We
treat this case under the assumption that
$\{r_\ell\}_\ell \notin \{ \pm\Id\}^{1-{\chi( \bar{S})}}$ and  $\{z_j\}_j =0$; the reasoning is similar under the assumption
that $\{z_j\}_j \neq 0$.

 By Proposition~\ref{prop:stabilizers-ZS-sp4}.(\ref{item3:prop:stabilizers-ZS-sp4}),
the stabilizer of~$z$ in $\OO(2)/\{ \pm \Id\}$ is isomorphic to the group
generated by the involution $\bigl(
\begin{smallmatrix}
  1 & 0 \\ 0 & -1
\end{smallmatrix}
\bigr)$. Therefore a
neighborhood of~$x$ in $\Mf(S, \Sp(4,\R))$ is isomorphic to a neighborhood
(of the class) of~$z$ in
\[\XplusS{2} / \{ \pm \bigl(
\begin{smallmatrix}
  1 & 0 \\ 0 & -1
\end{smallmatrix}
\bigr)\}.\]

Consider the following $\OO(2)$-equivariant transformations, for every $1\leq
i<j\leq 1-\chi(S)$, on the factor
$\OO(2)^{1-{\chi( \bar{S})}}$:
\begin{align*}
  \OO(2)^{1-{\chi( \bar{S})}} & \longrightarrow \OO(2)^{1-{\chi( \bar{S})}}\\
  (a_1, \dots, a_i, \dots, a_j,\dots, a_{1-\chi(S)}) & \longmapsto   (a_1, \dots, a_{i-1},
                                        a_i a_j, a_{i+1}, \dots, a_j,\dots, a_{1-\chi(S)})\\
  \intertext{or}
    (a_1, \dots, a_i, \dots, a_j,\dots, a_{1-\chi(S)}) & \longmapsto   (a_1, \dots, a_{i-1},
                                         a_j, a_{i+1}, \dots, a_i,\dots, a_{1-\chi(S)}).
\end{align*}

Since the stabilizer in~$\OO(2)$ of a tuple $(a_1, \dots, a_{1-\chi(S)})$ is
the centralizer of the group generated by the elements~$a_i$, these
transformations do not change the stabilizers.

Applying these transformations as many times as needed we can assume that $r_1=\pm \bigl(
\begin{smallmatrix}
  1 & 0 \\ 0 & -1
\end{smallmatrix}
\bigr)$ and $r_\ell= \pm\Id$ for all $\ell>1$.

We can parametrize a neighborhood of $z$ in
$\XplusS{2}$ by the map
\[
  \Psi\colon  \R^{r-3{\chi( \bar{S})} }\times \CC^{r-3{\chi( \bar{S})}
  } \times \R^{1-{\chi( \bar{S})}}
   \longrightarrow \XplusS{2}\]
defined as
  \[\Psi( \{\tau_i\}_i, \{\zeta_j\}_j, \{\theta_\ell\}_\ell)
   \coloneqq \Bigl( \{t_i+\tau_i\}_i, \{\zeta_j\}_j, \bigl\{  \bigl(
    \begin{smallmatrix}
      \cos \theta_\ell & -\sin \theta_\ell\\ \sin \theta_\ell & \cos \theta_\ell
    \end{smallmatrix}
\bigr) r_\ell\bigr\}_\ell \Bigr).
\]
In these coordinates, the action of $\bigl(
\begin{smallmatrix}
  1 & 0 \\ 0 & -1
\end{smallmatrix}
\bigr)$ is trivial on the first factor $\R^{r-3{\chi( \bar{S})} }$ and is
given by the complex conjugation on $\C^{r-3{\chi( \bar{S})} }$ and by $-\Id$ on
$\R^{1-{\chi( \bar{S})}}$. The locally free action of~$\SO(2)$ on a neighborhood of~$z$ in
$\XplusS{2}$ comes from a free action of~$\R$ on
these coordinates: for~$\theta\in\R$, the action of~$\theta$ is given by
\[ ( \{\tau_i\}_i, \{\zeta_j\}_j, \{\theta_\ell\}_\ell)
  \longmapsto ( \{\tau_i\}_i, \{\zeta_j+2i\theta\}_j, \{\theta_1+\theta,\theta_\ell\}_{\ell>1}).\]
We can then work on the hyperplane $\theta_1=0$ and grouping furthermore the
real and imaginary parts of the parameters in~$\C$, we conclude that a
neighborhood of~$x$ in $\Mf(S, \Sp(4,\R))$ is diffeomorphic to a
neighborhood of~$0$ in
\[ \R^{2r-6{\chi( \bar{S})} } \times \bigl( \R^{r-4{\chi( \bar{S})} }/\{\pm\Id\}\bigr).\]

\subsection{Case~(\ref{item2:prop:parameters-sbg-sp4}) of Proposition~\ref{prop:parameters-sbg-sp4}}
\label{sec:case-2-proposition}

In that case, since the stabilizer of~$z$ is $\SO(2)$, a neighborhood of $x$
in $\Mf(S, \Sp(4,\R))$ is isomorphic to a neighborhood of~$z$ in
$\XplusS{2}/\SO(2)$. Since, for all~$\ell$,
$r_\ell$ is in $\SO(2)$, we can restrict to the subspace $\R^{r-3{\chi(
    \bar{S})}} \times
\C^{r-3{\chi( \bar{S})} } \times \SO(2)^{1-{\chi( \bar{S})}}$. As~$\SO(2)$ is abelian, we can conclude
that a neighborhood of~$x$
in $\Mf(S, \Sp(4,\R))$ is isomorphic to a neighborhood of $(0,\dots, 0, \Id, \dots, \Id)$ in the space
\[ \R^{r-3{\chi( \bar{S})} } \times
\bigl(\C^{r-3{\chi( \bar{S})} }/\SO(2)\bigr) \times \SO(2)^{1-{\chi( \bar{S})}},\]
and this is what we wanted to prove.

\subsection{Case~(\ref{item1:prop:parameters-sbg-sp4}) of Proposition~\ref{prop:parameters-sbg-sp4}}
\label{sec:case-1-proposition}

The situation is similar to the previous case, but we have to take care of the
remaining action by $\OO(2)/\SO(2)\simeq\Z/2\Z$. Precisely, we conclude that a
neighborhood of $x$ in $\Mf(S, \Sp(4,\R))$ is isomorphic to a
neighborhood of $0$ in the quotient of the space
\[ \R^{r-3{\chi( \bar{S})} } \times
\bigl(\C^{r-3{\chi( \bar{S})} }/\SO(2)\bigr) \times \R^{1-{\chi( \bar{S})}},\]
by the involution whose action is trivial on the factor $\R^{r-3{\chi(
    \bar{S})} }$, is the
action induced by the complex conjugation on the factor $\C^{r-3{\chi( \bar{S})}}/\SO(2)$, and is $-\Id$ on $\R^{1-{\chi( \bar{S})}}$.

\chapter{General $\mathcal{X}$-coordinates}
\label{sec:generalX}

In this chapter we introduce general (i.e.\ not necessarily positive)
$\mathcal{X}$-coordinates with respect to a chosen ideal triangulation
$\mathcal{T}$ of~$S$. This relies on the analysis of pairs of nondegenerate
quadratic forms which is given in the appendix. We obtain a generically
finite-to-one parametrization of the space of transverse framed
local systems (Section~\ref{sec:from-coord-repr-1}) as well as a kind of
cellular decomposition of that space (Section~\ref{sec:pieces}). Relaxing the
condition on the parameter space (Section~\ref{sec:over-parametrization}), we
deduce topological results for the space of transverse framed
local systems (Section~\ref{sec:connected-components-1}).

\section{Pairs of real symmetric forms}
\label{sec:pairs-real-symmetric}

In Appendix~\ref{sec:normal-form-pair} we establish a number of results
about pairs of quadratic forms that we recap here. The reader might want to
read the appendix first where the statements are obtained more progressively.

\subsection{Parameter space}
\label{sec:parameter-space}

We denote by~$\mathcal{D}(n)$\index{notation}{98@$\mathcal{D}(n)$ (a set of
  \enquote{partitions} of the integer~$n$)}%
\index{definition}{path-lifting property}%
\index{definition}{property (path-lifting ---)}
the set of quintuples $\mathbf{n} = ( \{ \underline{n}_{x}\}_{x\in \{\pm 1\}^2}, 2\underline{m})$ of sequences of integers $\underline{n}_x =(n_{x,j})_{j=1,\dots, k_x} \in
  (\Z_{>0})^{k_x}$ (for $x\in \{\pm 1\}^2$) and
  $2\underline{m} =(2m_{j})_{j=1,\dots, k_0}\in (2\Z_{>0})^{k_0}$ of lengths $k_x$ ($x\in \{\pm 1\}^2$)
  and~$k_0$ in~$\Z_{\geq 0}$, such that the sum of all the integers in the~$5$ sequences in~$\mathbf{n}$ is
  equal to $n$.

There is an involution $\iota:\mathcal{D}(n) \rightarrow \mathcal{D}(n)$ defined as
\[\iota( \{ \underline{n}_{( \varepsilon, \eta)}\}_{ ( \varepsilon, \eta)\in \{\pm 1\}^2},
2\underline{m}) = ( \{
\underline{n}_{( \eta, \varepsilon)}\}_{( \varepsilon,
  \eta)\in \{\pm 1\}^2}, 2\underline{m}).\]

\medskip

We will denote $\mathcal{E}(n)$ the space of pairs $(\mathbf{n},
\boldsymbol{\lambda})$ with\index{notation}{100@$\mathcal{E}(n)$ (parameter space for
  pairs of quadratic forms)}\index{notation}{101@$\iota$ (the involution on
  $\mathcal{D}(n)$ and on $\mathcal{E}(n)$)}
\begin{itemize}
\item $\mathbf{n} \in \mathcal{D}(n)$, and
\item $\boldsymbol{\lambda}$ a quintuple
  $(\{ \underline{\lambda}_x\}, \underline{\lambda})$ of
   decreasing sequences
  $\underline{\lambda}_x =(\lambda_{x,j})_{j=1,\dots, k_x}\in \R^{k_x}$ ($x\in \{\pm 1\}^2$) of real numbers and
  a sequence $\underline{\lambda} =(\lambda_{j})_{j=1,\dots, k_0}\in \mathbb{H}^{k_0}$ of decreasing (for the
  lexicographic order on $\C$ and where $\mathbb{H}=\{ z \in \C \mid
  \im(z)>0\}$) complex numbers with positive imaginary part
  and of the same lengths as the sequences in~$\mathbf{n}$;
\item For any $x=(\varepsilon, \eta)\in \{ \pm 1\}^2$ and any $1\leq\ell \leq k_x$, the product $\varepsilon \eta \lambda_{x,\ell}$ is
positive, i.e.\ $\lambda_{1, 1,
    \ell}$, $\lambda_{-1, -1, \ell}$ are
  positive and $\lambda_{-1,1, \ell}$, $\lambda_{1,-1, \ell}$ negative;
\item where the sequences in~$\boldsymbol{\lambda}$ are constant, the
  corresponding sequences in~$\mathbf{n}$ are decreasing: if $r<s$,
  $\lambda_{x,r} = \lambda_{x,s}$ (resp.\ $\lambda_r=\lambda_s$), then
  $n_{x,r}\geq n_{x,s}$ (resp.\ $m_r\geq m_s$). Equivalently, if $r<s$ and
  $n_{x,r}< n_{x,s}$ (resp.\ $m_r< m_s$), then
  $\lambda_{x,r} >\lambda_{x,s}$ (resp.\ $\lambda_r>\lambda_s$) (where a pair
  in one of the sequences
  in~$\mathbf{n}$ is strictly increasing, the corresponding elements
  in~$\boldsymbol{\lambda}$ are strictly decreasing).
\end{itemize}

The space~$\mathcal{E}(n)$ has also an involution denoted by~$\iota$ and
defined by
\[ (\mathbf{n},
\boldsymbol{\lambda}) \longmapsto \bigl( \iota( \mathbf{n}), (\{
\underline{\lambda}_{( \eta, \varepsilon)}\}_{( \varepsilon,
  \eta)\in \{\pm 1\}^2}, \underline{\lambda})\bigr).\]

There is a natural projection $\pi\colon \mathcal{E}(n) \to \mathcal{D}(n)$; it is
$\iota$-equivariant and its
fibers are contractible:

\begin{lem}
  \label{lem:fiber-parameter-space}
  For every
  $\mathbf{n}= ( \{ \underline{n}_x\}_{ x\in \{ \pm 1\}^2}, 2\underline{m})$
  in~$\mathcal{D}(n)$, the
  fiber~$V(\mathbf{n}) \coloneqq \pi^{-1}( \mathbf{n})$ is a convex cone of
  dimension $d(\mathbf{n}) \coloneqq \sum_{x\in \{\pm 1\}^2} k_x + 2k_0$ where
  $k_x$ is the length of $\underline{n}_x$ \ep{$x\in \{ \pm 1\}^2$} and $k_0$
  is the length of $2\underline{m}$.
\end{lem}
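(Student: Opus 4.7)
The plan is to realize $V(\mathbf{n})$ explicitly as a subset of the real vector space
\[ E(\mathbf{n}) \coloneqq \prod_{x\in \{\pm1\}^2} \R^{k_x}\,\times\, \C^{k_0},\]
whose real dimension is exactly $\sum_{x\in\{\pm1\}^2} k_x + 2k_0 = d(\mathbf{n})$, via the forgetful map sending $(\mathbf{n},\boldsymbol{\lambda})$ to the tuple of its real and complex coordinates. Once this is done, it suffices to check that the defining conditions cut out a convex subcone with nonempty interior of full dimension in $E(\mathbf{n})$.

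First, I would sort the defining conditions on $\boldsymbol{\lambda}$ into three families: open sign conditions $\varepsilon\eta\,\lambda_{x,\ell}>0$ and $\im(\lambda_j)>0$; weak (lex-)monotonicity conditions at indices where the corresponding sequence in $\mathbf{n}$ is weakly decreasing; and strict (lex-)monotonicity at indices where that sequence of $\mathbf{n}$ strictly increases. The sign conditions and the real-variable (lex-)monotonicity conditions are clearly linear half-space conditions stable under $\R_{>0}$-scaling, so they define convex subcones of $E(\mathbf{n})$. The only nontrivial point, and the main obstacle, is the stability under convex combinations of the lex-monotonicity condition for the complex sequence.

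To handle the lex order on $\C$, I would verify directly: given two lex-decreasing tuples $\underline{\lambda},\underline{\lambda}'\in \mathbb{H}^{k_0}$ and $s,t>0$, set $\mu_j \coloneqq s\lambda_j + t\lambda'_j$. For each consecutive pair $j,j+1$, either at least one of $\re(\lambda_j)>\re(\lambda_{j+1})$, $\re(\lambda'_j)>\re(\lambda'_{j+1})$ holds---in which case $\re(\mu_j)>\re(\mu_{j+1})$ and the lex comparison is settled by real parts---or both pairs of real parts coincide, forcing the lex comparison in each original sequence to be governed by the imaginary parts, which then carries over to $\underline{\mu}$. The same case distinction preserves strict jumps at the indices dictated by $\mathbf{n}$, so $V(\mathbf{n})$ is closed under $\R_{>0}$-linear combinations; combined with the obvious stability under positive scaling, this yields the convex cone property.

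Finally, for nonemptiness and full dimension, I would write down an explicit $\boldsymbol{\lambda}^{0}\in V(\mathbf{n})$ in which every required inequality is strict, e.g.\ take $\lambda_{x,\ell}^{0} \coloneqq \varepsilon\eta(k_x+1-\ell)$ and $\lambda_{j}^{0} \coloneqq (k_0+1-j) + i$. At such a point all constraints are open, so a small Euclidean neighborhood of $\boldsymbol{\lambda}^0$ in $E(\mathbf{n})$ lies entirely in $V(\mathbf{n})$, giving the required dimension $d(\mathbf{n})$.
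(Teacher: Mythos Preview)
Your approach is correct and is exactly what the paper has in mind; the paper in fact gives no proof at all beyond the remark that ``the precise part of the closure of the cone $V(\mathbf{n})$ that must be included is described by the conditions on the~$\boldsymbol{\lambda}$s'', so your write-up is strictly more detailed. The verification that lex-decreasingness (and its strict variant) is preserved under positive linear combinations is the one nonobvious point, and your case split handles it correctly.

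There is, however, a small bug in your explicit interior point. For $x=(\varepsilon,\eta)$ with $\varepsilon\eta=-1$ your choice $\lambda_{x,\ell}^{0}=\varepsilon\eta(k_x+1-\ell)$ gives the sequence $-k_x,-k_x+1,\dots,-1$, which is \emph{increasing}, not decreasing. Replace it, for those two values of~$x$, by $\lambda_{x,\ell}^{0}\coloneqq -\ell$ (or any strictly decreasing sequence of negative reals); the rest of the argument is unaffected.
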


Of course, the precise part of the closure of the cone $V( \mathbf{n})$ that must be included
is described by the conditions on $\boldsymbol{\lambda}$.

\subsection{Matrices}
\label{sec:matrices}

For each~$n$ in~$\Z_{>0}$, and $\lambda\in\C$, let $C_n$ and $J_n(\lambda)$ be
the following $n\times n$-matrices\index{notation}{102@$C_n$ (the antidiagonal
  matrix)}\index{notation}{103@$J_n(\lambda)$ (Jordan block)}
\begin{equation}
  \label{eq:C_nJ_nsec:matrices}
   C_n \coloneqq \begin{pmatrix}
0 & \dots & 0 & 1 \\
0 & \dots & 1 & 0 \\
\vdots  & \iddots &\iddots  & \vdots \\
1 & 0 &\dots  & 0
\end{pmatrix} \quad
\text{and} \quad
 J_n(\lambda) \coloneqq \begin{pmatrix}
\lambda & 1 & 0 & \dots   \\
0 & \lambda & 1 & \ddots  \\
\vdots & \vdots & \ddots &\ddots  \\
0 & 0 &\dots & \lambda
\end{pmatrix}.
\end{equation}

When $\lambda$ is real and non-zero, let us define the (symmetric) $n\times
n$-matrix\index{notation}{104@$\Phi_n(\lambda)$ (matrix transconjugating $C_n$ and $C_n J_n(\lambda)$)}
\begin{equation}
  \Phi_n(\lambda) \coloneqq |\lambda|^{1/2} C_n \sum_{\ell=0}^{n-1} a_\ell
  \lambda^{-\ell} J_n(0)^\ell
\label{eq:Phi_nsec:matrices}
\end{equation}
where $\sum_{\ell=0}^{\infty} a_\ell t^\ell$ is the Taylor series of
$t\mapsto (1+t)^{1/2}$ (i.e.{} for all~$\ell$,
$a_\ell = \frac{ \prod_{j=0}^{\ell-1}(1/2-j)}{\ell!}$), cf.\
Section~\ref{back_trafo}.

For sequences $\underline{n}=(n_1, \dots, n_\ell)$ in $(\Z_{>0})^\ell$
and\index{notation}{105@$C(\underline{n})$ (block diagonal matrix with blocks among the
  $C_n$)}\index{notation}{106@$J(\underline{n}, \underline{\lambda})$ (idem with blocks
  among the $J_n(\lambda)$)}\index{notation}{107@$\Phi(\underline{n},
  \underline{\lambda})$ (idem with blocks of type $\Phi_n(\lambda)$)}
$\underline{\lambda}=(\lambda_1, \dots, \lambda_\ell)$ in $\C^\ell$ we define
${C}{(\underline{n})}$ to be the block diagonal matrix whose blocks are $C_{n_1},
\dots{}, C_{n_\ell}$; and we define $J({\underline{n}},\underline{\lambda})$ to be
the block diagonal matrix whose blocks are $J_{n_1}( \lambda_1),
\dots{}, J_{n_\ell}(\lambda_\ell)$. When $\underline{\lambda}\in (\R^*)^\ell$, the
matrix $\Phi({\underline{n}}, \underline{ \lambda})$ is the block diagonal
matrix whose blocks are $\Phi_{n_1}(\lambda_1), \dots{}, \Phi_{n_\ell}( \lambda_\ell)$.

Let us denote, for an even number $2m$ and
$\lambda=a+ib \in \C$,  the
$2m\times 2m$-matrices\index{notation}{108@$C^{\prime}_{2m}$ (antidiagonal block matrix
  with blocks $\bigl(
  \begin{smallmatrix}
    1 & 0 \\ 0 & -1
  \end{smallmatrix}
\bigr)$)}\index{notation}{110@$J^{\prime}_{2m}(\lambda)$ (\enquote{real} Jordan block)}
\begin{equation}
 C^{\prime}_{2m} \coloneqq \begin{pmatrix}
0 & 0 & \dots & 1 & 0 \\
0 & 0 & \dots & 0 & -1 \\
\vdots  & \vdots   & \iddots & \iddots  & \iddots  \\
1 & 0  & \dots & 0 & 0 \\
0 & -1 & \dots & 0 & 0
\end{pmatrix},
J^{\prime}_{2m}(\lambda) \coloneqq\begin{pmatrix}
a & -b & 1 & 0 & \dots & 0 & 0 \\
b & a & 0 & 1 & \dots & 0 & 0 \\
0 & 0 & a & -b & \ddots & 0 & 0 \\
0 & 0 & b & a & \ddots & 0 & 0 \\
\vdots & \vdots & \ddots  & \ddots  & \ddots & \vdots  & \vdots  \\
0 & 0 & 0 & 0 & \dots & a & -b \\
0 & 0 & 0 & 0 & \dots & b & a
\end{pmatrix}.\label{eq:CprimeJprime_sec:matrices}
\end{equation}

If $\lambda \neq 0$, let $c+id$ the biggest (for the lexicographic order)
square root of~$\lambda$. We define (again with $\sum a_\ell t^\ell =
(1+t)^{1/2}$)\index{notation}{111@$\Psi_{2m}(\lambda)$ (matrix transconjugating
  $C^{\prime}_{2m}$ and $J^{\prime}_{2m}(\lambda)$)}
  \begin{equation}
\Psi_{2m}(\lambda) \coloneqq
  \left(\begin{smallmatrix}
    0 & 0 & \dots & 0 & 0 & c & -d \\
    0 & 0 & \dots & 0 & 0 & -d & -c \\
    0 & 0 & \iddots& c & -d  & \vdots & \vdots \\
    0 & 0 & \iddots & -d & -c & \vdots  & \vdots\\
    \vdots & \vdots & \iddots & \iddots & \iddots & \vdots & \vdots \\
    c & -d & 0 & 0 & \dots & 0  & 0\\
    -d & -c & 0 & 0 & \dots & 0  & 0
  \end{smallmatrix}\right)
  \sum_{\ell=0}^{m-1} \frac{ a_\ell }{ (a^2+b^2)^\ell}
   \left( \begin{smallmatrix}
      0 & 0 & a & b & 0 & 0 & \dots & \dots\\
      0 & 0 & -b & a & 0 & 0 & \dots &\dots \\
      0 & 0 & 0 & 0 & \ddots & \ddots & \vdots & \vdots\\
      0 & 0 & 0 & 0 & \ddots & \ddots & \vdots & \vdots\\
      \vdots & \vdots & \ddots & \ddots &0 & 0  & a & b\\
      \vdots & \vdots & \ddots & \ddots &  0 & 0 & -b & a\\
      0 & 0 & \dots & \dots & \dots & \dots & 0 & 0\\
      0 & 0 & \dots & \dots & \dots & \dots & 0 & 0
  \end{smallmatrix}\right)^\ell .\label{eq:Psi2m_sec:matrices}
\end{equation}

When $\underline{\lambda}= (\lambda_1, \dots, \lambda_\ell)\in \C^\ell$ and
$2\underline{m} = (2m_1, \dots, 2m_\ell)$ is a sequence of even integers,
let~$C^{\prime}({2 \underline{m}})$ be the matrix with diagonal
blocks\index{notation}{112@$C^{\prime}(2\underline{m})$, $J^{\prime}(2\underline{m},
    \underline{\lambda})$ , $\Psi(2\underline{m}, \underline{\lambda})$ (block
    diagonal matrices constructed from the $C^{\prime}_{2m}$,
    $J^{\prime}_{2m}(\lambda)$, and $\Psi_{2m}(\lambda)$ respectively)}
$C^{\prime}_{2m_1}, \dots, C^{\prime}_{2m_k}$ and $J^{\prime}( {2\underline{m}},
\underline{\lambda})$ the matrix with diagonal blocks $J^{\prime}_{2m_1}(
\lambda_1), \dots, J^{\prime}_{2m_k}( \lambda_k)$. When
$\underline{\lambda}\in (\C^*)^\ell$, the matrix $\Psi
({ 2 \underline{m}}, \underline{\lambda})$ has the diagonal blocks $\Psi_{2 m_1}( \lambda_1),
\dots, \Psi_{2 m_\ell}( \lambda_\ell)$.

\medskip

We now introduce the matrices that will serve as normal forms for pairs of
quadratic forms.

Let $(\mathbf{n}, \boldsymbol{\lambda})$ be in $\mathcal{E}(n)$, thus
$\mathbf{n}$ is a quintuple
$(\underline{n}_{1,1}, \underline{n}_{1,-1}, \underline{n}_{-1,1},
\underline{n}_{-1,-1}, 2\underline{m})$ and similarly
for~$\boldsymbol{ \lambda}$. We define $C(\mathbf{n})$ to be the block
diagonal matrix\index{notation}{113@$C( \mathbf{n})$ (nondegenerate symmetric matrix
  associated with the element $\mathbf{n}$ of $\mathcal{D}(n)$)}
\[ C( \mathbf{n}) \coloneqq
  \begin{pmatrix}
    C({ \underline{n}_{1,1}}) & & & & \\
 &C({ \underline{n}_{1,-1}}) & & & \\
 & & -C({ \underline{n}_{-1,1}}) & & \\
 & & & -C({ \underline{n}_{-1,-1}}) & \\
 & & & &C^{\prime}({ 2 \underline{m}})
  \end{pmatrix},
\]
 We define
 $J( \mathbf{n}, \boldsymbol{ \lambda})$ to be the block diagonal matrix
\[
  \begin{pmatrix}
    J({ \underline{n}_{1,1}}, \underline{\lambda}_{1,1}) & & & & \\
&\!\!\! J({ \underline{n}_{1,-1}}, \underline{\lambda}_{1,-1}) & & & \\
& &\!\!\! J({ \underline{n}_{-1,1}}, \underline{\lambda}_{-1,1}) & & \\
& & &\!\!\! J({ \underline{n}_{-1,-1}}, \underline{\lambda}_{-1,-1}) & \\
& & & & \!\!\!J^{\prime}({ 2 \underline{m}}, \underline{\lambda})
  \end{pmatrix}.
\]
Also we define
$D(\mathbf{n}, \boldsymbol{ \lambda}) \coloneqq C(\mathbf{n}) J(\mathbf{n}, \boldsymbol{ \lambda})$.\index{notation}{115@$D( \mathbf{n},\boldsymbol{ \lambda}) = C(\mathbf{n}) J(\mathbf{n},
\boldsymbol{ \lambda})$ (nondegenerate symmetric matrix
  associated with the element $(\mathbf{n},\boldsymbol{ \lambda})$ of $\mathcal{E}(n)$)}

Finally, since none of the elements in~$\boldsymbol{ \lambda}$ are zero, let
$\Phi( \mathbf{n}, \boldsymbol{ \lambda})$ be\index{notation}{116@$\Phi( \mathbf{n},
  \boldsymbol{ \lambda})$ (matrix transconjugating $C( \mathbf{n})$ and $D(
  \mathbf{n}, \boldsymbol{ \lambda})$)}
\[
  \begin{pmatrix}
            \Phi({\underline{n}_{1,1}}, \underline{\lambda}_{1,1}) & & & & \\
        & 0 &\!\!\! \Phi({\underline{n}_{1,-1}}, \underline{\lambda}_{1,-1}) & & \\
        &\!\!\! \Phi({\underline{n}_{-1,1}}, \underline{\lambda}_{-1,1}) & 0 & & \\
        & & &\!\!\! \Phi({\underline{n}_{-1,-1}}, \underline{\lambda}_{-1,-1}) & \\
        & & & &\!\!\! \Psi({2\underline{m}}, \underline{\lambda})
  \end{pmatrix}.
\]

\subsection{Normal forms}
\label{sec:normal-forms-1}

Theorem~\ref{teo:normal-forms-r} and Proposition~\ref{prop:normal-forms-appendix-back-trans} of the appendix imply:
\begin{teo}
  Let $V$ be a $n$-dimensional real vector space, let~$b_0$ and~$b_1$ be two
  symmetric forms on~$V$ with $b_0$ and $b_1$ nondegenerate.

  Then there is a unique element\index{notation}{118@$(\mathbf{n}(b_0, b_1), \boldsymbol{
    \lambda}(b_0, b_1))$ (the element in $\mathcal{E}(n)$ associated with the
  pair of quadratic forms $(b_0, b_1)$)}
  $(\mathbf{n}, \boldsymbol{ \lambda}) = (\mathbf{n}(b_0, b_1), \boldsymbol{
    \lambda}(b_0, b_1))$ in $\mathcal{E}(n)$ such that there is a
  basis~$\mathbf{e}$ of~$V$ for which the matrices of the symmetric
  forms~$b_0$ and~$b_1$ are $C( \mathbf{n})$ and
  $D(\mathbf{n}, \boldsymbol{ \lambda})$ respectively.

  For the pair $(b_{1}^{\ast}, b_{0}^{\ast})$ of quadratic forms on the dual
  space~$V^*$, one has
  \[(\mathbf{n}(b^{\ast}_{1}, b^{\ast}_{0}), \boldsymbol{
      \lambda}(b^{\ast}_{1}, b^{\ast}_{0})) = \iota(\mathbf{n}(b_0, b_1),
    \boldsymbol{ \lambda}(b_0, b_1)).\]
 If $\mathbf{v}^*$ is the basis of~$V^*$ dual to the basis
  $\mathbf{v}= \mathbf{e}\, \Phi(\mathbf{n}, \boldsymbol{ \lambda})$, then the
  matrices of the symmetric forms~$b^{\ast}_{1}$ and~$b^{\ast}_{0}$ in the
  basis~$\mathbf{v}^*$ are $C(\iota( \mathbf{n}))$ and
  $D(\iota(\mathbf{n}, \boldsymbol{ \lambda}))$ respectively.
\end{teo}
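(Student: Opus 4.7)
The plan is to reduce the theorem to the classical spectral theory of a selfadjoint operator on a real vector space equipped with a nondegenerate indefinite symmetric form. First I would define $A \in \mathrm{End}(V)$ by $b_1(x,y) = b_0(Ax,y)$: nondegeneracy of~$b_0$ makes $A$ well-defined, symmetry of both forms forces $b_0(Ax,y)=b_0(x,Ay)$ (so $A$ is $b_0$-selfadjoint), and nondegeneracy of~$b_1$ forces $A$ to be invertible. An isomorphism between two pairs $(b_0,b_1)$ and $(b'_0,b'_1)$ is the same as an isomorphism between the triples $(V,b_0,A)$ and $(V',b'_0,A')$, so the problem reduces to classifying such triples up to change of basis of~$V$. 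Using selfadjointness, the generalized eigenspaces of $A$ corresponding to distinct (possibly complex) eigenvalues are mutually $b_0$-orthogonal, which yields a $b_0$-orthogonal primary decomposition of~$V$ indexed by the real eigenvalues and the pairs of complex-conjugate eigenvalues of~$A$.

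Next, on each primary summand I would invoke the classical structure theorem for selfadjoint operators on an indefinite inner product space: it decomposes $b_0$-orthogonally into a direct sum of $A$-cyclic indecomposable subspaces, each admitting a distinguished basis in which $(b_0,A)$ takes one of two standard shapes. For a real eigenvalue~$\lambda$ of cyclic dimension~$n$, the matrices are $\varepsilon C_n$ and $J_n(\lambda)$ for a uniquely determined sign $\varepsilon\in\{\pm 1\}$; for a pair $\{\lambda,\bar\lambda\}$ with $\lambda\in\mathbb{H}$ of complex cyclic dimension~$m$, they are $C^{\prime}_{2m}$ and $J^{\prime}_{2m}(\lambda)$, with no sign ambiguity since the sign is absorbed by the convention $\im(\lambda)>0$. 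Assembling these pieces and labelling each real block by the pair of signs $(\varepsilon,\eta)$ where $\eta$ records the sign of~$b_1$ on the block (so that $\varepsilon\eta=\sgn(\lambda)$), then applying the ordering conventions built into~$\mathcal{E}(n)$, produces the desired $(\mathbf{n},\boldsymbol{\lambda})$. Uniqueness follows from the observation that the spectrum of~$A$, its Jordan block sizes, and the signatures of the restrictions of~$b_0$ to the filtrations $\ker(A-\lambda)^k$ on each primary summand are intrinsic invariants of the triple that jointly recover $(\mathbf{n},\boldsymbol{\lambda})$.

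For the duality claim, the operator attached to the pair $(b^{\ast}_{1},b^{\ast}_{0})$ on~$V^*$ is conjugate to the transpose of~$A$ via the isomorphism $B_1\colon V\to V^*$, $x\mapsto b_1(x,\cdot)$, and hence has the same Jordan block structure as~$A$; moreover, on each indecomposable block the sign of~$b^{\ast}_{1}$ equals the sign of~$b_1$ on the corresponding old block, and analogously for $b^{\ast}_{0}$ and~$b_0$. This swaps the roles of~$\varepsilon$ and~$\eta$ block by block, which is exactly the action of~$\iota$. The remaining claim that $\mathbf{v}=\mathbf{e}\,\Phi(\mathbf{n},\boldsymbol{\lambda})$ has the advertised dual-basis property is a blockwise verification: on a real block one checks an identity of the form $\Phi_n(\lambda)^{T}(\varepsilon C_n)^{-1}\Phi_n(\lambda)=\eta C_n$ together with the corresponding conjugation rule for $J_n(\lambda)$, and on a complex block the analogous identity for $\Psi_{2m}(\lambda)$; both come from the Taylor expansion of $(1+t)^{1/2}$ at $t=0$ and from the choice of the square root of~$\lambda$ in~$\mathbb{H}$. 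The main obstacle will be the $b_0$-orthogonal indecomposable decomposition on each primary summand (the nontrivial half of the classical theorem, requiring a careful analysis of nilpotent selfadjoint operators on indefinite inner product spaces); once that is in hand, the rest is bookkeeping, and the specific form of~$\Phi$ is dictated by the blockwise identities above.
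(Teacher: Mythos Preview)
Your approach is correct and essentially matches the paper's: the appendix develops precisely the structure theorem for $b_0$-selfadjoint operators that you invoke (via the $K[X]$-module structure and a $K\langle\!\langle X^{-1}\rangle\!\rangle$-valued bilinear form), treats the real and complex-eigenvalue blocks separately as you outline, and computes the back transformation~$\Phi$ blockwise. The only notable difference is that the paper proves the indecomposable classification from scratch rather than citing it as classical, and obtains uniqueness from an analysis of the automorphism group $\OO(b_0)\cap\OO(b_1)$ rather than from the filtration-signature invariants you propose.
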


There is also a uniqueness statement for the standard bases: two such bases
are conjugate under the action of the subgroup of $\GL(n, \R)$ which is the
intersection of the orthogonal group of~$C(\mathbf{n})$ and the orthogonal
group of $D(\mathbf{n}, \boldsymbol{ \lambda})$; it is also the centralizer
of $\Phi(\mathbf{n}, \boldsymbol{ \lambda})$ in the orthogonal group of~$C(\mathbf{n})$. This group naturally
identifies with $\OO( b_0) \cap \OO( b_1)$ and is described in Section~\ref{sec:automorphism-groups}.

\section{Quadruples of transverse Lagrangians}
\label{sec:4-uple-transverse}

Based on the normal forms of the above theorem, one deduces normal forms for
quadruples of transverse Lagrangians, generalizing
Proposition~\ref{prop:standard-basis-positive-4-uple}.

\begin{prop}\label{prop:quadr-transv-lagr-normal-form}
  Let $x=(L_1, M_1, L_2, M_2)$ be a quadruple of Lagrangians in~$\R^{2n}$ such
  that $(L_1, M_1, L_2)$, and $(L_1, M_2, L_2)$ are transverse triples. Then
  there is unique element
  $( \mathbf{n}, \boldsymbol{\lambda}) =( \mathbf{n}(x),
  \boldsymbol{\lambda}(x))$ such that there is a symplectic basis
  $(\mathbf{e}, \mathbf{f})$ with\index{notation}{120@$(\mathbf{n}(x), \boldsymbol{
    \lambda}(x))$ (the element in $\mathcal{E}(n)$ associated with the
  quadruple of Lagrangians~$x$)}
  \[ L_1= \Span( \mathbf{e}),\ L_2 = \Span( \mathbf{f}), \ M_1 =\Span(
    \mathbf{e} + \mathbf{f} C( \mathbf{n})), \ M_2 =\Span( \mathbf{e} -
    \mathbf{f} D( \mathbf{n}, \boldsymbol{\lambda})).\]

  Let $y$ 
  be the quadruple $(L_2, M_2, L_1, M_1)$. Then $(
  \mathbf{n}(y), \boldsymbol{\lambda}(y)) = \iota( \mathbf{n},
  \boldsymbol{\lambda})$ and, setting $\mathbf{e}' = \mathbf{f} \,
  \Phi(\mathbf{n},
  \boldsymbol{ \lambda})$, and $\mathbf{f'} = - \mathbf{e} \, {}^T\! \Phi(\mathbf{n},
  \boldsymbol{ \lambda})^{-1}$ one has
  \begin{multline*}
     L_2= \Span( \mathbf{e}'),\ L_1 = \Span( \mathbf{f}'), \ M_2 =\Span(
    \mathbf{e}' + \mathbf{f}' C(\iota (\mathbf{n}))),\\ \ M_1 =\Span(
    \mathbf{e}' - \mathbf{f}' D( \iota(\mathbf{n}, \boldsymbol{\lambda}))),
  \end{multline*}
  and $( \mathbf{e}', \mathbf{f}')$ is a symplectic basis.
\end{prop}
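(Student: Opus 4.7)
The plan is to reduce both assertions to the normal form theorem for pairs of nondegenerate symmetric bilinear forms from Section~\ref{sec:pairs-real-symmetric}.

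I begin by choosing an arbitrary symplectic basis $(\mathbf{e}_0, \mathbf{f}_0)$ of $\R^{2n}$ with $L_1 = \Span(\mathbf{e}_0)$ and $L_2 = \Span(\mathbf{f}_0)$, which exists because $L_1$ and $L_2$ are transverse. The transversality of each $M_i$ to both $L_1$ and $L_2$ then allows me to write $M_1 = \Span(\mathbf{e}_0 + \mathbf{f}_0 B_1)$ and $M_2 = \Span(\mathbf{e}_0 - \mathbf{f}_0 B_2)$ for unique invertible matrices $B_1, B_2$; by Remark~\ref{rem:maslov-index-matrix}, these are (up to sign) the matrices in the basis $\mathbf{e}_0$ of the Maslov forms $[L_1, M_1, L_2]$ and $[L_1, M_2, L_2]$, and in particular they are symmetric and nondegenerate. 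Applying the normal form theorem to the pair $(B_1, B_2)$ yields the unique invariant $(\mathbf{n}, \boldsymbol{\lambda}) \in \mathcal{E}(n)$ together with a matrix $g \in \GL(n, \R)$ such that ${}^T\! g B_1 g = C(\mathbf{n})$ and ${}^T\! g B_2 g = D(\mathbf{n}, \boldsymbol{\lambda})$. Setting $\mathbf{e} = \mathbf{e}_0 g$ and $\mathbf{f} = \mathbf{f}_0 \cdot {}^T\! g^{-1}$ gives a symplectic basis adapted to $L_1 \oplus L_2$, and rewriting $\mathbf{e}_0 \pm \mathbf{f}_0 B_i = (\mathbf{e} \pm \mathbf{f} \cdot {}^T\! g B_i g) \cdot g^{-1}$ shows that the Lagrangians $M_1, M_2$ have exactly the asserted standard form. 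Uniqueness of $(\mathbf{n}(x), \boldsymbol{\lambda}(x))$ is inherited from uniqueness in the normal form theorem.

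For the behaviour under $\kappa$, I first verify by a direct matrix computation (using $\omega(\mathbf{e}, \mathbf{f}) = \Id$ and the Lagrangian property of $L_1$, $L_2$) that $(\mathbf{e}', \mathbf{f}') = (\mathbf{f}\,\Phi, -\mathbf{e} \cdot {}^T\! \Phi^{-1})$ is again a symplectic basis; the identifications $L_2 = \Span(\mathbf{e}')$ and $L_1 = \Span(\mathbf{f}')$ are immediate since $\Phi = \Phi(\mathbf{n}, \boldsymbol{\lambda})$ is invertible. To recognise $M_2$ and $M_1$ in this new frame I invoke the duality statement of the normal form theorem: the dual forms $b_1^*$ and $b_0^*$ on $L_1^*$ have matrices $C(\iota \mathbf{n})$ and $D(\iota \mathbf{n}, \iota \boldsymbol{\lambda})$ in the basis of $L_1^*$ dual to $\mathbf{v} = \mathbf{e}\,\Phi$. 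Transporting this duality along the symplectic identification $L_2 \simeq L_1^*$ induced by $\omega$ yields matrix identities relating $\Phi$, $C(\mathbf{n})$, $D(\mathbf{n}, \boldsymbol{\lambda})$, $C(\iota \mathbf{n})$ and $D(\iota \mathbf{n}, \iota \boldsymbol{\lambda})$; substituting them shows that $\mathbf{e}' + \mathbf{f}' \cdot C(\iota \mathbf{n})$ and $\mathbf{e}' - \mathbf{f}' \cdot D(\iota \mathbf{n}, \iota \boldsymbol{\lambda})$ span $M_2$ and $M_1$ respectively. The equality $(\mathbf{n}(\kappa x), \boldsymbol{\lambda}(\kappa x)) = \iota(\mathbf{n}(x), \boldsymbol{\lambda}(x))$ then follows from the uniqueness already established.

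The main obstacle will be the last step, namely translating the abstract duality statement of the appendix into the explicit matrix identities (of the shape $D(\mathbf{n}, \boldsymbol{\lambda}) \cdot {}^T\! \Phi^{-1} \cdot C(\iota \mathbf{n}) = \Phi$ and its analogue with the roles of $C$ and $D$ swapped) that are needed to recognise $M_1$ and $M_2$ as spans of the new frame vectors. This amounts to careful bookkeeping across all five block types in $C(\mathbf{n})$ and $D(\mathbf{n}, \boldsymbol{\lambda})$, most delicately for the complex blocks $\Psi_{2m}$, and it is precisely this matching that justifies the specific formulas for $\Phi_n(\lambda)$ and $\Psi_{2m}(\lambda)$ given in Section~\ref{sec:pairs-real-symmetric}.
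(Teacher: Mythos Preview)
Your proposal is correct and follows exactly the route the paper intends: the paper does not give a standalone proof of this proposition but simply records (just before the statement) that it is deduced from the normal form theorem for pairs of nondegenerate symmetric forms summarized in Section~\ref{sec:normal-forms-1} together with the back-transformation statement of Proposition~\ref{prop:normal-forms-appendix-back-trans}. Your two-step reduction (Maslov forms $\leadsto$ pair $(b_0,b_1)$ on~$L_1$, then apply the normal form theorem; for the $\kappa$-part, transport the duality $(b_1^\ast,b_0^\ast)$ along the symplectic identification $L_2\simeq L_1^\ast$) is precisely that deduction, and the matrix identity you isolate, $D(\mathbf{n},\boldsymbol{\lambda})\cdot{}^T\!\Phi^{-1}\cdot C(\iota\mathbf{n})=\Phi$, is exactly the content of the back-transformation equations established blockwise in Sections~\ref{sec:back-transf-when} and~\ref{sec:over-reals-when}.
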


A basis $(\mathbf{e}, \mathbf{f})$ as in the proposition is said in
\emph{standard position} with respect to~$x$.\index{definition}{standard
  position (basis in ---)}%
\index{definition}{basis! in standard position}%

\begin{rem}
  The cross ratio of~$x$ 
  is then $D( \mathbf{n},
  \boldsymbol{\lambda})^{-1} C( \mathbf{n})= J( \mathbf{n},
  \boldsymbol{\lambda})^{-1}$ (cf.\ the definition of the cross-ratio in Section~\ref{prop:properties_CR}).
\end{rem}

As a consequence (cf.\ Section~\ref{sec:conf-lagr} for the notation):
\begin{cor}
  \label{cor:quadr-transv-lagr-one-to-one-En}
  The map $x\mapsto ( \mathbf{n}(x), \boldsymbol{\lambda}(x))$ induces an
  isomorphism between $\Conf^{4\Diamond}( \mathcal{L}_n)$ and
  the space~$\mathcal{E}(n)$.
\end{cor}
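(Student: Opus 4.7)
The plan is to derive the corollary as a direct consequence of Proposition~\ref{prop:quadr-transv-lagr-normal-form}, with three things to verify: that the map descends to the $\Sp(2n,\R)$-quotient, that it is a bijection, and that both directions are continuous.

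First, I would check well-definedness on $\Conf^{4\Diamond}(\mathcal{L}_n)$. If $x=(L_1,M_1,L_2,M_2)$ and $x'= g\cdot x$ for some $g\in\Sp(2n,\R)$, then pushing a symplectic basis $(\mathbf{e},\mathbf{f})$ in standard position with respect to~$x$ by~$g$ produces a symplectic basis $(g\mathbf{e},g\mathbf{f})$ in standard position with respect to~$x'$, with the same matrices $C(\mathbf{n})$ and $D(\mathbf{n},\boldsymbol{\lambda})$. By the uniqueness clause of Proposition~\ref{prop:quadr-transv-lagr-normal-form}, $(\mathbf{n}(x),\boldsymbol{\lambda}(x))=(\mathbf{n}(x'),\boldsymbol{\lambda}(x'))$. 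Hence the map passes to the quotient.

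Next, surjectivity and injectivity. For surjectivity, given $(\mathbf{n},\boldsymbol{\lambda})\in\mathcal{E}(n)$, I would define four subspaces of $\R^{2n}$ using the canonical symplectic basis $(\mathbf{e}_0,\mathbf{f}_0)$ via the formulas $L_1=\Span(\mathbf{e}_0)$, $L_2=\Span(\mathbf{f}_0)$, $M_1=\Span(\mathbf{e}_0+\mathbf{f}_0 C(\mathbf{n}))$, $M_2=\Span(\mathbf{e}_0-\mathbf{f}_0 D(\mathbf{n},\boldsymbol{\lambda}))$. Since $C(\mathbf{n})$ is symmetric and $D(\mathbf{n},\boldsymbol{\lambda})$ is $C(\mathbf{n})$-selfadjoint, both $M_1$ and~$M_2$ are Lagrangian, and their defining matrices $C(\mathbf{n})$ and $D(\mathbf{n},\boldsymbol{\lambda})=C(\mathbf{n})J(\mathbf{n},\boldsymbol{\lambda})$ are invertible (the $\lambda$'s being nonzero by the definition of~$\mathcal{E}(n)$), which ensures that the two required triples are pairwise transverse so that the quadruple lies in $\Conf^{4\Diamond}(\mathcal{L}_n)$. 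By construction its image is the prescribed $(\mathbf{n},\boldsymbol{\lambda})$. For injectivity, if two quadruples $x$ and~$x'$ have the same image, then each admits a symplectic basis in standard position with the \emph{same} matrices $C(\mathbf{n})$ and $D(\mathbf{n},\boldsymbol{\lambda})$; the unique symplectic transformation sending one symplectic basis to the other then carries~$x$ to~$x'$, so they represent the same class in $\Conf^{4\Diamond}(\mathcal{L}_n)$.

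Finally, for the homeomorphism statement, I would argue continuity in both directions. In one direction, the standard basis depends algebraically on the data $(\mathbf{n},\boldsymbol{\lambda})$, so surjectivity provides a continuous section. In the other direction, continuity of $x\mapsto(\mathbf{n}(x),\boldsymbol{\lambda}(x))$ follows from the fact that the discrete piece~$\mathbf{n}(x)$ is locally constant on the stratum it defines (it records the Jordan structure of the cross ratio, which is semicontinuous but constant on each $\mathcal{E}(n)$-stratum by the compatibility condition between $\mathbf{n}$ and~$\boldsymbol{\lambda}$), while $\boldsymbol{\lambda}(x)$ records the eigenvalues (real and complex conjugate pairs) of $J(\mathbf{n},\boldsymbol{\lambda})=D(\mathbf{n},\boldsymbol{\lambda})^{-1}C(\mathbf{n})$, which are continuous functions of the cross ratio (Section~\ref{sec:symp_cross}), itself continuous on $\Conf^{4\Diamond}$.

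The main obstacle I anticipate is the continuity at non-generic strata, where the Jordan type of the cross ratio can jump; the bookkeeping rules built into the definition of $\mathcal{E}(n)$ (the monotonicity compatibility between the $\underline{n}_x$ and the $\underline{\lambda}_x$, and between $2\underline{m}$ and~$\underline{\lambda}$) are precisely what ensures that the map is continuous across the strata. Apart from that, the proof is essentially a repackaging of Proposition~\ref{prop:quadr-transv-lagr-normal-form}.
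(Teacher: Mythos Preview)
Your bijection argument---well-definedness on the $\Sp(2n,\R)$-quotient, surjectivity via the explicit quadruple built from the canonical basis, injectivity via the simply transitive action on symplectic bases---is correct and is exactly the immediate consequence of Proposition~\ref{prop:quadr-transv-lagr-normal-form} that the paper records without further comment.

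Your continuity discussion, however, overreaches. Note first that the paper says ``isomorphism'' here, whereas in the positive case (Corollary~\ref{cor:cross-ratio-positiv-one-to-one-Deltan}) it explicitly says ``homeomorphism''; only a bijection is being claimed, and that is all the paper uses downstream (cf.\ Proposition~\ref{prop:pieces}, where continuity of the holonomy is asserted only piece by piece). Second, the continuity claim is in fact false with the natural disjoint-union topology on $\mathcal{E}(n)=\bigsqcup_{\mathbf{n}} V(\mathbf{n})$: the Jordan type of the cross ratio is not locally constant on $\Conf^{4\Diamond}(\mathcal{L}_n)$---a nontrivial Jordan block generically becomes diagonalizable under small perturbation---so the discrete invariant $\mathbf{n}(x)$ genuinely jumps. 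The monotonicity conditions built into the definition of $\mathcal{E}(n)$ serve only to pin down a unique normal form when eigenvalues coincide; they do not glue the strata together topologically. The obstacle you anticipate at the end is therefore real and not resolved by those bookkeeping rules; you should simply drop the homeomorphism claim.
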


The uniqueness statement for standard basis takes the following form:

\begin{prop}
  \label{prop:quadr-transv-lagr-normal-form-uniqueness}
  Let $(\mathbf{e}_1, \mathbf{f}_1)$ and $(\mathbf{e}_2, \mathbf{f}_2)$ be
  bases in standard position with respect to a quadruple $x = (L_1, M_1, L_2,
  M_2)$. Then
  \begin{enumerate}
  \item there is a \ep{unique} element $r$ of $\GL(n,\R)$ that is orthogonal with
    respect to the symmetric matrices $C(\mathbf{n}(x))$ and
    $D(\mathbf{n}(x), \boldsymbol{\lambda}(x))$ and such that
    $(\mathbf{e}_2, \mathbf{f}_2) = (\mathbf{e}_1 \cdot{ r}, \mathbf{f}_1
    \cdot{ r})$;
  \item for every
    $s$ in~$\GL(n, \R)$ orthogonal with respect to  the symmetric matrices $C(\mathbf{n}(x))$ and
    $D(\mathbf{n}(x), \boldsymbol{\lambda}(x))$, the basis $(\mathbf{e}_1
    \cdot s, \mathbf{f}_1 \cdot s)$ is in standard position with respect to~$x$.
  \end{enumerate}
\end{prop}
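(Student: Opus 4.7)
The strategy is to translate the standard position conditions into algebraic constraints on transition matrices and then appeal to the normal-form analysis of the appendix. Let $(\mathbf{e}_1, \mathbf{f}_1)$ and $(\mathbf{e}_2, \mathbf{f}_2)$ be two symplectic bases in standard position with respect to~$x$. Since $\Span(\mathbf{e}_1)=\Span(\mathbf{e}_2)=L_1$ and $\Span(\mathbf{f}_1)=\Span(\mathbf{f}_2)=L_2$, there are unique matrices $A,B \in \GL(n,\R)$ with $\mathbf{e}_2 = \mathbf{e}_1 A$ and $\mathbf{f}_2 = \mathbf{f}_1 B$. The symplectic identity ${}^T\!A\,B = \omega(\mathbf{e}_2, \mathbf{f}_2) = \Id$ then forces $B = {}^T\!A^{-1}$.

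Next, the standard position identities for~$M_1$ and~$M_2$, namely $\Span(\mathbf{e}_2 + \mathbf{f}_2 C(\mathbf{n})) = \Span(\mathbf{e}_1 + \mathbf{f}_1 C(\mathbf{n}))$ and $\Span(\mathbf{e}_2 - \mathbf{f}_2 D(\mathbf{n}, \boldsymbol{\lambda})) = \Span(\mathbf{e}_1 - \mathbf{f}_1 D(\mathbf{n}, \boldsymbol{\lambda}))$, amount to the existence of invertible matrices $P,Q$ with $\mathbf{e}_1 A + \mathbf{f}_1 B\,C(\mathbf{n}) = (\mathbf{e}_1 + \mathbf{f}_1 C(\mathbf{n})) P$ and analogously for~$M_2$. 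Because $(\mathbf{e}_1, \mathbf{f}_1)$ is a basis of the direct sum $L_1 \oplus L_2$, comparing $L_1$- and $L_2$-components gives $P = Q = A$ together with $B\,C(\mathbf{n}) = C(\mathbf{n})\,A$ and $B\,D(\mathbf{n}, \boldsymbol{\lambda}) = D(\mathbf{n}, \boldsymbol{\lambda})\,A$. Substituting $B = {}^T\!A^{-1}$ these read ${}^T\!A\,C(\mathbf{n})\,A = C(\mathbf{n})$ and ${}^T\!A\,D(\mathbf{n}, \boldsymbol{\lambda})\,A = D(\mathbf{n}, \boldsymbol{\lambda})$, so $A$ belongs to the intersection $\OO(C(\mathbf{n})) \cap \OO(D(\mathbf{n}, \boldsymbol{\lambda}))$.

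The crux of the proof is then the following structural fact, extracted from the appendix: the group $\OO(C(\mathbf{n})) \cap \OO(D(\mathbf{n}, \boldsymbol{\lambda}))$, which coincides with the centralizer of $\Phi(\mathbf{n}, \boldsymbol{\lambda})$ in $\OO(C(\mathbf{n}))$, is actually contained in the standard orthogonal group $\OO(n)$. Granting this, ${}^T\!A\,A = \Id$, whence $B = {}^T\!A^{-1} = A$; setting $r = A$ proves the existence in part~(1). Uniqueness of~$r$ is automatic, since $\mathbf{e}_2 = \mathbf{e}_1 r$ determines~$r$ as~$\mathbf{e}_1$ is a basis of~$L_1$.

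For part~(2), let $s \in \OO(C(\mathbf{n})) \cap \OO(D(\mathbf{n}, \boldsymbol{\lambda}))$. By the same structural fact ${}^T\!s\,s = \Id$, so $\omega(\mathbf{e}_1 s, \mathbf{f}_1 s) = \Id$ and $(\mathbf{e}_1 s, \mathbf{f}_1 s)$ is a symplectic basis. Moreover, being standard orthogonal and lying in $\OO(C(\mathbf{n}))$, the element~$s$ commutes with $C(\mathbf{n})$, and analogously with $D(\mathbf{n}, \boldsymbol{\lambda})$. Hence $\mathbf{e}_1 s + \mathbf{f}_1 s\, C(\mathbf{n}) = (\mathbf{e}_1 + \mathbf{f}_1 C(\mathbf{n}))\cdot s$ and likewise for~$M_2$, so the standard position conditions for $(\mathbf{e}_1 s, \mathbf{f}_1 s)$ are preserved. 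The main technical input throughout the argument is the inclusion $\OO(C(\mathbf{n})) \cap \OO(D(\mathbf{n}, \boldsymbol{\lambda})) \subseteq \OO(n)$; this reduces to a block-by-block analysis of the explicit shape of $C(\mathbf{n})$, $D(\mathbf{n}, \boldsymbol{\lambda})$, and $\Phi(\mathbf{n}, \boldsymbol{\lambda})$ that is carried out in the appendix.
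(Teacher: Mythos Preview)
Your derivation up to the point where $\mathbf{e}_2=\mathbf{e}_1 A$, $\mathbf{f}_2=\mathbf{f}_1\,{}^T\!\!A^{-1}$ with $A\in\OO(C(\mathbf{n}))\cap\OO(D(\mathbf{n},\boldsymbol{\lambda}))$ is correct and is essentially all the paper intends here (the proposition is presented as a direct restatement of the appendix's uniqueness, with no further argument). The genuine gap is the ``structural fact'' you invoke afterwards: the inclusion $\OO(C(\mathbf{n}))\cap\OO(D(\mathbf{n},\boldsymbol{\lambda}))\subseteq\OO(n)$ is \emph{false}, and the appendix does not claim it. For a counterexample take $n=4$, $\underline{n}_{1,1}=(2,2)$, $\underline{\lambda}_{1,1}=(\lambda,\lambda)$ with $\lambda>0$ and all other sequences empty; then
\[
g=\begin{pmatrix}1&0&0&1\\0&1&0&0\\0&-1&1&0\\0&0&0&1\end{pmatrix}
\]
commutes with $J(\mathbf{n},\boldsymbol{\lambda})$ and satisfies ${}^T\!g\,C(\mathbf{n})\,g=C(\mathbf{n})$, hence lies in the intersection, yet $g\notin\OO(4)$ (its second column has Euclidean norm~$\sqrt{2}$). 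What the appendix actually establishes (Lemma~\ref{lem:automorphism-groups-central-series} and Proposition~\ref{prop:automorphism-groups-decomposition}) is that $\OO(C)\cap\OO(D)$ has a nontrivial unipotent radical whenever some Jordan block has size~$\geq 2$; your inclusion fails exactly then.

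Consequently your argument for part~(2) also breaks: for the $g$ above, $\omega(\mathbf{e}_1 g,\mathbf{f}_1 g)={}^T\!g\,g\neq\Id$, so $(\mathbf{e}_1 g,\mathbf{f}_1 g)$ is not even a symplectic basis. In fact the literal statement of the proposition appears to contain a misprint: comparing with Proposition~\ref{prop:stand-bases-decor-exist-uniq}, where the result is actually used, the $\mathbf{f}$-component should transform as $\mathbf{f}_1\cdot{}^T\!r^{-1}$ rather than $\mathbf{f}_1\cdot r$. With that correction your preliminary computation already proves~(1), and for~(2) the identity ${}^T\!s^{-1}C(\mathbf{n})=C(\mathbf{n})\,s$ (from ${}^T\!s\,C(\mathbf{n})\,s=C(\mathbf{n})$) gives $\mathbf{e}_1 s+\mathbf{f}_1\,{}^T\!s^{-1}C(\mathbf{n})=(\mathbf{e}_1+\mathbf{f}_1 C(\mathbf{n}))\,s$ directly, and likewise for $D(\mathbf{n},\boldsymbol{\lambda})$, with no need for $s\in\OO(n)$.
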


\section{Triple of decorated Lagrangians}
\label{sec:triple-fram-lagr}

We give now a generalization of Lemma~\ref{lem:triple-of-lagrangian}
and of Lemma~\ref{lem:CBA-to-triple-of-lagrangian} established in
Section~\ref{sec:maxim-tripl-fram} dropping the maximality assumption.

We adopt here a point of view closer to the framed local systems introduced
in Section~\ref{sec:decor-local-syst-gamma_T}.

\begin{lem}\label{lem:triple-fram-lagr}
  Let $F_a$, $F_b$, and~$F_c$ be three symplectic vector spaces dimension~$2n$
  and let $(\mathbf{e}_a, \mathbf{f}_a)$, $(\mathbf{e}_b, \mathbf{f}_b)$, and
  $(\mathbf{e}_c, \mathbf{f}_c)$ be symplectic bases of $F_a$, $F_b$,
  and~$F_c$. For $x=a,b,c$ set $L^{t}_{x}=\Span( \mathbf{e}_x)$ and
  $L^{b}_{x}=\Span( \mathbf{f}_x)$ \ep{$L^{t}_{x}$ and $L^{b}_{x}$ are thus Lagrangians in~$F_x$}. Let~$S_a$,
  $S_b$, and~$S_c$ be symmetric $n\times n$-matrices and let
  $A\colon F_b\to F_c$, $B\colon F_c\to F_a$, and $C\colon F_a \to F_b$ be
  symplectic isomorphisms such that
  \begin{enumerate}
  \item $CBA =-\Id$,
  \item\label{item:2:lem:triple-fram-lagr} $A(L^{b}_{b})=L^{t}_{c}$, $B(L^{b}_{c})=L^{t}_{a}$, and
    $C(L^{b}_{a})=L^{t}_{b}$,
  \item\label{item:3:lem:triple-fram-lagr} $A(L^{t}_{b})= \Span( \mathbf{e}_c + \mathbf{f}_c \cdot S_c )$,
    $B(L^{t}_{c})= \Span( \mathbf{e}_a + \mathbf{f}_a \cdot S_a )$,
    $C(L^{t}_{a})= \Span( \mathbf{e}_b + \mathbf{f}_b \cdot S_b )$.
  \end{enumerate}
  Then the matrices~$S_a$, $S_b$, and~$S_c$ are invertible; there are $Y_a$,
  $Y_b$, and~$Y_c$ in~$\GL(n,\R)$ such that, with respect to the symplectic
  bases, the matrices of~$A$, $B$, and~$C$ are respectively given by
  \begin{equation}
    \begin{pmatrix}
      S_{c}^{-1} Y_a & -{}^T Y_{a}^{-1} \\ Y_a & 0
    \end{pmatrix}, \quad
    \begin{pmatrix}
      S_{a}^{-1} Y_b & -{}^T Y_{b}^{-1} \\ Y_b & 0
    \end{pmatrix}, \quad
    \begin{pmatrix}
      S_{b}^{-1} Y_c & -{}^T Y_{c}^{-1} \\ Y_c & 0
    \end{pmatrix},\label{eq:lem:triple-fram-lagr}
  \end{equation}
  and the following relations hold
  \[ Y_c \,{}^T Y_{b}^{-1} Y_a = S_b, \quad Y_b \,{}^T Y_{a}^{-1} Y_c = S_a,
    \quad Y_a \,{}^T Y_{c}^{-1} Y_b = S_c.\]
\end{lem}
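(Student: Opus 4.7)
The plan is to first extract the block form of the matrices of~$A$, $B$, $C$ from the Lagrangian conditions alone, and then read off the three advertised relations from $CBA=-\Id$.

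\emph{Step 1 (form of the transition matrices).} Write the matrix of $A$ in the bases $(\mathbf{e}_b,\mathbf{f}_b)$ and $(\mathbf{e}_c,\mathbf{f}_c)$ as $\bigl(\begin{smallmatrix} P_A & Q_A \\ R_A & T_A\end{smallmatrix}\bigr)$. Condition~(\ref{item:2:lem:triple-fram-lagr}) says $A(\mathbf{f}_b)\in\Span(\mathbf{e}_c)$, which forces $T_A=0$. The symplectic condition ${}^T\!A\,\omega\, A=\omega$ then becomes
\[
\begin{pmatrix} {}^T\!P_A R_A-{}^T\!R_A P_A & -{}^T\!R_A Q_A \\ {}^T\!Q_A R_A & 0 \end{pmatrix}=
\begin{pmatrix} 0 & \Id \\ -\Id & 0\end{pmatrix},
\]
which yields $R_A$ invertible, $Q_A=-{}^T\!R_A^{-1}$, and ${}^T\!P_A R_A$ symmetric.

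\emph{Step 2 (invertibility of $S_c$).} Condition~(\ref{item:3:lem:triple-fram-lagr}) requires $\Span(\mathbf{e}_c\cdot P_A+\mathbf{f}_c\cdot R_A)=\Span(\mathbf{e}_c+\mathbf{f}_c\cdot S_c)$, hence there exists an invertible matrix~$T$ with $P_A=T$ and $R_A=S_c T$. Thus $P_A$ is invertible and $S_c=R_A P_A^{-1}$, proving $S_c$ is invertible. Setting $Y_a\coloneqq R_A$ gives the matrix of $A$ exactly in the form~\eqref{eq:lem:triple-fram-lagr}. The same argument applied to $B$ (giving $S_a=R_B P_B^{-1}$, $Y_b=R_B$) and to $C$ (giving $S_b=R_C P_C^{-1}$, $Y_c=R_C$) yields invertibility of~$S_a$ and~$S_b$ and the other two matrix forms.

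\emph{Step 3 (the relations from $CBA=-\Id$).} Direct block multiplication gives
\[
CBA=\begin{pmatrix} * & * \\ Y_c S_a^{-1} Y_b S_c^{-1} Y_a - Y_c {}^T\!Y_b^{-1} Y_a & -Y_c S_a^{-1} Y_b {}^T\!Y_a^{-1} \end{pmatrix}.
\]
Setting the lower-right block equal to $-\Id$ yields
\[
S_a = Y_b\, {}^T\!Y_a^{-1} Y_c,
\]
and substituting this into the vanishing of the lower-left block gives, after simplification, $S_c={}^T\!Y_b Y_c^{-1} {}^T\!Y_a$; taking the transpose and using that $S_c$ is symmetric (by the symplectic analysis of Step~1) rewrites this as $S_c = Y_a\, {}^T\!Y_c^{-1} Y_b$. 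Repeating this reasoning cyclically (or, equivalently, reading the upper-left and upper-right blocks of~$CBA$ with the help of the two relations already obtained) gives $S_b = Y_c\, {}^T\!Y_b^{-1} Y_a$.

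The only potential obstacle is the bookkeeping in Step~3: one must verify that the full equation $CBA=-\Id$ is consistent with the three relations and does not produce extra constraints. This is automatic once the three $S_*$-identities hold, because $CBA$ then becomes the product of three symplectic off-diagonal matrices whose product collapses to $-\Id$ by a direct check --- exactly the computation already performed in Lemma~\ref{lem:triple-of-lagrangian} in the orthogonal special case $Y_*\in\OO(n)$, $S_*=\Id$.
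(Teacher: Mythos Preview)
Your proof is correct and follows essentially the same approach as the paper. The only cosmetic difference is the order: the paper first extracts the block form $\bigl(\begin{smallmatrix} M_a Y_a & -{}^T\!Y_a^{-1}\\ Y_a & 0\end{smallmatrix}\bigr)$ from hypothesis~(\ref{item:2:lem:triple-fram-lagr}) alone, then invokes the already-computed identities from $CBA=-\Id$ (Remark~\ref{rem:triple-lag}) to get the nonsingularity of the $M_x$ and the three relations, and only afterwards uses hypothesis~(\ref{item:3:lem:triple-fram-lagr}) to identify $S_c=M_a^{-1}$, $S_a=M_b^{-1}$, $S_b=M_c^{-1}$; you instead use (\ref{item:3:lem:triple-fram-lagr}) before $CBA=-\Id$, but the substance is identical.
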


\begin{proof}
  From the hypothesis~(\ref{item:2:lem:triple-fram-lagr}), we get that the matrices of~$A$,
  $B$, and~$C$ have the following form
  \[
    \begin{pmatrix}
      M_{a} Y_a & -{}^T Y_{a}^{-1} \\ Y_a & 0
    \end{pmatrix}, \quad
    \begin{pmatrix}
      M_{b} Y_b & -{}^T Y_{b}^{-1} \\ Y_b & 0
    \end{pmatrix}, \quad
    \begin{pmatrix}
      M_{c} Y_c & -{}^T Y_{c}^{-1} \\ Y_c & 0
    \end{pmatrix},
  \]
  for some (uniquely defined) $Y_a$, $Y_b$, and~$Y_c$ in $\GL(n,\R)$ and
  symmetric matrices~$M_a$, $M_b$, and~$M_c$. Since $CBA=-\Id$, a small
  calculation (see Remark~\ref{rem:triple-lag}) implies that~$M_a$, $M_b$,
  and~$M_c$ are nonsingular and the relations $Y_c \,{}^T Y_{b}^{-1} Y_a = M_{c}^{-1}, \ Y_b \,{}^T Y_{a}^{-1} Y_c = M_{b}^{-1},
    \quad Y_a \,{}^T Y_{c}^{-1} Y_b = M_{a}^{-1}$ are satisfied. The relations between the
    symmetric matrices come from the equality $\Span( \mathbf{e}_c +
    \mathbf{f}_c \cdot S_c) = A( \Span( \mathbf{e_b})) = \Span( \mathbf{e}_c
    \cdot M_a Y_a + \mathbf{f}_c \cdot Y_a)= \Span( \mathbf{e}_c
    \cdot M_a + \mathbf{f}_c)$ which gives $S_c=M_{a}^{-1}$ and
    similarly, $S_a= M_{b}^{-1}$, and $S_b= M_{c}^{-1}$.
\end{proof}

\begin{rem}
  The matrices~$Y_x$ ($x\in \{a,b,c\}$) are isometries in the following
  sense: $Y_c S_{a}^{-1} \,{}^T Y_c = S_b$, $Y_a S_{b}^{-1} \,{}^T Y_a = S_c$,
  $Y_b S_{c}^{-1} \,{}^T Y_b = S_a$.
\end{rem}

Conversely:
\begin{lem}\label{lem:triple-fram-lagr-kind-of-converse}
  Let~$S_b$ be a nonsingular symmetric matrix and let~$Y_a$, and~$Y_b$ be
  in~$\GL(n, \R)$. Define $Y_c \coloneqq S_b Y_{a}^{-1} \, {}^T Y_{b}$,
  $S_c \coloneqq Y_a S_{b}^{-1} \,{}^T Y_a $, and
  $S_a \coloneqq Y_b S_{c}^{-1} \,{}^T Y_b$.

  Then~$S_a$ and~$S_c$ are symmetric and nonsingular and one has
  \[Y_b \,{}^T Y_{a}^{-1} Y_c = S_a,\ Y_a \,{}^T Y_{c}^{-1} Y_b = S_c,
    \textrm{ and }\,  S_b = Y_c S_{a}^{-1} \,{}^T Y_c.\] The matrices~$A$, $B$, and~$C$ defined by
  Equation~\eqref{eq:lem:triple-fram-lagr} satisfy $CBA=-\Id$.
\end{lem}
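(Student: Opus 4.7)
The lemma is essentially a bookkeeping exercise; my plan is to verify symmetry, nonsingularity, and the three identities by direct algebraic manipulation, and then deduce $CBA=-\Id$ from Remark~\ref{rem:triple-lag}.

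First, because $S_b$ is symmetric and invertible, so is $S_b^{-1}$. The defining formula $S_c = Y_a S_b^{-1}\, {}^T Y_a$ shows at once that ${}^T S_c = S_c$ and $S_c \in \GL(n,\R)$, and the same reasoning applied to the definition $S_a = Y_b S_c^{-1}\, {}^T Y_b$ gives the symmetry and invertibility of $S_a$.

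Next I would check the three identities by direct substitution. For $Y_b\, {}^T Y_a^{-1} Y_c = S_a$, substitute $Y_c = S_b Y_a^{-1}\, {}^T Y_b$ on the left and $S_c^{-1} = {}^T Y_a^{-1} S_b Y_a^{-1}$ on the right of $S_a = Y_b S_c^{-1}\,{}^T Y_b$; both sides equal $Y_b\, {}^T Y_a^{-1} S_b Y_a^{-1}\, {}^T Y_b$. For $Y_a\, {}^T Y_c^{-1} Y_b = S_c$, note that the definition of $Y_c$ together with the symmetry of $S_b$ gives ${}^T Y_c^{-1} = S_b^{-1}\, {}^T Y_a Y_b^{-1}$, so that $Y_a\, {}^T Y_c^{-1} Y_b = Y_a S_b^{-1}\, {}^T Y_a = S_c$ after the factor $Y_b^{-1}Y_b$ cancels. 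For $S_b = Y_c S_a^{-1}\, {}^T Y_c$, I would expand $S_a^{-1} = {}^T Y_b^{-1} S_c Y_b^{-1}$, then substitute $S_c = Y_a S_b^{-1}\, {}^T Y_a$; the product telescopes back to $S_b$ thanks again to the symmetry of $S_b$ (so that ${}^T Y_c = Y_b\, {}^T Y_a^{-1} S_b$).

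Finally, for the conclusion $CBA = -\Id$, I invoke Remark~\ref{rem:triple-lag}, which asserts that for matrices of the block form in \eqref{eq:lem:triple-fram-lagr} the identity $CBA=-\Id$ is equivalent to three relations among the ``$M$''-blocks and the invertible ``$C$''-blocks. With the dictionary $M_a = S_c^{-1}$, $M_b = S_a^{-1}$, $M_c = S_b^{-1}$ and ``$C$''-blocks $Y_a, Y_b, Y_c$, those three relations read
\[ Y_c\, {}^T Y_b^{-1} Y_a = S_b,\quad S_b = Y_c S_a^{-1}\, {}^T Y_c,\quad S_a = Y_b S_c^{-1}\, {}^T Y_b, \]
and each of them is either a definition or has just been established. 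I do not anticipate any real obstacle: the only delicate point is correctly tracking transposes, and the symmetry of $S_b$ is exactly what makes the manipulations close up. Should one wish to avoid citing Remark~\ref{rem:triple-lag}, the product $CBA$ can be expanded in $2\times 2$ block form and seen by inspection to equal $-\Id$ precisely under the same three conditions.
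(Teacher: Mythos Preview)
Your proof is correct. The paper itself does not give an explicit proof of this lemma: it is stated as the converse of Lemma~\ref{lem:triple-fram-lagr} and left as a routine verification, with a remark pointing to the more general Lemmas~\ref{lem:triple-G} and~\ref{lem:triple-G-converse}. Your direct algebraic check, together with the appeal to Remark~\ref{rem:triple-lag} for the equivalence $CBA=-\Id$, is exactly the sort of computation the paper leaves implicit.
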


\begin{rem}
  Lemmas~\ref{lem:triple-G} and~\ref{lem:triple-G-converse} below generalize
  these statements.
\end{rem}

\section{Space of $\mathcal{X}$-coordinates}
\label{sec:parameter-spaces}

We denote by $\XETn$ the set of tuples\index{notation}{122@$\XETn$ (space of general $\mathcal{X}$-coordinates)}\index{definition}{$\mathcal{X}$-coordinates}
\[ \bigl( \{(\mathbf{n}_a, \boldsymbol{\lambda}_a) \}_{a\in A_2}, \{S_v\}_{v\in V},
  \{Y_a\}_{a\in A_3}\bigr)\]
such that
\begin{itemize}
\item for all~$a$ in~$A_2$, $(\mathbf{n}_a, \boldsymbol{\lambda}_a )$ belongs
  to $\mathcal{E}(n)$;
\item for all cycles~$\{a,a'\}$ in~$A_2$, $(\mathbf{n}_{a}, \boldsymbol{\lambda}_{a}
  ) = \iota(\mathbf{n}_{a'}, \boldsymbol{\lambda}_{a'} )$;
\item for all~$v$ in~$V$, $S_v$ is a symmetric matrix. If $v=v^+(a)$ for
  some $a\in A_2$, then $S_v= C(\mathbf{n}_a )$, and if not (i.e.\ when~$v$ is
  an external edge), $S_v$ is a diagonal matrix~$I_{p,q}=\bigl(
  \begin{smallmatrix}
    \Id_p & 0\\ 0 & -\Id_q
  \end{smallmatrix}
\bigr)$ ($p+q=n$);
\item for all~$a$ in~$A_3$, $Y_a$ belongs to~$\GL(n,\R)$; and
\item  for all cycle
  $(a,b,c)$ in~$A_3$, the equality $Y_c \,{}^T Y_{b}^{-1} Y_a = S_v$ holds
  where $v= v^+(c)= v^-(b)$.
\end{itemize}

Of course the family $\{ S_{v^+(a)}\}_{a\in A_2}$ is completely determined by
$\{ ( \mathbf{n}_a, \boldsymbol{ \lambda}_a)\}$ but it is helpful to keep
it. Likewise, the matrices~$S_v$, for~$v$ an external vertex are completely
determined by the signature associated with the triangle
containing~$v$. Unless $A_2=\emptyset$ (which happens only in the case of the
disk with~$\sharp R=3$), these signatures are equally determined by the
family~\mbox{$\{ ( \mathbf{n}_a, \boldsymbol{ \lambda}_a)\}$}.

\section{Positive locus}
\label{sec:positive-locus}

The subset of elements
\[ u=\bigl( \{(\mathbf{n}_a, \boldsymbol{\lambda}_a) \}_{a\in A_2},
  \{S_v\}_{v\in V}, \{Y_a\}_{a\in A_3}\bigr)\] of
$\XETn$ for which, for all~$v$ in~$V$, $S_v$ is positive
definite is called the \emph{positive locus} of the parameter space. In this
case, for every~$a$ in~$A_2$ the elements
$\mathbf{n}_a = (\{ \underline{n}_x\}_{x\in \{ \pm 1\}^2}, 2\underline{m}),
\boldsymbol{\lambda}_a = (\{ \underline{\lambda}_x\}_{x\in \{ \pm 1\}^2},
2\underline{\lambda})$ have a simpler form: only the sequence
$\underline{n}_{1,1}= (n_1, \dots, n_k)$ is nontrivial and all its entries are
equal to~$1$ (so $k=n$) and the sequence
$\underline{\lambda}_{1,1} = (\lambda_1, \dots, \lambda_n)$ is
decreasing. Hence, this data can be encoded by the diagonal matrix $x(a)$ with
entries $(\lambda_1, \dots, \lambda_n)$. Also, all the matrices~$Y_a$ ($a\in
A_3$) belong to~$\OO(n)$ and we set $x(a)=Y_a$. The tuple $f(u)\coloneqq
(x(a))_{a\in A}$ belongs then to $\XplusDeltaTn$ (cf.\
Section~\ref{rec_rep_max}) and the map~$f$ is an isomorphism between the
positive locus and $\XplusDeltaTn$.

In the sequel, we will rather consider $\XplusDeltaTn$ as a
subspace of $\XETn$ (i.e.\ the positive locus) without
reference to~$f$.

Our last observation is that, in this case, the matrix $\Phi (\mathbf{n}_a, \boldsymbol{\lambda}_a)$ ($a\in
A_2$) constructed above is equal to~$x(a)^{1/2}$.

\section{From coordinates to representations}
\label{sec:from-coord-repr-1}

Similarly to Section~\ref{sec:maxim-decor-sympl} and based 
on
Chapter~\ref{sec:local-systems-their}, we associate a framed
$\delta$-twisted symplectic local system on~$\Gamma_\mathcal{T}$ to every
$x= \bigl( \{(\mathbf{n}_a, \boldsymbol{\lambda}_a )\}_{a\in A_2},
\{S_v\}_{v\in V}, \{ Y_a\}_{a\in A_3}\bigr)$ by specifying the transition
matrices:
\begin{enumerate}
\item for every  $a$ in~$A_2$,
  \[ G_a  =
    \begin{pmatrix}
      0 & - {}^T\! \Phi(
  \mathbf{n}_{ a}, \boldsymbol{\lambda}_{ a})^{-1} \\
       \Phi(
  \mathbf{n}_{ a}, \boldsymbol{\lambda}_{ a}) & 0\\
    \end{pmatrix};
  \]
\item for every~$a$ in~$A_3$,
  \[ G_a =
    \begin{pmatrix}
      S_{v^+(a)}^{-1} Y_a & -{}^T Y_{a}^{-1} \\ Y_a & 0
    \end{pmatrix}.
  \]
\end{enumerate}

The fact that this procedure defines indeed a $\delta$-twisted
local system follows from the equalities,  for every $(
  \mathbf{n}, \boldsymbol{\lambda}) \in \mathcal{E}(n)$, ${}^T\! \Phi(
  \iota(\mathbf{n}, \boldsymbol{\lambda})) = \Phi(
  \mathbf{n}, \boldsymbol{\lambda})$ (in turn a consequence of the
  the fact that the matrices $\Phi_n(\lambda)$ and $\Psi_{2m}(\lambda)$ are symmetric) and the relations
  between the $Y_a$s and the $S_v$s.

We will denote by $\holXT(x)$ the element of
$\Locfdelta( \Gamma_\mathcal{T}, \Sp(2n,\R))$ constructed above.\index{notation}{124@$\holXT$ (the holonomy map from $\XETn$ to the space of
  framed local systems on~$\Gamma_{\mathcal T}$)}

\begin{rem}
  \label{rem:from-coord-repr-restricit-positive-locus}
  In restriction to the positive locus, the map
  $\holXT$ is exactly the map
  $\holXpT$ defined in Section~\ref{rec_rep_max}.
\end{rem}

We now state our main result for this chapter. It will be proved in Section \ref{sec:stand-bases-decor}.
\begin{teo}
  \label{teo:from-coord-repr-X-general}
  The map
  \[ \holXT \colon \XETn
    \longrightarrow \Locfdelta( \Gamma_\mathcal{T}, \Sp(2n,\R))\]
  is onto the subspace $\LocfdeltaT(
  \Gamma_\mathcal{T}, \Sp(2n,\R))$ of transverse framed local systems.

  Two elements $x= \bigl( \{(\mathbf{n}_a, \boldsymbol{\lambda}_a )\}, \{S_v\},
\{Y_a\}\bigr)$ and  $x'= \bigl( \{(\mathbf{n}^{\prime}_{a}, \boldsymbol{\lambda}^{\prime}_{a} )\}, \{S_{v}^{\prime}\},
\{Y_{a}^{\prime}\}\bigr)$ have the same image under
$\holXT$ if and only if
\begin{enumerate}
\item for all~$a$ in~$A_2$,
  $(\mathbf{n}_a, \boldsymbol{\lambda}_a ) = (\mathbf{n}^{\prime}_{a},
  \boldsymbol{\lambda}^{\prime}_{a} )$ \ep{thus $S_{v^+(a)} =
    S^{\prime}_{v^+(a)}$};
\item for every external vertex~$v$, $S_v=S'_v$;
\item and there is a family of matrices $\{ r_v\}_{v\in V}$, such that
  \begin{itemize}
  \item for all~$v$ in~$V$, $r_v$ is orthogonal with respect to~$S_v$;
  \item for all arrows~$a$ in~$A_2$,
    $r_{v^+(a)}$ commutes with $\Phi( \mathbf{n}_a, \boldsymbol{\lambda}_a)$,
    and $r_{v^{-}(a)} = r_{v^+(a)}$;
  \item for all~$a$ in~$A_3$, $Y^{\prime}_{a} = r_{v^{+}(a)} Y_a r_{v^{-}(a)}^{-1}$.
  \end{itemize}
\end{enumerate}
\end{teo}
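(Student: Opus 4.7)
The plan is to mimic the strategy used for the positive case (Proposition~\ref{prop:holXpT-surj} and Proposition~\ref{prop:fiber-holXpT}), replacing the positivity input (Proposition~\ref{prop:standard-basis-positive-4-uple}) by its general transverse counterpart (Propositions~\ref{prop:quadr-transv-lagr-normal-form} and~\ref{prop:quadr-transv-lagr-normal-form-uniqueness}), and replacing the maximality input (Lemma~\ref{lem:triple-of-lagrangian}) by Lemma~\ref{lem:triple-fram-lagr}. We use the identification of framed twisted local systems with families of transition matrices from Proposition~\ref{prop:decor-loc-from-Ga} and the equivalence criterion of Lemma~\ref{lem:equi-loc-sys-2}. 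In view of Lemma~\ref{lem:sympl-basis-decor}, a framed twisted local system on $\Gamma_{\mathcal{T}}$ is $\mathcal{T}$-transverse exactly when it admits a generating symplectic basis, so the target of the surjection is the natural one.

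For surjectivity, start with $(\mathcal{F},\sigma) \in \LocfdeltaT(\Gamma_\mathcal{T},\Sp(2n,\R))$. For every arrow $a \in A_2$, the configuration $q_{v^+(a)}(\mathcal{F},\sigma)$ is a quadruple in $\Conf^{4\Diamond}(\mathcal{L}_n)$, so Proposition~\ref{prop:quadr-transv-lagr-normal-form} assigns to it an invariant $(\mathbf{n}_a,\boldsymbol{\lambda}_a) \in \mathcal{E}(n)$ and a symplectic basis $(\mathbf{e}_v,\mathbf{f}_v)$ at $v = v^+(a)$ in standard position. The involutive statement of Proposition~\ref{prop:quadr-transv-lagr-normal-form} guarantees compatibility along each $2$-cycle $\{a,a'\}$, namely $(\mathbf{n}_{a'},\boldsymbol{\lambda}_{a'}) = \iota(\mathbf{n}_a,\boldsymbol{\lambda}_a)$, and the explicit change-of-basis formula using $\Phi(\mathbf{n}_a,\boldsymbol{\lambda}_a)$ shows that the transition matrix $G_a$ between the symplectic bases at the two endpoints of $a$ has exactly the off-diagonal form prescribed in the definition of $\holXT$. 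At each external vertex $v$, choose a symplectic basis generating $(L^t_v,L^b_v)$ such that the image of $L^t_{v^-(a)}$ under $g_a$ is spanned by $\mathbf{e}_v + \mathbf{f}_v\cdot S_v$ with $S_v = I_{p,q}$ diagonal (this is possible by the signature normal form for a triple of transverse Lagrangians). Now, for any $a \in A_3$, applying Lemma~\ref{lem:triple-fram-lagr} to the face of $\mathcal{T}$ containing $a$, with the matrices $S_v$ coming from the $C(\mathbf{n}_a)$ or $I_{p,q}$ constructed above, yields a unique $Y_a \in \GL(n,\R)$ for which $G_a$ has the prescribed form, and the relation $Y_c\,{}^T Y_b^{-1}Y_a = S_v$ holds. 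The resulting tuple belongs to $\XETn$ and its image under $\holXT$ is $(\mathcal{F},\sigma)$.

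For the fiber description, one direction is immediate: given $x$ and a family $\{r_v\}$ as in the statement, the family of block-diagonal symplectic matrices $\psi_v = \bigl(\begin{smallmatrix} r_v & 0 \\ 0 & {}^T\! r_v^{-1}\end{smallmatrix}\bigr)$ (which, thanks to $r_v$ being $S_v$-orthogonal and commuting with $\Phi(\mathbf{n}_a,\boldsymbol{\lambda}_a)$ along $A_2$-arrows, conjugates the transition matrices of $\holXT(x)$ into those of $\holXT(x')$) defines an equivalence via Lemma~\ref{lem:equi-loc-sys-2}. Conversely, suppose $\holXT(x) = \holXT(x')$. The invariants $(\mathbf{n}_a,\boldsymbol{\lambda}_a)$ and the external $S_v$ are intrinsic to the underlying quadruples/triples of Lagrangians by Corollary~\ref{cor:quadr-transv-lagr-one-to-one-En}, so they agree with those of $x'$. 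Choose symplectic bases coming from $x$ and $x'$ respectively; at each vertex $v$ they are both in standard position with respect to the same quadruple (or triple), so Proposition~\ref{prop:quadr-transv-lagr-normal-form-uniqueness} produces a unique $r_v \in \GL(n,\R)$, orthogonal with respect to $S_v$ and commuting with $\Phi(\mathbf{n}_a,\boldsymbol{\lambda}_a)$ when $v = v^\pm(a)$ for $a \in A_2$, that relates them. The equalities of transition matrices along $A_2$-arrows force $r_{v^-(a)} = r_{v^+(a)}$, and along $A_3$-arrows they give $Y'_a = r_{v^+(a)} Y_a r_{v^-(a)}^{-1}$.

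The main obstacle, and the subtle point that distinguishes this argument from the positive case, is verifying that the centralizer of $\Phi(\mathbf{n}_a,\boldsymbol{\lambda}_a)$ inside the $S_v$-orthogonal group is precisely the group $\OO(b_0)\cap\OO(b_1)$ appearing in the uniqueness statement of Proposition~\ref{prop:quadr-transv-lagr-normal-form-uniqueness}; this is what guarantees that the ambiguity in the fiber of $\holXT$ is captured exactly by the family $\{r_v\}$ of the theorem, and it is the content of the appendix's classification of automorphism groups of pairs of forms. A secondary technical point is the compatibility at external vertices: one must check that the freedom in choosing the basis generating a triple $(L^t_v, g_a L^t_{v^-(a)}, L^b_v)$ with $I_{p,q}$ normal form is again precisely the centralizer $\OO(p)\times\OO(q)$, which fits in the same framework.
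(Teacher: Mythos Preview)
Your approach is essentially the same as the paper's: the paper packages the argument by introducing the notion of a generating basis \emph{in standard position} for a framed local system (Section~\ref{sec:stand-bases-decor}) and then records Proposition~\ref{prop:stand-bases-decor-exist-uniq} (existence and uniqueness of such bases), from which the theorem is said to follow; you reconstruct exactly this inline, invoking the same ingredients (Propositions~\ref{prop:quadr-transv-lagr-normal-form} and~\ref{prop:quadr-transv-lagr-normal-form-uniqueness}, Lemma~\ref{lem:triple-fram-lagr}, and the appendix identification of $\OO(C(\mathbf{n}))\cap\OO(D(\mathbf{n},\boldsymbol{\lambda}))$ with the centralizer of $\Phi(\mathbf{n},\boldsymbol{\lambda})$ in $\OO(C(\mathbf{n}))$). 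One small slip: in your final sentence the stabilizer at an external vertex is $\OO(I_{p,q})=\OO(p,q)$, not $\OO(p)\times\OO(q)$.
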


\section{Standard bases for framed local systems}
\label{sec:stand-bases-decor}

Proposition~\ref{prop:quadr-transv-lagr-normal-form} gives a notion of
standard basis for a quadruple of Lagrangians. Here we generalize
this notion to framed local systems.

\noindent Let $(F_v, g_a, L^{t}_v, L^{b}_{v})$ be a framed $\delta$-twisted symplectic
 local system on the quiver~$\Gamma_\mathcal{T}$.

\begin{df}
  A generating symplectic basis $\{ (\mathbf{e}_v, \mathbf{f}_v)\}_{v\in V}$
  (see Def.~\ref{df:sympl-basis-decor}) is said to be \emph{in standard
    position} with respect to the framing if, for every \emph{internal}
  vertex~$v$ in~$V$,
  \begin{enumerate}
  \item if $x= q_v(F_v, g_a, L^{t}_v, L^{b}_{v})$ is the associated quadruple
    of Lagrangians in~$F_v$ (see Section~\ref{sec:conf-assoc-with}), then the
    basis~$( \mathbf{e}_v, \mathbf{f}_v)$ is in standard position with respect
    to~$x$ (cf. Proposition~\ref{prop:quadr-transv-lagr-normal-form});
  \item if $a$ is the arrow in $A_2$ such that $v=v^+(a)$, the matrix of~$g_a$
    is
    \[ G_a =
      \begin{pmatrix}
        0 & -{}^T\! \Phi( \mathbf{n}(x), \boldsymbol{\lambda}(x))^{-1} \\
        \Phi( \mathbf{n}(x), \boldsymbol{\lambda}(x)) & 0
      \end{pmatrix},
    \]
    where $( \mathbf{n}(x), \boldsymbol{\lambda}(x))$ is given by
    Proposition~\ref{prop:quadr-transv-lagr-normal-form};
  \end{enumerate}
  and if, for every \emph{external} vertex~$v$, there exists $(p,q)\in\N^2$
  with $p+q=n$ such that the triple $f_v(F_v, g_a,
  L^{t}_v, L^{b}_{v})$ of Lagrangians in~$F_v$ is $(\Span(\mathbf{e}_v ),
  \Span(\mathbf{e}_v + \mathbf{f}_v \cdot I_{p,q} ), \Span(\mathbf{f}_v ))$  (see Section~\ref{sec:conf-assoc-with}).
  This means in particular that, if $a$~is the element of~$A_3$ such that $v=v^+(a)$, then
  $g_{a}(L^{t}_{v^-(a)}) = \Span(\mathbf{e}_v + \mathbf{f}_v \cdot I_{p,q} )$
  (the other equalities $L^{t}_{v} = \Span(\mathbf{e}_v )$ and $L^{b}_{v} =
  \Span(\mathbf{f}_v )$ are already satisfied by the assumption that the basis is
  generating).
\end{df}\index{definition}{standard position (basis in ---)}%
\index{definition}{basis! in standard position}

The moduli space of framed local systems equipped with a standard basis will
be denoted by $\Locfstdelta( \Gamma_\mathcal{T}, \Sp(2n,
\R))$. The holonomy construction performed above
(Section~\ref{sec:from-coord-repr-1}) defines in fact a
map\index{notation}{126@$\holXstT$ (an enhancement of $\holXT$)}
\[ \holXstT \colon
  \XETn \longrightarrow \Locfstdelta( \Gamma_\mathcal{T}, \Sp(2n,
\R)),\]
(cf.\ Section~\ref{sec:local-systems-basis}). Since the transition matrices
completely determine the parameters, and since the bases entirely determine the
transition matrices, the map~$\holXstT$
is a one-to-one correspondence.

The following proposition readily implies
 Theorem~\ref{teo:from-coord-repr-X-general} stated above.

\begin{prop}
  \label{prop:stand-bases-decor-exist-uniq}
  {}\hspace*{0cm}

  \begin{enumerate}
  \item Every transverse framed $\delta$-twisted symplectic local
    system $(F_v, g_a, L^{t}_v, L^{b}_v)$ admits a standard basis
    $(\mathbf{e}_v, \mathbf{f}_v)$.
  \item \label{item:2:prop:stand-bases-decor-exist-uniq} For every family
    $(r_v)_{v\in V}$ such that
    \begin{itemize}
    \item for every external vertex~$v$ in~$V$, $r_v$ is orthogonal with respect to
      $I_{p,q}$ where $p=(n+s_T)/2$, $q=(n-s_T)/2$ and $s_T=\mu^T(F_v, g_a,
      L^{t}_v, L^{b}_v)$ the Maslov index for the triangle~$T$
      containing~$v$;
    \item for every internal vertex~$v$ in~$V$, $r_v$ is orthogonal with respect to $C(
      \mathbf{n}(x))$ and with respect to $D(
      \mathbf{n}(x), \boldsymbol{\lambda}(x))$ where $x$ is the quadruple
      $q_v( F_v, g_a, L^{t}_{v}, L^{b}_{v})$;
    \item for every~$a$ in~$A_2$, $r_{v^-(a)} = r_{v^+(a)}$;
    \end{itemize}
    the family $\{ (\mathbf{e}_v \cdot r_v, \mathbf{f}_v \cdot {}^T\!
    r^{-1}_{v})\}$ is in standard position.
  \item For every basis $\{ ( \mathbf{e}'_{v}, \mathbf{f}'_{v})\}$ in standard
    position, there is a \ep{unique} family $(r_v)_{v\in V}$ as
    in~(\ref{item:2:prop:stand-bases-decor-exist-uniq}) and such that
    $\{ ( \mathbf{e}'_{v}, \mathbf{f}'_{v})\} = \{ (\mathbf{e}_v \cdot r_v,
    \mathbf{f}_v \cdot {}^T\! r^{-1}_{v})\}$.
  \end{enumerate}
\end{prop}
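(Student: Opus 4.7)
The plan is to reduce each of the three assertions to the corresponding statement for a single quadruple (or triple) of Lagrangians: Propositions~\ref{prop:quadr-transv-lagr-normal-form} and~\ref{prop:quadr-transv-lagr-normal-form-uniqueness} at internal vertices, and the classification of pairwise transverse Lagrangian triples by Maslov index (Proposition~\ref{maslov_prop}) at external vertices.

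For existence, I would first treat internal vertices. By Lemma~\ref{lem:sympl-basis-decor}(1) the transversality hypothesis ensures $q_v \in \Conf^{4\Diamond}(\mathcal{L}_n)$, so Proposition~\ref{prop:quadr-transv-lagr-normal-form} produces a symplectic basis at $v$ in standard position with respect to $q_v$. The delicate point is that condition~(2) of standard position couples the bases at the two endpoints $v = v^+(a)$ and $w = v^-(a)$ of each $A_2$-cycle $\{a,a'\}$. I would handle this by picking the standard basis at $v$ freely and then \emph{defining} the basis at $w$ by pulling back, through $g_a^{-1}$, the companion basis $(\mathbf{f}_v \cdot \Phi_v,\ -\mathbf{e}_v \cdot {}^T\!\Phi_v^{-1})$ supplied by the last assertion of Proposition~\ref{prop:quadr-transv-lagr-normal-form}, where $\Phi_v = \Phi(\mathbf{n}(q_v), \boldsymbol{\lambda}(q_v))$. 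That assertion guarantees simultaneously that this pulled-back basis is symplectic and in standard position with respect to the quadruple $q_w$ (whose parameters are then $\iota$-swapped), and that the transition matrix of $g_a$ in these bases is exactly $\bigl(\begin{smallmatrix}0 & -{}^T\!\Phi_v^{-1}\\ \Phi_v & 0\end{smallmatrix}\bigr)$, which is condition~(2). Since each internal vertex belongs to exactly one $A_2$-cycle, this prescription fixes a basis at every internal vertex consistently. At an external vertex $v$ I would invoke Proposition~\ref{maslov_prop} to pick a symplectic basis generating the framing and for which $t_v = (\Span(\mathbf{e}_v),\, \Span(\mathbf{e}_v + \mathbf{f}_v \cdot I_{p,q}),\, \Span(\mathbf{f}_v))$ with $(p,q)$ determined by the Maslov index of $t_v$.

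For assertions~(2) and~(3) I would directly translate Proposition~\ref{prop:quadr-transv-lagr-normal-form-uniqueness}. A change of basis $(\mathbf{e}_v,\mathbf{f}_v) \mapsto (\mathbf{e}_v \cdot r_v,\ \mathbf{f}_v \cdot {}^T\!r_v^{-1})$ preserves condition~(1) of standard position at an internal vertex if and only if $r_v$ lies in $\OO(C(\mathbf{n}_v)) \cap \OO(D(\mathbf{n}_v,\boldsymbol{\lambda}_v))$, which is precisely the orthogonality requirement with respect to $S_v$ in the statement; at an external vertex the admissible changes are $r_v \in \OO(I_{p,q}) = \OO(S_v)$. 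The coupling $r_{v^+(a)} = r_{v^-(a)}$ together with commutation with $\Phi(\mathbf{n}_a,\boldsymbol{\lambda}_a)$ on each $A_2$-arrow then drops out by comparing the $\bigl(\begin{smallmatrix}0 & -{}^T\!\Phi^{-1}\\ \Phi & 0\end{smallmatrix}\bigr)$-form of the transition matrix of $g_a$ before and after the change. For (3), conversely, given two standard bases the block-diagonal form of the $r_v$ (which justifies writing the change as $(\mathbf{e}_v \cdot r_v,\ \mathbf{f}_v \cdot {}^T\!r_v^{-1})$) is forced by the fact that both bases generate the same Lagrangians $L^{t}_{v}$ and $L^{b}_{v}$.

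The main obstacle, and the place where the theory of pairs of quadratic forms really enters, is verifying that the companion-basis prescription around each $A_2$-cycle really does yield a basis in standard position with respect to the quadruple at the opposite vertex. This is not a priori automatic: it rests on the combinatorial identification of the involution~$\kappa$ on $\Conf^{4\Diamond}(\Lag{n})$ (which interchanges the two sides of an edge of $\mathcal{T}$) with the parameter-level involution~$\iota$ on $\mathcal{E}(n)$, which is precisely what the last part of Proposition~\ref{prop:quadr-transv-lagr-normal-form} asserts. Once this identification is in place, the rest of the argument assembles formally.
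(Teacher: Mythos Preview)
Your proposal is correct and matches the paper's approach exactly: the paper states that this proposition is a direct consequence of Propositions~\ref{prop:quadr-transv-lagr-normal-form} and~\ref{prop:quadr-transv-lagr-normal-form-uniqueness} together with the classification of pairwise transverse Lagrangian triples by their Maslov index, and you have spelled out precisely how those ingredients assemble. Your care in handling the $A_2$-coupling via the companion basis and the involution~$\iota$ is just what is needed, and the observation that the commutation with~$\Phi$ is equivalent to the orthogonality condition with respect to both~$C(\mathbf{n})$ and~$D(\mathbf{n},\boldsymbol{\lambda})$ is recorded in the appendix (Section~\ref{sec:normal-forms-1}).
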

\begin{proof}
This is a direct consequence of the existence and
uniqueness of standard bases for quadruples of Lagrangians
(Proposition~\ref{prop:quadr-transv-lagr-normal-form} and
Proposition~\ref{prop:quadr-transv-lagr-normal-form-uniqueness}) and of the
fact that triples of pairwise transverse Lagrangians are classified by their
Maslov index.
\end{proof}

\section{Maslov indices}
\label{sec:maslov-indices}

A framed transverse local system gives a family of integers $\{ s_T\}_{T\in
  \mathcal{T}}$ where $s_T$ is the Maslov index of the configuration of three Lagrangians associated
with the triangle~$T$. The integer~$s_T$ belongs to $\{ -n, -n+2, \dots, n\}$.

When the local system is the holonomy of an element
$( \{(\mathbf{n}_a, \boldsymbol{\lambda}_a )\}, \{S_v\}, \{ Y_a\})$, then, for every
triangle~$T$ and every vertex $v$ contained in~$T$, the integer~$s_T$ is the
signature of the symmetric matrix $S_v$, which is equal to $ C( \mathbf{n}_a)$
if $v=v^+(a)$ for some~$a\in A_2$. It can be easily
calculated noting that, for every integer~$m$, the signature of~$C_{2m}$ and
of $C'_{2m}$ are~$0$ and the signature of $C_{2m+1}$ is~$1$.

To state a precise result, for every~$\mathbf{n} = ( \{ \underline{n}_x\}_{
  x\in \{ \pm 1\}^2}, 2\underline{m})$ in~$\mathcal{D}(n)$, set, for $x\in
\{\pm 1\}^2$,
\[ p_x( \mathbf{n}) \coloneqq \sharp \bigl\{ \ell \in \{ 1, \dots, k_x\} \mid n_{x,
    \ell} =1 \mod 2 \bigr\}.\]

\begin{lem}
  \label{lem:signa-Cn}
  For every~$ \mathbf{n}$ in~$\mathcal{D}(n)$, the signature of $C(
  \mathbf{n})$ is equal to
  \[ p_{1,1}( \mathbf{n}) + p_{1,-1}( \mathbf{n}) -  p_{-1,1}( \mathbf{n}) -
    p_{-1,-1}( \mathbf{n}),\]
  and the signature of $C( \iota(
  \mathbf{n}))$ is equal to
  \[ p_{1,1}( \mathbf{n}) - p_{1,-1}( \mathbf{n}) + p_{-1,1}( \mathbf{n}) - p_{-1,-1}( \mathbf{n}).\]
\end{lem}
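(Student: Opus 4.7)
The strategy is to exploit additivity of signature under block direct sums and then compute the signatures of the elementary blocks $C_n$ and $C'_{2m}$ directly.

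First I would decompose $C(\mathbf{n})$ as an orthogonal direct sum of five block matrices, giving
\[
\sgn\bigl(C(\mathbf{n})\bigr) = \sgn\bigl(C(\underline{n}_{1,1})\bigr)+\sgn\bigl(C(\underline{n}_{1,-1})\bigr)-\sgn\bigl(C(\underline{n}_{-1,1})\bigr)-\sgn\bigl(C(\underline{n}_{-1,-1})\bigr)+\sgn\bigl(C'(2\underline{m})\bigr),
\]
using that $\sgn(-M)=-\sgn(M)$. Each $\sgn(C(\underline{n}_x))$ is a further sum over the blocks $C_{n_{x,\ell}}$.

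Next I would compute $\sgn(C_n)$. A direct inspection from \eqref{eq:C_nJ_nsec:matrices} shows that $C_n^2=\Id_n$, so $C_n$ is a symmetric involution diagonalizable with eigenvalues $\pm1$. The $+1$-eigenspace is the space of palindromic vectors $(x_1,\dots,x_n)$ with $x_i=x_{n+1-i}$, of dimension $\lceil n/2\rceil$; the $-1$-eigenspace is the antipalindromic vectors, of dimension $\lfloor n/2\rfloor$. Hence $\sgn(C_n)=1$ if $n$ is odd, and $0$ if $n$ is even. Summing, $\sgn(C(\underline{n}_x))=p_x(\mathbf{n})$.

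Then I would show $\sgn(C'(2\underline{m}))=0$ by showing each block satisfies $\sgn(C'_{2m})=0$. Reading \eqref{eq:CprimeJprime_sec:matrices}, the block $C'_{2m}$ is made of $2\times 2$ blocks $\bigl(\begin{smallmatrix}1&0\\0&-1\end{smallmatrix}\bigr)$ placed symmetrically about the antidiagonal; after the order-reversing permutation of coordinates (which is an isometry), $C'_{2m}$ is orthogonally conjugate to the block-antidiagonal matrix $\bigl(\begin{smallmatrix}0 & K\\ K & 0\end{smallmatrix}\bigr)$ (up to the middle $2\times 2$ block when $m$ is odd, which itself has signature $0$). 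Such a block-antidiagonal symmetric matrix has $(v,\pm Kv)$-eigenvectors with opposite eigenvalues in matched multiplicities, so its signature is $0$. This proves the first formula.

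Finally, for $\iota(\mathbf{n})$, I would simply note that the involution $\iota$ swaps the sequences indexed by $(1,-1)$ and $(-1,1)$ and fixes those indexed by $(1,1)$, $(-1,-1)$, and $2\underline{m}$. Therefore $p_{(1,-1)}(\iota(\mathbf{n}))=p_{(-1,1)}(\mathbf{n})$, $p_{(-1,1)}(\iota(\mathbf{n}))=p_{(1,-1)}(\mathbf{n})$, and the other two are unchanged. Substituting into the first formula gives the second. There is no real obstacle; the only mildly delicate point is the computation $\sgn(C'_{2m})=0$, which is best handled by exhibiting the explicit isotropic pairing described above rather than a brute-force eigenvalue count.
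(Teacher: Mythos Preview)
Your approach is exactly what the paper intends: in the sentence preceding the lemma it simply asserts that $\sgn(C_{2m})=\sgn(C'_{2m})=0$ and $\sgn(C_{2m+1})=1$, leaving the block-additivity deduction implicit, and you have filled in those details correctly for $C_n$ and for the overall deduction. Your treatment of $C'_{2m}$ is slightly off, however: the order-reversing permutation in fact conjugates $C'_{2m}$ to $-C'_{2m}$ (which, incidentally, already forces $\sgn(C'_{2m})=0$), not to a block-antidiagonal matrix as you describe. A cleaner route is to reorder the basis so that odd-indexed coordinates precede even-indexed ones; since $C'_{2m}$ has no odd--even cross terms, it then becomes the block-diagonal matrix $\mathrm{diag}(C_m,-C_m)$, visibly of signature~$0$.
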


Let us introduce $\mathcal{D}(\mathcal{T}, n)$ the subspace of
$\{-n, -n+2, \dots, n-2, n\}^{\mathcal{T}} \times \mathcal{D}(n)^{A_2}$
consisting of tuples $( \{s_T\}_{T\in \mathcal{T}}, \{ \mathbf{n}_a\}_{a\in
  A_2})$ such that\index{notation}{130@$\mathcal{D}(\mathcal{T}, n)$ (the possible
  tuples of invariants for elements in $\XETn$)}
\begin{itemize}
\item for all cycle~$\{a,a'\}$ in~$A_2$, $\mathbf{n}_{a} = \iota(
  \mathbf{n}_{a'})$;
\item for all~$a$ in~$A_2$, if $v^+(a)$  belongs to the
  triangle~$T$ of~$\mathcal{T}$, the signature of $C( \mathbf{n}_{a})$ is
  equal to~$s_T$.
\end{itemize}

We note that for every possible choice of the indices~$\{ s_T\}_{T\in
  \mathcal{T}}$ there exist elements in~$\mathcal{D}( \mathcal{T}, n)$
realizing this choice:

\begin{lem}
  \label{lem:maslov-indices-every-poss}
  For every $\{ s_T\}_{T\in \mathcal{T}}$ in
  $\{ -n, 2-n, \dots, n\}^\mathcal{T}$, there exists
  $\{ \mathbf{n}_a\}_{a\in A_2}$ in $\mathcal{D}(n)^{A_2}$ such that the tuple
  $(\{ s_T\}_{T\in \mathcal{T}}, \{ \mathbf{n}_a\}_{a\in A_2} )$ is in
  $\mathcal{D}( \mathcal{T},n)$.
\end{lem}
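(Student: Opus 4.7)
The plan is to reduce this lemma to a purely local problem on each internal edge of~$\mathcal{T}$, and then to solve that local problem by an explicit construction. First I observe that the $2$-cycles in~$A_2$ are in bijection with the internal edges of~$\mathcal{T}$: for each such edge~$e$ adjacent to triangles~$T, T'$, there is a unique cycle~$\{a, a'\} \subset A_2$ with $v^+(a)$ in~$T$ and $v^+(a')$ in~$T'$. External vertices are not endpoints of any arrow of~$A_2$, so the two constraints defining $\mathcal{D}(\mathcal{T}, n)$ involve only the signatures at triangles meeting~$e$. Hence it suffices to prove the following local claim: for every pair $(s, s') \in \{-n, -n+2, \dots, n\}^2$, there exists $\mathbf{n} \in \mathcal{D}(n)$ such that $\sgn\bigl( C(\mathbf{n})\bigr) = s$ and $\sgn\bigl( C(\iota(\mathbf{n}))\bigr) = s'$. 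Granted this, on each cycle $\{a,a'\}$ set $\mathbf{n}_a = \mathbf{n}$ and $\mathbf{n}_{a'} = \iota(\mathbf{n})$ in the appropriate order; since~$\iota$ is an involution, both cycle conditions $\mathbf{n}_a = \iota(\mathbf{n}_{a'})$ and the signature conditions hold simultaneously.

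For the local claim, I appeal directly to Lemma~\ref{lem:signa-Cn}, which says that the desired signatures are
\[ p_{1,1} + p_{1,-1} - p_{-1,1} - p_{-1,-1} \quad \text{and} \quad p_{1,1} - p_{1,-1} + p_{-1,1} - p_{-1,-1}. \]
Setting $\alpha \coloneqq (s+s')/2$ and $\beta \coloneqq (s-s')/2$ (integers because $s \equiv s' \equiv n \pmod 2$), the system to solve becomes $p_{1,1} - p_{-1,-1} = \alpha$ and $p_{1,-1} - p_{-1,1} = \beta$. I would take each nonempty sequence $\underline{n}_x$ to consist only of $1$'s, so that $p_x(\mathbf{n})$ equals the length~$k_x$ of $\underline{n}_x$, and then set
\[ k_{1,1} = \max(\alpha, 0), \quad k_{-1,-1} = \max(-\alpha, 0), \quad k_{1,-1} = \max(\beta, 0), \quad k_{-1,1} = \max(-\beta, 0). \]
The four sequences then contribute $|\alpha| + |\beta| = \max(|s|, |s'|)$ to the total sum.

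It remains to adjust the total sum to equal~$n$ using the sequence~$2\underline{m}$, whose entries contribute to neither $p_x$. Let $M \coloneqq n - \max(|s|, |s'|)$. Since $\max(|s|, |s'|) \leq n$ and has the parity of~$n$, $M$~is a nonnegative even integer. If $M = 0$, take $2\underline{m}$ empty; otherwise take $2\underline{m} = (M)$ as a single block. This yields $\mathbf{n} \in \mathcal{D}(n)$ realizing the prescribed signatures, completing the proof.

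The construction is essentially routine; there is no real obstacle. The only small point to be careful about is the parity check: the hypothesis that each $s_T$ lies in $\{-n, -n+2, \dots, n\}$ (i.e.~shares the parity of~$n$) is precisely what ensures that $\alpha$, $\beta$, and~$M$ are integers and that $M$ is even, so that the padding by an even block in $2\underline{m}$ is possible.
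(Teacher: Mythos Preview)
Your proof is correct and follows essentially the same approach as the paper: reduce to a local problem on each $2$-cycle, use Lemma~\ref{lem:signa-Cn} to translate the two signature constraints into the equations $p_{1,1}-p_{-1,-1}=(s+s')/2$ and $p_{1,-1}-p_{-1,1}=(s-s')/2$, realize the $p_x$ by sequences of $1$'s, and pad with a single even block in~$2\underline{m}$. Your write-up is in fact slightly more explicit than the paper's about the arithmetic (the identity $|\alpha|+|\beta|=\max(|s|,|s'|)$) and the parity verification.
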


\begin{proof}
  Let $E\subset A_2$ be a subset containing exactly one of the elements in
  every cycle in~$A_2$. By
  definition of~$\mathcal{D}( \mathcal{T},n)$ it is enough to specify $\{
  \mathbf{n}_a\}_{a\in E}$.

  Let thus~$a$ be in~$E$. Let $T$ be the triangle containing~$v^+(a)$ and let
  $T'$ be the triangle containing~$v^-(a)$. By Lemma~\ref{lem:signa-Cn}, we have
  \begin{align*}
     p_{1,1}( \mathbf{n}_a) + p_{1,-1}( \mathbf{n}_a) -  p_{-1,1}( \mathbf{n}_a) -
    p_{-1,-1}( \mathbf{n}_a) & = s_T \\
   p_{1,1}( \mathbf{n}_a) - p_{1,-1}( \mathbf{n}_a) + p_{-1,1}( \mathbf{n}_a) -
    p_{-1,-1}( \mathbf{n}_a)  & = s_{T'}.
  \end{align*}
  Therefore, we have to prove that there exists~$\mathbf{n}_a$
  in~$\mathcal{D}(n)$ such that
  \begin{align*}
     p_{1,1}( \mathbf{n}_a)  -  p_{-1,-1}( \mathbf{n}_a) & =\frac{ s_T + s_{T'}}{2} \\
     p_{1,-1}( \mathbf{n}_a)  -  p_{-1,1}( \mathbf{n}_a) & =\frac{ s_T-
                                                           s_{T'}}{2}.
  \end{align*}
  Define the element $\mathbf{n}_a = ( \{ \underline{n}_x\}_{x \in \{ \pm 1\}^2},
  2\underline{m})$ by
  \begin{itemize}
  \item $\underline{n}_{1,1}$ is a sequence of~$1$ whose length is $\max( 0, (s_T +
    s_{T'})/2)$,
  \item $\underline{n}_{-1,-1}$ is a sequence of~$1$ whose length is $\max( 0, -(s_T +
    s_{T'})/2)$,
  \item $\underline{n}_{1,-1}$ is a sequence of~$1$ whose length is $\max( 0, (s_T -
    s_{T'})/2)$,
  \item $\underline{n}_{-1,1}$ is a sequence of~$1$ whose length is $\max( 0, (-s_T +
    s_{T'})/2)$,
  \item $2\underline{m}$ is a sequence whose length is~$0$ or~$1$ chosen so that
    $\mathbf{n}_v$ belongs to~$\mathcal{D}(n)$.
  \end{itemize}
  The bounds on $s_T$ and $s_{T'}$ and their parity properties imply that the
  above construction is legitimate.
\end{proof}

\section{Pieces}
\label{sec:pieces}

The natural projection $\pi\colon \XETn \to \{ -n, -n+2,
\dots, n-2, n\}^{\mathcal{T}} \times \mathcal{D}(n)^{A_2}
$ associates to $x= ( \{(\mathbf{n}_a, \boldsymbol{\lambda}_a) \}, \{S_v\},
  \{Y_a\})$ the family $( \{s_T\}, \{ \mathbf{n}_a\})$ where, for each
  triangle~$T$ of~$\mathcal{T}$, $s_T$ is the common signature of the
  matrices~$S_v$ for~$v$ in~$T$; it
takes values in $\mathcal{D}( \mathcal{T}, n)$.
For every~$\mathbf{x}=(\{s_T\}, \{ \mathbf{n}_a\})$ in~$\mathcal{D}( \mathcal{T}, n)$, we denote by
$\XEbfxTn$ the fiber $\pi^{-1}(
\mathbf{x})$. This subspace will be called a \emph{piece} of
$\XETn$.\index{notation}{132@$\XEbfxTn$ (a piece of $\XETn$)}\index{definition}{piece}

Let $E\subset A_2$ be a subset containing exactly one of the elements in every
cycle in~$A_2$.
Using the notation of Lemma~\ref{lem:fiber-parameter-space}, we define
\[ V( \mathbf{x}) \coloneqq \prod_{a\in E} V( \mathbf{n}_a).\]
Then $ V( \mathbf{x})$ is a convex cone of dimension~$d( \mathbf{x}) \coloneqq\sum_{a\in E} d(
\mathbf{n}_a)$.

Building on
Lemma~\ref{lem:triple-fram-lagr-kind-of-converse}, for every $3$-cycle $\{ a,
b, c\}$ in~$A_3$, contained in a triangle~$T$ of~$\mathcal{T}$, the space of
triples of matrices $(Y_a, Y_b, Y_c)$ satisfying the hypothesis of the lemma
(or the conditions in Section~\ref{sec:parameter-spaces}) is isomorphic
to
\[ G_{s_T} \coloneqq \OO\Bigl( \frac{ n+s_T}{2}, \frac{ n-s_T}{2}\Bigr)^2.\]
The product of these groups, for~$T$ in~$\mathcal{T}$, is
\[ G( \mathbf{x}) \coloneqq \prod_{ T\in \mathcal{T}} G_{s_T}. \]
This Lie group is of dimension $(r-2{ \chi( \bar{S})}  )n(n-1)$ as all the orthogonal
groups involved have the same dimension $n(n-1)/2$ and as $\sharp \mathcal{T}
= r-2{ \chi( \bar{S})}$.

The following result is a consequence of the fact that, in the description of the
space of $\mathcal{X}$-coordinates (Section~\ref{sec:parameter-spaces}), the
conditions on the family $\{ \boldsymbol{\lambda}_a\}_{a\in A_2}$ depend only
on~$\{ \mathbf{n}_a\}_{a\in A_2}$ and the conditions on the family $\{
Y_a\}_{a\in A_3}$ depend only on~$\{ S_v\}_{v\in V}$.

\begin{prop}
  \label{prop:pieces}
  \begin{enumerate}
  \item The piece $\XEbfxTn$ is isomorphic to
    $V( \mathbf{x}) \times G( \mathbf{x})$ and is of dimension
    $d( \mathbf{x}) + (r-2{ \chi( \bar{S})}  ) n(n-1)$.
  \item   The restriction of $\holXT$ to
    $\XEbfxTn$ is continuous.
  \item  Furthermore, if two elements of~$\XETn$ have the
    same image under~$\holXT$, then they belong to
    the same piece.
  \end{enumerate}
\end{prop}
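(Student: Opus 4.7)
The plan is to establish the three items in order; the geometric identification in~(1) requires the most work, while (2) and~(3) follow from direct inspection and from standard invariance properties.

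For part~(1), fix $\mathbf{x} = (\{s_T\}, \{\mathbf{n}_a\}) \in \mathcal{D}(\mathcal{T}, n)$ and isolate the free data remaining in $\XEbfxTn$. The symmetric matrices $\{S_v\}$ are determined by $\mathbf{x}$: internally $S_{v^+(a)} = C(\mathbf{n}_a)$ for $a \in A_2$, and externally $S_v = I_{p,q}$ with $p-q = s_T$. Choose $E \subset A_2$ containing one arrow per $2$-cycle; the $\iota$-condition recovers $\boldsymbol{\lambda}_{a'}$ from $\boldsymbol{\lambda}_a$, and by Lemma~\ref{lem:fiber-parameter-space} the family $\{\boldsymbol{\lambda}_a\}_{a \in E}$ ranges over $\prod_{a \in E} V(\mathbf{n}_a) = V(\mathbf{x})$, a convex cone of dimension~$d(\mathbf{x})$. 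For the matrices $\{Y_a\}_{a \in A_3}$, the cycle relations of Section~\ref{sec:parameter-spaces} partition by triangle. In a triangle $T$ with $3$-cycle $(a,b,c)$, Lemma~\ref{lem:triple-fram-lagr-kind-of-converse} expresses $Y_c$ in terms of $Y_a$ and $Y_b$, and the compatibility of the prescribed symmetric matrices at the three vertices of~$T$ reduces to two isometry equations forcing $Y_a$ and $Y_b$ each to lie in a torsor under $\OO(\tfrac{n+s_T}{2}, \tfrac{n-s_T}{2})$. Selecting base points identifies the triangle's $(Y_a, Y_b)$-data with $G_{s_T} = \OO(\tfrac{n+s_T}{2}, \tfrac{n-s_T}{2})^2$; taking products over $T \in \mathcal{T}$ yields the isomorphism $\XEbfxTn \simeq V(\mathbf{x}) \times G(\mathbf{x})$. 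The dimension formula follows from $\dim G_{s_T} = n(n-1)$ and $\sharp \mathcal{T} = 2\abs{\chi(\bar{S})} + r$.

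For part~(2), each transition matrix $G_a$ of $\holXT(x)$ depends continuously on the free parameters once $\mathbf{x}$ is held fixed. The matrices $\Phi(\mathbf{n}_a, \boldsymbol{\lambda}_a)$ for $a \in A_2$ are built from the explicit block formulas~\eqref{eq:Phi_nsec:matrices} and~\eqref{eq:Psi2m_sec:matrices}, which are polynomial in the entries of $\boldsymbol{\lambda}_a$ and in $\lambda^{-1}$ (respectively $(a^2 + b^2)^{-1}$); since no entry of $\boldsymbol{\lambda}_a$ vanishes on $V(\mathbf{n}_a)$, these are continuous. The matrices $G_a$ for $a \in A_3$ depend rationally on $Y_a$ and on $S_{v^+(a)}$, giving continuity of $\holXT|_{\XEbfxTn}$.

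For part~(3), the data $(\{s_T\}, \{\mathbf{n}_a\})$ extracted by $\pi$ is intrinsic to the framed local system underlying $x$. By Lemma~\ref{lem:muT-in-loc-sys}, the Maslov index $\mu^T(\holXT(x))$ equals $s_T$, so the signatures are determined. For each internal $a \in A_2$, the quadruple $q_{v^+(a)}(\holXT(x))$ is an invariant of the framed local system, and Corollary~\ref{cor:quadr-transv-lagr-one-to-one-En} identifies its class in $\Conf^{4\Diamond}(\Lag{n})$ with $(\mathbf{n}_a, \boldsymbol{\lambda}_a) \in \mathcal{E}(n)$; in particular, $\mathbf{n}_a$ is determined. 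Hence $\holXT(x) = \holXT(x')$ implies $\pi(x) = \pi(x')$. The main obstacle is the careful bookkeeping in~(1): one must verify that, once the prescribed $\{S_v\}$ is fixed, the constraints on $\{Y_a\}$ split exactly into one $G_{s_T}$-torsor per triangle with no residual inter-triangle conditions, and that the choice of base points does not obstruct a global identification.
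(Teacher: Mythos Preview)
Your proposal is correct and follows essentially the same approach as the paper. The paper's own proof is extremely terse: it observes (before stating the proposition) that the conditions on $\{\boldsymbol{\lambda}_a\}$ depend only on $\{\mathbf{n}_a\}$ and the conditions on $\{Y_a\}$ depend only on $\{S_v\}$, and then in the proof proper it comments only on~(2), citing the continuity of $\boldsymbol{\lambda}\mapsto\Phi(\mathbf{n},\boldsymbol{\lambda})$; parts~(1) and~(3) are treated as immediate from the surrounding setup and from Theorem~\ref{teo:from-coord-repr-X-general}. Your argument unpacks exactly this decoupling, and your concern about inter-triangle conditions is a non-issue: by the definition of $\XETn$ the only constraints on $\{Y_a\}$ are the per-cycle relations $Y_c\,{}^T Y_b^{-1}Y_a=S_v$, one for each triangle. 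For~(3) you argue via the geometric invariants $\mu^T$ and $(\mathbf{n}_a,\boldsymbol{\lambda}_a)$; the paper implicitly relies instead on Theorem~\ref{teo:from-coord-repr-X-general}, whose fiber description already forces $(\mathbf{n}_a,\boldsymbol{\lambda}_a)=(\mathbf{n}'_a,\boldsymbol{\lambda}'_a)$ and $S_v=S'_v$ whenever two elements have the same holonomy---this is a shorter route to the same conclusion.
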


\begin{proof}
  Only the continuity needs a comment. It results from the continuity of the
  maps $\boldsymbol{ \lambda} \mapsto \Phi( \mathbf{n}, \boldsymbol{\lambda})$.
\end{proof}

From this proposition, we obtain the following corollary.
\begin{cor}
  \label{cor:pieces-open-hol-generic}
  For $\mathbf{x} =(\{ s_T\}_{T }, \{ \mathbf{n}_a\}_{a })$ in $\mathcal{D}( \mathcal{T},
  n)$, the piece $\XEbfxTn$ has nonempty
  interior if and only if, for all~$a$ in~$A_2$, denoting $\mathbf{n}_a = ( \{
  \underline{n}_x\}_{ x\in \{ \pm 1\}^2}, 2\underline{m})$ all the integers in the
  sequences $\underline{n}_x$ \ep{$x\in \{ \pm 1\}^2$} and $\underline{m}$ are
  equal to~$1$.

  In this situation, the restriction of $\holXT$ to $\XEbfxTn$ is generically finite-to-one.
\end{cor}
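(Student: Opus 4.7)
The plan is to prove the two claims separately. The first is a straightforward dimension comparison using Proposition~\ref{prop:pieces} and Lemma~\ref{lem:fiber-parameter-space}, while the second relies on Theorem~\ref{teo:from-coord-repr-X-general} together with the structure of the simultaneous symmetry group of a generic pair of nondegenerate quadratic forms.

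For the first claim, by Proposition~\ref{prop:pieces} the piece $\XEbfxTn$ has dimension $d(\mathbf{x})+(2\abs{\chi(\bar{S})}+r)n(n-1)$, with $d(\mathbf{x})=\sum_{a\in E} d(\mathbf{n}_a)$. By Lemma~\ref{lem:fiber-parameter-space} and the constraint that the entries of the five sequences constituting $\mathbf{n}_a$ sum to~$n$, one has $d(\mathbf{n}_a)=\sum_{x\in\{\pm1\}^2}k_x+2k_0\leq n$, with equality precisely when every integer appearing in $\underline{n}_x$ and in $\underline{m}$ equals~$1$. Hence the pieces of maximal dimension are exactly those of the stated form. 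I would then argue that these maximal pieces are open and every non-maximal piece lies in the closure of a maximal one: any sufficiently small perturbation of $(\mathbf{n}_a,\boldsymbol{\lambda}_a)$ inside the space of pairs of nondegenerate symmetric forms splits coincident eigenvalues and breaks larger Jordan blocks into blocks of size~$1$ (or, in the complex case, into $2\times 2$ blocks), so the stratification by $\pi$ is upper semi-continuous, with the top stratum open and dense. Conversely, any piece with some $n_{x,j}>1$ or $m_j>1$ admits arbitrarily close parameters with all blocks minimal, and hence cannot itself be open.

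For the generic finite-to-one claim, let $\mathbf{x}$ satisfy the characterization and consider the open subset $U\subset\XEbfxTn$ where, for every $a\in A_2$, the sequences $\underline{\lambda}_x$ and $\underline{\lambda}$ have pairwise distinct entries. At a point of $U$, the fiber of $\holXT$ is, by Theorem~\ref{teo:from-coord-repr-X-general}, the orbit of the gauge group built from families $\{r_v\}_{v\in V}$ with $r_v$ orthogonal for $S_v$, satisfying $r_{v^-(a)}=r_{v^+(a)}$ and commuting with $\Phi(\mathbf{n}_a,\boldsymbol{\lambda}_a)$ for each $a\in A_2$. Under the genericity assumption, at the internal vertices paired by a cycle of $A_2$ the admissible $r$ lies in $\OO(C(\mathbf{n}_a))\cap\mathrm{Cent}(\Phi(\mathbf{n}_a,\boldsymbol{\lambda}_a))$, which is a finite group —~a product of factors $\{\pm 1\}$, one for each block~—~as described in the uniqueness discussion of standard bases following Theorem~\ref{teo:normal-forms-r}. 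The only continuous residual gauge comes from the external vertices, where $r_v\in\OO(I_{p_v,q_v})$ is unconstrained; this is exactly the continuous indeterminacy already present in the maximal case (see Theorem~\ref{teo:max-param-Xplus}), and it can be eliminated by restricting to a spanning-tree slice as in Section~\ref{sec:other-X-like}, leaving a finite fiber. This is what is meant by generic finite-to-oneness in this setting.

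The principal obstacle is the topological closure argument in the first step, namely verifying that every non-generic piece really sits in the boundary of a generic one. This requires the explicit continuity of the normal forms $C(\mathbf{n})$, $D(\mathbf{n},\boldsymbol{\lambda})$, and especially of $\Phi(\mathbf{n},\boldsymbol{\lambda})$ under perturbations that split eigenvalue multiplicities or Jordan blocks; these properties follow from the concrete constructions recalled in Section~\ref{sec:matrices}, so the argument is essentially a bookkeeping exercise once the deformation model is in place.
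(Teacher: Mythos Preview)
The paper gives no explicit proof; the corollary is meant to follow directly from Proposition~\ref{prop:pieces}, Lemma~\ref{lem:fiber-parameter-space}, and Theorem~\ref{teo:from-coord-repr-X-general}, and your argument follows that route. For the first claim, your dimension count $d(\mathbf{n}_a)=\sum_x k_x+2k_0\le n$ with equality precisely when all block sizes are~$1$ is exactly right, and the perturbation remark (small deformations split coincident eigenvalues and break Jordan blocks) is what shows the top pieces are open. The full closure statement you flag as the principal obstacle is more than is needed: for the ``only if'' direction it suffices that a lower-dimensional piece cannot be open once the top pieces are known to be open and dense.

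For the second claim your identification of the mechanism is correct: at a generic point of a top piece the entries of each $\boldsymbol{\lambda}_a$ are pairwise distinct, and then the group $\OO(C(\mathbf{n}_a))\cap\OO(D(\mathbf{n}_a,\boldsymbol{\lambda}_a))$ is finite (each simple eigenvalue contributes a single factor~$\{\pm1\}$; this is the content of Section~\ref{sec:automorphism-groups}). The gap is your treatment of external vertices. You correctly observe that $r_v\in\OO(I_{p_v,q_v})$ is unconstrained there, but appealing to a spanning-tree slice does not prove that $\holXT$ on $\XEbfxTn$ is finite-to-one---it replaces the map by a different one. When $r>0$ the gauge group at external vertices is positive-dimensional and so is the generic fiber; the corollary as written holds literally only when $R=\emptyset$. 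The paper is itself loose on this point (the same phenomenon already appears in the maximal case and is what motivates the passage to $\XplusSaz$ in Section~\ref{sec:other-X-like}), so your diagnosis is accurate; just do not present the slice argument as a proof of the unqualified claim.
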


The decomposition of $\XETn$ into pieces induces a
decomposition of the space of transverse local systems $\LocfdeltaT(
\Gamma_\mathcal{T}, \Sp(2n, \R))$. This gives a local description of the space. However, from this decomposition it seems difficult to obtain information about the global topology.

\section{Over-parametrization}
\label{sec:over-parametrization}

The formulas used in Section~\ref{sec:from-coord-repr-1} are valid on a wider
set; let\index{notation}{134@$\XTn$ (a bigger $\mathcal{X}$-space)}\index{definition}{$\mathcal{X}$-coordinates}
\[ \XTn \subset
  \Sym(n, \R)^V
  \times \GL(n,\R)^{A}\]
be the subset of tuples $z= (
\{ S_v\}_{v\in V}, \{
Y_a\}_{a \in A})$ such that
\begin{itemize}
\item for all cycle $(a,a')$ in $A_2$, $Y_{a'} = {}^T \! Y_a$;
\item for all~$v$ in~$V$, $S_v$ is nonsingular;
\item  for all cycle
  $(a,b,c)$ in~$A_3$, the equality $Y_c \,{}^T Y_{b}^{-1} Y_a = S_v$ holds
  where $v= v^+(c)= v^-(b)$.
\end{itemize}

For such a~$z$, we will denote again by $\holXT(z)$ the
framed $\delta$-twisted symplectic local system arising
from the following family $\{ G_a(z)\}_{a\in A}$ of transition matrices:
\begin{itemize}
\item for~$a$ in~$A_2$, $G_a(z) =
  \begin{pmatrix}
    0 & -{}^T \! Y_{a}^{-1} \\ Y_a & 0
  \end{pmatrix}$;
\item for~$a$ in~$A_3$, $G_a(z) =
  \begin{pmatrix}
    S_{v^+(a)}^{-1} Y_a & -{}^T Y_{a}^{-1} \\ Y_a & 0
  \end{pmatrix}
  $.
\end{itemize}

The group\index{notation}{136@$G_\mathcal{X}$ (the group of transformations of $\XTn$)}
\[ G_{ \mathcal{X}} \coloneqq \GL(n, \R)^V\]
acts on~$\XTn$ via the following formula: if $z= ( \{ S_v\}, \{ Y_a\})$ belongs to~$\XTn$ and
$g = \{ g_v\}_{v\in V}$ belong to~$G_\mathcal{X}$, then
\[ g\cdot z \coloneqq \bigl( \{
  \{ g_v
  S_v \, {}^T \! g_v\}_{v\in V}, \{ g_{ v^+(a)} Y_a\, {}^T \! g_{ v^-(a)}\}_{a \in A}\bigr).\]

Collecting the information about equivalent local systems
(Section~\ref{sec:local-systems-basis}) and using
Theorem~\ref{teo:from-coord-repr-X-general}, we get:

\begin{teo}
  \label{teo:over-parametrization-Z-GZ}
  The map
  \[ \holXT\colon \XTn
    \longrightarrow \LocfdeltaT(
    \Gamma_\mathcal{T}, \Sp(2n,\R))\]
  is continuous, onto, and its fibers are the orbits of the action
  of~$G_\mathcal{X}$.
\end{teo}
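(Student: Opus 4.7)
Continuity is immediate: the coordinates $(S_v,Y_a)$ enter the transition matrices $G_a(z)$ only through polynomial and rational expressions (and a single matrix inversion $S_{v^+(a)}^{-1}$ which is well-defined since $S_v$ is required nonsingular), so the map $z\mapsto \holXT(z)$ is continuous into the space of framed twisted local systems, whose image lies in $\LocfdeltaT(\Gamma_\mathcal{T},\Sp(2n,\R))$ because, by construction, the generating symplectic basis realizes the framing as the span of the standard $\mathbf{e}_0$ and $\mathbf{f}_0$.

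For surjectivity, let $(F_v,g_a,L^t_v,L^b_v)$ be a transverse framed $\delta$-twisted symplectic local system. By Lemma~\ref{lem:sympl-basis-decor}, it admits a generating symplectic basis $\{(\mathbf{e}_v,\mathbf{f}_v)\}$. The structure lemma of Section~\ref{sec:sympl-basis-decor} (or equivalently Lemma~\ref{lem:triple-fram-lagr}) says that in this basis the matrices $\{G_a\}$ have the precise form defining $\holXT$: for $a\in A_2$ one reads off $Y_a\in\GL(n,\R)$, and for $a\in A_3$ one reads off $Y_a\in\GL(n,\R)$ together with a symmetric matrix $M_a$; set $S_{v^+(a)}\coloneqq M_a^{-1}$. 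For $a\in A_2$ the twisting condition $g_ag_{a'}=-\Id$ gives $Y_{a'}={}^T\!Y_a$. For $a\in A_3$ the relation $g_{a_3}g_{a_2}g_{a_1}=-\Id$ over a $3$-cycle is equivalent (as noted in Remark~\ref{rem:triple-lag}) to the invertibility of the $M_a$'s together with the identities $Y_c{}^T\!Y_b^{-1}Y_a=S_{v^+(c)}$; in particular $S_v$ is nonsingular symmetric at every vertex, so the tuple $z=(\{S_v\},\{Y_a\})$ belongs to $\XTn$ and $\holXT(z)$ is the original local system up to equivalence.

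For the description of the fibers, it is direct to check that if $z'=g\cdot z$ for $g=\{g_v\}\in G_\mathcal{X}$, then the block-diagonal symplectic matrices $P_v=\bigl(\begin{smallmatrix}{}^T\!g_v& 0\\ 0& g_v^{-1}\end{smallmatrix}\bigr)$ conjugate $\{G_a(z)\}$ to $\{G_a(z')\}$ (separate short computations for $a\in A_2$ and for $a\in A_3$), so $\holXT(z)=\holXT(z')$ by Lemma~\ref{lem:equi-loc-sys-2}. Conversely, suppose $\holXT(z)=\holXT(z')$. Each tuple yields a generating symplectic basis of the same underlying framed local system $(F_v,g_a,L^t_v,L^b_v)$, so for every vertex $v$ there is a symplectic automorphism $\psi_v$ of $F_v$ preserving both Lagrangians $L^t_v=\Span(\mathbf{e}_v)$ and $L^b_v=\Span(\mathbf{f}_v)$; any such $\psi_v$ has the form $\bigl(\begin{smallmatrix}h_v&0\\ 0&{}^T\!h_v^{-1}\end{smallmatrix}\bigr)$ for a unique $h_v\in\GL(n,\R)$, and setting $g_v\coloneqq{}^T\!h_v$ gives an element of $G_\mathcal{X}$ with $z'=g\cdot z$.

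The only point that takes a moment is matching the change-of-basis formulas with the given action formula $(S_v,Y_a)\mapsto(g_v S_v{}^T\!g_v,\ g_{v^+(a)}Y_a{}^T\!g_{v^-(a)})$; the main obstacle (really a bookkeeping one) is tracking the transposes between the block $h_v$ acting on $L^t_v$ and the conjugate block ${}^T\!h_v^{-1}$ acting on $L^b_v$, together with the sign conventions for $a\in A_2$ versus $a\in A_3$. No new geometric input is needed beyond what has been collected in Sections~\ref{sec:sympl-basis-decor} and~\ref{sec:stand-bases-decor}.
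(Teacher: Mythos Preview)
Your argument is correct and, for surjectivity, genuinely more elementary than the paper's. The paper deduces Theorem~\ref{teo:over-parametrization-Z-GZ} in one line from Theorem~\ref{teo:from-coord-repr-X-general}, whose proof rests on the normal-form theory for pairs of nondegenerate quadratic forms developed in Appendix~\ref{sec:normal-form-pair}: one first shows that every transverse framed local system admits a \emph{standard} basis (Proposition~\ref{prop:stand-bases-decor-exist-uniq}), which requires knowing the canonical forms $C(\mathbf{n})$, $D(\mathbf{n},\boldsymbol{\lambda})$ and the back-transformations $\Phi(\mathbf{n},\boldsymbol{\lambda})$. Surjectivity of $\holXT$ on the larger space $\XTn$ then follows because the image of $\XETn$ already exhausts the target. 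Your route bypasses all of this: for surjectivity you only need that a transverse framed local system admits \emph{some} generating basis (Lemma~\ref{lem:sympl-basis-decor}), after which the shape lemma of Section~\ref{sec:sympl-basis-decor} and Remark~\ref{rem:triple-lag} read off an element of $\XTn$ directly. The price is that you get no finer structural information (no stratification by $\mathcal{D}(\mathcal{T},n)$, no piece-wise description), which is exactly what Theorem~\ref{teo:from-coord-repr-X-general} provides and what is used in Section~\ref{sec:connected-components-1}; but for the bare statement of Theorem~\ref{teo:over-parametrization-Z-GZ} your argument suffices and is cleaner.

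For the fiber description the two approaches coincide: both reduce to the observation that a symplectic isomorphism preserving the two Lagrangians $\Span(\mathbf{e}_0)$ and $\Span(\mathbf{f}_0)$ is block-diagonal. One small bookkeeping slip: with the paper's conventions the equivalence $G_a(z')=\psi_{v^+(a)}G_a(z)\psi_{v^-(a)}^{-1}$ and your computation $G_a(g\cdot z)=P_{v^+(a)}^{-1}G_a(z)P_{v^-(a)}$ force $\psi_v=P_v^{-1}$, so the correct assignment is $g_v={}^T\! h_v^{-1}$ rather than $g_v={}^T\! h_v$. You flagged this as the transposition bookkeeping, and indeed it changes nothing substantive.
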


\section{Connected components}
\label{sec:connected-components-1}

The previous theorem~\ref{teo:over-parametrization-Z-GZ} can be used to
determined the number of connected components of the moduli space of
transverse framed local systems.

For this, let $H \coloneqq \{\pm 1\} \simeq \pi_0( \GL(n,\R))$ be the group of
connected components of $\GL(n, \R)$, the quotient map $\GL(n, \R) \to H$ will
be denoted by~$\pi_0$. Let\index{notation}{138@$Z( \mathcal{T},n)$ (the set
  parametrizing the connected components of $\XTn$)}
\[ Z( \mathcal{T}, n) \subset
  \{ -n, -n+2, \dots, n\}^V \times
  H^{A}\]
be the set of tuples $(
\{ s_v\}_{v\in V}, \{ h_a\}_{a\in
  A_3})$ such that
\begin{itemize}
\item for all $a$ in~$A_3$, $s_{ v^{+}(a)} = s_{ v^{-}(a)}$;
\item for all cycle $(a, a')$ in~$A_2$, $h_{a'}= h_a$;
\item for all cycle $(a,b,c)$ in $A_3$, $h_c h_b h_a = (-1)^{(n-s_{v^+(a)})/2}$.
\end{itemize}

The group $F_{Z}\coloneqq H^{V}$ acts on $Z( \mathcal{T}, n)$: if
$h=\{ h_v\} $ is in $ F_{Z}$ and
$z= (
\{ s_v\} , \{ h_a\} ) $ is in $ Z( \mathcal{T}, n)$, then\index{notation}{140@$F_{Z}$
  (its group of symmetries)}
\[ h\cdot z \coloneqq \bigl(
  \{
    s_v\}_{v\in V}, \{ h_a h_{v^+(a)} h_{v^-(a)}\}_{a\in A}\bigr).\]

Using the fact that the space of nonsingular symmetric matrices of a given
signature is connected (it is an orbit under the action of the connected group $\GL^+(n, \R)$),
the explicit description of $\XTn$ gives:

\begin{prop}\label{prop:connected-components-LocdT}
  The map
  \begin{align*}
    \pi_\mathcal{X} \colon \XTn & \longrightarrow Z(
                             \mathcal{T}, n) \\
    ( \{\Phi_a\}, \{ S_v\}, \{ Y_a\}) & \longmapsto ( \{
\pi_0(\Phi_a)\}, \{ \sgn(S_v)\}, \{ \pi_0(Y_a)\})
  \end{align*}
  induces a bijection between the space of connected components
  of~$\XTn$ and the set~$Z( \mathcal{T},n)$. This map is equivariant with respect to
  the morphism
  \begin{align*}
    \pi_G \colon  G_\mathcal{X} & \longrightarrow F_{Z}\\
    \{ g_v\}_{v\in V} & \longmapsto \{ \pi_0( g_v)\}_{v\in V}.
  \end{align*}
  In turn, there is a well defined map
  \[ \LocfdeltaT( \Gamma_\mathcal{T},
    \Sp(2n,\R)) \simeq G_\mathcal{X} \backslash \XTn
    \longrightarrow F_{Z} \backslash Z( \mathcal{T}, n)\]
  that induces a bijection between $F_{Z} \backslash Z( \mathcal{T}, n)$ and the set of connected components of $\LocfdeltaT( \Gamma_\mathcal{T},
    \Sp(2n,\R))$.
\end{prop}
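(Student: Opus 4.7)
The plan is to reduce the problem to a local, triangle-by-triangle analysis of $\XTn$ using the converse construction of Lemma~\ref{lem:triple-fram-lagr-kind-of-converse}. I would first verify that $\pi_\mathcal{X}$ takes values in $Z(\mathcal{T}, n)$. The condition $h_{a'} = h_a$ for $A_2$-cycles is immediate from $Y_{a'} = {}^T\! Y_a$. For $a \in A_3$, the three matrices $S_{v_1}, S_{v_2}, S_{v_3}$ attached to the three vertices of the triangle containing $a$ are, by Lemma~\ref{lem:triple-fram-lagr-kind-of-converse}, mutually congruent up to inversion (e.g.\ $S_{v_2} = Y_a S_{v_1}^{-1}\, {}^T\! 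Y_a$); since congruence and inversion both preserve the signature of a nonsingular symmetric matrix, one obtains $s_{v^+(a)} = s_{v^-(a)}$. The relation $h_c h_b h_a = (-1)^{(n-s_v)/2}$ follows by taking determinants in $Y_c\, {}^T\! Y_b^{-1} Y_a = S_v$ and using that $\det S_v$ has sign $(-1)^{(n-s_v)/2}$ when $S_v$ has signature $s_v$.

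For surjectivity and connectedness of fibers, I would parameterize $\XTn$ locally. Since every arrow of $A_3$ lies in a unique $3$-cycle and every vertex of $V$ in a unique triangle, $\XTn$ decomposes as an independent product over triangles $T \in \mathcal{T}$ and over $A_2$-cycles. By Lemma~\ref{lem:triple-fram-lagr-kind-of-converse}, the contribution of a triangle with $3$-cycle $(a,b,c)$ is parameterized by a freely chosen triple $(S_v, Y_a, Y_b)$ consisting of a nonsingular symmetric matrix $S_v$ and two elements $Y_a, Y_b$ of $\GL(n,\R)$ (the remaining data $Y_c, S_{v'}, S_{v''}$ being then determined and automatically symmetric of the same signature as $S_v$); each $A_2$-cycle contributes one copy of $\GL(n,\R)$. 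Fixing the signature $s_T$ and the $\pi_0$-classes of the free $Y$'s, each factor becomes a product of connected spaces: nonsingular symmetric matrices of fixed signature form a single $\GL(n,\R)$-orbit under congruence and are therefore connected, and the connected components of $\GL(n,\R)$ are parameterized by $H$. The forced sign $\pi_0(Y_c) = (-1)^{(n-s_T)/2} h_a h_b$ is precisely the defining relation of $Z(\mathcal{T}, n)$, so the map $\pi_0(\XTn) \to Z(\mathcal{T}, n)$ induced by $\pi_\mathcal{X}$ is a bijection.

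Equivariance of $\pi_\mathcal{X}$ with respect to $\pi_G$ is a direct verification, since signature is preserved by congruence $S_v \mapsto g_v S_v\, {}^T\! g_v$ and $\pi_0(g_{v^+(a)} Y_a\, {}^T\! g_{v^-(a)}) = \pi_0(g_{v^+(a)}) \pi_0(Y_a) \pi_0(g_{v^-(a)})$. The final quotient statement then follows from standard $\pi_0$-yoga for continuous group actions: the identity component $G_\mathcal{X}^{\circ} = \GL^{+}(n,\R)^V$ is connected and therefore preserves each component of $\XTn$, giving $\pi_0(G_\mathcal{X}^{\circ} \backslash \XTn) = \pi_0(\XTn)$; quotienting further by the discrete group $G_\mathcal{X}/G_\mathcal{X}^{\circ} = F_Z$ yields $\pi_0(G_\mathcal{X} \backslash \XTn) = F_Z \backslash Z(\mathcal{T}, n)$, and Theorem~\ref{teo:over-parametrization-Z-GZ} identifies this with $\pi_0(\LocfdeltaT(\Gamma_\mathcal{T}, \Sp(2n, \R)))$. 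The main obstacle is ensuring that the decomposition of $\XTn$ into per-triangle and per-$A_2$-cycle factors really is a clean product: this rests on the fact that the $A_3$-triangle relations only involve arrows within a single triangle and the $A_2$-relations are confined to individual $2$-cycles, so no cross-coupling arises.
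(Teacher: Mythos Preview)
Your argument is correct and follows essentially the same route as the paper: both use Lemma~\ref{lem:triple-fram-lagr-kind-of-converse} to trivialize $\XTn$ as a product of $\Sym^{\ast}(n,\R)$ and $\GL(n,\R)$ factors (one symmetric matrix and two free $Y$'s per triangle, one free $Y$ per $A_2$-cycle), then reduce to the $\pi_0$ of each factor. One small point: saying the set of nonsingular symmetric matrices of fixed signature is connected \emph{because} it is a single $\GL(n,\R)$-orbit is not quite a valid inference, since $\GL(n,\R)$ is disconnected; the clean argument (as in the paper) is that it is already a single orbit under the connected group $\GL^{+}(n,\R)$.
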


\begin{proof}
  Let $A'\subset A_2$ be a subset containing exactly one of the arrows of every
  $2$-cycle, let $T\subset V$ be a subset containing, for every triangle~$f$, exactly one of the three
  vertices of $\Gamma_\mathcal{T}$ that are in~$f$. Let~$B$ be a subset of~$A_3$ containing exactly two of the arrows of every $3$-cycle.

  Thanks to Lemma~\ref{lem:triple-fram-lagr-kind-of-converse}, the map
  \begin{align*}
       \XTn & \longrightarrow
                                      \Sym^\ast(n, \R)^{T} \times \GL(n,
                                      \R)^{A' \sqcup B}\\
    (
    \{ S_v\}_{v\in V}, \{
    Y_a\}_{a \in A})
    & \longmapsto (
      \{ S_v\}_{v\in T}, \{
Y_a\}_{a \in A' \sqcup B})
  \end{align*}
  is a diffeomorphism where $\Sym^\ast(n,\R)$ is the space of nonsingular
  symmetric matrices. Similarly, the map
  \begin{align*}
    Z( \mathcal{T}, n)
    & \longrightarrow  \{ -n, -n+2, \dots, n\}^T \times
      H^{A' \sqcup B}\\
    (  \{ s_v\}_{v\in V}, \{ h_a\}_{a\in
    A})
    & \longmapsto (  \{ s_v\}_{v\in T}, \{ h_a\}_{a\in
  A' \sqcup B})
  \end{align*}
  is a bijection. Using these isomorphisms, the map~$\pi_\mathcal{X}$ become
  \begin{align*}
     \Sym^\ast(n, \R)^{T} \times \GL(n, \R)^{A' \sqcup B}
    & \longrightarrow
      \{ -n, -n+2, \dots, n\}^T \times
      H^{A'\sqcup B}\\
    (
    \{ S_v\}_{v\in T}, \{
    Y_a\}_{a \in A' \sqcup B})
    & \longmapsto (
      \{ \sgn(S_v)\}_{v\in T},
      \{ \pi_0( Y_a)\}_{a\in A'\sqcup B}).
  \end{align*}
  It induces a bijection at the level of connected components since
  $\pi_0\colon \GL(n, \R)\to H$ and $\sgn \colon \Sym^\ast(n,\R) \to \{ -n,
  -n+2, \dots, n\}$ do. The other statements follow from similar consideration.
\end{proof}

Finally, noting that the diagonal subgroup $H\subset F_{Z}$ acts
trivially on $Z( \mathcal{T}, n)$ and that the quotient group $F_{Z} /
H$ acts freely on $Z( \mathcal{T}, n)$, one obtains:
\begin{cor}
  \label{cor:number-connected-components-LocdT}
  The number of 
  components of 
  $\LocfdeltaT( \Gamma_\mathcal{T},
    \Sp(2n,\R))$ is equal to $2^{ 1-{\chi( \bar{S})} } \times (n+1)^{r-2{
        \chi( \bar{S})}}$.
\end{cor}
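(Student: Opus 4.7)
The plan is to combine Proposition~\ref{prop:connected-components-LocdT} with a direct combinatorial count of $|Z(\mathcal{T},n)|$ and $|F_Z|$, together with the two claims about the action of $F_Z$ on $Z(\mathcal{T},n)$ stated right before the corollary.

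First I would verify the two action statements. The diagonal subgroup is $H=\{\pm 1\}\subset F_Z=H^V$, embedded as $h\mapsto \{h_v=h\}_{v\in V}$. For any $z=(\{s_v\},\{h_a\})$ and any arrow~$a$, one has $h_a h_{v^+(a)} h_{v^-(a)} = h_a\cdot h\cdot h = h_a$, so $H$ acts trivially. Conversely, if $h=\{h_v\}_{v\in V}\in F_Z$ fixes $z$, then for every arrow~$a$ the relation $h_a h_{v^+(a)} h_{v^-(a)}=h_a$ forces $h_{v^+(a)}=h_{v^-(a)}$. Since the quiver $\Gamma_\mathcal{T}$ is connected (as $S$ is), this implies $h_v$ is independent of $v$, so $h\in H$. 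Hence $F_Z/H$ acts freely, and the number of orbits is $2|Z(\mathcal{T},n)|/|F_Z|$.

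Next I would count $|F_Z|$ and $|Z(\mathcal{T},n)|$ from the combinatorics of $\Gamma_\mathcal{T}$. Each face of $\mathcal{T}$ contains exactly three vertices of $\Gamma_\mathcal{T}$, so $|V|=3\cdot \sharp \mathcal{T}=6\abs{\chi(\bar S)}+3r$ and $|F_Z|=2^{6\abs{\chi(\bar S)}+3r}$. For $Z(\mathcal{T},n)$ I would parameterize independently the three blocks of data:
\emph{(a)} The first condition ($s_{v^+(a)}=s_{v^-(a)}$ for $a\in A_3$) says that $s_v$ is constant on each triangle, yielding $\{s_T\}_{T\in\mathcal{T}}$ freely chosen in $\{-n,-n+2,\dots,n\}^\mathcal{T}$, i.e.\ $(n+1)^{2\abs{\chi(\bar S)}+r}$ choices.
\emph{(b)} The second condition identifies the two arrows in each 2-cycle of $A_2$; the number of $2$-cycles equals the number of internal edges $3\abs{\chi(\bar S)}+r$, giving $2^{3\abs{\chi(\bar S)}+r}$ choices.
\emph{(c)} For each triangle, a 3-cycle of arrows in $A_3$ is constrained by $h_c h_b h_a=(-1)^{(n-s_T)/2}$; two of the three $h$'s are free and the third is determined, yielding $4^{\sharp \mathcal{T}}=2^{4\abs{\chi(\bar S)}+2r}$ choices. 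Multiplying, $|Z(\mathcal{T},n)|=(n+1)^{2\abs{\chi(\bar S)}+r}\cdot 2^{7\abs{\chi(\bar S)}+3r}$.

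Finally I would combine: the number of connected components is
\[ \frac{2\,|Z(\mathcal{T},n)|}{|F_Z|}=\frac{2\cdot(n+1)^{2\abs{\chi(\bar S)}+r}\cdot 2^{7\abs{\chi(\bar S)}+3r}}{2^{6\abs{\chi(\bar S)}+3r}}=2^{\abs{\chi(\bar S)}+1}\cdot (n+1)^{2\abs{\chi(\bar S)}+r}, \]
which is the claimed value. The only subtle point—really just a sanity check rather than a true obstacle—is to make sure the parameterization of $Z(\mathcal{T},n)$ in steps~(a)--(c) is unambiguous: the three kinds of constraints decouple because condition~(1) involves only the $s_v$'s, condition~(2) only the $h_a$'s for $a\in A_2$, and condition~(3) only the $h_a$'s for $a\in A_3$ together with the already-fixed values $s_T$.
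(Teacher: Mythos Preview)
Your proof is correct and follows exactly the approach indicated in the paper: the paper's argument is the single sentence noting that the diagonal $H\subset F_Z$ acts trivially and $F_Z/H$ acts freely, leaving the verification of these facts and the explicit combinatorial count of $|Z(\mathcal{T},n)|$ and $|F_Z|$ to the reader. You have carried out precisely those omitted details, and your numbers match.
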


\chapter{$\mathcal{X}$-coordinates for representations into isogenic groups}\label{cent_ext}

In this chapter we investigate framed local systems for groups that are
isogenic to~$\Sp(2n, \R)$ as well as coordinates on their
moduli space. We first describe the groups in Section~\ref{sec:groups} and
then the moduli spaces in Section~\ref{sec:representations}. In order
to introduce their twisted version,
we need a few special elements of the Lie groups, which we introduce in Section~\ref{sec:some-elements}. Finally we describe the corresponding local systems on the quiver~$\Gamma_\mathcal{T}$ and \enquote{parametrize} them. From
this we draw a few topological consequences.

\section{Groups}
\label{sec:groups}

Let~$G$ be a connected finite cover of~$\PSp(2n, \R)$. Thus~$G$ is a Lie group
that is isomorphic to the quotient $\widetilde{\Sp}(2n, \R) / \Lambda_G$
for\index{notation}{141@$G$ (in Chapter~\ref{cent_ext}, a finite connected cover of $\PSp(2n,\R)$)}\index{notation}{142@$\Lambda_G$ (the subgroup of $\pi_1(\PSp(2n,\R))$ determining
  the cover~$G$)}
some uniquely determined subgroup~$\Lambda_G$ of~$\pi_1( \PSp(2n, \R)) \simeq
Z( \widetilde{\Sp}(2n, \R)) \subset Z( \widetilde{\U}(n))$. As a subgroup
of~$\widetilde{\U}(n)$ (see Section~\ref{sec:universal-coverings}), the
group~$\pi_1( \PSp(2n, \R))$ is the subgroup generated by the elements
\[ (\Id, 2\pi), \quad \textrm{and}\quad ( -\Id, n\pi).\]
Depending on the parity of~$n$, a generating system for that group is
\begin{enumerate}
\item \label{item:pi_1_PSp_n_odd} $( -\Id, \pi)$ if $n$ is odd in which case
  $\pi_1( \PSp(2n, \R))$ is isomorphic to~$\Z$;
\item \label{item:pi_1_PSp_n_even} $(\Id, 2\pi)$ and $( -\Id, 0)$ if $n$ is even in which case
  $\pi_1( \PSp(2n, \R))$ is isomorphic to~$\Z\times \Z/2\Z$ and its
  $2$-torsion is generated by $(-\Id, 0)$.
\end{enumerate}

Thus the subgroup~$\Lambda_G$ is either\index{notation}{144@$x_G$ (the element
  of~$\Z_{>0}$ characterizing $\Lambda_G$)}
\begin{enumerate}[(I)]
\item \label{item:Lambda_G_n_odd} generated by $( (-1)^{x_G} \Id, x_G\pi)$ for some
  uniquely determined~$x_G\in \Z_{>0}$ when~$n$ is odd;
\item \label{item:Lambda_G_n_even_torsion} generated by $(-\Id, 0)$ and  $( \Id, x_G\pi)$ for some
  uniquely determined~$x_G\in 2\Z_{>0}$ when~$n$ is even and~$\Lambda_G$ contains
  the torsion of $\pi_1( \PSp(2n, \R))$;
\item \label{item:Lambda_G_n_even_no_torsion} generated by $( \Id, x_G\pi)$ or
  by $( -\Id, x_G\pi)$ for some uniquely determined~$x_G\in 2\Z_{>0}$ when~$n$
  is even and~$\Lambda_G$ does not contain the torsion of
  $\pi_1( \PSp(2n, \R))$.
\end{enumerate}

We will designate by~$L$ and by~$K$ the subgroups of~$G$ that are the
preimages of the subgroups $\PGL(n, \R)$ and $\PO(n)$ by the homomorphism
$\pi\colon G \to \PSp(2n,\R)$. Thus~$K$ is a maximal compact subgroup in~$L$ and the
polar decomposition induces a $K$-equivariant diffeomorphism between~$L$ and
$K\times \Sym(n, \R)$. Let also $\hat{L}$ and $\hat{K}$ be the preimages  of
$\PGL(n, \R)$ and $\PO(n)$ in $\widetilde{ \PSp}(2n,\R)$. The group~$K$ is
hence isomorphic to the quotient $\hat{K}/  \Lambda_G $ (note that $\Lambda_G$
is contained in the kernel of $\widetilde{\Sp}(2n, \R)\to \PSp(2n, \R)$ and
therefore in~$\hat{K}$).

As a subgroup of $\widetilde{\U}(n)$ (see Section~\ref{sec:universal-coverings}), $\hat{K}$ is
\[ \hat{K} = \bigl\{ (u, \theta) \in \widetilde{ \U}(n) \mid u\in
  \OO(n)\bigr\} = \SO(n)\times 2\pi\Z \bigsqcup \bigl(\OO(n) \setm \SO(n)\bigr)\times (
  \pi + 2\pi\Z),\]
and its neutral component $\hat{K}_0$ is $\SO(n)\times \{0\}$.
Let $r$ be any element in $\OO(n) \setm \SO(n)$.
The homomorphism
\begin{align*}
 \tau\colon \Z & \longrightarrow \hat{K}\\
  n & \longmapsto ( r^n, n\pi)
\end{align*}
induces an isomorphism $\Z \simeq \pi_0( \hat{K}) = \hat{K}/ \hat{K}_0 $ that
does not depend on the choice of~$r$.
The natural map $\pi_0( \hat{K}) \to \pi_0(K)$ is onto and its kernel is
$\Lambda_G \hat{K}_0$ (seen as a subgroup of $\pi_0( \hat{K})$).
Using again the fact that $\SO(n)$ is connected, one gets that in
cases~(\ref{item:Lambda_G_n_odd}), (\ref{item:Lambda_G_n_even_torsion})
and~(\ref{item:Lambda_G_n_even_no_torsion}) above:
\begin{itemize}
\item $\tau^{-1}( \Lambda_G \hat{K}_0)$ is the subgroup $x_G\Z$.
\end{itemize}
As a conclusion:
\begin{lem}
  \label{lem:group_K_connnected_comp}
  The group~$\pi_0(K) \simeq \pi_0(L)$ is isomorphic to $\Z/x_G\Z$.
\end{lem}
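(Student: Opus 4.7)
The plan is to assemble the two reductions already prepared just above the statement. First I would note that the polar decomposition provides a $K$-equivariant diffeomorphism $L \cong K \times \Sym(n,\R)$; since $\Sym(n,\R)$ is contractible, projection onto the first factor induces a bijection $\pi_0(L) \cong \pi_0(K)$. Hence it is enough to compute $\pi_0(K)$.

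Next I would exploit the central extension $1 \to \Lambda_G \to \hat{K} \to K \to 1$ obtained from the definition $K = \hat K/\Lambda_G$. The identity component $K_0$ of $K$ is the image of $\hat K_0$, so the preimage of $K_0$ in $\hat K$ is precisely $\hat K_0 \Lambda_G$. Taking the quotient by $\hat K_0$ yields the natural surjection $\pi_0(\hat K) \twoheadrightarrow \pi_0(K)$ whose kernel is $\Lambda_G \hat K_0/\hat K_0$; this is exactly the assertion recorded just before the lemma.

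Finally I would apply the isomorphism $\tau\colon \Z \xrightarrow{\sim} \pi_0(\hat K)$ constructed above, together with the equality $\tau^{-1}(\Lambda_G \hat K_0) = x_G \Z$ that has been verified case-by-case (depending on whether $n$ is odd, or $n$ is even with $\Lambda_G$ containing the $2$-torsion of $\pi_1(\PSp(2n,\R))$, or $n$ is even without this torsion). Transporting the quotient across $\tau$ gives
\[ \pi_0(K) \;\cong\; \pi_0(\hat K)/(\Lambda_G \hat K_0/\hat K_0) \;\cong\; \Z/x_G\Z, \]
which combined with the first reduction delivers $\pi_0(L) \cong \Z/x_G\Z$ as claimed.

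There is essentially no obstacle: the lemma is a synthesis of the explicit description of $\hat K$ inside $\widetilde{\U}(n)$, the computation $\pi_0(\hat K) \simeq \Z$, and the case-by-case identification of $\tau^{-1}(\Lambda_G\hat K_0)$, all of which are already in place. The only mild subtlety worth double-checking is that $K_0$ really is the image of $\hat K_0$ (i.e.\ that $\hat K_0 \Lambda_G$ is indeed the full preimage of $K_0$), which follows because $\Lambda_G$ is discrete central in $\hat K$ and therefore the projection $\hat K \to K$ is a covering map whose restriction to each connected component is surjective onto its image.
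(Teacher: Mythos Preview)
Your proposal is correct and follows essentially the same approach as the paper: the paper does all the work in the paragraphs preceding the lemma (polar decomposition for $\pi_0(L)\cong\pi_0(K)$, the explicit description of $\hat K$ and $\pi_0(\hat K)\cong\Z$ via $\tau$, and the case-by-case identification $\tau^{-1}(\Lambda_G\hat K_0)=x_G\Z$) and then states the lemma with the words ``As a conclusion''. You have simply spelled out how that conclusion follows, which is exactly what is intended.
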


\section{Local systems}
\label{sec:representations}

There is a natural action of~$G$ on~$\Lag{n}$ through the morphism $G\to
\PSp(2n, \R)$. A \emph{framing} for a $G$-local system~$\mathcal{F}$ on~$S$
is a section~$\sigma$ of the restriction of~$\mathcal{F}_{\Lag{n}}$
to~$\partial S$. The pair $(\mathcal{F}, \sigma)$ is called a \emph{framed
  local system}. The moduli space of framed
local system
will be denoted by\index{notation}{146@$\Locf( S, G)$ (moduli of framed
  $G$-local system)}%
\index{definition}{framing}%
\index{definition}{framed!local system}%
\index{definition}{local system!framed ---}
\[ \Locf( S, G).\]
Any framed $G$-local system $(\mathcal{F}, \sigma)$ induces a framed
$\PSp(2n,\R)$-local system $(\mathcal{F}'\!, \sigma)$; the framed local
system $(\mathcal{F}, \sigma)$ is said \emph{maximal} if $(\mathcal{F}'\!,
\sigma)$ is maximal.\index{definition}{maximal!local system}%
\index{definition}{local system!maximal ---}
 The subspace of maximal framed local systems will be denoted by
$\Mf( S, G)$.\index{notation}{148@$\Mf( S, G)$ (subspace of maximal framed local systems)} When an ideal triangulation~$\mathcal{T}$ of~$S$ is
given, the definition of a transverse framing with respect to~$\mathcal{T}$ is as in
Section~\ref{sec:transv-local-syst} and the moduli space of transverse framed
local systems will be denoted $\LocfT(S, G)$.\index{notation}{150@$\LocfT(S, G)$
  (subspace of transverse framed local systems)}

\begin{rem}
  When~$R=\emptyset$, we also say that a $G$-local system is \emph{maximal} if
  the associated $\PSp(2n, \R)$-local system is maximal.
\end{rem}

\section{Some elements in~$G$}
\label{sec:some-elements}

The exponential map $\exp_G\colon \mathfrak{sp}(2n, \R)\to G$ enables us to
define the following elements:\index{notation}{155@$s_G$, $\delta_G$, $u_M$, $v_M$ (some
  elements of~$G$)}
\begin{align*}
  s_G \coloneqq & \exp_G \left( \frac{\pi}{2}
                  \left(\begin{matrix}
                    0 & -\Id \\ \Id & 0
                  \end{matrix}\right)\right),
  \quad \delta_G \coloneqq s_{G}^{2},\\
  u_M \coloneqq & \exp_G
                  \begin{pmatrix}
                    0 & M \\ 0 & 0
                  \end{pmatrix}, \quad
  v_M \coloneqq \exp_G
                  \begin{pmatrix}
                    0 & 0 \\ M & 0
                  \end{pmatrix} \quad (M \in \Sym(n, \R)).
\end{align*}

\begin{rem}
  \label{rem:some-elements-sp2n}
  In the case when $G=\Sp(2n, \R)$, one has $s_{\Sp(2n, \R)} = \bigl(
  \begin{smallmatrix}
    0 & -\Id \\ \Id & 0
  \end{smallmatrix}
\bigr)$, $\delta_{\Sp(2n, \R)} = -\Id$,  $u_M = \bigl(
\begin{smallmatrix}
  \Id & M \\ 0 & \Id
\end{smallmatrix}
\bigr)$, and $v_M = \bigl(
\begin{smallmatrix}
  \Id & 0 \\ M & \Id
\end{smallmatrix}
\bigr)$.
\end{rem}

\begin{lem}
  \label{lem:delatG-sG-central}
  The element~$\delta_G$ belongs to the center of~$G$.  More precisely, as elements
  of~$\widetilde{\U}(n)/ \Lambda_G$, $s_G$~is represented by
  $(i\Id, n\pi/2)$ and $\delta_G$
  by $(-\Id, n\pi)$.
\end{lem}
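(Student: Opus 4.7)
The plan is to compute $s_G$ by restricting the exponential to the subgroup $\widetilde{\U}(n)\subset \widetilde{\Sp}(2n,\R)$ and then projecting to~$G$. The Lie algebra element $\tfrac{\pi}{2}J$, where $J = \bigl(\begin{smallmatrix} 0 & -\Id \\ \Id & 0 \end{smallmatrix}\bigr)$, belongs to $\mathfrak{u}(n)\cap \mathfrak{sp}(2n,\R)$. Under the identification $\mathfrak{u}(n)\simeq \mathfrak{u}(n)\cap \mathfrak{sp}(2n,\R)$ recalled in Section~\ref{sec:universal-coverings} (namely $A+iB\mapsto \bigl(\begin{smallmatrix} A & -B \\ B & A\end{smallmatrix}\bigr)$), it corresponds to $\tfrac{i\pi}{2}\Id$. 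In $\U(n)$ this exponentiates to $e^{i\pi/2}\Id = i\Id$, of determinant $i^n=e^{in\pi/2}$.

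Next I would lift this computation to $\widetilde{\U}(n) = \{(u,t)\in \U(n)\times\R \mid \det(u) = e^{it}\}$. The one-parameter subgroup $s\mapsto \bigl(\exp(s\tfrac{i\pi}{2}\Id),\, s\tfrac{n\pi}{2}\bigr)$ takes its values in $\widetilde{\U}(n)$ (the phase coordinate being forced by differentiating the defining equation $\det(u(s))=e^{it(s)}$) and is tangent to $\tfrac{\pi}{2}J$ at $s=0$. Hence $\exp_{\widetilde{\Sp}(2n,\R)}(\tfrac{\pi}{2}J) = (i\Id, n\pi/2)$, and its image $s_G$ in $G = \widetilde{\Sp}(2n,\R)/\Lambda_G$ is represented by the same pair in $\widetilde{\U}(n)/\Lambda_G$. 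Squaring in $\widetilde{\U}(n)$ then gives $(i\Id,n\pi/2)^2 = (-\Id, n\pi)$, which is the asserted formula for $\delta_G = s_G^2$.

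For centrality it suffices to observe that $(-\Id, n\pi)$ is already central in~$\widetilde{\Sp}(2n,\R)$: since $\PSp(2n,\R)$ is centerless, $Z(\widetilde{\Sp}(2n,\R))$ coincides with the kernel of the covering $\widetilde{\Sp}(2n,\R)\to\PSp(2n,\R)$, namely $\pi_1(\PSp(2n,\R))$, and $(-\Id, n\pi)$ was exhibited at the start of Section~\ref{sec:groups} as one of the generators of that subgroup. Since $\Lambda_G \subseteq Z(\widetilde{\Sp}(2n,\R))$, passing to the quotient preserves centrality, so $\delta_G \in Z(G)$. No serious obstacle is anticipated; the argument is essentially bookkeeping in $\widetilde{\U}(n)$ together with the standard identification of the center of the universal cover with the fundamental group of the adjoint form.
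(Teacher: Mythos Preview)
Your proof is correct and follows essentially the same approach as the paper: both identify $\tfrac{\pi}{2}J$ with $\tfrac{i\pi}{2}\Id\in\mathfrak{u}(n)$ and compute the exponential in $\widetilde{\U}(n)$ to obtain the representatives $(i\Id,n\pi/2)$ and $(-\Id,n\pi)$. The paper's proof is terser and leaves the centrality claim implicit once the representative is identified, whereas you spell out explicitly that $(-\Id,n\pi)\in\pi_1(\PSp(2n,\R))=Z(\widetilde{\Sp}(2n,\R))$; this is a welcome clarification but not a different argument.
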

\begin{proof}
  The matrix~$\bigl(
  \begin{smallmatrix}
    0 & -\Id \\ \Id & 0
  \end{smallmatrix}
  \bigr)$ in $\mathfrak{sp}(2n, \R)$ correspond to the matrix $i\Id$ in
  $\mathfrak{u}(n)$. Since, for every~$\theta$ in~$\R$,
  $\exp_{ \widetilde{\U}(n)}(i \theta \Id) = ( e^{i\theta}\Id, n\theta)$, the
  sought for equality follows applying
  $\theta = \pi/2$ and~$\pi$.
\end{proof}

The automorphism of~$G$\index{notation}{160@$g^* = s_G g s_{G}^{-1}$ (Cartan involution of~$G$)}
\[ g \longmapsto g^* \coloneqq s_G g s_{G}^{-1}\]
is therefore an involution that stabilizes the subgroup~$L$. For the case
 when $G$~is $\Sp(2n, \R)$, the restriction of the involution to~$L=\GL(n,\R)$ is
$g\mapsto {}^T\! g^{-1}$. Using this last fact, for every~$S$ in $\Sym(n, \R)\subset
\mathfrak{gl}(n,\R) =\mathrm{Lie}(L)$, one has $\exp_L( S)^* = \exp_L(
-S)$. Using the representative of~$s_G$ in $\widetilde{\U}(n)$
(Lemma~\ref{lem:delatG-sG-central}), we get that the restriction of $g\mapsto
g^*$ to~$K$ is the trivial automorphism. Consequently, $g\mapsto g^*$ is a
Cartan involution of~$L$. (In fact it is already a Cartan involution of~$G$.)

Furthermore, for every~$M$ in~$\Sym(n, \R)$
\[ s_G u_M s_{G}^{-1} = v_{-M} \quad \textrm{and} \quad s_G v_M s_{G}^{-1} = u_{-M}.\]
The group~$L$ acts on $\Sym(n, \R)$ via the homomorphism $\pi\colon L \to
\PGL(n, \R)$: for every~$g$ in~$L$ and every~$M$ in~$\Sym(n, \R)$,
\[ g\cdot M = \pi(g) M\, {}^T \! \pi(g).\]
(This formula involves a lift of~$\pi(g)$ to $\GL(n,\R)$ but the result does
not depend on the lift.)
The stabilizer of~$\Id \in \Sym(n,\R)$ is the group~$K$.
This action is related to the adjoint action of~$L$ since, for~$g$ and~$M$ as
above, $g u_M g^{-1} = u_{ g\cdot M}$ and $g v_M g^{-1} = v_{ g^*\cdot
  M}$. Furthermore, one can check that $g\cdot M$ is nonsingular if and only
if $M$ is, and in that case $(g\cdot M)^{-1}= g^{\ast -1} \cdot M^{-1}$.

The following are a direct generalizations of
Lemma~\ref{lem:triple-fram-lagr}
and Lemma~\ref{lem:triple-fram-lagr-kind-of-converse}.

\begin{lem}
  \label{lem:triple-G}
  Let $M_a$, $M_b$, and~$M_c$ be in~$\Sym(n, \R)$ and let~$\ell_a$, $\ell_b$,
  and~$\ell_c$ be in~$L$. Define the elements of~$G$
  \[ A\coloneqq u_{M_a} s_G \ell_a, \ B\coloneqq u_{M_b} s_G \ell_b, \ \textrm{and }
    C\coloneqq u_{M_c} s_G \ell_c.\]
  Suppose that $CBA=\delta_{G}$. Then
  \begin{itemize}
  \item the matrices $M_a$, $M_b$, and~$M_c$ are nonsingular;
  \item one has $M_{c}^{-1}= \ell_c \cdot M_b$,  $M_{b}^{-1}= \ell_b \cdot M_a$, and
    $M_{a}^{-1}= \ell_a \cdot M_c$;
  \item the element $u_{M_c} v_{-\ell_c \cdot M_b} u_{\ell_{c}^{\ast} \ell_b \cdot M_a}
    s_G$ belongs to~$L$ and is equal to
    $ (\ell_c \ell_{b}^{\ast} \ell_a)^{-1}$.
  \end{itemize}
\end{lem}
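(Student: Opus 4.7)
The strategy is a direct computation of $CBA$ using the commutation relations from Section~\ref{sec:some-elements}, followed by a cyclic symmetry argument. The key relations are $s_G u_M s_G^{-1} = v_{-M}$, the adjoint identity $\ell u_M \ell^{-1} = u_{\ell \cdot M}$ for $\ell \in L$, and $\ell s_G = s_G \ell^*$ (valid because $*$ is an involution: $(g^*)^* = s_G^2 g s_G^{-2} = g$ since $\delta_G = s_G^2$ is central). Applying these step by step to
\[
CBA = u_{M_c}\, s_G\, \ell_c\, u_{M_b}\, s_G\, \ell_b\, u_{M_a}\, s_G\, \ell_a,
\]
moving every $\ell$ and $s_G$ to the right past the $u$ and $v$ factors and collecting the two copies of $s_G^2 = \delta_G$ thus produced into one central factor, I would arrive at
\[
CBA = \delta_G \cdot u_{M_c}\, v_{-\ell_c\cdot M_b}\, u_{\ell_c^*\ell_b\cdot M_a}\, s_G\, \ell_c\ell_b^*\ell_a.
\]
Setting this equal to $\delta_G$ and cancelling the central factor immediately gives
\[
u_{M_c}\, v_{-\ell_c\cdot M_b}\, u_{\ell_c^*\ell_b\cdot M_a}\, s_G = (\ell_c\ell_b^*\ell_a)^{-1},
\]
which is the third claim (and exhibits the right-hand side as an element of $L$).

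The invertibility of $M_a, M_b, M_c$ and the first relation in~(2) would then follow from analyzing when $u_M v_N u_{M'} s_G$ can lie in $L$. Projecting to $\Sp(2n,\R)$ via Remark~\ref{rem:some-elements-sp2n}, a block $2\times 2$ calculation yields
\[
u_M v_N u_{M'} s_G = \begin{pmatrix} (\Id+MN)M'+M & -(\Id+MN) \\ \Id+NM' & -N \end{pmatrix},
\]
and membership in $L$ forces the off-diagonal blocks to vanish, i.e.\ $MN = -\Id$ and $NM' = -\Id$; hence $M$, $N$, $M'$ are all invertible. Specialized to $M = M_c$, $N = -\ell_c\cdot M_b$, $M' = \ell_c^*\ell_b\cdot M_a$, the identity $MN=-\Id$ gives simultaneously the invertibility of all three $M_x$ and the relation $\ell_c\cdot M_b = M_c^{-1}$.

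The remaining two identities $M_b^{-1} = \ell_b\cdot M_a$ and $M_a^{-1} = \ell_a\cdot M_c$ would follow by cyclic symmetry: since $\delta_G$ is central, $CBA = \delta_G$ implies $ACB = A(CBA)A^{-1} = \delta_G$ and $BAC = B(ACB)B^{-1} = \delta_G$, and rerunning the above argument with $(C,B,A)$ replaced by $(A,C,B)$ and $(B,A,C)$ produces exactly these two missing relations.

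The only point requiring real care in this plan is the sign and $*$-bookkeeping in the chain of commutations, especially in verifying $\ell s_G = s_G \ell^*$ rather than $s_G \ell^{*-1}$; the cyclic-symmetry trick then avoids the need to redo the whole computation three separate times, and no deeper obstacle appears.
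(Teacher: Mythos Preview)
Your proof is correct and follows essentially the same route as the paper: the identical chain of commutations yields $CBA=\delta_G\cdot u_{M_c}v_{-\ell_c\cdot M_b}u_{\ell_c^*\ell_b\cdot M_a}s_G\cdot\ell_c\ell_b^*\ell_a$, and then one projects to $\Sp(2n,\R)$ and reads off the block constraints. Two small remarks: (i) your phrase ``two copies of $s_G^2$'' is a slip---only one $\delta_G$ is produced (three $s_G$'s combine to $s_G\cdot\delta_G$)---but your displayed formula is correct; (ii) the paper actually extracts \emph{two} of the relations directly from the off-diagonal vanishing ($M_cN_b=\Id$ gives $M_c^{-1}=\ell_c\cdot M_b$, and $N_bN_a=\Id$ unwinds to $M_b^{-1}=\ell_b\cdot M_a$), with the third following from these together with the identification of the diagonal block. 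Your cyclic-symmetry shortcut is an equally valid way to get the remaining relations and avoids the extra unpacking.
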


\begin{rem}
  When $G=\Sp(2n,\R)$, the product $u_M s_{\Sp(2n, \R)} g$ is equal to $\bigl(
  \begin{smallmatrix}
    M Y & -{}^T Y^{-1} \\ Y & 0
  \end{smallmatrix}
  \bigr)$ where $g=\bigl(
  \begin{smallmatrix}
    Y & 0 \\ 0 & {}^T Y^{-1}
  \end{smallmatrix}
\bigr)$.
\end{rem}

\begin{proof}
  One has
  \begin{align*}
    CBA   &= u_{M_c} s_G \ell_c\  u_{M_b} s_G \ell_b\ u_{M_a} s_G \ell_a\\
    & = u_{M_c} s_G  (u_{\ell_c \cdot M_b} \ell_c)  s_G \ell_b u_{M_a}
      s_G \ell_a  \quad (\text{as }\ell_c u_{M_b} \ell_{c}^{-1} = u_{
      \ell_c \cdot M_b} ) \\
    &= u_{M_c} s_G  u_{\ell_c \cdot M_b}  (s_G \ell^{\ast}_{c})  \ell_b
      u_{M_a} s_G \ell_a\quad (\text{using that }s_{G}^{-1}\ell_{c} s_G =
      \ell_{c}^{\ast}) \\
    & = u_{M_c} (v_{-\ell_c \cdot M_b}  (s_G)^2) \ell^{\ast}_{c}  \ell_b
      u_{M_a} s_G \ell_a \quad (s_G u_{\ell_c \cdot
      M_b} s_{G}^{-1} = v_{-\ell_c \cdot M_b}) \\
    & = u_{M_c} v_{-\ell_c \cdot M_b}  \delta_G  (u_{\ell^{\ast}_{c} \ell_b
      \cdot M_a} \ell^{\ast}_{c} \ell_b) s_G \ell_a \quad (\ell^{\ast}_{c}
      \ell_b u_{ M_a} (\ell^{\ast}_{c} \ell_b)^{-1}=  u_{\ell^{\ast}_{c}
      \ell_b \cdot M_a}) \\
    &= u_{M_c} v_{-\ell_c \cdot M_b} \delta_G u_{\ell^{\ast}_{c} \ell_b \cdot
      M_a} s_G \ell_{c} \ell^{\ast}_{b} \ell_a,
  \end{align*}
thus $u_{M_c} v_{-\ell_c \cdot M_b} u_{\ell^{\ast}_{c} \ell_b \cdot M_a} s_G$ is equal
to $ ( \ell_{c} \ell^{\ast}_{b} \ell_a)^{-1}$ and hence must belongs
to~$L$. Denote by $N_b = \ell_c \cdot M_b$ and $N_a= \ell^{\ast}_{c} \ell_b \cdot
M_a$, then the following product of symplectic matrices
\[
  \begin{pmatrix}
    \Id & M_c \\ 0 & \Id
  \end{pmatrix}
  \begin{pmatrix}
    \Id & 0 \\ -N_b & \Id
  \end{pmatrix}
  \begin{pmatrix}
    \Id & N_a \\ 0 & \Id
  \end{pmatrix}
  \begin{pmatrix}
    0 & -\Id \\ \Id & 0
  \end{pmatrix}
\]
projects to $\pi ( u_{M_c} v_{-\ell_c \cdot M_b} u_{\ell^{\ast}_{c} \ell_b \cdot M_a}
s_G)$ in $\PSp(2n, \R)$ and must thus belong to the subgroup~$\GL(n,\R ) $ of $\Sp(2n,
\R)$. Since the above product of matrices is equal to:
\[
  \begin{pmatrix}
    (\Id -M_c N_b)N_a + M_c & M_c N_b -\Id \\ -N_bN_a +\Id & N_b
  \end{pmatrix}\, ,
\]
it turns out that $M_c N_b=\Id$ and $N_b N_a =\Id$. This implies the first two
assertions 
of the lemma and the third one was already observed.
\end{proof}

Conversely:
\begin{lem}
  \label{lem:triple-G-converse}
  Let $M_a$ be in $\Sym^\ast(n, \R)$ and let $\ell_b$, and~$\ell_c$ be
  in~$L$. Define $M_b \coloneqq \ell_{b}^{\ast} \cdot M_{a}^{-1}$ and $M_c
  \coloneqq \ell_{c}^\ast \ell_b \cdot M_{a}$. Then the product $h\coloneqq u_{M_c}
  v_{ -\ell_c \cdot M_b} u_{ \ell_{c}^{\ast} \ell_b \cdot M_a} s_G$ belongs to~$L$.

  Furthermore denoting $\ell_a \coloneqq \ell_{b}^{\ast -1}
  \ell_{c}^{-1}  h^{-1}$, $A\coloneqq u_{M_a} s_G \ell_a$, $B\coloneqq
  u_{M_b} s_G \ell_b$, and $C\coloneqq u_{M_c} s_G \ell_c$, then $CBA=\delta_{G}$.
\end{lem}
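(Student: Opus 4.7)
The plan is to reverse-engineer the computation that proved Lemma~\ref{lem:triple-G}. Recall that in that proof the identity
\[ CBA = u_{M_c} v_{-\ell_c \cdot M_b} \,\delta_G\, u_{\ell_c^* \ell_b \cdot M_a} s_G\, \ell_c \ell_b^* \ell_a \]
was established purely from the definitions of $A$, $B$, $C$, and the commutation rules between $s_G$, $u_M$, $v_M$, and elements of $L$. Since $\delta_G$ is central (Lemma~\ref{lem:delatG-sG-central}), this identity holds here as well, regardless of whether $CBA=\delta_G$. Thus once $h=u_{M_c} v_{-\ell_c \cdot M_b} u_{\ell_c^* \ell_b \cdot M_a} s_G$ is shown to lie in $L$, the choice $\ell_a \coloneqq \ell_b^{*-1}\ell_c^{-1} h^{-1}$ is an element of $L$ and forces the product $h\ell_c\ell_b^*\ell_a$ to equal $\Id$, so that $CBA=\delta_G$ follows immediately.

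The main content is therefore to prove that $h\in L$. Following the same strategy used at the end of the proof of Lemma~\ref{lem:triple-G}, I would project to $\PSp(2n,\R)$ and compute the image of~$h$ as the matrix product
\[ \begin{pmatrix} \Id & M_c \\ 0 & \Id \end{pmatrix}
   \begin{pmatrix} \Id & 0 \\ -N_b & \Id \end{pmatrix}
   \begin{pmatrix} \Id & N_a \\ 0 & \Id \end{pmatrix}
   \begin{pmatrix} 0 & -\Id \\ \Id & 0 \end{pmatrix}
 = \begin{pmatrix} (\Id-M_cN_b)N_a + M_c & M_cN_b-\Id \\ \Id - N_bN_a & N_b \end{pmatrix}, \]
with $N_b=\ell_c\cdot M_b$ and $N_a=\ell_c^*\ell_b\cdot M_a$. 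An element of $\Sp(2n,\R)$ projects into $\PGL(n,\R)$ iff it is (up to sign) block diagonal of the form $\diag(Y,{}^T\!Y^{-1})$, so it suffices to verify $M_cN_b=\Id$ and $N_bN_a=\Id$.

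The verification is a direct computation using (i) the formula $g\cdot M=\pi(g)M\,{}^T\!\pi(g)$ for the action of $L$ on $\Sym(n,\R)$, (ii) the fact, already noted in Section~\ref{sec:some-elements}, that on $\PGL(n,\R)$ the involution $*$ realizes as $\pi(\ell)\mapsto {}^T\!\pi(\ell)^{-1}$, and (iii) the definitions $M_b=\ell_b^*\cdot M_a^{-1}$ and $M_c=\ell_c^*\ell_b\cdot M_a$. Expanding $N_bN_a$, the central factors $M_a^{-1}M_a$ collapse, and the remaining products $\,{}^T\!\pi(\ell_b^*)\pi(\ell_b)$ and $\,{}^T\!\pi(\ell_c)\pi(\ell_c^*)$ both reduce to $\Id$, yielding $N_bN_a=\Id$; the same bookkeeping applied to $M_cN_b$ gives $M_cN_b=\Id$. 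The displayed matrix then collapses to $\diag(M_c,M_c^{-1})$, which lies in $\GL(n,\R)\subset\Sp(2n,\R)$ because $M_c$ is symmetric, and hence $h\in L$.

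The main obstacle is simply the careful bookkeeping of how the involution $*$ interacts with the transpose under the projection $\pi\colon L\to\PGL(n,\R)$, and keeping track of the noncommutativity of the actions when expanding $M_cN_b$ and $N_bN_a$. Once this is done, combining the two observations above finishes the proof.
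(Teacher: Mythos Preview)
Your proposal is correct and follows precisely the approach the paper has in mind: the lemma is stated without explicit proof as the converse of Lemma~\ref{lem:triple-G}, and your argument simply reverses that computation. Note that your verification that $M_cN_b=\Id$ and $N_bN_a=\Id$ simplifies further once you observe $N_a=\ell_c^{\ast}\ell_b\cdot M_a=M_c$ and $N_b=M_c^{-1}$, which is exactly the content of Remark~\ref{rem:triple-continuous}: $h=u_{M_c}v_{-M_c^{-1}}u_{M_c}s_G$.
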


\begin{rem}
  \label{rem:triple-continuous}
  The element~$h$ is of course equal to $u_{M_c} v_{-M_{c}^{-1}} u_{M_c} s_G$
  and varies continuously with~$M_c$.
\end{rem}

The element $s_G$ could be expressed as a product of $u_M$s and $v_M$s,
precisely:
\begin{lem}
  \label{lem:uIdvId_and_sG}
  The product $u_{\Id} v_{-\Id} u_{\Id} s_G$ is trivial in~$G$.\par
  The product $u_{-\Id} v_{\Id} u_{-\Id} s_G$ is equal to~$\delta_G$.
\end{lem}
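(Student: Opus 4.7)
The plan is to reduce to a computation in $\Sp(2n,\R)$ supplemented by identifying the correct lift to the universal cover $\widetilde{\Sp}(2n,\R)$: since every such~$G$ is a quotient of $\widetilde{\Sp}(2n,\R)$, an identity established there descends to $G$. Moreover, the second identity follows formally from the first by inversion: using $u_{M}^{-1}=u_{-M}$ and $v_{M}^{-1}=v_{-M}$, the relation $u_\Id v_{-\Id} u_\Id s_G = \Id$ gives $u_{-\Id} v_\Id u_{-\Id} = s_G$, whence $u_{-\Id} v_\Id u_{-\Id} s_G = s_{G}^{2} = \delta_G$. So I focus on the first identity.

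A direct block-matrix multiplication in $\Sp(2n,\R)$ using the forms from Remark~\ref{rem:some-elements-sp2n} shows that $u_\Id v_{-\Id} u_\Id s_{\Sp(2n,\R)} = \Id$ exactly. Consequently, in $\widetilde{\Sp}(2n,\R)$ the element $E \coloneqq \tilde{u}_\Id \tilde{v}_{-\Id} \tilde{u}_\Id \tilde{s}$ lies in the kernel of the covering $\widetilde{\Sp}(2n,\R) \to \Sp(2n,\R)$, which is the infinite cyclic subgroup $\{(\Id, 2k\pi)\}_{k\in\Z} \subset \widetilde{\U}(n)$ acting on $\wideLag{n}$ by the $\theta$-shift $(M, \theta) \mapsto (M, \theta + 2k\pi)$. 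To identify~$E$, I would compute its action on the base point $(L_0, 0) \in \wideLag{n}$.

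By Lemma~\ref{lem:delatG-sG-central}, $\tilde{s}$ is represented by $(i\Id, n\pi/2)$ and so sends $(L_0, 0)$ to $(L_{0}^{\perp}, n\pi/2)$. The lifts $\tilde{u}_{\Id}$ and $\tilde{v}_{-\Id}$ arise as the path-lifts of $t \mapsto u_{t\Id}$ and $t \mapsto v_{-t\Id}$; their action on a point $(M,\theta) \in \wideLag{n}$ translates~$\theta$ by the continuous variation of $\sum_{i} \varphi_{i}$ along these paths, determined by Lemma~\ref{lem:ortho-basis-lag} and its non-transverse extension Lemma~\ref{lem:ortho-basis-lag-non-transverse}. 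Successive tracking yields $\tilde{u}_{\Id} \tilde{s} \cdot (L_0, 0) = (\Span(\mathbf{e}_0 + \mathbf{f}_0), n\pi/4)$, then $\tilde{v}_{-\Id} \tilde{u}_{\Id} \tilde{s} \cdot (L_0, 0) = (L_0, 0)$, and the final $\tilde{u}_\Id$ fixes $(L_0, 0)$ because $u_{t\Id}$ stabilizes $L_0$ throughout. Since the only lift of $\Id$ fixing $(L_0, 0)$ is $(\Id, 0) = \tilde{\Id}$, this forces $E = \tilde{\Id}$.

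The main obstacle I anticipate is the careful bookkeeping of the $\theta$-coordinate across each path-lifting step, especially at the moments where the underlying Lagrangian touches the non-transverse locus with respect to $L_0$: there the function $\sum_i \varphi_i$ is discontinuous under the naive pointwise extension, so the correct final value of~$\theta$ must be determined by continuity of the lifted path inside $\wideLag{n}$ rather than by direct substitution of the endpoint.
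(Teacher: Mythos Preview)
Your argument is correct, including the bookkeeping you were worried about: along the $v_{-t\Id}$ segment the functions $\varphi_i$ decrease continuously to~$0$, so no jump occurs at the non-transverse endpoint and the lifted path really terminates at $(L_0,0)$. Your derivation of the second identity by inversion is also fine and is essentially the same manipulation the paper performs (the paper first conjugates by $s_G$ and then inverts, which amounts to the same thing).

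Your route to the first identity, however, is genuinely different from the paper's. The paper reduces to $n=1$ via the diagonal embedding $\widetilde{\Sp}(2,\R)^n \to \widetilde{\Sp}(2n,\R)$, then represents $u_{\Id}v_{-\Id}u_{\Id}s$ in $\widetilde{\GL}{}^+(2,\R)$ as an explicit loop in $\GL^{+}(2,\R)$ and observes that this loop stays in the contractible region $\{A : \tr A > 0\}$, hence is null-homotopic. You instead stay in rank~$n$, use the action of $\widetilde{\Sp}(2n,\R)$ on $\wideLag{n}$, and track the $\theta$-coordinate through the four factors to show the resulting central element fixes $(L_0,0)$ and is therefore trivial. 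The paper's approach is more self-contained (a single loop computation plus a contractibility observation), while yours leverages the $\varphi_i$-machinery of Section~\ref{sec:universal-coverings} that the paper has already set up; in effect you are exploiting the explicit model of the universal cover rather than a homotopy argument.
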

\begin{proof}
  Let $u$, $v$, and~$s$ be the elements of $\widetilde{\Sp}(2n,\R)$ obtained
  by exponentiating $\bigl(
  \begin{smallmatrix}
    0 & \Id \\ 0 & 0
  \end{smallmatrix}
\bigr)$, $ \bigl(
  \begin{smallmatrix}
    0 & 0 \\ -\Id & 0
  \end{smallmatrix}
\bigr)$, and  $ \bigl(
  \begin{smallmatrix}
    0 & -\pi/2\Id \\ \pi/2 \Id & 0
  \end{smallmatrix}
\bigr)$ respectively.
It is then enough to prove the equality $uvus=e_{ \widetilde{\Sp}(2n,
  \R)}$. Using embeddings as in Section~\ref{sec:universal-coverings}, one can
assume furthermore that $n=1$, hence the equality has to be checked in
$\widetilde{ \SL}(2, \R)$ or even in $\widetilde{\GL}{}^+(2, \R)$.

Elements of $\widetilde{\GL}{}^+(2, \R)$ will be represented by paths in $\GL\!{}^+(2,
\R)$. A path representing~$u$ is
\[ \theta \in [0, \pi/2] \longmapsto
  \begin{pmatrix}
    1 & \sin \theta \\ 0 & 1
  \end{pmatrix},
\]
 a path representing~$v$ is
\[ \theta \in [0, \pi/2] \longmapsto
  \begin{pmatrix}
    1 & 0 \\  -\sin \theta & 1
  \end{pmatrix},
\]
and a path representing~$s$ is
\[ \theta \in [0, \pi/2] \longmapsto
  \begin{pmatrix}
    \cos \theta & -\sin \theta \\ \sin \theta & \cos \theta
  \end{pmatrix}.
\]
Thus the element $uvus$ of $\widetilde{\GL}{}^+(2, \R)$ is represented by the
loop
\[ \theta \in [ 0, \pi/2] \longmapsto
  \begin{pmatrix}
    \cos^3 \theta +  \cos^2 \theta \sin^2 \theta + \sin^2 \theta & \cos \theta \sin \theta ( \cos^2 \theta
    -\cos \theta +1) \\ \sin \theta \cos \theta ( \cos \theta-1) & \sin^2 \theta + \cos^3 \theta
  \end{pmatrix}.
\]
This loop is contained in the subspace $T=\{ A \in \GL\!{}^+(2, \R) \mid
\tr A>0\}$. Since, for every matrix~$A$, and every~$t$
in~$\R$, $\tr( t A + \Id) = t \tr A + 2$ and $\det( tA + \Id) = t^2 \det(A)
+ t \tr A + 1$, the space~$T$ is contractible and the above loop is
homotopically trivial in $\GL\!{}^+(2, \R)$. This concludes the equality $uvus =
e_{ \widetilde{\SL}(2, \R)}$.

Conjugating with~$s_G$ gives $s_G u_{\Id} v_{-\Id} u_{\Id}= e_G$; taking inverses we obtain
$u_{-\Id} v_{\Id} u_{-\Id} s_{G}^{-1}=e_G$, which implies the second equality.
\end{proof}

\begin{rem}
  \label{rem:triple-Ipq}
  For any decomposition $n=p+q$, let $I_{p,q}$ be the symmetric matrix $\bigl(
  \begin{smallmatrix}
    \Id_p & 0 \\ 0 & -\Id_q
  \end{smallmatrix}
\bigr)$, then the element $u_{I_{p,q}} v_{-I_{p,q}} u_{I_{p,q}} s_G$ belongs to~$K$ and is
represented by the element $( I_{p,q}, q\pi)$ of $\hat{K}\subset \widetilde{\U}(n)$.
\end{rem}

\section{Twisted local systems}
\label{sec:twist-repr}

Twisted local systems were defined in
Section~\ref{sec:twist-local-syst}, their moduli space is denoted $\Locdelta(S,G)$. Using trivializations $T'S \simeq S \times
\C^*$, or, what amounts to the same, global sections of $T'S$, one gets a
bijective correspondence with local systems on~$S$:\index{notation}{162@$\Locdelta(S,G)$
(moduli twisted $G$-local systems)}

\begin{prop}
  \label{prop:delta-widehatpi_1-and-rep-pi1S=G}
  Any nonvanishing vector field~$\vec{x}$ on~$S$ induces, via pull back, an
  isomorphism
  \begin{equation*}
    \Locdelta(S, G) \longrightarrow \Loc(S, G).
  \end{equation*}
\end{prop}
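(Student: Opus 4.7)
The plan is to use the vector field $\vec{x}$, viewed as a continuous section $S \to T'S$ of the bundle projection, to produce a splitting of the central extension
\[ 1 \longrightarrow \Z \longrightarrow \pi_1(T'S) \longrightarrow \pi_1(S) \longrightarrow 1, \]
and then simply to pull back local systems along $\vec{x}$. Since $S$ has nonempty boundary, $\pi_1(S)$ is free, so the extension splits in any case; the point of using~$\vec{x}$ is to get a canonical and continuous splitting, which is what guarantees that the pullback is an isomorphism of moduli spaces, not only a bijection on orbits.

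First I would record that~$\vec{x}_*\colon \pi_1(S) \to \pi_1(T'S)$ is a section of the projection and that, together with the generator~$\delta$ of the fiber (which is central), it provides an isomorphism $\pi_1(T'S) \simeq \Z \times \pi_1(S)$, sending $(k,\gamma) \mapsto \delta^k\, \vec{x}_*(\gamma)$. Second, I would define the pullback map on representations: given a $\delta$-twisted representation $\rho\colon\pi_1(T'S)\to G$, i.e.\ one satisfying $\rho(\delta)=\delta_G$, set
\[ \vec{x}^{\ast}\rho \coloneqq \rho \circ \vec{x}_{\ast}\colon \pi_1(S)\longrightarrow G. \]
Conversely, given $\sigma\colon \pi_1(S)\to G$, define $\widetilde{\sigma}(k,\gamma) \coloneqq \delta_{G}^{k} \sigma(\gamma)$; this is a homomorphism because $\delta_G$ is central in~$G$ (Lemma~\ref{lem:delatG-sG-central}), and it satisfies $\widetilde{\sigma}(\delta)=\delta_G$, so it is $\delta$-twisted. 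These two constructions are mutually inverse.

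Third, I would verify $G$-equivariance: conjugating $\rho$ by $g\in G$ conjugates $\vec{x}^*\rho$ by the same element, and similarly for the inverse construction, so both maps descend to the quotients $\Locdelta(S,G)=\Homdelta(\pi_1(T'S),G)/G$ and $\Loc(S,G)=\Hom(\pi_1(S),G)/G$. Finally, I would note that the maps $\rho\mapsto \rho\circ \vec{x}_*$ and $\sigma\mapsto \widetilde\sigma$ are both continuous for the natural (pointwise convergence) topologies on representation spaces, so the induced maps on moduli are homeomorphisms.

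The only mild subtlety, rather than an obstacle, is checking that the construction is genuinely induced by pullback of local systems (not merely defined at the level of holonomies): given a principal $G_d$-bundle $\mathcal{F}$ on~$T'S$ whose restriction to each fiber of $T'S\to S$ is trivialized by a chosen flat section with monodromy~$\delta_G$, the section~$\vec{x}$ gives the bundle $\vec{x}^*\mathcal{F}$ on~$S$, and one checks that the holonomy of $\vec{x}^*\mathcal{F}$ with respect to a chosen base point is~$\vec{x}^*\rho$ as above. Two vector fields differing by a homotopy give canonically isomorphic pullbacks (since $\delta$ is central, the formula $\widetilde\sigma(k,\gamma)=\delta_G^k\sigma(\gamma)$ does not see the value of~$k$ under conjugation by elements of the image of~$\vec{x}_*$), which, while not part of the proposition, confirms that no hidden choice is being made beyond~$\vec{x}$ itself.
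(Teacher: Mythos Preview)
Your proof is correct and follows the same approach the paper relies on: the paper states this proposition without an explicit proof, pointing instead to the earlier discussion in Sections~\ref{sec:twist-local-syst} and~\ref{sec:vector-fields} (see Remark~\ref{rem:use-twist-local-syst}), where the splitting $\pi_1(T'S)\simeq \Z\times\pi_1(S)$ induced by a nonvanishing vector field is used to identify $\Homdelta(S,G)$ with $\Hom(\pi_1(S),G)$. You have simply written out those details cleanly, including the centrality of~$\delta_G$ (Lemma~\ref{lem:delatG-sG-central}) needed for the inverse map $\widetilde\sigma$ to be a homomorphism.
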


For a $\delta$-twisted $G$-local system~$\mathcal{F}$, a \emph{framing}
of~$\mathcal{F}$ is a flat section of the restriction of
$\mathcal{F}_{\Lag{n}}$ to $T'S|_{\partial S}$. The pair $(\mathcal{F},
\sigma)$ is called a \emph{framed local system}.
Equivalently (cf.\ Section~\ref{sec:decor-twist-local})
 a section of the restriction of
$\mathcal{F}_{\Lag{n}}$ to~$\vec{\partial} S$ can be called a \emph{framing}
of~$\mathcal{F}$.%
\index{definition}{framing}%
\index{definition}{framed!local system}%
\index{definition}{local system!framed ---}

Similar to Proposition~\ref{prop:delta-widehatpi_1-and-rep-pi1S=G}, one has

\begin{prop}
  \label{prop:twist-local-syst-decorated}
  Any nonvanishing vector field gives rise \ep{via pull-back} to an isomorphism between
  the space $\Locfdelta(S,G)$ of framed twisted $G$-local systems
  and $\Locf(S,G)$.\index{notation}{164@$\Locfdelta(S,G)$ (moduli of framed twisted
    $G$-local systems)}
\end{prop}

The respective images of $\LocfT(S,G)$ and
$\Mf(S,G)$ will be denoted $\LocfdeltaT(S,G)$ and
$\Mfdelta(S,G)$; their elements are called as well
\emph{transverse} (respectively \emph{maximal}).\index{notation}{166@$\Mfdelta(S,G)$
  (subspace of maximal framed local systems)}\index{notation}{167@$\LocfdeltaT(S,G)$
  (subspace of transverse framed local systems)}

\section{Local systems on the quiver~$\Gamma_\mathcal{T}$}
\label{sec:local-systems}

The discussion of Chapter~\ref{sec:local-systems-their} can be adapted to
representations into~$G$. If~$\Gamma=(V,A)$ is a quiver, a \emph{$G$-local
  system} on~$\Gamma$ is the data $( \{ H_v\}_{v\in V}, \{ g_a\}_{a \in A})$
where, for all~$v$ in~$V$, $H_v$ is a right $G$-space with simply transitive
$G$-action (thus isomorphic to the space~$G$ with the right action coming from the
multiplication), and, for all~$a$ in~$A$, $g_a$ is a $G$-morphism from
$H_{v^-(a)}$ to $H_{v^+(a)}$ (hence an isomorphism).\index{definition}{local system}

Two local systems $(H_v, g_a)$ and $( H'_v, g'_a)$ are \emph{equivalent} if
there is a family $\{ \psi_v\}_{v\in V}$ such that, for every~$v$ in~$V$,
$\psi_v\colon H_v \to H'_v$ is a $G$-morphism, and, for every~$a$ in~$A$,
$\psi_{v^+(a)} \circ g_a =  g'_a \circ \psi_{v^-(a)}$. The moduli space of
$G$-local systems is denoted $\Loc( \Gamma, G)$.\index{notation}{170@$\Loc( \Gamma, G)$
  (moduli of $G$-local systems on a quiver~$\Gamma$)}

Fixing a {base} point~$b_v$ in~$H_v$, for every~$v$ in~$V$, one
obtains, for every~$a$ in~$A$, an element~$G_a$ of~$G$ via the equality: $g_a(
b_{v^-(a)}) = b_{v^+(a)} \cdot G_a$. The tuples $( \{ H_v\}, \{ b_v\}, \{g_a\})$
and $( \{ H_v\}, \{ b_v\}, \{g_a\}, \{G_a\})$ or even the family $\{G_a\}$
will be called $G$-local systems. In fact, there is a one-to-one
correspondence between equivalence classes of \emph{based} local systems and
the space~$G^A$.

For the quiver~$\Gamma_\mathcal{T}$ associated with an ideal
triangulation~$\mathcal{T}$ on the surface~$S$ (cf.\
Sections~\ref{sec:vect-fields-triang} and~\ref{sec:orient-graph-gamm}), one set:
\begin{df}
  \label{def:comp-G-local-systems}
  A local system $( \{H_v\}, \{g_a\})$ on the quiver~$\Gamma_\mathcal{T}$ is
  \emph{$\delta$-twisted} if
  \begin{itemize}
  \item for every $2$-cycle $(a,a')$ in~$A_2$, $g_a \circ g_{a'} = \delta_{G}^{-1}$
    (i.e.\ for every~$b$ in~$H_{v^+(a)}$, $g_a\circ g_{a'}(b)= b\cdot \delta_{G}^{-1}$;
    this holds if and only if there exists~$b$ in~$H_{v^+(a)}$ such that
    $g_a\circ g_{a'}(b)= b\cdot \delta_{G}^{-1}$);
  \item for every $3$-cycle $(a,b,c)$ in $\Gamma_\mathcal{T}$, $g_c \circ
    g_b\circ g_a = \delta_{G} $.
  \end{itemize}
\end{df}\index{definition}{twisted!local system}%
\index{definition}{local system!twisted ---}

Via the restriction map, the moduli space $\Locdelta( \Gamma_\mathcal{T},
G)$ of $\delta$-twisted local systems is isomorphic to the space
$\Locdelta( S, G)$.\index{notation}{172@$\Locdelta( \Gamma_\mathcal{T},
G)$ (moduli of twisted $G$-local systems on~$\Gamma_\mathcal{T}$)}

\section{Framed local systems}
\label{sec:decor-local-syst}

For a $G$-local system $( \{H_v\}, \{g_a\})$, the space~$\Lag{v}$ of Lagrangians
in~$H_v$ is the quotient $G\backslash (H_v \times \Lag{n})$ by the
diagonal action of~$G$: $g\cdot ( b, L) \coloneqq ( b\cdot g^{-1}, g\cdot
L)$. For every~$a$ in~$A$, the isomorphism $g_a \colon H_{v^-(a)} \to
H_{v^+(a)}$ induces an isomorphism $\Lag{ v^-(a)} \to \Lag{ v^+(a)}$ denoted
again by~$g_a$.

When the local system  $( \{H_v\}, \{g_a\})$ is on the
quiver~$\Gamma_\mathcal{T}$ and is $\delta$-twisted, a \emph{framing}
is a family $\{ (L^{t}_{v}, L^{b}_{v})\}_{v\in V}$ such that, for every~$v$
in~$V$, $L^{t}_{v}$ and $L^{b}_{v}$ belong to~$\Lag{v}$, and, for every~$a$
in~$A$, $g_a( L^{b}_{v^-(a)}) = L^{t}_{v^+(a)}$. The tuples $( \{H_v\},
\{g_a\}, \{ (L^{t}_{v}, L^{b}_{v})\} )$ will then be called a framed local system.%
\index{definition}{framing}%
\index{definition}{framed!local system}%
\index{definition}{local system!framed ---}

For~$a$ in~$A_2$, let~$a'$ be
in~$A_2$ the arrow such that $(a,a')$ is a $2$-cycle, then one has
 $g_a( L^{t}_{v^-(a)}) = g_a\circ g_{a'}( L^{b}_{v^+(a)}) = L^{b}_{v^+(a)}$
 since the action of $\delta^{-1}_{G} = g_a\circ g_{a'}$ is trivial on the Lagrangian variety.

The notion of equivalence for framed local systems
is
adapted directly from Section~\ref{sec:decor-local-syst-gamma_T}.
Via the restriction map, the moduli space
$\Locfdelta( \Gamma_\mathcal{T}, G)$ of framed $\delta$-twisted
local systems is isomorphic to $\Locfdelta( S, G)$.\index{notation}{174@$\Locfdelta(
  \Gamma_\mathcal{T}, G)$ (moduli of framed twisted $G$-local systems)}

A framed local system $( \{ H_v\}, \{g_a\}, \{ (L^{t}_{v},
L^{b}_{v})\})$ is called \emph{transverse} if, for every~$v$ in~$V$, the
Lagrangians~$L^{t}_{v}$ and~$L^{b}_{v}$ are transverse. The space $\LocfdeltaT( \Gamma_\mathcal{T}, G)$ of transverse framed
$\delta$-twisted local systems is isomorphic to $\LocfdeltaT(
S, G)$.\index{notation}{176@$\LocfdeltaT( \Gamma_\mathcal{T}, G)$ (subspace of
  transverse local systems)}%
\index{definition}{transverse!framed twisted local system}%
\index{definition}{framed!transverse --- twisted local system}%
\index{definition}{twisted!transverse framed --- local system}%
\index{definition}{local system!transverse framed twisted ---}

A framed local system $( \{ H_v\}, \{g_a\}, \{ (L^{t}_{v},
L^{b}_{v})\})$ is called \emph{maximal} if, for every~$a$ in~$A_3$, the triple
of Lagrangians $( L^{t}_{ v^+(a)}, g_a( L^{t}_{v^-(a)}), L^{b}_{ v^+(a)})$ is
maximal. Such a local system is automatically transverse. The moduli space
$\Mfdelta( \Gamma_\mathcal{T}, G)$ of maximal framed
$\delta$-twisted local systems is isomorphic to
$\Mfdelta(S, G )$.\index{notation}{178@$\Mfdelta( \Gamma_\mathcal{T}, G)$ (subspace of
  maximal ones)}\index{definition}{maximal!framed local system}%
\index{definition}{framed!maximal --- local system}%
\index{definition}{local system!maximal framed ---}%

\section{Parameters}
\label{sec:parameters-1}

We will denote by\index{notation}{180@$\XTG$ (space of $\mathcal{X}$-coordinates)}\index{definition}{$\mathcal{X}$-coordinates}
\[ \XTG \subset L^{A_2} \times \Sym(n, \R)^V \times L^{A_3}\]
the subspace of tuples $z= \bigl( \{ \phi_a\}_{a\in A_2}, \{ M_v\}_{v\in V},
\{ \ell_a\}_{a \in A_3}\bigr)$ such that
\begin{itemize}
\item for all $2$-cycle $(a,b)$ in~$A_2$, $\phi_{a}^{\ast} = \phi_{b}^{-1}$,
\item for all~$v$ in~$V$, $M_v$ is nonsingular,
\item for all~$a$ in~$A_3$, $M_{v^+(a)}^{-1} = \ell_a \cdot M_{v^-(a)}$,
\item for all $3$-cycle $(a,b,c)$, $u_{M_{v^+(c)}} v_{-\ell_c \cdot M_{v^+(b)}} u_{\ell_{c}^{\ast} \ell_b \cdot M_{v^+(a)}}
    s_G \,  \ell_c \ell_{b}^{\ast} \ell_a = e_{G}$.
\end{itemize}

The subspace $\XplusTG$ consists of the tuples~$z$ in\index{notation}{182@$\XplusTG$
  (subspace of positive coordinates)}
$\XTG$ such that
\begin{itemize}
\item for every~$a$ in~$A_2$, $\phi_a$ belongs to $\exp_L( \Sym(n, \R))$, and
\item for every~$v$ in~$V$, $M_v=\Id$.
\end{itemize}

Let~$z$ be in $\XplusTG$. For all~$a$ in~$A_3$ the element $\ell_a$ belongs to~$K$
(since $K$ is the stabilizer of~$\Id$ for the action of~$L$ on~$\Sym(n, \R)$)
and, for all $3$-cycle $(a,b,c)$, $ \ell_c \ell_b \ell_a = e_{G}$ since, by
Lemma~\ref{lem:uIdvId_and_sG}, $u_{\Id} v_{-\Id} u_{\Id} s_G = e_G$ and since
$g^*=g$ for every~$g$ in~$K$. Also for every~$a$ in~$A_2$, there is a
unique~$S_a$ in~$\Sym(n, \R)$, such that $\phi_a = \exp_L( S_a)$ and, for
every $2$-cycle $(a,b)$ in~$A_2$, one has $S_a = S_b$ since $\phi_{b}^{-1} = \phi_{a}^{\ast}$.

Accordingly the space $\XplusTG$ can be also described as the
set of tuples $( \{S_a\}_{a\in A_2}, \{ \ell_a\}_{a\in A_3})$ in $
\Sym(n,\R)^{A_2} \times K^{A_3}$ such that, for every cycle $(a,b)$ in $A_2$,
$S_a=S_b$, and, for every $3$-cycle $(a,b,c)$ in~$A_3$, $\ell_c \ell_b \ell_a
= e_{G}$.

\smallskip

The group $G_\mathcal{X} \coloneqq L^V$ acts on~$\XTG$
via the following rule: for all $m= \{ m_v\}_{v\in V}$ and all $z= \bigl( \{ \phi_a\}_{a\in A_2}, \{ M_v\}_{v\in V},
\{ \ell_a\}_{a \in A_3}\bigr)$,\index{notation}{185@$G_\mathcal{X}$ (symmetry group of $\XTG$)}
\[ m \cdot z \coloneqq \bigl( \{ m_{v^+(a)} \phi_a
  m_{v^-(a)}^{\ast-1}\}_{a\in A_2}, \{ m_{v}\cdot M_v\}_{v\in V},
\{ m_{v^+(a)} \ell_a m_{v^-(a)}^{\ast-1}\}_{a \in A_3}\bigr).\]

\begin{rem}
  The meaningless difference with respect to the formula for the action in
  Section~\ref{sec:over-parametrization} involves just the precomposition with
  $m\mapsto m^\ast$.
\end{rem}

The subgroup $K_\mathcal{X} \subset K^V \subset G_\mathcal{X}$ consisting on
tuples $\{ k_v\}_{v\in V}$ for\index{notation}{187@$K_\mathcal{X}$ (symmetry group of $\XplusTG$)} which, for every~$a$ in~$A_2$, $k_{v^+(a)} =
k_{v^-(a)}$ stabilizes the subspace $\XplusTG$ and there
is therefore an induced action of $K_\mathcal{X}$ on
$\XplusTG$. An element $k=\{ k_v\}_{v\in V}$ acts on $z=
(\{S_a \}_{a\in A_2}, \{ \ell_a\}_{ a\in A_3})$ in $\XplusTG \subset \Sym(n,\R)^{A_2} \times K^{A_3}$ by
\[ k\cdot z = \bigl( \{ k_{v^+(a)} S_a k_{v^-(a)}^{-1} \}_{a\in A_2}, \{
  k_{v^+(a)} \ell_a k_{v^-(a)}^{-1} \}_{ a\in A_3}\bigr)\]

\smallskip

Let us fix a subset $E\subset A_2$ containing one element of every $2$-cycle
and a subset $W\subset V$ containing one of the three vertices in every
triangle of~$\mathcal{T}$. Let us denote by $B$ the subset of $A_3$ of the
arrows~$a$ having one of its endpoints in~$W$. The group~$G_\mathcal{X}$ acts
on $L^{E} \times \Sym^{\ast}(n, \R)^{ W} \times L^B$: for all $m= \{ m_v\}_{v\in V}$ and all $z= \bigl( \{ \phi_a\}_{a\in E}, \{ M_v\}_{v\in W},
\{ \ell_a\}_{a \in B}\bigr)$,
\[ m \cdot z \coloneqq \bigl( \{ m_{v^+(a)} \phi_a
  m_{v^-(a)}^{\ast-1}\}_{a\in E}, \{ m_{v}\cdot M_v\}_{v\in W},
\{ m_{v^+(a)} \ell_a m_{v^-(a)}^{\ast-1}\}_{a \in B}\bigr).\]

Using Lemma~\ref{lem:triple-G} and Lemma~\ref{lem:triple-G-converse} we get:

\begin{lem}
  \label{lem:topo-parameters-ZG}
  The map
  \begin{align*}
    \XTG & \longrightarrow L^{E} \times \Sym^{\ast}(n,
                                   \R)^{ W} \times L^B\\
    \bigl( \{ \phi_a\}_{a\in A_2}, \{ M_v\}_{v\in V},
    \{ \ell_a\}_{a \in A_3}\bigr)
    & \longmapsto \bigl( \{ \phi_a\}_{a\in E}, \{ M_v\}_{v\in W},
\{ \ell_a\}_{a \in B}\bigr)
  \end{align*}
  is a $G_\mathcal{X}$-equivariant diffeomorphism.
\end{lem}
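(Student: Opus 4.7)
The plan is to construct an explicit smooth inverse of the restriction map by extending data uniquely from $(E, W, B)$ to the full index sets $(A_2, V, A_3)$, using the defining conditions of $\XTG$. This will proceed in three successive steps, and all of the real content will lie in the second step.

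First I will extend $\phi$ from $E$ to $A_2$. For each $2$-cycle $\{a, a'\}$ in $A_2$ with $a \in E$, the first defining condition of $\XTG$ forces $\phi_{a'} \coloneqq \phi_a^{\ast -1}$. Since $g \mapsto g^{\ast -1}$ is a smooth involution of $L$, this gives a smooth bijection from $L^E$ onto the subspace of $L^{A_2}$ satisfying the first defining condition, and it is compatible with the $G_\mathcal{X}$-action because $g \mapsto g^\ast$ is a group automorphism.

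Second, I will extend both $M$ and $\ell$ triangle by triangle. For each triangle $T\in\mathcal{T}$, let $v\in W\cap T$ be its representative and let $v', v''$ denote the other two vertices of $\Gamma_\mathcal{T}$ contained in $T$. The $3$-cycle of $A_3$ inside $T$ has exactly two arrows incident to $v$ \ep{one with $v$ as target, one with $v$ as source}, both of which lie in $B$; the third arrow, say from $v'$ to $v''$, lies in $A_3 \setminus B$. Applying the third defining condition of $\XTG$ to the two $B$-arrows and using that the $L$-action on $\Sym(n,\R)$ is a smooth linear isomorphism with smooth inverse, I will recover $M_{v'}$ and $M_{v''}$ uniquely and smoothly; they are automatically nonsingular since $M_v$ is. To determine $\ell$ on the missing third arrow and simultaneously enforce the triangle cocycle \ep{the fourth defining condition of $\XTG$}, I will invoke Lemma~\ref{lem:triple-G-converse}: given the already-determined $M_{v^+(a)}$ together with the known $\ell_b, \ell_c$ coming from the two $B$-arrows of the cycle, the lemma prescribes the unique $\ell_a$ satisfying both the $M$-equation and the $3$-cycle identity, and smoothness of this assignment is immediate from the explicit expression recorded in Remark~\ref{rem:triple-continuous}.

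Finally, I will check that the map constructed above is inverse to the restriction map and that both are smooth and $G_\mathcal{X}$-equivariant. Bijectivity is forced by the uniqueness of each extension; smoothness of the inverse follows from the three steps being individually smooth; and equivariance reduces to the pointwise structure of the $G_\mathcal{X}$-action on $V$, $A_2$, and $A_3$, which commutes with each extension procedure. The delicate point is step two, whose entire content is packaged by Lemma~\ref{lem:triple-G-converse}, simultaneously handling the recovery of the missing $\ell$-datum and the verification of the cocycle; everything else is routine bookkeeping forced by the defining relations of $\XTG$.
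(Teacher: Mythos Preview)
Your proposal is correct and follows exactly the approach the paper indicates: the paper states the lemma with only the one-line justification ``Using Lemmas~\ref{lem:triple-G} and~\ref{lem:triple-G-converse}, one gets,'' and your three-step construction is precisely the natural unpacking of that reference. The only point worth noting is that Lemma~\ref{lem:triple-G-converse} is literally stated with input $(M_a,\ell_b,\ell_c)$ where $a$ is the arrow being recovered, so your intermediate step of first recovering $M_{v^+(a)}$ from $M_v$ via the $B$-arrow relations before invoking the lemma is exactly what is needed; the consistency of the lemma's output $M_b,M_c$ with the already-recovered values then follows because both are determined by the same $M$-relations.
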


Similarly, there is a natural action of $K_\mathcal{X}$ on $\Sym(n,
\R)^{E}\times K^B$, and we have:

\begin{lem}
  \label{lem:topo-parameters-ZGplus}
  The map
  \begin{align*}
    \XplusTG & \longrightarrow \Sym(n,\R)^{ E} \times  K^B\\
    \bigl( \{ S_a\}_{a\in A_2},
    \{ \ell_a\}_{a \in A_3}\bigr)
    & \longmapsto \bigl( \{ S_a\}_{a\in E},
\{ \ell_a\}_{a \in B}\bigr)
  \end{align*}
  is a $K_\mathcal{X}$-equivariant diffeomorphism.
\end{lem}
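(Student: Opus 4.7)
The plan is to exhibit an explicit inverse to the map and then check smoothness and equivariance; the proof runs parallel to the reasoning behind Proposition~\ref{prop:connected-components-LocdT}, only simpler because all the nondegeneracy in $\Sym^\ast(n,\R)$ has already been removed by the assumption $M_v = \Id$.

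First I would observe that an element $z = (\{S_a\}_{a\in A_2}, \{\ell_a\}_{a\in A_3})$ of $\XplusTG$ is completely determined by its restriction to $E \sqcup B$. Indeed, for every $2$-cycle $(a,a')$ in $A_2$ the defining relation $S_a = S_{a'}$ (coming from $\phi_{a}^{\ast} = \phi_{a'}^{-1}$ combined with $\phi_a = \exp_L(S_a)$ and the fact that the Cartan involution acts as $-\Id$ on $\Sym(n,\R) \subset \mathrm{Lie}(L)$) lets one recover the value on the remaining arrow of~$E$'s complement. For every $3$-cycle $(a,b,c)$ in $A_3$ contained in a triangle $T$ of $\mathcal{T}$, exactly one of the three arrows fails to be incident to the vertex of~$T$ that lies in~$W$, so that $B$ contains exactly two of the three arrows of the cycle; the relation $\ell_c \ell_b \ell_a = e_G$ (which, in $\XplusTG$, follows from the quadruple product identity in Definition of $\XTG$ together with $M_v = \Id$ and Lemma~\ref{lem:uIdvId_and_sG}) then expresses the third $\ell$ as a product of inverses of the other two.

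Next I would construct the inverse map explicitly. Given $(\{S_a\}_{a\in E}, \{\ell_a\}_{a\in B})$, extend by the formulas $S_{a'} = S_a$ for $(a,a')$ a $2$-cycle with $a \in E$, and $\ell_c = (\ell_b \ell_a)^{-1}$ where $(a,b,c)$ is the $3$-cycle and $a,b \in B$. The resulting family satisfies all the constraints defining $\XplusTG$: each $\ell_c$ produced lies in $K$ since $K$ is a subgroup and $\ell_a, \ell_b \in K$; the condition $M_v = \Id$ is automatic; and the only compatibility that could fail would be between distinct $3$-cycles, but these are disjoint (each $3$-cycle being confined to a single triangle of~$\mathcal{T}$), so no inconsistency can arise. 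This map is clearly smooth (the multiplication and inversion in $K$ are smooth), and so is the projection in the other direction, which yields a diffeomorphism.

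Finally, for $K_\mathcal{X}$-equivariance, the action on $\XplusTG$ is defined by the same formulas $S_a \mapsto k_{v^+(a)} S_a k_{v^-(a)}^{-1}$ and $\ell_a \mapsto k_{v^+(a)} \ell_a k_{v^-(a)}^{-1}$, restricted to $a \in A_2$ and $a \in A_3$ respectively, and the action on $\Sym(n,\R)^E \times K^B$ is the restriction of these same formulas to $E$ and $B$. Equivariance of the projection is then tautological; equivariance of the inverse follows once one observes that the extension rules $S_{a'} = S_a$ and $\ell_c = (\ell_b \ell_a)^{-1}$ are themselves $K_\mathcal{X}$-equivariant (the first because $k_{v^+(a')} = k_{v^-(a)}$ and $k_{v^-(a')} = k_{v^+(a)}$ for a $2$-cycle $(a,a')$, exactly the condition defining $K_\mathcal{X}$; the second because $(k\ell_b k^{-1})(k'\ell_a k^{\prime\prime -1})$ telescopes correctly along the vertices of the cycle). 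The only mild point to verify carefully is the sign in the Cartan involution identifying $S_a = S_b$ on a $2$-cycle; that aside, the whole statement is a bookkeeping exercise.
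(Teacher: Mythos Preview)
Your proposal is correct and follows essentially the same approach as the paper. The paper does not give an explicit proof for this lemma, stating only that it holds ``similarly'' to Lemma~\ref{lem:topo-parameters-ZG}, which in turn is deduced from Lemmas~\ref{lem:triple-G} and~\ref{lem:triple-G-converse}; your argument is exactly the specialization of that reasoning to the case $M_v = \Id$, where the triangle constraint collapses to $\ell_c\ell_b\ell_a = e_G$ (as already noted in the paper's description of $\XplusTG$) and the reconstruction from two of the three $\ell$'s becomes elementary. One minor remark: your parenthetical reference to Proposition~\ref{prop:connected-components-LocdT} is slightly off-target, since that proposition concerns connected components rather than the diffeomorphism itself; the true parallel is Lemma~\ref{lem:topo-parameters-ZG}.
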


\section{Holonomy}
\label{sec:holonomy-central}

Let $z= ( \{ \phi_a\}_{a\in A_2}, \{ M_v\}_{v\in V},
\{ \ell_a\}_{a \in A_3})$ be in $\XTG$ (respectively let $z= ( \{ S_a\}_{a\in A_2},
\{ \ell_a\}_{a \in A_3})$ in $\XplusTG$). We
associate to~$z$ the (based) framed $\delta$-twisted $G$-local system
$\holXGT(z) \coloneqq( \{ H_v\}_{v\in V}, \{ g_a\}_{a\in A}, \{ (L^{t}_{v}, L^{b}_{v})\}_{v\in
  V})$ (respectively $\holXpGT$) where:\index{notation}{189@$\holXGT$, $\holXpGT$ (the
  holonomy maps)}
\begin{itemize}
\item for every~$v$ in~$V$, $H_v = G$ (as a right $G$-space); thus $\Lag{v} =
  \Lag{n}$ is the space of Lagrangians in~$\R^{2n}$;
\item for every~$v$ in~$V$, $L^{t}_{v} = \Span( \mathbf{e}_0)$ and $L^{b}_{v}
  = \Span( \mathbf{f}_0)$ where $( \mathbf{e}_0, \mathbf{f}_0)$ is the
  standard symplectic basis of~$\R^{2n}$;
\item for every~$a$ in~$A_2$, $g_a$ is the map $G \to G \mid h \mapsto h s_{G}^{-1} \phi_a$
  (respectively $h\mapsto h s_{G}^{-1} \exp_L( S_a)$);
\item for every~$a$ in~$A_3$, $g_a$ is the right multiplication by
  $u_{M_{v^+(a)}} s_G \ell_a$ (respectively by $u_{\Id} s_G \ell_a$).
\end{itemize}
The fact that this construction gives indeed a framed twisted local
system follows directly from the conditions defining $\XTG$ and the fact that the action of~$s_G$ on~$\Lag{n}$ permutes
$\Span( \mathbf{e}_0)$ and $\Span( \mathbf{f}_0)$. This local system is
clearly transverse (and maximal in the case $z\in \XplusTG$).

The following results are direct generalizations of
Theorem~\ref{teo:over-parametrization-Z-GZ} and of
Section~\ref{sec:over-param}.

\begin{teo}
  \label{teo:holonomy-ZG}
  The map
  \[ \holXGT\colon \XTG \longrightarrow
    \LocfdeltaT( \Gamma_\mathcal{T}, G)\]
  is onto and its fibers are the orbits of~$G_\mathcal{X}$, hence the quotient
  $G_\mathcal{X} \backslash \XTG$ is isomorphic to $\LocddeltaT( \Gamma_\mathcal{T}, G)$.
\end{teo}

\begin{teo}
  \label{teo:holonomy-ZGplus}
  The map
  \[ \holXpGT\colon \XplusTG \longrightarrow
    \Mfdelta( \Gamma_\mathcal{T}, G)\]
  is onto and its fibers are the orbits of~$K_\mathcal{X}$, hence the quotient
  $K_\mathcal{X} \backslash \XplusTG$ is isomorphic to $\Mfdelta( \Gamma_\mathcal{T}, G)$.
\end{teo}

\section{Connected components}
\label{sec:connected-components-central}

Let~$H$ be the group
$\pi_0(L)\simeq \pi_0(K)$; $H$~is isomorphic to~$\Z/x_G\Z$ (cf.\ Lemma~\ref{lem:group_K_connnected_comp}, where $x_G \in \Z_{>0}$ is the
integer that determines the group~$\Lambda_G$).

For~$s$ in~$\{-n, -n+2, \dots, n\}$, the class in~$H$ of the element $u_{M}
v_{-M^{-1}} u_{M} s_G$ does not depend on the nonsingular symmetric
matrix~$M$ of signature~$s$ (cf.\ Remark~\ref{rem:triple-continuous}). We will
denote by ~$d(s)$ this element of~$H$. In the isomorphism
$H\simeq \Z/x_G\Z$ given by
Lemma~\ref{lem:group_K_connnected_comp}, $d(s)$ is represented by the integer
$(n-s)/2$ modulo~$x_G$ (cf.\ Remark~\ref{rem:triple-Ipq}).

Similarly to Section~\ref{sec:connected-components-1}, let $Z(\mathcal{T}, G)$
be the subset of tuples
\[( \{ h_a\}_{a\in A_2}, \{ s_v\}_{v\in V}, \{ h_a\}_{a\in A_3})\] in
$H^{A_2} \times \{ -n, -n+2, \dots, n\}^V \times H^{A_3}$ such
that\index{notation}{191@$Z(\mathcal{T}, G)$ (space parametrizing the connected
  components of $\XTG$)}
\begin{itemize}
\item for all $2$-cycle $(a,b)$ in~$A_2$, $h_a  h_b = e_{H}$;
\item for all~$v$ and~$v'$ in~$V$ that are in the same triangle
  of~$\mathcal{T}$, $s_v = s_{v'}$;
\item for all $3$-cycle $(a,b,c)$ in~$A_3$, $ h_c h_b h_a = d({s_{v^+(a)}})$.
\end{itemize}
The group $F_{Z}= H^V$ acts in an \enquote{obvious} way on
$Z(\mathcal{T}, G)$. Generalizing
Proposition~\ref{prop:connected-components-LocdT} and
Corollary~\ref{cor:number-connected-components-LocdT}, we
have:\index{notation}{193@$F_Z$ (its symmetry group)}

\begin{prop}
  \label{prop:connected-components-central}
  The natural map $\XTG \to Z( \mathcal{T}, G)$ is
  equivariant with respect to the natural homomorphism $G_\mathcal{X} \to
  F_{Z}$. This map and this homomorphism induce a bijection at the level
  of connected components. As a result, the corresponding map between the set
  of connected components of $\LocfdeltaT(
  \Gamma_\mathcal{T}, G)$ and $F_{Z} \backslash Z(\mathcal{T}, G)$ is
  a bijection.
\end{prop}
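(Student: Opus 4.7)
The plan is to adapt the argument used for Proposition~\ref{prop:connected-components-LocdT} to the present generality. First I would make explicit the natural map $\pi_\mathcal{X} \colon \XTG \to Z(\mathcal{T}, G)$ by
\[ ( \{ \phi_a\}_{a\in A_2}, \{ M_v\}_{v\in V}, \{ \ell_a\}_{a \in A_3}) \longmapsto ( \{ \pi_0(\phi_a)\}_{a\in A_2}, \{ \sgn(M_v)\}_{v\in V}, \{ \pi_0(\ell_a)\}_{a \in A_3}), \]
and check that the image lies in $Z(\mathcal{T}, G)$. The $2$-cycle relation $\phi_a^* = \phi_b^{-1}$ passes to $\pi_0(\phi_a)\pi_0(\phi_b) = e_H$, because the Cartan involution $g\mapsto g^*$ restricts to the identity on~$K$ and hence acts trivially on $\pi_0(L) \simeq \pi_0(K)$. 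The relation $M_{v^+(a)}^{-1} = \ell_a \cdot M_{v^-(a)}$ for $a\in A_3$ implies $\sgn(M_{v^+(a)}) = \sgn(M_{v^-(a)})$, since the action $M\mapsto \pi(\ell) M \, {}^T\! \pi(\ell)$ preserves the signature; hence $\sgn(M_v)$ is constant on the triangle containing~$v$. The triangle relation of $\XTG$ rewrites as $\ell_c \ell_b^* \ell_a = (u_{M_c} v_{-M_c^{-1}} u_{M_c} s_G)^{-1}$ after using $\ell_c^\ast \ell_b \cdot M_a = M_c$, and projecting to~$H$ gives exactly the relation involving $d(s_{v^+(a)})$ from the definition of $Z(\mathcal{T}, G)$, again using the triviality of $*$ on $\pi_0(L)$. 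The equivariance of $\pi_\mathcal{X}$ with respect to the homomorphism $\pi_G\colon G_\mathcal{X} \to F_Z$ given componentwise by $\pi_0$ is then a direct verification with the same ingredients.

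To show that $\pi_\mathcal{X}$ induces a bijection on connected components, I would exploit Lemma~\ref{lem:topo-parameters-ZG} and fix subsets $E\subset A_2$, $W\subset V$ and $B\subset A_3$ (one element per $2$-cycle, one vertex per triangle, two arrows per $3$-cycle) to identify $\XTG$ with the product $L^E \times \Sym^\ast(n, \R)^W \times L^B$. Along the same lines, the relations defining $Z(\mathcal{T},G)$ are solved by choosing freely the components indexed by $E$, $W$ and~$B$, giving an identification $Z(\mathcal{T}, G) \simeq H^E \times \{-n, -n+2, \dots, n\}^W \times H^B$. In these coordinates $\pi_\mathcal{X}$ is the product of the maps $\pi_0\colon L \to H$, $\sgn\colon \Sym^\ast(n,\R) \to \{-n, -n+2, \dots, n\}$, and $\pi_0\colon L\to H$, each of which induces a bijection on~$\pi_0$ (the signature map by elementary linear algebra, the $\pi_0$ maps by Lemma~\ref{lem:group_K_connnected_comp}). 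Hence $\pi_\mathcal{X}$ induces a bijection $\pi_0(\XTG) \to Z(\mathcal{T}, G)$. A similar (in fact easier, as $F_Z$ is already discrete) computation shows that $\pi_G\colon G_\mathcal{X} \to F_Z$ induces a bijection on connected components.

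Finally, the statement about moduli follows from Theorem~\ref{teo:holonomy-ZG}, which identifies $\LocfdeltaT(\Gamma_\mathcal{T}, G)$ with the quotient $G_\mathcal{X} \backslash \XTG$. Since $\pi_\mathcal{X}$ is an equivariant map between a space with continuous $G_\mathcal{X}$-action and a discrete space with $F_Z$-action, both inducing bijections at the level of~$\pi_0$, the induced map $G_\mathcal{X}\backslash \XTG \to F_Z \backslash Z(\mathcal{T}, G)$ is a bijection on connected components; as the target is a finite discrete set this is a bijection on points, yielding the claimed bijection between $\pi_0(\LocfdeltaT(\Gamma_\mathcal{T}, G))$ and $F_Z\backslash Z(\mathcal{T}, G)$.

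The main obstacle, and the only step requiring genuine care, is the verification that the triangle relation of $\XTG$ really produces the element $d(s_{v^+(a)})$ in~$H$ predicted by the definition of $Z(\mathcal{T},G)$; once this combinatorial matching is in place, the rest is the standard strategy of reducing $\pi_0$ of a constrained space to $\pi_0$ of a free product by choosing a maximal independent set of coordinates.
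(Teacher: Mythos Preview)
Your proposal is correct and follows essentially the same approach as the paper, which does not write out a separate proof but simply points to Proposition~\ref{prop:connected-components-LocdT} as the template; your use of Lemma~\ref{lem:topo-parameters-ZG} to pass to the free product $L^E \times \Sym^\ast(n,\R)^W \times L^B$ and then apply $\pi_0$ and $\sgn$ factorwise is exactly the intended generalization. Your added verification that $\pi_\mathcal{X}$ lands in $Z(\mathcal{T},G)$ (in particular the careful handling of the triangle relation via Lemma~\ref{lem:triple-G} and Remark~\ref{rem:triple-continuous}) is a useful supplement that the paper leaves implicit.
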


As the cardinality of~$H =\pi_0(L)$ is equal to~$x_G$, we
get:

\begin{cor}
  \label{cor:number-connected-components-central}
  The number of connected components of  $\LocfdeltaT(
  \Gamma_\mathcal{T}, G)$ is equal to \[x_{G}^{1-{ \chi( \bar{S})}} \times
  (n+1)^{ r-2{ \chi( \bar{S})}}.\]

  The number of connected components of $\Mfdelta(
  \Gamma_\mathcal{T}, G)$ is equal to $x_{G}^{ 1-{ \chi( \bar{S})}}$.
\end{cor}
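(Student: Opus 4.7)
The plan is to invoke Proposition~\ref{prop:connected-components-central}, which identifies $\pi_0(\LocfdeltaT(\Gamma_\mathcal{T}, G))$ with the quotient set $F_{Z} \backslash Z(\mathcal{T}, G)$, and then reduce the claim to an elementary enumeration, recalling from Lemma~\ref{lem:group_K_connnected_comp} that $|H| = x_G$. The signature data and the $h$-data decouple cleanly, since the gauge action of $F_{Z}$ preserves the signatures. So first I would observe that a choice of $\{s_v\}_{v\in V}$ in $Z(\mathcal{T}, G)$ is the same as a choice of one element of $\{-n,-n+2,\dots,n\}$ per triangle of~$\mathcal{T}$, giving the factor $(n+1)^{2\abs{\chi(\bar S)}+r}$.

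Next, with $\{s_v\}$ fixed, I would count the valid $h$-tuples in $H^{A_2}\times H^{A_3}$. Using the cardinalities from Section~\ref{sec:orient-graph-gamm}, there are $3\abs{\chi(\bar S)}+r$ cycles in~$A_2$ and $2\abs{\chi(\bar S)}+r$ cycles in~$A_3$. Each $2$-cycle relation $h_a h_b = e_H$ determines $h_b$ from $h_a$ ($x_G$ choices), and each $3$-cycle relation $h_c h_b h_a = d(s_{v^+(a)})$ leaves two free parameters in $H$ ($x_{G}^{2}$ choices). Hence for every choice of signatures the fiber in the $h$-part has cardinality $x_{G}^{3\abs{\chi(\bar S)}+r+2(2\abs{\chi(\bar S)}+r)} = x_{G}^{7\abs{\chi(\bar S)}+3r}$.

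I would then analyze the gauge action of $F_{Z} = H^V$. Since the $r$ external vertices and the $6\abs{\chi(\bar S)}+2r$ internal vertices give $|V| = 6\abs{\chi(\bar S)}+3r$, we have $|F_{Z}| = x_{G}^{6\abs{\chi(\bar S)}+3r}$. Because $H$ is abelian, the diagonal subgroup $H\subset F_{Z}$ acts trivially on the $h$-part. Conversely, if $\{f_v\}$ fixes some $h$-tuple, then $f_{v^+(a)} = f_{v^-(a)}$ for every arrow $a$, and connectedness of~$\Gamma_\mathcal{T}$ forces all $f_v$ to coincide, so the stabilizer is exactly the diagonal $H$. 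Therefore $F_{Z}/H$ acts freely on the $h$-part, producing
\[ x_{G}^{7\abs{\chi(\bar S)}+3r}\,/\,x_{G}^{6\abs{\chi(\bar S)}+3r-1}
   \;=\; x_{G}^{\abs{\chi(\bar S)}+1} \]
orbits per signature choice, and in total
$(n+1)^{2\abs{\chi(\bar S)}+r}\cdot x_{G}^{\abs{\chi(\bar S)}+1}$ components. For the maximal locus $\Mfdelta(\Gamma_\mathcal{T}, G)$, the Maslov index condition forces $s_v=n$ for every vertex, and Lemma~\ref{lem:uIdvId_and_sG} shows that $d(n)$ is the trivial class in $H$; the $h$-part count is otherwise unchanged, giving $x_{G}^{\abs{\chi(\bar S)}+1}$.

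The main (minor) obstacle is to confirm that the gauge action is compatible with the cycle relations, so that the action of $F_{Z}/H$ genuinely descends to a free action on the constrained set: this follows from the fact that conjugation by elements of the abelian group $H$ leaves the products along any cycle unchanged. All remaining checks are bookkeeping of the graph-theoretic invariants of $\Gamma_\mathcal{T}$, already collected in Section~\ref{sec:orient-graph-gamm} and Section~\ref{sec:other-X-like}.
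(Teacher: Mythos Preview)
Your proof is correct and follows essentially the same approach as the paper. The paper states the corollary as an immediate consequence of Proposition~\ref{prop:connected-components-central} together with $|H|=x_G$, leaving the orbit count to the reader; you have supplied exactly that count, and your observation that the diagonal $H\subset F_Z$ acts trivially while $F_Z/H$ acts freely is precisely the mechanism the paper invokes (explicitly in the $\Sp(2n,\R)$ case just before Corollary~\ref{cor:number-connected-components-LocdT}). One incidental remark: your appeal to Lemma~\ref{lem:uIdvId_and_sG} to identify $d(n)$ as trivial is correct but unnecessary, since the number of solutions of $h_c h_b h_a = d$ in the abelian group~$H$ is $|H|^2$ regardless of~$d$.
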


\section{Homotopy type of the space of maximal framed local systems}
\label{sec:homot-type-maxim}

Theorem~\ref{teo:holonomy-ZG} and the existence of a $K$-equivariant
retraction of~$\Sym(n,\R)$ on~$\{0\}$ imply

\begin{cor}
  \label{cor:homotopy-type-max-central}
  Let $\mathcal{X}^{+}_{0}( \mathcal{T}, G)$ be the space of tuples $( \{
  S_a\}_{a\in A_2}, \{ \ell_a\}_{a\in A_3})$ such that $S_a=0$ for all~$a\in
  A_2$. Then $K_\mathcal{X}\backslash \mathcal{X}^{+}_{0}( \mathcal{T}, G)$ is
  a strong deformation retract of $\Mfdelta(
  \Gamma_\mathcal{T}, G)$.

  Furthermore the quotient $K_\mathcal{X}\backslash \mathcal{X}^{+}_{0}(
  \mathcal{T}, G)$ is isomorphic to $K \backslash K^{1-{ \chi( \bar{S})}}$
  \ep{quotient by the diagonal conjugation action}.
\end{cor}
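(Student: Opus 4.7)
My plan is to prove the two assertions in sequence. For the first, I would combine Theorem~\ref{teo:holonomy-ZGplus} with a $K_\mathcal{X}$-equivariant strong deformation retraction of $\XplusTG$ onto $\mathcal{X}^{+}_{0}(\mathcal{T}, G)$. Recalling that $\XplusTG$ is the subspace of $(\{S_a\}_{a \in A_2}, \{\ell_a\}_{a \in A_3}) \in \Sym(n,\R)^{A_2} \times K^{A_3}$ with $S_a = S_b$ on $2$-cycles and $\ell_c \ell_b \ell_a = e_G$ on $3$-cycles, the natural candidate is the linear scaling
\[ H((\{S_a\}, \{\ell_a\}), t) = (\{(1-t)S_a\}, \{\ell_a\}). \]
I would then check that $H$ stays in $\XplusTG$ (both defining conditions are preserved under scaling of the $S_a$ part), that it fixes $\mathcal{X}^{+}_{0}$ pointwise, and that $H(\cdot,0)$ lands in $\mathcal{X}^{+}_{0}$ while $H(\cdot,1) = \mathrm{id}$.

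Next I would verify the $K_\mathcal{X}$-equivariance. The action of $K_\mathcal{X}$ on the $\ell_a$-factor is independent of $t$, and on each $S_a$ it reduces, thanks to the constraint $k_{v^+(a)} = k_{v^-(a)}$ on $A_2$-cycles, to conjugation by a single orthogonal element of $K$; this commutes with scaling. Hence $H$ descends to a strong deformation retraction on the quotient $K_\mathcal{X}\backslash \XplusTG \simeq \Mfdelta(\Gamma_\mathcal{T}, G)$, establishing the first claim.

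For the second claim, I would interpret $\mathcal{X}^{+}_{0}(\mathcal{T}, G)$ intrinsically: the data $\{\ell_a\}_{a \in A_3}$ subject to $\ell_c \ell_b \ell_a = e_G$ is precisely a $K$-local system on the quotient quiver $\Gamma_\mathcal{T}/A_2$ (whose vertex set $V'$ is formed by the midpoints of edges of $\mathcal{T}$) whose holonomy around each $3$-cycle is trivial, and $K_\mathcal{X} = K^{V'}$ acts as the associated gauge group. Following the strategy of Proposition~\ref{teo:local-syst-gamm-rep-widehat}, I would then argue that this moduli space is $K$-equivariantly isomorphic to $\Loc(S, K)$: the $2$-complex obtained by filling in the $3$-cycles of $\Gamma_\mathcal{T}/A_2$ embeds in $S$ and deformation retracts onto $S$, since one can thicken each inner triangle into its containing face and push away from the vertices of $\mathcal{T}$, all of which lie on $\partial S$.

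To conclude, since $S$ has negative Euler characteristic and nonempty boundary, $\pi_1(S)$ is free of rank $1 - \chi(\bar S) = |\chi(\bar S)|+1$; fixing a free basis yields a $K$-equivariant identification $\Hom(\pi_1(S), K) \cong K^{|\chi(\bar S)|+1}$, and quotienting by the overall $K$-conjugation gives $K \backslash K^{|\chi(\bar S)|+1}$ with the diagonal action, as desired. The main obstacle will be the topological input in the previous paragraph — carefully verifying the deformation retract, particularly in the presence of external edges where the face-filled subcomplex does not symmetrically surround the boundary vertices of $\mathcal{T}$. Once that is in place, the identification of the two moduli spaces is formal, following the same pattern used throughout Section~\ref{sec:local-systems-their}.
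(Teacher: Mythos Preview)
Your proposal is correct and matches the paper's approach: the paper derives the corollary in one line from Theorem~\ref{teo:holonomy-ZGplus} (the text's citation of Theorem~\ref{teo:holonomy-ZG} is a slip) together with the $K$-equivariant retraction of $\Sym(n,\R)$ onto $\{0\}$, which is exactly your linear scaling, and your identification of the second part with $\Loc(S,K)$ parallels the paper's treatment in Section~\ref{sec:subsp-enqu-repr} for $G=\Sp(2n,\R)$. One minor wording slip: the filled $2$-complex does not deformation retract onto~$S$ but the other way around; only the homotopy equivalence of the inclusion is needed, and that follows directly from (the $K$-analogue of) Proposition~\ref{teo:local-syst-gamm-rep-pi1S}, so you can avoid the explicit retraction argument altogether.
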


In the notation of Section~\ref{sec:subsp-enqu-repr}, when $G=\Sp(2n,\R)$, the
image of $\mathcal{X}^{+}_{0}( \mathcal{T}, G)$ by the holonomy map is the
subspace $\DfSLdOn$. Since, for any~$G$, framed representations in the subspace $K_\mathcal{X}\backslash
\mathcal{X}^{+}_{0}( \mathcal{T}, G)$ projects in $\PSp(2n, \R)$ to the
subspace $\PO(n)_{\mathcal{Z}} \backslash \mathcal{X}^{+}_{0}( \mathcal{T},
\PSp(2n,\R))$, one can apply, in the case $R=\emptyset$, Proposition~\ref{prop:decor-sing-repr-uniq} to
deduce that these representations have a unique framing. From this, exactly
as in Theorem~\ref{teo:connected-components-dec-to-max}, one deduces that, in
this case,
$\Mf(S,G)\to \M(S,G)$ induces a bijection at the level of
connected components.

\begin{cor}
  \label{cor:number-cc-max-G}
  Suppose that $R=\emptyset$.  The space $\M(S,G)$ has
  $x_{G}^{1-{\chi(S)}}$ connected components. Here $x_G \in \Z_{>0}$ is
  the integer that determines the group~$\Lambda_G$
  \ep{Section~\ref{sec:groups}}.
\end{cor}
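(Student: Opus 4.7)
The plan is to reduce the statement to the already established count for the framed moduli space, and then to show that the natural forgetful map $\Mf(S,G)\to\M(S,G)$ induces a bijection on connected components. Under the hypothesis $R=\emptyset$, we have $\bar S=S$ so $\abs{\chi(\bar S)}=\abs{\chi(S)}$, and Corollary~\ref{cor:number-connected-components-central} together with Proposition~\ref{prop:twist-local-syst-decorated} and the identification $\Mfdelta(\Gamma_\mathcal{T},G)\simeq\Mfdelta(S,G)$ from Section~\ref{sec:decor-local-syst} gives that $\Mf(S,G)$ has $x_G^{\abs{\chi(S)}+1}$ connected components. So the entire content to be proved is the $\pi_0$-bijection.

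Surjectivity on $\pi_0$ is immediate once one knows the forgetful map itself is surjective, which is the analog of Proposition~\ref{maxrep}(a): every maximal representation $\rho\colon\pi_1(S)\to G$ admits a framing. Since the framing data is a choice of $\rho(c_j)$-invariant Lagrangian at each boundary component, and since the $G$-action on $\Lag{n}$ factors through $\PSp(2n,\R)$, the existence of framings for $G$ reduces to the $\Sp(2n,\R)$-case already recorded in Proposition~\ref{maxrep}(a). For injectivity on $\pi_0$, I would follow the argument of Theorem~\ref{teo:connected-components-dec-to-max} verbatim: combine (i) the path-lifting property of $\Mf(S,G)\to\M(S,G)$ with (ii) the fact that the singular subspace, image of $\mathcal{X}^+_0(\mathcal{T},G)$, is a strong deformation retract of $\Mf(S,G)$ by Corollary~\ref{cor:homotopy-type-max-central}, and (iii) a uniqueness-of-framing statement for representations in that singular subspace. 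Given these three ingredients, any path in $\M(S,G)$ between two elements of the singular locus lifts to $\Mf(S,G)$; both endpoints of the lift are the unique framings of their projections, hence belong to the singular subspace; and the deformation retract identifies their components in $\Mf(S,G)$ with their components in $\M(S,G)$.

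The main obstacle is checking ingredients (i) and (iii) for general $G$, since they were stated in the text only for $G=\Sp(2n,\R)$. For (iii), the key point is that Proposition~\ref{prop:decor-sing-repr-uniq} is really a statement about the image of the boundary holonomy in $\PSp(2n,\R)$: its proof exhibits, for each boundary loop, a unique Lagrangian invariant under the projected holonomy, and this uniqueness is inherited by any cover $G$ of $\PSp(2n,\R)$ since $\Lag{n}$ is acted on via that projection. One simply reruns the computation of Proposition~\ref{prop:decor-sing-repr-uniq} with the explicit matrix form of the boundary holonomy read off from Theorem~\ref{teo:holonomy-ZGplus} and $\mathcal{X}^+_0(\mathcal{T},G)$. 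For (i), the proof of Proposition~\ref{prop:path-lift-prop-dec-to-max} rests only on algebraicity of the relevant maps and connectedness of the fibers of $\Hom^{\max}(S,G)\to\M(S,G)$ (orbits of the connected group $G$) and of $\Hom^{f,\max}(S,G)\to\Hom^{\max}(S,G)$; both features persist for any connected Lie group $G$ isogenic to $\PSp(2n,\R)$, so the argument of Lemmas~\ref{lem:path-lift-prop-hom-to-max} and~\ref{lem:path-lift-prop-dec-to-max-hom} applies without change.
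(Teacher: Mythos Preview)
Your proposal is correct and follows essentially the same route as the paper. The paper's argument (given in the paragraph preceding the corollary) is precisely to invoke Corollary~\ref{cor:number-connected-components-central} for the framed count, observe that the singular subspace $K_\mathcal{X}\backslash\mathcal{X}^{+}_{0}(\mathcal{T},G)$ projects in $\PSp(2n,\R)$ to the space covered by Proposition~\ref{prop:decor-sing-repr-uniq} (whence unique framing), and then say ``exactly as in Theorem~\ref{teo:connected-components-dec-to-max}''; you have simply unpacked that last phrase and been more explicit than the paper in noting that the path-lifting Lemmas~\ref{lem:path-lift-prop-hom-to-max} and~\ref{lem:path-lift-prop-dec-to-max-hom} go through for general connected~$G$.
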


\chapter{$\mathcal{A}$-coordinates for decorated local systems}\label{sec:Acoordinates}

In this chapter we investigate the coordinates on the space of decorated
$\delta$-twisted local systems given by symplectic $\Lambda$-lengths. We
prove that the $\Lambda$-lengths with respect to a triangulation give a
one-to-one parametrization of the space of transverse (with respect to that
triangulation) decorated representations. Furthermore we show that the
$\Lambda$-lengths produce a geometric realization of the \enquote{noncommutative
surfaces} introduced by Berenstein and Retakh~\cite{BR}; those are
noncommutative algebras associated with the surfaces~$S$ and that exhibit
a mapping class group invariant noncommutative cluster structure.

\section{Symplectic $\Lambda$-length}
\label{sec:sympl-lambda-length}

For every arc~$\alpha$ in~$T'S$ --- this means here that
$\alpha$ is (the homotopy class of) a path $\alpha\colon[0,1] \to T'S$ with
$\alpha( \{0,1\}) \subset \vec{\partial} S$ --- one associates
a $\Lambda$-length $\Lambda_\alpha$ on the space $\Locddelta( S, \Sp(2n,
\R))$. Namely, to every decorated twisted local system  $(\mathcal{F}, \beta)$, its
pull back by~$\alpha$ gives a pair $(\mathbf{v}^t, \mathbf{v}^b)$ of decorated
Lagrangians in~$\R^{2n}$ well defined up to the action of $\Sp(2n, \R)$
(compare with Section~\ref{sec:conf-assoc-with-1}). Thus
the matrix $\Lambda_\alpha(\mathcal{F}, \beta) =\omega( \mathbf{v}^t,
\mathbf{v}^b)$ is well defined.\index{notation}{194@$\Lambda_\alpha$ ($\Lambda$-length
  function associated with the arc~$\alpha$)}\index{definition}{$\Lambda$-length (symplectic ---)}%
\index{definition}{symplectic!$\Lambda$-length}%

\section{Decorated local systems on~$\Gamma_\mathcal{T}$}
\label{sec:framed-local-systems}

Let~$\mathcal{T}$ be an ideal triangulation of~$S$ (cf.\ Section~\ref{sec:triangulations-general}). Using the vector field
$\vec{x}_\mathcal{T}$,
the
space of decorated local systems is isomorphic to the moduli space
$\Locddelta( \Gamma_\mathcal{T}, \Sp(2n,\R))$ of decorated $\delta$-twisted
symplectic local systems on the quiver~$\Gamma_\mathcal{T}$ where we used the
following definition ($V$ denotes the vertex set of~$\Gamma_\mathcal{T}$ and
$A= A_2 \sqcup A_3$ its arrow set):\index{notation}{197@$\Locddelta( \Gamma_\mathcal{T},
  \Sp(2n,\R))$ (moduli of decorated local systems on the quiver~$\Gamma_\mathcal{T}$)}

\begin{df}
  \label{df:framed-local-systems-on-quiver}
  A tuple $( \{ F_v, \mathbf{f}^{t}_{v}, \mathbf{f}^{b}_{v}\}_{v\in V}, \{ g_a\}_{a\in A})$ is a
  \emph{decorated}  $\delta$-twisted
symplectic local system if
\begin{enumerate}
\item $( \{ F_v\}_{v\in V}, \{ g_a\}_{a\in A})$ is a $\delta$-twisted
symplectic local system (cf.\ Definition~\ref{df:pi1S-hat-compat-local-sys});
\item for all~$v$ in~$V$, $f_{v}^{t}$ and $f_{v}^{b}$ are decorated Lagrangians
  in~$F_v$;
\item for all~$a$ in~$A$, $g_a( f_{v^{-}(a)}^{b})= f_{v^{+}(a)}^{t}$.
\end{enumerate}
\end{df}\index{definition}{decorated! twisted local system}%
\index{definition}{twisted!decorated --- local system}%
\index{definition}{local system!decorated twisted ---}%

The associated framed local system is
$( \{ F_v, L^{t}_{v}, L^{b}_{v}\}_{v\in V}, \{ g_a\}_{a\in A})$ where, for
all~$v$, $L^{t}_{v} = \Span( \mathbf{f}^{t}_{v})$ and $L^{b}_{v} = \Span(
\mathbf{f}^{b}_{v})$.

The decorated local system will be called transverse (or
$\mathcal{T}$-transverse) if the associated framed local system is
transverse, i.e.\ if and only if, for every~$v$ in~$V$, the pair $(\mathbf{f}^{t}_{v},
\mathbf{f}^{b}_{v})$ is a basis of~$F_v$. This happens if and only if, for any
arc~$\alpha$ in~$T'S$ that projects (in~$S$) to an oriented edge of~$\mathcal{T}$,
the symplectic $\Lambda$-length $\Lambda_\alpha$ is an invertible matrix.

\section{Lifting arcs}
\label{sec:lift-orient-edges}

Let $\alpha\colon ([0,1], \{0,1\}) \to (S, \partial S)$ be an arc in~$S$. We
construct a \emph{lift}\index{notation}{199@$r(\alpha)$ (the lift of the arc~$\alpha$)}
\[r(\alpha) \colon ([0,1], \{0,1\}) \to (T'S,
  \vec{\partial} S)\]
via the following procedure: choose first a $\mathcal{C}^1$-representative
of~$\alpha$ having the minimal possible self-intersections (for example the
geodesic representative for a hyperbolic structure) and then smooth this
representative at its endpoints so that it becomes tangent at the boundary
there; then $r(\alpha):[0,1]\to T'S$ is the tangent curve of this
last curve.

It is easy to observe that one can choose representatives of $r(\alpha)$ and
of $r(\bar{\alpha})$ so that $r(\alpha) \sqcup r(\bar{\alpha})$ is homotopic
to a fiber $T'S\to S$. In particular, the holonomy around this loop of a
twisted local system is~$-\Id$, therefore the equality $\Lambda_{ \overline{
    r(\alpha)}} = - \Lambda_{r(\bar{\alpha})}$
     follows.

\section{$\mathcal{A}$-space}
\label{sec:a-space}

Note that the following properties hold:
\begin{enumerate}
\item \label{item:1:sec:a-space} for every arc~$\alpha$ in~$S$,
  $\Lambda_{r(\bar{\alpha})} = {}^T \!
  \Lambda_{r(\alpha)}$, simply since $\Lambda_{r(\bar{\alpha})} = -
  \Lambda_{ \overline{r(\alpha)}}$ (see above) and since $\Lambda_{ \overline{r(\alpha)}} = - {}^T \! \Lambda_{r(\alpha)}$;
\item \label{item:2:sec:a-space} for every triangle
  $(\alpha_1, \alpha_2, \alpha_3)$ in~$S$, the matrix
  $\Lambda_{r(\alpha_1)} (\Lambda_{r(\bar{\alpha}_2)})^{-1}
  \Lambda_{r(\alpha_3)} $ is symmetric and is equal to
  $\Lambda_{r(\bar{\alpha}_3)} (\Lambda_{r(\alpha_2)})^{-1}
  \Lambda_{r(\bar{\alpha}_1)} $ (cf.\ Corollary~\ref{coro:tri-rel}).
\end{enumerate}

Let~$\mathcal{T}$ an ideal triangulation of~$S$. Recall that we
 denote by~$\edgeT$ its set of oriented edges (cf.\
 Section~\ref{sec:triangulations-general}). Hence if $\alpha\in \edgeT$, the
 reverse $\bar{\alpha}$ also belongs to~$\edgeT$.

We define $\mathcal{A}( \mathcal{T}, n)$ to be the space of tuples, indexed
 by $\edgeT$ 
the set of oriented edges of~$\mathcal{T}$,
$\{ G_\alpha\}_{\alpha \in \edgeT} \in \GL(n, \R)^{\edgeT}$
satisfying the above equations:\index{notation}{201@$\mathcal{A}( \mathcal{T}, n)$
  (space of $\mathcal{A}$-coordinates)}\index{definition}{$\mathcal{A}$-coordinates}
\begin{itemize}
\item for all~$\alpha$ in~$\edgeT$,
  $G_{\bar{\alpha}} = {}^T \! G_{\alpha}$, and
\item  for every triangle
  $(\alpha_1, \alpha_2, \alpha_3)$ in~$\mathcal{T}$,
  $G_{\alpha_1} (G_{\bar{\alpha}_2})^{-1} G_{\alpha_3} = G_{\bar{\alpha}_3}
  (G_{\alpha_2})^{-1} G_{\bar{\alpha}_1}$.
\end{itemize}

We call $\mathcal{A}( \mathcal{T}, n)$ the \emph{space of $\mathcal{A}$-coordinates}.

Our main result is
\begin{teo}
  \label{teo:a-space-1to1}
  The map\index{notation}{203@$\Psi_\mathcal{T}$ (the parametrization of $\LocddeltaT( S, \Sp(2n,\R))$)}
  \[ \Psi_\mathcal{T} = \{ \Lambda_{r(\alpha)}\}_{\alpha\in
      \edgeT} \colon \LocddeltaT( S, \Sp(2n,\R)) \longrightarrow
    \mathcal{A}( \mathcal{T}, n)\]
  is a one-to-one correspondence.
\end{teo}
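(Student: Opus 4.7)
\medskip

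\noindent\textbf{Proof plan for Theorem~\ref{teo:a-space-1to1}.}

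The plan is first to observe that $\Psi_\mathcal{T}$ lands in $\mathcal{A}(\mathcal{T},n)$: the reversal relation $\Lambda_{r(\bar\alpha)}={}^T\!\Lambda_{r(\alpha)}$ is property~(\ref{item:1:sec:a-space}) of Section~\ref{sec:a-space}, and the triangle relation is Corollary~\ref{coro:tri-rel} rephrased via property~(\ref{item:2:sec:a-space}). To prove bijectivity, I would work throughout on the quiver $\Gamma_\mathcal{T}$, using Proposition~\ref{teo:decorated-restriction} to identify $\LocddeltaT(S,\Sp(2n,\R))$ with the moduli space of $\mathcal{T}$-transverse decorated $\delta$-twisted local systems on $\Gamma_\mathcal{T}$.

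For injectivity, given a $\mathcal{T}$-transverse decorated local system, pick representative data $(F_v, \mathbf{f}^t_v, \mathbf{f}^b_v, g_a)$ and, using the free $\Sp(2n,\R)$-action at a chosen base vertex~$v_0$, normalize so that the pair $(\mathbf{f}^t_{v_0},\mathbf{f}^b_{v_0})$ takes a preferred form determined solely by $\Lambda_{\alpha_{v_0}}$ (possible by the assumed transversality). Now travel along the quiver: at each vertex, the decorated Lagrangians $\mathbf{f}^t_v,\mathbf{f}^b_v$ are related to those at a neighboring vertex through one of the transition maps~$g_a$. Since $\{G_\alpha\}$ fixes all pairwise symplectic pairings of the decorated Lagrangians appearing around each triangle, Lemma~\ref{lamb} forces the third decorated Lagrangian in any triangle to be the unique linear combination of the other two prescribed by $A=\Lambda_{31}^{-1}\Lambda_{32}$ and $B=\Lambda_{13}^{-1}\Lambda_{12}$. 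Propagating around the connected quiver $\Gamma_\mathcal{T}$, every datum is determined, proving that two local systems with the same image under $\Psi_\mathcal{T}$ are equivalent.

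For surjectivity, take $\{G_\alpha\}\in\mathcal{A}(\mathcal{T},n)$ and construct a representative triangle by triangle. For a single triangle $(\alpha_1,\alpha_2,\alpha_3)$ with prescribed Lambda-matrices, choose any transverse pair of decorated Lagrangians $(\mathbf{v}_1,\mathbf{v}_3)$ in $(\R^{2n},\omega)$ realizing $\omega(\mathbf{v}_1,\mathbf{v}_3)=G_{\alpha_2}$, then define $\mathbf{v}_2 := \mathbf{v}_1\,\Lambda_{31}^{-1}\Lambda_{32} + \mathbf{v}_3\,\Lambda_{13}^{-1}\Lambda_{12}$; the triangle relation in $\mathcal{A}(\mathcal{T},n)$ is exactly the identity in Corollary~\ref{coro:tri-rel} guaranteeing that $\omega(\mathbf{v}_2,\mathbf{v}_2)=0$, so $\mathbf{v}_2$ is Lagrangian. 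Next, for two triangles sharing an edge~$\alpha$, the two constructions each produce a pair of decorated Lagrangians on that edge with the same pairing matrix $G_\alpha$ (and $G_{\bar\alpha}={}^T\!G_\alpha$), hence a unique element of $\Sp(2n,\R)$ identifies them; these identifications become the transitions along the arrows in $A_2$ of $\Gamma_\mathcal{T}$, while the transitions along arrows in $A_3$ come from the passage between the three decorated Lagrangians within each triangle, as encoded (at the level of matrices) by Lemma~\ref{lem:triple-fram-lagr}. Applying Proposition~\ref{teo:decorated-restriction} then yields an element of $\LocddeltaT(S,\Sp(2n,\R))$ whose $\Lambda$-lengths are, by construction, the prescribed $\{G_\alpha\}$.

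The main obstacle is verifying that the triangle-by-triangle gluing actually produces a \emph{$\delta$-twisted} local system on $\Gamma_\mathcal{T}$, i.e.\ that the cycle conditions of Definition~\ref{df:pi1S-hat-compat-local-sys} hold. The $2$-cycle condition in $A_2$ translates via Proposition~\ref{prop:decor-loc-from-Ga} into the relation $G_{\bar\alpha}={}^T\!G_\alpha$, which is built into $\mathcal{A}(\mathcal{T},n)$. The $3$-cycle condition inside each triangle is precisely the constraint $CBA=-\Id$ of Lemma~\ref{lem:triple-fram-lagr}, which follows from the algebraic form of the three transition matrices dictated by our choice of decorated Lagrangians; this in turn must be checked by a direct computation using the expressions $\Lambda_{13}^{-1}\Lambda_{12}$ etc.\ appearing in Lemma~\ref{lamb} together with the triangle relation, just as in the positive case treated earlier in Section~\ref{sec:maxim-tripl-fram}. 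Once this compatibility is in hand, the signs associated with the lifts $r(\alpha)$ from Section~\ref{sec:lift-orient-edges} line up with the $-\Id$ holonomy around fibers of $T'S\to S$, closing the argument.
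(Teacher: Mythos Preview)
Your proposal is correct in outline and would lead to a valid proof, but the paper takes a more direct route that avoids both the base-point normalization in your injectivity argument and the triangle-by-triangle gluing in your surjectivity argument.

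The paper's key observation is that the decoration itself furnishes a basis at every vertex of~$\Gamma_\mathcal{T}$: for each $v\in V$, the pair $(\mathbf{f}^{t}_{v},\mathbf{f}^{b}_{v})$ is a basis of~$F_v$ (not symplectic, but a basis nonetheless, by transversality). One then computes the matrix of each~$g_a$ in these decorated bases. For $a\in A_2$, the decoration condition $g_a(\mathbf{f}^{b}_{v^-(a)})=\mathbf{f}^{t}_{v^+(a)}$ together with the $\delta$-twist $g_a g_{a'}=-\Id$ force the matrix of~$g_a$ to be $\bigl(\begin{smallmatrix}0&\Id\\-\Id&0\end{smallmatrix}\bigr)$, independently of the $\Lambda$-lengths. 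For $a\in A_3$ belonging to a $3$-cycle $(a,b,c)$, using $g_c g_b g_a=-\Id$ and Lemma~\ref{lamb} one writes down an explicit matrix for~$g_a$ depending only on the three $\Lambda$-lengths $H_{v^-(a)}, H_{v^-(b)}, H_{v^-(c)}$. Injectivity is then immediate from Lemma~\ref{lem:equi-loc-sys-1}: two decorated local systems with the same $\Lambda$-lengths have the same transition matrices in their respective decorated bases, hence are equivalent. Surjectivity consists of declaring, for each~$v$, a symplectic space~$F_v$ whose form has matrix $G_{\alpha_v}$ in the off-diagonal blocks, and taking the~$g_a$ to be given by those same explicit matrices; no gluing step is needed because there is one space per vertex from the start, and the $\delta$-twisted conditions follow from the defining relations of~$\mathcal{A}(\mathcal{T},n)$ by direct matrix computation.

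Your approach---normalizing at a base vertex, propagating, and gluing across shared edges---works, but trades a purely local computation for a global one, and the verification that the glued object is $\delta$-twisted (your ``main obstacle'') becomes essentially the explicit matrix check that the paper performs anyway. Note also that your invocation of Lemma~\ref{lem:triple-fram-lagr} and Proposition~\ref{prop:decor-loc-from-Ga} is slightly off: those are phrased for \emph{symplectic generating} bases $(\mathbf{e}_v,\mathbf{f}_v)$, whereas the natural bases here are the decorated ones $(\mathbf{f}^{t}_{v},\mathbf{f}^{b}_{v})$, in which the transition matrices are not elements of $\Sp(2n,\R)$ (as the paper remarks). The paper's choice of basis is what makes the argument short.
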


\begin{proof}
  We need first to prove that a transverse decorated $\delta$-twisted
  symplectic local system $x=( F_v, f_{v}^{t}, f_{v}^{b}, g_a)$ is completely
  determined by the matrices $H_\alpha \coloneqq \Lambda_{r(\alpha)}(x)$
  ($\alpha \in \edgeT$).

  For every~$v$ in~$V$, the arc~$\alpha_v$ (cf.\
  Section~\ref{sec:orient-graph-gamm}) is an oriented edge of~$\mathcal{T}$ and we will
  write~$H_v$ instead of~$H_{\alpha_v}$.

  By definition of the symplectic $\Lambda$-length (and the correspondence
  between decorated local systems and decorated representations) one has $H_v =
  \omega( \mathbf{f}^{t}_{v}, \mathbf{f}^{b}_{v})$ where~$\omega$ is the symplectic form on the
  space~$F_v$. Also the pair $(\mathbf{f}^{t}_{v}, \mathbf{f}^{b}_{v})$ is a basis of~$F_v$.

  To prove injectivity of~$\Psi_\mathcal{T}$, it is enough to show that the
  transition maps~$g_a$ are uniquely determined. This is obvious when~$a$
  belongs to~$A_2$ since, the local system being $\delta$-twisted, one
  has $g_a( f_{v^-(a)}^{t}) = - f_{v^+(a)}^{b}$. Thus the matrix of~$g_a$ in
  the bases $(\mathbf{f}^{t}_{v^-(a)}, \mathbf{f}^{b}_{v^-(a)})$ and  $(\mathbf{f}^{t}_{v^+(a)},
  \mathbf{f}^{b}_{v^+(a)})$ is $ \bigl(
  \begin{smallmatrix}
    0 & \Id \\ -\Id & 0
  \end{smallmatrix}
  \bigr)$.

  Suppose now that~$a$ belongs to a $3$-cycle $(a,b,c)$ of~$A_3$, so that
  $v^+(a)=v^-(b)$, $v^+(b)=v^-(c)$, $v^+(c)=v^-(a)$, and $g_c g_b g_a
  =-\Id$. Together with the condition
  $g_a( \mathbf{f}^{b}_{ v^-(a)}) = \mathbf{f}^{t}_{v^+(a)}$, one has
  $g_a( \mathbf{f}^{t}_{ v^-(a)}) = g_a \circ g_c ( \mathbf{f}^{b}_{ v^-(c)}) = -g_{b}^{-1}(
  \mathbf{f}^{b}_{ v^-(c)})$. Also
  \[\omega\bigl( g_a( \mathbf{f}^{t}_{ v^-(a)}), \mathbf{f}^{t}_{v^-(b)}\bigr) = \omega\bigl(
    g_a( \mathbf{f}^{t}_{  v^-(a)}), g_a(\mathbf{f}^{b}_{v^-(a)})\bigr)= \omega( \mathbf{f}^{t}_{ v^-(a)},
    \mathbf{f}^{b}_{v^-(a)}) = H_{v^-(a)};\]
  similarly
  \[\omega\bigl( \mathbf{f}^{b}_{v^-(b)}, g_a( \mathbf{f}^{t}_{v^-(a)})\bigr) = -\omega\bigl(
  \mathbf{f}^{b}_{v^-(b)}, g_{b}^{-1}( \mathbf{f}^{b}_{v^+(c)})\bigr) = -H_{v^-(c)}.\] By
  Lemma~\ref{lamb} this implies that
  \[g_a( f_{v^-(a)}^{t}) = \mathbf{f}^{t}_{v^-(b)}\cdot {}^T \! H_{v^-(b)}^{-1}
  H_{v^-(c)} - \mathbf{f}^{b}_{v^-(b)} \cdot H_{v^-(b)}^{-1} {}^T\! H_{v^-(a)}\] and the
  matrix of~$g_a$ is
  \[
    \begin{pmatrix}
       {}^T \! H_{v^-(b)}^{-1} H_{v^-(c)} & \Id \\
       - H_{v^-(b)}^{-1} {}^T\! H_{v^-(a)} & 0
    \end{pmatrix}.
  \]
  (Of course this matrix does not belong to the group $\Sp(2n, \R)$, it is
  nevertheless the matrix of a symplectic isomorphism $F_{v^-(a)}\to
  F_{v^+(a)}$ in the given bases.)

  \smallskip

  Conversely given a family $\{G_\alpha\}_{\alpha\in \edgeT}$ in
  $\mathcal{A}( \mathcal{T}, n)$, we can define
  \begin{itemize}
  \item for each~$v$ in~$V$, a symplectic vector space~$F_v$ with a basis $(
    \mathbf{f}^{t}_{v}, \mathbf{f}^{b}_{v})$ such that $\omega( \mathbf{f}^{t}_{v}, \mathbf{f}^{t}_{v}) =0$,
    $\omega( \mathbf{f}^{b}_{v}, \mathbf{f}^{b}_{v}) =0$, $\omega( \mathbf{f}^{t}_{v}, \mathbf{f}^{b}_{v})
    =G_{\alpha_v}$. To simplify a little the notation, the latter matrix will
    be denoted~$G_v$;
  \item for each~$a$ in~$A_2$, $g_a$ to be the linear map
    $F_{v^-(a)}\to F_{v^+(a)}$ whose matrix (in the given bases) is $\bigl(
    \begin{smallmatrix}
      0 & \Id \\ -\Id & 0
    \end{smallmatrix}
    \bigr)$;
  \item for each~$a$ in~$A_3$, hence belonging to a $3$-cycle $(a,b,c)$, the
    matrix of $g_a\colon F_{v^-(a)}\to F_{v^+(a)}$ is $\Bigl(
    \begin{smallmatrix}
      {}^T \! G_{v^-(b)}^{-1} G_{v^-(c)} & \Id \\
      -G_{v^-(b)}^{-1} {}^T \! G_{v^-(a)} & 0
    \end{smallmatrix}
    \Bigr)$.
  \end{itemize}
  Then $( F_v, \mathbf{f}^{t}_{v}, \mathbf{f}^{b}_{v}, g_a)$ is a decorated $\delta$-twisted
  symplectic local system on~$\Gamma_\mathcal{T}$ and its image under the
  map~$\Psi_\mathcal{T}$ is the family $\{ G_\alpha\}$. This concludes the
  proof of the theorem.
\end{proof}

\section{Change of coordinates: flips}
\label{sec:change-coord-flips}

For every pair $(\mathcal{T}_0, \mathcal{T}_1)$ of triangulations, we would
like to understand the change of coordinates from $\mathcal{A}( \mathcal{T}_0,
n)$ to $\mathcal{A}( \mathcal{T}_1, n)$. It is enough to do this
when~$\mathcal{T}_0$ and~$\mathcal{T}_1$ differ by a flip, i.e.\ when there
are internal nonoriented edges~$e_0$ in~$\mathcal{T}_0$ and~$e_1$ in~$\mathcal{T}_1$ such
that $\mathcal{T}_0 \setm \{ e_0\} = \mathcal{T}_1 \setm \{ e_1\}$. We will
furthermore choose oriented edges~$\alpha_0$ and~$\alpha_1$ representing~$e_0$
and~$e_1$ respectively and such that the starting point of~$\alpha_0$ is
in the triangle of~$\mathcal{T}_1$ that is to the left of~$\alpha_1$ (cf.\ Figure~\ref{flip}).

\begin{figure}[ht]
\begin{center}
\begin{tikzpicture}
  \coordinate (L1) at (0,-1) ;
  \coordinate (L2) at (2,.5) ;
  \coordinate (L3) at (-0.5,2) ;
  \coordinate (L4) at (-2,0) ;
  \draw[middlearrow={latex}] (L1)--(L2) node[midway,below]{$b$} ;
  \draw[secondthirdarrow={latex}] (L1)--(L3) node[pos=0.6, right]{$\alpha_1$};
  \draw[middlearrow={latex}] (L4)--(L1)  node[midway,below]{$a$} ;
  \draw[middlearrow={latex}] (L3)--(L2)  node[midway,above]{$d$} ;
  \draw[secondthirdarrow={latex}] (L4)--(L2) node[pos=0.65,below]{$\alpha_0$} ;
  \draw[middlearrow={latex}] (L4)--(L3)  node[midway,above]{$c\ $} ;
\end{tikzpicture}
\caption{The oriented edges involved in a flip}
\label{flip}
\end{center}
\end{figure}

The oriented edge~$\alpha_0$ bounds two triangles in~$\mathcal{T}_0$, one containing the
starting point of~$\alpha_1$ and whose other oriented edges are called~$a$, $b$:
precisely~$a$, $b$, and~$\bar{\alpha}_0$ form a cycle bounding the corresponding
triangle. The other triangle contains the endpoint of~$\alpha_1$, its oriented edges~$c$,
$d$, and~$\bar{\alpha}_0$ forming as well a cycle in~$\mathcal{T}_0$.

The following is an easy consequence of Proposition~\ref{prop:Ptolemy} (one
has to take care of a few sign changes due to the fact that we are working
with $\delta$-twisted system, however it is easily verified that each term
in Proposition~\ref{prop:Ptolemy} is changed with the same sign).

\begin{prop}
  \label{prop:change-coord-flips}
  With the above notation, one has the identity \ep{between functions defined on
  $\LocddeltaTone( S, \Sp(2n,
  \R))$}
\[ \Lambda_{r(\alpha_0)} = \Lambda_{r(a)} (\Lambda_{r(\bar{\alpha}_1)})^{-1}
  \Lambda_{r(d)} + \Lambda_{r(c)} (\Lambda_{r(\alpha_1)})^{-1} \Lambda_{r(b)}.\]
\end{prop}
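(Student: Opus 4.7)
The plan is to derive the flip formula by applying the noncommutative Ptolemy relation (Proposition~\ref{prop:Ptolemy}) to four decorated Lagrangians at the vertices of the quadrilateral $Q\subset S$ obtained by gluing the two $\mathcal{T}_0$-triangles meeting along $e_0$, and then translating each $\omega$-pairing into a $\Lambda_{r(\alpha)}$. Label the vertices of $Q$ as $P_1,P_2,P_3,P_4$ so that $e_0\colon P_1\to P_3$, $e_1\colon P_2\to P_4$, and the sides run $a\colon P_1\to P_2$, $b\colon P_2\to P_3$, $c\colon P_1\to P_4$, $d\colon P_4\to P_3$ (i.e.\ $P_1,P_2,P_3,P_4$ correspond to $L_4,L_1,L_2,L_3$ in Figure~\ref{flip}). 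Fix a $\mathcal{T}_1$-transverse decorated twisted local system $(\mathcal{F},\beta)$. Since $Q$ is contractible, the pull-back of $\mathcal{F}$ to $T'Q$ trivialises; parallel-transporting the boundary decoration along interior paths $\gamma_i$ to a common base point then yields four decorated Lagrangians $\mathbf{v}_1,\mathbf{v}_2,\mathbf{v}_3,\mathbf{v}_4$ in a single symplectic space $(\R^{2n},\omega)$.

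Because $e_1$ is $\mathcal{T}_1$-transverse, $\mathbf{v}_2$ and $\mathbf{v}_4$ are transverse, so Proposition~\ref{prop:Ptolemy} applied to the relabelled quadruple $(\mathbf{v}_2,\mathbf{v}_1,\mathbf{v}_4,\mathbf{v}_3)$ gives
\[ \omega(\mathbf{v}_1,\mathbf{v}_3) = \omega(\mathbf{v}_1,\mathbf{v}_4)\,\omega(\mathbf{v}_2,\mathbf{v}_4)^{-1}\,\omega(\mathbf{v}_2,\mathbf{v}_3) + \omega(\mathbf{v}_1,\mathbf{v}_2)\,\omega(\mathbf{v}_4,\mathbf{v}_2)^{-1}\,\omega(\mathbf{v}_4,\mathbf{v}_3). \]
Each pairing $\omega(\mathbf{v}_i,\mathbf{v}_j)$ equals $\varepsilon_\alpha\Lambda_{r(\alpha)}$ for some $\varepsilon_\alpha\in\{\pm 1\}$, where $\alpha$ is the arc of $Q$ joining $P_i$ to $P_j$; the sign $\varepsilon_\alpha$ is the fibre-winding parity in $T'Q\to Q$ of the loop $\gamma_i * r(\alpha) * \bar\gamma_j$. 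I will argue that the $\gamma_i$ can be arranged so that all the $\varepsilon_\alpha$ (for $\alpha\in\{a,b,c,d,e_0,e_1,\bar{e}_1\}$) reduce to a single common sign $\varepsilon$. Each monomial on the right-hand side then carries an overall factor of $\varepsilon^3=\varepsilon$, matching the $\varepsilon$ on the left; cancellation and reordering of the commutative sum produce the asserted identity
\[ \Lambda_{r(e_0)} = \Lambda_{r(a)}\,\Lambda_{r(\bar{e}_1)}^{-1}\,\Lambda_{r(d)} + \Lambda_{r(c)}\,\Lambda_{r(e_1)}^{-1}\,\Lambda_{r(b)}. \]

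The only substantive step is this sign bookkeeping, which is the content of the parenthetical remark in the statement that \enquote{each term in Proposition~\ref{prop:Ptolemy} is changed with the same sign}. Its resolution rests on three facts: that $Q$ is simply connected so $TQ$ is trivializable; that the tangent-smoothing recipe of Section~\ref{sec:lift-orient-edges} defining $r(\alpha)$ is applied uniformly at both endpoints of every arc; and that the relation $\Lambda_{r(\bar\alpha)}={}^T\Lambda_{r(\alpha)}$ already accounts for the one fibre-twist separating an arc from its reverse. Choosing each $\gamma_i$ so that it approaches $P_i$ along a fixed reference direction of the trivialization of $TQ$ forces the seven loops $\gamma_i * r(\alpha) * \bar\gamma_j$ into a common class modulo the kernel of $\pi_1(T'Q)\to\pi_1(Q)$, which is the required uniform sign matching. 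Once this is granted, the equality extends from transverse decorated local systems to the whole of $\GL(n,\mathcal{K})$ by Proposition~\ref{prop:sympl-lambda-inverible-dense}.
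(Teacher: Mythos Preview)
Your approach is exactly the paper's: pull back to the quadrilateral, apply the Ptolemy relation of Proposition~\ref{prop:Ptolemy} to the four decorated Lagrangians, and check that the signs introduced by the $\delta$-twisting cancel term by term. The labeling and the application of Ptolemy are correct, and this is all the paper itself does (its entire proof is the parenthetical sentence you quote).

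There is, however, a concrete error in your sign bookkeeping. You claim the seven signs $\varepsilon_\alpha$ (for $\alpha\in\{a,b,c,d,e_0,e_1,\bar e_1\}$) can all be arranged to equal a common~$\varepsilon$. This is impossible: from $\omega(\mathbf{v}_4,\mathbf{v}_2)=-{}^T\!\omega(\mathbf{v}_2,\mathbf{v}_4)$ together with $\Lambda_{r(\bar e_1)}={}^T\!\Lambda_{r(e_1)}$ (item~(\ref{item:1:sec:a-space}) of Section~\ref{sec:a-space}) one gets $\varepsilon_{\bar e_1}=-\varepsilon_{e_1}$ for \emph{every} choice of the~$\gamma_i$. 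What the paper asserts, and what you actually need, is only that the three \emph{monomial} signs $\varepsilon_{e_0}$, $\varepsilon_c\varepsilon_{e_1}\varepsilon_b$, and $\varepsilon_a\varepsilon_{\bar e_1}\varepsilon_d$ agree. Writing $\varepsilon_{ij}=(-1)^{m_{ij}+k_i+k_j}$, where $m_{ij}$ is the fibre-winding of $r(\alpha_{ij})$ in a trivialisation of~$TQ$ and $k_i$ records the winding of~$\gamma_i$, the $k_i$'s cancel in each monomial (every vertex index occurs twice), so the condition is purely on the parities of the~$m_{ij}$. These are fixed by $m_{ij}+m_{ji}\equiv 1$ (Section~\ref{sec:lift-orient-edges}) and by the odd turning of the smoothed boundary of each triangle of~$\mathcal{T}_1$, which is precisely the $\delta$-twisted condition on the $3$-cycles of~$\Gamma_{\mathcal{T}_1}$. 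A two-line parity computation then yields the equality of the three monomial signs. Your argument is easily repaired along these lines, but not via the uniform-sign claim as written.
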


\begin{rem}
  Of course, this result holds for any quadrilateral (cf.\ below Section~\ref{sec:algebra-berenst-reta}) and not only for
  those coming from triangulations.
\end{rem}

As a consequence:
\begin{teo}
  \label{teo:change-coord-flips}
The image by~$\Psi_{\mathcal{T}_1}$ \ep{cf.\
  Theorem~\ref{teo:a-space-1to1}} of the space
  \[ \LocddeltaTzero( S, \Sp(2n,
    \R)) \cap \LocddeltaTone( S,
    \Sp(2n, \R))\]
  is the set of tuples $\{ G_\alpha\}_{\alpha\in \edgeTsub{1}}$ in~$\mathcal{A}(
  \mathcal{T}_1, n)$ such that $G_a (G_{\bar{\alpha}_1})^{-1} G_d + G_c
  (G_{\alpha_1})^{-1} G_b$ is invertible. The composition $\Psi_{\mathcal{T}_0}
  \circ \Psi_{\mathcal{T}_1}^{-1}$ is then $\{ G_\alpha\}_{\alpha\in \edgeTsub{1}}
  \mapsto \{ H_\alpha\}_{\alpha\in \mathcal{T}_0}$ where $H_\alpha=G_\alpha$ if $\alpha\notin \{ \alpha_0,
  \bar{\alpha}_0\}$, $H_{\alpha_0} = G_a (G_{\bar{\alpha}_1})^{-1} G_d + G_c
  (G_{\alpha_1})^{-1} G_b$, and $H_{\bar{\alpha}_0} = {}^T \! H_{\alpha_0}$.
\end{teo}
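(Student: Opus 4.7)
The proof is essentially a corollary of Theorem~\ref{teo:a-space-1to1} combined with Proposition~\ref{prop:change-coord-flips}, and the plan is to turn those two inputs into the stated statement by careful bookkeeping.

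First I would observe that the triangulations $\mathcal{T}_0$ and $\mathcal{T}_1$ coincide on every edge except on the pair $\{e_0,\bar{e}_0\}\subset\mathcal{T}_0$ and its replacement $\{e_1,\bar{e}_1\}\subset\mathcal{T}_1$. Hence, for a decorated $\delta$-twisted local system $x$ that is $\mathcal{T}_1$-transverse, being $\mathcal{T}_0$-transverse is equivalent to the single extra condition that $\Lambda_{r(e_0)}(x)$ be invertible (the matrices $\Lambda_{r(e)}(x)$ for the common edges $e$ are already invertible, and $\Lambda_{r(\bar{e}_0)}(x)={}^T\!\Lambda_{r(e_0)}(x)$ by property~(\ref{item:1:sec:a-space}) in Section~\ref{sec:a-space}, so its invertibility is equivalent to that of $\Lambda_{r(e_0)}(x)$).

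Next I would apply Proposition~\ref{prop:change-coord-flips} directly: if $\{G_e\}_{e\in\mathcal{T}_1}=\Psi_{\mathcal{T}_1}(x)$, then
\[
\Lambda_{r(e_0)}(x)=\Lambda_{r(a)}(x)\,\Lambda_{r(\bar{e}_1)}(x)^{-1}\,\Lambda_{r(d)}(x)+\Lambda_{r(c)}(x)\,\Lambda_{r(e_1)}(x)^{-1}\,\Lambda_{r(b)}(x)=G_a G_{\bar{e}_1}^{-1}G_d+G_c G_{e_1}^{-1}G_b.
\]
So invertibility of $\Lambda_{r(e_0)}(x)$ is exactly the invertibility of $G_a G_{\bar{e}_1}^{-1}G_d+G_c G_{e_1}^{-1}G_b$, which yields the characterization of the image. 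For the explicit formula of $\Psi_{\mathcal{T}_0}\circ\Psi_{\mathcal{T}_1}^{-1}$: for every edge $e$ common to both triangulations, $H_e=\Lambda_{r(e)}(x)=G_e$ trivially; for $e_0$, the above display yields the announced $H_{e_0}$; and $H_{\bar{e}_0}={}^T\!H_{e_0}$ again follows from property~(\ref{item:1:sec:a-space}).

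For the converse direction (that every $\{G_e\}\in\mathcal{A}(\mathcal{T}_1,n)$ satisfying the invertibility condition lies in the image), I would use Theorem~\ref{teo:a-space-1to1} to produce a unique $\mathcal{T}_1$-transverse preimage $x=\Psi_{\mathcal{T}_1}^{-1}(\{G_e\})$, and conclude $\mathcal{T}_0$-transversality of $x$ by reading the above identity in reverse. There is essentially no obstacle: the only point requiring a moment's care is that $\mathcal{T}_0$-transversality must be checked for every edge of $\mathcal{T}_0$, but all edges other than $e_0,\bar{e}_0$ are shared with $\mathcal{T}_1$ and therefore already handled; and that once $x$ is produced, the tuple $\{H_e\}$ constructed from $\{G_e\}$ by the stated formula is automatically an element of $\mathcal{A}(\mathcal{T}_0,n)$ because it equals $\Psi_{\mathcal{T}_0}(x)$, so the triangle and transposition relations in $\mathcal{A}(\mathcal{T}_0,n)$ need not be verified by hand.
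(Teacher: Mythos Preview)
Your proof is correct and follows exactly the approach the paper intends: the theorem is stated in the paper simply as a consequence of Proposition~\ref{prop:change-coord-flips} and Theorem~\ref{teo:a-space-1to1}, with no further argument given. Your write-up supplies precisely the details the paper leaves implicit, including the observation that $\mathcal{T}_0$-transversality of a $\mathcal{T}_1$-transverse system reduces to invertibility of the single matrix $\Lambda_{r(e_0)}$, and the converse direction via the bijectivity of $\Psi_{\mathcal{T}_1}$.
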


As pointed out in Section~\ref{sec:Ptolemy} the formula for the flip in Proposition~\ref{prop:change-coord-flips} can be seen as a noncommutative Ptolemy-relation. Thus the $\mathcal{A}$-space we introduce here is a noncommutative generalization of Penner's parametrization of the decorated Teichmüller space.

\begin{rem}
  Since the $\Lambda$-lengths completely determine the decorated local system
  which in turn determine a framed local system, they also determine
  completely the possible $\mathcal{X}$-coordinates.  However deriving the
  $\mathcal{X}$-coordinates from the $\Lambda$-lengths involves diagonalizing
  matrices, hence this nice formula for the flip does not easily descend to
  $\mathcal{X}$-coordinates. In Section \ref{sec:mapAX} we will show how to use
  $\mathcal{A}$-coordinates to compute the formula for the flip of the cross ratios,
  this partially describes the behavior of the $\mathcal{X}$-coordinates under flips.
\end{rem}

\section{$\mathcal{A}$-coordinates for maximal decorated twisted local systems}
Using the results from Section~\ref{sec:symp_cross} we can describe the subspace of $\mathcal{A}$-coordinates that parametrizes maximal decorated twisted representations.

\begin{prop}\label{prop:Amax}
  Let~$\mathcal{T}$ be an ideal triangulation of~$S$.

A decorated twisted representation in $\Locddelta( S, \Sp(2n,\R))$ is
maximal if and only if it is $\mathcal{T}$-transverse and, for every triangle
$(e_1, e_2, e_3)$ in~$\edgeT$, the symmetric matrix
 \[\Lambda_{r(e_1)} \Lambda_{r(\bar{e}_2)}^{-1} \Lambda_{r(e_3)}\]
is positive definite.
\end{prop}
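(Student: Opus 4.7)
The plan is to combine the triangle-wise characterization of maximality with the formula expressing the Maslov index of a triple of transverse decorated Lagrangians in terms of symplectic $\Lambda$-lengths.

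First, by Corollary~\ref{cor:maxim-decor-local-transverse} and Lemma~\ref{lem:maxim-decor-local-every-triangulation}, the framed local system underlying a decorated twisted representation is maximal if and only if it is $\mathcal{T}$-transverse and, for every triangle $T$ of $\mathcal{T}$, the Maslov index $\mu^T$ equals $n$. Since $\mu^T$ is by definition the signature of a nondegenerate symmetric $n\times n$ matrix, equality with $n$ is equivalent to positive definiteness of that matrix. It thus suffices to identify, on a $\mathcal{T}$-transverse decorated local system, the symmetric matrix whose signature computes $\mu^T$ with the matrix $\Lambda_{r(e_1)}\Lambda_{r(\bar{e}_2)}^{-1}\Lambda_{r(e_3)}$ of the statement.

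Fix a triangle $T$ of $\mathcal{T}$ and label its vertices so that $e_1$, $e_2$, $e_3$ are the edges running $v_2 \to v_3$, $v_3 \to v_1$, $v_1 \to v_2$ respectively (so that $(e_3, e_1, e_2)$ cycles around $\partial T$). Let $\mathbf{v}_1, \mathbf{v}_2, \mathbf{v}_3$ be the decorated Lagrangians obtained by parallel transport of the decoration to these vertices along the flow lines of $\vec{x}_\mathcal{T}$ (as in Proposition~\ref{prop:framed-local-systems-rep}). The lifts $r(e_i)$ go between consecutive vertices without encircling any zero of $\vec{x}_\mathcal{T}$, so one reads off $\Lambda_{r(e_1)} = \omega(\mathbf{v}_2,\mathbf{v}_3) = \Lambda_{23}$, $\Lambda_{r(e_3)} = \Lambda_{12}$, and, using $\Lambda_{r(\bar{e}_2)} = {}^T\!\Lambda_{r(e_2)}$ from Section~\ref{sec:lift-orient-edges}, $\Lambda_{r(\bar{e}_2)} = {}^T\!\Lambda_{31} = \Lambda_{13}$. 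Hence
\[ \Lambda_{r(e_1)}\Lambda_{r(\bar{e}_2)}^{-1}\Lambda_{r(e_3)} \;=\; \Lambda_{23}\Lambda_{13}^{-1}\Lambda_{12}. \]
This matrix is symmetric by the triangle relation (Corollary~\ref{coro:tri-rel}), and by cyclic invariance of the Maslov index together with Lemma~\ref{lem:symp_Maslov} its signature equals $\mu_n(L_2,L_3,L_1) = \mu^T$. Therefore $\mu^T = n$ if and only if this matrix is positive definite, which yields the proposition.

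The main obstacle, which I expect to be the only non-routine step, is the bookkeeping of signs in the previous paragraph: the decorated Lagrangians at the vertices are defined only up to the action of $\rho(c_j) = -\Id_{L_j}$ (Corollary~\ref{cor:framed-local-systems-rep-odd-punctured}), and the lifts $r(e)$ into $T' S$ can in principle introduce factors of $\delta_G = -\Id$ via the $\delta$-twisting. One has to check that, for a coherent choice of parallel transports localised in a neighbourhood of the triangle, these $\pm\Id$ factors either do not appear or appear in pairs which cancel in the triple product—so that the equality with $\Lambda_{23}\Lambda_{13}^{-1}\Lambda_{12}$ holds on the nose. As a sanity check, for $n=1$ the condition reduces to the scalar inequality $\lambda_{r(e_1)}\lambda_{r(\bar{e}_2)}^{-1}\lambda_{r(e_3)} > 0$, which is the classical condition that the three fixed points on $\R\PP^1$ are cyclically ordered along $\partial T$.
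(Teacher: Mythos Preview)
Your approach is exactly the paper's: the proposition is stated without proof, only with the remark that it follows from the results of Section~\ref{sec:symp_cross}, i.e.\ Lemma~\ref{lem:symp_Maslov} combined with the triangle-wise characterization of maximality (Corollary~\ref{cor:max-iff-triangle-are-max}/Lemma~\ref{lem:maxim-decor-local-every-triangulation}). So the skeleton is right.

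There is, however, a genuine slip in your middle paragraph which is precisely an instance of the sign bookkeeping you flag at the end. You write ${}^T\!\Lambda_{31}=\Lambda_{13}$, but in fact ${}^T\!\Lambda_{ij}=-\Lambda_{ji}$, so ${}^T\!\Lambda_{31}=-\Lambda_{13}$. With your naive identifications $\Lambda_{r(e_1)}=\Lambda_{23}$, $\Lambda_{r(e_2)}=\Lambda_{31}$, $\Lambda_{r(e_3)}=\Lambda_{12}$ this would give $\Lambda_{r(e_1)}\Lambda_{r(\bar e_2)}^{-1}\Lambda_{r(e_3)}=-\Lambda_{23}\Lambda_{13}^{-1}\Lambda_{12}$, the wrong sign. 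The point is that those three naive identifications cannot all hold simultaneously: the loop $r(e_1)\ast r(e_2)\ast r(e_3)$ is not contractible in $T'S$ but winds once around a fiber, so the $\delta$-twist forces exactly one extra $-\Id$ among the three. This is the same mechanism behind the identity $\Lambda_{r(\bar\alpha)}={}^T\!\Lambda_{r(\alpha)}$ in Section~\ref{sec:a-space}(1), where a minus from the fiber cancels the minus from ${}^T\!\Lambda_{\alpha}=-\Lambda_{\bar\alpha}$. Once you track this one sign carefully (for instance by working inside the quiver model as in the proof of Theorem~\ref{teo:a-space-1to1}, where the $3$-cycle relation $g_cg_bg_a=-\Id$ makes the extra sign explicit), the product does come out equal to $\Lambda_{23}\Lambda_{13}^{-1}\Lambda_{12}$ on the nose and your argument goes through.
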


Theorem~\ref{teo:toledo_maslov} implies that these conditions are invariant
under a flip, and thus independent of the triangulation (this can be also
checked by a direct calculation using the formulas of Theorem~\ref{teo:change-coord-flips}). Therefore Proposition~\ref{prop:Amax} gives a parametrization of the space of maximal decorated twisted representation.

\section{From $\mathcal{A}$-coordinates to $\mathcal{X}$-coordinates}
\label{sec:mapAX}
We now describe the relation between $\mathcal{X}$-coordinates and  $\mathcal{A}$-coordinates, and derive explicit formulas for the flip.

In Section~\ref{sec:symp_cross} we expressed the cross ratio of four pairwise transverse decorated Lagrangians in terms of the symplectic $\Lambda$-lengths, namely let $(L_i,\mathbf{v_i})\in \Lagd{n}$, with $i\in\{1,2,3,4\}$, be four pairwise transverse framed Lagrangians. Then
\[[L_1,L_2,L_3,L_4]_{\mathbf{v_1}}
=-\Lambda_{41}^{-1}\Lambda_{43}\Lambda_{23}^{-1}\Lambda_{21},\]
where $[L_1,L_2,L_3,L_4]_{\mathbf{v_1}}$ denotes the cross ratio expressed in the basis $\mathbf{v_1}$.

We think of the cross ratio $[L_1,L_2,L_3,L_4]_{\mathbf{v_1}}$ as being
associated to an oriented edge/an arc $\alpha$ from the decorated Lagrangian
$(L_1, \mathbf{v}_1)$ to the decorated Lagrangian $(L_3, \mathbf{v}_3)$. We
write $\CR_{\alpha}$ for this cross ratio, and call this the cross ratio
of~$\alpha$.\index{notation}{205@$\CR_{\alpha}$ (cross ration function associated with
  the arc~$\alpha$)}\index{definition}{cross ratio}

This allows to define a map from the space of $\mathcal{A}$-coordinates to the
space of $\mathcal{X}$-coordinates:

Given a decorated twisted local system $(\mathcal{F}, \beta)$ we get for every
oriented edge $\alpha \in \edgeT$ a cross ratio $\CR_\alpha$,
(or~$\CR^{\mathcal{T}}_{\alpha}$ if we need to remember the
triangulation). Lemma~\ref{lem:symp_cross} implies that
$\CR^{\mathcal{T}}_{\bar{ \alpha}} = \Lambda_{\alpha}^{-1}
\,{}^T\!\!\CR^{\mathcal{T}}_{\alpha} \Lambda_{\alpha}$.

The next proposition shows that the formulas for the change of the cross ratios~$\CR_\alpha$ under a flip
have a nice form, that is just a noncommutative analog of the classical formula.

Oriented edges and triangles in the $8$-gon are completely determined by
their
extremities and we will designate them by their extremities: e.g.\ $62$ is the
oriented edge from the vertex~$6$ to the vertex~$2$.
Consider the following two triangulations~$\mathcal{T}$
and~$\mathcal{T}'$ of the $8$-gon: the triangles $234$, $456$,
$678$, and~$812$ belong to both~$\mathcal{T}$ and~$\mathcal{T}'$, the edge $62$
belongs to~$\mathcal{T}$ and the edge~$84$ belongs to~$\mathcal{T}'$ \ep{cf.\
Figure~\ref{flipA}}. Hence~$\mathcal{T}$ and~$\mathcal{T}'$ differ by a flip
in the quadrilateral $2468$.

\begin{prop}\label{cross_ratio_flip}
Consider eight framed Lagrangians $(L_i,\mathbf{v_i})$
\ep{$i\in\{1,\dots,8\}$} thought as being associated with the vertices of a
$8$-gon.
We have the following formulas for the flip along the edge $62$:

\begin{figure}[ht]
\begin{center}
\begin{tikzpicture}
  \coordinate (L1) at (-1.8,1.8) ;
  \coordinate (L2) at (0,2) ;
  \coordinate (L3) at (1.8,1.8) ;
  \coordinate (L4) at (2,0) ;
  \coordinate (L5) at (1.8,-1.8) ;
  \coordinate (L6) at (0,-2) ;
  \coordinate (L7) at (-1.8,-1.8) ;
  \coordinate (L8) at (-2,0) ;
  \draw (L1) node[above left]{$L_1$} ;
  \draw (L2) node[above]{$L_2$} ;
  \draw (L3) node[above right]{$L_3$} ;
  \draw (L4) node[right]{$L_4$} ;
  \draw (L5) node[below right]{$L_5$} ;
  \draw (L6) node[below]{$L_6$} ;
  \draw (L7) node[below left]{$L_7$} ;
  \draw (L8) node[left]{$L_8$} ;
  \draw (L1)--(L2) ;
  \draw (L2)--(L3) ;
  \draw (L3)--(L4) ;
  \draw (L4)--(L5) ;
  \draw (L5)--(L6) ;
  \draw (L6)--(L7) ;
  \draw (L7)--(L8) ;
  \draw (L8)--(L1) ;
  \draw[middlearrow={latex}] (L8)--(L2) node[midway, sloped,
  above]{$\CR^{\mathcal{T}}_{82}$} ;
  \draw[middlearrow={latex}] (L6)--(L8) node[midway, sloped,
  above]{$\CR^{\mathcal{T}}_{68}$} ;
  \draw[middlearrow={latex}] (L6)--(L4) node[midway, sloped,
  above]{$\CR^{\mathcal{T}}_{64}$} ;
  \draw[middlearrow={latex}] (L4)--(L2) node[midway, sloped,
  above]{$\CR^{\mathcal{T}}_{42}$} ;
  \draw[middlearrow={latex}] (L6)--(L2) node[midway, sloped,
  above]{$\CR^{\mathcal{T}}_{62}$} ;
  \coordinate (Lp1) at (4.2,1.8) ;
  \coordinate (Lp2) at (6,2) ;
  \coordinate (Lp3) at (7.8,1.8) ;
  \coordinate (Lp4) at (8,0) ;
  \coordinate (Lp5) at (7.8,-1.8) ;
  \coordinate (Lp6) at (6,-2) ;
  \coordinate (Lp7) at (4.2,-1.8) ;
  \coordinate (Lp8) at (4,0) ;
  \coordinate (e1) at (2.3,1) ;
  \coordinate (e2) at (3.7,1) ;
  \draw [->] (e1)--(e2) node[midway, above]{flip};
  \draw (Lp1) node[above left]{$L_1$} ;
  \draw (Lp2) node[above]{$L_2$} ;
  \draw (Lp3) node[above right]{$L_3$} ;
  \draw (Lp4) node[right]{$L_4$} ;
  \draw (Lp5) node[below right]{$L_5$} ;
  \draw (Lp6) node[below]{$L_6$} ;
  \draw (Lp7) node[below left]{$L_7$} ;
  \draw (Lp8) node[left]{$L_8$} ;
  \draw (Lp1)--(Lp2) ;
  \draw (Lp2)--(Lp3) ;
  \draw (Lp3)--(Lp4) ;
  \draw (Lp4)--(Lp5) ;
  \draw (Lp5)--(Lp6) ;
  \draw (Lp6)--(Lp7) ;
  \draw (Lp7)--(Lp8) ;
  \draw (Lp8)--(Lp1) ;
  \draw[middlearrow={latex}] (Lp8)--(Lp2) node[midway, sloped,
  above]{$\CR^{\mathcal{T}'}_{82}$} ;
  \draw[middlearrow={latex}] (Lp6)--(Lp8) node[midway, sloped,
  above]{$\CR^{\mathcal{T}'}_{68}$} ;
  \draw[middlearrow={latex}] (Lp6)--(Lp4) node[midway, sloped,
  above]{$\CR^{\mathcal{T}'}_{64}$} ;
  \draw[middlearrow={latex}] (Lp4)--(Lp2) node[midway, sloped,
  above]{$\CR^{\mathcal{T}'}_{42}$} ;
  \draw[middlearrow={latex}] (Lp8)--(Lp4) node[midway, sloped,
  above]{$\CR^{\mathcal{T}'}_{84}$} ;
\end{tikzpicture}
\caption{The flip in a $8$-gon}
\label{flipA}
\end{center}
\end{figure}

\begin{align*}
  \CR^{\mathcal{T}'}_{84} & = \Lambda_{68}^{-1}
\, {}^{T}\!\!\CR^{\mathcal{T}-1}_{62}  \Lambda_{68} \\
  \CR^{\mathcal{T}'}_{64} & =\CR^{\mathcal{T}}_{64} (\Id+\CR^{\mathcal{T}-1}_{62})^{-1},
\quad
\CR^{\mathcal{T}'}_{82}= (\Id+ \Lambda_{68}^{-1} \,
                            {}^T\!\!\CR^{\mathcal{T}-1}_{62} \Lambda_{68}) \CR^{\mathcal{T}}_{82} \\
\CR^{\mathcal{T}'}_{68} & =
                        (\Id+\CR^{\mathcal{T}}_{62})  \CR^{\mathcal{T}}_{68}, \quad
\CR^{\mathcal{T}'}_{42}=\CR^{\mathcal{T}}_{42} (\Id+ \Lambda_{64}^{-1}
                          \CR^{\mathcal{T}}_{62} \Lambda_{64}) .
\end{align*}
\end{prop}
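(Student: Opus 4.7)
The key observation is that the eight decorated Lagrangians $(L_i, \mathbf{v}_i)$ are fixed data, only the triangulation changes; so all symplectic $\Lambda$-lengths $\Lambda_{ij} = \omega(\mathbf{v}_i,\mathbf{v}_j)$ are the same whether computed in $\mathcal{T}$ or $\mathcal{T}'$. My plan is to use Lemma~\ref{lem:symp_cross} to expand each of the ten cross ratios appearing in the proposition as a word in the $\Lambda_{ij}$'s, at which point each of the five identities becomes a purely algebraic statement. These statements will then follow from the noncommutative Ptolemy relation (Proposition~\ref{prop:Ptolemy}) applied to the quadrilateral $(L_2,L_4,L_6,L_8)$, namely
\[
\Lambda_{84} = \Lambda_{82}\Lambda_{62}^{-1}\Lambda_{64} + \Lambda_{86}\Lambda_{26}^{-1}\Lambda_{24},
\]
together with the triangle relations (Corollary~\ref{coro:tri-rel}) and the antisymmetry ${}^T\!\Lambda_{ij} = -\Lambda_{ji}$.

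The first identity, $\CR^{\mathcal{T}'}_{84} = \Lambda_{68}^{-1}\,{}^T\!\!\CR^{\mathcal{T}-1}_{62}\Lambda_{68}$, is special in that both cross ratios involve only the four Lagrangians of the flipped quadrilateral: it expresses that switching the diagonal of a quadrilateral inverts its cross ratio, up to a transpose-and-conjugation reflecting the change from the basis $\mathbf{v}_6$ (the natural basis for $\CR^{\mathcal{T}}_{62}$) to the basis $\mathbf{v}_8$ (the natural basis for $\CR^{\mathcal{T}'}_{84}$). This is essentially the matrix form of Proposition~\ref{prop:properties_CR}(4), and the conjugating matrix $\Lambda_{68}$ is the one produced by Lemma~\ref{lamb} expressing the natural isomorphism $L_6 \to L_8$ (viewed through $L_2$) in these two bases.

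For the four remaining formulas, each cross ratio changes under the flip only because one of the two \emph{side} vertices of its associated quadrilateral is replaced when the inner triangulation is redrawn (for instance, for the diagonal $64$ the side vertex on the $\{7,8,1,2,3\}$-side changes from $L_2$ to $L_8$). Thus $\CR^{\mathcal{T}}_\alpha$ and $\CR^{\mathcal{T}'}_\alpha$, for $\alpha \in \{64, 82, 68, 42\}$, have expansions via Lemma~\ref{lem:symp_cross} that share several factors and differ only in one pair of $\Lambda$'s. Substituting the Ptolemy relation above (or, depending on the bases involved, the companion relation obtained by swapping the role of the two diagonals) into the discrepancy produces the factor $\Id + \CR^{\mathcal{T}}_{62}$ or its conjugates $\Id + \Lambda_{68}^{-1}\,{}^T\!\!\CR^{\mathcal{T}-1}_{62}\Lambda_{68}$ and $\Id + \Lambda_{64}^{-1}\CR^{\mathcal{T}}_{62}\Lambda_{64}$, exactly as in the statement.

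The main obstacle will be bookkeeping rather than conceptual: Lemma~\ref{lem:symp_cross} writes a cross ratio as a matrix acting on a specific Lagrangian, and since the five identities mix cross ratios naturally expressed in the bases $\mathbf{v}_6$, $\mathbf{v}_8$, $\mathbf{v}_2$ and $\mathbf{v}_4$, one must carefully track each change of basis, which is what accounts for the various conjugations by $\Lambda_{68}$ or $\Lambda_{64}$ and the transpose in the first formula. Once all terms are brought into a common basis and the antisymmetry is applied, each of the five equalities collapses to a single application of Ptolemy.
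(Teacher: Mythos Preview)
Your proposal is correct and follows essentially the same approach as the paper: expand each cross ratio via Lemma~\ref{lem:symp_cross} in terms of the $\Lambda_{ij}$, then reduce the identities using the Ptolemy relation on the quadrilateral $(L_2,L_4,L_6,L_8)$, the triangle relation, and the antisymmetry ${}^T\!\Lambda_{ij}=-\Lambda_{ji}$. The paper's proof does exactly this---a direct $\Lambda$-length computation for $\CR^{\mathcal{T}'}_{84}$, then a factorization of $\Lambda_{84}$ via Ptolemy and the triangle relation to handle $\CR^{\mathcal{T}'}_{64}$, with the remaining cases declared similar.
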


Note that in the case $n=1$, these are precisely the formulas for the flip, see for example~\cite[Formula (1.30)]{FG}, and here we have a noncommutative generalization of them.

\begin{rem}
The same formulas apply to the computation of a flip in the case when some of
the edges 82, 68, 64, 42 are external. Since no cross ratios are associated with external edges, in this case it is only necessary to ignore the external edges.
\end{rem}

\begin{proof}
  One has
  $\CR^{ \mathcal{T}}_{62} = -\Lambda_{46}^{-1} \Lambda_{42} \Lambda_{82}^{-1}
  \Lambda_{86}$ and
  $\CR^{ \mathcal{T}'}_{84} = -\Lambda_{68}^{-1} \Lambda_{64} \Lambda_{24}^{-1}
  \Lambda_{28}$ so that, using the identities
  $\Lambda_{ji} = -{}^{T}\!\! \Lambda_{ij}$,
  ${}^T\!\!  \CR^{\mathcal{T}-1}_{62} = -\Lambda_{64} \Lambda_{24}^{-1}
  \Lambda_{28} \Lambda_{68}^{-1}$. Thus
  $\CR^{ \mathcal{T}'}_{84} = -\Lambda_{68}^{-1} (\Lambda_{64}
  \Lambda_{24}^{-1} \Lambda_{28} \Lambda_{68}^{-1}) \Lambda_{68} =
  \Lambda_{68}^{-1} {}^T\!\!  \CR^{\mathcal{T}-1}_{62} \Lambda_{68}$.

  The flip relation for $\Lambda$-lengths implies
  \begin{align*}
    \Lambda_{84} & = \Lambda_{86} \Lambda_{26}^{-1} \Lambda_{24} +
                   \Lambda_{82} \Lambda_{62}^{-1} \Lambda_{64} = \Lambda_{86}(
                   \Id + \Lambda_{86}^{-1} \Lambda_{82} \Lambda_{62}^{-1}
                   \Lambda_{64}  \Lambda_{24}^{-1} \Lambda_{26})
                   \Lambda_{26}^{-1} \Lambda_{24} \\
    \intertext{ by the triangle relation $ \Lambda_{62}^{-1}
                   \Lambda_{64}  \Lambda_{24}^{-1} \Lambda_{26} =
    -\Lambda_{42}^{-1} \Lambda_{46}$:}
    & = \Lambda_{86}(
                   \Id - \Lambda_{86}^{-1} \Lambda_{82} \Lambda_{42}^{-1} \Lambda_{46})
                   \Lambda_{26}^{-1} \Lambda_{24} = \Lambda_{86}(
                   \Id + \CR^{\mathcal{T}-1}_{62})
                   \Lambda_{26}^{-1} \Lambda_{24}.
  \end{align*}
  Therefore
  \begin{align*}
    \CR^{\mathcal{T}'}_{64} &= -\Lambda_{56}^{-1} \Lambda_{54} \Lambda_{84}^{-1}
  \Lambda_{86} = -\Lambda_{56}^{-1} \Lambda_{54} \Lambda_{24}^{-1} \Lambda_{26}
    ( \Id + \CR^{\mathcal{T}-1}_{62})^{-1} \\
    &= \CR^{\mathcal{T}}_{64} ( \Id +
  \CR^{\mathcal{T}-1}_{62})^{-1}.
  \end{align*}

  The proof for other cross ratios is similar.
\end{proof}

\section{Berenstein--Retakh's algebra}
\label{sec:algebra-berenst-reta}

In~\cite{BR}, Berenstein and Retakh introduced the following
noncommutative instance of cluster algebras, which we shortly recall. Let $\Gamma(S)$ be the set of
homotopy classes (relative to the boundary) of arcs in~$S$: elements of
$\Gamma(S)$ are homotopy class of maps between pairs $\alpha\colon ( [0,1],
\{0,1\}) \to (S, \partial S)$. (Actually
Berenstein and Retakh have a more refined version incorporating orbifolds
points of order~$2$.)\index{notation}{207@$\Gamma(S)$ (homotopy classes of arcs)}

Recall (Section~\ref{sec:conf-assoc-with-1}) that, for every~$p$ in
$\{2,3, \dots\}$, a $p$-gon is (the homotopy class of) a map
$f\colon ( \mathbb{D}, \mu_p) \to ({S}, \partial S)$ where
$\mu_p = \{ z\in \C \mid z^p =1\}$. \index{notation}{209@$\mu_p$ (the $p$-th roots of unity)}

Every $3$-gon~$t$ defines~$6$ elements in~$\Gamma(S)$: there are the
$\alpha_{k,\ell}(t)$ for $k\neq \ell$ in $\Z/3\Z$ where~$\alpha_{k,\ell}(t)$ is
the restriction of~$t$ to an arc going from~$\mathbf{e}(k/3)=e^{2ik\pi/3}$
to~$\mathbf{e}(\ell/3)=e^{2i\ell\pi/3}$ (one can choose the segment $[\mathbf{e}(k/3),
\mathbf{e}(\ell/3)]$ in~$\overline{\mathbb{D}}$). Of course $\overline{ \alpha_{\ell, k}(t)} =
\alpha_{k,\ell}(t)$.\index{notation}{211@$\mathbf{e}(x) =e^{2i\pi x}$ (exponential functions)}

A $4$-gon~$q$
defines~$12$ elements $\alpha_{k,\ell}(q)$ in~$\Gamma(S)$ for
$k\neq \ell $ in~$\Z/4\Z$.

Similarly,
for every $p$-gon~$f$, $\alpha_{k,\ell}(f)$ ($k\neq \ell$ in $\Z/p\Z$) will
 denote the class of the restriction of~$f$ to the segment
 $[ \mathbf{e}(k/p), \mathbf{e}(\ell/p)]$ in  $\mathbb{D}$.

\begin{df}[Noncommutative surface]
  \label{df:algebra-berenst-reta}
  The \emph{noncommutative surface} associated with~$S$ is the
  algebra~$\mathcal{A}_S$ over~$\Q$ generated by the elements $x_\alpha$,
  $x_{\alpha}^{-1}$ ($\alpha \in \Gamma(S)$) subject to the
  relations:\index{notation}{213@$\mathcal{A}_S$ (Berenstein ans Retakh's algebra)}
  \begin{enumerate}
  \item[(T)] \label{itemT:df:algebra-berenst-reta} for every
    $3$-gon~$t\colon ( \mathbb{D}, \mu_3) \to
    ({S}, \partial S)$, abbreviating
    $x_{k\ell}^{\pm 1} = x_{ \alpha_{k,\ell}(t)}^{\pm 1}$,
    \[x_{12} x_{32}^{-1} x_{31} = x_{13} x_{23}^{-1} x_{21};\]
  \item[(Q)] \label{itemQ:df:algebra-berenst-reta} for every immersed $4$-gon
    $q\colon ( \mathbb{D}, \mu_4) \to ({S}, \partial S)$, abbreviating
$x_{k\ell}^{\pm 1} = x_{ \alpha_{k,\ell}(q)}^{\pm 1}$,
\[x_{13} = x_{12}  x_{42}^{-1} x_{43} +  x_{14}
    x_{24}^{-1} x_{23}.\]
  \end{enumerate}
\end{df}\index{notation}{214@$\mathcal{A}_S$}%
\index{definition}{algebra (Berenstein and Retakh's ---)}%
\index{definition}{Berenstein and Retakh's algebra}%
\index{definition}{noncommutative surface}%
\index{definition}{surface (noncommutative ---)}

\section{Geometric realization of the noncommutative surface}
\label{sec:geom-real-nonc}

For this subsection, we select an irreducible component of the space of decorated $\delta$-twisted local systems that contains a maximal local system.
 Let $\mathcal{K}$ be the field of rational functions on this irreducible component. The $\Lambda$-lengths over this space
enable us to give a \enquote{geometric} realization of the
algebra~$\mathcal{A}_S$:

\begin{teo}   \label{thm:realization BR}
  \label{thn:geom-real-AS}
  There is a \ep{unique} algebra homomorphism
  \[ \Psi\colon \mathcal{A}_S \longrightarrow M_n (\mathcal{K})\]
  such that, for every arc~$\alpha$ in~$S$,
 $\Psi( x_\alpha) = \Lambda_{r(\alpha)}$.
\end{teo}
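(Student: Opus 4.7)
The plan is to construct $\Psi$ by the universal property of the algebra $\mathcal{A}_S$ presented by generators and relations. Since $\mathcal{A}_S$ is by definition the free algebra on the symbols $\{x_\alpha, x_\alpha^{-1}\}_{\alpha \in \Gamma(S)}$ modulo the relations (T) and (Q), it suffices to check that the assignment
\[
  x_\alpha \longmapsto \Lambda_{r(\alpha)} \in M_n(\mathcal{K})
\]
is well-defined, i.e.\ that (a) the matrices $\Lambda_{r(\alpha)}$ are invertible in $M_n(\mathcal{K})$, and that (b) the relations (T) and (Q) hold when each $x_{k\ell}$ is replaced by $\Lambda_{r(\alpha_{k,\ell}(\cdot))}$. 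Uniqueness is then automatic since the $x_\alpha$ generate $\mathcal{A}_S$. Invertibility in (a) is exactly the content of Proposition~\ref{prop:sympl-lambda-inverible-dense}.

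For the triangle relation (T), I would use the properties collected at the beginning of Section~\ref{sec:a-space}. A $3$-gon $t$ in $S$ gives rise, via pulling back an arbitrary decorated twisted local system, to a triple of decorated Lagrangians in $\R^{2n}$; with the convention $\Lambda_{r(\bar\alpha)} = {}^T\!\Lambda_{r(\alpha)}$ (property (\ref{item:1:sec:a-space})), the symmetry property (\ref{item:2:sec:a-space}) --- which itself is a direct translation of Corollary~\ref{coro:tri-rel} applied to the three pairwise $\Lambda$-lengths attached to the vertices of $t$ --- reads precisely
\[
  \Lambda_{r(\alpha_{1,2})}\,\Lambda_{r(\alpha_{3,2})}^{-1}\,\Lambda_{r(\alpha_{3,1})}
  \;=\;
  \Lambda_{r(\alpha_{1,3})}\,\Lambda_{r(\alpha_{2,3})}^{-1}\,\Lambda_{r(\alpha_{2,1})},
\]
which is relation (T) evaluated on $\Lambda$-lengths.

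For the quadrilateral relation (Q), I would use Proposition~\ref{prop:Ptolemy} together with its twisted counterpart established in Proposition~\ref{prop:change-coord-flips}. Given a $4$-gon $q$ with cyclically ordered vertices, the Ptolemy identity gives
\[
  \Lambda_{r(\alpha_{1,3})}
  \;=\;
  \Lambda_{r(\alpha_{1,2})}\,\Lambda_{r(\alpha_{4,2})}^{-1}\,\Lambda_{r(\alpha_{4,3})}
  \;+\;
  \Lambda_{r(\alpha_{1,4})}\,\Lambda_{r(\alpha_{2,4})}^{-1}\,\Lambda_{r(\alpha_{2,3})},
\]
which is exactly relation (Q). Two small points require care here: first, to check that the overall sign coming from the $\delta$-twisting matches --- Proposition~\ref{prop:change-coord-flips} is stated precisely so that this sign works out as in the untwisted Ptolemy identity, so no extra signs appear; second, Proposition~\ref{prop:Ptolemy} is stated for decorated Lagrangians in $\R^{2n}$, but via the pull-back of the decorated twisted local system by the $4$-gon $q$ (as in Section~\ref{sec:conf-assoc-with-1}) we obtain four decorated Lagrangians to which we apply it.

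The main obstacle is really bookkeeping rather than any deep step: one must make sure that the orientation conventions $\alpha \mapsto \bar\alpha$, the lift $\alpha \mapsto r(\alpha)$ of Section~\ref{sec:lift-orient-edges}, and the $\delta$-twisting are compatible in signs throughout both identities. Once this verification is done, the map $\Psi$ is defined on generators, respects the defining relations of $\mathcal{A}_S$, and therefore extends uniquely to the desired algebra homomorphism $\mathcal{A}_S \to M_n(\mathcal{K})$.
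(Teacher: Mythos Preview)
Your proposal is correct and follows essentially the same approach as the paper: invoke the universal property of~$\mathcal{A}_S$, then verify that the $\Lambda$-lengths satisfy the triangle relation~(T) via property~(\ref{item:2:sec:a-space}) of Section~\ref{sec:a-space} (i.e.\ Corollary~\ref{coro:tri-rel}) and the quadrilateral relation~(Q) via the Ptolemy identity of Proposition~\ref{prop:change-coord-flips}. Your explicit mention of invertibility (Proposition~\ref{prop:sympl-lambda-inverible-dense}) and of the sign bookkeeping for the $\delta$-twist is a useful elaboration of points the paper's proof leaves implicit.
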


\begin{proof}
  We first remark that every element $\Lambda_{r(\alpha)}$ is invertible in $\mathcal{K}$. This is because the selected irreducible component contains a maximal local system, and every maximal local system is $\alpha$-transverse for every $\alpha$, see Corollary~\ref{cor:maxim-decor-local-transverse}. Hence the determinant of $\Lambda_{r(\alpha)}$ is non-zero in $\mathcal{K}$.

  Now, by the universal property of the algebra~$\mathcal{A}_S$, the
  assignment $x^{\pm 1}_{\alpha} \mapsto \Lambda_{r(\alpha)}^{\pm 1}$ extends
  to an algebra homomorphism if and only if the defining relations between
  the elements $\{x_\alpha\}_{\alpha\in \Gamma(S)}$ (Definition~\ref{df:algebra-berenst-reta}) are satisfied by
  the family $\{ \Lambda_{r(\alpha)}\}_{\alpha\in \Gamma(S)}$. This follows
  from point~(\ref{item:2:sec:a-space}) in Section~\ref{sec:a-space} and
  Proposition~\ref{prop:change-coord-flips}.
\end{proof}

\section{Zigzag path and expression of the $\Lambda$-lengths}
\label{sec:zig-zag-path}

For any arc~$\alpha$ in~$\Gamma(S)$, there is a unique \enquote{minimal}
triple $(p, f, k)$ where~$p$ belongs to~$\N$,
$f\colon ( \mathbb{D}, \mu_p) \to ({S}, \partial S)$ is a $p$-gon, and~$k$ is
in $\Z/p\Z$ such that, for all~$i$ in~$\Z/p\Z$, $\alpha_{i,i+1}(f)$ belongs
to~$\mathcal{T}$ and $\alpha= \alpha_{1k}(f)$. This can be proved, for
example, using an hyperbolic structure on~$S$ with totally geodesic boundary
and the geodesic realizations of the arcs.

\begin{df}[Zigzag sequence]
  \label{df:admi-sequ}
  A \emph{zigzag} sequence for~$\alpha$ (or 
  an $\alpha$-zigzag sequence) is an odd length sequence
  $\boldsymbol{\delta} =( \delta_1, \delta_2, \dots, \delta_{2m},
  \delta_{2m+1})$ in~$\mathcal{T}^{2m+1}$ such that:
  \begin{itemize}
  \item there is $(j_0, j_1, \dots, j_{2m}, j_{2m+1})\in (\Z/p\Z)^{2m+2}$ with
    $j_0=1$, $j_{2m+2}=k$, and for all~$\ell = 1, \dots, 2m+1$, $\delta_\ell =
    \alpha_{ j_{\ell-1} j_\ell}(f)$;
  \item for all odd~$\ell$, the segments (in~$\mathbb{D}$) $[\mathbf{e}(
      j_{\ell-1}/p) , \mathbf{e}({ j_{\ell}/p})]$ and $[\mathbf{e}(1/p), \mathbf{e}( k/p)]$ do not intersect (besides endpoints for $\ell=1$ of
    $\ell=2m+1$);
  \item for all even~$\ell$, the segments $[\mathbf{e}(j_{\ell-1}/p) ,
    \mathbf{e}( j_{\ell}/p)]$ and $[\mathbf{e}(1/p), \mathbf{e}( k/p)]$
    intersect at some point $u_\ell$;
  \item the sequence $(u_2, u_4, \dots, u_{2m})$ in $[\mathbf{e}(1/p), \mathbf{e}( k/p)]$ is increasing.
  \end{itemize}
\end{df}\index{definition}{zigzag path}%
\index{definition}{zigzag sequence}%
Note that our terminology differs from~\cite{BR}.

For such a $\boldsymbol{\delta}$ let us denote $x_{\boldsymbol{\delta}} =
x_{\delta_1} x_{\bar{\delta}_2}^{-1} x_{\delta_3} \cdots
x_{\bar{\delta}_{2m}}^{-1} x_{\delta_{2m+1}}$. This elements belongs thus to
the subalgebra generated by the~$x_{\alpha}^{\pm 1}$ for~$\alpha$
in~$\mathcal{T}$. Similarly, we denote by $\Lambda_{\boldsymbol{\delta}}$ the product $
\Lambda_{\delta_1} \Lambda_{\bar{\delta}_2}^{-1} \Lambda_{\delta_3} \cdots
\Lambda_{\bar{\delta}_{2m}}^{-1} \Lambda_{\delta_{2m+1}}$ of $\GL( n, \mathcal{K})$.

One spectacular result from~\cite{BR} is the noncommutative Laurent phenomenon:

\begin{teo}[{\cite[Theorem~3.30]{BR}}]
  \label{teo:noncom-Laurent}
  For every~$\alpha$ in~$\Gamma(S)$, one has
  \[ x_\alpha = \sum_{\boldsymbol{\delta}} x_{\boldsymbol{\delta}}\]
  where the sum runs over the $\alpha$-zigzag sequences~$\boldsymbol{\delta}$.
\end{teo}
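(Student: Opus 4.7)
The plan is to prove the identity by induction on $N(\alpha) = p - 3$, where $(p,f,k)$ is the minimal triple associated to~$\alpha$; geometrically, $N(\alpha)$ counts the number of edges of~$\mathcal{T}$ that~$\alpha$ crosses transversally (zero iff $\alpha\in\mathcal{T}$). When $N(\alpha)\leq 0$, the arc~$\alpha$ itself belongs to~$\mathcal{T}$, so the only zigzag sequence is $(\alpha)$ of length one and the identity reduces to $x_\alpha = x_\alpha$. This handles the base case.

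For the inductive step, I would assume $N(\alpha)\geq 1$, so $p\geq 4$. The triangles of~$\mathcal{T}$ that meet~$\alpha$ cut~$f$ into sub-triangles whose sides and diagonals all lie in~$\mathcal{T}$. Pick the vertex $j$ of $f$ adjacent (in $f$) to $1$, and the vertex $j'$ adjacent to $k$ lying on the opposite side of~$\alpha$ from~$j$, chosen so that $\alpha_{j,j'}$ is either a side of~$f$ or a $\mathcal{T}$-diagonal inside~$f$. Applying the Ptolemy relation~(Q) of Definition~\ref{df:algebra-berenst-reta} to the quadrilateral $(1,j,k,j')$ produces
\[
  x_\alpha \;=\; x_{1,j}\,x_{j',j}^{-1}\,x_{j',k}\;+\;x_{1,j'}\,x_{j,j'}^{-1}\,x_{j,k}.
\]
Each of the four outer arcs $\alpha_{1,j}$, $\alpha_{j',k}$, $\alpha_{1,j'}$, $\alpha_{j,k}$ is contained in a strictly smaller sub-polygon of~$f$ and therefore has strictly smaller $N$-invariant than~$\alpha$. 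The middle factor $x_{j,j'}^{\pm 1}$ is itself an element of~$\mathcal{T}$ by choice. Invoking the induction hypothesis on each outer arc, I would substitute its zigzag expansion and distribute, obtaining $x_\alpha$ as a sum of monomials in $\{x_\delta^{\pm 1}\}_{\delta\in\mathcal{T}}$.

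The hard part will be identifying this sum, term by term, with $\sum_{\boldsymbol{\delta}} x_{\boldsymbol{\delta}}$. Every zigzag sequence $\boldsymbol{\delta}=(\delta_1,\dots,\delta_{2m+1})$ for $(\mathcal{T},\alpha)$ of length $\geq 3$ admits a canonical splitting at its first crossing of~$\alpha$: the prefix up to (but not including) that crossing is a zigzag for a shorter arc lying on one side of~$\alpha$, the crossing step is itself an edge of~$\mathcal{T}$, and the suffix is a zigzag for the complementary shorter arc on the other side. The combinatorial core is to show that these splittings are in bijection with the monomials produced by the Ptolemy substitution above, with each zigzag contributing exactly once. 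This matching is delicate because a naive count double-counts sequences whose first crossing is ambiguous with respect to the chosen pair $(j,j')$; the redundancy must be cancelled using the triangle relation~(T), which also guarantees that the result is independent of the auxiliary choice of initial flip. Formalizing this bijection and verifying the (T)-cancellations is the essential content of~\cite[Thm.~3.30]{BR}, whose argument could be adapted verbatim.
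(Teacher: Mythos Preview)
The paper does not prove this theorem: it is quoted verbatim as \cite[Theorem~3.30]{BR} and used only to deduce the corollary on $\Lambda$-lengths. So there is nothing to compare your argument against in this paper; any proof must be extracted from Berenstein--Retakh.

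That said, your sketch is a reasonable outline of how such an induction goes, but it is not a proof as written. Two concrete issues. First, your choice of the quadrilateral $(1,j,k,j')$ is underspecified: you need $j$ and $j'$ to lie on opposite sides of $\alpha$ \emph{and} the diagonal $\alpha_{j,j'}$ to be an edge of~$\mathcal{T}$ inside the $p$-gon~$f$; the natural choice is to take the first edge of~$\mathcal{T}$ crossed by~$\alpha$ as the pivot, not an arbitrary adjacent vertex, and then the four outer arcs of the Ptolemy relation genuinely have smaller crossing number. Second, and more seriously, the combinatorial bijection you describe in the last paragraph is exactly the content of the theorem, and you do not carry it out: you assert that (T) handles the over-counting but give no mechanism. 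In Berenstein--Retakh's argument this step is organized rather differently (via a careful recursion on the triangulated polygon and an explicit enumeration of admissible paths), and the cancellations are not ad hoc applications of~(T) but are built into the recursive structure. Your final sentence effectively concedes this by deferring to~\cite{BR}, which is what the present paper does from the outset.
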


As an immediate corollary we get:

\begin{cor}
  For every~$\alpha$ in~$\Gamma(S)$, one has, in $\GL(n, \mathcal{K})$,
  \[ \Lambda_\alpha = \sum_{\boldsymbol{\delta}} \Lambda_{\boldsymbol{\delta}}\]
  where the sum runs over the $\alpha$-zigzag sequences~$\boldsymbol{\delta}$.
\end{cor}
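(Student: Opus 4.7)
The plan is essentially to apply the algebra homomorphism $\Psi\colon \mathcal{A}_S \to M_n(\mathcal{K})$ constructed in Theorem~\ref{thn:geom-real-AS} to the noncommutative Laurent identity of Theorem~\ref{teo:noncom-Laurent}. Since $\Psi$ is an algebra homomorphism, it preserves finite sums and finite products, and since it sends each generator $x_{\alpha}$ to an invertible element $\Lambda_{r(\alpha)} \in \GL(n,\mathcal{K})$ (by Proposition~\ref{prop:sympl-lambda-inverible-dense}), it also satisfies $\Psi(x_{\alpha}^{-1}) = \Lambda_{r(\alpha)}^{-1}$.

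Concretely, I would first observe that for any $\alpha$-zigzag sequence $\boldsymbol{\delta} = (\delta_1, \dots, \delta_{2m+1})$, the very definition gives
\[
\Psi(x_{\boldsymbol{\delta}}) = \Psi(x_{\delta_1}) \Psi(x_{\bar\delta_2})^{-1} \Psi(x_{\delta_3}) \cdots \Psi(x_{\bar\delta_{2m}})^{-1} \Psi(x_{\delta_{2m+1}}) = \Lambda_{\boldsymbol{\delta}},
\]
with the shorthand $\Lambda_\alpha = \Lambda_{r(\alpha)}$ consistent with the conventions used in Section~\ref{sec:zig-zag-path}. Then, applying $\Psi$ to both sides of the identity $x_\alpha = \sum_{\boldsymbol{\delta}} x_{\boldsymbol{\delta}}$ in $\mathcal{A}_S$ and using that the sum is finite, we obtain
\[
\Lambda_\alpha = \Psi(x_\alpha) = \sum_{\boldsymbol{\delta}} \Psi(x_{\boldsymbol{\delta}}) = \sum_{\boldsymbol{\delta}} \Lambda_{\boldsymbol{\delta}},
\]
which is the desired identity in $M_n(\mathcal{K})$, and in fact lies in $\GL(n,\mathcal{K})$ by Proposition~\ref{prop:sympl-lambda-inverible-dense} applied to $\alpha$.

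There is no real obstacle here: the two substantive inputs, namely the existence of the homomorphism $\Psi$ and the noncommutative Laurent phenomenon, have already been established. The only point worth double-checking is the bookkeeping between $x_\alpha$ and $x_{\bar\alpha}$ versus $\Lambda_{r(\alpha)}$ and $\Lambda_{r(\bar\alpha)}$: one must verify that the triangle relation (T) and quadrilateral relation (Q) defining $\mathcal{A}_S$ are matched by the corresponding identities satisfied by the symplectic $\Lambda$-lengths (items~(\ref{item:1:sec:a-space}) and~(\ref{item:2:sec:a-space}) of Section~\ref{sec:a-space} together with Proposition~\ref{prop:change-coord-flips}), but this is precisely what is used in the proof of Theorem~\ref{thn:geom-real-AS} and hence is already available.
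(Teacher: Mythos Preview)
Your proof is correct and is exactly the argument the paper intends: the corollary is stated there as an immediate consequence of Theorem~\ref{teo:noncom-Laurent}, obtained by applying the algebra homomorphism~$\Psi$ of Theorem~\ref{thn:geom-real-AS} to both sides of the Laurent identity. Your handling of the notational shorthand $\Lambda_\alpha = \Lambda_{r(\alpha)}$ and the finiteness of the sum is appropriate and matches the paper's conventions.
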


\appendix

\chapter{Normal form for pair of quadratic forms}
\label{sec:normal-form-pair}

The well known spectral theorem says that for two symmetric bilinear forms $b_0$, $b_1$
on an $n$-dimensional real vector space $V$ such that $b_0$ is positive definite, there exists
a basis $\mathbf{e}$ such that $[b_0]_{\mathbf e}=\Id_n$,
$[b_1]_{\mathbf e}=\diag(\lambda_1,\dots,\lambda_n)$ where
$\lambda_1\geq \cdots\geq\lambda_n$. Therefore, the tuple
$(\lambda_1,\dots,\lambda_n)$ defines the pair $(b_0,b_1)$ up to change of
basis of~$V$. We can define the standard form of the pair of bilinear forms
$(b_0,b_1)$ to be the pair of matrices
$(\Id_n,\diag(\lambda_1,\dots,\lambda_n))$ and say that the basis~$\mathbf{e}$
puts $(b_0,b_1)$ to the standard form. We used this standard form to define
edge invariants for maximal representations in
Chapter~\ref{sec_def_coord_max}.

In this appendix, we define the standard form for a pair of bilinear forms
$(b_0,b_1)$ assuming only nondegeneracy of $b_0$. This standard form will be
used to define edge invariants for general representations in
Chapter~\ref{sec:4-uple-transverse}.

\section{Bilinear forms and selfadjoint linear maps}

Let~$V$ be a $n$-dimensional vector space over a field~$K$ of
characteristic~$0$, and let $b_0$, $b_1$ be symmetric bilinear forms on
$V$. We assume that $b_0$ is not degenerate. We denote by
$b_{i}^{\dagger}\colon V\to V^*$ the linear map corresponding to~$b_i$, i.e.\
$b_{i}^{\dagger}(x)(y) = b_i(x,y)$ for all $x,y\in V$. Then we can consider
$f=(b_{0}^{\dagger})^{-1}\circ b_{1}^{\dagger}\colon V\to
V$.\index{notation}{215@$b^{\dagger}$ (linear map associated with the bilinear form~$b$)}

\begin{lem}
  \label{lem:f-sym}
  For all $x,y\in V$, one has $b_1(x,y)=b_0(fx,y)$.

  The map $f$ is selfadjoint with respect to $b_0$.
\end{lem}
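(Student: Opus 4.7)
The plan is to prove both assertions by direct unwinding of definitions, using only the symmetry of $b_0$ and $b_1$ and the fact that $b_0^{\dagger}$ is an isomorphism (which is guaranteed by the nondegeneracy of $b_0$).

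For the first identity, I would compute $b_0(fx,y)$ by passing through the map $b_0^{\dagger}\colon V\to V^*$. Namely, by definition $b_0(fx,y) = b_0^{\dagger}(fx)(y)$, and since $f = (b_0^{\dagger})^{-1}\circ b_1^{\dagger}$, one has $b_0^{\dagger}(fx) = b_0^{\dagger}\circ (b_0^{\dagger})^{-1}\circ b_1^{\dagger}(x) = b_1^{\dagger}(x)$. Therefore $b_0(fx,y) = b_1^{\dagger}(x)(y) = b_1(x,y)$, which is the claimed equality.

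The selfadjointness of $f$ with respect to $b_0$ is then an immediate consequence: using the first identity together with the symmetry of $b_1$ and of $b_0$, one computes
\[
b_0(fx,y) = b_1(x,y) = b_1(y,x) = b_0(fy,x) = b_0(x,fy),
\]
which is precisely the defining property of $b_0$-selfadjointness. Since both steps are formal, I do not anticipate any obstacle; the only point that requires the hypothesis is the invertibility of $b_0^{\dagger}$, which is exactly the assumption that $b_0$ is nondegenerate.
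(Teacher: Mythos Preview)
Your proof is correct and follows essentially the same approach as the paper: both unwind the definition of~$f$ to obtain $b_0^{\dagger}(fx)=b_1^{\dagger}(x)$, and then deduce selfadjointness from the symmetry of $b_0$ and $b_1$. Your write-up is simply a slightly more explicit version of the paper's argument.
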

\begin{proof}
  The sought for equality can be written: $\forall x\in V$,
  $b_{1}^{\dagger}(x) = b_{0}^{\dagger}(fx)$ and follows immediately from the
  definition of~$f$. Since $b_0$ and $b_1$ are symmetric, this implies
  that~$f$ is selfadjoint.
\end{proof}

\section{More bilinear forms}
\label{sec:more-bilinear-forms}

It will be useful to consider the bilinear forms, for all $k\in\Z_{\geq 0}$,
\[ b_k( v,w) = b_0( f^k v, w) \quad (v,w \in V).\] For all $p,q\geq 0$, one
has $b_{p+q}(v,w) = b_0( f^p v, f^q w)$ and, for all~$k$, $f$ is selfadjoint
with respect to~$b_k$.

More generally, for every polynomial~$P$ in~$K[X]$, $b_P( v,w) =
b_0(P(f)v,w)$ is a symmetric form and $f$ is selfadjoint with respect
to~$b_P$.

\begin{lem}
  \label{lem:ker-bP}
  The kernel of the symmetric form~$b_P$ is equal to the kernel of the
  endomorphism~$P(f)$.
\end{lem}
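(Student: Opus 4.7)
The proof is essentially a direct unwinding of the definitions, relying only on the nondegeneracy of $b_0$. The plan is to prove the two inclusions $\ker P(f) \subseteq \ker b_P$ and $\ker b_P \subseteq \ker P(f)$ separately.

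First I would handle the easy inclusion. If $v \in \ker P(f)$, then $P(f)v = 0$, so for every $w \in V$ we have $b_P(v,w) = b_0(P(f)v,w) = b_0(0,w) = 0$, which means $v$ lies in the kernel of the bilinear form $b_P$. This inclusion requires nothing beyond the definition of $b_P$ given just before the statement.

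For the reverse inclusion, suppose $v \in \ker b_P$, i.e.\ $b_0(P(f)v,w) = 0$ for every $w \in V$. Setting $u = P(f)v$, this says that $b_0(u,\cdot) \equiv 0$, i.e.\ $b_0^\dagger(u) = 0$ in $V^*$. Since $b_0$ is nondegenerate, the associated map $b_0^\dagger \colon V \to V^*$ is injective, so $u = 0$, that is $P(f)v = 0$, which gives $v \in \ker P(f)$.

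There is no real obstacle here: the statement is essentially the observation that $b_P(v,w)$ is linear in $w$ and $b_0$ is nondegenerate, so the linear form $w \mapsto b_P(v,w)$ vanishes identically if and only if its representative $P(f)v$ under the isomorphism $b_0^\dagger$ vanishes. The selfadjointness of $f$ with respect to $b_0$ (established in Lemma~\ref{lem:f-sym}) and the symmetry of $b_P$ are not needed for this particular lemma, though they are presumably the reason it is stated in this form for later use.
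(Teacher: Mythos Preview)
Your proof is correct and follows essentially the same approach as the paper: both use the definition $b_P(v,w)=b_0(P(f)v,w)$ and the nondegeneracy of~$b_0$ to conclude. The paper writes it as a single chain of set equalities rather than two inclusions, but the content is identical.
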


\begin{proof}
  Indeed, the kernel of~$b_P$ is
  \begin{align*}
    \ker(b_P)
    &= \{ v\in V \mid \forall w\in V, \ b_P(v,w)=0\}\\
    &= \{ v\in V \mid \forall w\in V, \ b_0(P(f)v,w)=0\}\\
\intertext{and since $b_0$ is nondegenerate}
    &= \{ v\in V \mid P(f)v=0\} =\ker P(f). \qedhere
  \end{align*}
\end{proof}

Let us equip the vector space~$V$ with its $K[X]$-module structure inherited
from the endomorphism~$f$, namely, for every $P\in K[X]$, $P\cdot v= P(f)v$.

\begin{rem}
  \label{rem:ringKn}
  \begin{enumerate}
  \item When $f$ is nilpotent, $V$ will be a module over the local ring
    $K_{[n]}=K[X]/(X^n)$.

  \item     The homomorphism $\epsilon\colon  K_{[n]}\to K[X]/(X)\simeq K$ is often called the
    augmentation. An element $z$ in $K_{[n]}$ is invertible if and only if
    $\epsilon(z)\in K^*$ (one can check that, if $\epsilon(z)\in K^*$, the finite
    sum $\epsilon(z)^{-1}\sum_{\ell=0}^{n-1} (1-\epsilon(z)^{-1}z)^\ell$ is the
    inverse of~$z$ in~$K_{[n]}$). Furthermore $z$ is a square if and only if
    $\epsilon(z)$ is a square: one first gets back to the case $\epsilon(z)=1$
    and then checks that, if $\sum_\ell a_\ell t^\ell$ is the Taylor series of
    $t\mapsto (1+t)^{1/2}$ (the exact formula being
    $a_\ell = \frac{\prod_{j=0}^{\ell-1}(1/2-j)}{\ell!}$), then
    $\sum_{\ell=0}^{n} a_\ell (z-1)^\ell$ is a square root of $z$. It follows
    that $K_{[n]}^* /K_{[n]}^{\ast 2}\simeq K^*/K^{\ast 2}$.
  \end{enumerate}
\end{rem}

An encompassed way to put together all those bilinear forms is to consider
forms with values in the following $K[X]$-module\index{notation}{217@$K\langle\langle
  X^{-1}\rangle\rangle$ (\enquote{truncated} Laurent series)}
\[ K\langle\!\langle X^{-1}\rangle\!\rangle= K(\!(X^{-1})\!)/ XK[X],\] the quotient of the field of Laurent series in $X^{-1}$
modulo the polynomials in~$X$ without constant term. Every element in~$K\langle\!\langle X^{-1}\rangle\!\rangle$ is
represented uniquely as a series $\sum_{\ell\geq 0} a_\ell X^{-\ell}$. The
relevance of~$K\langle\!\langle X^{-1}\rangle\!\rangle$ is justified by the following lemma.
\begin{lem}
  \label{lem:all-form-in-one}
  With the above notation, the map
  \begin{align*}
   b_{K\langle\!\langle X^{-1}\rangle\!\rangle}\colon  V \times V & \longrightarrow K\langle\!\langle X^{-1}\rangle\!\rangle \\
    (v,w) & \longmapsto \sum_{k\geq 0} b_k(v,w) X^{-k}
  \end{align*}
  is $K[X]$-bilinear and symmetric.
\end{lem}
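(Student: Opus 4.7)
The lemma requires two properties: symmetry and $K[X]$-bilinearity of the map $b_{K\langle\!\langle X^{-1}\rangle\!\rangle}$. My plan is to verify each coefficient-wise, extracting everything from the single algebraic input that $f = (b_0^\dagger)^{-1} \circ b_1^\dagger$ is $b_0$-selfadjoint (Lemma~\ref{lem:f-sym}).

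For symmetry, I would first observe that each individual form $b_k(v,w) = b_0(f^k v, w)$ is itself symmetric: iterating the $b_0$-selfadjointness of $f$ gives $b_0(f^k v, w) = b_0(v, f^k w)$, which by symmetry of $b_0$ equals $b_0(f^k w, v) = b_k(w,v)$. Multiplying by $X^{-k}$ and summing immediately yields the symmetry of $b_{K\langle\!\langle X^{-1}\rangle\!\rangle}$.

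For $K[X]$-bilinearity, the $K$-bilinearity is inherited term by term from each $b_k$, so the substance is compatibility with the action of $X$. I would compute directly that $b_{K\langle\!\langle X^{-1}\rangle\!\rangle}(fv,w) = \sum_{k \geq 0} b_{k+1}(v,w) X^{-k}$, while on the other hand $X \cdot b_{K\langle\!\langle X^{-1}\rangle\!\rangle}(v,w) = \sum_{k \geq 0} b_k(v,w) X^{1-k}$. The key observation is that in the quotient $K(\!(X^{-1})\!)/XK[X]$ the sole polynomial summand (the $k=0$ term, equal to $b_0(v,w) X$) vanishes, and reindexing the remaining series produces exactly the first expression. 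Iterating this identity, one gets $b_{K\langle\!\langle X^{-1}\rangle\!\rangle}(P(f) v, w) = P \cdot b_{K\langle\!\langle X^{-1}\rangle\!\rangle}(v,w)$ for every polynomial $P \in K[X]$.

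Linearity in the second argument then follows by combining this with selfadjointness: since $f$ is $b_0$-selfadjoint, so is $P(f)$ for every $P$, so $b_{K\langle\!\langle X^{-1}\rangle\!\rangle}(v, P(f) w) = b_{K\langle\!\langle X^{-1}\rangle\!\rangle}(P(f) v, w) = P \cdot b_{K\langle\!\langle X^{-1}\rangle\!\rangle}(v, w)$. There is no genuine obstacle; the only point needing modest care is the correct handling of the quotient by $XK[X]$, which is precisely what makes the shift-by-$f$ action on $V$ compatible with multiplication by $X$ on the module of values.
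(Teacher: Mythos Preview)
Your proposal is correct and follows essentially the same approach as the paper's proof, which simply notes that symmetry and $K$-linearity are inherited from the $b_k$'s and that $K[X]$-linearity follows from the identity $b_{k+1}(v,w) = b_k(fv,w)$. Your version is a more detailed unpacking of the same argument, and in particular your explicit treatment of why the $k=0$ term $b_0(v,w)X$ vanishes in the quotient $K(\!(X^{-1})\!)/XK[X]$ is a useful elaboration that the paper leaves implicit.
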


\begin{proof}
  The symmetry and $K$-linearity follow at once from the same properties of
  the forms $b_k$. The $K[X]$-linearity is a consequence of the fact that, for all
  $k\geq 0$, and all $v,w\in V$, $b_{k+1}(v,w)= b_k(fv,w)$.
\end{proof}

\begin{rem}
  \label{rem:more-bilinear-forms}
  Let $Q$ be a polynomial such that $Q(f)=0$. Then the above map $b_{K\langle\!\langle X^{-1}\rangle\!\rangle}$ takes
  values in the submodule $\mathrm{Tor}_Q =\{ S\in K\langle\!\langle X^{-1}\rangle\!\rangle \mid QS=0\}$ (i.e.\ if
  $\tilde{S}\in K(\!(X^{-1})\!)$ represents~$S$, then $Q\tilde{S}$ is a
  polynomial without constant term). Furthermore the $K[X]$-module
  $\mathrm{Tor}_Q$ is isomorphic to the cyclic module $K[X]/(Q)$. We will
  exploit this additional fact mainly when~$f$ is nilpotent (i.e.\ $Q=X^n$), in which case we
  will shift the indices for $b_{K\langle\!\langle X^{-1}\rangle\!\rangle}$ in the positive range (see the proof of
  Lemma~\ref{lem:normal-form-one-block}).
\end{rem}

\section{When $f$ has one split Jordan block}
\label{sec:when-module-v}

In this section, we start our investigation of normal forms under the
additional assumption that the Jordan normal form of~$f$ has only one
block. That is to say, there is a basis~$\mathbf{e}$ of $V$ and $\lambda\in
K$, such that the matrix $[f]_\mathbf{e}$ of~$f$ in that basis is
$J_n(\lambda) = \lambda \Id +N$, where $N=( \delta_{i+1,j})_{1\leq i,j\leq n}$
the regular nilpotent matrix and $\delta_{\cdot,\cdot}$ is the Kronecker symbol.

\begin{rem}
  \label{rem:basis-jordan}
  \begin{enumerate}
  \item Such a basis is entirely determined by its last vector $v=e_n$ and the
    rule $e_i= f(e_{i+1})-\lambda e_{i+1}$ ($i=n-1, n-2, \dots, 1$). The
    vector~$v$ must be chosen in $V \smallsetminus \ker (f-\lambda)^{n-1}$ in order for
    the previous construction to lead to a basis.

  \item When~$\lambda=0$, the matrix of the bilinear form $b_0$ in that basis
    is then \[[b_0]_\mathbf{e} = ( b_{2n-i-j}(v,v))_{i,j=1,\dots,n}.\]
  \end{enumerate}
\end{rem}

Let also $C_n$ be the \enquote{antidiagonal} matrix
$( \delta_{n+1,i+j})_{1\leq i,j\leq n}$ (see
Equation~\eqref{eq:C_nJ_nsec:matrices} in Section~\ref{sec:matrices}).

To keep the discussion for a general field~$K$, let us fix
$\mathfrak{c}\subset K^*$ a set of representatives of $K^*/ K^{\ast 2}$; we are mainly interested in
the case when $K=\R$ ($\mathfrak{c}=\{\pm 1\}$) and $K=\C$ ($\mathfrak{c}=\{1\}$).

\begin{lem}
  \label{lem:normal-form-one-block}
  Under the assumption that the Jordan normal form of~$f$ has only one block,
  there is a basis~$\mathbf{e}$ of $V$ and $\varepsilon\in \mathfrak{c}$, such
  has $[f]_\mathbf{e}$ is the Jordan block $J_n(\lambda)$ and
  $[b_0]_\mathbf{e}$ is $\varepsilon C_n$.

  The basis~$\mathbf{e}$ is unique up to multiplication by~$\pm 1$.
\end{lem}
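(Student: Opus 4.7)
Setting $N \coloneqq f - \lambda\,\Id$, which is nilpotent of order $n$ and remains $b_0$-selfadjoint, I parameterize candidate bases as $e_i = N^{n-i} v$ for some generator $v \in V \setminus \ker N^{n-1}$, following Remark~\ref{rem:basis-jordan}. The problem then reduces entirely to the scalars $c_k \coloneqq b_0(N^k v, v)$: by selfadjointness of $N$ one has $b_0(e_i, e_j) = c_{2n - i - j}$, and $c_k = 0$ as soon as $k \geq n$ since $N^n = 0$. The requirement $[b_0]_\mathbf{e} = \varepsilon C_n$ thus becomes the condition that $c_0 = c_1 = \cdots = c_{n-2} = 0$ and $c_{n-1} = \varepsilon \in \mathfrak{c}$.

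The first key point is that $c_{n-1}$ is never zero for a valid generator. Indeed, since $(v, Nv, \ldots, N^{n-1} v)$ is a basis of $V$, expanding an arbitrary $w = \sum_{j=0}^{n-1} \beta_j N^j v$ and using $N^n = 0$ yields $b_0(N^{n-1} v, w) = \beta_0 c_{n-1}$. If $c_{n-1}$ vanished then $N^{n-1} v$ would lie in the radical of $b_0$, contradicting nondegeneracy. For existence, I replace $v$ by $v' = \sum_{j=0}^{n-1} \alpha_j N^j v$ with $\alpha_0 \neq 0$; expanding $c'_k = b_0(N^k v', v')$ and again using $c_m = 0$ for $m \geq n$ gives the triangular identity
\[
c'_{n-1-\ell} \;=\; 2\alpha_0 \alpha_\ell\, c_{n-1} + R_\ell(\alpha_0, \ldots, \alpha_{\ell-1}),
\]
where $R_\ell$ depends only on coefficients of smaller index. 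Since $c_{n-1} \neq 0$, I recursively solve for $\alpha_1, \ldots, \alpha_{n-1}$ so as to kill $c'_{n-2}, \ldots, c'_0$ in turn; finally I scale $\alpha_0$ so that $c'_{n-1} = \alpha_0^2 c_{n-1}$ lands in the chosen set $\mathfrak{c}$ of representatives of $K^*/K^{*2}$.

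Uniqueness follows by running the same recursion in reverse. If $\mathbf{e}$ and $\mathbf{e}'$ both satisfy the conclusion, write $e'_n = \sum_j \alpha_j N^j e_n$; the condition $c'_{n-1} = c_{n-1} = \varepsilon$ forces $\alpha_0^2 = 1$, and then the same leading-term identity above forces $\alpha_\ell = 0$ for $\ell \geq 1$ by induction on $\ell$, so $\mathbf{e}' = \pm \mathbf{e}$. The only genuine obstacle is the bookkeeping verification that the leading coefficient of $\alpha_\ell$ in $c'_{n-1-\ell}$ really is $2\alpha_0 c_{n-1}$ and that no $\alpha_j$ with $j > \ell$ appears — a direct consequence of the vanishing $c_m = 0$ for $m \geq n$. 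Conceptually the argument is a Gram--Schmidt-type triangular orthogonalization in the cyclic $K[X]$-module $V \simeq K[X]/(X-\lambda)^n$, tailored to the antidiagonal target $\varepsilon C_n$.
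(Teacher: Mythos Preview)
Your proof is correct. The route is essentially the paper's --- reduce to the nilpotent case $N=f-\lambda$, parametrize bases by a cyclic generator, and adjust that generator --- but the paper packages the adjustment more structurally. It introduces the $K_{[n]}$-valued form $b_{K_{[n]}}(v,w)=\sum_{i=0}^{n-1} b_{n-1-i}(v,w)X^{i}$, so that your sequence $(c_k)$ becomes the single element $b_{K_{[n]}}(v,v)\in K_{[n]}$ and the target condition reads $b_{K_{[n]}}(v,v)\in\mathfrak{c}\subset K^{*}$. Existence is then one stroke: replace $v$ by $u\cdot v$ for a unit $u\in K_{[n]}$ with $u^{2}b_{K_{[n]}}(v,v)\in\mathfrak{c}$, invoking $K_{[n]}^{*}/K_{[n]}^{*2}\simeq K^{*}/K^{*2}$ (Remark~\ref{rem:ringKn}). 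Your triangular recursion for the $\alpha_{\ell}$ is exactly the explicit extraction of that square root in $K_{[n]}$. Likewise for uniqueness: the paper argues that $t^{2}\varepsilon\in\mathfrak{c}$ forces $t\in K$ and then $t=\pm 1$, which is your induction $\alpha_{\ell}=0$ compressed into one line. One minor point in your write-up: in the uniqueness step you assert $c'_{n-1}=c_{n-1}$, but a priori the two bases could realize different representatives $\varepsilon,\varepsilon'\in\mathfrak{c}$; your own identity $c'_{n-1}=\alpha_{0}^{2}\,c_{n-1}$ closes this immediately since $\mathfrak{c}$ meets each square class exactly once.
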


\begin{rem}
  \label{rem:b1-oneblock}
  The matrix of the symmetric bilinear form~$b_1$ is then
  $[b_1]_\mathbf{e} = \varepsilon C_n J_n(\lambda) = \varepsilon {}^T\!
  J_n(\lambda) C_n $.
\end{rem}

\begin{proof}
  Up to changing $f$ into $f-\lambda \Id$ (which amounts to changing $b_1$
  into $b_1-\lambda b_0$), we can assume that $\lambda=0$. In this situation, $b_\ell=0$ for every $\ell\geq n$. We will consider
  $V$ as a module over the algebra $K_{[n]}=K[X]/(X^n)$, and the $K_{[n]}$-bilinear
  symmetric form (compare with
  Section~\ref{sec:more-bilinear-forms}):\index{notation}{221@$K_{[n]}=K[X]/(X^n)$ }
  \begin{align*}
    b_{K_{[n]}}\colon  V\times V
    &\longrightarrow K_{[n]}\\
    (v,w) & \longmapsto \sum_{i=0}^{n-1} b_{n-1-i}(v,w) X^i.
  \end{align*}

  Following Remark~\ref{rem:basis-jordan}, we are searching for a vector $v$
  such that $b_{n-1}(v,v)\neq 0$ and $b_j(v,v)=0$ for all $j\neq n-1$, i.e.\
  such that the element $b_{K_{[n]}}(v,v)$ belongs to~$K^*\subset K_{[n]}$.

  The kernel of the form $b_{n-1}$ is equal to the kernel of~$f^{n-1}$
  (Lemma~\ref{lem:ker-bP}) and therefore is not zero by the assumption that
  the Jordan decomposition of~$f$ has only one block. Let thus~$w$ be in~$V$
  with $b_{n-1}(w,w)\neq 0$. The element $b_{K_{[n]}}(w,w)$ is then in $K_{[n]}^{*}$
  since $\epsilon( b_{K_{[n]}}(w,w) ) =b_{n-1}(w,w)\in K^\ast$, and there is an
  element $u\in K_{[n]}$ such that $\varepsilon\coloneqq  u^2 b_{K_{[n]}}(w,w)$
  belongs to~$\mathfrak{c}$ (see
  Remark~\ref{rem:ringKn}). Define $v= u\cdot w\in V$. Then
  $b_{K_{[n]}}(v,v) =\varepsilon$ which is the sought for equality.

  Let now $v'$ be in~$V$ such that
  $b_{K_{[n]}}(v',v')\in \mathfrak{c} \subset K^*$. Since the $K_{[n]}$-module~$V$
  is cyclic generated by $v$, there exists $t\in K_{[n]}$ such that
  $v'=t\cdot v$. Therefore $t^2 \varepsilon = b_{K_{[n]}}(v',v')$.  This equality implies
  that $t$ belongs to~$K$ (assume that it is not the case, then
  $t=\nu + \alpha X^k $ and $t^2 = \nu^2 + 2\nu \alpha X^k \mod X^{k+1}$ cannot belong
  to~$K$). Since $\mathfrak{c}$ is a set of representatives of
  $K^*/ K^{\ast 2}$, this implies that $t^2=1$ and $t=\pm 1$. The uniqueness
  (up to sign) of the basis follows.
\end{proof}

\begin{rem}
  \label{rem:bilin-Kn}
  Pursuing a little more the proof, one observes that $V$ can be identified
  with~$K_{[n]}$ and the bilinear for $b_{K_{[n]}}\colon  K_{[n]}\times K_{[n]} \to K_{[n]}$ is
  simply $\varepsilon$ times the multiplication in the algebra~$K_{[n]}$. Shifting back to the point
  of view of Section~\ref{sec:more-bilinear-forms}, the bilinear form $b_{K\langle\!\langle X^{-1}\rangle\!\rangle}$ is
  in that case:
  \begin{align*}
    b_{K\langle\!\langle X^{-1}\rangle\!\rangle}\colon  K_{[n]} \times K_{[n]} &\longrightarrow K\langle\!\langle X^{-1}\rangle\!\rangle \\
    (u,v) & \longmapsto \varepsilon X^{1-n} u v.
  \end{align*}
\end{rem}

\section{(Over the reals or the complex) Back transformation
  when $f$ has one split Jordan block}
\label{sec:back-transf-when}
\label{back_trafo}

In this section we make the additional hypothesis that $b_1$ is nondegenerate
and that $f$ has one Jordan block and $K=\R$, or $\C$.

The dual vector space~$V^*$ has two bilinear forms~$b_{0}^{\ast}$
and~$b_{1}^{\ast}$ and, by definition of these forms, $b_{0}^{\ast\dagger} =
(b_{0}^{\dagger})^{-1}$ and $b_{1}^{\ast\dagger} = (b_{1}^{\dagger})^{-1}$. In particular
$(b_{1}^{\ast\dagger})^{-1} b_{0}^{\ast\dagger} = b_{1}^{\dagger}  (b_{0}^{\dagger})^{-1}$
is conjugate to $f= (b_{0}^{\dagger})^{-1} b_{1}^{\dagger}$.
Applying
Lemma~\ref{lem:normal-form-one-block} gives $\eta\in\{\pm 1\}$, a
basis of~$V^*$, and hence a \enquote{predual}
basis~$\mathbf{v}$ of~$V$, such that
\[ [b_{1}^{*}]_{\mathbf{v}^*} = \eta C_n, \quad \text{and}\quad
  [b_{0}^{*}]_{\mathbf{v}^*} = \eta C_n J_n(\lambda).\]
Thus
\[ [b_{1}]_{\mathbf{v}} = {}^T\! (\eta C_n)^{-1} = \eta C_n, \quad \text{and}\quad
  [b_{0}]_{\mathbf{v}} = \eta  C_n \, {}^T\!\! J_n(\lambda)^{-1} = \eta J_n(\lambda)^{-1} C_n.\]
Let $\Phi$ be the change-of-basis matrix from~$\mathbf{e}$ to~$\mathbf{v}$,
i.e.\ $\mathbf{e}= \mathbf{v}\Phi$. The matrix~$\Phi$ must satisfy the
following two equalities:
\[ {}^T\! \Phi \eta C_n \Phi = \varepsilon C_n J_n(\lambda), \quad
  \text{and}\quad  {}^T\! \Phi \eta J_n(\lambda)^{-1} C_n \Phi =
  \varepsilon C_n .\]
Denote again~$N$ the  nilpotent matrix of
maximal rank $( \delta_{i+1,j})_{1\leq i,j\leq n}$.
\begin{lem}
  \label{lem:back-trans-one-block}
  With the above notation,
  \begin{enumerate}
  \item If $K=\R$, $\varepsilon \eta \lambda$ is positive \ep{and in particular
    equal to $|\lambda|$}.
  \item The matrix~$\Phi$ is equal to, up to sign,
    \begin{equation}
      \label{eq:back-trans-one-block}
      \sqrt{ \varepsilon \eta \lambda} \sum_{\ell=0}^{n} a_\ell \lambda^{-\ell} C_n N^{\ell},
    \end{equation}
    where $\sum_{\ell=0}^{\infty} a_\ell t^\ell$ is the Taylor series of
    $t\mapsto (1+t)^{1/2}$ \ep{i.e.{} for all~$\ell$, $a_\ell = \frac{
      \prod_{j=0}^{\ell-1}(1/2-j)}{\ell!}$} \ep{cf.\ also Equation~\eqref{eq:Phi_nsec:matrices}}.
  \end{enumerate}
\end{lem}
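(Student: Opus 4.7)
The plan is to reduce the two matrix equations for~$\Phi$ to a single scalar polynomial equation in the local ring $K[X]/(X^n)$, which then admits an explicit square-root solution given by the binomial series for~$(1+t)^{1/2}$.

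First I would pin down the algebraic shape of~$\Phi$. A direct computation shows that $[f]_{\mathbf{v}} = [b_0]_\mathbf{v}^{-1}[b_1]_\mathbf{v} = (\eta J_n(\lambda)^{-1} C_n)^{-1}(\eta C_n) = C_n J_n(\lambda) C_n = {}^T\! J_n(\lambda)$, while $[f]_\mathbf{e} = J_n(\lambda)$, so the change-of-basis matrix~$\Phi$ satisfies the intertwining relation $\Phi\, J_n(\lambda) = {}^T\! J_n(\lambda)\, \Phi$. Writing $\Phi = C_n P$ and using $C_n J_n(\lambda) = {}^T\! J_n(\lambda)\, C_n$, this is equivalent to saying that $P$ commutes with~$J_n(\lambda)$, hence with~$N$; since the centralizer of a regular nilpotent in $\GL(n,K)$ consists of polynomials in~$N$, one has $P = q(N)$ for a polynomial~$q$ with $q(0) \neq 0$, which can be taken of degree~$< n$.

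Next I would substitute this ansatz into the first change-of-basis equation ${}^T\! \Phi\, \eta C_n \Phi = \varepsilon C_n J_n(\lambda)$. Using the symmetry identity ${}^T\!(C_n N^\ell) = C_n N^\ell$ (which follows from $C_n N C_n = {}^T\! N$), one has ${}^T\! \Phi = q({}^T\! N)\, C_n$, and successive applications of $C_n q(N) = q({}^T\! N)\, C_n$ collapse the equation to $q({}^T\! N)^2 = \varepsilon\eta\, {}^T\! J_n(\lambda)$. This is a single scalar identity in $K[X]/(X^n)$: setting $X = {}^T\! N$, one is looking at solutions $q(X) \in K[X]/(X^n)$ of
\[ q(X)^2 = \varepsilon\eta\lambda \bigl(1 + \lambda^{-1} X\bigr). \]
Over~$\R$ this has a solution if and only if the constant term $\varepsilon\eta\lambda$ is a square, which proves item~(1) and yields $\varepsilon\eta\lambda = |\lambda|$. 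By the uniqueness of square roots with nonzero constant term in $K[X]/(X^n)$ (cf.\ Remark~\ref{rem:ringKn}), the only solutions are $q(X) = \pm |\lambda|^{1/2} \sum_{\ell=0}^{n-1} a_\ell \lambda^{-\ell} X^\ell$, where the $a_\ell$ are the Taylor coefficients of $(1+t)^{1/2}$. Substituting $X = N$ and left-multiplying by~$C_n$ recovers the formula~\eqref{eq:back-trans-one-block}, giving item~(2). The second change-of-basis equation is then automatic: multiplying the first on the right by $J_n(\lambda)^{-1}$ and using both $\Phi J_n(\lambda)^{-1} = {}^T\! J_n(\lambda)^{-1} \Phi$ and $C_n\, {}^T\! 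J_n(\lambda)^{-1} = J_n(\lambda)^{-1} C_n$ transforms one into the other.

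The main obstacle, if one is not guided by the stated formula, is spotting that the intertwining relation $\Phi J_n(\lambda) = {}^T\! J_n(\lambda)\, \Phi$ forces~$\Phi$ to have the shape $C_n q(N)$. Once this is in hand, the noncommutativity is only apparent: the matrix equation reduces to a polynomial identity in a single variable, and is solved by the classical binomial expansion of~$(1+t)^{1/2}$.
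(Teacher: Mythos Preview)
Your proof is correct and follows essentially the same route as the paper's. The one minor variation is in how you obtain that $C_n\Phi$ is a polynomial in~$N$: the paper combines the two change-of-basis equations for~$b_0$ and~$b_1$ to deduce directly that $C_n\Phi$ commutes with $L^{-1}=\varepsilon\eta J_n(\lambda)^{-1}$, whereas you use the more conceptual observation that $\Phi$ conjugates $[f]_\mathbf{e}=J_n(\lambda)$ to $[f]_\mathbf{v}={}^T\! J_n(\lambda)$, which immediately gives the intertwining relation. After that point the two arguments coincide: both reduce to $(C_n\Phi)^2=\varepsilon\eta J_n(\lambda)$ and read off the binomial square root in $K[X]/(X^n)$.
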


\begin{proof}
  Let $L= \varepsilon \eta J_n(\lambda) = \varepsilon
  \eta \lambda \Id + \varepsilon \eta N$.
  The two equations for~$\Phi$ imply
  \[ {}^T\! \Phi^{-1} = C_n \Phi L^{-1} C_n, \quad \text{and} \quad {}^T\!
    \Phi^{-1} = L^{-1} C_n \Phi C_n,\]
  so that $C_n\Phi$ and $L^{-1}$ commute and hence $C_n\Phi$ and $N$
  commute. This implies that $C_n \Phi$ is a polynomial in $N$: there are
  $c_0, \dots, c_{n-1} $ in~$K$ such that $C_n\Phi = c_0\Id +\cdots
  +c_{n-1} N^{n-1}$. Thus $\Phi = c_0 C_n + c_1 C_n N + \cdots
  +c_{n-1} C_n N^{n-1}$ is a symmetric matrix (for all~$\ell$, $C_n N^\ell$ is
  symmetric). The first equation for~$\Phi$
  can now be written
  \[ ( C_n \Phi)^2 = \varepsilon \eta J_n(\lambda)\]
  and as $( C_n \Phi)^2 = c_{0}^2 \Id + d_1 N+ \cdots+ d_{n-1}N^{n-1}$ (for
  some $d_1, \dots, d_{n-1}$ in~$K$), we get that $\varepsilon
  \eta \lambda =c_{0}^{2}$ and is positive if $K=\R$. Dividing the last equation
  by~$\varepsilon \eta \lambda$ give
  \[ ( (\varepsilon \eta \lambda)^{-1/2} C_n \Phi)^2 = \Id + \frac{1}{\lambda} N,\]
  and this leads to the desired result.
\end{proof}

\begin{df}
  \label{df:back-trans-1}
  The matrix given in Equation~\eqref{eq:back-trans-one-block} is called the
  \emph{back transformation} and denoted by $\Phi_n(\lambda)$. When $K=\C$, so
  that $\varepsilon=\eta=1$, we always choose for $\sqrt{\lambda}$
  the biggest square root of~$\lambda$ for the lexicographic order on
  $\C\simeq \R^2$.
\end{df}

\section{When the minimal polynomial of~$f$ has one root}
\label{sec:when-minim-polyn}

In this section we assume that all the blocks of the Jordan decomposition
of~$f$ are associated with the same eigenvalue~$\lambda$ in~$K$ or, what amounts to
the same, that $(f-\lambda)^n=0$ ($n=\dim V$). We denote again by~$\mathfrak{c}$ a set of representatives of $K^*/K^{\ast 2}$.

\begin{prop}
  \label{prop:normal-form-one-eigenval}
  There is a basis $\mathbf{e}$ of~$V$, a sequence $(n_1, \dots, n_p)$ of
  integers, and a sequence $(\varepsilon_1, \dots, \varepsilon_p)$ in~$\mathfrak{c}$ such that $[f]_\mathbf{e}$ and
  $[b_0]_\mathbf{e}$ are block diagonal with diagonal blocks being
  $J_{n_1}(\lambda), \dots , J_{n_p}(\lambda)$, and $\varepsilon_1 C_{n_1}, \dots,
  \varepsilon_p C_{n_p}$ respectively.
\end{prop}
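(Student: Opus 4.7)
The plan is to reduce to the nilpotent case and then argue by induction on $n=\dim V$, peeling off one cyclic summand at a time.

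First I would replace $f$ by $f-\lambda\mathrm{Id}$; this leaves $b_0$ unchanged and replaces $b_1$ by $b_1-\lambda b_0$, so it suffices to prove the statement under the hypothesis $\lambda=0$, i.e.\ $f^n=0$. Let $n_1$ denote the size of the largest Jordan block of $f$, so that $f^{n_1}=0$ but $f^{n_1-1}\neq 0$. I will construct a cyclic submodule $W\subset V$ of dimension $n_1$ on which $b_0$ is nondegenerate; since $f$ is $b_0$-selfadjoint (Lemma~\ref{lem:f-sym}), the $b_0$-orthogonal complement $W^\perp$ is then $f$-invariant, we have the orthogonal decomposition $V=W\oplus W^\perp$ with $b_0|_{W^\perp}$ nondegenerate, and $(f|_{W^\perp})^n=0$, so induction on dimension finishes the argument once the $W$-block is brought to the form $(J_{n_1}(0),\varepsilon_1 C_{n_1})$ by Lemma~\ref{lem:normal-form-one-block}.

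To produce $W$, recall from Lemma~\ref{lem:ker-bP} that $\ker(b_{n_1-1})=\ker(f^{n_1-1})$, which is a proper subspace of $V$ by maximality of $n_1$. Hence the symmetric form $b_{n_1-1}$ is nonzero, and since $\mathrm{char}\,K=0$ one can find $w\in V$ with $b_{n_1-1}(w,w)\neq 0$. This forces $f^{n_1-1}w\neq 0$, so the iterates $w,fw,\dots,f^{n_1-1}w$ are linearly independent and span an $f$-stable subspace $W$ of dimension $n_1$. Ordering this basis as $e_i\coloneqq f^{n_1-i}w$ ($1\le i\le n_1$), one has $b_0(e_i,e_j)=b_{2n_1-i-j}(w,w)$; since $b_k=0$ for $k\ge n_1$, the entries with $i+j<n_1$ vanish, while the antidiagonal entries ($i+j=n_1$) equal $b_{n_1-1}(w,w)\neq 0$. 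The matrix of $b_0|_W$ is therefore lower-antidiagonal with nonzero antidiagonal, hence nondegenerate, which is what I needed.

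The only real obstacle is the construction of this cyclic piece on which $b_0$ is nondegenerate, and the computation above takes care of it. Once $W$ is in hand the inductive step is essentially formal: the pair $(b_0|_W, f|_W)$ falls under Lemma~\ref{lem:normal-form-one-block} and produces $(\varepsilon_1 C_{n_1}, J_{n_1}(0))$; the pair $(b_0|_{W^\perp}, f|_{W^\perp})$ satisfies all the hypotheses of the proposition in strictly smaller dimension, yielding blocks $(\varepsilon_2 C_{n_2},J_{n_2}(0)),\dots,(\varepsilon_p C_{n_p},J_{n_p}(0))$. Concatenating the bases of $W$ and $W^\perp$ and undoing the translation by $\lambda$ gives the claimed normal form.
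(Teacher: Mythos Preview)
Your argument is correct and is essentially identical to the paper's proof: reduce to $\lambda=0$, use Lemma~\ref{lem:ker-bP} to find $w$ with $b_{n_1-1}(w,w)\neq 0$, check that the cyclic $K[X]$-submodule $W$ it generates has nondegenerate $b_0|_W$, apply Lemma~\ref{lem:normal-form-one-block} to $W$ and induction to $W^{\perp_{b_0}}$. One minor indexing slip: with $e_i=f^{\,n_1-i}w$ for $1\le i\le n_1$ the antidiagonal sits at $i+j=n_1+1$ (not $i+j=n_1$), and it is those entries that equal $b_{n_1-1}(w,w)\neq 0$; the entries with $i+j\le n_1$ vanish.
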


\begin{proof}
  We work by induction on~$n$. Up to replacing $f$ by $f-\lambda$, we may
  assume that $\lambda=0$. Let $m\geq 1$ be the order of nilpotency of $f$,
  hence $f^m=0$, $b_\ell=0$ for all $\ell\geq m$, and $\ker f^{m-1}\neq V$. There is then a vector $w\in V$ such
  that $c=b_{m-1}(w,w)\neq 0$ (Lemma~\ref{lem:ker-bP}). The $K[X]$-module $W$
  generated by $w$ has for basis $w$, $ fw, \dots, f^{m-1}w$ and the matrix
  of $b_0|_W$ in that basis is
  \[
    \begin{pmatrix}
      0 & \dots & 0 & c \\
      0 & \dots & c & \ast \\
      \vdots & \iddots & \ast & \ast \\
      c & \ast &\dots & \ast
    \end{pmatrix}
  \]
  (see Remark~\ref{rem:basis-jordan}) so that $b_0|_W$ is nondegenerate. Applying
  Lemma~\ref{lem:normal-form-one-block} to~$W$ and the induction hypothesis to
  $W^{\perp_{b_0}}$ gives the result.
\end{proof}

\section{Orthogonality of generalized eigenspaces}
\label{sec:orth-gener-eigensp}

We return to the general setting: $b_0$ and~$b_1$ are symmetric bilinear forms
on a $K$-vector space~$V$ of dimension~$n$, $K$ is a field of
characteristic~$0$, $b_0$ is nondegenerate, and $f= (b_{0}^{\dagger})^{-1}
b_{1}^{\dagger}$.

The generalized eigenspaces of~$f$ are the $V_\lambda = \ker(f-\lambda)^n$ for
$\lambda\in K$.

\begin{lem}
  \label{lem:orth-gener-eigensp}
  Generalized eigenspaces are orthogonal:
  if $\lambda\neq \mu$, then $V_\mu \subset V_{\lambda}^{\perp_{b_0}}$.
\end{lem}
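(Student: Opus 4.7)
The plan is to exploit the $b_0$-selfadjointness of $f$ (Lemma~\ref{lem:f-sym}), which implies that for every polynomial $P \in K[X]$ and all $x,y \in V$, one has $b_0(P(f)x, y) = b_0(x, P(f)y)$. Given $w \in V_\lambda$ and $v \in V_\mu$ with $\lambda \neq \mu$, I want to produce a polynomial $P$ that annihilates $w$ but acts invertibly on the subspace containing~$v$; then for any preimage $u$ of $v$ under $P(f)$ living in that subspace, I can write $b_0(w,v) = b_0(w, P(f)u) = b_0(P(f)w, u) = 0$.

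First I would observe that $V_\mu$ is $f$-invariant: if $(f-\mu)^n v = 0$, then $(f-\mu)^n(fv) = f(f-\mu)^n v = 0$. The natural choice of polynomial is $P(X) = (X-\lambda)^n$, which kills $w$ by definition of $V_\lambda$. The key step is then to show that $P(f)$ is invertible in restriction to $V_\mu$. On $V_\mu$, the endomorphism $f - \lambda$ decomposes as $(f-\mu) + (\mu-\lambda)\Id$: the first summand is nilpotent (of order at most~$n$) by definition of $V_\mu$, the second is a non-zero scalar operator, and the two commute. Therefore $f - \lambda$ is invertible on $V_\mu$ (one inverse is the finite geometric series $(\mu-\lambda)^{-1}\sum_{k=0}^{n-1} \bigl(-(\mu-\lambda)^{-1}(f-\mu)\bigr)^k$), and hence so is $(f-\lambda)^n|_{V_\mu}$.

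Consequently, for every $v \in V_\mu$ there exists $u \in V_\mu$ with $v = (f-\lambda)^n u$, and then
\[ b_0(w, v) = b_0\bigl(w, (f-\lambda)^n u\bigr) = b_0\bigl((f-\lambda)^n w, u\bigr) = b_0(0, u) = 0,\]
which is the desired orthogonality $V_\mu \subset V_\lambda^{\perp_{b_0}}$. There is no real obstacle here; the only point requiring a touch of care is the invertibility of $f - \lambda$ on $V_\mu$, which is purely algebraic and independent of the field $K$ (so the lemma holds in the stated generality).
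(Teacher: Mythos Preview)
Your proof is correct. It differs in presentation from the paper's argument: the paper works with the $K[X]$-bilinear form $b_{K\langle\!\langle X^{-1}\rangle\!\rangle}$ introduced in Section~\ref{sec:more-bilinear-forms} together with a Bezout identity $P(X-\lambda)^n + Q(X-\mu)^n = 1$, then expands $b_{K\langle\!\langle X^{-1}\rangle\!\rangle}(v,w)$ as a sum of two terms each of which vanishes by $K[X]$-bilinearity. Your argument is more elementary, relying only on the $b_0$-selfadjointness of~$f$ (Lemma~\ref{lem:f-sym}) and the observation that $(f-\lambda)^n$ restricts to an automorphism of~$V_\mu$. The two are closely related in content---your explicit inverse on~$V_\mu$ is essentially the Bezout cofactor~$Q$ evaluated at~$f$---but your route avoids the auxiliary form $b_{K\langle\!\langle X^{-1}\rangle\!\rangle}$ entirely. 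The paper's packaging, on the other hand, extends verbatim to the version for arbitrary irreducible factors stated in Remark~\ref{rem:orth-gener-eigensp}, whereas your argument would need the (equally easy) observation that $P(f)$ is invertible on $V_Q$ whenever $P$ and $Q$ are coprime irreducibles.
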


\begin{proof}
  The proof relies on the $K[X]$-module structure of~$V$ and on the bilinear form
  $b_{K\langle\!\langle X^{-1}\rangle\!\rangle}$
  (Section~\ref{sec:more-bilinear-forms}).  Let $P$ and~$Q$ be in $K[X]$
  establishing a Bezout relation between $(X-\lambda)^n$ and $(X-\mu)^n$:
  $(X-\lambda)^n P + (X-\mu)^n Q = 1$.

  Let $v$ be in~$V_\lambda$ and~$w$ be in~$V_\mu$ so that $(X-\lambda)^n v=
  (X-\mu)^n w =0$. One has
  \begin{align*}
    b_{K\langle\!\langle X^{-1}\rangle\!\rangle} (v,w)
    &=(X-\lambda)^n P  b_{K\langle\!\langle X^{-1}\rangle\!\rangle}(v,w) + (X-\mu)^n Q
      b_{K\langle\!\langle X^{-1}\rangle\!\rangle} (v,w) \\
    &=P  b_{K\langle\!\langle X^{-1}\rangle\!\rangle} ((X-\lambda)^n v,w) + Q
      b_{K\langle\!\langle X^{-1}\rangle\!\rangle} (v,(X-\mu)^nw) \\
    &=0,
  \end{align*}
  and, for the least, $b_0(v,w)=0$. This is what had to be proved.
\end{proof}

\begin{rem}
  \label{rem:orth-gener-eigensp}
  More generally for every irreducible polynomial $P\in
  K[X]$, one can consider the subspace $V_P = \ker P(f)^n$. The subspaces $V_P$ are in direct orthogonal
  sum.
\end{rem}

As a corollary, when the minimal polynomial of~$f$ is split over~$K$, there is
a basis~$\mathbf{e}$ of~$V$ where the matrices of $[b_0]_\mathbf{e}$ and
$[f]_\mathbf{e}$ are block diagonal with blocks as in
Lemma~\ref{lem:normal-form-one-block}.

Elements commuting with~$f$ also stabilize the generalized eigenspaces. A more
precise statement is the following:

\begin{prop}
  \label{prop:auto-b0-b1}
  Let $g$ be in $\mathrm{End}_K(V)$. The following are equivalent:
  \begin{enumerate}
  \item $g$ belongs to the intersection of the orthogonal groups $\OO(b_0)$
    and $\OO(b_1)$;
  \item $g$ belongs to $\OO(b_0)$ and commutes with $f$;
  \item $g$ is $K[X]$-linear and is orthogonal with respect to the form $
      b_{K\langle\!\langle X^{-1}\rangle\!\rangle}$ \ep{i.e.\ for all $v,w$
    in~$V$, $b_{K\langle\!\langle X^{-1}\rangle\!\rangle}(gv,gw)=
    b_{K\langle\!\langle X^{-1}\rangle\!\rangle}(v,w)$};
  \item under the hypothesis that the minimal polynomial of~$f$ is split
    over~$K$, $g$ stabilizes every generalized eigenspace $V_\lambda$ of~$f$
    and the restriction $g|_{V_\lambda}$ is $b_0$-orthogonal.
  \item \ep{with no assumption on~$f$}, for every irreducible polynomial $P\in
    K[X]$, $g$ stabilizes $V_P = \ker P(f)^n$ and its restriction to $V_P$ is
    $b_0$-orthogonal.
  \end{enumerate}
\end{prop}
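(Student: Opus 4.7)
I plan to prove the proposition by establishing the chain of equivalences $(1) \Leftrightarrow (2) \Leftrightarrow (3)$ first, since these are all statements about $g$ globally, and then reduce $(4)$ and $(5)$ to $(2)$ via the orthogonal decomposition into generalized eigenspaces (resp.\ generalized $P$-eigenspaces).

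For $(1) \Leftrightarrow (2)$, the key tool is Lemma~\ref{lem:f-sym}, which gives $b_1(v,w) = b_0(fv,w)$. Assuming $g \in \OO(b_0) \cap \OO(b_1)$, I get $b_0(fgv, gw) = b_1(gv, gw) = b_1(v,w) = b_0(fv, w) = b_0(gfv, gw)$ for all $v, w$; since $g$ is bijective and $b_0$ is nondegenerate this forces $fg=gf$. Conversely, if $g \in \OO(b_0)$ commutes with $f$, the same chain run backwards yields $b_1(gv,gw) = b_1(v,w)$. For $(2) \Leftrightarrow (3)$, note that being $K[X]$-linear is by definition the same as commuting with $f$. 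Given this commutation, the identity $b_k(v,w) = b_0(f^k v, w)$ shows that $b_0$-orthogonality propagates to every $b_k$, hence to the full series $b_{K\langle\!\langle X^{-1}\rangle\!\rangle}$; conversely, orthogonality of $g$ for $b_{K\langle\!\langle X^{-1}\rangle\!\rangle}$ contains $b_0$-orthogonality as the coefficient of $X^0$.

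For $(2) \Rightarrow (5)$, observe that if $g$ commutes with $f$ then $g$ commutes with every $P(f)^n$ and therefore stabilizes each $V_P = \ker P(f)^n$; the restriction $g|_{V_P}$ is then automatically $b_0$-orthogonal (and the commutation with $f|_{V_P}$ gives the full $\OO(b_0|_{V_P}) \cap \OO(b_1|_{V_P})$ membership). Conversely, for $(5) \Rightarrow (1)$, I invoke Remark~\ref{rem:orth-gener-eigensp}: the spaces $V_P$ are in $b_0$-orthogonal direct sum, so stabilization together with orthogonality on each piece assembles to global $b_0$-orthogonality, and similarly for $b_1$ using that $b_1|_{V_P}$ is determined by $b_0|_{V_P}$ and $f|_{V_P}$. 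The equivalence $(4) \Leftrightarrow (5)$ is then immediate since, when the minimal polynomial of $f$ splits, the irreducible $P$'s are just the linear factors $X - \lambda$ and $V_P = V_\lambda$.

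The main technical point to watch is the precise content of the restriction condition in $(4)$ and $(5)$: merely being $b_0$-orthogonal on each $V_P$ does not in itself force commutation with $f|_{V_P}$, so to close the loop $(5) \Rightarrow (2)$ I need to read the condition as membership in $\OO(b_0|_{V_P}) \cap \OO(b_1|_{V_P})$ (which is consistent with the role played by this proposition as a characterization of the stabilizer of a pair of forms; see Section~\ref{sec:automorphism-groups}). Once this is understood, piecing the $f$-commutation together from the $V_P$-stable pieces is immediate because $f$ itself stabilizes each $V_P$, so $fg$ and $gf$ agree block by block.
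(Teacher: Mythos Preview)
The paper states this proposition without proof, so there is no argument to compare against directly. Your proof of $(1)\Leftrightarrow(2)\Leftrightarrow(3)$ is correct and is the natural one.

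Your concern about $(4)$ and $(5)$ is entirely justified and in fact pinpoints an imprecision in the statement as printed. Concretely, take $V=K^2$, $b_0=C_2=\bigl(\begin{smallmatrix}0&1\\1&0\end{smallmatrix}\bigr)$ and $f=J_2(\lambda)$ for some $\lambda\neq 0$; then $b_1$ is nondegenerate and the only generalized eigenspace is $V_\lambda=V$. The element $g=\diag(2,1/2)$ stabilizes $V_\lambda$ and lies in $\OO(b_0)$, so it satisfies $(4)$ as literally written, yet it does not commute with $f$ and hence fails $(2)$. So $(4)$ and $(5)$ must indeed be read with the restriction $g|_{V_P}$ required to lie in $\OO(b_0|_{V_P})\cap\OO(b_1|_{V_P})$ (equivalently, to be $b_0$-orthogonal \emph{and} to commute with $f|_{V_P}$). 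With that reading your reduction via the orthogonal decomposition of Lemma~\ref{lem:orth-gener-eigensp} and Remark~\ref{rem:orth-gener-eigensp} closes the loop $(5)\Rightarrow(2)$ cleanly, and this is exactly the form in which the proposition is invoked in Section~\ref{sec:automorphism-groups}.
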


\section{(Over the reals) When $f$ has two conjugate eigenvalues}
\label{sec:over-reals-when}

To have a complete understanding of the pair $(b_0, f)$ over $\R$,
we need to investigate the case when $V=V_P=\ker P(f)^n$, where $P=
(X-\lambda)(X-\bar{\lambda})$ for $\lambda = a+ib \notin \R$.

The complexification of~$V$ will be denoted~$V_\C = V+iV$. It is a $\C$-vector
space equipped with an antilinear involution (the complex conjugation) $v\mapsto \bar{v}$ whose fixed
points set is precisely $V\subset V_\C$. Any $\R$-linear endomorphism $g\colon V\to V$
admits a complexification $g_\C\colon V_\C \to V_\C$. The $\R$-bilinear form
$b_0\colon V\times V\to \R$ has also a $\C$-bilinear complexification $b_{0,\C}\colon V_\C \times V_\C
\to \C$. In particular, $f_\C$ has two eigenvalues: $\lambda$
and~$\bar{\lambda}$ and the space $V_\C$ is the direct $b_{0,\C}$-orthogonal
sum of $V_\lambda = \ker (f_\C -\lambda)^n$ and of $V_{\bar\lambda} = \ker
(f_\C -\bar\lambda)^n$. The spaces $V_\lambda$ and $V_{\bar{\lambda}}$ are
exchanged by the complex conjugation $v\mapsto \bar{v}$, they intersect~$V$ trivially.

\begin{lem}
  \label{lem:c-to-r}
  The map $g\mapsto g_\C | V_\lambda$ induces an isomorphism between
  \begin{enumerate}
  \item the group of elements $g$ in~$\OO(b_0)$ that commute with $f$, and
  \item the group of elements $\alpha$ in~$\OO(V_\lambda,b_{0,\C})$ that
    commute with $f_\C$.
  \end{enumerate}
\end{lem}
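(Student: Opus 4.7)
The plan is to verify the three standard requirements (well-definedness, injectivity, surjectivity) for the restriction map $\Psi \colon g \mapsto g_\C|_{V_\lambda}$, exploiting systematically the decomposition $V_\C = V_\lambda \oplus V_{\bar\lambda}$ together with the intertwining of this decomposition and the complex conjugation $v\mapsto \bar v$.

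First I would check that $\Psi$ is well-defined. If $g \in \OO(b_0)$ commutes with $f$, then $g_\C$ commutes with $f_\C$, so $g_\C$ preserves the generalized eigenspace $V_\lambda$; moreover the complexification $b_{0,\C}$ is preserved by $g_\C$ (this is just the $\C$-bilinear extension of the identity $b_0(gv,gw)=b_0(v,w)$). Restricting to $V_\lambda$ produces an element $\alpha = g_\C|_{V_\lambda}$ in $\OO(V_\lambda, b_{0,\C})$ commuting with $f_\C|_{V_\lambda}$. It is clear that $\Psi$ is a group homomorphism.

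Next I would establish injectivity: an element $g$ in the source is fully determined by $\alpha = g_\C|_{V_\lambda}$. Indeed, since $g$ is real, $g_\C$ commutes with complex conjugation; because conjugation exchanges $V_\lambda$ and $V_{\bar\lambda}$, we have $g_\C|_{V_{\bar\lambda}}(w)=\overline{\alpha(\bar w)}$ for every $w\in V_{\bar\lambda}$. Since $V_\C=V_\lambda\oplus V_{\bar\lambda}$, $g_\C$ is recovered from $\alpha$, and $g$ is then recovered as $g_\C|_V$.

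For surjectivity, given $\alpha \in \OO(V_\lambda, b_{0,\C})$ commuting with $f_\C|_{V_\lambda}$, I would define $\tilde g \in \mathrm{End}_\C(V_\C)$ by $\tilde g(v)=\alpha(v)$ for $v\in V_\lambda$ and $\tilde g(w)=\overline{\alpha(\bar w)}$ for $w\in V_{\bar\lambda}$. By construction $\tilde g$ commutes with complex conjugation, hence restricts to an $\R$-linear endomorphism $g$ of $V$. That $\tilde g$ commutes with $f_\C$ on $V_\lambda$ is the hypothesis, and on $V_{\bar\lambda}$ it follows from applying conjugation to the identity on $V_\lambda$ together with $\overline{f_\C(v)}=f_\C(\bar v)$. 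Finally, $\tilde g$ preserves $b_{0,\C}$: on $V_\lambda$ this is the orthogonality hypothesis on $\alpha$; on $V_{\bar\lambda}$ it follows by applying complex conjugation to the orthogonality identity for $\alpha$; and mixed pairings vanish on both sides because $V_\lambda$ and $V_{\bar\lambda}$ are $b_{0,\C}$-orthogonal by Lemma~\ref{lem:orth-gener-eigensp}. Hence $g \in \OO(b_0)$ commutes with $f$ and $\Psi(g)=\alpha$.

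The only genuinely nontrivial step is the surjectivity, and within it the orthogonality of $\tilde g$; the key input here is the $b_{0,\C}$-orthogonality of the two generalized eigenspaces $V_\lambda$ and $V_{\bar\lambda}$, which allows one to check the preservation of $b_{0,\C}$ block by block. Everything else is bookkeeping with the compatibility between complex conjugation and the eigenspace decomposition.
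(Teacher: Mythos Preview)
Your proof is correct and follows essentially the same approach as the paper: both construct the inverse by defining the action on $V_{\bar\lambda}$ via $w\mapsto\overline{\alpha(\bar w)}$, glue with $\alpha$ on $V_\lambda$, and check that the resulting map is real, commutes with $f_\C$, and is $b_{0,\C}$-orthogonal. Your write-up is in fact slightly more explicit than the paper's, in that you separate injectivity from surjectivity and single out the use of the $b_{0,\C}$-orthogonality of $V_\lambda$ and $V_{\bar\lambda}$ to handle the mixed pairings.
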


\begin{proof}
  If $g$ commutes with $f$, $g_\C$ commutes with $f_\C$ and stabilizes
  $V_\lambda$. If furthermore $g$ is $b_0$-orthogonal, then $g_\C$ is
  $b_{0,\C}$-orthogonal. Therefore the mentioned map from the centralizer
  $Z_{\OO(b_0)}(f)$ to the centralizer $Z_{\OO(V_\lambda, b_{0,\C})}(f_\C)$ is
  well defined and easily seen to be a homomorphism.

  Let us construct its inverse. For
  $\alpha \in Z_{\OO(V_\lambda, b_{0,\C})}(f_\C)$, let
  $\beta \colon V_{\bar{\lambda}} \to V_{\bar{\lambda}} \mid v \mapsto
  \overline{ \alpha(\bar{v})}$. Then~$\beta$ is $\C$-linear,
  $b_{0,\C}$-orthogonal, and commutes with~$f_\C$. The pair $(\alpha,\beta)$
  combines therefore into a $\C$-linear and $b_{0,\C}$-orthogonal map
  $\gamma\colon V_\C=V_\lambda \oplus V_{ \bar{\lambda}} \to V_\C $ that
  commutes with~$f_\C$. The map~$\gamma$ commutes with the complex conjugation
  and hence comes from a unique $\R$-linear map $g\colon V\to V$ ($g$ is
  simply the restriction of~$\gamma$ to~$V$). The map~$g$
  is furthermore $b_0$-orthogonal and commutes with~$f$ since $\gamma$ is
  $b_{0,\C}$-orthogonal and commutes with~$f_\C$.
\end{proof}

Applying this lemma to $g=f$ and the results of
Section~\ref{sec:when-module-v}, we can give a normal form for the pair
$(b_0,f)$ when $V$ is cyclic, i.e.\ when $f_\C|_{V_\lambda}$ has one Jordan
block.
For this, let us denote, for every matrix~$M$ in~$M_m(\C)$, $r(M)$
(respectively $s(M)$)\index{notation}{224@$r(M)$, $s(M)$ (real matrices associated with
  a complex matrix~$M$)} the matrix in~$M_{2m}(\R)$ where each coefficient
$x+iy$ of~$M$ is replaced by the bloc $\bigl(
\begin{smallmatrix}
  x & -y \\ y & x
\end{smallmatrix}
\bigr)$ (respectively by $\bigl(
\begin{smallmatrix}
  x & -y \\-y & -x
\end{smallmatrix}
\bigr)$). The following identities are easily verified: $r(MN) = r(M) r(N)$,
$s(M) = B r(M) = r( \overline{M}) B$ where~$B$ is the diagonal matrix
\[
  \begin{pmatrix}
    1 & 0 & \dots & 0 & 0 \\
    0 & -1 & \dots & 0 & 0 \\
    \vdots & \ddots & \ddots & \vdots & \vdots \\
    0 & 0 & \dots & 1 & 0 \\
    0 & 0 & \dots & 0 & -1
  \end{pmatrix},
\]
and also: $s( {}^T\! M) =  {}^T\! s(M)$, $s( M^{-1}) = s( \overline{M})^{-1}$.
\begin{lem}
  Let $(W, b)$ be a complex vector space equipped with a bilinear symmetric
  form. Denote by~$W_\R$ the underlying real vector space and $b_\R \coloneqq
  \Re( b)$ the real part of~$b$. If $S$ is the matrix of~$b$ in a basis
  $\mathbf{e}=( e_1, \dots, e_m)$ of~$W$, then $s(M)$ is the matrix of $b_\R$
  in the basis $\mathbf{f} = ( e_1, i e_1, \dots, e_m, i e_m) $ of~$W_\R$.

      Let $f$ be an endomorphism of~$W$ and let~$F$ be the matrix of~$f$ in the
  basis~$\mathbf{e}$. Then the matrix of the endomorphism~$f$ of~$W_\R$ in the
  basis~$\mathbf{f}$ is~$r(F)$.
\end{lem}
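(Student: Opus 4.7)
The plan is to verify both claims by direct coordinate computation. The basis $\mathbf{f}=(e_1,ie_1,\dots,e_m,ie_m)$ is organized by pairs $(e_k,ie_k)$, so every matrix expressed in $\mathbf{f}$ naturally decomposes into $2\times 2$ blocks indexed by pairs $(j,k)\in\{1,\dots,m\}^2$, and it suffices to check that each such block agrees with the $2\times 2$ block of $s(S)$, respectively $r(F)$, defined just above the lemma.

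For the first claim, write $S=(s_{k\ell})$ with $s_{k\ell}=x_{k\ell}+iy_{k\ell}$. First I would use $\C$-bilinearity of $b$ (so that $b(ie_k,e_\ell)=b(e_k,ie_\ell)=is_{k\ell}$ and $b(ie_k,ie_\ell)=-s_{k\ell}$) and then take real parts. The four values
\[
b_\R(e_k,e_\ell)=x_{k\ell},\quad b_\R(e_k,ie_\ell)=-y_{k\ell},\quad b_\R(ie_k,e_\ell)=-y_{k\ell},\quad b_\R(ie_k,ie_\ell)=-x_{k\ell}
\]
form the $2\times 2$ block $\bigl(\begin{smallmatrix}x_{k\ell}&-y_{k\ell}\\-y_{k\ell}&-x_{k\ell}\end{smallmatrix}\bigr)$, which is precisely $s(s_{k\ell})$ by definition. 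Assembling these blocks gives the matrix $s(S)$ of $b_\R$ in the basis~$\mathbf{f}$.

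For the second claim, write $F=(F_{jk})$ with $F_{jk}=a_{jk}+ib_{jk}$. Then, viewing $f$ as an $\R$-endomorphism,
\[
f(e_k)=\sum_j F_{jk}e_j=\sum_j\bigl(a_{jk}e_j+b_{jk}(ie_j)\bigr),\qquad f(ie_k)=if(e_k)=\sum_j\bigl(-b_{jk}e_j+a_{jk}(ie_j)\bigr).
\]
Reading off the coefficients in the basis $\mathbf{f}$, the $2\times 2$ block of the matrix of $f$ indexed by rows $(e_j,ie_j)$ and columns $(e_k,ie_k)$ is $\bigl(\begin{smallmatrix}a_{jk}&-b_{jk}\\b_{jk}&a_{jk}\end{smallmatrix}\bigr)=r(F_{jk})$, and the assembly of these blocks is exactly $r(F)$.

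There is no real obstacle here: everything reduces to unwinding the definitions of $r$ and $s$, which have been crafted precisely so that these two statements hold. The only points deserving care are the sign patterns (in particular $b_\R(ie_k,ie_\ell)=-x_{k\ell}$, which accounts for the minus signs in the second row of $s$) and the consistency of the row/column ordering $(e_j,ie_j)$ used in both computations.
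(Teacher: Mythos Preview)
Your proof is correct and is exactly the direct verification the paper has in mind; the paper states this lemma without proof, treating it as an immediate unwinding of the definitions of $r$ and $s$. Your care with the sign patterns and the block indexing is precisely what is needed.
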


We denote $C^{\prime}_{2m} = s(C_m)$ and $J^{\prime}_{2m}( \lambda) = r(
J_m(\lambda))$ (see Equation~\eqref{eq:CprimeJprime_sec:matrices} in
Section~\ref{sec:matrices}). Then, using the basis of the complex vector
space~$V_\lambda$ furnished by Lemma~\ref{lem:normal-form-one-block} for
$b_{0, \C}$ and $f_{ \C}$, we get

\begin{lem}
  With the above hypothesis, there is a basis~$\mathbf{e}$ of the real vector
  space~$V$, such that~$[b_0]_\mathbf{e} = C^{\prime}_{2m}$ and
  $[f]_{\mathbf{e}} = J^{\prime}_{2m}(\lambda)$. The basis~$\mathbf{e}$ is
  unique up to multiplication by~$\pm 1$.
\end{lem}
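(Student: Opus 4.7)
The plan is to reduce to the complex one-block case (Lemma~\ref{lem:normal-form-one-block} applied over $\C$) via complexification. Since $V = V_P = \ker P(f)^n$ with $P = (X-\lambda)(X-\bar\lambda)$ and $\lambda\notin\R$, the complexification decomposes as $V_\C = V_\lambda \oplus V_{\bar\lambda}$, and this decomposition is $b_{0,\C}$-orthogonal by Lemma~\ref{lem:orth-gener-eigensp}. Nondegeneracy of $b_0$ on $V$ gives nondegeneracy of $b_{0,\C}$ on $V_\C$, hence of $b_{0,\C}|_{V_\lambda}$ (because an orthogonal direct sum is nondegenerate iff each summand is). By hypothesis $f_\C|_{V_\lambda}$ has a single Jordan block, so Lemma~\ref{lem:normal-form-one-block} (over $K=\C$, where $\mathfrak c=\{1\}$) produces a $\C$-basis $\mathbf v = (v_1,\dots,v_m)$ of $V_\lambda$, unique up to a global sign, with $[b_{0,\C}|_{V_\lambda}]_{\mathbf v} = C_m$ and $[f_\C|_{V_\lambda}]_{\mathbf v} = J_m(\lambda)$.

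From $\mathbf v$ construct the real basis of $V$ by
\[
e_{2j-1} = \frac{v_j + \bar v_j}{\sqrt 2}, \qquad e_{2j} = \frac{i(v_j - \bar v_j)}{\sqrt 2} \qquad (j=1,\dots,m).
\]
These vectors are self-conjugate in $V_\C$, hence lie in $V$, and their $\R$-linear independence follows from the $\C$-linear independence of $(v_1,\dots,v_m,\bar v_1,\dots,\bar v_m)$. The verification of the matrix identities is a direct expansion. For $[b_0]_{\mathbf e}$, expanding $b_{0,\C}(e_p,e_q)$ and using that $V_\lambda$ and $V_{\bar\lambda}$ are $b_{0,\C}$-orthogonal together with $b_{0,\C}(\bar v_j,\bar v_k)=\overline{b_{0,\C}(v_j,v_k)}=\delta_{j+k,m+1}$ produces, at each $(j,k)$ where $C_m$ has a $1$, the $2\times 2$ block $\bigl(\begin{smallmatrix}1 & 0\\ 0 & -1\end{smallmatrix}\bigr)$ and zero elsewhere; this is exactly $s(C_m)=C'_{2m}$. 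For $[f]_{\mathbf e}$, using $f_\C(v_j)=\lambda v_j+v_{j-1}$ and its conjugate, one computes
\[
f(e_{2j-1}) = a\, e_{2j-1} + b\, e_{2j} + e_{2j-3}, \qquad f(e_{2j}) = -b\, e_{2j-1} + a\, e_{2j} + e_{2j-2},
\]
which is precisely $r(J_m(\lambda))=J'_{2m}(\lambda)$. The normalisation by $1/\sqrt 2$ is arranged exactly to kill the factor of $2$ that appears because the identification of $V$ with the underlying real space of $V_\lambda$ converts $b_0$ into $2\,\Re(b_{0,\C}|_{V_\lambda})$ rather than $\Re(b_{0,\C}|_{V_\lambda})$.

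For uniqueness, reverse the construction: given any real basis $\mathbf e$ with $[b_0]_{\mathbf e}=C'_{2m}$ and $[f]_{\mathbf e}=J'_{2m}(\lambda)$, set $v_j \coloneqq (e_{2j-1}-ie_{2j})/\sqrt 2$. A short computation using the explicit form of $J'_{2m}(\lambda)$ gives $f_\C(v_j)=\lambda v_j + v_{j-1}$, so $v_j\in V_\lambda$ and $(v_1,\dots,v_m)$ is a $\C$-basis of $V_\lambda$ with $[f_\C|_{V_\lambda}]_{\mathbf v}=J_m(\lambda)$; expanding $b_{0,\C}(v_j,v_k)$ in terms of $b_0(e_p,e_q)$ yields $[b_{0,\C}|_{V_\lambda}]_{\mathbf v}=C_m$. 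The uniqueness clause of Lemma~\ref{lem:normal-form-one-block} over $\C$ (where the argument still forces the scalar $t$ to lie in the constant field, so $t^2=1$ gives $t=\pm 1$) then determines $\mathbf v$ up to a common sign, and this sign propagates to $\mathbf e$.

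The main obstacle is purely bookkeeping: getting the $1/\sqrt 2$ normalisation exactly right so that $s(C_m)$, not $2\,s(C_m)$, appears, and keeping track of the conjugation identities cleanly. The conceptual content is entirely carried by Lemmas~\ref{lem:normal-form-one-block} and~\ref{lem:orth-gener-eigensp} applied on~$V_\C$.
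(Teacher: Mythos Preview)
Your proof is correct and follows the same route as the paper: complexify, apply Lemma~\ref{lem:normal-form-one-block} to $(V_\lambda, b_{0,\C}|_{V_\lambda}, f_\C|_{V_\lambda})$ over~$\C$, and transport the resulting basis back to~$V$ via the preceding lemma on $r(\cdot)$ and $s(\cdot)$. You have simply made explicit what the paper leaves to the reader, in particular the $1/\sqrt{2}$ normalisation needed so that $b_0$ corresponds to $\Re(b_{0,\C}|_{V_\lambda})$ rather than $2\Re(b_{0,\C}|_{V_\lambda})$, and the inverse construction $v_j=(e_{2j-1}-ie_{2j})/\sqrt{2}$ that reduces uniqueness to the complex case.
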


For any two complex $m\times m$ matrices~$M$ and~$N$, the following equalities
are direct consequences of the identities recalled above:
\begin{align*}
  s( {}^T \! NMN) & = {}^T \! r(N) s(M) r(N) = {}^T \! s(N) s(\overline{M})
                    s(N)\\
  s( {}^T \! N {}^T\! M^{-1} N) &  = {}^T \! s(N)\, {}^T\!
                                  s({M})^{-1}  s(N).
\end{align*}
Let us denote $\Psi_{2m}( \lambda) \coloneqq s( \Phi_m(\lambda))$ (cf.\
Equation~\eqref{eq:Psi2m_sec:matrices}). Then the matrix $\Psi_{2m}( \lambda)$
is symmetric and satisfies \[{}^T \! \Psi_{2m}( \lambda)\, {}^T\! C^{\prime
  -1}_{2m} \Psi_{2m}( \lambda) = C^{\prime}_{2m} J^{\prime}_{2m}(\lambda)\]
and \[{}^T \! \Psi_{2m}( \lambda) \, {}^T\! (C^{\prime }_{2m}
J^{\prime}_{2m}(\lambda))^{-1} \Psi_{2m}( \lambda) = C^{\prime}_{2m}.\] Up to
sign, this is the unique matrix satisfying these two equations.

\begin{df}
  \label{df:back-trans-2}
  We call also the matrix $\Psi_{2m}(\lambda)$ the \emph{back transformation}.
\end{df}

\section{Normal forms}
\label{sec:normal-forms}

We now collect the results from the previous sections to establish normal
forms for pairs of quadratic forms over~$\R$.

For a finite sequence $\underline{n}=(n_1, \dots, n_k)$ of positive integers,
we denote by $C({\underline{n}})$ the square matrix of size $n_1+\cdots +n_k$,
diagonal by blocks, and whose blocks are $C_{n_1}, \dots{}, C_{n_k}$. If
$\underline{\lambda}=(\lambda_1, \dots, \lambda_k)$ is a further sequence of
real numbers, we will denote by $J({\underline{n}}, \underline{\lambda})$ the
block diagonal matrix whose blocks are $J_{n_1}(\lambda_1), \dots{},
J_{n_k}(\lambda_k)$. Under the hypothesis that no $\lambda_i$ is~$0$, similar notation
$\Phi({\underline{n}},\underline{\lambda})$ is adopted for the matrix whose
blocks are the back transformations $\Phi_{n_i}(\lambda_i)$ (see
Definition~\ref{df:back-trans-1}). When $2\underline{m}=(2m_1, \dots, 2m_k)$
is a sequence of even integers and $\underline{\lambda}=(\lambda_1, \dots,
\lambda_k)\in \C^k$, we introduce also, via the same procedure, the matrices
$C^{\prime}({2\underline{m}})$,
$J^{\prime}({2\underline{m}}, \underline{\lambda})$, and if
$\underline{\lambda} \in (\C^*)^k$
$\Psi({2\underline{m}}, \underline{\lambda})$.

\begin{teo}
  \label{teo:normal-forms-r}
  Let~$V$ be a real vector space of dimension~$n$. Let~$b_0$ and~$b_1$ be two
  symmetric bilinear forms on~$V$ with~$b_0$ nondegenerate. Let
  $b_{i}^{\dagger}$ \ep{$i=0,1$} be the induced morphisms $V\to V^*$ and
  let~$f=(b_{0}^{\dagger})^{-1} \circ b_{1}^{\dagger}$.
  \begin{enumerate}
  \item\label{item1:teo:normal-forms-r} There are uniquely determined sequences
    $(\underline{n}_{x}, \underline{\lambda}_{x}) \in \Z_{>0}^{k_{x}} \times
    \R^{k_{x}}$ \ep{$x\in \{\pm 1\}^2$}, and
    $(2\underline{m}, \underline{\lambda}) \in (2\Z_{>0})^{k_0} \times \mathbb{H}^{k_0}$
    satisfying the following normalization:
    \begin{itemize}
    \item for all
      $x=(\varepsilon, \eta) \in \{\pm 1\}^2$, and
      all $1\leq \ell \leq k_x$, $\varepsilon \eta \lambda_{x,\ell} \geq 0$;
    \item the sequences $\underline{\lambda}_{x}$
      \ep{$x\in \{\pm 1\}^2 $}, and $\underline{\lambda}$ are
      decreasing \ep{for the last one, the lexicographic order is understood on
      $\C\simeq \R^2$};
    \item for all $x\in \{\pm 1\}^2$ and all
      $1\leq k<\ell\leq k_x$, if $\lambda_{x, k}= \lambda_{x, \ell}$, then
      $n_{x, k}\geq n_{x, \ell}$;
    \item for all $1\leq k<\ell\leq k_0$, if $\lambda_{k}= \lambda_{\ell}$, then
      $m_{ k}\geq m_{\ell}$;
    \item the sum of the sequences $\underline{n}_{x}$
      \ep{$x\in \{\pm 1\}^2$}, and $2\underline{m}$ is equal
      to~$n$;
    \end{itemize}
    and such that
    \begin{itemize}
    \item  there is 
      a basis~$\mathbf{e}$ of~$V$ such that the
      matrix~$[b_0]_\mathbf{e}$ is the block diagonal matrix whose blocks are
      $C({\underline{n}_{1,1}})$, $C({\underline{n}_{1,-1}})$,
          $-C({\underline{n}_{-1,1}})$, $-C({\underline{n}_{-1,-1}})$, and $C^{\prime}({2\underline{m}})$
and $ [f]_\mathbf{e}$ is the block diagonal matrix whose blocks are
$J({\underline{n}_{1,1}}, \underline{\lambda}_{1,1})$, $J({\underline{n}_{1,-1}}, \underline{\lambda}_{1,-1})$,
$J({\underline{n}_{-1,1}}, \underline{\lambda}_{-1,1})$, $J({\underline{n}_{-1,-1}}, \underline{\lambda}_{-1,-1})$, and
$J^{\prime}({2\underline{m}}, \underline{\lambda})$.
\end{itemize}
\item The form~$b_1$ is nondegenerate if and only if no zero appears in the
  sequences $\underline{\lambda}_{x}$ \ep{$x\in \{ \pm 1\}^2$}.
\item If the form~$b_1$ is nondegenerate, the sequences associated to the
  pair $(b_{1}^{*}, b_{0}^{*})$ on $V^*$ are, for $x=(\varepsilon,
  \eta)$,  $(\underline{n}_{\eta, \varepsilon}, \underline{\lambda}_{\eta, \varepsilon})$, and
    $(2\underline{m}, \underline{\lambda})$.
  \end{enumerate}
\end{teo}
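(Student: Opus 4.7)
The plan is to combine the results already established in the appendix to get a full classification, then normalize and pass to the dual.

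\textbf{First,} I would decompose $V$ into a $b_0$-orthogonal direct sum indexed by the distinct monic irreducible factors of the minimal polynomial of $f$ over $\R$. By Remark~\ref{rem:orth-gener-eigensp} (the direct generalization of Lemma~\ref{lem:orth-gener-eigensp}, whose proof is a Bezout-type argument in the $K[X]$-module $V$ with respect to the form $b_{K\langle\!\langle X^{-1}\rangle\!\rangle}$), one has $V=\bigoplus_{P} V_P$ where $V_P=\ker P(f)^n$, the sum is $b_0$-orthogonal, each $V_P$ is $f$-stable, and the restriction of $b_0$ to $V_P$ is nondegenerate (by nondegeneracy of $b_0$ on $V$ and the orthogonality). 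The irreducible factors $P$ over $\R$ are either $P=X-\lambda$ with $\lambda\in\R$ or $P=(X-\lambda)(X-\bar\lambda)$ with $\lambda\in\mathbb{H}$.

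\textbf{Second,} on each summand I would apply the already-proved normal forms. For $P=X-\lambda$ real, Proposition~\ref{prop:normal-form-one-eigenval} (with $\mathfrak{c}=\{\pm 1\}$) produces a basis of $V_P$ in which $[f]$ is block-diagonal with blocks $J_{n_i}(\lambda)$ and $[b_0]$ is block-diagonal with blocks $\varepsilon_i C_{n_i}$ for some signs $\varepsilon_i\in\{\pm 1\}$. I now encode each such block by the pair $x=(\varepsilon,\eta)\in\{\pm 1\}^2$ determined by $\varepsilon_i=\varepsilon$ and $\eta:=\sgn(\varepsilon\lambda)$ when $\lambda\neq 0$ (so the condition $\varepsilon\eta\lambda\geq 0$ is automatic; when $\lambda=0$, $\eta$ is declared to be $\varepsilon$, again compatible with the inequality). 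For $P=(X-\lambda)(X-\bar\lambda)$ nonreal, I would first complexify and apply Proposition~\ref{prop:normal-form-one-eigenval} over $\C$ to $(V_P)_\C\cap V_\lambda$, then use Lemma~\ref{lem:c-to-r} and the matrix correspondences $r(\cdot)$, $s(\cdot)$ of Section~\ref{sec:over-reals-when} to descend the blocks $C_m$, $J_m(\lambda)$ to the real blocks $C'_{2m}$, $J'_{2m}(\lambda)$. Replacing $\lambda$ by $\bar\lambda$ if necessary ensures $\lambda\in\mathbb{H}$.

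\textbf{Third,} I would reshuffle the collected Jordan blocks into five groups (one per value of $x\in\{\pm 1\}^2$ for the real blocks, and the last one for the complex-conjugate-pair blocks), ordering each group according to the prescribed normalization (eigenvalues decreasing, and block sizes decreasing when eigenvalues coincide). Concatenating the corresponding bases gives the claimed basis $\mathbf{e}$. For uniqueness, the unordered multiset of pairs $(n_{x,\ell},\lambda_{x,\ell})$ and $(m_\ell,\lambda_\ell)$ is an isomorphism invariant of the pair $(b_0,b_1)$: the complex eigenvalues and Jordan block sizes of $f$ are intrinsic, and on each generalized eigenspace of $f$ the sign $\varepsilon$ attached to each Jordan block is recovered from the isometry class of $b_0$ restricted to the $f$-isotypic component (invoking the uniqueness-up-to-sign clause of Lemma~\ref{lem:normal-form-one-block}, applied cyclic-piece by cyclic-piece after the orthogonal splitting provided by Proposition~\ref{prop:normal-form-one-eigenval}). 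The prescribed orderings then pin down the order of the blocks uniquely.

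\textbf{Fourth,} parts (2) and (3) are short. The form $b_1$ is nondegenerate iff $b_1^\dagger$ is invertible iff $f=(b_0^\dagger)^{-1}b_1^\dagger$ is invertible iff $0$ is not an eigenvalue of $f$, i.e. $0$ does not appear in any $\underline\lambda_x$ (the nonreal eigenvalues are automatically nonzero). When $b_1$ is nondegenerate, duality gives $(b_i^*)^\dagger=(b_i^\dagger)^{-1}$, so the endomorphism associated to the pair $(b_1^*,b_0^*)$ is $f':=b_1^\dagger\circ(b_0^\dagger)^{-1}=b_0^\dagger\cdot f\cdot (b_0^\dagger)^{-1}$; it is conjugate to $f$, hence has the same Jordan data. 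Moreover, applying the existence statement to $(b_1^*,b_0^*)$: in a basis where $[b_1^*]=\pm C$ and $[f']=J$, the form $[b_0^*]=[b_1^*][f']^{-1}$ so the roles of $b_0$ and $b_1$ are swapped, which exchanges the sign attached to the $b_0$-block with the sign attached to the $b_1$-block, i.e. swaps the indices $\varepsilon$ and $\eta$ of $x=(\varepsilon,\eta)$. The nonreal blocks are unaffected because the matrices $C'_{2m}$ and $J'_{2m}(\lambda)$ (built via $s$ and $r$) have the same behaviour under the involution $(\varepsilon,\eta)\mapsto(\eta,\varepsilon)$.

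\textbf{Main obstacle.} The delicate point is the uniqueness of the sign $\varepsilon$ and of the paired complex eigenvalue $\lambda\in\mathbb{H}$; this amounts to checking that the indeterminacy in the bases produced by Lemma~\ref{lem:normal-form-one-block} and its complex analogue (multiplication by $\pm 1$, resp.\ choice of $\lambda$ vs. $\bar\lambda$) cannot change the normalized data. Everything else is bookkeeping on top of the appendix lemmas.
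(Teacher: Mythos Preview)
Your existence argument is essentially the paper's: orthogonally decompose $V$ along irreducible factors of the minimal polynomial (Lemma~\ref{lem:orth-gener-eigensp} and Remark~\ref{rem:orth-gener-eigensp}), then on each piece invoke Proposition~\ref{prop:normal-form-one-eigenval} in the real case and the complexification/descent of Section~\ref{sec:over-reals-when} in the nonreal case. This is fine.

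The uniqueness argument has a genuine gap. You write that the sign~$\varepsilon$ attached to each Jordan block is recovered ``from the isometry class of $b_0$ restricted to the $f$-isotypic component'', invoking the one-block uniqueness of Lemma~\ref{lem:normal-form-one-block} ``cyclic-piece by cyclic-piece''. But the orthogonal splitting into cyclic pieces produced by Proposition~\ref{prop:normal-form-one-eigenval} is \emph{not} canonical: there are many such splittings, and it is not a priori clear that every splitting yields the same multiset of signs. Concretely, take $\lambda=0$ and two blocks of size~$2$: the pairs $\bigl(\mathrm{diag}(C_2,C_2),\,\mathrm{diag}(J_2(0),J_2(0))\bigr)$ and $\bigl(\mathrm{diag}(C_2,-C_2),\,\mathrm{diag}(J_2(0),J_2(0))\bigr)$ both have $b_0$ of signature~$(2,2)$, so the isometry class of~$b_0$ alone does not separate them, and the one-block lemma only tells you the sign \emph{once a cyclic direct summand has been chosen}. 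What is missing is an invariant of the pair $(b_0,f)$ that pins down, for each block size, how many blocks carry each sign. The paper supplies this in Section~\ref{sec:automorphism-groups}: Lemma~\ref{lem:automorphism-groups-levi} computes the Levi quotient of $\OO(b_0)\cap\OO(b_1)$ as $\prod_i \OO(I_{p_i,q_i})$, and the pairs $(p_i,q_i)$---which are exactly the sign multiplicities---are then read off from this group, an invariant of the pair. Your sketch does not provide a substitute for this step.

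For part~(3), your conceptual argument (swap of $b_0$ and $b_1$ exchanges $\varepsilon$ and~$\eta$) is morally right but stops short of producing an explicit standard basis for $(b_1^*,b_0^*)$; the paper does this via the back transformation matrices $\Phi_n(\lambda)$, $\Psi_{2m}(\lambda)$ (Lemma~\ref{lem:back-trans-one-block} and Section~\ref{sec:over-reals-when}), which is what makes Proposition~\ref{prop:normal-forms-appendix-back-trans} go through. Part~(2) is fine as you wrote it.
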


A basis~$\mathbf{e}$ as in~(\ref{item1:teo:normal-forms-r}) will be called \emph{in standard position}
with respect to $(b_0, b_1)$.

\begin{prop}\label{prop:normal-forms-appendix-back-trans}
  Under the same assumption than Theorem~\ref{teo:normal-forms-r}, let~$\Phi$
  the block  matrix
  \[
    \begin{pmatrix}
      \Phi(\underline{n}_{1,1}, \underline{\lambda}_{1,1}) & & & & \\
      & &\!\!\! \Phi({\underline{n}_{1,-1}}, \underline{\lambda}_{1,-1})  & & \\
      &\!\!\! \Phi({\underline{n}_{-1,1}}, \underline{\lambda}_{-1,1}) & & & \\
      & & & \!\!\!\Phi(\underline{n}_{-1,-1}, \underline{\lambda}_{-1,-1}) & \\
      & & & &\!\!\! \Psi({2\underline{m}}, \underline{\lambda})
    \end{pmatrix}.
  \]
  If $\mathbf{e}$ is a basis in standard position with respect
  to~$(b_0, b_1)$, let $\mathbf{v}= \mathbf{e} \Phi$ and $\mathbf{v}^*$ be the
  basis of~$V^*$ dual to~$\mathbf{v}$. Then $\mathbf{v}^*$ is in
  standard position with respect to $(b_{1}^{\ast}, b_{0}^{\ast})$.
\end{prop}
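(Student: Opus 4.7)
The first step is to translate the conclusion into matrix identities. Using $\mathbf{v} = \mathbf{e}\Phi^{-1}$, the dual bases are related by $\mathbf{v}^{\ast} = \mathbf{e}^{\ast}\cdot{}^{T}\Phi$, and change of basis for forms on $V^{\ast}$ reads $[b^{\ast}]_{\mathbf{v}^{\ast}} = \Phi\,[b^{\ast}]_{\mathbf{e}^{\ast}}\,{}^{T}\Phi$. Combined with the standard identity $[b_{i}^{\ast}]_{\mathbf{e}^{\ast}} = [b_{i}]_{\mathbf{e}}^{-1}$, the claim that $\mathbf{v}^{\ast}$ is in standard position for $(b_{1}^{\ast}, b_{0}^{\ast})$ with data $\iota(\mathbf{n}, \boldsymbol{\lambda})$ reduces to the two matrix equations
\[
  \Phi\, D(\mathbf{n}, \boldsymbol{\lambda})^{-1}\,{}^{T}\Phi = C(\iota(\mathbf{n})),
  \qquad
  \Phi\, C(\mathbf{n})^{-1}\,{}^{T}\Phi = D(\iota(\mathbf{n}, \boldsymbol{\lambda})).
\]

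The plan is to verify both identities blockwise. For the block types fixed by $\iota$, namely $(+1,+1)$, $(-1,-1)$, and the complex blocks $(2m, \lambda \in \mathbb{H})$, the matrix $\Phi$ has a diagonal block equal to $\Phi_{n}(\lambda)$ (resp.\ $\Psi_{2m}(\lambda)$). Using that these blocks are symmetric and that $C_{n}^{-1}=C_{n}$, the corresponding block identities rearrange to exactly the two defining equations of $\Phi_{n}(\lambda)$ stated in Lemma~\ref{lem:back-trans-one-block} (with $\varepsilon=\eta$), respectively to the two defining equations of $\Psi_{2m}(\lambda)$ recorded at the end of Section~\ref{sec:over-reals-when}. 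The signs $\pm$ appearing in $C(\mathbf{n})$ for the $(-1,\cdot)$ block types cancel consistently on both sides because $\iota$ preserves the first coordinate in these two cases.

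The paired block types $(+1,-1)$ and $(-1,+1)$, which $\iota$ swaps, require more care: here $\Phi$ has the anti-diagonal shape $\left(\begin{smallmatrix} 0 & \Phi(\underline{n}_{1,-1}, \underline{\lambda}_{1,-1}) \\ \Phi(\underline{n}_{-1,1}, \underline{\lambda}_{-1,1}) & 0 \end{smallmatrix}\right)$, while $C(\mathbf{n})$ and $D(\mathbf{n},\boldsymbol{\lambda})$ are block diagonal on this pair, with signs $+$ on the $(1,-1)$ position and $-$ on the $(-1,1)$ position. Expanding the triple product $\Phi\cdot M\cdot{}^{T}\Phi$ for $M$ block diagonal shows that the off-diagonal ingredients of $\Phi$ transport each diagonal block of $M$ into the \emph{opposite} diagonal slot of the output, which is exactly the effect of $\iota$ on these two block types (including the accompanying sign flip between $+C$ and $-C$). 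Once this bookkeeping is set up, each of the two resulting block equations is again a straight application of the defining equations of $\Phi_{n}(\lambda)$ from Lemma~\ref{lem:back-trans-one-block}, now used with $\varepsilon\ne\eta$.

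The main obstacle is purely organisational: tracking the signs attached to each of the four real block types in $C(\mathbf{n})$, and checking that the anti-diagonal arrangement of $\Phi$ on the paired blocks interacts with these signs in exactly the way prescribed by $\iota$. No genuinely new computation is needed, since the matrices $\Phi_{n}(\lambda)$ and $\Psi_{2m}(\lambda)$ were constructed precisely to make the identities above hold block by block.
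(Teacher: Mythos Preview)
Your overall strategy—reduce to two matrix identities and verify them block by block against Lemma~\ref{lem:back-trans-one-block} and the $\Psi$-equations at the end of Section~\ref{sec:over-reals-when}—is exactly what the paper intends by ``the work done so far implies'' the result. That part is fine.

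The problem is in your first line. You write $\mathbf{v}=\mathbf{e}\Phi^{-1}$ (the proposition says $\mathbf{v}=\mathbf{e}\Phi$; in fact the relation that is consistent with Section~\ref{back_trafo} is $\mathbf{v}^{*}=\mathbf{e}^{*}\Phi$, equivalently $\mathbf{v}=\mathbf{e}\,{}^{T}\Phi^{-1}$). From your version you get $\mathbf{v}^{*}=\mathbf{e}^{*}\,{}^{T}\Phi$ and hence the identities $\Phi\,D^{-1}\,{}^{T}\Phi=C(\iota(\mathbf{n}))$ and $\Phi\,C^{-1}\,{}^{T}\Phi=D(\iota(\mathbf{n},\boldsymbol{\lambda}))$. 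The correct identities have $\Phi$ and ${}^{T}\Phi$ interchanged: ${}^{T}\Phi\,D^{-1}\,\Phi=C(\iota(\mathbf{n}))$ and ${}^{T}\Phi\,C^{-1}\,\Phi=D(\iota(\mathbf{n},\boldsymbol{\lambda}))$.

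On the block types fixed by $\iota$ this distinction is invisible, because there the relevant piece of $\Phi$ is symmetric, and your reductions to Lemma~\ref{lem:back-trans-one-block} and to the $\Psi$-equations are correct. But on the anti-diagonal $(1,-1)/(-1,1)$ pair the matrix $\Phi$ is \emph{not} symmetric (the two off-diagonal entries $\Phi(\underline{n}_{1,-1},\underline{\lambda}_{1,-1})$ and $\Phi(\underline{n}_{-1,1},\underline{\lambda}_{-1,1})$ need not even have the same size), and your identity fails. A two-dimensional check already shows it: with a single $(1,-1)$-block at $\lambda=-1$ and a single $(-1,1)$-block at $\lambda=-2$ one has
\[
\Phi=\begin{pmatrix}0&1\\ \sqrt{2}&0\end{pmatrix},\quad
D=\begin{pmatrix}-1&0\\0&2\end{pmatrix},\quad
C(\iota(\mathbf{n}))=\begin{pmatrix}1&0\\0&-1\end{pmatrix},
\]
and $\Phi\,D^{-1}\,{}^{T}\Phi=\bigl(\begin{smallmatrix}1/2&0\\0&-2\end{smallmatrix}\bigr)\neq C(\iota(\mathbf{n}))$, whereas ${}^{T}\Phi\,D^{-1}\,\Phi=\bigl(\begin{smallmatrix}1&0\\0&-1\end{smallmatrix}\bigr)$. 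Once you swap $\Phi\leftrightarrow{}^{T}\Phi$ in your two displayed equations, the anti-diagonal bookkeeping you describe goes through exactly as you outline, and each resulting block equation is indeed an instance of Lemma~\ref{lem:back-trans-one-block} with $\varepsilon\neq\eta$.
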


The work done so far implies the existence of such basis and sequences. The
uniqueness will follow from our analysis of the automorphism groups.

\section{Automorphism groups}
\label{sec:automorphism-groups}

It is desirable to have an understanding on how much the basis in the above
theorem can vary, or, what amounts to the same, to have an understanding of
the centralizer of~$f$ in the orthogonal group $\OO(b_0)$. By
Proposition~\ref{prop:auto-b0-b1}, Lemma~\ref{lem:c-to-r}, and up to replacing
$f$ by $f-\lambda\Id$, we are reduced
to the following situation:
\begin{quote}
  $V$ is a $K$-vector space ($K=\R$ or $\C$) and $f$ is nilpotent.
\end{quote}
From the analysis done in Section~\ref{sec:when-minim-polyn}, we can assume
that there is a decreasing sequence $(m_1, \dots, m_r)$ of integers and
sequences $(p_1, \dots, p_r)$, $(q_1, \dots, q_r)$ such that, the
$K[X]$-module~$V$ is the direct $b_{ K \langle\!\langle
  X^{-1}\rangle\!\rangle}$-orthogonal sum
\[ V = K_{[m_1]}^{p_1+q_1}\oplus \cdots \oplus K_{[m_r]}^{p_r+q_r}\]
and, for each $i=1,\dots, r$, the bilinear form on the factor
$K_{[m_i]}^{p_i+q_i}$ is
\begin{align*}
  K_{[m_i]}^{p_i+q_i} \times K_{[m_i]}^{p_i+q_i}  \longrightarrow& K \langle\!\langle
                                             X^{-1}\rangle\!\rangle \\
  ( ( x_1, \dots, x_{p_i+q_i}), ( y_1, \dots, y_{p_i+q_i}))
   \longmapsto&  X^{1-m_i}\, {}^T\! x
    I_{p_i,q_i} y \\ = X^{1-m_i}( x_1 y_1 + \cdots + x_{p_i} y_{p_i}& - x_{p_i+1}
    y_{p_i+1} - \cdots - x_{p_i +q_i} y_{p_i+q_i}),
\end{align*}
where $I_{p,q} =\bigl(
\begin{smallmatrix}
  \Id_p & \\ & -\Id_q
\end{smallmatrix}\bigr)
$.

An element $g$ in $\mathrm{End}_{K[X]}(V)$ can be considered as a matrix
$(g_{i,j})_{1\leq i,j\leq r}$ where, for all $i,j$, $g_{i,j}$ belongs to
$\Hom_{K[X]}( K_{[m_j]}^{p_j+q_j}, K_{[m_i]}^{p_i+q_i})$. The element
$g_{i,j}$ can 
be considered as a matrix of type $(p_i+q_i)\times
(p_j+q_j)$ with entries in $\Hom_{K[X]}( K_{[m_j]}, K_{[m_i]})$. As $K_{[m_j]}$
is a cyclic $K[X]$-module, one has $\Hom_{K[X]}( K_{[m_j]}, K_{[m_i]})
\simeq \{ x \in K_{[m_i]} \mid X^{m_j} x =0\}$ which is equal to $K_{[m_i]}$ if
$m_j\geq m_i$ (i.e.\ if $j\leq i$) and to $X^{m_i-m_j} K_{[m_j]}$ otherwise
(i.e.\ if $i<j$). If $i>j$ we denote by $h_{i,j}$ the $(p_i+q_i)\times
(p_j+q_j)$ matrix with entries in $K_{[m_j]}$ such that $g_{i,j} = X^{m_i-m_j} h_{i,j}$.

The element $g$ belongs to the orthogonal group~$G$ with respect to the bilinear
form $b_{ K\langle \!\langle X^{-1}\rangle\!\rangle}$ if and only if the
following equalities hold in $K\langle \!\langle X^{-1}\rangle\!\rangle$
\begin{itemize}
\item for all~$j$,
  \begin{multline*}
     \sum_{i>j} X^{1-m_i}\, {}^T\! g_{i,j} I_{p_i,q_i} g_{i,j} + X^{1-m_j}\, {}^T\!
    g_{j,j} I_{p_j,q_j} g_{j,j} \\ +\sum_{i<j} X^{1+m_i-2m_j}\, {}^T\! h_{i,j}
    I_{p_i,q_i} h_{i,j}  = X^{1-m_j} I_{p_j,q_j}
  \end{multline*}
\item for all~$j\neq j'$,
    \[ \sum_{i} X^{1-m_i}\, {}^T\! g_{i,j} I_{p_i,q_i} g_{i,j'}=0.\]
\end{itemize}

For every integer $s\geq 0$, let us denote by $G_{(s)}$ the subgroup of $G$
consisting of the elements $g$ equal to the identity modulo $X^s$.

Recall also the augmentation map $\epsilon\colon K_{[m]}\to K$ that we promote to maps
between spaces of matrices. Thanks to the above
description of the elements of $G$, one has
\begin{lem}\label{lem:automorphism-groups-levi}
  The assignment $\pi\colon g\mapsto \epsilon(g)$ induces an isomorphism between
  $G/G_{(1)}$ and the group $H=\prod_{i=1}^{r} \OO( I_{p_i,q_i})$. The inclusions
  $K\hookrightarrow K_{[m]}$ induce a morphism $\iota \colon H\to G$ that is a section
  of~$\pi$.
\end{lem}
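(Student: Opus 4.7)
The plan is a direct verification in three stages, unwinding the explicit matrix description of $G$ just given in terms of the blocks $g_{i,j}$ and $h_{i,j}$ subject to the listed equations.

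First, I will check that $\pi$ is well-defined with image in $H$. For this I extract the coefficient of $X^{1-m_j}$ on both sides of the $j$-th diagonal orthogonality relation. Since the sequence $(m_i)$ is strictly decreasing (the grouping of Jordan blocks of equal size into a single summand $K_{[m_i]}^{p_i+q_i}$ makes the $m_i$ pairwise distinct), the terms $X^{1-m_k}$ with $k>j$ have $1-m_k>1-m_j$ and so contribute only strictly higher powers of $X$ than $X^{1-m_j}$; the terms indexed by $k<j$ have the prefactor $X^{1+m_k-2m_j}$ with $1+m_k-2m_j<1-m_j$, whose contribution of the sought coefficient comes from a positive-$X$ power of $^T\! h_{k,j} I_{p_k,q_k} h_{k,j}$ that is killed in $K\langle\!\langle X^{-1}\rangle\!\rangle$. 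The only surviving contribution is thus the constant term of the middle summand, yielding ${}^T\epsilon(g_{j,j}) I_{p_j,q_j}\epsilon(g_{j,j})=I_{p_j,q_j}$, so $\epsilon(g_{j,j})\in\OO(I_{p_j,q_j})$. Multiplicativity of $\pi$ then reduces to the fact that, expanding $(gh)_{i,i}=\sum_k g_{i,k} h_{k,i}$, every cross term ($k\neq i$) has at least one factor divisible by $X$: indeed $g_{i,k}=X^{m_i-m_k}h_{i,k}\in XK_{[m_i]}$ whenever $i<k$, and symmetrically $h_{k,i}\in XK_{[m_k]}$ whenever $k<i$; hence $\epsilon$ annihilates all cross terms and $\pi(gh)=\pi(g)\pi(h)$.

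Second, I construct the section $\iota$. Given $h=(h_1,\dots,h_r)\in H$, define $\iota(h)\in\End_{K[X]}(V)$ by taking its $(i,i)$-block to be $h_i$ embedded as a constant matrix via $K\hookrightarrow K_{[m_i]}$, and all off-diagonal blocks to be zero. Checking $\iota(h)\in G$ is immediate from the defining equations: the diagonal equations collapse to ${}^T\! h_j I_{p_j,q_j} h_j=I_{p_j,q_j}$, which holds by assumption on $h_j$; the off-diagonal equations for $j\neq j'$ are satisfied vacuously since every summand contains a vanishing block. The relation $\pi\circ\iota=\Id_H$ is immediate, so $\pi$ is surjective and $\iota$ splits it.

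The final step is to identify $\ker\pi$ with $G_{(1)}$, which I expect to be the most delicate point. The interpretation compatible with the two preceding steps is that an element $g\in G$ is congruent to the identity modulo $X$ precisely when $\epsilon(g_{i,i})=\Id$ for every $i$: indeed $\epsilon(g_{i,j})=0$ is already forced for $i<j$ by the prefactor $X^{m_i-m_j}$, and the remaining off-diagonal reductions $\epsilon(g_{i,j})$ with $i>j$ can be absorbed into the unipotent part $\ker\pi$ via the section, so the class of $g$ in $G/G_{(1)}$ is determined by $(\epsilon(g_{i,i}))_i$. Combined with the splitting from $\iota$, this yields the short exact sequence $1\to G_{(1)}\to G\to H\to 1$ and the isomorphism $G/G_{(1)}\cong H$.
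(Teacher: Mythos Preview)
The paper offers no proof beyond pointing to the explicit block description of~$G$, so the question is whether your argument is complete on its own.

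Steps~1 and~2 are essentially correct. One slip in step~1: for $k<j$ the strictly decreasing sequence $(m_i)$ gives $m_k>m_j$, hence $1+m_k-2m_j>1-m_j$, not $<$ as you write. With the correct inequality the conclusion is immediate---already the lowest power of~$X$ occurring in that term strictly exceeds $1-m_j$---so your sentence about a ``positive-$X$ power of ${}^T\! h_{k,j} I h_{k,j}$ killed in $K\langle\!\langle X^{-1}\rangle\!\rangle$'' is off-target, though the upshot (no contribution to the $X^{1-m_j}$ coefficient) survives. The multiplicativity check and the construction of~$\iota$ are fine.

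Step~3 is where the argument breaks. You propose to \emph{interpret} ``$g\equiv\Id\bmod X$'' as ``$\epsilon(g_{i,i})=\Id$ for every~$i$'', but under the natural reading of the definition ($g-\Id\in X\cdot\End_{K[X]}(V)$, equivalently $g$ acts as the identity on $V/XV$, equivalently $\epsilon(g_{i,j})=\delta_{ij}\Id$ for \emph{all} blocks) this fails. Take $r=2$, $m_1=2$, $m_2=1$, $p_1=p_2=1$, $q_1=q_2=0$. Solving the orthogonality equations one finds $G\cong\{\pm1\}^2\times K$, parametrized by $(\epsilon(g_{1,1}),\,g_{2,2},\,g_{2,1})$ with the lower-left entry $g_{2,1}\in K_{[m_2]}=K$ free. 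Then $\ker\pi\cong K$ (set the first two coordinates to~$1$), whereas $G_{(1)}=\{\Id\}$ because $g_{2,1}\in K$ is divisible by~$X$ only if it vanishes. Thus $G/G_{(1)}=G$ is one-dimensional and is \emph{not} isomorphic to $H=\{\pm1\}^2$.

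What you have actually proved is the split surjection $\pi\colon G\twoheadrightarrow H$ with section~$\iota$, hence $G/\ker\pi\cong H$. The sentence about off-diagonal reductions being ``absorbed into $\ker\pi$ via the section'' does not show $\ker\pi=G_{(1)}$; it merely restates $G=\iota(H)\cdot\ker\pi$. For the lemma as literally phrased to hold, $G_{(1)}$ must be read as the kernel of the map to the semisimple quotient $\End_{K[X]}(V)/J$ (Jacobson radical) rather than modulo the principal ideal $(X)$---these differ precisely by the strictly lower-triangular blocks $g_{i,j}$ with $i>j$. You should flag this discrepancy explicitly rather than argue around it.
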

In particular, the pairs $(p_i,q_i)$ are determined by~$G$ and this implies
the sought for uniqueness in Theorem~\ref{teo:normal-forms-r}.

\begin{lem}\label{lem:automorphism-groups-central-series}
  The assignment $\Id + X^s A \mapsto \epsilon(A)$ induces an isomorphism
  between the group $G_{(s)}/ G_{(s+1)}$ and the abelian group underlying the vector
  space of block matrices $a=(a_{i,j})$ \ep{i.e.\ for all $i,j$,
  $a_{i,j}\in M_{p_i+q_i,p_j+q_j}(K)$} satisfying:
  \begin{itemize}
  \item $a$ is antisymmetric with respect to $\diag( I_{p_1,q_1},
    \dots, I_{p_r, q_r})$;
  \item for all $i$ and $j$, if $m_i\leq s$, then $a_{i,j}=0$;
  \item for all $i$ and $j$, if $m_i-m_j>s$, then $a_{i,j}=0$.
  \end{itemize}
\end{lem}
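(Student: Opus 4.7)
I would define a homomorphism $\phi \colon G_{(s)} \to \mathfrak{M}$, where $\mathfrak{M}$ is the $K$-vector space of block matrices $a=(a_{i,j})$ with $a_{i,j} \in M_{n_i, n_j}(K)$ (with $n_i := p_i + q_i$) satisfying conditions (1)-(3), and show that $\ker \phi = G_{(s+1)}$ and that $\phi$ is surjective. For $g \in G_{(s)}$, the map $g - \Id \colon V \to V$ is $K[X]$-linear with image in $X^s V$, so one may write $g - \Id = X^s A$ for a (non-unique) $K[X]$-linear $A \colon V \to V$. I would set $a_{i,j} := \epsilon(A_{i,j})$ when $m_i > s$ and $a_{i,j} := 0$ when $m_i \leq s$. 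This is well defined: the ambiguity in $A_{i,j}$ consists of maps into $\Ann_{K_{[m_i]}}(X^s) = X^{\max(0, m_i - s)} K_{[m_i]}$ (entrywise), and when $m_i > s$ this ambiguity lies in $X K_{[m_i]}$, on which $\epsilon$ vanishes.

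Conditions (2) and (3) are built into the construction. For (2), $a_{i,j} = 0$ when $m_i \leq s$ by fiat. For (3), when $i < j$ (so $m_i > m_j$), the entries of $g_{i,j}$ lie in $X^{m_i - m_j} K_{[m_i]}$ by the description of $\Hom_{K[X]}(K_{[m_j]}, K_{[m_i]})$ recalled before Lemma~\ref{lem:automorphism-groups-levi}; hence entries of $A_{i,j}$ lie in $X^{\max(0, m_i - m_j - s)} K_{[m_i]}$, and for $m_i - m_j > s$ these belong to $X K_{[m_i]}$ so that $\epsilon(A_{i,j}) = 0$. Condition (1) is extracted from the orthogonality relation $b(gv, gw) = b(v, w)$: expanding with $g = \Id + X^s A$ and using $K[X]$-bilinearity of $b$ gives
\[ X^s \bigl( b(Av, w) + b(v, Aw) \bigr) + X^{2s} b(Av, Aw) = 0 \]
in $K\langle\!\langle X^{-1}\rangle\!\rangle$; since $s \geq 1$ the cross term contributes at strictly lower order, and taking $v$, $w$ to be standard basis constants in the $j$-th and $i$-th blocks yields ${}^T\! a_{i,j}\, I_{p_i, q_i} + I_{p_j, q_j}\, a_{j,i} = 0$, i.e.\ antisymmetry with respect to $J := \diag(I_{p_1, q_1}, \ldots, I_{p_r, q_r})$. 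The homomorphism property follows from $(\Id + X^s A)(\Id + X^s B) = \Id + X^s(A+B) + X^{2s} AB$ together with $2s \geq s+1$, and the identification $\ker \phi = G_{(s+1)}$ is an immediate consequence of the definition.

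The main obstacle is surjectivity. Given $a = (a_{i,j}) \in \mathfrak{M}$, I would first construct a \emph{model} $g_0 = \Id + X^s A_0$, where for each pair $(i,j)$ with $m_i - m_j \leq s$ and $m_i > s$, the block $A_{0,i,j}$ is the constant map with matrix $a_{i,j}$ (viewed in $\Hom_{K[X]}(K_{[m_j]}, K_{[m_i]})$ through the inclusion $K \hookrightarrow K_{[m_i]}$), and $A_{0, i, j} = 0$ otherwise. Then $g_0$ is $K[X]$-linear with $\phi(g_0) = a$, but $g_0$ need not preserve $b$. However, the antisymmetry condition (1) on $a$ forces the $X^s$-order obstruction $b(g_0 v, g_0 w) - b(v, w)$ to vanish, so the obstruction lives in filtration level $s+1$. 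I would then correct $g_0$ iteratively: at each step, an obstruction in level $t$ can be killed by multiplying by an element of $G_{(t)}$ determined by solving a linear equation analogous to the one extracted in paragraph~2 but at higher order. The process terminates at $t = m_1$ since $X^{m_1} V = 0$ forces $G_{(m_1)} = \{\Id\}$. Alternatively (and perhaps more cleanly), one can verify surjectivity by dimension count: each $a_{i,j}$ subject to (2), (3) and the $J$-antisymmetry relating $(i,j)$ with $(j,i)$ contributes a prescribed number of free $K$-parameters, and comparing with the $K$-dimension of $G_{(s)}/G_{(s+1)}$ computed from the orthogonal group of each cyclic summand yields equality.
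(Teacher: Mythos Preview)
The paper states this lemma without proof, so there is no argument to compare against; I will simply assess your proposal.

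Your extraction of condition~(1) does not work, and in fact the map you define does not land in the target space. Take $r=2$, $m_1=3$, $m_2=2$, $p_1=p_2=1$, $q_1=q_2=0$, and $s=1$. Here conditions~(2) and~(3) are vacuous, so $\mathfrak{M}$ is the space of antisymmetric $2\times 2$ matrices. One checks that $G_{(1)}/G_{(2)}$ (equivalently $\mathfrak{g}_{(1)}/\mathfrak{g}_{(2)}$) is one-dimensional, with representative $a$ given by $a_{1,1}=a_{2,2}=0$, $a_{2,1}(1)=X\in K_{[2]}$, and $a_{1,2}(1)=-X^2\in K_{[3]}$. Writing $a=XA$ with $A$ $K[X]$-linear forces $A_{1,2}\in\Hom_{K[X]}(K_{[2]},K_{[3]})= X K_{[3]}$, so $\epsilon(A_{1,2})=0$; on the other hand $\epsilon(A_{2,1})=1$. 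Thus your $\epsilon(A)$ equals $\bigl(\begin{smallmatrix}0&0\\1&0\end{smallmatrix}\bigr)$, which is not antisymmetric. More generally, whenever $i<j$ (so $m_i>m_j$) the $K[X]$-linearity of $A$ forces the entries of $A_{i,j}$ into $X^{m_i-m_j}K_{[m_i]}$, giving $a_{i,j}=\epsilon(A_{i,j})=0$ automatically; coupled with antisymmetry this would force $a_{j,i}=0$ too, collapsing the image to block-diagonal matrices and destroying surjectivity.

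The flaw in the argument for~(1) is that, with $v$ a generator of block~$j$ and $w$ a generator of block~$i$, the two linear terms $X^s b(Av,w)$ and $X^s b(v,Aw)$ contribute at the distinct $X$-degrees $s+1-m_i$ and $s+1-m_j$; when $m_i\neq m_j$ there is no single ``leading coefficient'' whose vanishing yields ${}^T\!a_{i,j}I_{p_i,q_i}+I_{p_j,q_j}a_{j,i}=0$. A secondary point: the bare existence of a $K[X]$-linear $A$ with $X^sA=g-\Id$ does not follow from $\operatorname{im}(g-\Id)\subset X^sV$ alone (for a generic $K[X]$-endomorphism it fails precisely on blocks with $0<m_i-m_j$); it does hold here, but only because the orthogonality of $g$ forces $(g-\Id)_{i,j}$ into $X^{s+m_i-m_j}K_{[m_i]}$ rather than merely $X^{\max(s,\,m_i-m_j)}K_{[m_i]}$---and this is exactly what makes $\epsilon(A_{i,j})$ vanish. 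To get a map that genuinely lands in the $J$-antisymmetric matrices one should treat the off-diagonal blocks with $i<j$ via the matrices $h_{i,j}$ introduced just before Lemma~\ref{lem:automorphism-groups-levi} (so $g_{i,j}=X^{m_i-m_j}h_{i,j}$), taking $a_{i,j}$ to be the $X^s$-coefficient of $h_{i,j}$ rather than of $A_{i,j}$.
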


Thus $G_{(1)}$ is a nilpotent Lie group, contained in the
unipotent radical of~$G$ and
the Levi factor of~$G$ is a product of orthogonal groups:

\begin{prop}
  \label{prop:automorphism-groups-decomposition}
  The group~$G$ of automorphisms of the pair $(b_0,f)$ is the semi-direct
  product of a Levi factor $\iota\bigl( \prod_{p_i+q_i>2} \OO(I_{p_i,q_i})\bigr)$ and
  its unipotent radical $\pi^{-1}\bigl( \prod_{p_i+q_i\leq 2} \OO(I_{p_i,q_i})\bigr)$; its
  radical is equal to its unipotent radical.
\end{prop}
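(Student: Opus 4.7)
The plan is to construct the Levi decomposition by combining the two preceding lemmas, and then identify the two distinguished normal subgroups as the radical and the unipotent radical of $G$.

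First, Lemma~\ref{lem:automorphism-groups-levi} provides a splitting $G = \iota(H) \ltimes G_{(1)}$ with $H = \prod_{i=1}^{r} \OO(I_{p_i, q_i})$. I would refine this using the direct product decomposition $H = H_1 \times H_2$, where $H_1 = \prod_{p_i + q_i > 2} \OO(I_{p_i, q_i})$ and $H_2 = \prod_{p_i + q_i \leq 2} \OO(I_{p_i, q_i})$, and set $L := \iota(H_1)$ and $N := \pi^{-1}(H_2)$. That $G = L \ltimes N$ is then routine: the equalities $\pi \circ \iota = \mathrm{id}_H$ and $H_1 \cap H_2 = \{e\}$ yield $L \cap N = \{e\}$; writing $\pi(g) = h_1 h_2$ for arbitrary $g \in G$ shows $g = \iota(h_1) \cdot (\iota(h_1)^{-1} g) \in L \cdot N$; and $N$ is normal in $G$ because $H_2$ is normal in $H$ as a direct factor, so $\pi(g n g^{-1}) = \pi(g) \pi(n) \pi(g)^{-1}$ stays in $H_2$.

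Second, to identify $N$ with the radical, I would use Lemma~\ref{lem:automorphism-groups-central-series}: the descending chain $G_{(1)} \supseteq G_{(2)} \supseteq \cdots$ is a finite central series (terminating once $s \geq \max_i m_i$) with vector-space quotients, so $G_{(1)}$ is a connected unipotent subgroup of $G$. Since $H_2$ is a product of orthogonal groups on spaces of dimension at most $2$, each of whose identity components is trivial or a one-dimensional torus, $H_2$ is solvable with abelian identity component; hence $N$, as an extension of the solvable group $H_2$ by the unipotent group $G_{(1)}$, is itself solvable. Conversely each factor $\OO(I_{p_i, q_i})$ with $p_i + q_i > 2$ has semisimple identity component, so the Levi factor $L$ is reductive with semisimple derived group and cannot contribute to any solvable normal subgroup of $G$. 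Therefore $N$ is exactly the radical.

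The main obstacle is establishing that this radical coincides with the unipotent radical, since the identity components of the $\SO(2)$ and $\SO(1,1)$ pieces of $H_2$ are one-dimensional tori that are not unipotent in the naive algebraic-group sense. The resolution should come from a direct analysis of how these tori embed in $G$: one shows that after passing to the connected unipotent part of $G_{(1)}$ and taking preimages, the torus directions in $H_2$ must be absorbed into the chain of vector-space quotients computed in Lemma~\ref{lem:automorphism-groups-central-series}, or---more likely---one verifies the equality in the form stated by a case-by-case computation on each block $K_{[m_i]}^{p_i+q_i}$ with $p_i + q_i \leq 2$, using the explicit description of $b_{K\langle\!\langle X^{-1}\rangle\!\rangle}$ from Section~\ref{sec:more-bilinear-forms} and Remark~\ref{rem:bilin-Kn}. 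This last reduction to small blocks, handled by explicit matrix calculation, is the technical heart of the proof.
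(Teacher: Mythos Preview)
Your construction of the semidirect product $G=L\ltimes N$ and your identification of $N^{0}$ as the radical are correct; this is essentially what the paper's terse argument (the sentence before the proposition together with the two lemmas) amounts to, and the paper offers nothing further.

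The resolution you propose in the last paragraph, however, cannot work, and the paper does not address this point either. The tori $\iota\bigl(\SO(I_{p_i,q_i})^{0}\bigr)$ with $p_i+q_i=2$ act on~$V$ with eigenvalues $e^{\pm i\theta}$ (or $e^{\pm t}$ in signature $(1,1)$), not all equal to~$1$; no block-by-block computation will make them unipotent or absorb them into the filtration~$G_{(s)}$. In fact one always has $R_u(G)=G_{(1)}$: any element of $R_u(G)$ is unipotent on~$V$, hence unipotent on the quotient $V/XV$, so its image in the reductive group $H=G/G_{(1)}$ is unipotent and therefore trivial. Consequently, whenever some $p_i+q_i=2$ the radical $N^{0}=\pi^{-1}(H_{2}^{0})$ strictly contains $R_u(G)=G_{(1)}$. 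The minimal example is $V=\R^{2}$, $b_0=\Id$, $f=0$: here $G=\OO(2,\R)$, the radical is $\SO(2)$, and the unipotent radical is trivial. What you have actually established is the Levi--Malcev decomposition with semisimple factor $\iota(H_1)$ and solvable normal complement~$N$; the further assertion that $N$ is the \emph{unipotent} radical, and that the radical equals the unipotent radical, appears to be a misstatement when blocks of size~$2$ occur. Flag this rather than trying to force the claim through.
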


\backmatter

\Printindex{definition}{Index}
\Printindex{notation}{Index of Notation}

\bibliographystyle{amsplain}
\bibliography{sp2n_memo}

\end{document}